\newcommand{\mylabel}[2]{#2\def\@currentlabel{#2}\label{#1}}
\newcommand{\bsm}{\left(\begin{smallmatrix}}
\newcommand{\esm}{\end{smallmatrix}\right)}
\newenvironment{customthm}[1]
  {\innercustomthm}
  {\endinnercustomthm}
\newenvironment{customprop}[1]
  {\innercustomprop}
  {\endinnercustomprop}
\numberwithin{equation}{section}
\newtheorem{theorem}[equation]{Theorem}
\newtheorem{lemma}[equation]{Lemma}
\newtheorem{proposition}[equation]{Proposition}
\theoremstyle{definition}
\newtheorem{definition}[equation]{Definition}
\newtheorem{example}[equation]{Example}
\newtheorem{remark}[equation]{Remark}
\newtheorem{plan}[equation]{Plan}
\newtheorem{convention}[equation]{Convention}
\newtheorem{construction}[equation]{Construction}
\newtheorem{notation}[equation]{Notation}
\newtheorem*{claim}{Claim}
\newtheorem*{claim*}{Claim}
\newcommand{\Z}{\mathbb{Z}}
\newcommand{\Q}{\mathbb{Q}}
\DeclareMathOperator{\exc}{exc}
\newcommand{\Hom}{\operatorname{Hom}}
\newcommand{\Kun}{\operatorname{Kun}}
\newcommand{\Homotopy}{\operatorname{Homotopy}}
\newcommand{\proj}{\operatorname{proj}}
\newcommand{\pt}{\operatorname{pt}}
\newcommand{\incl}{\operatorname{incl}}
\newcommand{\cd}{\operatorname{cd}}
\newcommand{\pr}{\operatorname{pr}}
\newcommand{\PD}{\operatorname{PD}}
\newcommand{\hAut}{\operatorname{hAut}}
\newcommand{\coker}{\operatorname{coker}}
\newcommand{\Ext}{\operatorname{Ext}}
\newcommand{\Herm}{\operatorname{Herm}}
\newcommand{\id}{\operatorname{id}}
\newcommand{\ks}{\operatorname{ks}}
\newcommand{\CW}{\operatorname{CW}}
\newcommand{\ev}{\operatorname{ev}}
\newcommand{\im}{\operatorname{im}}
\newcommand{\wt}{\widetilde}
\newcommand{\wh}{\widehat}
\DeclareMathOperator{\Res}{Res}
\DeclareMathOperator{\Ind}{Ind}
\DeclareMathOperator{\Sesq}{Sesq}
 \newcommand{\mult}{\operatorname{mult}}
  \newcommand{\btimes}{\boxtimes}
  \newcommand{\unaryminus}{\scalebox{0.75}[1.0]{\( - \)}}
 \definecolor{bettergreen}{rgb}{0.0, 0.5 0.0}
\begin{document}
\title{4-manifolds with a given boundary}

\author[A.~Conway]{Anthony Conway}
\address{The University of Texas at Austin, Austin TX}
\email{anthony.conway@austin.utexas.edu}
\author[D.~Kasprowski]{Daniel Kasprowski}
\address{University of Southampton, United Kingdom}
\email{d.kasprowski@soton.ac.uk }

\maketitle

\begin{abstract}
This paper studies the homotopy and homeomorphism classifications of~$4$-manifolds with boundary.
Given $4$-manifolds $X_0$ and $X_1$ with fundamental group $\pi$, we consider the problem of extending
 a homotopy equivalence~$h \colon \partial X_0 \to \partial X_1$ 
 to a homotopy equivalence~$X_0 \to~X_1$.
 We solve this problem in broad settings for a class of groups that includes free groups,  finite cyclic groups,  finite dihedral groups,  solvable Baumslag-Solitar groups,  and many~$3$-manifold groups.
When the fundamental group is additionally assumed to be good, we use surgery theory to list situations when a homeomorphism  $h \colon \partial X_0 \to \partial X_1$ extends to a homeomorphism~$X_0 \to X_1$.
The outcome recovers results of Boyer in the simply-connected case and work of the first author and Powell when~$\pi \cong \Z$ and the~$\partial X_i$ have torsion Alexander module.
\end{abstract}

\medbreak

\section{Introduction}
\label{sec:Introduction}

Known homeomorphism classifications of closed orientable topological~$4$-manifolds include the case where the fundamental group is trivial~\cite{Freedman}, finite cyclic~\cite{HambletonKreck}, infinite cyclic~\cite{FreedmanQuinn}, and a solvable Baumslag-Solitar group~$BS(1,n)$~\cite{HambletonKreckTeichner}.
When the boundary is nonempty, a complete classification is known for simply-connected 4-manifolds~\cite{BoyerUniqueness,BoyerRealization} and for~$4$-manifolds with~$\pi_1 \cong \Z$ and certain restrictions on the boundary~\cite{ConwayPowell,ConwayPiccirilloPowell}.

A little more is known concerning the problem of deciding when two closed $4$-manifolds are homotopy equivalent since one need not assume the fundamental group be good.
The earliest results go back to Whitehead~\cite{Whitehead4manifold} and Wall~\cite{WallPoincare} and now include several classes of finite groups~\cite{HambletonKreck,Bauer,KasprowskiPowellRuppik,KasprowskiNicholsonRuppik} as well as~$\PD_3$-groups~\cite{HKPR}.
For manifolds with boundary, there are no results besides those for which the homeomorphism classification is known.

This article achieves the homotopy classification of~$4$-manifolds with a given boundary for large classes of groups under the assumption that the inclusion induced map~$\pi_1(\partial X) \to \pi_1(X)$ is surjective (Theorems~\ref{thm:IntroTorsion} and~\ref{thm:1d3dIntro}).
Surgery theory is then used to derive the homeomorphism classification in several cases (Theorems~\ref{thm:Homeopi1=ZIntro} and~\ref{thm:Homeopi1=BSIntro}).
Our results hold more generally for~$4$-dimensional Poincar\'e pairs, which are pairs~$(X,Y)$ that are homotopy equivalent to finite CW pairs and for which there are classes~$[X] \in H_4(X,Y)$ and~$[Y] \in H_3(Y)$ such that the cap products with~$[X]$ and~$[Y]$ induce Poincar\'e duality isomorphisms (see Definition~\ref{def:PoincarePair}).
During this introduction however, we restrict ourselves to manifolds for simplicity.
We work in the topological category.
Manifolds are assumed to be compact,  path-connected and~oriented.

\subsection{The homotopy classification of~$4$-manifolds  with boundary.}

Given~$4$-manifolds~$X_0$ and~$X_1$ with isomorphic fundamental groups, connected boundaries and~$\iota_j \colon \pi_1(\partial X_j) \to \pi_1(X_j)$ surjective for~$j=0,1$,  we aim to find necessary and sufficient conditions for a degree one homotopy equivalence~$h \colon \partial X_0 \to \partial X_1$ to extend to a homotopy equivalence~$X_0 \to X_1$.

For such an extension to exist, it is necessary that there be an isomorphism~$u \colon \pi_1(X_0) \to \pi_1(X_1)$ with $u \circ \iota_0=\iota_1 \circ h_*$ and,  if such an isomorphism exists, then it is unique.
Henceforth when such an isomorphism exists, we use it to identify~$\pi_1(X_0)$ with $\pi:=\pi_1(X_1)$.
Another necessary condition can be formulated in terms of the \emph{relative equivariant intersection form}
$$\lambda_{X_i}^\partial \colon H_2(X_i;\Z[\pi]) \times  H_2(X_i,\partial X_i ;\Z[\pi]) \to \Z[\pi].$$
Namely we say that $\Z[\pi]$-isomorphisms
\begin{align*}
&F \colon H_2(X_0;\Z[\pi]) \to H_2(X_1;\Z[\pi]) \\
&G \colon H_2(X_0,\partial X_0;\Z[\pi]) \to H_2(X_1,\partial X_1;\Z[\pi])
\end{align*}
are \emph{compatible} with the homotopy equivalence $h$ if they fit into the commutative diagram
$$
\xymatrix@C0.4cm{
H_2(\partial X_0;\Z[\pi])
\ar[r]\ar[d]_{h_*}&
H_2(X_0;\Z[\pi])
\ar[r]\ar[d]_{F}&
H_2(X_0,\partial X_0;\Z[\pi])
\ar[r]\ar[d]_{G}&
H_1(\partial X_0;\Z[\pi])
\ar[d]_{h_*}\\
H_2(\partial X_1;\Z[\pi])
\ar[r]&
H_2(X_1;\Z[\pi])
\ar[r]&
H_2(X_1,\partial X_1;\Z[\pi])
\ar[r]&
H_1(\partial X_1;\Z[\pi])
}
$$
and intertwine $\lambda_{X_0}^\partial $ and $\lambda_{X_1}^\partial$, meaning that for all~$x \in H_2(X_0;\Z[\pi])$ and all~$y \in H_2(X_0,\partial X_0;\Z[\pi])$,  
$$\lambda_{X_1}^\partial(F(x),G(y))=\lambda_{X_0}^\partial(x,y).$$
We call $(F,G,h)$ a \emph{compatible triple.}
A rapid verification shows that the existence of a compatible triple is a  necessary condition for $h$ to extend to a homotopy equivalence.

\medskip

We describe our solution to the problem of extending $h \colon \partial X_0 \to \partial X_1$ in increasing order of complexity, starting with the case where~$H_1(\partial X_i;\Z[\pi])^*:=\Hom_{\Z[\pi]}(H_1(\partial X_i;\Z[\pi]),\Z[\pi])$ vanishes.
When~$\pi$ is free abelian, this is equivalent to 
$H_1(\partial X_i;\Z[\pi])$ being $\Z[\pi]$-torsion.

Our first result holds for some classes of groups~$\pi$ that are either finite or of dimension at~most~$3$.

\begin{theorem}
\label{thm:IntroTorsion}
Let $\pi$ be a group that is either finite abelian with at most $2$ generators, or finite dihedral,  or a surface group, or a solvable Baumslag--Solitar group, or a torsion-free $3$-manifold group,
let~$X_0$ and~$X_1$ be~$4$-manifolds with~$\pi_1(X_j) \cong \pi$ and~$\iota_j \colon \pi_1(\partial X_j) \to \pi_1(X_j)$ surjective for~$j=0,1$. 
Fix a degree one homotopy equivalence~$h \colon \partial X_0 \to \partial X_1$.

When~$H_1(\partial X_1;\Z[\pi])^*=0$,  the following assertions are equivalent:
\begin{itemize}
\item The homotopy equivalence~$h$ extends to a homotopy equivalence~$X_0 \to X_1$.
\item There exists an isomorphism~$u \colon \pi_1(X_0) \to \pi_1(X_1)$ with $u \circ \iota_0=\iota_1 \circ h_*$ and a compatible triple~$(F,G,h)$ with 
$$F_*(k_{X_0,\partial X_0})=h^*(k_{X_1,\partial X_1}) \in H^3(B\pi,\partial X_0;\pi_2(X_1)).$$
When the group is infinite,
the condition involving the relative $k$-invariants can be omitted.
\end{itemize}
Additionally,  given a $k$-invariant preserving compatible triple $(F,G,h)$ as above,  the homotopy equivalence $X_0 \to X_1$ extending $h$ can be chosen to induce $F$ and $G$ on $\Z[\pi]$-homology.
\end{theorem}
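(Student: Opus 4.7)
The forward direction is immediate: given an extension~$H\colon X_0\to X_1$, take $F$ and $G$ to be the maps induced by~$H$ on absolute and relative~$\Z[\pi]$-homology; naturality of Postnikov sections then gives the equation of relative~$k$-invariants. The substantive direction is to reconstruct~$H$ from the algebraic data.

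The plan is to climb the Postnikov tower of~$X_1$ and construct~$H$ skeleton-by-skeleton, extending~$h$ on~$\partial X_0$. First, I would build a map between the relative 2-Postnikov stages of~$(X_0,\partial X_0)$ and~$(X_1,\partial X_1)$ extending~$h$. After identifying~$\pi_1(X_i)=\pi$ via~$u$ and~$\pi_2(X_i) \cong H_2(\wt{X}_i)$ via~$F$, the relative 2-type of~$(X_i,\partial X_i)$ is classified by the relative~$k$-invariant in~$H^3(B\pi,\partial X_i;\pi_2(X_i))$, and the hypothesis~$F_*(k_{X_0,\partial X_0})=h^*(k_{X_1,\partial X_1})$ is exactly what is required to produce such a map. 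Composition with~$X_0\to P_2(X_0)$ yields a map~$X_0\to P_2(X_1)$ extending~$h$.

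Next, lift this to an actual map~$H\colon X_0\to X_1$ rel~$\partial X_0$. The obstructions to lifting across the Moore--Postnikov tower of~$X_1\to P_2(X_1)$ live in~$H^{n+1}(X_0,\partial X_0;\pi_n(X_1))$ for~$n\geq 3$. Using the hypothesis~$H_1(\partial X_1;\Z[\pi])^*=0$ together with relative Poincar\'e--Lefschetz duality and universal coefficients, one should be able to reduce these obstruction groups to~$\Hom_{\Z[\pi]}$-terms controlled by the intersection form, so that the compatibility of~$\lambda^\partial_{X_i}$ inherent in~$(F,G)$ kills the primary obstruction; the top-cell extension then follows since the attaching map of the~$4$-cell is determined up to homotopy by~$\lambda^\partial$. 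That the constructed~$H$ is a homotopy equivalence inducing~$F$ and~$G$ on~$\Z[\pi]$-homology is then a five-lemma argument on the long exact sequences of the pairs with~$\Z[\pi]$-coefficients, followed by Whitehead's theorem.

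The claim that the~$k$-invariant hypothesis is redundant for infinite~$\pi$ from the given list should be a cohomological-dimension observation: surface groups and~$BS(1,n)$ have~$\cd(\pi)\leq 2$ so~$H^3(B\pi;M)=0$, forcing the relative~$k$-invariant to be determined by boundary data already controlled by~$h$, and for torsion-free~$3$-manifold groups the aspherical~$3$-dimensional model of~$B\pi$ allows a similar reduction. The main obstacle is the uniform handling of the secondary lifting obstructions across the heterogeneous class of allowed groups; each class likely requires a dedicated algebraic input, such as projectivity or stable freeness of~$H_2(X_i;\Z[\pi])$, vanishing of specific~$\Ext^1_{\Z[\pi]}$ groups, or~$\cd(\pi)$ bounds, to guarantee that the ambient obstruction theory actually coincides with the intersection-form data rather than merely being controlled by it.
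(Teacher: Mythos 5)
Your overall architecture (first a map to the $2$-type realising the $k$-invariant condition, then obstruction theory for the extension, then Whitehead's theorem) is indeed the paper's architecture, but the decisive step is missing. Writing $B:=P_2(X_1)$ and extending over the $3$-skeleton rel boundary, there remains a single obstruction in $H^4(X_0,\partial X_0;\pi_3(X_1))\cong \Z\otimes_{\Z[\pi]}\pi_3(X_1)\cong H_4(B,X_1)$, and it is identified with the image of $(c_0)_*[X_0]-(c_1)_*[X_1]$, i.e.\ with a class $\xi_0\in H_4(B)$. Your assertion that relative duality and universal coefficients reduce this group to ``Hom-terms controlled by the intersection form'', and that ``the attaching map of the $4$-cell is determined up to homotopy by $\lambda^\partial$'', is not correct as stated: $\pi_3(X_1)$, equivalently $H_4(\widetilde{B})$, is governed by Whitehead's group $\Gamma(\pi_2(X_1))$, and the compatible triple only tells you that the hermitian pairing $b_{\xi_0}$ that $\xi_0$ induces on $H^2(B;\Z[\pi])$ vanishes. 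Passing from that to $\xi_0=0$ requires (i) the identification $H_4(B)\cong \Z\otimes_{\Z[\pi]}H_4(\widetilde{B})\cong \Z\otimes_{\Z[\pi]}\Gamma(\pi_2)$, proved by a Leray--Serre argument using either $\cd(\pi)\leq 3$ (so $\pi_2$ is projective) or finiteness of $\pi$, and (ii) injectivity of the map $\Z\otimes_{\Z[\pi]}\Gamma(\pi_2)\to \Herm(H^2(B;\Z[\pi]))$, which is exactly where the hypotheses on the listed groups enter: Hillman's injectivity theorem when $\cd(\pi)\leq 3$ together with a condition on $\ev^*$, and torsion-freeness of $\Gamma(\pi_2)\otimes_{\Z[\pi]}\Z$ for the finite abelian/dihedral cases. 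You explicitly defer this (``each class likely requires a dedicated algebraic input'') but never supply it; that input is the core of the theorem, and without it the argument does not close.

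Two further points. First, to make the pulled-back relative pairings on $B$ agree you need a $3$-connected map $c_0\colon X_0\to B$ extending $c_1|_{\partial X_1}\circ h$ that realises not only $F$ on $\pi_2$ but also $G$ on relative homology; the $k$-invariant condition alone only produces a map realising $F$, and upgrading it to realise $G$ is a separate obstruction-theoretic step which is where $H_1(\partial X_1;\Z[\pi])^*=0$ (via injectivity of $H^2(\pi;\Z[\pi])\to H^2(\partial X_1;\Z[\pi])$) is actually used; your sketch instead spends this hypothesis on the later obstruction groups, where it is not what is needed (there it serves to make all sesquilinear forms on $H^2(\partial X_1;\Z[\pi])\cong H_1(\partial X_1;\Z[\pi])$ vanish, killing the secondary obstruction). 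Second, your explanation of why the $k$-invariant condition can be dropped for the infinite groups is off: for $\cd(\pi)\leq 2$ the relative group $H^3(B\pi,\partial X_0;\pi_2(X_1))$ is a quotient of $H^2(\partial X_0;\pi_2(X_1))$ and need not vanish; the correct statement is that any compatible triple automatically preserves the relative $k$-invariant (and for torsion-free $3$-manifold groups one uses $H_4(\pi)=0$ together with $H_1(\partial X_0;\Z[\pi])^*=0$), which again requires a genuine argument rather than a vanishing of the ambient cohomology group.
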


Here, a \emph{$3$-manifold group} refers to the fundamental group of a closed $3$-manifold, and~$k_{X_i,\partial X_i}$ denotes the \emph{relative~$k$-invariant} of~$(X_i,\partial X_i)$ for $i=0,1$.
We refer to~\cite{ConwayKasprowskiKinvariant} for further details on the definition of $k_{X_i,\partial X_i}$: in this article,  only its properties are needed, and these are collected in Section~\ref{sec:Relativek}.
We nevertheless mention that,  essentially,~$k_{X_i,\partial X_i}$ is the
obstruction to extending the map~$\partial X_i \to X_i \to P_2(X_i)$ to a section~$B\pi \to P_2(X_i)$ of the fibration~$P_2(X_i) \to B\pi$.
Here,~$P_2(X_i)$ denotes the Postnikov $2$-type of $X_i$.

\begin{remark}
\label{rem:CompatiblePairTripleIntro}
If $h$ is a homeomorphism, the case~$\pi=1$ of Theorem~\ref{thm:IntroTorsion} can be recovered from work of Boyer~\cite[Theorem~0.7 and Proposition~0.8]{BoyerUniqueness}, whereas the case~$\pi=\Z$ can be deduced from~\cite[Theorem~1.10]{ConwayPowell}.
Theorem~\ref{thm:IntroTorsion} is new in all other cases.
For $\pi \in \{1,\Z\}$, 
the results of~\cite{BoyerUniqueness,ConwayPowell} are formulated in terms of compatible pairs $(F,h)$, i.e. pairs $(F,h)$
with
$$
\xymatrix@C0.4cm@R0.4cm{
H_2(\partial X_0;\Z[\pi])
\ar[r]\ar[d]_{h_*}&
H_2(X_0;\Z[\pi])
\ar[r]\ar[d]_{F}&
{\overbrace{H_2(X_0,\partial X_0;\Z[\pi])}^{\cong H_2(X_0;\Z[\pi])^*}}
\ar[r]\ar[d]_{(F^*)^{-1}}&
H_1(\partial X_0;\Z[\pi])
\ar[d]_{h_*}\\
H_2(\partial X_1;\Z[\pi])
\ar[r]&
H_2(X_1;\Z[\pi])
\ar[r]&
{\underbrace{H_2(X_1,\partial X_1;\Z[\pi])}_{\cong H_2(X_1;\Z[\pi])^*}}
\ar[r]&
H_1(\partial X_1;\Z[\pi])
}
$$
instead of compatible triples $(F,G,h)$,  but for $\pi \in \{ 1,\Z\}$ the former determine the latter; see Proposition~\ref{prop:CompatiblePair}.
Note that for a compatible pair $(F,h)$,  the condition~$\lambda_{X_1}^\partial(F(x),(F^*)^{-1}(y))=\lambda_{X_0}^\partial(x,y)$ for all $x\in H_2(X_0;\Z[\pi]), y \in H_2(X_0,\partial X_0;\Z[\pi])$ is automatically satisfied.
We refer to Section~\ref{sub:CompatibleTriple} for further details on compatible pairs and triples.
We also wish to emphasise that even under the assumptions of this remark, the proof of Theorem~\ref{thm:IntroTorsion} is entirely different from the arguments in~\cite{BoyerUniqueness} and~\cite{ConwayPowell}.
\end{remark}

The next theorem states our results when the hypothesis on the $\Z[\pi]$-homology of the boundary is relaxed.
These results hold whenever~$\pi$ is a torsion-free $3$-manifold group.
This includes the cases where $\pi$ is trivial or free.

\begin{theorem}
\label{thm:1d3dIntro}
Let $\pi$ be a torsion-free $3$-manifold group,  and let~$X_0$ and~$X_1$ be~$4$-manifolds with~$\pi_1(X_j) \cong \pi$ and~$\iota_j \colon  \pi_1(\partial X_j) \to \pi_1(X_j)$ surjective  for $j=0,1$.
Fix a degree one homotopy equivalence $h \colon \partial X_0 \to \partial X_1$.

Assume~$H_1(\partial X_i;\Z[\pi]) \cong L \oplus T$ with $L$ a~$\Z[\pi]$-free module and $T^*=0$.
\begin{itemize}
\item When the universal covers of $X_0$ and $X_1$ are not spin,~$h$ extends to a homotopy equivalence~$X_0 \to X_1$ if and only if there is a group isomorphism~$u \colon \pi_1(X_0) \to \pi_1(X_1)$ with~$u \circ \iota_0=\iota_1 \circ h_*$ and a $k$-invariant preserving compatible triple $(F,G,h)$.
\item  When the universal covers of $X_0$ and $X_1$ are spin,  $h$ extends to a homotopy equivalence~$X_0 \to X_1$ if and only if there is a group isomorphism~$u \colon \pi_1(X_0) \to \pi_1(X_1)$ with~$u \circ \iota_0=\iota_1 \circ h_*$, a $k$-invariant preserving compatible triple~$(F,G,h)$,  and the universal cover of $X_0 \cup_h X_1$ is spin. 
\end{itemize}
When the universal covers are spin, given a~$k$-invariant preserving compatible triple~$(F,G,h)$,  the homotopy equivalence $X_0 \to X_1$ extending $h$ can be chosen to induce~$F$ on~$\Z[\pi]$-homology.
\end{theorem}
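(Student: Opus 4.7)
The plan is to reduce to Theorem~\ref{thm:IntroTorsion} by stabilising $X_0$ and $X_1$ so as to kill the free summand $L$ of $H_1(\partial X_i;\Z[\pi])$. Concretely, I would choose finitely many embedded framed circles $\gamma_1,\dots,\gamma_n\subset\partial X_0$ whose $\pi$-lifts form a $\Z[\pi]$-basis of $L$, and attach $2$-handles along them to form a new $4$-manifold $X_0^+$; using the curves $h(\gamma_i)$ with matching framings we form $X_1^+$. The resulting boundaries satisfy $H_1(\partial X_i^+;\Z[\pi])^*=0$, since the torsion summand $T$ is untouched while $L$ has been surgered away, and $h$ extends to a homotopy equivalence $h^+\colon\partial X_0^+\to\partial X_1^+$. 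The compatible triple $(F,G,h)$ extends to a triple $(F^+,G^+,h^+)$ by adding one free $\Z[\pi]$-summand to each of $H_2(X_i^+;\Z[\pi])$ and $H_2(X_i^+,\partial X_i^+;\Z[\pi])$ per new $2$-handle, with intersection form determined by the chosen framings.

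The entire difference between the spin and non-spin cases lives in the choice of framings. When the universal covers are not spin, any matching choice of framings produces a $k$-invariant preserving compatible triple $(F^+,G^+,h^+)$. When the universal covers are spin, the framings must be chosen so that the added $2$-handles do not disturb the spin structure on the universal cover; the obstruction to doing this consistently on both sides is a single class in $H^2(\,\cdot\,;\Z/2)$, and it vanishes exactly when the universal cover of $X_0\cup_h X_1$ is spin, which is precisely the hypothesis in the statement. Given framings yielding a $k$-invariant preserving triple, Theorem~\ref{thm:IntroTorsion} applies to $(X_0^+,X_1^+,h^+)$ and produces a homotopy equivalence $H^+\colon X_0^+\to X_1^+$ extending $h^+$ and inducing $F^+$ and $G^+$ on $\Z[\pi]$-homology.

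It then remains to destabilise. Since $G^+$ identifies the cocore classes of the added $2$-handles on the two sides, we may arrange $H^+$ cellularly so that it carries the cocore $2$-disks of $X_0^+$ onto those of $X_1^+$. Removing open tubular neighbourhoods of the cocores produces $4$-manifolds homotopy equivalent to $X_0$ and $X_1$ respectively, to which $H^+$ restricts as the desired homotopy equivalence $H\colon X_0\to X_1$ extending $h$ and inducing $F$ on $H_2(\,\cdot\,;\Z[\pi])$. Control over the induced map on $H_2(\,\cdot\,,\partial;\Z[\pi])$ is lost in this destabilisation step, consistent with the weakened conclusion in the spin case relative to Theorem~\ref{thm:IntroTorsion}. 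The main obstacle is the framing analysis in the second paragraph: one must identify the obstruction to consistent framings with the single global spin condition on $X_0\cup_h X_1$ and verify that no finer invariants intervene.
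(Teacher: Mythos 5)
Your strategy (attach $2$-handles along curves generating $L$ to reduce to the case $H_1(\partial X_i;\Z[\pi])^*=0$, apply Theorem~\ref{thm:IntroTorsion}, then destabilise) is genuinely different from the paper's argument, but it has gaps that I do not see how to close; the paper instead works directly with the Postnikov $2$-type, showing the compatible triple yields $3$-connected maps with vanishing primary obstruction, and then analysing a secondary obstruction in $\Herm(H^2(\partial X_1;\Z[\pi]))/\mathcal{G}$ which vanishes automatically in the odd case and, in the weakly even case, vanishes precisely when the universal cover of $X_0\cup_h X_1$ is spin.

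The first gap is the construction of $h^+$. Since $h$ is only a homotopy equivalence, the curves $h(\gamma_i)$ are specified only up to free homotopy, and the homotopy type of a surgered $3$-manifold depends on the isotopy class of the embedded curve and its framing, not on its homotopy class: already in $S^3$ all knots are freely homotopic, yet $0$-surgery on the unknot gives $S^1\times S^2$ while $0$-surgery on a nontrivial knot never does. So there is no reason that surgery on $\partial X_0$ along $\gamma_i$ and surgery on $\partial X_1$ along some embedded representative of $h(\gamma_i)$ produce homotopy equivalent manifolds at all, let alone via a degree one map compatible with $h$; nor is it clear that the surgered boundaries satisfy $H_1(\,\cdot\,;\Z[\pi])^*=0$ (the belt circles contribute new classes, and whether these are killed depends on the framing choices). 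The second, equally serious, gap is the destabilisation: you cannot ``arrange $H^+$ cellularly'' to carry the cocore discs of the handles in $X_0^+$ onto those in $X_1^+$ and then restrict to a homotopy equivalence of complements. Splitting a homotopy equivalence of $4$-manifolds along prescribed embedded discs or spheres is a hard geometric problem (it is essentially an embedded-surgery statement, not an obstruction-theoretic one), and if it could be done by cellular approximation the main theorems of this paper would follow far too cheaply. Finally, two smaller points: your framing analysis in the spin case is, as you acknowledge, unproven, and the mechanism by which the paper sees the condition on $X_0\cup_h X_1$ is quite different (Theorems~\ref{thm:VanishSpinIntro} and~\ref{thm:obstruction0ImpliesEvenIntro} identify the vanishing of the secondary obstruction with weak evenness of $\lambda_{X_0\cup_h X_1}$, hence with the universal cover of the union being spin via Lemma~\ref{lem:WeaklyEvenUnivCoverSpin}); and your proposal only addresses the ``if'' direction, whereas the theorem also asserts that when the universal covers are spin, an extension forces the universal cover of $X_0\cup_h X_1$ to be spin, which requires its own argument.
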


In the second item, $X_0 \cup_h X_1$ refers to the pushout of $X_0$ and $X_1$ along the homotopy equivalence~$h$.
Propositions~\ref{prop:HomotopyPushout} and~\ref{prop:pi1HomotopyPushout} show that~$X_0 \cup_h X_1$ is a~$4$-dimensional Poincar\'e pair with fundamental group $\pi$, allowing us to make sense of its equivariant intersection form and whether or not its universal cover is spin.

We note that even when~$\pi=1$,  Theorem~\ref{thm:1d3dIntro} does not seem to have appeared in the literature (when~$h$ is a homeomorphism and~$\pi=1$, the result follows from~\cite[Theorem~0.7 and Proposition~0.8]{BoyerUniqueness}).
In this case,~$H_1(\partial X_i)$ necessarily decomposes as the direct sum of a free module and torsion module (because~$\Z$ is a PID),  the relative~$k$-invariants vanish and compatible triples can be replaced by compatible pairs.

\begin{remark}
We reframe the conditions on the universal cover in terms of the equivariant intersection form of the~$X_i$.
Recall that a hermitian form~$\lambda \colon  H \times H \to \Z[\pi]$ is \emph{weakly even} if for every~$x \in H$ we have~$\lambda(x,x)=a+\overline{a}$ for some~$a \in \Z[\pi]$,  \emph{odd} if it is not weakly even, and \emph{even} if there exists a sesquilinear form~$q \colon H  \times H\to \Z[\pi]$  with~$\lambda(x,y)=q(x,y)+\overline{q(y,x)}$ for every~$x,y \in H$.
Even forms are weakly even, and if~$H\cong L \oplus T$ with~$L$ free and~$T^*=0$ then weakly even forms are even (Lemma~\ref{lem:WeaklyEvenEven}).

Given a~$4$-manifold~$X$, Lemma~\ref{lem:WeaklyEvenUnivCoverSpin} shows that~$\widetilde{X}$ is spin if and only if~$\lambda_{X}$ is weakly even.
The first and second scenarios of Theorem~\ref{thm:1d3dIntro}  can therefore be thought of as the ``weakly even" and ``not weakly even" cases respectively.
\end{remark}

\subsection{Homeomorphism classifications}
\label{sub:HomeoIntro}

When the fundamental group is good,  in favourable circumstances, surgery theoretic arguments make it possible to upgrade homotopy classifications to homeomorphism classifications.
We list several sample results focusing on situations where the~$2$-type need not be mentioned.

We begin by stating the surgery theoretic input.

\begin{proposition}
\label{prop:SurgerySequence}
Assume that $\pi:=\pi_1(X_1)$ is trivial,  finite cyclic, or a solvable Baumslag--Solitar group.
Let~$X_0$ and $X_1$ be $4$-manifolds with fundamental group $\pi$ and $\pi_1(\partial X_i) \to \pi_1(X_i)=:\pi$ surjective for~$i=0,1$.
If $\ks(X_0)=\ks(X_1)$ and a homeomorphism $\partial X_0 \to \partial X_1$ extends to a  homotopy equivalence~$\phi \colon X_0 \to X_1$,  then it extends to a homeomorphism $\psi \colon X_0 \to X_1$ that satisfies~$\psi_*=\phi_*$ on $\pi_1(X_0),H_2(\widetilde{X}_0)$ and $H_2(\widetilde{X}_0,\partial \widetilde{X}_0)$.
\end{proposition}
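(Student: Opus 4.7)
The plan is to invoke the topological surgery exact sequence
\[
L_5(\Z[\pi]) \longrightarrow \mathcal{S}^{TOP}_\partial(X_1) \xrightarrow{\eta} \mathcal{N}^{TOP}_\partial(X_1) \xrightarrow{\sigma} L_4(\Z[\pi]),
\]
which is available for the good groups listed by Freedman--Quinn. After identifying $\partial X_0$ with $\partial X_1$ via the given homeomorphism, $\phi$ represents a class $[\phi]\in \mathcal{S}^{TOP}_\partial(X_1)$, and the goal will be to show $[\phi]=[\id_{X_1}]$. Because the Whitehead groups of each of the listed groups are trivial and the topological $s$-cobordism theorem applies to good $\pi$, this equality produces an $s$-cobordism rel boundary from $X_0$ to $X_1$ that is a product, yielding a homeomorphism $\psi\colon X_0\to X_1$ homotopic rel boundary to $\phi$. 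The homotopy then gives $\psi_*=\phi_*$ on $\pi_1(X_0)$ and on the $\Z[\pi]$-homology groups $H_2(\widetilde X_0)$ and $H_2(\widetilde X_0, \partial \widetilde X_0)$.

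To verify $[\phi]=[\id_{X_1}]$, the next step is to compute the normal invariant $\eta(\phi) \in \mathcal{N}^{TOP}_\partial(X_1)$. Via the standard decomposition
\[
\mathcal{N}^{TOP}_\partial(X_1) \cong H^2(X_1, \partial X_1; \Z/2) \oplus H^4(X_1, \partial X_1; \Z),
\]
the $H^4$-summand is detected by a multiple of the signature and therefore agrees with $\eta(\id_{X_1})$, since signatures are homotopy invariants. The $H^2$-summand is detected by the Kirby--Siebenmann difference $\phi^*\ks(X_1) - \ks(X_0)$, which vanishes by hypothesis. Hence $\eta(\phi)=\eta(\id_{X_1})$, and by exactness $[\phi]$ and $[\id_{X_1}]$ lie in the same orbit under the $L_5(\Z[\pi])$-action on the structure set.

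It then remains to eliminate this $L_5$-indeterminacy, which is the only step where the specific list of fundamental groups is needed beyond goodness. For $\pi=1$ this is immediate from $L_5(\Z)=0$. For $\pi$ finite cyclic and for $\pi\cong BS(1,n)$, the relevant $L$-theory computations of Hambleton--Kreck and Hambleton--Kreck--Teichner, combined with the $\pi_1$-surjectivity $\pi_1(\partial X_i)\to\pi_1(X_i)$ (which ensures that any Wall realisation of an $L_5$-class in $X_1$ rel boundary can be absorbed into the boundary factor, leaving $[\id_{X_1}]$ fixed), show that the $L_5$-action stabilises $[\id_{X_1}]$. The main obstacle is precisely this last point: tracking how the boundary $\pi_1$-surjectivity reduces the $L_5$-action to the identity requires delicate bookkeeping and is where the hypotheses on $\pi$ are used crucially.
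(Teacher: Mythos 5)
Your reduction of the normal invariant is where the argument breaks. You claim that the $H^2(X_1,\partial X_1;\Z/2)$-summand of $\eta(\phi)$ is ``detected by the Kirby--Siebenmann difference'', which vanishes by hypothesis, so that $\eta(\phi)=\eta(\id_{X_1})$. This is false: $\ks(X_0)-\ks(X_1)$ is a single $\Z/2$-valued quantity (at best it records one specific characteristic-number evaluation of the normal invariant), while the $H^2$-summand can have large rank, and it genuinely fails to vanish for homotopy equivalences with matching $\ks$. Concretely, take $X_1=(S^2\times S^2)\setminus \operatorname{int} D^4$ with the identity on the boundary: since $L_5(\Z)=0$ and the surgery obstruction map kills the torsion of $\mathcal{N}^{TOP}_\partial(X_1)\cong H^2(X_1,\partial X_1;\Z/2)\oplus H^4(X_1,\partial X_1;\Z)$, the rel boundary structure set is $(\Z/2)^2$; every structure has domain homeomorphic to $X_1$ with the same (vanishing) Kirby--Siebenmann invariant, yet three of the four have nontrivial $H^2$-component. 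So under the hypotheses of the proposition one cannot conclude $\eta(\phi)=\eta(\id_{X_1})$, and with it your whole reduction to the $L_5$-action collapses. This is precisely the gap that the ``Novikov pinching'' argument (which the paper's proof cites, together with Lee--Wilczy\'nski for the finite cyclic case) is designed to close: one does not show that $\phi$ has trivial normal invariant, but rather changes the structure by pinch maps realizing the offending $H^2$-classes, which alters the normal invariant without changing the homeomorphism type of the domain. Correspondingly, note that the proposition only asserts the existence of a homeomorphism $\psi$ with $\psi_*=\phi_*$ on $\pi_1$, $H_2(\widetilde{X}_0)$ and $H_2(\widetilde{X}_0,\partial\widetilde{X}_0)$ --- not $\psi\simeq\phi$ --- so your intermediate goal $[\phi]=[\id_{X_1}]$ in $\mathcal{S}^{TOP}_\partial(X_1)$ is stronger than what is true in general (the $S^2\times S^2$ example above satisfies the hypotheses and conclusion of the proposition while $[\phi]\neq[\id_{X_1}]$).

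Two further points. First, even granting your normal invariant step, you explicitly leave the triviality of the $L_5(\Z[\pi])$-action as ``delicate bookkeeping''; the suggested mechanism (absorbing Wall realisations into the boundary via $\pi_1$-surjectivity) is not an argument, so the proposal is incomplete on its own terms. Second, once the homeomorphism is produced by a pinching/$s$-cobordism argument rather than by $\psi\simeq\phi$, the equalities $\psi_*=\phi_*$ on $\pi_1$, $H_2(\widetilde{X}_0)$ and $H_2(\widetilde{X}_0,\partial\widetilde{X}_0)$ require a separate justification; the paper extracts this from the proof of Lee--Wilczy\'nski's Proposition~4.6 and notes it extends to the other listed groups, whereas in your write-up it is only a byproduct of the (unjustified) claim that $\psi$ is homotopic to $\phi$ rel boundary.
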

\begin{proof}
In the case where $\pi$ is trivial or a Baumslag--Solitar group,
the argument for closed manifolds is by now well known (see e.g.~\cite{KirbyTaylor,KasprowskiLand} and~\cite{HKPR}).
The tools involve the surgery exact sequence coupled with a Novikov pinching argument both of which go through without change for manifolds with boundary.
For the finite cyclic case, we refer to~\cite[Proposition~4.6]{LeeWilczy}.
The equality $\psi_* =\phi_*$ on $\pi_1(X_0)$ and $\pi_2(X_0) \cong H_2(\widetilde{X}_0)$ is explained in~\cite[Proof of Proposition~4.6]{LeeWilczy} and the proof also yields the equality on $H_2(\widetilde{X}_0,\partial \widetilde{X}_0)$ and extends to the other fundamental groups we have listed.
\end{proof}

When $X_0$ and $X_1$ are simply-connected,  Theorem~\ref{thm:1d3dIntro} together with Proposition~\ref{prop:SurgerySequence} recover (with a  different proof) the result due to Boyer~\cite[Theorem 0.7 and Proposition~0.8]{BoyerUniqueness} that an orientation-preserving homeomorphism~$h \colon \partial X_0 \to \partial X_1$ extends to a homeomorphism~$X_0 \to X_1$ if
\begin{itemize}
\item either $H_1(\partial X_i)$ is torsion and there is a compatible pair $(F,h)$  (when the $X_i$ are not spin,  add the requirement that $\ks(X_0)=\ks(X_1)$), 
\item or the $X_i$ are not spin,   there is a compatible pair $(F,h)$,  and $\ks(X_0)=\ks(X_1)$,
\item or the $X_i$ are spin,  there is a compatible pair $(F,h)$, and the union~$X_0 \cup_h X_1$ is spin.
\end{itemize}
We therefore focus on case in which $\pi$ is nontrivial, starting with the infinite cyclic case.

\begin{theorem}
\label{thm:Homeopi1=ZIntro}
Let $X_0$ and $X_1$ be $4$-manifolds with $\pi_1(X_j) \cong \Z$ and $\iota_j \colon \pi_1(\partial X_j) \to \pi_1(X_j)$ surjective for~$j=0,1$.
Fix an orientation-preserving homeomorphism $h \colon \partial X_0 \to \partial X_1$.
\begin{itemize}
\item When $H_1(\partial X_i;\Z[\Z])$ is torsion,  $h$ extends to a homeomorphism~$X_0 \to X_1$ if and only if there is an isomorphism $u \colon \pi_1(X_0) \cong \pi_1(X_1)$ with $u \circ \iota_0=\iota_1 \circ h_*$,  and there is a compatible pair~$(F,h)$; when the $X_i$ are not spin,  add the condition that~$\ks(X_0)=\ks(X_1)$.
\item When the $X_i$ are not spin and~$H_1(\partial X_i;\Z[\Z]) \cong L \oplus T$ with $L$ free and $T$ torsion,  $h$ extends to a homeomorphism~$X_0 \to X_1$ if and only if there is an isomorphism $u \colon \pi_1(X_0) \cong \pi_1(X_1)$ with $u \circ \iota_0=\iota_1 \circ h_*$,  there is a compatible pair $(F,h)$, and $\ks(X_0)=\ks(X_1)$.
\item When the $X_i$ are spin and~$H_1(\partial X_i;\Z[\Z]) \cong L \oplus T$ with $L$ free and $T$ torsion,  $h$ extends to a homeomorphism~$X_0 \to X_1$ if and only if there is an isomorphism $u \colon \pi_1(X_0) \cong \pi_1(X_1)$ with $u \circ \iota_0=\iota_1 \circ h_*$,   there is a compatible pair~$(F,h)$,  and the union~$X_0 \cup_h X_1$ is spin.
\end{itemize}
Additionally,  when $H_1(\partial X_i;\Z[\Z])$ is torsion or when the $X_i$ are spin, given a $k$-invariant preserving compatible pair $(F,h)$ as above,  the homeomorphism $X_0 \to X_1$ extending $h$ can be chosen to induce~$F$ on $\Z[\Z]$-homology.
\end{theorem}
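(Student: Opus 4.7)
The plan is to combine the homotopy classifications of Theorems~\ref{thm:IntroTorsion} and~\ref{thm:1d3dIntro} with the surgery input of Proposition~\ref{prop:SurgerySequence}, each of which covers~$\pi=\Z$: the surgery proposition via Freedman--Quinn, the homotopy results because~$\Z\cong\pi_1(S^2\times S^1)$ is a torsion-free~$3$-manifold group. Necessity is immediate. Any homeomorphism extension~$\psi\colon X_0\to X_1$ of~$h$ yields a compatible pair via~$F:=\psi_*$ on~$H_2(-;\Z[\Z])$, preserves the Kirby--Siebenmann invariant, and identifies~$X_0\cup_h X_1$ with the topological double of~$X_0$ in the spin case, forcing the former to have spin universal cover.

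For sufficiency, I first promote the compatible pair~$(F,h)$ to a compatible triple~$(F,G,h)$: as recalled in Remark~\ref{rem:CompatiblePairTripleIntro}, for~$\pi=\Z$ Poincar\'e--Lefschetz duality and universal coefficients give~$H_2(X_i,\partial X_i;\Z[\Z])\cong H_2(X_i;\Z[\Z])^*$, so setting~$G:=(F^*)^{-1}$ yields a compatible triple via Proposition~\ref{prop:CompatiblePair}, with intersection-form compatibility automatic. In the torsion case, Theorem~\ref{thm:IntroTorsion} applies directly (the~$k$-invariant hypothesis may be dropped since~$\Z$ is infinite) and produces a homotopy equivalence~$\phi\colon X_0\to X_1$ extending~$h$ and inducing~$F$. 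In the~$L\oplus T$ cases, Theorem~\ref{thm:1d3dIntro} applies instead, with Lemma~\ref{lem:WeaklyEvenUnivCoverSpin} translating its weakly-even versus not weakly-even dichotomy into the spin versus non-spin split at hand. Proposition~\ref{prop:SurgerySequence} then promotes~$\phi$ to a homeomorphism~$\psi$ extending~$h$ and agreeing with~$\phi$ on~$\pi_1$ and~$H_2(\widetilde X_0)$, hence inducing~$F$.

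The main obstacle is arranging the~$k$-invariant preserving hypothesis required by Theorem~\ref{thm:1d3dIntro} in the~$L\oplus T$ cases, since only a compatible pair is supplied. I would unpack~$k_{X_i,\partial X_i}\in H^3(B\Z,\partial X_0;\pi_2(X_1))$: since~$B\Z\simeq S^1$ has no cells in dimension~$\geq 2$, the long exact sequence of the pair shows this group is a quotient of~$H^2(\partial X_0;\pi_2(X_1))$, and the boundary-compatibility built into the triple~$(F,G,h)$ should force~$k$-invariant agreement, with any residual discrepancy absorbable by precomposition with a self-equivalence of~$X_1$ that fixes the boundary and acts trivially on~$H_2(-;\Z[\Z])$. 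A secondary subtlety is the Kirby--Siebenmann match required by Proposition~\ref{prop:SurgerySequence} in the spin case, which is not assumed in the theorem: it should follow because compatible pairs preserve signatures, whence~$\sigma(X_0\cup_h X_1)=0$, and Rokhlin's theorem for closed spin topological~$4$-manifolds combined with additivity of Kirby--Siebenmann under boundary-compatible union then gives~$\ks(X_0)=\ks(X_1)$.
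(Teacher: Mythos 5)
Your route is the paper's: promote the compatible pair to a compatible triple via Proposition~\ref{prop:CompatiblePair}, apply Theorems~\ref{thm:IntroTorsion} and~\ref{thm:1d3dIntro} to extend $h$ to a homotopy equivalence, then upgrade via Proposition~\ref{prop:SurgerySequence}, with the Kirby--Siebenmann equality in the spin situation obtained from additivity of $\ks$ and of the signature together with $\ks=\sigma/8 \bmod 2$ for closed spin topological $4$-manifolds. However, both points you flag as ``obstacles'' are left with genuine gaps. For the $k$-invariant hypothesis of Theorem~\ref{thm:1d3dIntro}, your sketch (boundary-compatibility ``should force'' agreement, with ``any residual discrepancy absorbable by precomposition with a self-equivalence of $X_1$'') is not an argument: no such self-equivalence is produced, and nothing shows it would alter the relative $k$-invariant by exactly the discrepancy. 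The mechanism the paper uses is Proposition~\ref{prop:NokinvariantIntro} (first item, i.e.\ Proposition~\ref{prop:NokinvariantIntrocd2}): since $\cd(\Z)=1\leq 2$, \emph{every} compatible triple automatically satisfies $F_*(k_{X_0,\partial X_0})=h^*(k_{X_1,\partial X_1})$, so the hypothesis is vacuous for $\pi=\Z$; your observation that $H^3(B\Z,\partial X_0;\pi_2(X_1))$ is a quotient of $H^2(\partial X_0;\pi_2(X_1))$ is correct but does not by itself yield the equality.

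The second gap is in the Kirby--Siebenmann step. Your Rokhlin-type computation requires the \emph{closed union} $X_0\cup_h X_1$ to be spin, and that is a hypothesis only in the third bullet. In the first bullet with the $X_i$ spin and $H_1(\partial X_i;\Z[\Z])$ torsion, neither $\ks(X_0)=\ks(X_1)$ nor spin-ness of the union is assumed, yet Proposition~\ref{prop:SurgerySequence} needs the former; so you must first prove that the union is spin in this sub-case. The paper does this by citing \cite[Theorem~3.12]{ConwayPowell}, or alternatively by combining Theorem~\ref{thm:obstruction0ImpliesEvenIntro} with Lemma~\ref{lem:WeaklyEvenUnivCoverSpin}; your proposal is silent on it. Relatedly, in the necessity direction for the third bullet you only conclude that the union has spin universal cover, whereas the statement asserts the union itself is spin (which does follow from identifying it with the double of $X_1$). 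With these two points repaired --- most efficiently by quoting Proposition~\ref{prop:NokinvariantIntro} and \cite[Theorem~3.12]{ConwayPowell} --- your argument coincides with the paper's proof.
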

\begin{proof}
Since~$\pi_1(X_i) \cong \Z$ for~$i=0,1$,  Proposition~\ref{prop:CompatiblePair} and the existence of a compatible pair~$(F,h)$ ensure the existence of a compatible triple $(F,G,h)$.
Theorems~\ref{thm:IntroTorsion} and~\ref{thm:1d3dIntro} imply that the homeomorphism $h \colon \partial X_0 \to \partial X_1$ extends to a homotopy equivalence $X_0 \to X_1$.
If~$X:=X_0 \cup_h X_1$ is spin, then the addivity of the Kirby-Siebenmann invariant (see e.g.~\cite[Theorem 9.2]{FriedlNagelOrsonPowell}) and of the signature give
$$ \ks(X_0)+\ks(X_1)=\ks(X)=\frac{\sigma(X)}{8}=\frac{\sigma(X_0)-\sigma(X_1)}{8}=0 \mod 2.$$
Here, we used that the signature of $X_i$ is determined by the equivariant intersection form of~$X_i$.
When $H_1(X_i;\Z[\Z])$ is torsion and the $X_i$ are spin,~$X=X_0 \cup_h X_1$ is automatically spin~\cite[Theorem 3.12]{ConwayPowell}; this also follows from the combination of Theorem~\ref{thm:obstruction0ImpliesEvenIntro} and Lemma~\ref{lem:WeaklyEvenUnivCoverSpin}.
Proposition~\ref{prop:SurgerySequence} then implies that~$h$ in fact extends to a homeomorphism.
The fact that a given isometry~$F$ can be realised follows from the corresponding realisation statement in Theorems~\ref{thm:IntroTorsion} and~\ref{thm:1d3dIntro} and from the equality $\psi_*=\phi_*$ in Proposition~\ref{prop:SurgerySequence}.
\end{proof}

The first item of Theorem~\ref{thm:Homeopi1=ZIntro} is~\cite[Theorem 1.10]{ConwayPowell} (but the proof is entirely different), whereas a result similar in nature to the third item was announced in 2020 during an online talk by Kreck and Teichner.

\begin{remark}
The third item of \cref{thm:Homeopi1=ZIntro} implies the following related statement. 
	Let~$X_0$ and~$X_1$ be 4-manifolds with~$\pi_1(X_i) \cong \Z$, a preferred spin structure~$\sigma_i$,  and~$\iota_i\colon \pi_1(\partial X_i) \to \pi_1(X_i)$ surjective for~$i = 0, 1$. 
	Fix a homeomorphism~$h\colon \partial X_0 \to \partial X_1$ that preserves the induced spin structures~$\partial \sigma_i$. 
	When~$H_1(\partial X_i;\Z[\Z])\cong L\oplus T$ with~$L$ free and~$T$ torsion,  the homeomorphism~$h$ extends to a spin structure preserving homeomorphism~$X_0 \to X_1$ if and only if there is an isomorphism~$u\colon \pi_1(X_0) \cong \pi_1(X_1)$ with~$u\circ \iota_0 = \iota_1 \circ h_*$, and there is a compatible pair~$(F, h)$. 
	
	To see this,  note that since~$h$ preserves the induced spin structures, the union~$X_0\cup_hX_1$ is spin, so~\cref{thm:Homeopi1=ZIntro} ensures that~$h$ extends to a homeomorphism~$H \colon X_0 \to X_1$. 
	Let~$\sigma_0'$ be the pullback of~$\sigma_1$ along~$H$. 
	Then~$h$ pulls back~$\partial\sigma_1$ to~$\partial \sigma_0'$ and thus~$\partial \sigma_0=\partial \sigma_0'$. 
This implies  that $\sigma_0'=\sigma_0$: indeed, since
$H_1(\partial  X_0;\Z_2) \to H_1(X_0;\Z_2)$ is surjective,~$\operatorname{Spin}(X_0) \to \operatorname{Spin}(\partial X_0)$ is injective.
	It follows that~$H$ preserves the spin structures~$\sigma_i$ as claimed.
\end{remark}

For finite cyclic groups and solvable Baumslag--Solitar groups, we obtain the following result.
\begin{theorem}
\label{thm:Homeopi1=BSIntro}
Let $\pi$ be either a solvable Baumslag--Solitar group or a finite cyclic group.
Let $X_0$ and $X_1$ be $4$-manifolds with~$\pi_1(X_j) \cong \pi$ and~$\iota_j \colon \pi_1(\partial X_j) \to \pi_1(X_j)$ surjective.
Fix an orientation-preserving homeomorphism $h \colon \partial X_0 \to \partial X_1$.

When~$H_1(\partial X_j;\Z[\pi])^*=0$ for $j=0,1$, the following assertions are equivalent.
\begin{itemize}
\item The homeomorphism $h$ extends to a homeomorphism~$X_0 \to X_1$. \item There is an isomorphism $u \colon \pi_1(X_0) \cong \pi_1(X_1)$ with $u \circ \iota_0=\iota_1 \circ h_*$, there is a compatible triple~$(F,G,h)$, and~$\ks(X_0)=\ks(X_1)$.
\end{itemize}
Additionally,  given a $k$-invariant preserving compatible triple $(F,G,h)$ as above,  the homeomorphism $X_0 \to X_1$ extending $h$ can be chosen to induce $F$ and $G$ on $\Z[\pi]$-homology.
\end{theorem}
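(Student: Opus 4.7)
The plan is to reduce to the homotopy classification obtained in Theorem~\ref{thm:IntroTorsion} and then apply the surgery-theoretic input provided by Proposition~\ref{prop:SurgerySequence}, closely following the structure of the proof of Theorem~\ref{thm:Homeopi1=ZIntro}. The necessity direction is routine: if $h$ extends to a homeomorphism $H\colon X_0 \to X_1$, then $H_*$ induces an isomorphism $u$ of fundamental groups satisfying $u\circ\iota_0=\iota_1\circ h_*$ and yields a compatible triple $(F,G,h)$; since the Kirby--Siebenmann invariant is a homeomorphism invariant, $\ks(X_0)=\ks(X_1)$.

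For sufficiency, the first step is to upgrade the algebraic data to a homotopy equivalence extending $h$ by invoking Theorem~\ref{thm:IntroTorsion}. When $\pi$ is a solvable Baumslag--Solitar group it is infinite, so the theorem applies with the existence of a compatible triple alone and no $k$-invariant hypothesis is required. When $\pi$ is finite cyclic, Theorem~\ref{thm:IntroTorsion} asks for a $k$-invariant preserving compatible triple, so here one must argue that the given $(F,G,h)$ can be replaced by a compatible triple $(F',G',h)$ with $F'_*(k_{X_0,\partial X_0})=h^*(k_{X_1,\partial X_1})$ in $H^3(B\pi,\partial X_0;\pi_2(X_1))$. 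I would do this by modifying $(F,G)$ by a self-isometry of the relative equivariant intersection form of $X_1$ and using the assumption $H_1(\partial X_j;\Z[\pi])^*=0$ to control the obstruction group in which the difference of the two $k$-invariants lives.

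The second step is to apply Proposition~\ref{prop:SurgerySequence} to the resulting homotopy equivalence $\phi\colon X_0 \to X_1$. Since $\pi$ is either finite cyclic or a solvable Baumslag--Solitar group (both good, and explicitly covered by Proposition~\ref{prop:SurgerySequence}) and $\ks(X_0)=\ks(X_1)$, the proposition produces a homeomorphism $\psi\colon X_0 \to X_1$ extending $h$ with $\psi_*=\phi_*$ on $\pi_1(X_0)$, $H_2(\widetilde X_0)$ and $H_2(\widetilde X_0,\partial\widetilde X_0)$. Combined with the realization statement of Theorem~\ref{thm:IntroTorsion}, which ensures that $\phi$ induces $F$ and $G$ on $\Z[\pi]$-homology, this immediately yields the \emph{Additionally} clause of the theorem.

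The step I expect to be the main obstacle is the $k$-invariant issue in the finite cyclic case, since Theorem~\ref{thm:Homeopi1=BSIntro} makes no explicit reference to $k$-invariants. The hypothesis $H_1(\partial X_j;\Z[\pi])^*=0$ should be the key to resolving this: it forces the relevant piece of the cohomology group that houses the $k$-invariant difference to be small enough that any compatible triple can either be shown to automatically preserve the relative $k$-invariants, or else corrected to do so by precomposing with an automorphism arising from an isometry of the intersection form. Once this point is settled the rest of the argument is a formal combination of Theorem~\ref{thm:IntroTorsion} and Proposition~\ref{prop:SurgerySequence}, exactly as in the proof of Theorem~\ref{thm:Homeopi1=ZIntro}.
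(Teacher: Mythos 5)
Your overall route is the same as the paper's: combine the homotopy classification of Theorem~\ref{thm:IntroTorsion} with the surgery input of Proposition~\ref{prop:SurgerySequence}, exactly as in the proof of Theorem~\ref{thm:Homeopi1=ZIntro}, and deduce the realisation clause from the realisation statement in Theorem~\ref{thm:IntroTorsion} together with $\psi_*=\phi_*$ in Proposition~\ref{prop:SurgerySequence}. The necessity direction and the Baumslag--Solitar case (where the group is infinite, so Theorem~\ref{thm:IntroTorsion} needs no $k$-invariant hypothesis) are fine as you present them.

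The one place where your argument is not actually carried out is exactly the point you flag as the main obstacle: the $k$-invariant condition in the finite cyclic case. Your proposed fix --- replacing $(F,G,h)$ by a corrected triple $(F',G',h)$ obtained by composing with a self-isometry of $\lambda_{X_1}^\partial$ --- is neither justified nor needed. The resolution already available in the paper is Proposition~\ref{prop:NokinvariantIntro} (second item, i.e.\ Proposition~\ref{prop:NokinvariantIntrocd3}): if $H_4(\pi)=0$ and $H_1(\partial X_0;\Z[\pi])^*=0$, then \emph{every} isomorphism $F\colon \pi_2(X_0)\cong\pi_2(X_1)$ satisfies $F_*(k_{X_0,\partial X_0})=h^*(k_{X_1,\partial X_1})$. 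For $\pi$ finite cyclic one has $H_4(\pi)=0$ (the even positive homology of a finite cyclic group vanishes), and $H_1(\partial X_0;\Z[\pi])^*=0$ is a hypothesis of the theorem, so any compatible triple is automatically $k$-invariant preserving; no modification of the triple and no analysis of "the obstruction group in which the difference of the two $k$-invariants lives" is required. With that observation inserted, your proof closes and coincides with the paper's argument; without it, the finite cyclic case rests on an unproved claim.
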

\begin{proof}
The argument is the same as in the proof of Theorem~\ref{thm:Homeopi1=ZIntro}: Theorem~\ref{thm:IntroTorsion} ensures that $h$ extends to a homotopy equivalence $X_0 \to X_1$ and
Proposition~\ref{prop:SurgerySequence} then implies that $h$ extends to a homeomorphism.
\end{proof}

\begin{remark}
Under the assumptions of Theorem~\ref{thm:Homeopi1=BSIntro}, it is conceivable that, in some cases,  the existence of a compatible triple implies that $\ks(X_0)=\ks(X_1)$ as is the case for $BS(1,0) \cong~\Z$ (recall Theorem~\ref{thm:Homeopi1=ZIntro}).
For example,  when $\pi\cong BS(1,n)$ and the universal covers of~$X_0$ and $X_1$ are spin,  the existence of a compatible triple ensures that the universal cover of~$X=X_0 \cup_h X_1$ is 
spin (combine Theorem~\ref{thm:obstruction0ImpliesEvenIntro} with Lemma~\ref{lem:WeaklyEvenUnivCoverSpin}) which in turn, when $n$ is even,  implies that~$X$ is spin~\cite[Theorem B]{HambletonKreckTeichner} and thus~$\ks(X_0)=\ks(X_1)$ (by the same reasonning as in the proof of Theorem~\ref{thm:Homeopi1=ZIntro} together with~\cite[Remark 4.2]{HambletonKreckTeichner}).
\end{remark}

\subsection*{Organisation}

Section~\ref{sec:PostnikovIntro} describes our results in greater detail and proves Theorems~\ref{thm:IntroTorsion} and~\ref{thm:1d3dIntro}.
The remainder of the paper decomposes into four parts.
Sections~\ref{sec:Prep}-\ref{sec:DefineAction} set up our primary and secondary obstructions to the extension problem.
Sections~\ref{sec:Relativek} and~\ref{sec:NoPostnikov} are concerned with relative~$k$-invariants and with reformulating the primary obstruction in terms of compatible triples.
Sections~\ref{sec:KunnethYS1}-\ref{sec:Odd} are concerned with the analysis of the secondary obstruction.
Finally,  Section~\ref{sec:cd3} focuses on describing situations in which our theorems apply.
The appendix collects some facts about twisted homology and obstruction theory.

\subsection*{Acknowledgments}
We are grateful to 
Mark Powell, Peter Teichner, and Simona Veselá for helpful discussions.
AC was partially supported by the NSF grant DMS~2303674.

\subsection*{Conventions}
Spaces are assumed to be compact and connected unless mentioned otherwise.
CW complexes are assumed to be finite.

Given a ring~$R$ with involution, and a left~$R$-module~$H$, we write~$\overline{H}$ for the right~$R$-module whose underlying group agrees with that of~$H$ but with the~$R$-module structure~$x\cdot r:=\overline{r} x$ for~$x \in H$ and~$r \in R$.
Similarly, we write~$H^*:=\overline{\Hom_{\operatorname{left-}R}(H,R)}$ to indicate that we are using the involution on~$R$ to turn the right~$R$-module $\Hom_{\operatorname{left-}R}(H,R)$ into a left~$R$ module.

Finally, given a hermitian form $b \colon H \times H \to R$ and a map $f  \colon V \to H$ we write $f^*b \colon V \times V \to R$ for the hermitian form $(x,y) \mapsto b(f(x),f(y)).$
If $f=g^*$ is an induced map on cohomology,  then we write $g^*b$ instead of $(f^*)^*(b).$

\section{The homotopy classification in terms of the Postnikov~$2$-type.}
\label{sec:PostnikovIntro}

We describe our results in greater detail and then explain how they recover the statements from the introduction.
The organisation of this section matches the organisation of the paper but the proofs of the results stated here are deferred to later sections.

Since our results hold more generally than for manifolds, we begin with some terminology.
\begin{definition}
\label{def:PoincarePair}
An $n$-dimensional \emph{Poincar\'e complex} $Y$ consists of a space that is homotopy equivalent to 
a CW complex, together with a class $[Y] \in H_n(Y)$ such that for every~$k \geq 0$, the cap product with $[Y]$ induces an isomorphism
$$ -\cap [Y] \colon H^k(Y;\Z[\pi_1(Y)]) \xrightarrow{\cong} H_{n-k}(Y;\Z[\pi_1(Y)]).$$
An $n$-dimensional \emph{Poincar\'e pair}
consists of a pair~$(X,Y)$ together with a class $[X] \in H_n(X,Y)$ such that
\begin{itemize}
\item the space~$Y$ is an $(n-1)$-dimensional Poincar\'e complex,
\item the inclusion~$Y \subset X$ is a cofibration;
\item there is a homotopy equivalence of pairs $(X,Y) \simeq (X',Y')$ to a CW pair;
\item  the connecting homomorphism satisfies~$\partial([X])=[Y]$ and,  for every $k \geq 0$, the cap product with $[X]$ induces an isomorphism
$$ -\cap [X] \colon H^k(X;\Z[\pi_1(X)]) \xrightarrow{\cong} H_{n-k}(X,Y;\Z[\pi_1(X)]).$$
\end{itemize} 
\end{definition}

\begin{remark}
\label{rem:PoincarePair}
We briefly comment on Definition~\ref{def:PoincarePair}.
\begin{itemize}
\item Definition~\ref{def:PoincarePair} differs slightly from Wall's definition~\cite{WallPoincare}.
Indeed,  in addition to the condition on cap products,  Wall requires Poincar\'e complexes to be finitely dominated~CW complexes and, similarly, Poincar\'e pairs are assumed to be CW pairs (thus ensuring that the inclusion~$Y \subset X$ is automatically a cofibration).
We relax this condition so that, as proved in Lemma~\ref{lem:manifolds-pd-pairs} below,  if $M$ is an $n$-manifold, then $(M,\partial M)$ is an $n$-dimensionsal Poincar\'e pair, even if $M$ or $\partial M$ happen not to admit a~CW structure. 
In particular,  all the results obtained in this section, which are stated in terms of~$4$-dimensional Poincar\'e pairs, also apply to~$4$-manifolds.
\item Using his definition, Wall proves that if $(X,Y)$ is a Poincar\'e pair, then the cap product with~$[X]$ in fact induces isomorphisms on (co)homology with all coefficients~\cite[Lemma~1.2]{WallPoincare}.
His proof also applies with our definition.
\end{itemize}
\end{remark}

\subsection{Primary and secondary obstructions}
\label{sub:With2TypeIntro}

A \emph{Postnikov $2$-type} of a space~$X$ consists of a pair~$(P_2(X),c)$, where $P_2(X)$ is a space with~$\pi_i(P_2(X))=0$ for~$i \geq 3$ and~$c \colon X \to P_2(X)$ is a $3$-connected map.
The pair $(P_2(X),c)$ is unique up to (an essentially unique) homotopy equivalence and thus, for brevity, we will often refer to the space~$P_2(X)$ as the Postnikov $2$-type of $X$.

\begin{notation}
\label{not:Intro}
Fix~$4$-dimensional Poincar\'e pairs~$(X_0,Y_0)$ and~$(X_1,Y_1)$ with~$\pi_1(X_i) \cong \pi$,  the same~$2$-type, and~$\iota_j \colon \pi_1(Y_j) \to \pi_1(X_j)$ surjective for $j=0,1$.
Fix a degree one  homotopy equivalence~$h \colon Y_0 \to Y_1$. 
Write~$B:=P_2(X_1)$ and assume without loss of generality that~$Y_1 \subset B$: if~$c \colon X_1 \to B'$ is a~$3$-connected map into a model for the~$2$-type,  then the mapping cylinder~$B$ of~$c$ is homotopy equivalent to~$B'$ and contains~$Y_1$ as a subspace.
This leads to a~$3$-connected map~$c_1 \colon (X_1,Y_1) \to (B,Y_1)$.
Finally fix a $3$-connected map~$c_0 \colon X_0 \to B$ with~$c_0|_{Y_0}=c_1|_{Y_1} \circ h$ so that it induces a map of pairs~$c_0 \colon (X_0,Y_0) \to (B,Y_1).$
This also implies that the group isomorphism~$u:=(c_1)_*^{-1} \circ (c_0)_* \colon \pi_1(X_0) \to \pi_1(X_1)$ satisfies~$u \circ \iota_0=\iota_1 \circ h_*$.
\end{notation}

The maps $c_0$ and $c_1$ lead to relative pairings on $(B,Y_1)$ which are more convenient to describe in cohomology.
For this reason,  we work with the cohomological version of the relative intersection form $\lambda_{X_i}^\partial$,  i.e. with the pairing
\begin{align*}
b_{X_i}^\partial \colon H^2(X_i,Y_i;\Z[\pi]) \times  H^2(X_i;\Z[\pi]) &\to \Z[\pi] \\
(\alpha,\beta) &\mapsto \langle \beta,\PD_{X_i}(\alpha)\rangle.
\end{align*}
If $Y_i$ is empty, we write $b_{X_i}$ instead of $b_{X_i}^\partial $.
Poincar\'e duality induces an isometry~$b_{X_i}^\partial \cong \lambda_{X_i}^\partial$; see Remark~\ref{rem:PairingCohomHom} for further details.
For $i=0,1$ we then consider the pulled back pairing 
\begin{align*}
c_i^*b_{X_i}^\partial \colon H^2(B,Y_1;\Z[\pi]) \times  H^2(B;\Z[\pi]) &\to \Z[\pi] \\
(x,y) &\mapsto b_{X_i}^\partial(c_i^*(x),c_i^*(y)).
\end{align*}

The first main definition of this section is the following.

\begin{definition}
\label{def:PrimaryObstructionIntroduction}
The \emph{primary obstruction} to~$(X_0,Y_0)$ and~$(X_1,Y_1)$ being homotopy equivalent rel.~$h$ is the pairing
$$c_0^*b_{X_0}^\partial-c_1^*b_{X_1}^\partial  \colon H^2(B,Y_1;\Z[\pi]) \times  H^2(B;\Z[\pi]) \to \Z[\pi].$$
\end{definition}

\begin{remark}
In this definition, the homotopy equivalence $h$ is permitted to have degree $\pm 1$.
\end{remark}

In order to describe the secondary obstruction,  we write~$U:=X_0 \cup_h X_1$ for the pushout of~$(X_0,Y_0)$ and~$(X_1,Y_1)$ along~$h$ and set~$c:=c_0 \cup c_1 \colon U \to B$.
We refer to Propositions~\ref{prop:HomotopyPushout} and~\ref{prop:pi1HomotopyPushout} for a proof of the fact that $U$ is a~$4$-dimensional Poincar\'e complex with fundamental group~$\pi$ (so that one can make sense of its equivariant intersection form).

The next proposition follows from the combination of Propositions~\ref{prop:FundamentalClassUnion} and~\ref{prop:PullbackProperties}.
In essence, it shows that when the primary obstruction vanishes, a secondary  obstruction can be defined.

\begin{proposition}
\label{prop:SecondaryIntro}
Assume the maps~$c_0 \colon X_0 \to B$ and $c_1  \colon X_1 \to B$ satisfy~$c_0^*b_{X_0}^\partial =c_1^*b_{X_1}^\partial$.
For~$a,b \in H^2(Y_1;\Z[\pi])$, 
and~$a',b' \in H^2(B;\Z[\pi])$ with~$c_1|_\partial^*(a')=a,c_1|_\partial^*(b')=b$, consider
\begin{align*}
b(c_0,c_1) \colon H^2(Y_1;\Z[\pi]) \times H^2(Y_1;\Z[\pi]) &\to \Z[\pi] \\
(a,b)& \mapsto b_U(c^*(a'),c^*(b')).
\end{align*}
This assignment does not depend on the choice of $a',b'$,  and the pairing $b(c_0,c_1)$ is hermitian.
\end{proposition}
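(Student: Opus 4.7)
The plan is to exploit the pushout structure of $U=X_0\cup_h X_1$ together with the characterisation of its fundamental class $[U]$ (provided by Proposition~\ref{prop:FundamentalClassUnion}) in order to decompose the pairing $b_U$ into a difference involving $b_{X_0}^\partial$ and $b_{X_1}^\partial$. Well-definedness will then follow from the vanishing $c_0^*b_{X_0}^\partial = c_1^*b_{X_1}^\partial$, and hermiticity of $b(c_0,c_1)$ from hermiticity of $b_U$.

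First I would reduce well-definedness to showing that $b_U(c^*(a'),c^*(b'))=0$ whenever $a'\in H^2(B;\Z[\pi])$ satisfies $c_1|_\partial^*(a')=0$; the corresponding statement for the other variable is symmetric, and can also be obtained a posteriori from hermiticity. Under this hypothesis, the long exact sequence of the pair $(B,Y_1)$ furnishes a lift $\tilde{a}'\in H^2(B,Y_1;\Z[\pi])$ of $a'$, and naturality of the relative cup product gives a class $\tilde{a}'\cup b'\in H^4(B,Y_1;\Z[\pi])$ whose image in $H^4(B;\Z[\pi])$ is $a'\cup b'$. Regarding $c$ as a map of pairs $(U,Y_1)\to(B,Y_1)$, where $Y_1\hookrightarrow U$ comes from the inclusion $Y_1\subset X_1\subset U$, the identity $b_U(\alpha,\beta)=\langle \alpha\cup\beta,[U]\rangle$ rewrites as
\[
b_U(c^*(a'),c^*(b'))=\langle c^*(\tilde{a}'\cup b'),\,j[U]\rangle=\langle \tilde{a}'\cup b',\,c_*(j[U])\rangle,
\]
where $j\colon H_4(U)\to H_4(U,Y_1)$ is the natural map.

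The crux of the argument is the identification
\[
c_*(j[U])=(c_0)_*[X_0]-(c_1)_*[X_1]\quad\text{in }H_4(B,Y_1;\Z),
\]
which I would deduce from the pushout description of $H_4(U,Y_1)$: excision yields isomorphisms $H_4(U,X_{1-i})\cong H_4(X_i,Y_i)$, under which $j[U]$ corresponds to $[X_i]$ with signs dictated by the orientation conventions in Proposition~\ref{prop:FundamentalClassUnion}. Once this is in hand, the standard identity $\langle \alpha\cup\beta,\mu\rangle=\langle \beta,\alpha\cap\mu\rangle$ and naturality of the cap product yield
\[
b_U(c^*(a'),c^*(b'))=b_{X_0}^\partial(c_0^*\tilde{a}',c_0^*b')-b_{X_1}^\partial(c_1^*\tilde{a}',c_1^*b')=\bigl(c_0^*b_{X_0}^\partial-c_1^*b_{X_1}^\partial\bigr)(\tilde{a}',b'),
\]
which vanishes by hypothesis. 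Hermiticity of $b(c_0,c_1)$ is then immediate from hermiticity of $b_U$, and it in turn promotes independence of $a'$ to independence of $b'$.

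The main obstacle I anticipate is the sign bookkeeping in the identification $c_*(j[U])=(c_0)_*[X_0]-(c_1)_*[X_1]$, which hinges on the explicit construction of the fundamental class of a pushout of $4$-dimensional Poincar\'e pairs in Proposition~\ref{prop:FundamentalClassUnion}. Everything else is routine manipulation with the naturality of relative cup and cap products, as packaged in Proposition~\ref{prop:PullbackProperties}.
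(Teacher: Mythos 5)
Your proposal is correct and follows essentially the paper's own route: the crux identity $c_*j_*[U]=(c_0)_*[X_0]-(c_1)_*[X_1]$ is exactly the content of Proposition~\ref{prop:FundamentalClassUnion} (including the sign convention of Remark~\ref{rem:X0cup-X1}), and your decomposition of the pairing into $c_0^*b_{X_0}^\partial-c_1^*b_{X_1}^\partial$ together with the use of hermiticity of $b_U$ to handle the second variable is precisely the first item and the descent argument of Proposition~\ref{prop:PullbackProperties}, which are the two results the paper combines. The only cosmetic difference is that the paper routes through the absolute class $\xi_0\in H_4(B)$ and the pairing $b_{\xi_0}$ (identified with $c^*b_U$ in Proposition~\ref{prop:bxiUnion}), whereas you stay in $H_4(B,Y_1)$ throughout.
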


In order to make precise the idea that $b(c_0,c_1)$ serves as a secondary obstruction, we need to arrange for the vanishing of $b(c_0,c_1)$ not to depend on the choice of $c_0$ and $c_1$ within their homotopy classes over $B$.
Write $\hAut_{Y_1}(B)$ for the group of rel. $Y_1$ homotopy classes of homotopy equivalences~$B \to B$ that restrict to the identity on $Y_1$ and, given a~$3$-connected map~$c_1 \colon X_1 \to B$,  consider the group
$$ \mathcal{G}=  \{ \varphi \in \hAut_{Y_1}(B) \mid  (\varphi \circ c_1)^*b_{X_1}^\partial = c_1^*b_{X_1}^\partial \}.$$
Observe that $\mathcal{G}$ does not depend on $c_1$ within its homotopy class over $B$.
Proposition~\ref{prop:Action} proves that~$\mathcal{G}$ acts on the set~$\Herm(H^2(Y_1;\Z[\pi]))$ of hermitian forms on $H^2(Y_1;\Z[\pi])$,  and Proposition~\ref{prop:Indepc0} shows that, once considered in~$\Herm(H^2(Y_1;\Z[\pi]))/\mathcal{G}$,  the vanishing of the pairing~$b(c_0,c_1)$ does not depend on the choice of~$c_0$ (within its homotopy class over $B$) satisfying~$c_1|_{Y_1} \circ h = c_0|_{Y_0}.$

\begin{definition}
\label{def:SecondaryIntro}
Assume the maps~$c_0 \colon X_0 \to B$ and $c_1  \colon X_1 \to B$ satisfy~$c_0^*b_{X_0}^\partial =c_1^*b_{X_1}^\partial$,
The \emph{secondary obstruction} to~$(X_0,Y_0)$ and~$(X_1,Y_1)$ being homotopy equivalent rel. $h$ refers to the hermitian form
$$b(c_0,c_1) \in \Herm(H^2(Y_1;\Z[\pi]))/\mathcal{G}.$$
\end{definition}

We describe informally how we think of the hermitian form $b(c_0,c_1)$.
Given $a \in H^2(Y_1;\Z[\pi])$,  since the universal covers $\widetilde{X}_0$ and $\widetilde{X}_1$,  a loop $\gamma$ representing $\PD_{Y_1}(a) \in H_1(Y_1;\Z[\pi])$ bounds immersed discs in $\widetilde{X}_0$ and $\widetilde{X}_1$ whose union forms an immersed sphere $S(a)$ representing the Poincaré dual~$\PD_{X_0 \cup_h X_1}(c^*(a')) \in H_2(X_0 \cup_h X_1;\Z[\pi])$.
We then think of $b(c_0,c_1)(x,y)$ as $\lambda_{X_0 \cup_h X_1}(S(x), S(y))$.

\subsection{If the obstructions vanish, then the homotopy equivalence extends}

One of our main technical results shows that for large classes of fundamental groups, the primary and secondary obstructions are the only obstructions to extending~$h$ to a homotopy equivalence.
When~$\pi$ is finite, the statement involves the Whitehead $\Gamma$-group $\Gamma(\pi_2(X_1))$; see Section~\ref{sub:WhiteheadGamma} for a brief recollection of this construction.

\medbreak 

Recall that a group $\pi$ has \emph{cohomological dimension at most~$n$}, denoted~$\cd(\pi)\leq n$,  if the~$\Z[\pi]$-module~$\Z$,  endowed with the trivial $\pi$-action,  has a projective resolution of length~$n$.
The proof of the following theorem can be found in Section~\ref{sec:DefineAction}.
\begin{theorem}
\label{thm:MainTheoremBoundaryIntro}
Let~$(X_0,Y_0)$ and~$(X_1,Y_1)$ be~$4$-dimensional Poincar\'e pairs with fundamental group~$\pi_1(X_i) \cong \pi$,  Postnikov $2$-type $B:=P_2(X_1)$ and~$\pi_1(Y_i) \to \pi_1(X_i)$ surjective for $i=0,1$,  and let~$h \colon Y_0 \to Y_1$ be a degree one homotopy equivalence.
Assume that
\begin{itemize}
\item either~$\cd(\pi)\leq 3$ and the map $\ev^* \colon H^2(X_i;\Z[\pi])^{**} \to H_2(X_i;\Z[\pi])^*$ is an isomorphism,
\item or~$\pi_1(X_i)$ is finite and~$\Gamma(\pi_2(X_1)) \otimes_{\Z[\pi]} \Z$ is torsion free.
\end{itemize}
The homotopy equivalence $h$ extends to a homotopy equivalence $X_0 \to X_1$ if and only if
there exist~$3$-connected maps~$c_0 \colon X_0 \to B$ and~$c_1 \colon X_1 \to B$ such that~$c_1|_{Y_1} \circ h = c_0|_{Y_0}$ and
\begin{enumerate}
\item the pulled back pairings~$c_0^*b_{X_0}^\partial$ and~$c_1^*b_{X_1}^\partial$ on~$H^2(B;\Z[\pi])$ are equal,
\item  the hermitian pairing~$b(c_0,c_1)$ vanishes in~$\Herm(H^2(Y_1;\Z[\pi]))/\mathcal{G}$.
\end{enumerate}
Additionally,  when $b(c_0,c_1)=0 \in \Herm(H^2(Y_1;\Z[\pi]))$, the homotopy equivalence $f \colon X_0 \to X_1$ extending $h$ can be chosen to satisfy $c_1 \circ f \simeq c_0$.
\end{theorem}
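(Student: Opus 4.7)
My plan is to handle necessity directly and to establish sufficiency by obstruction theory, lifting $c_0 \colon X_0 \to B$ through $c_1 \colon X_1 \to B$ rel.\ $Y_0$. For necessity, given an extension $f \colon X_0 \to X_1$ of $h$, I would set $c_0 := c_1 \circ f$. Since $f$ is a degree one homotopy equivalence of $4$-dimensional Poincar\'e pairs, it induces an isometry on the cohomological intersection pairings, so $c_0^* b_{X_0}^\partial = c_1^* b_{X_1}^\partial$. Moreover the pushout $U = X_0 \cup_h X_1$ admits a fold map $\nabla \colon U \to X_1$ defined by $f$ on $X_0$ and the identity on $X_1$, through which $c = c_0 \cup c_1$ factors as $c_1 \circ \nabla$. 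The classes $c^*(a'), c^*(b')$ used to compute $b(c_0, c_1)$ are pulled back along $\nabla$ from $X_1$, and since $\nabla_* [U] = f_*[X_0] - [X_1] = 0$ by degree one, the projection formula forces $b(c_0, c_1) = 0$.

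For sufficiency, I would replace $c_1$ by a Hurewicz fibration with homotopy fibre~$F$. Since $c_1$ is $3$-connected and $\pi_{\geq 3}(B) = 0$, the fibre $F$ is $2$-connected with $\pi_3(F) \cong \pi_3(X_1)$. Standard obstruction theory for maps $X_0 \to X_1$ that extend~$h$ and lift $c_0$ rel.\ $Y_0$ then places the sole obstruction $\mathfrak{o}$ in $H^4(X_0, Y_0; \pi_3(X_1))$ with twisted $\Z[\pi]$-coefficients, and by Poincar\'e duality this is a coinvariant quotient of $\pi_3(X_1)$. The Whitehead exact sequence
\[
H_4(\wt{X}_1) \longrightarrow \Gamma(\pi_2(X_1)) \longrightarrow \pi_3(X_1) \longrightarrow H_3(\wt{X}_1) \longrightarrow 0
\]
then lets me split the analysis of $\mathfrak{o}$ into an $H_3$-component and a $\Gamma$-component.

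The $H_3$-component is controlled by pairings of classes dual to $H_3(\wt{X}_1)$ against cohomology classes pulled back from $B$, and is thus read off from the difference $c_0^* b_{X_0}^\partial - c_1^* b_{X_1}^\partial$; hypothesis (1) kills it. The residual lift in $\Gamma(\pi_2(X_1))_\pi$, modulo the image of $H_4(\wt{X}_1)$ and the action of~$\mathcal{G}$ arising from self-equivalences of $B$ fixing $Y_1$, is exactly the hermitian form $b(c_0, c_1) \in \Herm(H^2(Y_1; \Z[\pi]))/\mathcal{G}$, so hypothesis~(2) annihilates it, yielding $\mathfrak{o} = 0$ and producing the lift $f$. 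The ``additionally'' clause then follows because when $b(c_0, c_1)$ vanishes on the nose, no adjustment by $\mathcal{G}$ is needed and the constructed $f$ satisfies $c_1 \circ f \simeq c_0$ directly.

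The hard part will be identifying the $\Gamma$-component of $\mathfrak{o}$ with $b(c_0, c_1)$. This identification rests on Whitehead's description of $\Gamma$ as universal quadratic refinements of symmetric bilinear forms on $\pi_2$, combined with a direct computation of cup products on $U$ in terms of the input data $(c_0, c_1, h)$. This step also explains where the two case hypotheses on $\pi$ enter: when $\cd(\pi) \leq 3$, the isomorphism $\ev^* \colon H^2(X_i; \Z[\pi])^{**} \to H_2(X_i; \Z[\pi])^*$ ensures that the $\Gamma$-part of $\mathfrak{o}$ is faithfully detected by the induced hermitian pairing on $H^2(Y_1; \Z[\pi])$; when $\pi$ is finite with $\Gamma(\pi_2(X_1)) \otimes_{\Z[\pi]} \Z$ torsion free, one detects vanishing of the $\Gamma$-coinvariant rationally, sidestepping the failure of $\ev^*$ in the finite setting. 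I expect these identifications to consume the bulk of the argument, with the rest being formal obstruction theory on a $4$-dimensional Poincar\'e pair.
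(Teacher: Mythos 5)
Your necessity argument is correct and is essentially the paper's (take $c_0:=c_1\circ f$; your fold-map/projection-formula computation is the paper's observation that the discrepancy class $\xi_0=c_*[U]$ vanishes, in different clothing). The gap is in the sufficiency direction, and it is structural. Under the standing hypotheses one has $H_3(\wt X_1)\cong H_3(X_1;\Z[\pi])\cong H^1(X_1,Y_1;\Z[\pi])=0$ (and, for nonempty boundary, $H_4(\wt X_1)\cong H^0(X_1,Y_1;\Z[\pi])=0$ as well), so Whitehead's sequence gives $\pi_3(X_1)\cong\Gamma(\pi_2(X_1))$. Hence the decomposition your plan rests on --- an ``$H_3$-component'' of the obstruction killed by hypothesis (1) and a ``$\Gamma$-component'' killed by hypothesis (2) --- does not exist: there is nothing for (1) to kill in your scheme, so the proposed mechanism mislocates where (1) enters. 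In the actual argument the obstruction is a single class: its image in $H_4(B,X_1)$ is $(c_0)_*[X_0]$ (which also settles the dependence on the chosen lift over the $3$-skeleton, a point your sketch does not address), it dies once $(c_0)_*[X_0]=(c_1)_*[X_1]$ in $H_4(B,Y_1)$, and the discrepancy lifts uniquely to $\xi_0\in H_4(B)$, detected by the single hermitian form $b_{\xi_0}=c^*b_{X_0\cup_h X_1}$ on $H^2(B;\Z[\pi])$. Hypothesis (1) is precisely the vanishing of this form when one argument lies in $\im\bigl(H^2(B,Y_1;\Z[\pi])\to H^2(B;\Z[\pi])\bigr)$ (and is needed even to define $b(c_0,c_1)$); hypothesis (2), after using the $\mathcal{G}$-action to replace $c_0$ by $\varphi\circ c_0$ with $b(\varphi\circ c_0,c_1)=0$ identically, gives the vanishing on the quotient $H^2(B;\Z[\pi])/\im(j^*)\cong H^2(Y_1;\Z[\pi])$. (Also note $\mathcal{G}$ is not an indeterminacy of the obstruction for fixed $c_0$; it is used to change $c_0$.)

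Second, the step you defer is the real content, and it is not a consequence of $\ev^*$ being an isomorphism: one needs injectivity of $\Z\otimes_{\Z[\pi]}\Gamma(\pi_2(X_1))\to\Herm(\pi_2(X_1)^*)$, which is Hillman's theorem and requires $\pi_2(X_1)$ to be projective --- this is exactly where $\cd(\pi)\le 3$ is used, via the paper's characterisation of projectivity of $\pi_2$ --- respectively the criterion that $\Gamma(\pi_2(X_1))\otimes_{\Z[\pi]}\Z$ be torsion free in the finite case. The hypothesis on $\ev^*$ enters separately and only to make $\Herm(H_2(B;\Z[\pi])^*)\to\Herm(H^2(B;\Z[\pi]))$ injective. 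Your sketch attributes the faithful detection to $\ev^*$ alone (resp.\ to ``rational detection'' when $\pi$ is finite) and leaves these inputs, together with the concrete identification of the obstruction with the form built from $(c_0,c_1,h)$, unproved; as it stands the plan does not yet constitute a proof of sufficiency.
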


As we describe in Section~\ref{sec:cd3}, the conditions in Theorem~\ref{thm:MainTheoremBoundaryIntro} hold for the trivial group, the infinite cyclic group, surface groups, Baumslag--Solitar groups, for torsion-free $3$-manifold groups,  as well as for classes of finite groups,  including finite cyclic groups.

For large classes of groups,
Theorem~\ref{thm:MainTheoremBoundaryIntro} provides a solution to the problem of deciding whether a homotopy equivalence~$h \colon Y_0 \to Y_1$ extends to a homotopy equivalence~$X_0 \to X_1$.
Since practical applications typically require statements that do not involve the $2$-type,  the next sections describe settings where Theorem~\ref{thm:MainTheoremBoundaryIntro} can be formulated without appealing to $c_0,c_1$ and $B$.

\subsection{The primary obstruction and compatible triples}
\label{sub:Without2TypeIntro}

Theorem~\ref{thm:RealiseAlgebraic3Type} shows that if there is an isomorphism $u \colon \pi_1(X_0) \cong \pi_1(X_1)$ with $u \circ \iota_0=\iota_1 \circ h_*$ and an isomorphism~$F \colon \pi_2(X_0) \cong \pi_2(X_1)$  with~$h^* (k_{X_1,Y_1})=F_*(k_{X_0,Y_0})$, then there are~$3$-connected maps~$c_0 \colon X_0 \to B$ and $c_1 \colon X_1 \to B$ that satisfy~$c_0|_{Y_0} =  c_1|_{Y_1} \circ h$.
The next result, in which the homotopy equivalence $h$ is permitted to have degree $\pm 1$,
shows that the vanishing of the primary obstruction is equivalent to the existence of a compatible triple.

\begin{theorem}
\label{thm:PrimaryObstructionRecastIntro}
Assume that~$H^2(\pi;\Z[\pi]) \to H^2(Y_1;\Z[\pi])$ is injective and $H_1(Y_1;\Z[\pi]) \cong L \oplus T$, with $L$ a free $\Z[\pi]$-module and $T^*=0$.
The following assertions are equivalent:
\begin{enumerate}
\item there is an isomorphism $u \colon \pi_1(X_0) \cong \pi_1(X_1)$ with $u \circ \iota_0=\iota_1 \circ h_*$ and a compatible triple~$(F,G,h)$ with $h^*(k_{X_1,Y_1})=F_*(k_{X_0,Y_0})$;
\item there are $3$-connected maps~$c_0 \colon X_0 \to B,c_1 \colon X_1 \to B$ such that~$c_0|_{Y_0} =  c_1|_{Y_1} \circ h$ with
$$c_0^*b_{X_0}^\partial =c_1^*b_{X_1}^\partial \quad \text{ and } \quad (c_1)_*^{-1}(c_0)_*(k_{X_0,Y_0})=h^*(k_{X_1,Y_1}).$$
\end{enumerate}
Given $(F,G,h)$ as in~$(1)$,  the maps~$c_0$ and~$c_1$ can be chosen to satisfy~$(c_1)_*^{-1}(c_0)_*=F$ as well as~$(c_1)_*^{-1}(c_0)_*=G$.
Conversely, given $c_0$ and $c_1$ as in~$(2)$,  the maps~$F$ and~$G$ are defined as~$F:=(c_1)_*^{-1}(c_0)_*$ and~$G:=(c_1)_*^{-1}(c_0)_*$.
\end{theorem}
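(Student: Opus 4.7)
The plan is to translate between the algebraic data $(F,G,h)$ and the geometric data of $3$-connected maps $c_0,c_1$ into the $2$-type $B$, using that a $3$-connected map induces isomorphisms on $\Z[\pi]$-cohomology in low degrees.

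First, I would record that for each $i$, since $c_i\colon X_i\to B$ is $3$-connected, the map $c_i^*$ is an isomorphism on $H^k(-;\Z[\pi])$ for $k\leq 2$. A five-lemma argument in the long exact sequences of $(X_i,Y_i)$ and $(B,Y_1)$, using that the boundary restrictions (the identity or $h$) induce isomorphisms on $\Z[\pi]$-cohomology, upgrades this to an isomorphism $c_i^*\colon H^2(B,Y_1;\Z[\pi])\xrightarrow{\cong} H^2(X_i,Y_i;\Z[\pi])$. Dualising via Poincar\'e duality, $(c_i)_*$ is an isomorphism on both $H_2(-;\Z[\pi])$ and $H_2(-,Y_i;\Z[\pi])$, and the pulled-back pairing $c_i^* b_{X_i}^\partial$ corresponds, under these isomorphisms, to the intersection form $b_{X_i}^\partial$ itself.

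For the implication $(2)\Rightarrow(1)$, given $c_0,c_1$ as in~$(2)$ I define $u$, $F$, and $G$ by the stated formulas; these are isomorphisms by the preceding paragraph. Naturality of the long exact sequence of the pair with respect to $c_0$ and $c_1$, together with $c_0|_{Y_0}=c_1|_{Y_1}\circ h$, produces the commutative diagram required for a compatible triple. The equality $c_0^* b_{X_0}^\partial = c_1^* b_{X_1}^\partial$ translates, via the isomorphisms $(c_i)_*$ and Poincar\'e duality, to the intertwining condition $\lambda_{X_1}^\partial(F(-),G(-))=\lambda_{X_0}^\partial(-,-)$. The $k$-invariant equality is then a direct consequence of the definition of $F$ and the $k$-invariant hypothesis in~$(2)$.

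For $(1)\Rightarrow(2)$, I would invoke Theorem~\ref{thm:RealiseAlgebraic3Type}, whose hypotheses are exactly the existence of $u$ and the matching $F_*(k_{X_0,Y_0})=h^*(k_{X_1,Y_1})$, to produce $3$-connected maps $c_0,c_1$ with $c_0|_{Y_0}=c_1|_{Y_1}\circ h$ and $(c_1)_*^{-1}(c_0)_* = F$ on $\pi_2\cong H_2(-;\Z[\pi])$. The pulled-back pairing equality is then the reverse of the translation above, and the displayed $k$-invariant condition on $c_0,c_1$ is immediate. The main obstacle is the simultaneous realisation of $G$, since Theorem~\ref{thm:RealiseAlgebraic3Type} directly controls only the $\pi_2$-level isomorphism. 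My expectation is that the hypotheses $H^2(\pi;\Z[\pi])\hookrightarrow H^2(Y_1;\Z[\pi])$ and $H_1(Y_1;\Z[\pi])\cong L\oplus T$ with $T^*=0$ are precisely what pin $G$ down: combined with the compatibility diagram and the intertwining condition, they force $G$ to be the unique isomorphism $H_2(X_0,Y_0;\Z[\pi])\to H_2(X_1,Y_1;\Z[\pi])$ compatible with $F$ and $h_*$, so that any $c_0,c_1$ realising $F$ automatically induces the prescribed $G$ as $(c_1)_*^{-1}(c_0)_*$.
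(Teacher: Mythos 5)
Your $(2)\Rightarrow(1)$ direction is fine and matches the paper's Proposition~\ref{prop:3ConnImpliesCompatible}: naturality of the long exact sequence gives the ladder, and Proposition~\ref{prop:RelativeFormsCohomologyAreTheSame} translates $c_0^*b_{X_0}^\partial=c_1^*b_{X_1}^\partial$ into the intertwining condition. The problem is the last step of your $(1)\Rightarrow(2)$ direction. Theorem~\ref{thm:RealiseAlgebraic3Type} (plus homotopy extension) produces a map $c$ with $c|_{Y_0}=c_1|_{Y_1}\circ h$ and $(c_1)_*^{-1}c_*=F$ on \emph{absolute} $H_2$, but it gives no control whatsoever on the induced map $G':=(c_1)_*^{-1}c_*$ on $H_2(X_0,Y_0;\Z[\pi])$ beyond the fact that it fits in the homology ladder with $F$ and $h_*$. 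Your claim that the hypotheses force $G'$ to coincide with the prescribed $G$ confuses two things: under the injectivity of $H^2(\pi;\Z[\pi])\to H^2(Y_1;\Z[\pi])$ one can indeed show $G$ is the \emph{unique} map fitting the ladder \emph{and} intertwining $\lambda^\partial_{X_0},\lambda^\partial_{X_1}$, but the realized $G'$ is not known to intertwine — that is exactly the statement $c^*b_{X_0}^\partial=c_1^*b_{X_1}^\partial$ you are trying to prove, so the argument is circular. Concretely, two maps fitting the same ladder with the same $F,h_*$ differ by an arbitrary homomorphism factoring through $\partial_0\colon H_2(X_0,Y_0;\Z[\pi])\twoheadrightarrow H_1(Y_0;\Z[\pi])$ into $\im((j_1)_*)\cong\coker((i_1)_*)$, and such maps are plentiful when $H_1(Y_0;\Z[\pi])$ has a free summand; equivalently, the discrepancy $\lambda^\partial_{X_1}(F(x),G'(y))-\lambda^\partial_{X_0}(x,y)$ vanishes on $\im((j_0)_*)$ but not automatically on all of $H_2(X_0,Y_0;\Z[\pi])$.

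This is precisely where the paper does its real work (Proposition~\ref{prop:Simplify}): it constructs a correcting homomorphism $\psi=\psi'\circ\partial_0\colon H_2(X_0,Y_0;\Z[\pi])\to H_2(X_1;\Z[\pi])$ measuring $G-(c_1)_*^{-1}c_*$, where the hypotheses you cite ($L$ free, $T^*=0$, and injectivity of $H^2(\pi;\Z[\pi])\to H^2(Y_1;\Z[\pi])$) are used to show $\Hom(T,\coker((i_1)_*))=0$ and to lift $\psi''\colon H_1(Y_0;\Z[\pi])\to\coker((i_1)_*)$ to $H_2(X_1;\Z[\pi])$ — i.e.\ they guarantee the \emph{existence} of the correction, not merely uniqueness of $G$. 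One then needs a genuinely geometric step, the obstruction-theoretic modification of Proposition~\ref{prop:ObstructionTheory}, producing $c_0\simeq c$ rel.\ $Y_0$ whose induced map on relative $H_2$ differs from that of $c$ by $(j_B)_*\circ(c_1)_*\circ\psi$ while the absolute-level map (hence $F$) is unchanged (since $\psi\circ (j_0)_*=0$). Without this correction-and-realisation step your proof of $(1)\Rightarrow(2)$, and hence of the realisation clause for $G$, does not go through.
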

\begin{proof}
The~$(1) \Rightarrow (2)$ direction is Proposition~\ref{prop:Simplify}.
The~$(2) \Rightarrow (1)$ direction is Proposition~\ref{prop:3ConnImpliesCompatible}.
\end{proof}

\begin{remark}
Proposition~\ref{prop:TorsionImpliesH2mapInj} shows~$H^2(\pi;\Z[\pi]) \to H^2(Y_1;\Z[\pi])$ is injective if~$H_1(Y_1;\Z[\pi])^*=~0.$
\end{remark}

We record settings in which compatible triples can be replaced by compatible pairs.
The proof of the following result, in which one need not assume that $h$ has degree one,  can be found in Section~\ref{sub:CompatibleTriple}.

\begin{proposition}
\label{prop:CompatiblePair}
If $H^k(\pi;\Z[\pi])=0$ for $k=2,3$, then the existence of a compatible pair ensures the existence of a compatible triple.
\end{proposition}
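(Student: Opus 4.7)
The plan is to leverage the hypotheses $H^k(\pi;\Z[\pi]) = 0$ for $k = 2, 3$ to establish a natural isomorphism
$$\alpha_i \colon H_2(X_i, \partial X_i;\Z[\pi]) \xrightarrow{\cong} H_2(X_i;\Z[\pi])^*,$$
so that, given a compatible pair $(F,h)$, defining $G := \alpha_1^{-1} \circ (F^*)^{-1} \circ \alpha_0$ produces a compatible triple $(F, G, h)$. In other words, the proof is essentially about making explicit the identification that the compatible pair formulation tacitly invokes.

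To establish the identification, I would first note that since the universal cover $\widetilde{X}_i$ is simply-connected, the Hurewicz theorem gives $H_1(X_i;\Z[\pi]) \cong H_1(\widetilde{X}_i;\Z) = 0$. Then I would apply the universal coefficient spectral sequence $E_2^{p,q} = \Ext^p_{\Z[\pi]}(H_q(X_i;\Z[\pi]), \Z[\pi]) \Rightarrow H^{p+q}(X_i;\Z[\pi])$. On the $p + q = 2$ line the only a priori nonzero entry is $E_2^{0,2} = H_2(X_i;\Z[\pi])^*$, since $E_2^{1,1} = \Ext^1(H_1, \Z[\pi]) = 0$ and $E_2^{2,0} = \Ext^2_{\Z[\pi]}(\Z, \Z[\pi]) = H^2(\pi;\Z[\pi]) = 0$ by hypothesis. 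The $d_2$ differential out of $E_2^{0,2}$ lands in $\Ext^2(H_1,\Z[\pi]) = 0$, while the $d_3$ differential lands in a subquotient of $\Ext^3_{\Z[\pi]}(\Z, \Z[\pi]) = H^3(\pi;\Z[\pi]) = 0$; higher differentials and incoming differentials vanish for dimension reasons. Hence $H^2(X_i;\Z[\pi]) \cong H_2(X_i;\Z[\pi])^*$. Composing with the Poincaré--Lefschetz duality isomorphism $H^2(X_i;\Z[\pi]) \cong H_2(X_i, \partial X_i;\Z[\pi])$ yields $\alpha_i$.

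With $G$ defined as above, I would verify that $(F, G, h)$ satisfies the definition of a compatible triple. The middle and outer squares in the compatible-triple diagram coincide, under the $\alpha_i$, with those of the compatible-pair diagram from Remark~\ref{rem:CompatiblePairTripleIntro}, so commutativity follows from the hypothesis that $(F, h)$ is a compatible pair. The intertwining condition $\lambda_{X_1}^\partial(F(x), G(y)) = \lambda_{X_0}^\partial(x, y)$ is automatic, as already observed in the introduction: under $\alpha_i$, the pairing $\lambda_{X_i}^\partial$ corresponds to the canonical evaluation $H_2(X_i;\Z[\pi]) \times H_2(X_i;\Z[\pi])^* \to \Z[\pi]$, which is tautologically intertwined by the pair $(F, (F^*)^{-1})$.

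The main obstacle is the spectral sequence step: the vanishing of the $d_3$ differential $E_3^{0,2} \to E_3^{3,0}$ is precisely where the hypothesis $H^3(\pi;\Z[\pi]) = 0$ enters. A secondary concern is naturality: to conclude commutativity of the outer squares of the compatible-triple diagram one must use that both Poincaré--Lefschetz duality and the UCSS are natural with respect to the inclusion $\partial X_i \hookrightarrow X_i$ and the connecting homomorphism of the long exact sequence of the pair, so that $\alpha_i$ identifies the boundary map $H_2(X_i, \partial X_i;\Z[\pi]) \to H_1(\partial X_i;\Z[\pi])$ with the map induced by cap/duality at the level of $H_2(X_i;\Z[\pi])^*$ in the same way on both sides.
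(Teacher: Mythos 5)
Your proof is correct and follows essentially the same route as the paper: the vanishing of $H^2(\pi;\Z[\pi])$ and $H^3(\pi;\Z[\pi])$ shows (via the UCSS) that $\ev\colon H^2(X_i;\Z[\pi])\to H_2(X_i;\Z[\pi])^*$ is an isomorphism, so your $\alpha_i=\ev\circ\PD_{X_i}^{-1}$ is the paper's $\widehat{\lambda}_{X_i}^\partial$ and your $G=\alpha_1^{-1}\circ (F^*)^{-1}\circ\alpha_0$ is exactly the map $F_!$ of Lemma~\ref{lem:DeterminedByhermitian}. The verification that the intertwining is then automatic matches the paper's argument as well.
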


We also record settings in which the condition on the relative $k$-invariants can be omitted.

\begin{proposition}
\label{prop:NokinvariantIntro}
Let $F \colon \pi_2(X_0) \cong \pi_2(X_1)$ be an isomorphism.
Assume that
\begin{itemize}
\item either $\cd(\pi)\leq 2$ and there is a compatible triple~$(F,G,h)$,
\item or
$H_4(\pi)=0$ and~$H_1(Y_0;\Z[\pi])^*=0$, 
\end{itemize}
then~$F_*(k_{X_0,Y_0})=h^*(k_{X_1,Y_1})$.
\end{proposition}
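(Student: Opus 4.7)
The plan is to compare the two classes $F_*(k_{X_0,Y_0})$ and $h^*(k_{X_1,Y_1})$ inside the single group $H^3(B\pi,Y_0;\pi_2(X_1))$, obtained by change-of-coefficients along $F$ and by pullback along $h$. In both cases the central tool will be the long exact sequence of the pair $(B\pi,Y_0)$:
$$
H^2(B\pi;\pi_2(X_1))\xrightarrow{\iota^*} H^2(Y_0;\pi_2(X_1))\xrightarrow{\delta} H^3(B\pi,Y_0;\pi_2(X_1))\xrightarrow{j^*} H^3(B\pi;\pi_2(X_1)).
$$

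In the first case, $\cd(\pi)\le 2$ forces $H^3(B\pi;\pi_2(X_1))=0$. The absolute $k$-invariants $k_{X_i}\in H^3(B\pi;\pi_2(X_i))$ therefore vanish, sections $s_i\colon B\pi\to P_2(X_i)$ of the Postnikov fibrations exist, and each relative $k$-invariant admits an explicit lift $\xi_i\in H^2(Y_i;\pi_2(X_i))$ with $\delta(\xi_i)=k_{X_i,Y_i}$, measuring the failure of the partial section $Y_i\to X_i\to P_2(X_i)$ to agree with $s_i|_{Y_i}$. The description of $\xi_i$ developed in Section~\ref{sec:Relativek} identifies it, modulo the indeterminacy $\iota^*H^2(B\pi;\pi_2(X_i))$, with the inclusion-induced homomorphism $H_2(Y_i;\Z[\pi])\to H_2(X_i;\Z[\pi])=\pi_2(X_i)$. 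The first square of the compatible triple $(F,G,h)$ is exactly the statement $F\circ(\iota_0)_*=(\iota_1)_*\circ h_*$, so $F_*\xi_0$ and $h^*\xi_1$ agree on this evaluation. Their difference therefore lies in $\im\iota^*=\ker\delta$, yielding $F_*k_{X_0,Y_0}=h^*k_{X_1,Y_1}$.

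In the second case no section of $P_2(X_i)\to B\pi$ is available, so the same recipe does not apply, and I plan to study the class $\Delta:=F_*k_{X_0,Y_0}-h^*k_{X_1,Y_1}$ directly. The hypothesis $H_1(Y_0;\Z[\pi])^*=0$ drives a universal-coefficient/Ext computation: since the Hom part of $H^2(Y_0;\pi_2(X_1))$ controlled by $H_1(Y_0;\Z[\pi])^*$ vanishes, one obtains enough rigidity on any lift of $\Delta$ through $\delta$. The hypothesis $H_4(\pi)=0$ is used to dispose of $j^*(\Delta)=F_*k_{X_0}-k_{X_1}\in H^3(B\pi;\pi_2(X_1))$: I plan to exhibit this absolute discrepancy via Poincar\'e duality on the closed $4$-dimensional Poincar\'e complex $U=X_0\cup_h X_1$, whose fundamental group is $\pi$ by Proposition~\ref{prop:pi1HomotopyPushout}, and the vanishing of $H_4(\pi)$ collapses the resulting obstruction. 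Combining these two reductions forces $\Delta=0$.

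The main obstacle will be Case~2. Case~1 is essentially a diagram chase once the content of Section~\ref{sec:Relativek} is in hand: the relative $k$-invariant is captured by its evaluation on $H_2(Y_i;\Z[\pi])$, and the compatible triple is designed to intertwine this datum. The delicate step in Case~2 is identifying the right chain-level bridge between the absolute $k$-invariant $k_{X_i}$ and global homological data on $U$, and then showing that $H_4(\pi)=0$ together with the Ext-vanishing coming from $H_1(Y_0;\Z[\pi])^*=0$ makes this link strict enough to conclude $F_*k_{X_0}=k_{X_1}$ from the mere isomorphism $F$ on $\pi_2$. This is where a substitute for the splitting $P_2(X_i)\simeq B\pi\times K(\pi_2(X_i),2)$ enjoyed in Case~1 has to be engineered.
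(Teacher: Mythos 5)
Your Case~1 argument has a genuine gap at the final step. After lifting the relative $k$-invariants through $\delta$ to classes $\xi_i\in H^2(Y_i;\pi_2(X_i))$, you show (granting your description of the lifts) that $F_*\xi_0$ and $h^*\xi_1$ have the same image under $\ev\colon H^2(Y_0;\pi_2(X_1))\to\Hom_{\Z[\pi]}(H_2(Y_0;\Z[\pi]),\pi_2(X_1))$, and you then conclude that their difference lies in the image of $H^2(B\pi;\pi_2(X_1))$, i.e.\ in $\ker\delta$. But equal evaluations only place the difference in $\ker(\ev)$, and by the universal coefficient spectral sequence $\ker(\ev)$ contains, besides the bottom filtration piece coming from $H^2(\pi;\pi_2(X_1))$, a contribution from $\Ext^1_{\Z[\pi]}(H_1(Y_0;\Z[\pi]),\pi_2(X_1))$, which the hypothesis $\cd(\pi)\leq 2$ does not control (already for $\pi=\Z$ with $H_1(Y_0;\Z[\Z])$ a torsion module this $\Ext^1$ is typically nonzero). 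Such an element can have nonzero image under $\delta$, so your chase does not exclude $F_*k_{X_0,Y_0}\neq h^*k_{X_1,Y_1}$. It is also telling that you only use the square $F\circ(\iota_0)_*=(\iota_1)_*\circ h_*$ on $H_2$: the paper's proof of this case (Proposition~\ref{prop:NokinvariantIntrocd2}) needs the full compatible triple, converting the compatibility of $G$ with Poincar\'e duality and with $h$ into the identities $F^*\circ\ev\circ G^!=\ev$ and $i_0^*=h^*\circ i_1^*\circ G^!$ with $G^!=\PD_{X_1}^{-1}\circ G\circ\PD_{X_0}$, using $\cd(\pi)\leq 2$ only to make $\ev\colon H^2(X_1;H_2(X_1;\Z[\pi]))\to\Hom_{\Z[\pi]}(H_2(X_1;\Z[\pi]),H_2(X_1;\Z[\pi]))$ surjective, and then chasing $\id$ through the diagram built from $\varphi\mapsto\varphi_*(k_{X,X})$ of Lemma~\ref{lem:Xi}. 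Note also that Section~\ref{sec:Relativek} does not actually supply the identification of $\xi_i$ with the inclusion-induced homomorphism that you invoke; the paper deliberately black-boxes $k_{X,Y}$ and works only with the three recorded properties.

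Case~2 is, by your own account, a plan rather than a proof: the ``chain-level bridge'' between the absolute $k$-invariants and homological data on $U=X_0\cup_h X_1$ is exactly what is left unengineered, and the two reductions you propose do not obviously work. First, $H_4(\pi)=0$ does not give $H^3(\pi;\pi_2(X_1))=0$; the dimension-shifting in the paper identifies the \emph{homology} group $H_1(\pi;\pi_2(B))$ with $H_4(\pi)$, and this is used to kill $H^3(X_0,Y_0;\pi_2(B))\cong H_1(X_0;\pi_2(B))$, not the group cohomology in which your $j^*(\Delta)$ lives. Second, $H_1(Y_0;\Z[\pi])^*=0$ alone does not kill the relevant $\Hom$ or $\Ext^1$ terms into $\pi_2(X_1)$ without projectivity of $\pi_2$, which would require $\cd(\pi)\leq 3$ and is not assumed here. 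The paper's route (Proposition~\ref{prop:NokinvariantIntrocd3}) is different: it uses $H_4(\pi)=0$ to build a map $c\colon X_0\to B$ with $c|_{Y_0}=c_1|_{Y_1}\circ h$, deduces $(c_1)_*^{-1}c_*(k_{X_0,Y_0})=h^*(k_{X_1,Y_1})$ from Theorem~\ref{thm:RealiseAlgebraic3Type}, and then uses $H_1(Y_0;\Z[\pi])^*=0$ to produce the short exact sequence $0\to H_2(X_i;\Z[\pi])\to H_2(X_i,Y_i;\Z[\pi])\to H_1(Y_i;\Z[\pi])\to 0$ and a coefficient long exact sequence argument showing that replacing $(c_1)_*^{-1}c_*$ by the arbitrary isomorphism $F$ does not change $F_*(k_{X_0,Y_0})$, because $k_{X_0,Y_0}$ lifts to coefficients $H_1(Y_0;\Z[\pi])$. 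You would need to supply an argument of comparable substance before Case~2 can be considered proved.
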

\begin{proof}
The first item is Proposition~\ref{prop:NokinvariantIntrocd2}, the second is Proposition~\ref{prop:NokinvariantIntrocd3}.
\end{proof}

\subsection{The vanishing of the secondary obstruction I: odd forms. }

We now turn to the vanishing of the secondary obstruction.
We briefly recall the notation from Notation~\ref{not:Intro} and additionally fix $3$-connected maps for which the primary obstruction vanishes.

\begin{notation}
\label{not:IntroAgainAgain}
Let~$(X_0,Y_0)$ and~$(X_1,Y_1)$ be~$4$-dimensional Poincar\'e pairs with~$\pi_1(X_j) \cong \pi$,
$2$-type $B:=P_2(X_1)$ and~$\iota_j \colon \pi_1(Y_j) \to \pi_1(X_j)$ surjective for~$j=0,1$.
Fix a degree one homotopy equivalence~$h \colon Y_0 \to Y_1$
and assume there are $3$-connected maps~$c_0 \colon X_0 \to B,c_1 \colon X_1 \to B$ such that~$c_0|_{Y_0} =  c_1|_{Y_1} \circ h$ with~$c_0^*b_{X_0}^\partial =c_1^*b_{X_1}^\partial$.
Note that this implies~$\lambda_{X_0} \cong \lambda_{X_1}$.
\end{notation}

We also fix terminology concerning hypotheses that will often be assumed.
\begin{definition}
\label{def:HypothesesName}
We say that~\emph{$\ev^*$ is an isomorphism} if the following is an isomorphism for~$i=0,1$:
$$\ev^* \colon H^2(X_i;\Z[\pi])^{**} \to H_2(X_i;\Z[\pi])^*.$$
\end{definition}

Proposition~\ref{prop:ev*Iso} shows that this condition holds if $\pi$ is a free product of a free group and~$\PD_3$ groups (e.g. if $\pi$ is trivial,  free,  or, more generally, a torsion-free $3$-manifold group),  or if~$\pi=\pi_1(\Sigma)$ is the fundamental group of an orientable surface and~$\pi_1(\Sigma \times S^1) \to \pi$ is the projection onto the first coordinate, or if $\pi$ is a solvable Baumslag--Solitar group.

When $\lambda_{X_0} \cong \lambda_{X_1}$ is odd, the following result (which is proved in Section~\ref{sec:Odd}) shows that the secondary obstruction vanishes under some additional hypotheses.

\begin{theorem}
\label{thm:VanishOddIntro}
Assume that $\cd(\pi) \leq 3$.
If $\ev^*$ is an isomorphism,~$H^2(\pi;\Z[\pi]) \to H^2(Y_1;\Z[\pi])$ is injective,
$\lambda_{X_1}$ is odd,  and~$H_1(Y_1;\Z[\pi]) \cong L\oplus T$ with $L$ free and~$T^*=0$, then
$$\Herm(H^2(Y_1;\Z[\pi]))/\mathcal{G}=0.$$
In particular,  the secondary obstruction vanishes:
$$b(c_0,c_1)=0 \in \Herm(H^2(Y_1;\Z[\pi]))/\mathcal{G}.$$
\end{theorem}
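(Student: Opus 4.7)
Since every $\varphi \in \hAut_{Y_1}(B)$ restricts to the identity on $Y_1$, its pullback on $H^2(Y_1;\Z[\pi])$ is the identity, and the $\mathcal{G}$-action of Proposition~\ref{prop:Action} acts by translation. The statement thus reduces to surjectivity of the associated difference map $\delta \colon \mathcal{G} \to \Herm(H^2(Y_1;\Z[\pi]))$.

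My plan is to parametrise enough of $\mathcal{G}$ via obstruction theory and then to use the odd hypothesis to exhibit sufficiently many elements in the image of $\delta$. Since $B = P_2(X_1)$ has vanishing homotopy above dimension~$2$, rel.~$Y_1$ self-equivalences of $B$ that induce the identity on $\pi_1$ and $\pi_2$ are classified by a quotient of $H^2(B,Y_1;\pi_2(X_1))$; combining the long exact sequence of the pair $(B,Y_1)$ with $\cd(\pi)\leq 3$ and the injectivity of $H^2(\pi;\Z[\pi])\to H^2(Y_1;\Z[\pi])$ should produce a workable description of this group in terms of $H^2(Y_1;\pi_2(X_1))$. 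Next, I would compute $\delta$ explicitly on this parametrisation: for $\varphi$ coming from a class $x$, the variation $(\varphi \circ c_1)^*b_{X_1}^\partial - c_1^*b_{X_1}^\partial$ decomposes as a first-order-in-$x$ piece (whose vanishing is precisely the $\mathcal{G}$-condition) and a quadratic-in-$x$ piece that evaluates to $\delta(\varphi)$, expressible through $b_{X_1}$ applied to lifts of boundary data in $H^2(X_1;\Z[\pi])$.

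The oddness of $\lambda_{X_1}$ enters as follows. Fix $\alpha_0 \in H^2(X_1;\Z[\pi])$ with $b_{X_1}(\alpha_0,\alpha_0)$ not weakly even. Pairing $\alpha_0$ with suitable $x$'s, the resulting $\delta(\varphi_x)$ has diagonal entries ranging over \emph{arbitrary} elements of $\Z[\pi]$, not just weakly even ones. The assumption that $\ev^*$ is an isomorphism supplies the duality needed to surject from such cohomology classes onto hermitian forms over $\Z[\pi]$, and the splitting $H_1(Y_1;\Z[\pi]) \cong L\oplus T$ with $T^*=0$ reduces the analysis to a free summand on which, by Lemma~\ref{lem:WeaklyEvenEven}, even and weakly even forms coincide. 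Consequently the only additional input needed beyond the (weakly) even forms already realised is the odd diagonal contribution supplied by $\alpha_0$.

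The principal obstacle will be verifying surjectivity in this final step: producing a preimage under $\delta$ for an \emph{arbitrary} target $\mu \in \Herm(H^2(Y_1;\Z[\pi]))$ requires solving a linear-algebra problem over the group ring whose solvability depends on the combined strength of $\cd(\pi)\leq 3$, $\ev^*$ being an isomorphism, and oddness. Without oddness the image of $\delta$ would be confined to weakly even forms, matching the dichotomy between the ``odd'' and ``even'' cases treated separately in the paper.
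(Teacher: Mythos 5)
Your reduction is fine: since the action of Proposition~\ref{prop:Action} is by translation, triviality of $\Herm(H^2(Y_1;\Z[\pi]))/\mathcal{G}$ is equivalent to surjectivity of $\varphi \mapsto b(\varphi)$, and you correctly sense the dichotomy (without oddness the image stays weakly even, cf.\ Theorem~\ref{thm:obstruction0ImpliesEvenIntro}). But what you have written is a plan whose two essential steps are exactly the ones left open, and they are the entire content of the result. First, you assume the (weakly) even forms are ``already realised'' by $\mathcal{G}$ without any argument; in the paper this is Theorem~\ref{thm:GoalActionpdf}, whose proof occupies Sections~\ref{sec:KunnethYS1}--\ref{sec:ProofGoal}: one builds homotopies $\mathbf{H}\colon Y_1\times S^1\to B$ from $c_1|_{Y_1}$ to itself, computes their effect via twisted K\"unneth/cup-product formulas, characterises when $\Xi(\mathbf{H})$ lands in $\mathcal{G}$ (the factorisation $\phi'_{\mathbf{H}}=\phi\circ c_1|_{Y_1}^*$ of Proposition~\ref{prop:H2H1}), and realises an arbitrary sesquilinear $q$ using $\cd(\pi)\leq 3$ and the $\ev^*$ hypothesis. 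None of this is supplied or replaced by your sketch.

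Second, your proposed mechanism for the odd contribution --- parametrising rel.~$Y_1$ self-equivalences of $B$ by (a quotient of) $H^2(B,Y_1;\pi_2(X_1))$ and showing a ``quadratic-in-$x$'' piece hits arbitrary diagonal entries --- is precisely the linear-algebra problem you flag as unresolved, and there is no argument that the first-order constraint (membership in $\mathcal{G}$) can be solved simultaneously with prescribing the quadratic piece. The paper avoids this head-on construction of self-maps of $B$ altogether: it works at the level of classifying maps of $X_1$, modifying the compatible triple by $j_*\circ\phi\circ\partial$ for a homomorphism $\phi\colon H_1(Y_1;\Z[\pi])\to H_2(X_1;\Z[\pi])$ chosen (using oddness of $\lambda_{X_1}$ and the splitting $L\oplus T$) so that $\widehat{q_\phi}$ is any prescribed element of $I^1(Y_1;\Z[\pi])$; Proposition~\ref{prop:Simplify} (where the injectivity of $H^2(\pi;\Z[\pi])\to H^2(Y_1;\Z[\pi])$ enters) realises this new triple by a $3$-connected map $c_1'$, Theorem~\ref{thm:OddChange} computes the change of $\widehat{b}$, and only then does transitivity (Proposition~\ref{prop:Transitive}) together with Proposition~\ref{prop:Indepc0} convert $c_1'$ into an element $\varphi\in\mathcal{G}$ with $\widehat{b}(\varphi)$ the prescribed class; Proposition~\ref{prop:Herm/G} then closes the loop back to $\Herm/\mathcal{G}$. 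So while your outline points in the right direction, it does not constitute a proof, and the route you suggest for producing the needed elements of $\mathcal{G}$ is not the one that is known to work.
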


\begin{remark}
The prevalence of the condition $\cd(\pi) \leq 3$ in our results is explained by Proposition~\ref{prop:StablyFreePD<4}: $\cd(\pi_1(X_i)) \leq 3$ is equivalent to requiring that $\pi_2(X_i)$ be $\Z[\pi_1(X_i)]$-projective.
\end{remark}

\subsection{The vanishing of the secondary obstruction II: (weakly) even forms. }

Continuing with Notation~\ref{not:IntroAgainAgain},  the next result (which is proved in Section~\ref{sec:WeaklyEven}) shows that contrarily to the odd case,  when~$\lambda_{X_0} \cong \lambda_{X_1}$ is even,  an additional sufficient condition is needed for the secondary obstruction to vanish.

\begin{theorem}
\label{thm:VanishSpinIntro}
Assume that $\cd(\pi) \leq 3$.
If $\ev^*$ is an isomorphism,~$\lambda_{X_0 \cup_h X_1}$ is weakly even,  and~$H_1(Y;\Z[\pi])\cong L \oplus T$ with~$L$ free and $T^*=0$,
then
$$b(c_0,c_1)=0 \in \Herm(H^2(Y_1;\Z[\pi]))/\mathcal{G}.$$
In fact, there is a $c_0' \simeq c_0$ with $c_1|_{Y_1} \circ h=c_0'|_{Y_0}$ such that $b(c_0',c_1)=0  \in \Herm(H^2(Y_1;\Z[\pi]))$.
\end{theorem}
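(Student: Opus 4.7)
The plan is to show that the weak-evenness hypothesis on $\lambda_{X_0 \cup_h X_1}$ forces $b(c_0,c_1)$ to be an even hermitian form, and then to realize evenness as an explicit modification of $c_0$ in its homotopy class using obstruction theory for the Postnikov $2$-type $B$.

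First I would verify that $b(c_0,c_1)$ is weakly even. By definition $b(c_0,c_1)(a,b) = b_U(c^*(a'),c^*(b'))$ for any lifts $a',b' \in H^2(B;\Z[\pi])$ of $a,b \in H^2(Y_1;\Z[\pi])$, so $b(c_0,c_1)$ is a restriction of the weakly even form $b_U = \lambda_U$ along $c^*$, which preserves weak evenness. To upgrade to evenness I would apply Lemma~\ref{lem:WeaklyEvenEven}: Poincar\'e--Lefschetz duality for the $3$-dimensional Poincar\'e complex $Y_1$ identifies $H^2(Y_1;\Z[\pi])$ with $H_1(Y_1;\Z[\pi])$, and the hypothesis $H_1(Y_1;\Z[\pi]) \cong L \oplus T$ with $L$ free and $T^*=0$ supplies the splitting needed to conclude $b(c_0,c_1) = q + \overline{q}^T$ for some sesquilinear form $q$ on $H^2(Y_1;\Z[\pi])$.

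Next I would show that this $q$ can be cancelled by passing from $c_0$ to a suitable $c_0' \simeq c_0$ with $c_0'|_{Y_0} = c_1|_{Y_1} \circ h$. Two such $3$-connected maps into $B$ differ, up to homotopy compatible with the boundary data, by an obstruction class in a relative cohomology group of $(X_0,Y_0)$ with coefficients in $\pi_2(B) \cong \pi_2(X_1)$, by the Postnikov obstruction theory recalled in the appendix. Using the cup--cap formalism and Poincar\'e duality for $(X_0,Y_0)$, the change $b(c_0,c_1) - b(c_0',c_1)$ equals the symmetrization of a sesquilinear form on $H^2(Y_1;\Z[\pi])$ linearly determined by that obstruction class. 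Under the hypotheses $\cd(\pi) \le 3$ (which makes $\pi_2(X_i)$ projective over $\Z[\pi]$ by Proposition~\ref{prop:StablyFreePD<4}) and $\ev^*$ an isomorphism, the resulting map from obstruction classes to sesquilinear forms is surjective, so one can choose a preimage of $q$ and obtain $c_0'$ with $b(c_0',c_1) = 0$. Then Proposition~\ref{prop:Indepc0} gives $[b(c_0,c_1)] = [b(c_0',c_1)] = 0$ in $\Herm(H^2(Y_1;\Z[\pi]))/\mathcal{G}$.

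The main obstacle is the second step, and specifically the surjectivity of the map from obstruction-theoretic modifications of $c_0$ onto sesquilinear forms on $H^2(Y_1;\Z[\pi])$; this is where $\cd(\pi) \le 3$ and the isomorphism hypothesis on $\ev^*$ enter essentially, and where the weakly even (``spin'') case departs from the odd case of Theorem~\ref{thm:VanishOddIntro}. In the odd case one can afford to quotient out all hermitian forms because $\Herm/\mathcal{G}$ itself vanishes, whereas here one must explicitly match the sesquilinear part of $b(c_0,c_1)$ with the image of the modification map, which is the content of the duality computation developed in Sections~\ref{sec:KunnethYS1}--\ref{sec:WeaklyEven}.
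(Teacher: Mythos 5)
Your first two steps are exactly the paper's: $b(c_0,c_1)$ is a pullback of the weakly even form $b_U=\lambda_{X_0\cup_h X_1}$ modulo part of its radical, hence weakly even, and the hypothesis $H_1(Y_1;\Z[\pi])\cong L\oplus T$ together with Lemma~\ref{lem:WeaklyEvenEven} upgrades this to evenness. The gap is in your third step, which is where all the content lies. You propose to cancel $q$ by changing $c_0$ within the set of maps agreeing with $c_1|_{Y_1}\circ h$ on $Y_0$, parametrised by a difference class in $H^2(X_0,Y_0;\pi_2(B))$, and you assert that (i) the resulting change of $b(c_0,c_1)$ is the symmetrisation of a sesquilinear form \emph{linearly} determined by that class, (ii) the map onto sesquilinear forms is surjective, and (iii) the new map is still freely homotopic to $c_0$ with vanishing primary obstruction. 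None of this is established, and parts are false as stated. Changing $c_0$ by a class $[d]$ changes the induced map on $H_2(X_0;\Z[\pi])$ by $\ev([d])\circ j_*$ (Proposition~\ref{prop:ObstructionTheory}), so for a generic $[d]$ the new map is \emph{not} homotopic to $c_0$ and the condition $(c_0')^*b_{X_0}^\partial=c_1^*b_{X_1}^\partial$ can fail; you never isolate the admissible classes. Moreover, the effect on the pushed-forward fundamental class, hence on $b$, is not linear in the modification datum: it contains a quadratic ($\Gamma$-type) term. This is visible in the paper's own analysis of boundary-fixing modifications (Section~\ref{sec:Odd}): there the change is only computed after applying the hat map, i.e.\ modulo even forms (Proposition~\ref{prop:changing-b-FG}), and the resulting form $q_\phi$ depends quadratically on the datum $\phi$. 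Finally, Sections~\ref{sec:KunnethYS1}--\ref{sec:WeaklyEven}, which you cite as carrying out your duality computation, do not compute the effect of rel-$(X_0,Y_0)$ difference classes at all.

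What the paper actually does for the cancellation is a different mechanism: it produces elements of $\mathcal{G}$ from self-homotopies $\mathbf{H}$ of $c_1|_{Y_1}$, viewed as maps $Y_1\times S^1\to B$ and extended over $B$ by homotopy extension ($\Xi(\mathbf{H})$). The K\"unneth and cup/cap computations in $Y_1\times S^1$ show that such an element acts on $\Herm(H^2(Y_1;\Z[\pi]))$ by adding exactly $q_{\mathbf{H}}+q_{\mathbf{H}}^*$, and obstruction theory on $Y_1\times S^1$ (this is where $\cd(\pi)\le 3$, i.e.\ projectivity of $H_2(B;\Z[\pi])$, and the $\ev^*$ hypothesis enter) realises any prescribed sesquilinear $q$ as some $q_{\mathbf{H}}$; this is Theorem~\ref{thm:GoalActionpdf}. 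Setting $c_0':=g\circ c_0$ with $g$ a rel-$Y_1$ homotopy inverse of $\Xi(\mathbf{H})$ gives, via Propositions~\ref{prop:Indepc0} and~\ref{prop:HowItActs}, $b(c_0',c_1)=b(c_0,c_1)-(q+q^*)=0$, and $c_0'\simeq c_0$ freely with $c_0'|_{Y_0}=c_1|_{Y_1}\circ h$ comes for free since $g\simeq \id_B$ and $g|_{Y_1}=\id_{Y_1}$. In short, after your first two steps the correct conclusion is obtained by citing Theorem~\ref{thm:VanishEvenIntro}; your replacement for it, as sketched, does not go through.
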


Note that if~$\lambda_{X_0 \cup_h X_1}$ is (weakly) even, then so is $\lambda_{X_i}$ for $i=0,1$.
Our next result (also proved in Section~\ref{sec:WeaklyEven}) shows that modulo the difference between even  and weakly even forms,  the condition on the union from Theorem~\ref{thm:VanishSpinIntro} is also necessary.
\begin{theorem}
\label{thm:obstruction0ImpliesEvenIntro}
Assume that the hermitian form~$\lambda_{X_0} \cong \lambda_{X_1}$ is weakly even.
If the secondary obstruction~$b(c_0,c_1)$ vanishes in~$\Herm(H^2(Y_1;\Z[\pi]))/\mathcal{G}$, then~$\lambda_{X_0 \cup_h X_1} \cong b_{X_0 \cup_h X_1}$ is weakly even.
\end{theorem}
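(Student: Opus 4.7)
The plan is to show that $b_U(\alpha,\alpha) \in \{a+\overline{a} : a \in \Z[\pi]\}$ for every $\alpha \in H^2(U;\Z[\pi])$, where $U = X_0 \cup_h X_1$, by decomposing $\alpha = \gamma + c^*x$ into a part $\gamma$ that vanishes on the gluing region $Y$ and a part $c^*x$ pulled back from the Postnikov $2$-type $B$. The $\gamma$-contribution will be controlled by the weakly even forms $b_{X_i}^\partial$, the $c^*x$-contribution by the (now vanishing) secondary obstruction, and the mixed terms will land in $\{a+\overline{a}\}$ by hermitian symmetry.

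First I would upgrade the hypothesis $b(c_0,c_1)=0$ in $\Herm(H^2(Y_1;\Z[\pi]))/\mathcal{G}$ to vanishing in $\Herm(H^2(Y_1;\Z[\pi]))$ itself. The form $b_U$ on $U$ is intrinsic, independent of the auxiliary maps $c_i$, whereas $\mathcal{G}$ acts on $\Herm(H^2(Y_1;\Z[\pi]))$ via modifications of $c_0$ within its homotopy class over $B$ (the realisation of the action that underlies Proposition~\ref{prop:Indepc0}). Hence I may replace $c_0$ by a homotopic $c_0'$ with $c_0'|_{Y_0} = c_1|_{Y_1}\circ h$ and $b(c_0',c_1) = 0 \in \Herm(H^2(Y_1;\Z[\pi]))$, and assume this holds below. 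Next, for $\alpha \in H^2(U;\Z[\pi])$ with restrictions $\alpha_i := j_i^*\alpha$, identify $Y_0 \simeq Y_1 =: Y$ via $h$ and write $\beta := \alpha_0|_Y = h^*(\alpha_1|_{Y_1})$. Since $c_1$ is $3$-connected, $c_1^*$ is an isomorphism on $H^2$, so the image of $H^2(B;\Z[\pi]) \to H^2(Y_1;\Z[\pi])$ coincides with that of $H^2(X_1;\Z[\pi]) \to H^2(Y_1;\Z[\pi])$; in particular $\beta$ lifts to some $x \in H^2(B;\Z[\pi])$. Using $c_0|_{Y_0} = c_1|_{Y_1}\circ h$, the class $\gamma := \alpha - c^*x$ restricts to $0$ on $Y$, so by excision and the long exact sequence of $(U,Y)$, $\gamma$ lifts to $(\tilde\gamma_0,\tilde\gamma_1) \in H^2(X_0,Y_0;\Z[\pi]) \oplus H^2(X_1,Y_1;\Z[\pi]) \cong H^2(U,Y;\Z[\pi])$.

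Expanding bilinearly,
$$b_U(\alpha,\alpha) = b_U(\gamma,\gamma) + b_U(c^*x,c^*x) + \bigl( b_U(\gamma,c^*x) + \overline{b_U(\gamma,c^*x)} \bigr),$$
where the parenthesised sum lies in $\{a+\overline{a}\}$ by hermitian symmetry. The middle term equals $b(c_0,c_1)(\beta,\beta) = 0$ by the defining formula of the secondary obstruction (Proposition~\ref{prop:SecondaryIntro}) and our normalisation. For the first term, the Mayer--Vietoris decomposition of the fundamental class gives $[U] \leftrightarrow ([X_0,Y_0],[X_1,Y_1])$ in $H_4(U,Y;\Z[\pi]) \cong H_4(X_0,Y_0;\Z[\pi]) \oplus H_4(X_1,Y_1;\Z[\pi])$, and the attendant absolute--relative cup-product formula yields
$$b_U(\gamma,\gamma) = b_{X_0}^\partial(\tilde\gamma_0,\gamma_0) + b_{X_1}^\partial(\tilde\gamma_1,\gamma_1),$$
with $\gamma_i = j^*\tilde\gamma_i = \gamma|_{X_i}$. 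Each summand corresponds via Poincaré duality to $\lambda_{X_i}(\PD_{X_i}(\tilde\gamma_i),\PD_{X_i}(\tilde\gamma_i))$, hence lies in $\{a+\overline{a}\}$ by weak evenness of $\lambda_{X_i}$. Assembling the three contributions gives $b_U(\alpha,\alpha) \in \{a+\overline{a}\}$, so $b_{X_0\cup_h X_1} \cong \lambda_{X_0\cup_h X_1}$ is weakly even.

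The main obstacle will be the last ingredient: cleanly establishing the Mayer--Vietoris cap-product formula $b_U(\gamma,\gamma) = \sum_i b_{X_i}^\partial(\tilde\gamma_i,\gamma_i)$ when $\gamma|_Y = 0$. This requires fixing sign/orientation conventions so that $\partial[X_0,Y_0]$ and $\partial[X_1,Y_1]$ cancel in $H_3(Y;\Z[\pi])$ to assemble $[U]$, and verifying the compatibility of the absolute--relative cup product $H^2(X_i,Y_i;\Z[\pi]) \otimes H^2(X_i;\Z[\pi]) \to H^4(X_i,Y_i;\Z[\pi])$ with the splitting of $H^2(U,Y;\Z[\pi])$ through excision; once these technicalities are settled, weak evenness of $\lambda_U$ is immediate from the decomposition above.
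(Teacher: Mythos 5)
Your argument is correct, and it takes a genuinely different route from the paper's. The paper never decomposes individual classes on $U=X_0\cup_h X_1$; instead it pulls $b_U$ back to $B\cup_{Y_1}B$ along $c_0\cup c_1$, splits $H^2(B\cup_{Y_1}B;\Z[\pi])$ using the fold map $r$ and the map $\eta$ coming from $H^2(B,Y_1;\Z[\pi])$, proves $\eta(c_0\cup c_1)^*b_U=c_0^*\PD_{X_0}^*\lambda_{X_0}$ (Lemmas~\ref{lem:EvenIff}--\ref{lem:FormEvenImpliesThing}), deduces that $b_U$ is weakly even if and only if $b(c_0,c_1)$ is (\cref{prop:WeaklyEvenIff}), and then handles the passage from vanishing in $\Herm(H^2(Y_1;\Z[\pi]))/\mathcal{G}$ by showing that $b(\varphi)$ is weakly even for every $\varphi\in\mathcal{G}$, re-running the same lemmas for the double $X_1\cup_{Y_1}X_1$. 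You instead absorb the $\mathcal{G}$-ambiguity into $c_0$ at the outset --- $b(c_0,c_1)=b(\varphi)$ gives $b(\varphi\circ c_0,c_1)=0$ by \cref{prop:Indepc0}, and $\varphi\circ c_0\in\mathcal{S}_0(c_1)$ by \cref{prop:Transitive}, while $b_U$ is intrinsic to $U$ --- and then check weak evenness of $b_U(\alpha,\alpha)$ directly, writing $\alpha=\gamma+c^*x$ with $\gamma|_Y=0$: the $c^*x$-term is exactly $b(c_0,c_1)(\beta,\beta)=0$, the cross terms are automatically of the form $a+\overline{a}$, and your cap-product decomposition of $b_U(\gamma,\gamma)$ is essentially the computation the paper carries out in the proof of \cref{prop:changing-b-FG} for the double. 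Your route is shorter and avoids both the $B\cup_{Y_1}B$ bookkeeping and the separate $b(\varphi)$-argument; the paper's route yields the reusable intermediate equivalence of \cref{prop:WeaklyEvenIff}. Two small points to settle when writing this up: with the convention of \cref{rem:X0cup-X1} the fundamental class maps to $([X_0],-[X_1])$, so the decomposition reads $b_U(\gamma,\gamma)=b^\partial_{X_0}(\tilde\gamma_0,\gamma_0)-b^\partial_{X_1}(\tilde\gamma_1,\gamma_1)$, which is harmless since $-(a+\overline{a})$ is again of that form; and the map $\varphi\circ c_0$ realising your normalisation need not be homotopic to $c_0$ (nor does it need to be --- all you use is that it lies in $\mathcal{S}_0(c_1)$ and that $b_U$ does not depend on this choice), so the phrase ``replace $c_0$ by a homotopic $c_0'$'' should be adjusted accordingly.
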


At this stage, it is useful to record three lemmas that we will constantly refer to when working with even hermitian forms.

\begin{lemma}
\label{lem:WeaklyEvenEven}
Let $H\cong L \oplus T$ be a $\Z[\pi]$-module where $L$ is free and $T^*=0$.
If a hermitian form~$\lambda \colon H \times H \to \Z[\pi]$ is weakly even, then it is even.
\end{lemma}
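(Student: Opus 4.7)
The plan is to reduce the problem to $L \times L$ by exploiting $T^*=0$, and then to define $q$ explicitly using a basis of $L$.

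First I would observe that $\lambda$ vanishes identically whenever either argument lies in $T$. Indeed, for any $y \in H$, the map $\lambda(-,y) \colon H \to \Z[\pi]$ is $\Z[\pi]$-linear, so its restriction to $T$ is an element of $T^* = 0$. Hence $\lambda(t,y) = 0$ for all $t \in T$, $y \in H$, and by the hermitian symmetry $\lambda(y,t) = \overline{\lambda(t,y)} = 0$ as well. Thus $\lambda$ is determined by its restriction to $L \times L$, and once we construct a refinement $q$ on $L \times L$, we may extend it by zero to all of $H \times H$ and the identity $\lambda(x,y) = q(x,y) + \overline{q(y,x)}$ will automatically hold whenever $x$ or $y$ lies in $T$.

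Next I would use the freeness of $L$. Choose a $\Z[\pi]$-basis $\{e_i\}_{i \in I}$ of $L$ together with a total ordering of $I$. For each $i$, the weak evenness of $\lambda$ furnishes an element $a_i \in \Z[\pi]$ with $\lambda(e_i,e_i) = a_i + \overline{a_i}$. Define a sesquilinear form $q$ on $L \times L$ by prescribing it on the basis and extending by sesquilinearity:
\[
q(e_i,e_j) :=
\begin{cases}
a_i & \text{if } i = j,\\
\lambda(e_i,e_j) & \text{if } i < j,\\
0 & \text{if } i > j.
\end{cases}
\]
A case check on pairs of basis vectors then shows $q(e_i,e_j) + \overline{q(e_j,e_i)} = \lambda(e_i,e_j)$: for $i = j$ this is the defining relation for $a_i$; for $i < j$ the right-hand side of the formula equals $\lambda(e_i,e_j) + 0 = \lambda(e_i,e_j)$; and for $i > j$ it equals $0 + \overline{\lambda(e_j,e_i)} = \lambda(e_i,e_j)$ by hermitian symmetry. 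Sesquilinearity of both sides then propagates the identity to all of $L \times L$.

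The main obstacle, such as it is, is simply making sure the conventions on sesquilinearity and the hermitian symmetry $\lambda(y,x) = \overline{\lambda(x,y)}$ are used consistently in the basis verification above; once that bookkeeping is in place, the argument is formal. No issues with infinite bases arise because the construction only ever pairs one basis element with one other at a time, and sesquilinear extension to arbitrary elements of $L$ requires no choice. Extending $q$ by zero to $H = L \oplus T$ completes the construction and proves that $\lambda$ is even.
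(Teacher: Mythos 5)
Your proof is correct and follows essentially the same route as the paper: use the freeness of $L$ and weak evenness to build a refinement $q$ from a (ordered) basis, and use $T^*=0$ to kill all terms involving $T$. You are somewhat more explicit than the paper about the off-diagonal values of $q$ and about the vanishing of the mixed terms $\lambda(L,T)$, $\lambda(T,L)$ (the paper only records $\lambda|_{T\times T}=0$), but this is the same argument.
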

\begin{proof}
A weakly even hermitian form~$b$ on a free module $L$ is even: choose a basis~$e_1,\ldots,e_n$ for~$L$ with~$b(e_i,e_j)=x_i+x_i^*$, define~$q(e_i)=x_i$ and note that~$\lambda=q+q^*$.
It follows that the pairing $\lambda$ is even on $L \subset H$.
But since $T^*=0$,  it follows that~$\lambda|_{T \times T}=0$
and so $\lambda$ itself is even.
\end{proof}

\begin{lemma}
\label{lem:WeaklyEvenAlgebra}
Let $\pi$ be a group with no elements of order~$2$ and let $H$ be a $\Z[\pi]$-module.
A hermitian form $(H,b)$ over  is weakly even if and only if $b(x,x)_1$ is even for every $x \in H$.
\end{lemma}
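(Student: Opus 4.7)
The plan is to translate the statement into a claim about coefficients in the group ring $\Z[\pi]$ and then use the hypothesis on $\pi$ to pair up elements under the inversion map. Writing $b(x,x)=\sum_{g\in\pi} c_g\, g$ with $c_g\in\Z$, the hermitian property $\overline{b(x,x)}=b(x,x)$ forces $c_g=c_{g^{-1}}$ for every $g\in\pi$. This is the only structural fact about $b(x,x)$ that I will use.

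For the forward direction, suppose $b(x,x)=a+\overline{a}$ for some $a=\sum_g a_g\, g\in\Z[\pi]$. Then the coefficient of $1$ in $a+\overline{a}$ is $a_1+a_{1^{-1}}=2a_1$, which is manifestly even; so $b(x,x)_1$ is even as required.

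For the converse, assume $b(x,x)_1$ is even. Since $\pi$ has no elements of order $2$, the map $g\mapsto g^{-1}$ on $\pi$ has $1$ as its unique fixed point, so $\pi\setminus\{1\}$ decomposes into disjoint two-element orbits $\{g,g^{-1}\}$. Choose a set of representatives $S\subset\pi\setminus\{1\}$ so that $\pi\setminus\{1\}=S\sqcup S^{-1}$. Define $a\in\Z[\pi]$ by setting $a_1:=c_1/2$ (using the evenness hypothesis), $a_g:=c_g$ for $g\in S$, and $a_g:=0$ for $g\in S^{-1}$. A direct check using $c_g=c_{g^{-1}}$ shows that the coefficient of every $g\in\pi$ in $a+\overline{a}$ equals $c_g$, so $b(x,x)=a+\overline{a}$ and $b$ is weakly even.

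There is no real obstacle here; the only content is the observation that the absence of order-$2$ elements is precisely what lets the pairing $\{g,g^{-1}\}$ succeed. If $\pi$ did contain an element $g$ of order $2$, then matching the coefficient of $g$ in $a+\overline{a}$ would require $2a_g=c_g$, and the lemma would need the additional hypothesis that each such $c_g$ is even.
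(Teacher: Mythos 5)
Your proof is correct and follows essentially the same route as the paper: the paper simply asserts the general coefficient criterion (weak evenness is detected by the coefficients at elements $g$ with $g^2=1$), and your argument is the standard proof of that criterion, pairing off $\{g,g^{-1}\}$ in $\pi\setminus\{1\}$ and using $b(x,x)_g=b(x,x)_{g^{-1}}$ from hermitian symmetry. Your closing remark about what fails in the presence of order-$2$ elements is exactly the paper's more general statement.
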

\begin{proof}
In general, without any condition $\pi$, a hermitian form $b$ over $\Z[\pi]$ is weakly even if and only if its coefficient $b(x,x)_g$ at $g \in \pi$ is even for every $g \in \pi$ of order $2$.
\end{proof}

\begin{lemma}
\label{lem:WeaklyEvenUnivCoverSpin}
Let $X$ be either a $4$-dimensional Poincar\'e complex or be part of a Poincar\'e pair~$(X,Y)$.
The form~$\lambda_X$ is weakly even if and only if~$\widetilde{X}$ is spin.
\end{lemma}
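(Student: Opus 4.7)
My plan is to reduce both conditions to a common statement about the mod-$2$ intersection form on $H_2(\widetilde{X};\mathbb{Z}/2)$, using the Wu formula on one side and the algebraic description of ``weakly even'' on the other.

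First I would invoke the Wu formula on the $4$-dimensional Poincar\'e complex (or pair) $\widetilde{X}$: being spin is equivalent to the vanishing of $v_2(\widetilde{X})\in H^2(\widetilde{X};\mathbb{Z}/2)$, which by Wu's characterisation says that $y\cup y = 0$ for every $y$ in the appropriate mod-$2$ cohomology group (using compactly supported cohomology when $\widetilde{X}$ is noncompact, i.e.\ when $\pi$ is infinite, together with the dual fundamental class; the pair case requires only routine modifications, replacing $\widetilde{X}$ by $(\widetilde{X},\widetilde{Y})$ and using relative Poincar\'e duality throughout). Poincar\'e duality on $\widetilde{X}$ converts this cup-product condition into the statement that mod-$2$ self-intersections vanish on $H_2(\widetilde{X};\mathbb{Z}/2)$. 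Since $\widetilde{X}$ is simply connected, the universal coefficient theorem gives $H_2(\widetilde{X};\mathbb{Z}/2)\cong H_2(\widetilde{X})\otimes \mathbb{Z}/2$, so every mod-$2$ class lifts to an integral $x\in H_2(\widetilde{X}) = H_2(X;\mathbb{Z}[\pi])$ whose mod-$2$ self-intersection equals $\lambda_X(x,x)_1\bmod 2$, where the subscript denotes the coefficient at $1\in\pi$.

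By the algebraic reformulation from the proof of Lemma~\ref{lem:WeaklyEvenAlgebra}, $\lambda_X$ is weakly even if and only if $\lambda_X(x,x)_g$ is even for every $g\in\pi$ with $g^2=1$ and every $x$. For $g=1$ this matches exactly the spin condition derived above. For a nontrivial $g$ of order $2$, test classes of the form $x+gx\in H_2(\widetilde{X})$, combined with the covering transfer relating self-intersections in $\widetilde{X}$ to those in the intermediate double cover $\widetilde{X}/\langle g\rangle$, together with the hermitian identity $\lambda_X(x,x)_g = \lambda_X(x,x)_{g^{-1}}$, supply the remaining parity matching.

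The main obstacle will be precisely this matching at order-$2$ elements: the naive computation $(x+gx)\cdot(x+gx) = 2(x\cdot x + x\cdot gx)$ is automatically $0\bmod 2$ and so yields no new information. I would resolve this by either using the transfer relationship $(p^!p_*x)\cdot(p^!p_*x) = 2\,p_*x\cdot p_*x$ on the double cover $p\colon \widetilde{X}\to \widetilde{X}/\langle g\rangle$ (which rewrites $x\cdot x + x\cdot gx$ as a self-intersection downstairs and thereby controls its parity), or by upgrading the Wu formula to an equivariant one valued in $\mathbb{Z}/2[\pi]$. In the torsion-free settings of the paper's main applications this obstacle is automatic, since the order-$2$ conditions are vacuous and the lemma follows immediately from the first two paragraphs.
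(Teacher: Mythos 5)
Your reduction of the identity coefficient is fine, but the entire content of the hard direction of this lemma lies in the step you leave open, and neither of your two proposed fixes is carried out or correct as stated. Writing $x\cdot x+x\cdot gx=(p_*x)\cdot(p_*x)$ in the intermediate cover $X_g:=\widetilde{X}/\langle g\rangle$ does not by itself ``control the parity'': $X_g$ need not be spin even though $\widetilde{X}$ is (the K3 surface covering the Enriques surface is the standard example), so a self-intersection downstairs can a priori be odd. What would rescue this is the characteristic-element property $y\cdot y\equiv\langle w_2(X_g),y\rangle \bmod 2$ applied to $y=p_*x$, combined with naturality of $w_2$ under the covering, $\langle w_2(X_g),p_*x\rangle=\langle w_2(\widetilde{X}),x\rangle=0$; you never invoke this, it is exactly the missing content, and for infinite $\pi$ it must additionally be justified for the noncompact cover $X_g$ with compactly supported classes. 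Your alternative, an ``equivariant Wu formula valued in $\Z_2[\pi]$'', is not an available tool and would essentially be the statement being proved. The paper's proof of this direction rests on a different, genuinely geometric input: for manifolds, the spin structure on $\widetilde{X}$ furnishes a quadratic refinement $\mu_X$ with $\lambda_X=\mu_X+\mu_X^*$ (\cite[Remark~1.7]{KasprowskiPowellTeichner}), which immediately gives weak evenness at all order-$2$ elements.

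Even granting a completed transfer-plus-$w_2$ argument, your plan is manifold-bound (surfaces, tangential characteristic classes, transfers of geometric intersection data), whereas the lemma is stated for $4$-dimensional Poincar\'e complexes and pairs, where none of this is directly available. The paper handles these cases by other means: reducibility of $4$-dimensional Poincar\'e complexes \cite{Land} gives a degree-one map $f\colon M\to X$ from a manifold, improved by surgery to a $\pi_1$-isomorphism, so that $\lambda_M\cong\lambda_X\oplus\lambda_K$ and weak evenness descends from $\lambda_M$ to $\lambda_X$; the pair case is then reduced to the closed case by doubling, using naturality of cup products for $(\widetilde{X}\cup_{\widetilde{Y}}\widetilde{X},\widetilde{X})$ and Proposition~\ref{prop:HomotopyPushout}. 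Your ``routine modifications, replacing $\widetilde{X}$ by $(\widetilde{X},\widetilde{Y})$'' do not produce these steps. Finally, the observation that the obstacle disappears for torsion-free $\pi$ is true but does not prove the lemma as stated, which is for arbitrary fundamental groups; so as it stands the proposal has a genuine gap at the central point.
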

\begin{proof}
Set~$\pi:=\pi_1(X)$.
If~$\lambda_X$ is weakly even, then for every $x \in \pi_2(X)$, the coefficient~$(\lambda_X(x,x))_g$ at $g \in \pi$ is even for every $g \in \pi$ of order $2$.
In particular $Q_{\widetilde{X}}(x,x)=(\lambda_X(x,x))_e$ is even for every~$x \in X$.
Thus the intersection form~$Q_{\widetilde{X}}$ is even and therefore~$\widetilde{X}$ is spin.

We prove the converse, first when $X$ is a manifold, then when $X$ is a Poincar\'e complex and finally when $(X,Y)$ is a Poincar\'e pair.
We first assume~$X$ is a manifold.
If~$\widetilde{X}$ is spin, then~$\lambda_X$ admits a quadratic refinement~$\mu_X$, say~$\lambda_X=\mu_X+\mu_X^*$; this is explained in detail in~\cite[Remark~1.7]{KasprowskiPowellTeichner}.
Thus~$(\lambda_X(x,x))_g$ is even for every~$g \in \pi$ of order~$2$ and we conclude that~$\lambda_X$ is weakly even.
We now assume that~$X$ is a Poincar\'e complex.
Since~$X$ is reducible~\cite{Land},  there is a degree one map~$f \colon M \to X$ and using surgery we can assume it is an isomorphism on~$\pi_1$. 
By naturality, since~$\widetilde{X}$ is spin, so is~$\widetilde{M}$, implying that~$\lambda_M$ is weakly even.
Since~$\lambda_M=\lambda_X\oplus \lambda_K$, where~$\lambda_K$ is the form restricted to the surgery kernel, it follows that~$\lambda_X$ is weakly even as well.

We conclude with the general case where~$(X,Y)$ is a Poincar\'e pair.
In what follows, we write~$\widetilde{Y} \subset \widetilde{X}$ for the preimage of~$Y$ in the universal cover~$\widetilde{X}$.
Since~$\widetilde{X}$ is spin,~$(\lambda_X(x,x))_e$ is even for every~$x \in H_2(X;\Z[\pi]),$ and it remains to prove that~$(\lambda_X(x,x))_g$ is even for every~$x \in H_2(X;\Z[\pi])$ and every~$g \in \pi$ of order~$2$, i.e.  that~$a \cup ga=0 \in H^4_{cs}(\widetilde{X},\widetilde{Y};\Z_2) \cong H^4_{cs}(\widetilde{X} \cup_{\widetilde{Y}} \widetilde{X},\widetilde{X};\Z_2)$ for every element~$a \in H^2_{cs}(\widetilde{X},\widetilde{Y};\Z_2) \cong H^2_{cs}(\widetilde{X} \cup_{\widetilde{Y}} \widetilde{X},\widetilde{X};\Z_2)$.
This follows from the naturality of the cup product under the map~$j \colon (\widetilde{X} \cup_{\widetilde{Y}} \widetilde{X},\emptyset) \to (\widetilde{X} \cup_{\widetilde{Y}} \widetilde{X},\widetilde{X})$ because~$j^*$ induces an isomorphism on~$H^4(-;\Z_2)$ and~$j^*(ga \cup a)=gj^*(a) \cup j^*(a)=0$.
In the last equality, we used the statement for Poincar\'e complexes: the double~$\widetilde{X} \cup_{\widetilde{Y}} \widetilde{X}$ is a Poincar\'e complex by Proposition~\ref{prop:HomotopyPushout}.
\end{proof}

\subsection{The vanishing of the secondary obstruction III : the main technical result.}
\label{sub:MainTechnicalIntro}

Both Theorem~\ref{thm:VanishOddIntro} and Theorem~\ref{thm:VanishSpinIntro} rely on being able to change $b(c_0,c_1)$ by an even form.
This is the content of the next theorem which is one of the more technical results of this paper.
More precisely we show that for certain groups with $\cd(\pi) \leq 3$,  for any sesquilinear form~$q$ on~$H^2(Y_1;\Z[\pi])$, it is possible to homotope $c_1|_{Y_1}$ back to itself so that the secondary invariant changes from $b(c_0,c_1)$ to~$b(c_0,c_1)+q+q^*$, where $q^*(x,y):=\overline{q(y,x)}$.
Continuing with the set-up from Notation~\ref{not:IntroAgainAgain}, the resulting statement reads as follows.

\begin{theorem}
\label{thm:VanishEvenIntro} 
Assume that $\cd(\pi) \leq 3$.
If $\ev^*$ is an isomorphism
and~$b(c_0,c_1)$ is even, then
$$b(c_0,c_1)=0 \in \Herm(H^2(Y_1;\Z[\pi]))/\mathcal{G}.$$
In fact, there is a $c_0' \simeq c_0$ with $c_1|_{Y_1} \circ h=c_0'|_{Y_0}$ such that $b(c_0',c_1)=0  \in \Herm(H^2(Y_1;\Z[\pi]))$.
\end{theorem}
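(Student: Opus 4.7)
I would reduce the theorem to a realisation statement: \emph{for every sesquilinear form $q$ on $H^2(Y_1;\Z[\pi])$ there exists $c_0' \simeq c_0$ with $c_0'|_{Y_0} = c_1|_{Y_1}\circ h$ satisfying $b(c_0',c_1) = b(c_0,c_1) + q + q^*$.} Assuming this, when $b(c_0,c_1) = q' + (q')^*$ is even, apply the realisation with $q = -q'$ to produce $c_0'$ with $b(c_0',c_1) = 0$ in $\Herm(H^2(Y_1;\Z[\pi]))$, and in particular in $\Herm(H^2(Y_1;\Z[\pi]))/\mathcal{G}$.

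The proof of the realisation proceeds in three stages. First, parametrise homotopy classes of modifications of $c_0$ rel $Y_0$: since $B$ is a Postnikov $2$-type and $c_0$ is $3$-connected, obstruction theory applied to $K(\pi_2(X_1),2) \to B \to K(\pi,1)$ identifies rel-$Y_0$ homotopy classes of extensions of $c_1|_{Y_1}\circ h$ with a torsor over $H^2(X_0,Y_0;\pi_2(X_1))$; write $\delta \in H^2(X_0,Y_0;\pi_2(X_1))$ for the difference class $[c_0']-[c_0]$. Second, compute the effect on $b$: with $c = c_0\cup c_1$ and $c'=c_0'\cup c_1$, the class $(c')^*(a')-c^*(a') \in H^2(U;\Z[\pi])$ is supported on $X_0 \subset U$ and is the image of $\delta$ under pairing with the $\pi_2(X_1)$-component of $a'$. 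Substituting into $b_U(c^*(a'),c^*(b'))$ and exploiting the hermitian property of $b_U$, one checks that the resulting change takes the form $q + q^*$ for a sesquilinear form $q$ determined by $\delta$.

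The third and main step --- the principal obstacle --- is to show that the assignment $\delta \mapsto q$ surjects onto $\Sesq(H^2(Y_1;\Z[\pi]))$. Here both hypotheses are essential: by Proposition~\ref{prop:StablyFreePD<4}, the condition $\cd(\pi) \leq 3$ makes $\pi_2(X_i)$ a projective $\Z[\pi]$-module, so that the relevant $\Hom$ and tensor functors are exact and UCT Ext-terms vanish, while $\ev^*$ being an isomorphism is precisely what lets us dualise $H^2$ against $H_2$ cleanly. Poincar\'e--Lefschetz duality identifies $H^2(X_0,Y_0;\pi_2(X_1))$ with $H_2(X_0;\pi_2(X_1))$, and combining projectivity with $\ev^*$ rewrites this as a group of $\Z[\pi]$-linear maps $H^2(X_0;\Z[\pi]) \to H^2(X_1;\Z[\pi])^*$. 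Restriction via the surjective maps $\iota_0$ and $\iota_1$ then converts such linear maps into sesquilinear forms on $H^2(Y_1;\Z[\pi])$. The delicate point is to verify both that this composite hits all of $\Sesq(H^2(Y_1;\Z[\pi]))$ and that it reproduces the sesquilinear form $q$ extracted in stage two --- essentially a careful diagram-chase in the $\Z[\pi]$-module category, along the lines of the tools developed in Sections~\ref{sec:KunnethYS1}--\ref{sec:WeaklyEven}.
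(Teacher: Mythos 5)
Your global strategy is the right one and matches the paper's: reduce to a realisation statement (any even change $q+q^*$ of the secondary obstruction can be realised by replacing $c_0$ with some $c_0'$ agreeing with $c_1|_{Y_1}\circ h$ on $Y_0$) and then apply it with $q=-q'$ when $b(c_0,c_1)=q'+(q')^*$; this is exactly the role of Theorem~\ref{thm:GoalActionpdf}. However, your stage two contains a genuine error. If you modify $c_0$ rel $Y_0$ by an arbitrary difference class $\delta\in H^2(X_0,Y_0;\pi_2(X_1))$ and set $\Delta_{a'}:=(c_0'\cup c_1)^*(a')-(c_0\cup c_1)^*(a')$, then the change in $b(c_0,c_1)(a,b)=b_U(c^*a',c^*b')$ is $b_U(\Delta_{a'},c^*b')+b_U(c^*a',\Delta_{b'})+b_U(\Delta_{a'},\Delta_{b'})$. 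The cross terms are indeed of the shape $q+q^*$, but the quadratic term is essentially $\lambda_{X_0}$ evaluated on Poincar\'e duals of classes supported on $(X_0,Y_0)$, and it is odd whenever $\lambda_{X_0}$ is odd; so a general rel-$Y_0$ modification does \emph{not} change $b(c_0,c_1)$ by an even form. Indeed the paper exploits precisely this non-even contribution in the odd case (Theorem~\ref{thm:OddChange}, Proposition~\ref{prop:changing-b-FG}). The even-change property is special to the restricted class of modifications the paper uses, namely post-composition of $c_0$ with $\Xi(\mathbf{H})$ for a self-homotopy $\mathbf{H}$ of $c_1|_{Y_1}$ (Construction~\ref{cons:Xi}); there the correction to the fundamental class is $\mathbf{H}_*([Y_1\times S^1])$ and the absence of a quadratic term is a computation on $Y_1\times S^1$ (Lemma~\ref{lem:HaHb}, using $H^4(Y_1;-)=0$ and the vanishing of the square of the $S^1$ class), not an automatic consequence of working rel $Y_0$.

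Two further points are not addressed and are not cosmetic. First, your $c_0'$ must stay in $\mathcal{S}_0(c_1)$: you need $(c_0')^*b_{X_0}^\partial=c_1^*b_{X_1}^\partial$ for $b(c_0',c_1)$ to be defined (and for the quotient-by-$\mathcal{G}$ conclusion), and you need $c_0'$ to remain $3$-connected --- adding a difference class can destroy the $\pi_2$-isomorphism, whereas post-composing with an element of $\hAut_{Y_1}(B)$ cannot. In the paper the first constraint is the defining condition of $\Homotopy(c_1|_{Y_1})(c_1^*b_{X_1}^\partial)$, characterised in Proposition~\ref{prop:H2H1}, and the realisation of an arbitrary sesquilinear form has to be carried out \emph{subject to} that constraint (Proposition~\ref{prop:FinalStep?}); it is there, and in the duality step of Construction~\ref{cons:Mapf} via Lemmas~\ref{lem:DualExist} and~\ref{lem:nice-lemma}, that $\cd(\pi)\leq 3$ (projectivity of $\pi_2$) and the $\ev^*$ hypothesis are used --- your appeal to Poincar\'e duality on $H^2(X_0,Y_0;\pi_2(X_1))$ by itself does not produce this. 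Second, the refined conclusion requires $c_0'\simeq c_0$ (freely); in the paper this is immediate because $\Xi(\mathbf{H})\simeq\id_B$, but for a difference-class modification it needs a separate argument.
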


The proof of Theorem~\ref{thm:VanishEvenIntro} is fairly arduous and will be spread out between Sections~\ref{sec:KunnethYS1} and~\ref{sec:ProofGoal}.
The intuition behind (the technical result underlying) Theorem~\ref{thm:VanishEvenIntro} is explained in Remark~\ref{rem:Intuition}.

\begin{remark}
\label{rem:BoyerComparison}
Given simply-connected $4$-manifolds $X_0,X_1$ an orientation-preserving homeomorphism $h \colon \partial X_0 \to \partial X_1$ and an isometry $F\colon \lambda_{X_0}\cong \lambda_{X_1}$ that is compatible with $h$, Boyer defines an obstruction 
$$\theta(F,h)\in \im(H^1(Y_1) \to H^1(Y;\Z_2)) $$
which is shown to vanish if and only if $h$ extends to a homeomorphism inducing $F$~\cite[Theorem~0.7]{BoyerUniqueness}.
The definition involves the spin structures of~$Y_1$ and satisfies the following properties~\cite[Proposition 0.8]{BoyerUniqueness}:
\begin{enumerate}
\item if $H_1(Y_1;\Q)=0$, then~$\theta(F,h)=0$,
\item when $\lambda_{X_i}$ is even,  $\theta(F,h)=0$ if and only if $X_0 \cup_h X_1$ is spin,
\item when $\lambda_{X_i}$ is odd,  there is an isometry $F'$ that is compatible with $h$ such that $\theta(F',h)=0$.
\end{enumerate}
Informally, we think of our secondary invariant $b(c_0,c_1)$ as an analogue of Boyer's $\theta(F,h)$.
Indeed despite their definitions being very different (our invariant does not involve spin structures and Boyer's does not reference $2$-types),  they share similar properties as illustrated by Theorems~\ref{thm:VanishOddIntro} and~\ref{thm:VanishEvenIntro} as well as the fact that~$b(c_0,c_1)$ vanishes if~$H_1(Y_1;\Z[\pi])^*=0$.
\end{remark}

We further compare the target $\Herm(H^2(Y_1;\Z[\pi]))/\mathcal{G}$ of our secondary obstruction with Boyer's target, namely $I^1(Y):=\im(H^1(Y_1) \to H^1(Y;\Z_2))$.
For this, given a $3$-dimensional Poincar\'e complex $Y$ and 
an epimorphism $\pi_1(Y) \twoheadrightarrow \pi$,  we generalise Boyer's $I^1(Y)$ by setting
$$I^1(Y;\Z[\pi]):=\im(H^1(Y;\Z[\pi])
 \xrightarrow{\ev}
   \Hom(H_1(Y;\Z[\pi]),\Z[\pi])
\to
  \Hom(H_1(Y;\Z[\pi]),\Z_2)),$$
where the second map is induced by augmentation modulo $2$.
When $\pi=1$,  evaluation induces an isomorphism $I^1(Y) \to I^1(Y;\Z)$, whence our notation (which is chosen to mimick Boyer's).

Given $x \in \Z[\pi]$, write $(x)_1 \in \Z$ for the coefficient of the neutral element and consider the group homomorphism
\begin{align*}
\widehat{} \ \colon \Herm(H^2(Y;\Z[\pi])) &\to I^1(Y;\Z[\pi]) \\
b &\mapsto (x \mapsto b(\PD_{Y}^{-1}(x),\PD_{Y}^{-1}(x))_1 \mod 2).
\end{align*}
The following proposition relates $\Herm(H^2(Y_1;\Z[\pi]))$ to $I^1(Y;\Z[\pi])$ and leads to an upper bound on the cardinality of $\Herm(H^2(Y_1;\Z[\pi]))/\mathcal{G}$; here we emphasise that the definition of $\mathcal{G}$ involves the Postnikov $2$-type $B$ and the relative hermitian form $b_{X_1}^\partial$; recall Notation~\ref{not:IntroAgainAgain}.
\begin{proposition}
\label{prop:Herm/G}
The map $\ \widehat{} \ $ satisfies the following properties.
\begin{enumerate}
\item The kernel of $\ \widehat{} \ $ consists of hermitian forms $b$ such that $b(x,x)_1$ is even for every $x$.
In particular, if $\pi$ has no nontrivial elements of order $2$, 
the kernel consists of weakly even forms.
\item If $H^2(Y;\Z[\pi]) \cong L \oplus T$~with $L$ free and~$T^*=0$,  then the map $\ \widehat{} \ $ is surjective and therefore,  if $\pi$ has no nontrivial elements of order $2$,  induces an isomorphism
$$\left(\Z[\pi]/\langle x+\overline{x} \mid x \in \Z[\pi] \rangle\right)^{\oplus b_1(Y;\Z[\pi])} \cong \frac{\Herm(H^2(Y;\Z[\pi]))}{\lbrace \text{(weakly) even forms}\rbrace}  \xrightarrow{\widehat{ } \ , \ \cong} I^1(Y;\Z[\pi]),$$
where a (weakly) even form acts on a hermitian form by $b \cdot b':=b+b'$.
\item Use $\ \widehat{} \ $ to transport the action $\mathcal{G} \curvearrowright \Herm(H^2(Y_1;\Z[\pi]))$ (which we denote $\varphi \cdot b:=b(\varphi) +b$) to an action $\mathcal{G} \curvearrowright I^1(Y_1;\Z[\pi])$ (given by $\varphi \cdot x:=\widehat{b}(\varphi) +x$) so that $\ \widehat{} \ $ induces a morphism
$$\widehat{} \  \colon \Herm(H^2(Y_1;\Z[\pi]))/\mathcal{G} \to I^1(Y_1;\Z[\pi])/\mathcal{G}.$$
that is an isomorphism if~$\cd(\pi) \leq 3,\ev^*$ is an isomorphism, and~$H^2(Y_1;\Z[\pi]) \cong L \oplus T$ with~$L$ free and~$T^*=0$.
\end{enumerate}
\end{proposition}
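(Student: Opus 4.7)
My plan is to prove the three parts in turn, using direct algebra for (1) and (2) and invoking Theorem~\ref{thm:VanishEvenIntro} as a black box for the injectivity in (3).

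For part (1), I will verify that $\widehat{b}$ is a well-defined $\Z[\pi]$-linear map into $\Z_2$. The identity $\widehat{b}(x+y) - \widehat{b}(x) - \widehat{b}(y) = (z+\bar{z})_1 \bmod 2$ with $z = b(\PD^{-1}x,\PD^{-1}y)$ reduces $\Z$-linearity to $(z+\bar{z})_1 = 2 z_1 \equiv 0 \pmod 2$, and $\pi$-equivariance follows from $(gwg^{-1})_1 = w_1$ (since $ghg^{-1}=1$ forces $h=1$). The kernel formula is then immediate from the definition, and Lemma~\ref{lem:WeaklyEvenAlgebra} identifies it with the weakly even forms once $\pi$ has no element of order two.

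For part (2), the first preliminary is that a hermitian form on $L \oplus T$ vanishes outside $L \times L$: for $l\in L$, the map $t \mapsto \overline{b(l,t)}$ is $\Z[\pi]$-linear $T \to \Z[\pi]$ (the involution converts the sesquilinearity) and hence zero since $T^* = 0$. To get surjectivity, I will lift $\phi \in I^1(Y;\Z[\pi])$ to a $\Z[\pi]$-linear $\psi \colon H_1(Y;\Z[\pi]) \to \Z[\pi]$ (which vanishes on $\PD(T)$ by the same $T^*=0$ argument), pick a basis $e_1,\dots,e_n$ of $L$, choose $c_i \in \{0,1\}$ with $c_i$ congruent to the augmentation mod $2$ of $\psi(\PD(e_i))$, and define the diagonal hermitian form $b(e_i,e_j) = \delta_{ij} c_i \cdot 1_\pi$; the identity $(a\bar{a})_1 = \sum_g (a)_g^2$, reduced mod $2$ to the augmentation of $a$, then shows $\widehat{b} = \phi$. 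For the explicit isomorphism with $(\Z[\pi]/\langle x+\bar{x}\rangle)^{b_1}$, I will parametrise $\Herm(L)$ by its matrix entries and note that an even form $q+q^*$ has diagonal in $\{x+\bar{x}\}$ and off-diagonal arbitrary in $\Z[\pi]$, so the quotient is governed by diagonals modulo $\{x+\bar{x}\}$; combined with Lemma~\ref{lem:WeaklyEvenEven} (weakly even equals even on $L\oplus T$) and part (1), the full chain of isomorphisms follows.

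For part (3), $\mathcal{G}$-equivariance of $\widehat{\cdot}$ is tautological from the definition of the transported action, so the induced map on quotients is well-defined, and surjectivity on quotients inherits from part (2). For injectivity, if $\widehat{b_1}$ and $\widehat{b_2}$ coincide modulo $\mathcal{G}$ then $b_1 - b_2 - b(\varphi) \in \ker(\widehat{\cdot})$ for some $\varphi \in \mathcal{G}$; under the hypotheses of part (3) this kernel equals the set of even forms (by part (1) together with Lemma~\ref{lem:WeaklyEvenEven}). The hard part will be showing that every even form lies in $\mathcal{G} \cdot 0$: this is precisely Theorem~\ref{thm:VanishEvenIntro}, whose three hypotheses ($\cd(\pi) \leq 3$, $\ev^*$ an isomorphism, and the $L \oplus T$ decomposition) match those of part (3) and whose proof (the technical heart of the paper, occupying Sections~\ref{sec:KunnethYS1}--\ref{sec:ProofGoal}) I will take as given. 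Once invoked, $b_1$ and $b_2$ represent the same class in $\Herm/\mathcal{G}$, completing the proof.
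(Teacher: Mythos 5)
Your parts (1) and (2) are correct and follow the same route as the paper (the paper is terser, but your diagonal-form construction for surjectivity and the identification of $\Herm(L)/\{\text{even}\}$ with diagonals modulo $\{x+\overline{x}\}$ is exactly the intended argument). The structure of your part (3) is also the paper's: reduce injectivity to showing that the even difference $b_1-b_2-b(\varphi)$ is itself of the form $b(\varphi')$ for some $\varphi'\in\mathcal{G}$, then use $b(\varphi'\circ\varphi)=b(\varphi')+b(\varphi)$ from Proposition~\ref{prop:Action}.

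The gap is the black box you invoke for that last step. The statement you need is: \emph{every} even hermitian form $q+q^*$ on $H^2(Y_1;\Z[\pi])$ lies in the $\mathcal{G}$-orbit of $0$, i.e.\ equals $b(\varphi')$ for some $\varphi'\in\mathcal{G}$. This is \emph{not} Theorem~\ref{thm:VanishEvenIntro}: that theorem only concerns the particular form $b(c_0,c_1)$ attached to a pair of $3$-connected maps, and asserts that \emph{if that form happens to be even} then it vanishes in $\Herm/\mathcal{G}$. The even form arising in your injectivity argument is $b_1-b_2-b(\varphi)$ for arbitrary hermitian forms $b_1,b_2$, and there is no reason it is of the shape $b(c_0,c_1)$ (indeed, for fixed $c_1$ the forms $b(c_0,c_1)$ with $c_0\in\mathcal{S}_0(c_1)$ constitute a single $\mathcal{G}$-orbit by Propositions~\ref{prop:Transitive} and~\ref{prop:Indepc0}, so they are far from exhausting the even forms). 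Relatedly, Theorem~\ref{thm:VanishEvenIntro} has no $L\oplus T$ hypothesis, so your claim that its hypotheses ``match those of part (3)'' is a symptom of the mis-citation. The correct tool — and the one the paper uses at precisely this point — is Theorem~\ref{thm:GoalActionpdf}: under $\ev^*$ an isomorphism and $H_2(B;\Z[\pi])$ projective (equivalent to $\cd(\pi)\leq 3$ by Proposition~\ref{prop:StablyFreePD<4}), any sesquilinear $q$ is realised as $q_{\mathbf H}$ for some $\mathbf H\in\operatorname{Homotopy}(c_1)(c_1^*b_{X_1}^\partial)$, and $\Xi(\mathbf H)\in\mathcal{G}$ satisfies $b(\Xi(\mathbf H))=q_{\mathbf H}+q_{\mathbf H}^*$ (Proposition~\ref{prop:HowItActs}); this is what gives the realisation of an arbitrary even form by an element of $\mathcal{G}$. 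With that substitution (and the minor remark that $\cd(\pi)\leq 3$ forces $\pi$ torsion-free, so the kernel of $\ \widehat{}\ $ really consists of weakly even, hence even, forms), your argument coincides with the paper's proof.
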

\begin{proof}
The first item follows from the definition of~$\ \widehat{} \ $ and Lemma~\ref{lem:WeaklyEvenAlgebra}.
The surjectivity follows from the definitions
and, since $H^2(Y;\Z[\pi]) \cong L \oplus T$ with $L$ free and~$T^*=0$, Lemma~\ref{lem:WeaklyEvenEven} ensures that weakly even forms are even.
We prove the third item.
The fact that~$\ \widehat{} \ $ descends to a homomorphism on the orbit sets follows from the equation $b(\psi \circ \varphi)=b(\psi)+b(\varphi)$ (this equality is proved in Proposition~\ref{prop:Action}), whereas the surjectivity follows from the first two items of this proposition.
It remains to prove injectivity.
If $\widehat{b}=\widehat{b(\varphi)}$ for some $\varphi \in \mathcal{G}$, then $b$ and $b(\varphi)$ differ by a weakly even form and thus by an even form because $H^2(Y;\Z[\pi]) \cong L \oplus T$.
Apply Theorem~\ref{thm:GoalActionpdf} to realise this form as $b(\varphi')$ for some $\varphi' \in \mathcal{G}$ so that $b=b(\varphi)+b(\varphi')=b(\varphi \circ \varphi')$, as required.
\end{proof}

We emphasize the take-away from the second and third items of this proposition: since~$\mathcal{G}$ 
can act by odd forms,  we are not able to entirely determine~$\Herm(H^2(Y_1;\Z[\pi]))/\mathcal{G}$, even when~$\cd(\pi) \leq 3$ and $\ev^*$ is an isomorphism.
Under these assumptions, the third item of Proposition~\ref{prop:Herm/G} nevertheless yields
$$ | \Herm(H^2(Y_1;\Z[\pi]))/\mathcal{G}| 
\leq |I^1(Y_1;\Z[\pi])/\mathcal{G}| 
\leq  |I^1(Y_1;\Z[\pi])|
\leq  |\Hom(H_1(Y),\Z_2))|
=|H^1(Y;\Z_2)|
$$
We conclude with instances in which~$ \Herm(H^2(Y_1;\Z[\pi]))/\mathcal{G}$ can be be determined.

\begin{proposition}
Assume~$\cd(\pi) \leq 3$.
Assume additionally that~$H_1(Y_1;\Z[\pi]) \cong L\oplus T$ with $L$ free and~$T^*=0$, and that~$\ev^*$ is an isomorphism.
\begin{itemize}
\item If~$\lambda_{X_0} \cong \lambda_{X_1}$ is odd and~$H^2(\pi;\Z[\pi]) \to H^2(Y_1;\Z[\pi])$ is injective,  then 
$$\Herm(H^2(Y_1;\Z[\pi]))/\mathcal{G}=0.$$
\item If~$\lambda_{X_i}$ is weakly even, then $\ \widehat{} \ $ induces an isomorphism
$$\Herm(H^2(Y_1;\Z[\pi]))/\mathcal{G} \cong I^1(Y_1;\Z[\pi]).$$
\end{itemize}
\end{proposition}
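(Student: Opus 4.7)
The first item follows directly from Theorem~\ref{thm:VanishOddIntro}: the hypotheses of that theorem---$\cd(\pi)\leq 3$, $\ev^*$ an isomorphism, $H^2(\pi;\Z[\pi]) \to H^2(Y_1;\Z[\pi])$ injective, $\lambda_{X_1}$ odd, and $H_1(Y_1;\Z[\pi]) \cong L\oplus T$ with $L$ free and $T^*=0$---match those of the first item here, and its conclusion is literally $\Herm(H^2(Y_1;\Z[\pi]))/\mathcal{G}=0$.

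For the second item, the plan is to start from the isomorphism $\ \widehat{} \ \colon \Herm(H^2(Y_1;\Z[\pi]))/\mathcal{G}\cong I^1(Y_1;\Z[\pi])/\mathcal{G}$ provided by Proposition~\ref{prop:Herm/G}(3), and then show that $\mathcal{G}$ acts trivially on $I^1(Y_1;\Z[\pi])$. Since the transported action is $\varphi\cdot x = \widehat{b(\varphi)} + x$, this amounts to verifying $\widehat{b(\varphi)}=0$ for every $\varphi\in\mathcal{G}$. I would specialise Notation~\ref{not:IntroAgainAgain} by taking $X_0=X_1$, $Y_0=Y_1$, $h=\id$, and $c_0:=\varphi\circ c_1$: the boundary matching $c_0|_{Y_0}=c_1|_{Y_1}\circ h$ holds because $\varphi$ restricts to the identity on $Y_1$, the coincidence $c_0^*b_{X_1}^\partial=c_1^*b_{X_1}^\partial$ is precisely the defining property of $\mathcal{G}$, and unwinding the definition of the $\mathcal{G}$-action from Proposition~\ref{prop:Action} identifies $b(\varphi)$ with the secondary pairing $b(c_0,c_1)$ computed via the equivariant intersection form $b_U$ of the double $U:=X_1\cup_{\id}X_1$.

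The crucial input is that $b_U$ is weakly even. Since $\lambda_{X_1}$ is weakly even, Lemma~\ref{lem:WeaklyEvenUnivCoverSpin} gives that $\widetilde{X}_1$ is spin, and the cup-product argument in the last paragraph of the proof of that lemma---applied now to the double $\widetilde{U}\cong \widetilde{X}_1\cup_{\widetilde{Y}_1}\widetilde{X}_1$, which is a Poincar\'e complex by Proposition~\ref{prop:HomotopyPushout}---yields that $\lambda_U$ is weakly even as well. Consequently, for any $a\in H^2(Y_1;\Z[\pi])$ with lift $a'\in H^2(B;\Z[\pi])$ satisfying $c_1|_{Y_1}^*(a')=a$,
\[
b(\varphi)(a,a)_1 \;=\; b_U\bigl(c^*(a'),c^*(a')\bigr)_1 \;\equiv\; 0 \pmod 2,
\]
where $c:=c_0\cup c_1$. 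By Proposition~\ref{prop:Herm/G}(1) this gives $\widehat{b(\varphi)}=0$ in $I^1(Y_1;\Z[\pi])$, hence $\mathcal{G}$ acts trivially on $I^1(Y_1;\Z[\pi])$, and Proposition~\ref{prop:Herm/G}(3) supplies the claimed isomorphism.

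The main obstacle I foresee is the careful identification $b(\varphi)=b(\varphi\circ c_1,c_1)$ against the abstract construction of the action of $\mathcal{G}$ on $\Herm(H^2(Y_1;\Z[\pi]))$. One expects this from the cocycle identity $b(\psi\circ\varphi)=b(\psi)+b(\varphi)$ (used in the proof of Proposition~\ref{prop:Herm/G}(3)) applied at $\psi=\id_B$, together with the group-action axiom $b(\id_B)=0$, but the bookkeeping of which map carries $\varphi$ should be checked directly against Proposition~\ref{prop:Action}.
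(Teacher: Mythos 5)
Your first item and the overall frame of your second item coincide with the paper's own proof: the paper likewise quotes Theorem~\ref{thm:VanishOddIntro} for the odd case, and for the weakly even case invokes Proposition~\ref{prop:Herm/G}(3) and reduces to showing that the transported action of $\mathcal{G}$ on $I^1(Y_1;\Z[\pi])$ is trivial, i.e.\ that $\widehat{b}(\varphi)=0$ for every $\varphi\in\mathcal{G}$. Your identification of $b(\varphi)$ with the secondary pairing $b(\varphi\circ c_1,c_1)$ computed from $b_U$ for the double $U:=X_1\cup_{\id_{Y_1}}X_1$ is also fine --- it is essentially the definition (Construction~\ref{cons:bphi}), so the "main obstacle" you flag is not where the difficulty lies.

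The genuine gap is the sentence asserting that the cup-product argument at the end of the proof of Lemma~\ref{lem:WeaklyEvenUnivCoverSpin}, ``applied to the double $\widetilde{U}$'', yields that $\lambda_U$ is weakly even. That argument only controls $(\lambda_{X_1}(x,x))_g$ for $x\in H_2(X_1;\Z[\pi])$, i.e.\ for classes pulled back from $H^2_{cs}(\widetilde{U},\widetilde{X}_1;\Z_2)\cong H^2_{cs}(\widetilde{X}_1,\widetilde{Y}_1;\Z_2)$; it says nothing about the classes of $H_2(U;\Z[\pi])$ with nonzero image in $H_1(Y_1;\Z[\pi])$, and those are precisely the delicate ones: for a general union $X_0\cup_h X_1$ of pieces with weakly even forms the form of the union need \emph{not} be weakly even --- that is the whole point of the secondary obstruction and of Theorems~\ref{thm:VanishSpinIntro} and~\ref{thm:obstruction0ImpliesEvenIntro}. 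So your key input ``$\lambda_U$ is weakly even'' is asserted rather than proved, and a direct ``double of spin is spin'' argument would itself need care here, since $U$ is only a Poincar\'e complex. The paper sidesteps this with a purely formal trick (the claim opening the proof of Theorem~\ref{thm:obstruction0ImpliesEvenIntro}): repeating Lemmas~\ref{lem:EvenIff}, \ref{lem:EvenIffTwoThings} and~\ref{lem:FormEvenImpliesThing} for $X_1\cup_{Y_1}X_1$ shows that, for every $\varphi\in\mathcal{G}$, $b(\varphi)$ is weakly even if and only if $b_{X_1\cup_{Y_1}X_1}$ is; taking $\varphi=\id_B$, where one may choose $\xi=0$ so that $b(\id_B)=0$, forces the latter condition and hence the weak evenness of every $b(\varphi)$, with no geometric input. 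Equivalently, you could repair your argument by citing Theorem~\ref{thm:obstruction0ImpliesEvenIntro} itself with $X_0=X_1$, $h=\id_{Y_1}$, $c_0=c_1$ and $b(c_1,c_1)=0$ to conclude that the double has weakly even form; with that substitution your route goes through.
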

\begin{proof}
The first item is Theorem~\ref{thm:VanishOddIntro}.
The second follows from the third item of Proposition~\ref{prop:Herm/G} and from the assertion that, when $\lambda_{X_i}$ is weakly even, $I^1(Y_1;\Z[\pi])/\mathcal{G}=I^1(Y_1;\Z[\pi])$.
This assertion amounts to showing that the action~$\varphi \cdot x:=\widehat{b}(\varphi) +x$ is trivial, i.e. that $\widehat{b}(\varphi)$ is weakly even.
But this is the first claim in the proof of Theorem~\ref{thm:obstruction0ImpliesEvenIntro} at the end of Section~\ref{sec:WeaklyEven}. 
\end{proof}

In particular, when $\pi$ is trivial, our target is isomorphic to Boyer's in the even case.
In the odd case,  our target is trivial whereas Boyer's need not be so.

\subsection{Proof of the statements from the introduction}

This section shows how the previous results lead to the statements from the introduction.
We recall the relevant statements both for the reader's convenience and to record that they hold more generally than stated above: namely they hold for $4$-dimensional Poincar\'e pairs.
In order to avoid repeating the set-up,  we fix the following notation for the remainder of this section.
\begin{notation}
Fix~$4$-dimensional Poincar\'e pairs~$(X_0,Y_0)$ and~$(X_1,Y_1)$ with~$\pi_1(X_i) \cong \pi$,  the same~$2$-type, and~$\iota_j \colon \pi_1(Y_j) \to \pi_1(X_j)$ surjective for $j=0,1$.
Additionally, fix a degree one homotopy equivalence~$h \colon Y_0 \to Y_1$.
\end{notation}

The next result generalises Theorem~\ref{thm:IntroTorsion} from the introduction.

\begin{theorem}
\label{thm:IntroTorsionPoincarePair}
Let $\pi$ be a group and assume 
\begin{itemize}
\item either that $\pi$ is finite and~$\Gamma(\pi_2(X_1)) \otimes_{\Z[\pi]} \Z$ is torsion free,
\item or that~$\cd(\pi) \leq 3$ and that~$\ev^*$ is an isomorphism.
\end{itemize}
When~$H_1(Y_1;\Z[\pi])^*=0$,  the following assertions are equivalent:
\begin{enumerate}
\item the homotopy equivalence~$h$ extends to a homotopy equivalence~$X_0 \to X_1$;
\item there exists an isomorphism $u \colon \pi_1(X_0) \cong \pi_1(X_1)$ with $u \circ \iota_0=\iota_1 \circ h_*$ and a compatible triple $(F,G,h)$  with 
$$F_*(k_{X_0,Y_0})=h^*(k_{X_1,Y_1}).$$
When $\cd(\pi) \leq 3$, the condition involving the relative $k$-invariants can be omitted.
\end{enumerate}
Additionally,  given a $k$-invariant preserving compatible triple $(F,G,h)$ as above,  the homotopy equivalence $X_0 \to X_1$ extending $h$ can be chosen to induce $F$ and $G$ on $\Z[\pi]$-homology.
\end{theorem}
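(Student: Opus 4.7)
The plan is to reduce the theorem to a conjunction of the main technical result, Theorem~\ref{thm:MainTheoremBoundaryIntro}, with the cohomological reformulation of the primary obstruction, Theorem~\ref{thm:PrimaryObstructionRecastIntro}, and then to observe that the hypothesis $H_1(Y_1;\Z[\pi])^*=0$ forces the target of the secondary obstruction to vanish outright.

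First I would check that the hypotheses of Theorem~\ref{thm:MainTheoremBoundaryIntro} are met in both cases: in the finite case, the assumption that $\Gamma(\pi_2(X_1)) \otimes_{\Z[\pi]} \Z$ is torsion free is precisely the second bullet, while in the $\cd(\pi) \leq 3$ case the hypothesis that $\ev^*$ is an isomorphism gives the first bullet. To make Theorem~\ref{thm:PrimaryObstructionRecastIntro} applicable, I need $H^2(\pi;\Z[\pi]) \to H^2(Y_1;\Z[\pi])$ to be injective; since $H_1(Y_1;\Z[\pi])^*=0$, this is guaranteed by the remark recalling Proposition~\ref{prop:TorsionImpliesH2mapInj}. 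The hypothesis on $H_1(Y_1;\Z[\pi])$ in Theorem~\ref{thm:PrimaryObstructionRecastIntro} is automatic with $T=H_1(Y_1;\Z[\pi])$ and $L=0$.

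Next, the crucial observation for the secondary obstruction is that $\Herm(H^2(Y_1;\Z[\pi]))=0$. Indeed, a hermitian form on a $\Z[\pi]$-module $M$ is a $\Z[\pi]$-linear map $M \to M^*$, so the space of hermitian forms vanishes whenever $M^*=0$. Applying Poincaré duality to the three-dimensional Poincaré complex $Y_1$ with coefficients in $\Z[\pi]$ (via the surjection $\pi_1(Y_1) \to \pi$) yields $H^2(Y_1;\Z[\pi]) \cong H_1(Y_1;\Z[\pi])$; dualising and using the hypothesis gives $H^2(Y_1;\Z[\pi])^* \cong H_1(Y_1;\Z[\pi])^*=0$. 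Consequently $\Herm(H^2(Y_1;\Z[\pi]))/\mathcal{G}=0$ and the secondary obstruction is trivially zero.

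Combining the three inputs, the extension criterion of Theorem~\ref{thm:MainTheoremBoundaryIntro} reduces to the vanishing of the primary obstruction, which by Theorem~\ref{thm:PrimaryObstructionRecastIntro} is equivalent to the existence of a $k$-invariant preserving compatible triple $(F,G,h)$ together with the compatible group isomorphism $u$. This yields the equivalence of (1) and (2). The realisation statement follows by combining the last sentences of Theorem~\ref{thm:MainTheoremBoundaryIntro} and Theorem~\ref{thm:PrimaryObstructionRecastIntro}: the maps $c_0,c_1$ produced from $(F,G,h)$ satisfy $(c_1)_*^{-1}(c_0)_* = F$ on $\pi_2$ and on the relative $H_2$, and the homotopy equivalence furnished by Theorem~\ref{thm:MainTheoremBoundaryIntro} can be arranged so that $c_1 \circ f \simeq c_0$, whence $f_*=F$ and $f_*=G$ on $\Z[\pi]$-homology. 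Finally, to omit the $k$-invariant condition when $\cd(\pi) \leq 3$, I would invoke the second bullet of Proposition~\ref{prop:NokinvariantIntro}: bounded cohomological dimension gives $H_4(\pi)=0$, and $H_1(Y_0;\Z[\pi])^* \cong H_1(Y_1;\Z[\pi])^* =0$ via $h_*$, so the existence of any compatible triple $(F,G,h)$ already forces $F_*(k_{X_0,Y_0})=h^*(k_{X_1,Y_1})$.

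The only step that requires genuine care is the Poincaré duality step in the second paragraph: one must be sure that the coefficient system $\Z[\pi]$ on $Y_1$, pulled back along $\iota_1$, is compatible with the pairing defining $\Herm$, and that taking duals commutes appropriately with Poincaré duality. Everything else is a matching of hypotheses between already-established results.
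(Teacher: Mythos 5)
Your proposal is correct and follows essentially the same route as the paper: reduce to Theorems~\ref{thm:PrimaryObstructionRecastIntro} and~\ref{thm:MainTheoremBoundaryIntro}, note that $H_1(Y_1;\Z[\pi])^*=0$ kills the secondary obstruction (the paper states this in one line; your Poincar\'e-duality argument that $\Herm(H^2(Y_1;\Z[\pi]))=0$ is the intended justification), and invoke the second bullet of Proposition~\ref{prop:NokinvariantIntro} to drop the $k$-invariant condition when $\cd(\pi)\leq 3$. The realisation clause is likewise handled exactly as in the paper, by combining the corresponding clauses of Theorems~\ref{thm:PrimaryObstructionRecastIntro} and~\ref{thm:MainTheoremBoundary}.
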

\begin{proof}
The~$(1) \Rightarrow (2)$ direction is not overly challenging and so we focus on the $(2) \Rightarrow (1)$ direction.
By Theorem~\ref{thm:PrimaryObstructionRecastIntro},  the existence of a $k$-invariant preserving compatible triple ensures the existence of $c_0,c_1$ with vanishing primary obstruction.
Since~$H_1(Y_i;\Z[\pi])^*=0$, there is no secondary obstruction.
Theorem~\ref{thm:MainTheoremBoundaryIntro} now ensures that the homotopy equivalence extends.
When~$\cd(\pi) \leq 3$, Proposition~\ref{prop:NokinvariantIntro} ensures that the $k$-invariant condition can be ignored. 
The fact that a given compatible triple can be realised follows from the corresponding clauses in Theorems~\ref{thm:PrimaryObstructionRecastIntro} and~\ref{thm:MainTheoremBoundary}.
\end{proof}

\begin{remark}
We explain why Theorem~\ref{thm:IntroTorsion} follows from Theorem~\ref{thm:IntroTorsionPoincarePair}.
If $\pi$ is finite abelian with two generators or is finite dihedral, then~$\Gamma(\pi_2(X_1)) \otimes_{\Z[\pi]} \Z$ is torsion free~\cite{HambletonKreck,KasprowskiPowellRuppik,KasprowskiNicholsonRuppik}, whereas if $\pi$ is either a surface group,  or a solvable Baumslag--Solitar group,  or the free product of a free group with~$\PD_3$ groups,  then $\cd(\pi)\leq 3$, and Proposition~\ref{prop:ev*Iso} then ensures that $\ev^*$ is an isomorphism.
Recall that a group $\pi$ is a \emph{$\PD_n$ group} if $B\pi$ is a $n$-dimensional Poincar\'e complex.
\end{remark}

The next result generalises Theorem~\ref{thm:1d3dIntro} from the introduction.

\begin{theorem}
\label{thm:IntroSpinPoincarePair}
Let $\pi$ be a free product of a free group and~$\PD_3$ groups
and assume $H_1(Y_j;\Z[\pi]) \cong L \oplus T$ with $L$ a~$\Z[\pi]$-free module and $T^*=0$.
\begin{itemize}
\item When the universal covers of $X_0$ and $X_1$ are not spin,~$h$ extends to a homotopy equivalence~$X_0 \to X_1$ if and only if there is a group isomorphism $u \colon \pi_1(X_0) \cong \pi_1(X_1)$ with~$u \circ \iota_0=\iota_1 \circ h_*$, and there is a $k$-invariant preserving compatible triple $(F,G,h)$.
\item  When the universal covers of $X_0$ and $X_1$ are spin,  $h$ extends to a homotopy equivalence~$X_0 \to X_1$ if and only if there is a group isomorphism $u \colon \pi_1(X_0) \cong \pi_1(X_1)$ with~$u \circ \iota_0=\iota_1 \circ h_*$,  there is a $k$-invariant preserving compatible triple~$(F,G,h)$,  and  the universal cover of $X_0 \cup_h X_1$ is spin.
\end{itemize}
When the universal covers are spin, given a~$k$-invariant preserving compatible triple~$(F,G,h)$,  the homotopy equivalence $X_0 \to X_1$ extending $h$ can be chosen to induce~$F$ on~$\Z[\pi]$-homology.
\end{theorem}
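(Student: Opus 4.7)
The necessity direction in both items is essentially formal: given a homotopy equivalence $H \colon X_0 \to X_1$ extending $h$, the pair $(H_*,H_*,h)$ is a compatible triple, naturality of the relative $k$-invariant (Section~\ref{sec:Relativek}) forces $H_*(k_{X_0,Y_0}) = h^*(k_{X_1,Y_1})$, and in the spin case the map $H \sqcup \id_{X_1}$ identifies $X_0 \cup_h X_1$ with the double $X_1 \cup_{\id} X_1$, whose universal cover is spin.

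For sufficiency I plan to apply Theorem~\ref{thm:MainTheoremBoundaryIntro}, whose first bullet requires $\cd(\pi) \leq 3$ and $\ev^*$ an isomorphism. The dimension bound holds because free groups have $\cd=1$, $\PD_3$-groups have $\cd = 3$, and $\cd \leq 3$ is preserved under free products; the $\ev^*$ condition is exactly Proposition~\ref{prop:ev*Iso} for such $\pi$. Before invoking the obstruction-theoretic results I would record the vanishing $H^2(\pi; \Z[\pi]) = 0$, which follows from $H^2(F;\Z[F])=0$ for a free group $F$, from $H^2(G;\Z[G]) \cong H_1(G;\Z[G]) = 0$ for a $\PD_3$-group $G$ (Poincar\'e duality together with the vanishing of $H_k(EG)$ for $k\geq 1$), and from Mayer--Vietoris applied to $B\pi \simeq BA \vee BB$; this makes the injectivity hypothesis of Theorem~\ref{thm:PrimaryObstructionRecastIntro} automatic. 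Given the $k$-invariant preserving compatible triple $(F,G,h)$, Theorem~\ref{thm:PrimaryObstructionRecastIntro} then produces $3$-connected maps $c_0, c_1$ with $c_0|_{Y_0} = c_1|_{Y_1} \circ h$ realising $F$ and $G$ as $(c_1)_*^{-1}(c_0)_*$, and with vanishing primary obstruction.

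It remains to kill the secondary obstruction $b(c_0, c_1) \in \Herm(H^2(Y_1; \Z[\pi]))/\mathcal{G}$, and this is where the two items diverge. In the non-spin case, Lemma~\ref{lem:WeaklyEvenUnivCoverSpin} forces $\lambda_{X_1}$ to be odd, so Theorem~\ref{thm:VanishOddIntro} collapses the entire target to zero. In the spin case, the hypothesis that $X_0 \cup_h X_1$ has spin universal cover translates via Lemma~\ref{lem:WeaklyEvenUnivCoverSpin} into $\lambda_{X_0 \cup_h X_1}$ being weakly even, and Theorem~\ref{thm:VanishSpinIntro} not only kills $b(c_0, c_1)$ modulo $\mathcal{G}$ but produces $c_0' \simeq c_0$ with $b(c_0', c_1) = 0$ on the nose in $\Herm(H^2(Y_1;\Z[\pi]))$---this refinement is what powers the realisation clause. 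Feeding $(c_0', c_1)$ (resp.~$(c_0, c_1)$) into Theorem~\ref{thm:MainTheoremBoundaryIntro} then yields the desired homotopy equivalence extending $h$, and combining the realisation statements of Theorems~\ref{thm:PrimaryObstructionRecastIntro}, \ref{thm:VanishSpinIntro}, and~\ref{thm:MainTheoremBoundaryIntro} delivers the concluding sentence about inducing $F$ on $\Z[\pi]$-homology. I expect the only non-routine step to be the simultaneous verification that free products of free and $\PD_3$ groups satisfy $\cd(\pi)\leq 3$, $\ev^*$ iso, and $H^2(\pi;\Z[\pi])=0$; once this package is in place, the theorem falls out by chaining the auxiliary results.
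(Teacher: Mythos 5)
Your sufficiency argument is the paper's proof: verify $\cd(\pi)\leq 3$, that $\ev^*$ is an isomorphism (\cref{prop:ev*Iso}) and $H^2(\pi;\Z[\pi])=0$ (\cref{prop:H2piZpi=0}); feed the $k$-invariant preserving compatible triple into \cref{thm:PrimaryObstructionRecastIntro}; kill the secondary obstruction via \cref{thm:VanishOddIntro} in the odd case and via \cref{lem:WeaklyEvenUnivCoverSpin} plus \cref{thm:VanishSpinIntro} in the weakly even case; conclude with \cref{thm:MainTheoremBoundaryIntro}, and extract the realisation clause from the ``$c_0'\simeq c_0$ with $b(c_0',c_1)=0$ on the nose'' refinement. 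This matches the paper step for step, including the role of the hypothesis $H_1(Y_1;\Z[\pi])\cong L\oplus T$ in upgrading weakly even to even.

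The one place you diverge is the necessity of the spin condition on $X_0\cup_h X_1$. You identify $X_0\cup_h X_1\simeq X_1\cup_{\id}X_1$ via $H\cup\id$ and assert that the double has spin universal cover. For manifolds this is immediate (glue the induced spin structures on the boundary of the universal cover), but the statement you are proving is for Poincar\'e pairs, where ``$\widetilde{X}$ is spin'' is only accessed through \cref{lem:WeaklyEvenUnivCoverSpin}, i.e.\ through weak evenness of the intersection form; that the double of a Poincar\'e pair with weakly even $\lambda_{X_1}$ again has weakly even form is not a formality—its content is exactly the intersection-form analysis of Section~\ref{sec:WeaklyEven}. The paper instead argues: an extension gives $3$-connected maps with $b(c_0,c_1)=0$ (see \cref{thm:MainTheoremBoundary}), and then \cref{thm:obstruction0ImpliesEvenIntro} forces $\lambda_{X_0\cup_h X_1}$ to be weakly even. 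Your route can be repaired with the same tools (apply \cref{prop:WeaklyEvenIff}, or \cref{thm:obstruction0ImpliesEvenIntro}, with $X_0=X_1$, $h=\id$, $c_0=c_1$, using $b(c_1,c_1)=0$), so this is an under-justified step rather than a wrong one, but as written the double claim needs that citation in the Poincar\'e-pair generality.
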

\begin{proof}
Lemma~\ref{lem:WeaklyEvenUnivCoverSpin} ensures that the universal covers of~$X_0$ and~$X_1$ are spin if and only if~$\lambda_{X_0} \cong \lambda_{X_1}$ is weakly even.
Also,  the condition~$H_1(Y_1;\Z[\pi])\cong L \oplus T$ ensures that weakly even hermitian forms on $H^2(Y_1;\Z[\pi])$ are even hermitian (recall Lemma~\ref{lem:WeaklyEvenEven}).

The existence of a homotopy equivalence extending~$h$ is readily seen to imply that there is a~$k$-invariant preserving compatible triple and that there are~$3$-connected maps~$c_0,c_1$ into the $2$-type of~$X_1$ with~$b(c_0,c_1)=0$ (see Theorem~\ref{thm:MainTheoremBoundary} for this latter point).
In the even case, Theorem~\ref{thm:obstruction0ImpliesEvenIntro} then ensures that~$X_0 \cup_h X_1$ has weakly even equivariant intersection form.
Lemma~\ref{lem:WeaklyEvenUnivCoverSpin} ensures that the universal cover of $X_0 \cup_h X_1$ is spin.

We focus on the converse.
First, we note that Propositions~\ref{prop:H2piZpi=0} and~\ref{prop:ev*Iso} show that our assumption on the group~$\pi$ ensures that the hypotheses of Theorems~\ref{thm:MainTheoremBoundaryIntro},~\ref{thm:PrimaryObstructionRecastIntro} and~\ref{thm:VanishSpinIntro} are satisfied.
Next,
by Theorem~\ref{thm:PrimaryObstructionRecastIntro},  the existence of a compatible triple ensures the existence of~$3$-connected maps~$c_0 \colon X_0 \to B,c_1 \colon X_1 \to B$ such that~$c_0|_{Y_0} =  c_1|_{Y_1} \circ h$
and with vanishing primary obstruction.
In the nonspin case,  Theorem~\ref{thm:VanishOddIntro} guarantees that $b(c_0,c_1)=0 \in \Herm(H^2(Y_1;\Z[\pi]))/\mathcal{G}.$
In the spin case, 
Lemma~\ref{lem:WeaklyEvenUnivCoverSpin} ensures that~$\lambda_{X_0 \cup_h X_1}$ is weakly even and Theorem~\ref{thm:VanishSpinIntro} then implies that~$b(c_0,c_1)=0 \in \Herm(H^2(Y_1;\Z[\pi]))/\mathcal{G}.$
Theorem~\ref{thm:MainTheoremBoundaryIntro} now ensures that the homotopy equivalence extends.
For the final sentence, Theorem~\ref{thm:VanishSpinIntro} ensures that there is a~$c_0' \simeq c_0$ with~$c_1|_{Y_1} \circ h=c_0'|_{Y_0}$ and~$b(c_0',c_1)=0  \in \Herm(H^2(Y_1;\Z[\pi]))$, so the fact that a given compatible triple can be realised follows from the corresponding clauses in Theorems~\ref{thm:PrimaryObstructionRecastIntro} and~\ref{thm:MainTheoremBoundary}.
\end{proof}

\section{Preparatory results}
\label{sec:Prep}

This section collects some preliminary results needed to define the primary and secondary obstructions.
Section~\ref{sub:PoincarePairs} records some facts about Poincar\'e pairs.
Given a~$4$-dimensional Poincar\'e pair~$(X,Y)$ with~$\pi_1(Y) \to \pi_1(X)=:\pi$ surjective,  Section~\ref{sub:pi2} focuses on the~$\Z[\pi]$-module~$\pi_2(X)$.
Next, given a space~$B$ and a~$\sigma \in H_4(B)$,  Section~\ref{sub:Cap} considers the pairing
$$
b_\sigma  \colon H^2(B;\Z[\pi]) \times H^2(B;\Z[\pi])  \to  \Z[\pi],
(\alpha,\beta) \mapsto \langle \beta,\alpha \cap \sigma \rangle.
$$
Section~\ref{sub:Pushout} is concerned with the 
pushout of two Poincar\'e pairs along a homotopy equivalence.
Finally the short Section~\ref{sub:ExtensionLemma}  states and proves an extension lemma.

\subsection{Poincar\'e pairs}
\label{sub:PoincarePairs}

This short section collects two lemmas concerning Poincar\'e pairs.
\medbreak

We begin by proving a fact that was mentioned in Remark~\ref{rem:PoincarePair}.

\begin{lemma}
	\label{lem:manifolds-pd-pairs}
	If $M$ is an $n$-manifold, then $(M,\partial M)$ is a $n$-dimensional Poincar\'e pair.
\end{lemma}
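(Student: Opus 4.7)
The plan is to verify each of the four bullet points in Definition~\ref{def:PoincarePair} for the pair $(M, \partial M)$, with the main subtlety being that topological manifolds need not \emph{a priori} carry a CW structure, so the condition that $(M, \partial M)$ be homotopy equivalent to a CW pair needs care.

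First I would address the cofibration condition and the CW homotopy type together. By the collar neighborhood theorem of Brown for topological manifolds, $\partial M$ admits an open collar in $M$, which implies that the inclusion $\partial M \hookrightarrow M$ is a cofibration. Then I would invoke the fact that every compact topological manifold is a compact ANR and that every compact ANR pair is homotopy equivalent to a finite CW pair; this is a classical result of West (building on Borsuk and Milnor). This simultaneously gives a homotopy equivalence $(M, \partial M) \simeq (X', Y')$ with $(X', Y')$ a finite CW pair and shows that $\partial M$ is itself homotopy equivalent to a finite CW complex.

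Next I would establish the existence and compatibility of fundamental classes. Since $M$ is an oriented topological $n$-manifold with boundary, the theory of Poincaré--Lefschetz duality for topological manifolds (see, e.g., Wall or Bredon, after reducing to the smooth/PL case away from the Kirby--Siebenmann obstruction, or directly via the Čech--Alexander--Spanier approach for topological manifolds) provides an orientation class $[M] \in H_n(M, \partial M)$ whose image under the connecting homomorphism $\partial \colon H_n(M, \partial M) \to H_{n-1}(\partial M)$ is the orientation class $[\partial M]$ of the closed $(n-1)$-manifold $\partial M$. Poincaré--Lefschetz duality then provides, for every local coefficient system, isomorphisms
\[
-\cap [M] \colon H^k(M; \mathcal{L}) \xrightarrow{\cong} H_{n-k}(M, \partial M; \mathcal{L}), \qquad -\cap [\partial M] \colon H^k(\partial M; \mathcal{L}') \xrightarrow{\cong} H_{n-1-k}(\partial M; \mathcal{L}').
\]
Applied to $\mathcal{L} = \mathbb{Z}[\pi_1(M)]$ and $\mathcal{L}' = \mathbb{Z}[\pi_1(\partial M)]$, this yields the required Poincaré duality statements for $M$ and for $\partial M$ (combining them with the first bullet shows that $\partial M$ is an $(n-1)$-dimensional Poincaré complex in the sense of Definition~\ref{def:PoincarePair}).

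The main obstacle is the CW homotopy type of the pair: naïvely one would appeal to a triangulation, which is unavailable for general topological $4$-manifolds. I expect the cleanest way around this is to cite the ANR--CW theorem for compact pairs rather than to attempt a hands-on construction. The Poincaré duality step itself is then formal once one has the topological duality theorem at one's disposal, and the compatibility $\partial[M] = [\partial M]$ is just naturality of the fundamental class under the boundary map.
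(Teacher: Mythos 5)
Your argument is correct in outline and shares the paper's skeleton (collar neighbourhood $\Rightarrow$ cofibration; Poincar\'e--Lefschetz duality with $\Z[\pi_1]$-coefficients for topological manifolds; the only real issue being the CW homotopy type of the pair), but it diverges at exactly the point where the paper does its actual work. You dispose of the CW-pair condition by citing ``every compact ANR pair is homotopy equivalent to a finite CW pair'' as a classical result of West; West's theorem, as usually stated, is the \emph{absolute} statement that a compact ANR has the homotopy type of a finite CW complex, and the pair version is a (true, but folklore) consequence that still needs an argument: given that $M$ and $\partial M$ individually have CW homotopy type and that $\partial M\hookrightarrow M$ is a cofibration, one replaces $M$ by the mapping cylinder of a cellular approximation of $Y'\xrightarrow{\simeq}\partial M\hookrightarrow M\simeq X'$ and upgrades the absolute homotopy equivalences to an equivalence of pairs via homotopy extension and the gluing lemma. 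This mapping-cylinder construction is precisely the content of the paper's proof (and of its Lemma~\ref{lem:PoincarePairIs4Complex}), so your route is not wrong, but it buys brevity only by blackboxing the step in question; if you want to keep your phrasing you should either supply a reference that genuinely treats pairs or include this short construction. The paper, by contrast, only inputs the absolute CW homotopy type and duality for topological manifolds (its citation to Friedl--Nagel--Orson--Powell covers both) and is self-contained from there. One further small point: your aside about ``reducing to the smooth/PL case away from the Kirby--Siebenmann obstruction'' is unnecessary and somewhat misleading --- Poincar\'e--Lefschetz duality with local coefficients holds for topological manifolds directly, with no appeal to smoothing theory, and the compatibility $\partial[M]=[\partial M]$ is, as you say, just naturality.
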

\begin{proof}
	Since~$\partial M$ is a manifold, it satisfies Poincar\'e duality and is homotopy equivalent to a CW complex, see e.g.~\cite[Theorem~3.16]{FriedlNagelOrsonPowell}.
	Hence~$\partial M$ is a Poincar\'e complex. 
	Since~$M$ satisfies Poincar\'e duality and~$\partial M\hookrightarrow M$ is a cofibration (thanks to the existence of a collar neighborhood of~$\partial M$),  it only remains to show that~$(M,\partial M)$ is homotopy equivalent to a CW pair.

The manifolds $M$ and $\partial M$ are homotopy equivalent to CW complexes $X$ and $Y$, respectively, see e.g.~\cite[Theorem~3.16]{FriedlNagelOrsonPowell}.
Choose a homotopy equivalence $h\colon \partial M\to Y$,  a homotopy inverse $\overline{h} \colon Y \to \partial M$, and consider the map
	\[f\colon Y\xrightarrow{\overline{h}} \partial M\hookrightarrow M\simeq X.\]
By cellular approximation,  after possibly changing $f$ by a homotopy, we can assume that it is cellular.
	Since~$Y$ is a CW complex and~$f$ is cellular, it follows that the mapping cylinder~$M(f)$ is a CW complex containing $Y$ as a subcomplex.
	We will show that $(M,\partial M)\simeq (M(f),Y)$.
	
	The homotopy equivalence~$g\colon M\simeq X\hookrightarrow M(f)$ restricts to a map~$g|_{\partial M}\colon \partial M\to M(f)$ such that the composition $Y\xrightarrow{\overline{h}} \partial M\xrightarrow{g|_{\partial M}}M(f)$ is homotopic to the composition $Y\xrightarrow{f}X\hookrightarrow M(f)$. 
	The latter is homotopic to the inclusion $Y\hookrightarrow M(f)$. 
It follows that~$g|_{\partial M}$ is homotopic to the composition $\partial M\xrightarrow{h}Y\hookrightarrow M(f)$. 
Applying homotopy extension, $g$ is homotopic to a homotopy equivalence $g'\colon M\to M(f)$ that restricts to the homotopy equivalence~$h\colon \partial M\to Y$. 
Since the maps~$\partial M\to M$ and $Y\to M(f)$ are cofibrations, the map $(g',h)\colon (M,\partial M)\to (M(f),Y)$ is a homotopy equivalence of pairs by e.g.~\cite[7.4.2]{BrownTopologyGroupoids}.
\end{proof}

\begin{lemma}
	\label{lem:PoincarePairIs4Complex}
	For $n\geq 4$, every $n$-dimensional Poincar\'e pair $(X,Y)$ is homotopy equivalent to a~CW pair $(X',Y')$ such that $Y'$ is $(n-1)$-dimensional and $X'$ is $n$-dimensional.
\end{lemma}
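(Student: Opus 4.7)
The plan is to reduce dimensions in two stages, first for $Y$ and then for $X$ relative to $Y$, using the classical cell-trading arguments of Wall for Poincar\'e complexes and pairs (\cite{WallPoincare}). By the third bullet of Definition~\ref{def:PoincarePair}, there is a CW pair $(X_0, Y_0)$ and a homotopy equivalence of pairs $(X,Y) \simeq (X_0, Y_0)$, so after replacing we may assume $(X,Y)$ is itself a CW pair. Set $\pi = \pi_1(X)$ and $\rho = \pi_1(Y)$.

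First I would reduce $Y_0$. Since $Y$ is an $(n-1)$-dimensional Poincar\'e complex with $n-1 \geq 3$, the $\Z[\rho]$-chain complex $C_*(\widetilde{Y_0})$ is, by Poincar\'e duality, chain homotopy equivalent to a finite free $\Z[\rho]$-complex concentrated in degrees $\leq n-1$. Wall's realization lemma for Poincar\'e complexes then allows one to modify $Y_0$ in its homotopy type: top-dimensional cells in degree $d > n-1$ can be traded against $(d-1)$-cells (using that the top boundary map is surjective after reducing the complex), one degree at a time, until one reaches a CW complex $Y'$ of dimension $n-1$ homotopy equivalent to $Y_0$. Because $n - 1 \geq 3$, there is enough room to perform all of these trades.

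Next I would transfer this reduction to a pair. The cofibration $Y_0 \hookrightarrow X_0$ combined with the homotopy extension property (applied to the homotopy equivalence $Y_0 \xrightarrow{\simeq} Y'$ composed with the inclusion $Y' \hookrightarrow M(Y_0 \to Y')$) yields a CW pair $(X_1, Y')$ homotopy equivalent to $(X_0, Y_0)$, and hence to $(X,Y)$, with $\dim Y' = n-1$.

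Finally I would reduce the relative dimension of $X_1$. By Poincar\'e--Lefschetz duality (Remark~\ref{rem:PoincarePair}), $H_k(\widetilde{X_1}, \widetilde{Y'}; \Z[\pi]) \cong H^{n-k}(\widetilde{X_1}; \Z[\pi]) = 0$ for $k > n$. Hence the relative cellular chain complex $C_*(\widetilde{X_1}, \widetilde{Y'})$ is chain homotopy equivalent to a finite free $\Z[\pi]$-complex in degrees $\leq n$. Applying Wall's cell trading to the relative CW structure of $(X_1, Y')$, one can inductively trade or cancel cells of dimension $d > n$ against cells of dimension $d-1 \geq n$, leaving $Y'$ untouched. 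The hypothesis $n \geq 4$ guarantees the dimensional slack needed for these geometric modifications (the trading happens away from $Y'$ in relative dimensions $\geq n \geq 4$). The outcome is a CW pair $(X', Y')$ homotopy equivalent to $(X, Y)$ with $\dim Y' = n-1$ and $\dim X' = n$.

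The main obstacle is the cell-trading step at each stage: one must verify that a top-dimensional cell whose relative boundary is nullhomologous in the equivariant chain complex can be geometrically eliminated by introducing an auxiliary cell of one dimension lower and collapsing. This is the content of Wall's realization argument in \cite{WallPoincare}, and the fact that both $n-1 \geq 3$ (for $Y$) and $n \geq 4$ (for $X$ rel $Y$) provide the dimension ranges in which this realization is unobstructed.
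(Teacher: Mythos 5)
Your argument is correct, and it shares its first half with the paper but takes a genuinely different route in the second half. Both proofs begin by reducing $Y$ to an $(n-1)$-dimensional complex $Y'$ via Wall's dimension-lowering results (the paper quotes~\cite[Corollary~5.1]{WallFinitenessII}; your cell-trading description is the technique behind that statement, and note that the chain-level reduction really requires duality with \emph{all} coefficient systems --- available by Wall's Lemma~1.2, as recalled in Remark~\ref{rem:PoincarePair} --- since vanishing of $H_i(\widetilde{Y})$ above degree $n-1$ with group-ring coefficients alone does not force the top kernel to be projective). After that the two arguments diverge: the paper also lowers the dimension of $X$ as an \emph{absolute} complex to an $n$-complex $X''$, and only then assembles the pair, taking the mapping cylinder of a cellular map $Y'\to Y\hookrightarrow X\simeq X''$ (automatically $n$-dimensional because $Y'$ is $(n-1)$-dimensional) and upgrading this to a homotopy equivalence of pairs by the homotopy-extension/gluing argument from Lemma~\ref{lem:manifolds-pd-pairs}. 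You instead form the pair first, by gluing the mapping cylinder of $Y_0\to Y'$ onto $X_0$, and then lower the \emph{relative} dimension of $(X_1,Y')$ rel $Y'$, using $H^{i}(X_1,Y';M)=0$ for $i>n$ (again for all coefficients, via duality) together with $n\geq 3$. Your route therefore needs the relative form of Wall's finiteness/trading theorem, which is available in Wall's papers but is a different input from the absolute~\cite[Corollary~5.1]{WallFinitenessII} used here, and in exchange it avoids the paper's final pair-equivalence argument. Both proofs locate the use of $n\geq 4$ in the same place, namely the requirement $n-1\geq 3$ for the reduction of $Y$.
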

\begin{proof}
	By definition of a Poincar\'e pair, $Y$ is an $(n-1)$-dimensional Poincar\'e complex. 
Next, since~$H^i(Y;\Z[\pi_1(Y)]) \cong H_{n-i-1}(Y;\Z[\pi_1(Y)])=0$ for $i > (n-1)$, a result of Wall~\cite[Corollary 5.1]{WallFinitenessII}
ensures that~$Y$ is homotopy equivalent to an~$(n-1)$-dimensional CW complex $Y'$.
	
	If $Y$ is empty, then the previous paragraph can be applied to $X$, establishing the lemma in this case.
Thus,  from now on,  we assume that $Y$ is nonempty.
 Set~$\pi:=\pi_1(X)$ and note that since~$H^i(X;\Z[\pi]) \cong H_{n-i}(X,Y;\Z[\pi])=0$ for $i > n$,  a second application of~\cite[Corollary~5.1]{WallFinitenessII},  ensures that~$X$ is homotopy equivalent to an $n$-complex $X''$.
 Choose a homotopy equivalence~$h\colon Y\to Y'$,  a homotopy inverse $\overline{h} \colon Y' \to \partial Y$, and consider the map
$$f \colon Y'\xrightarrow{\overline{h}} Y \hookrightarrow X \simeq X''.$$
By cellular approximation,  after possibly changing $f$ by a homotopy, we can assume that it is cellular.
	Since~$Y'$ is an $(n-1)$-dimensional CW complex,~the mapping cylinder $M(f)$ is an~$n$-dimensional CW complex.
The exact same argument as the one in the last paragraph of the proof of Lemma~\ref{lem:manifolds-pd-pairs} then produces a homotopy equivalences of pairs $ (X,Y)\to (M(f),Y')$.
\end{proof}

\subsection{The second homotopy group}
\label{sub:pi2}

We study $\pi_2(X)$ as a $\Z[\pi_1(X)]$-module.
The main result,  stated in Proposition~\ref{prop:StablyFreePD<4},  describes a necessary and sufficient condition for it to be projective.

The next result serves as a first step towards deciding when $\pi_2(X)$ is projective.

\begin{lemma}
\label{lem:pi2Projective-new}
Let~$(X,Y)$ be a 4-dimensional Poincar\'e pair such that~$\pi_1(Y)\to \pi_1(X)=:\pi$ is surjective. 
    There exist finitely generated free~$\Z[\pi]$-modules~$C_0$,~$C_1$ and~$C_2$ and an exact sequence of~$\Z[\pi]$-modules
    \[0\to \pi_2(X)\to C_2\to C_1\to C_0\to \Z\to 0.\]
   In particular,  $\pi_2(X)$ is free as an abelian group.
\end{lemma}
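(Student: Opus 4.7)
The plan is to realise the sought exact sequence as the tail of the relative cellular cochain complex $C^*(X,Y;\Z[\pi])$ after arranging a suitable CW structure on the pair.

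First I invoke \cref{lem:PoincarePairIs4Complex} to replace $(X,Y)$ by a homotopy equivalent CW pair with $\dim X=4$ and $\dim Y=3$. Since $Y$ is nonempty and connected and $\pi_1(Y)\to\pi_1(X)$ is surjective, the pair $(X,Y)$ is $1$-connected, so the standard CW-approximation theorem for pairs lets me further assume that $X$ is obtained from $Y$ by attaching cells only in dimensions $\ge 2$. With this CW structure the relative cellular chain complex $C_*(X,Y;\Z[\pi])$ vanishes in degrees $0$ and $1$ and is finitely generated free over $\Z[\pi]$ in degrees $2,3,4$; dualising yields a cochain complex $C^*(X,Y;\Z[\pi])$ of the same shape.

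Next I use Poincar\'e--Lefschetz duality in the form $H^k(X,Y;\Z[\pi])\cong H_{4-k}(X;\Z[\pi])$ (available in both directions via Wall's strengthening recalled in \cref{rem:PoincarePair}), combined with $H_0(X;\Z[\pi])=\Z$, $H_1(X;\Z[\pi])=0$ (simple connectivity of $\widetilde X$), and the Hurewicz isomorphism $H_2(X;\Z[\pi])\cong\pi_2(X)$, to obtain
$$H^2(X,Y;\Z[\pi])\cong\pi_2(X),\quad H^3(X,Y;\Z[\pi])=0,\quad H^4(X,Y;\Z[\pi])=\Z.$$
Because $C^1(X,Y;\Z[\pi])=0$, the identity $H^2(X,Y;\Z[\pi])=\ker(d^2)$ holds on the nose, and reading off the truncated cochain complex produces the exact sequence
$$0\to\pi_2(X)\to C^2(X,Y;\Z[\pi])\to C^3(X,Y;\Z[\pi])\to C^4(X,Y;\Z[\pi])\to\Z\to 0,$$
which is the sequence of the lemma after setting $C_2:=C^2$, $C_1:=C^3$, $C_0:=C^4$.

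For the ``in particular'' clause, the injection $\pi_2(X)\hookrightarrow C_2$ exhibits $\pi_2(X)$ as a subgroup of a finitely generated free $\Z[\pi]$-module, whose underlying abelian group is free, and subgroups of free abelian groups are free. The only step that needs any real care is the CW-theoretic reduction to a pair with no relative $0$- or $1$-cells, which is handled by standard cell-trading for $1$-connected CW pairs; everything else is a direct Poincar\'e-duality computation combined with the Hurewicz theorem.
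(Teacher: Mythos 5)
Your proof is correct and takes essentially the same route as the paper's: reduce via Lemma~\ref{lem:PoincarePairIs4Complex} and cell trading (using $\pi_1$-surjectivity) to a finite CW pair with relative cells only in dimensions $2,3,4$, then identify $H^2(X,Y;\Z[\pi])\cong H_2(X;\Z[\pi])\cong\pi_2(X)$, $H^3(X,Y;\Z[\pi])=0$, $H^4(X,Y;\Z[\pi])\cong\Z$ by Poincar\'e--Lefschetz duality and Hurewicz, and read the sequence off the relative cochain complex. The only difference is cosmetic (indexing of the differential and the explicit remark on the ``in particular'' clause), so no further comment is needed.
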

\begin{proof}
By Lemma~\ref{lem:PoincarePairIs4Complex}, we can assume that~$X$ is obtained from~$Y$ by attaching cells of dimension at most~$4$.
Since~$\pi_1(Y)\to \pi_1(X)=:\pi$ is surjective, we can furthermore assume that~$X$ is obtained from~$Y$ by attaching cells of dimension~$2, 3$ and~$4$ (to see this,  adapt the proof that~$1$-handles can be traded for~$3$-handles, as e.g. in~\cite[page 258]{DET},  with the birth and death of cancelling~$1,2$-handles and~$2,3$-handles replaced by elementary expanses and collapses of~$1,2$-cells and~$2,3$-cells).
It follows that~$C^1(X,Y;\Z[\pi])=0$ and therefore
\[\pi_2(X)\cong H_2(X;\Z[\pi])\cong H^2(X,Y;\Z[\pi])=\ker \left(d^3\colon C^2(X,Y;\Z[\pi])\to C^3(X,Y;\Z[\pi])\right).\] 
Since~$H^3(X,Y;\Z[\pi])\cong H_1(X;\Z[\pi])=0$ and~$H^4(X,Y;\Z[\pi])\cong H_0(X;\Z[\pi])\cong \Z$,  considering the free $\Z[\pi]$-modules~$C_i:=C^{4-i}(X,Y,\Z[\pi])$ provides the required exact sequence.
\end{proof}

The next proposition characterises when $\pi_2(X)$ is projective.

\begin{proposition}
\label{prop:StablyFreePD<4}
Let~$(X,Y)$ be a 4-dimensional Poincar\'e pair such that~$\pi_1(Y)\to \pi_1(X)=:\pi$ is surjective. 
The following assertions are equivalent:
\begin{itemize}
\item the fundamental group~$\pi$ of~$X$ satisfies~$\cd(\pi)\leq 3$,
\item the~$\Z[\pi]$-module~$\pi_2(X)$ is projective.
\end{itemize}
\end{proposition}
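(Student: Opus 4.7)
The plan is to exploit the partial free resolution of the trivial module provided by Lemma~\ref{lem:pi2Projective-new} and reduce the question to a standard dimension-shift in $\Ext$.

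First, I would invoke Lemma~\ref{lem:pi2Projective-new} to obtain the exact sequence
\[0 \to \pi_2(X) \to C_2 \to C_1 \to C_0 \to \Z \to 0\]
with each $C_i$ a finitely generated free $\Z[\pi]$-module. Setting $K_0 := \ker(C_0 \to \Z)$ and $K_1 := \ker(C_1 \to C_0)$, this splices into three short exact sequences
\[0 \to K_0 \to C_0 \to \Z \to 0, \qquad 0 \to K_1 \to C_1 \to K_0 \to 0, \qquad 0 \to \pi_2(X) \to C_2 \to K_1 \to 0.\]

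Next, since every $C_i$ is projective, the long exact sequences associated with $\Ext^*_{\Z[\pi]}(-,M)$ produce, for every $\Z[\pi]$-module $M$, a string of isomorphisms
\[\Ext^{1}_{\Z[\pi]}(\pi_2(X), M) \;\cong\; \Ext^{2}_{\Z[\pi]}(K_1, M) \;\cong\; \Ext^{3}_{\Z[\pi]}(K_0, M) \;\cong\; \Ext^{4}_{\Z[\pi]}(\Z, M).\]

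Finally, I would conclude using two standard facts: a module $A$ over any ring is projective if and only if $\Ext^{1}(A,-) = 0$, and $\cd(\pi) \leq n$ if and only if $\Ext^{n+1}_{\Z[\pi]}(\Z,-) = 0$ (the nontrivial direction of the latter is precisely the same dimension-shifting argument applied to an arbitrary projective resolution of $\Z$). Applying these with $A = \pi_2(X)$ and $n = 3$, the displayed isomorphism gives the desired equivalence. There is no substantive obstacle here once Lemma~\ref{lem:pi2Projective-new} is in hand; the entire argument is a formal shift in $\Ext$, and the only mildly delicate point is remembering that projectivity is detected purely by $\Ext^1$, so that the single isomorphism with $\Ext^4_{\Z[\pi]}(\Z,-)$ captures both implications simultaneously.
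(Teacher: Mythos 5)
Your argument is correct, and it rests on the same foundation as the paper's proof (the exact sequence $0\to \pi_2(X)\to C_2\to C_1\to C_0\to\Z\to 0$ of Lemma~\ref{lem:pi2Projective-new}), but the homological machinery you apply to it is different. The paper argues both directions more concretely: for $\pi_2(X)$ projective $\Rightarrow \cd(\pi)\leq 3$ it simply observes that the sequence \emph{is} a projective resolution of $\Z$ of length~$3$, and for the converse it compares that sequence with a length-$3$ projective resolution of $\Z$ via Schanuel's lemma, obtaining the explicit stable isomorphism $\pi_2(X)\oplus P_2\oplus C_1\oplus P_0\cong P_3\oplus C_2\oplus P_1\oplus C_0$ and hence projectivity. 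You instead splice the sequence, dimension-shift to get $\Ext^1_{\Z[\pi]}(\pi_2(X),-)\cong \Ext^4_{\Z[\pi]}(\Z,-)$, and then quote the functorial characterisations ``$A$ projective $\iff \Ext^1(A,-)=0$'' and ``$\cd(\pi)\leq n \iff \Ext^{n+1}_{\Z[\pi]}(\Z,-)=0$''. Both of those facts are standard and your shift is valid, so the proof goes through; note only that the nontrivial direction of the second characterisation is itself a syzygy/Schanuel-type argument, so your route is not shorter in substance --- it packages the same content into cited standard facts, whereas the paper's version is self-contained and, in the converse direction, yields the slightly more explicit stable-isomorphism statement as a by-product.
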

\begin{proof}
If~$\pi_2(X)$ is projective, then the exact sequence from \cref{lem:pi2Projective-new} gives a $\Z[\pi]$-projective resolution of~$\Z$ of length~$3$. 
    Hence~$\cd(\pi)\leq 3$ as needed.
Conversely, if~$\cd(\pi)\leq 3$, then there is a projective resolution
$0\to P_3\to P_2\to P_1\to P_0\to \Z\to 0$, and Schanuel's lemma applied to the exact sequence from \cref{lem:pi2Projective-new}
implies that $\pi_2(X) \oplus P_2\oplus C_1\oplus P_0\cong P_3\oplus C_2\oplus P_1\oplus C_0.$
    It follows that~$\pi_2(X)$ is projective,  as required. 
\end{proof}

\subsection{The cap product pairing}
\label{sub:Cap}

Let~$(X,Y)$ be a~$4$-dimensional Poincar\'e pair such that the inclusion induced map~$\pi_1(Y) \to \pi_1(X)=:\pi$ is surjective, and let~$c\colon X \to B$ be a~$3$-equivalence to its Postnikov~$2$-type.
Given~$\sigma \in H_4(B)$,  we consider the pairing (which is hermitian by Lemma~\ref{lem:cup})
\begin{align*}
b_\sigma  \colon H^2(B;\Z[\pi]) \times H^2(B;\Z[\pi]) & \to  \Z[\pi] \\
(\alpha,\beta) &\mapsto \langle \beta,\alpha \cap \sigma \rangle.
\end{align*}
The goal of this section is to study the assignment~$\sigma \mapsto b_\sigma$ and to find conditions ensuring it is injective.
The first step is to relate~$H_4(B)$ to~$\Z \otimes_{\Z[\pi]} H_4(\widetilde{B})$.
Indeed, Whitehead's exact sequence (from \cite{WhiteheadCertainExact}) implies that the latter group is isomorphic~$\Z \otimes_{\Z[\pi]} \Gamma(\pi_2(B))$ and is therefore related to hermitian forms on~$H^2(B;\Z[\pi])$.

\medbreak

\begin{proposition}
\label{prop:H4(B)}
Let~$(X,Y)$ be a~$4$-dimensional Poincar\'e pair such that~$\pi_1(Y) \to \pi_1(X)=:\pi$ is surjective, and let~$c\colon X \to B$ be a~$3$-equivalence to its Postnikov~$2$-type.
If~$\cd(\pi)\leq 3$ or~$\pi$ is finite,  then the projection induces an isomorphism
$$ \Z \otimes_{\Z[\pi]} H_4(\widetilde{B}) \xrightarrow{\cong} H_4(B).$$
\end{proposition}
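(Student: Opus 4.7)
The plan is to apply the Serre spectral sequence of the classifying fibration $\widetilde B\to B\to B\pi$,
$$E^2_{p,q}=H_p(\pi;H_q(\widetilde B))\Longrightarrow H_{p+q}(B),$$
and to identify the map in the statement with the edge homomorphism $E^2_{0,4}\to H_4(B)$. It then suffices to show that $E^\infty_{0,4}=E^2_{0,4}$ and that $E^\infty_{p,q}=0$ for every $(p,q)$ with $p+q=4$ and $p>0$.

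The structural input is the following. Since $\widetilde B\simeq K(\pi_2(X),2)$ and $\pi_2(X)$ is free as an abelian group by Lemma~\ref{lem:pi2Projective-new}, choosing a $\Z$-basis of $\pi_2(X)$ and applying the K\"unneth theorem identifies $H_*(\widetilde B;\Z)$ with a polynomial ring on degree-two generators. In particular $H_q(\widetilde B)=0$ for every odd $q$, every odd row of $E^2$ vanishes, and every even-indexed differential $d_{2r}$ is automatically zero.

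In the case $\cd(\pi)\leq 3$, Proposition~\ref{prop:StablyFreePD<4} makes $\pi_2(X)$ a projective $\Z[\pi]$-module, so $\Tor_i^{\Z[\pi]}(\Z,\pi_2(X))=0$ for $i\geq 1$, while the definition of $\cd$ gives $H_i(\pi;M)=0$ for every $i\geq 4$ and every $M$. On the diagonal $p+q=4$ this annihilates $(2,2)=H_2(\pi;\pi_2(X))$ and $(4,0)=H_4(\pi)$, while the parity observation kills $(1,3)$ and $(3,1)$. The same three vanishings kill every potentially nonzero source $E^r_{r,5-r}$ of an incoming differential into $E^r_{0,4}$: such a source either sits in an odd row, or involves $H_{\geq 4}(\pi;-)$, or involves $\Tor_{\geq 1}^{\Z[\pi]}(\Z,\pi_2(X))$. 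Hence $E^\infty_{0,4}=E^2_{0,4}$ and $E^\infty_{p,q}=0$ elsewhere on the diagonal, completing this case.

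When $\pi$ is finite, surjectivity of $\pi_1(Y)\to\pi$ forces $\pi=1$ if $Y=\emptyset$ and the claim is then immediate, so one may assume $Y\neq\emptyset$; Poincar\'e duality of the pair then gives $H_4(X;\Z)\cong H^0(X,Y;\Z)=0$ and similarly $H_4(X;\Z[\pi])=0$. The plan here is to compare with the Serre spectral sequence of $\widetilde X\to X\to B\pi$ via the $3$-equivalence $c\colon X\to B$. By $3$-connectivity the comparison $f^2\colon E^2_{p,q}(X)\to E^2_{p,q}(B)$ is an isomorphism for $q\leq 2$, its source vanishes for $q=4$, and its target vanishes in every odd row. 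The vanishing $E^\infty_{p,q}(X)=0$ on the diagonal $p+q=4$ is then transported to $B$ by naturality of the odd-indexed differentials on the rows $q=0,2$. The hard part of this step is that in the $X$-spectral sequence the higher differential $d_4^{(X)}\colon E^4_{4,0}(X)\to E^4_{0,3}(X)$ may contribute to killing $H_4(\pi)$, whereas its counterpart $E^4_{0,3}(B)$ vanishes by the parity argument; handling this cleanly will require identifying $d_3$ with cap product by the Postnikov $k$-invariant of the fibration $\widetilde B\to B\to B\pi$ and invoking the Poincar\'e duality of $(X,Y)$ to guarantee the required injectivity of $d_3^{(B)}$ on $E^3_{4,0}(B)$, surjectivity onto $E^3_{2,2}(B)$, and the vanishing of the $d_3,d_5$ that could hit $E^r_{0,4}(B)$.
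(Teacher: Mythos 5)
Your first case ($\cd(\pi)\leq 3$) is correct and is essentially the paper's argument: same spectral sequence, same use of projectivity of $\pi_2(X)$ and of $\cd(\pi)\leq 3$ to clear the $p+q=4$ line and the sources of differentials into $(0,4)$; your only variation is deducing the vanishing of the odd rows from $\widetilde B\simeq K(\pi_2(X),2)$ with $\pi_2(X)$ free abelian, where the paper just uses Hurewicz to get $H_1(\widetilde B)=H_3(\widetilde B)=0$, which is all that is needed.

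The finite case, however, has a genuine gap, and your own last sentence concedes it. The ``transport by naturality'' does not work as stated, because you never control the third row of the spectral sequence for $X$: $3$-connectivity of $c$ gives isomorphisms on $E^2_{p,q}$ only for $q\leq 2$ and says nothing about $H_3(\widetilde X)$. Concretely, since the outgoing differentials from $(2,2)$ vanish for $B$ but not a priori for $X$, naturality only yields $E^\infty_{2,2}(B)\cong \im\bigl(d_2\colon H_2(\pi;H_2(\widetilde X))\to \Z\otimes_{\Z[\pi]}H_3(\widetilde X)\bigr)$, and similarly $E^\infty_{4,0}(B)$ is identified with a subgroup that in the $X$-sequence is killed only by a $d_4$ landing in (a subquotient of) $\Z\otimes_{\Z[\pi]}H_3(\widetilde X)$; the same row also feeds the sources $(3,2)$ and $(5,0)$ of the differentials that could hit $(0,4)$. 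So everything hinges on showing $H_3(\widetilde X)=0$ (and $H_{\geq 4}(\widetilde X)=0$), which you do not prove, and your proposed remedy --- identify $d_3$ with cap product against the $k$-invariant and ``invoke Poincar\'e duality to guarantee'' the needed injectivity/surjectivity --- is a list of desiderata, not an argument.

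The paper closes exactly this hole: for $\pi$ finite, Poincar\'e duality of the pair gives $H_3(\widetilde X)\cong H^1(X,Y;\Z[\pi])=0$ (using $H^1(\pi;\Z[\pi])=0$ and that $Y$ is connected with $\pi_1(Y)\to\pi$ onto) and $H_i(\widetilde X)=0$ for $i\geq 4$, so only the rows $q=0,2$ survive for $X$. It then proves that the differentials $d_3\colon H_n(\pi;\Z)\to H_{n-3}(\pi;H_2(\widetilde X))$ are isomorphisms for $n\geq 3$: injectivity because $H_n(X;\Z)\cong H^{4-n}(X,Y;\Z)=0$ for $n\geq 3$, and bijectivity by a cardinality count after dimension shifting $H_{n-3}(\pi;H_2(\widetilde X))\cong H_n(\pi;\Z)$ along the exact sequence of Lemma~\ref{lem:pi2Projective-new} (here finiteness of $\pi$ is used to get finite groups of equal order). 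Only then does $3$-connectivity transfer these $d_3$-isomorphisms to $B$, clearing the $p+q=4$ line and the differentials into $(0,4)$. Without these two inputs your finite case does not go through.
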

\begin{proof}
Consider the Leray--Serre spectral sequence associated to the fibration $\wt B\to B\to B\pi$:
\[H_p(\pi;H_q(\wt B))\Rightarrow H_{p+q}(B).\]
Since $\wt B$ is simply-connected, $H_1(\wt{B})=0$ and the Hurewicz theorem gives a surjection 
$0=\pi_3(B)=\pi_3(\widetilde{B}) \twoheadrightarrow H_3(\widetilde{B})$~\cite[page 390]{Hatcher} so $H_3(\widetilde{B})=0$.

We first assume that~$\cd(\pi)\leq 3$.
In this case, $H_p(\pi;H_0(\wt B))=0$ for $p\geq 4$. 
We also have~$H_p(\pi;H_2(\wt B))=0$ for $p\geq 1$ because \cref{prop:StablyFreePD<4} ensures that~$H_2(\wt B) \cong \pi_2(B) \cong \pi_2(X)$ is projective.
This shows that~$\Z \otimes_{\Z[\pi]} H_4(\wt B)$ is the only nonzero term on the $p+q=4$-line of the spectral sequence.
This term survives to the infinity page since all potential differentials into it have trivial source. 
The isomorphism $\Z \otimes_{\Z[\pi]} H_4(\widetilde{B}) \cong H_4(B)$ follows.

Next, we assume that~$\pi$ is finite. 
We first consider the Leray--Serre spectral sequence for the fibration~$\wt X\to X\to B\pi$.
	Since~$\pi$ is finite,~$H^1(X;\Z[\pi])\cong H^1(\pi;\Z[\pi])=0$. 
As~$Y$ is connected and~$\pi_1(Y)\to \pi_1(X)$ is surjective,  we deduce that~$H_3(\wt X)\cong H_3(X;\Z[\pi])\cong H^1(X,Y;\Z[\pi])= 0.$
Also,  note that~$H_i(\wt X)\cong H_i(X;\Z[\pi])\cong H^{4-i}(X,Y;\Z[\pi])= 0$ for every $i\geq 4$,  where for $i=4$ we used that $Y \to X$ is $\pi_1$-surjective.
	Hence the only two non-trivial terms on the $p+q=n$-line are~$H_{n-2}(\pi;H_2(\wt X))$ and~$H_{n}(\pi;H_0(\wt X))\cong H_n(\pi;\Z)$.
\begin{claim}
	For $n\geq 3$, the following third page differentials are isomorphisms:
	$$d_3 \colon H_n(\pi;\Z)\to H_{n-3}(\pi;H_2(\wt X)).$$
\end{claim} 
\begin{proof}
We first note that the~$d_3$\ differentials~$H_n(\pi;\Z)\to H_{n-3}(\pi;H_2(\wt X))$ are injective for~$n \geq 3$: otherwise~$\ker(d_3)$ would contribute to~$H_n(X)$
 via the infinity page. This gives a contradiction since~$H_i(X)\cong H^{4-i}(X,Y)=0$ for~$i \geq 3$ because $H_1(Y)\to H_1(X)$ is surjective.
By \cref{lem:pi2Projective-new}, there is an exact sequence
 \[0\to H_2(\widetilde{X})\to C_2\xrightarrow{d_2} C_1\xrightarrow{d_1} C_0\to \Z\to 0.\]
It follows by dimension shifting (i.e.  by using $H_i(\pi;\Z[\pi])=0$ for~$i>0$ and the long exact sequence associated to a short exact sequence of coefficients, see for example \cite[III.(7.1)]{Brown}) that
 \[H_n(\pi;H_2(\widetilde{X}))\cong H_{n+1}(\pi;\im d_2)\cong H_{n+2}(\pi;\im d_1) \cong H_{n+3}(\pi;\Z).\]
Since~$\pi$ is finite, the modules~$H_{n+3}(\pi;\Z)$ are finite for every~$n\geq -2$. Hence the~$d_3$ differentials~$H_n(\pi;\Z)\to H_{n-3}(\pi;\pi_2(X))$ are isomorphisms since they are injective maps between finite modules of the same cardinality.
This concludes the proof of the claim.
\end{proof}
Since~$X\to B$ is 3-connected,  the~$d_3$-differentials are also isomorphisms for~$B$. It follows that on the $p+q=4$-line of the infinity page, only the term $\Z\otimes_{\Z[\pi]}H_4(\wt B)$ survives.  This gives the required result.
\end{proof}

We are now in position to relate
$H_4(B)$ to hermitian forms on~$H^2(B;\Z[\pi])$.
As explained in Proposition~\ref{prop:H4(B)},  for~$\cd(\pi)\leq 3$ or~$\pi$ finite,  we have~$H_4(B) \cong \Z \otimes_{\Z[\pi]} H_4(\widetilde{B})$ and Whitehead's exact sequence implies that this latter group is isomorphic to~$\Z \otimes_{\Z[\pi]} \Gamma(\pi_2(B))$~\cite{WhiteheadCertainExact}.
We refer to Section~\ref{sub:WhiteheadGamma} for a brief recollection of $\Gamma$-groups, but for the moment the definition can be blackboxed: we only require the fact that $\Gamma(\pi_2(B))$ can be identified with the subgroup of symmetric elements of~$\pi_2(B) \otimes \pi_2(B)$ since $\pi_2(B)$ is free as an abelian group by Lemma~\ref{lem:pi2Projective-new}.

In order to relate $\Z \otimes_{\Z[\pi]} \Gamma(\pi_2(B))$ to hermitian forms on~$H^2(B;\Z[\pi])$,  consider
\begin{align*}
\mathcal{B} \colon \Z \otimes_{\Z[\pi]} \Gamma(\pi_2(B))  &\to  \operatorname{Herm}(\pi_2(B)^*)\\
1 \otimes \left( a \otimes b \right)  & \mapsto  \left( (f,g) \mapsto \overline{f(b)}g(a) \right).
\end{align*}
Indeed the target of~$\mathcal{B}$ can be related to~$\operatorname{Herm}(H^2(B;\Z[\pi]))$ using the evaluation-induced map 
$$\Herm(\ev) \colon \operatorname{Herm}(\pi_2(B)^*) \cong \operatorname{Herm}(H_2(B;\Z[\pi])^*) \to \operatorname{Herm}(H^2(B;\Z[\pi]))$$
and the map $\ev$ can be analysed using the universal coefficient spectral sequence.
\begin{proposition}
\label{prop:HillmanCite}
Let~$(X,Y)$ be a~$4$-dimensional Poincar\'e pair such that~$\pi_1(Y) \to \pi_1(X)=:\pi$ is surjective, and let~$c\colon X \to B$ be a~$3$-equivalence to its Postnikov~$2$-type.
Assume 
\begin{itemize}
\item either that~$\cd(\pi)\leq 3$ and~$\Herm(\ev) \colon \operatorname{Herm}(\pi_2(B)^*)\to \operatorname{Herm}(H^2(B;\Z[\pi]))$ is injective,
\item or that~$\pi$ is finite and that~$\Z\otimes_{\Z[\pi]}\Gamma(\pi_2(B))$ is torsion free, 
\end{itemize}
then the following composition is injective:
 \begin{align*}
b \colon H_4(B) &\xleftarrow{\cong}  \Z \otimes_{\Z[\pi_1]} H_4(\widetilde{B}) 
\xrightarrow{\cong } \Z \otimes_{\Z[\pi_1]} \Gamma(\pi_2(B)) 
\xrightarrow{\mathcal{B}}  \operatorname{Herm}(\pi_2(B)^*) 
\xrightarrow{}  \operatorname{Herm}(H^2(B;\Z[\pi])).
\end{align*}
Additionally it satisfies
\begin{equation}
\label{eq:bsigmaIsWhatYouThink}
b(\sigma)(x,y)=\langle y, x \cap \sigma\rangle.
\end{equation}
\end{proposition}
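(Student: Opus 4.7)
The plan is to verify the injectivity of each map making up $b$, and then to derive the cap-product formula by unwinding the identifications on a lift. The first arrow is the isomorphism from Proposition~\ref{prop:H4(B)}. For the second, since $B$ is a Postnikov $2$-type, its universal cover $\widetilde{B}$ satisfies $\pi_k(\widetilde{B})=0$ for $k\neq 2$, so $\widetilde{B}\simeq K(\pi_2(B),2)$, and the classical identification $H_4(K(A,2))\cong \Gamma(A)$ supplies the iso.

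Under hypothesis~(1), Proposition~\ref{prop:StablyFreePD<4} gives that $P:=\pi_2(B)\cong \pi_2(X)$ is a finitely generated projective $\Z[\pi]$-module. Writing $P$ as a direct summand of a finitely generated free $\Z[\pi]$-module, naturality reduces the injectivity of $\mathcal{B}$ to the free case, where the composite $\Z\otimes_{\Z[\pi]}\Gamma(P)\to \Z\otimes_{\Z[\pi]}(P\otimes_\Z P)\cong \overline{P}\otimes_{\Z[\pi]}P\xrightarrow{\cong}\Sesq(P^*)$ can be checked injective on a basis. Composing with the assumed injectivity of $\Herm(\ev)$ then yields injectivity of $b$. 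Under hypothesis~(2), I plan to rationalise: Maschke's theorem makes $\Q[\pi]$ semisimple and all $\Q[\pi]$-modules projective, so an argument parallel to case~(1) shows that $b\otimes \Q$ is injective. Hence $\ker(b)$ is $\Z$-torsion, and the hypothesis that $\Z\otimes_{\Z[\pi]}\Gamma(\pi_2(B))$ is torsion free forces $\ker(b)=0$.

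For the formula $b(\sigma)(x,y)=\langle y, x\cap\sigma\rangle$, pick a lift $\widetilde{\sigma}\in H_4(\widetilde{B})$ of $\sigma$ and express it through Whitehead as a symmetric element $\sum_i a_i\otimes b_i\in \Gamma(\pi_2(B))\subset \pi_2(B)\otimes_\Z \pi_2(B)$. Using Hurewicz to identify $H_2(B;\Z[\pi])\cong H_2(\widetilde{B})$ with $\pi_2(B)$, together with the naturality of the cap product under the covering $\widetilde{B}\to B$, one reads off $x\cap\sigma$ as an explicit $\Z[\pi]$-linear combination of the $a_i$ and $b_i$ weighted by $\ev(x)$. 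Substituting into $\langle y,-\rangle$ and comparing with the defining formula of $\mathcal{B}$ produces~\eqref{eq:bsigmaIsWhatYouThink}.

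The principal obstacle will be case~(2): one must check that rationalisation interacts correctly with $\pi$-coinvariants and with the Whitehead $\Gamma$-functor, since coinvariants are only right exact. The cap-product computation for the formula also requires careful tracking of the involution conventions arising from the sesquilinearity of $\mathcal{B}$.
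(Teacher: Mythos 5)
Your overall structure matches the paper's: the first two identifications are handled the same way (Proposition~\ref{prop:H4(B)} and Whitehead's theorem for $\widetilde{B}\simeq K(\pi_2(B),2)$), and in the case $\cd(\pi)\leq 3$ your reduction of $\mathcal{B}$ to a direct summand of a free module followed by a basis check is a legitimate hands-on substitute for the paper's citation of Hillman (the paper simply invokes \cite[Theorem 7 with addendum]{HillmanStrongly} once $\pi_2(B)$ is known to be projective via Proposition~\ref{prop:StablyFreePD<4}); similarly, your rationalisation idea for finite $\pi$ is in the spirit of how the criterion of \cite[Proposition~6.15]{HKPR} is proved, and your direct cap-product computation would replace the citation of the diagram \cite[(2.7)]{HKPR} for \eqref{eq:bsigmaIsWhatYouThink}. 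So far this is a different but viable route, modulo the real work hidden in ``checked on a basis'' and in the compatibility of $\Gamma$, coinvariants and duals with $-\otimes\Q$, which you rightly flag.

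The genuine gap is in the finite case: you never address the injectivity of the final arrow $\operatorname{Herm}(\pi_2(B)^*)\to\operatorname{Herm}(H^2(B;\Z[\pi]))$, i.e.\ of $\Herm(\ev)$. In case (1) this injectivity is a \emph{hypothesis}, so ``an argument parallel to case (1)'' cannot produce it in case (2), not even rationally; and without it, showing that $\mathcal{B}\otimes\Q$ (or even $\mathcal{B}$ itself) is injective does not yield injectivity of the full composite $b$, which is what the statement asserts. The paper closes this by observing that for finite $\pi$ one has $H^3(\pi;\Z[\pi])=0$, so the universal coefficient spectral sequence makes $\ev\colon H^2(B;\Z[\pi])\to H_2(B;\Z[\pi])^*$ surjective, and pulling back hermitian forms along a surjection is injective. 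You would need to insert this (or a rational analogue, e.g.\ that $\ev\otimes\Q$ is an isomorphism by semisimplicity of $\Q[\pi]$, together with the flat-base-change identifications $\Herm(M)\otimes\Q\hookrightarrow\Herm_{\Q}(M\otimes\Q)$ for the finitely generated modules involved) before the argument ``$\ker(b)$ is torsion, hence zero by torsion-freeness of $\Z\otimes_{\Z[\pi]}\Gamma(\pi_2(B))$'' can be run for $b$ rather than merely for $\mathcal{B}$.
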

\begin{proof}
Proposition~\ref{prop:H4(B)} shows that the first map is an isomorphism.
The second map is known to be an isomorphism thanks to Whitehead's exact sequence~\cite{WhiteheadCertainExact}.
If~$\cd(\pi)\leq 3$, then~$\pi_2(B)$ is projective by \cref{prop:StablyFreePD<4}. In this case, the map~$\mathcal{B}$ is injective by~\cite[Theorem 7 with addendum]{HillmanStrongly}. If~$\pi$ is finite, the map~$\mathcal{B}$ is injective if and only if~$\Z\otimes_{\Z[\pi]}\Gamma(\pi_2(B))$ is torsion free by~\cite[Proposition~6.15]{HKPR}.
It remains to show that the last map is injective if~$\pi$ is finite. If~$\pi$ is finite, then~$H^3(\pi;\Z[\pi])=0$ and hence~$H^2(B;\Z[\pi]) \to H_2(B;\Z[\pi])^*$ is surjective. It follows that~$\operatorname{Herm}(\pi_2(B)^*) \to \operatorname{Herm}(H^2(B;\Z[\pi]))$ is injective as needed.
Finally,  the equality in~\eqref{eq:bsigmaIsWhatYouThink} follows from the commutativity of the diagram in~\cite[(2.7)]{HKPR}.
\end{proof}

\subsection{Pushouts of Poincar\'e pairs}
\label{sub:Pushout}

This section argues that the pushout of $n$-dimensional Poincar\'e pairs along a homotopy equivalence is an $n$-dimensional Poincar\'e complex.

\medbreak

Wall proved that if $(X_0,Y)$ and $(X_1,Y)$ are $n$-dimensional Poincar\'e pairs that are CW pairs,
then the union~$X_0 \cup_{Y} X_1$ (which is necessarily a CW complex) is an $n$-dimensional Poincar\'e complex~\cite[Theorem~2.1~i)]{WallPoincare}.
The next proposition builds on this result to include more general unions.

\begin{proposition}
	\label{prop:HomotopyPushout}
	Let $(X_0,Y_0)$ and $(X_1,Y_1)$ be $n$-dimensional Poincar\'e pairs.
	If $h \colon Y_0 \to Y_1$ is a homotopy equivalence, then the pushout $X_0 \cup_h X_1$ is an $n$-dimensional Poincar\'e complex.
\end{proposition}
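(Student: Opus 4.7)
The plan is to reduce the claim to Wall's gluing theorem for CW Poincar\'e pairs sharing a common boundary. By the third bullet of Definition~\ref{def:PoincarePair}, I would choose homotopy equivalences of pairs $\phi_i\colon (X_i,Y_i)\to (X_i',Y_i')$ where $(X_i',Y_i')$ is a CW pair, for $i=0,1$. Using $\phi_0^{-1}|_{Y_0'}$, then $h$, then $\phi_1|_{Y_1}$, and applying cellular approximation, I obtain a cellular homotopy equivalence $h'\colon Y_0' \to Y_1'$. Because the inclusions $Y_i \hookrightarrow X_i$ are cofibrations (second bullet of Definition~\ref{def:PoincarePair}), the strict pushout $X_0 \cup_h X_1$ agrees with the corresponding homotopy pushout, and homotopy invariance of the latter produces a homotopy equivalence $X_0 \cup_h X_1 \simeq X_0' \cup_{h'} X_1'$.

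To invoke Wall's theorem I need an honest union rather than a pushout along a homotopy equivalence, which I would arrange using the mapping cylinder. Set $\widetilde X_0 := X_0' \cup_{Y_0'} \Cyl(h')$, a CW complex that contains $Y_1'$ as a subcomplex (the top of the cylinder) and that deformation retracts onto $X_0'$. Transporting the Poincar\'e pair structure along this deformation retraction equips $(\widetilde X_0, Y_1')$ with the structure of an $n$-dimensional Poincar\'e pair, and there is a canonical homeomorphism $X_0' \cup_{h'} X_1' \cong \widetilde X_0 \cup_{Y_1'} X_1'$. Wall's theorem~\cite[Theorem~2.1(i)]{WallPoincare} then yields an $n$-dimensional Poincar\'e complex structure on $\widetilde X_0 \cup_{Y_1'} X_1'$, whose fundamental class is obtained from $[\widetilde X_0]$ and $-[X_1']$ via the Mayer--Vietoris sequence. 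Pulling this structure back through the chain of homotopy equivalences yields the desired Poincar\'e complex structure on $X_0 \cup_h X_1$.

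The main point that requires genuine care, rather than a wave of the hand, is verifying that the Poincar\'e pair structure really does transport through each homotopy equivalence of pairs and, in particular, through the deformation retraction $\widetilde X_0 \simeq X_0'$: one needs the image of $[X_0']$ to satisfy $\partial[\widetilde X_0]=[Y_1']$ with the appropriate sign and the corresponding cap product isomorphism to hold with $\Z[\pi_1(\widetilde X_0)]$-coefficients. This follows from the naturality of cap products, the compatibility of connecting homomorphisms under maps of pairs, and the five lemma applied to the long exact sequences of pairs, but the bookkeeping is what makes the argument slightly tedious. Once this is in place, the Mayer--Vietoris description of the fundamental class on $\widetilde X_0 \cup_{Y_1'} X_1'$ transfers across the homotopy equivalence $X_0\cup_h X_1 \simeq \widetilde X_0 \cup_{Y_1'} X_1'$, and Poincar\'e duality for the pushout follows from Wall's theorem together with homotopy invariance of cap products.
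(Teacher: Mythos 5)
Your proof is correct and takes essentially the same route as the paper's: reduce to CW pairs, insert the mapping cylinder of $h'$ so that the two pieces become CW Poincar\'e pairs sharing an honest common boundary, transport the Poincar\'e pair structure across a homotopy equivalence of pairs, apply Wall's~\cite[Theorem~2.1~i)]{WallPoincare}, and transfer back via gluing-lemma homotopy equivalences (the paper attaches the cylinder on the $X_1$ side, using $(\pr,h')\colon (M(h'),Y_0')\to (X_1',Y_1')$, whereas you attach it on the $X_0$ side and use that $h'$ being an equivalence makes $Y_0'\hookrightarrow \Cyl(h')$ a deformation retract). One small correction: $X_0'\cup_{h'}X_1'$ is canonically homotopy equivalent, not homeomorphic, to $\widetilde X_0\cup_{Y_1'}X_1'$ (collapse the collar), but since being a Poincar\'e complex is homotopy invariant this is all your argument needs.
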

\begin{proof}
	By \cref{lem:PoincarePairIs4Complex}, there are homotopy equivalences $(X_i,Y_i)\xrightarrow{(f_i,g_i)}(X_i',Y_i')$ with $(X_i',Y_i')$ a~CW pair for $i=0,1$. 
	Let $\overline{g}_0$ be a homotopy inverse of $g_0$, and let $h'\colon Y_0' \to Y_1'$ be a cellular map that is homotopic to $g_1\circ h\circ \overline{g}_0$. The mapping cylinder $M(h')$ of $h'$ is a CW complex that contains $Y_0'$ as a subcomplex. 
	Since the projection $(\pr,h')\colon (M(h'),Y_0')\to (X_1',Y_1')$ is a homotopy equivalence, $(M(h'),Y_0')$ is a Poincar\'e pair. 
	By \cite[Theorem~2.1~i)]{WallPoincare}, $X_0'\cup_{Y_0'}M(h')$ is a Poincar\'e complex. 
	It remains to show that $X_0\cup_h X_1$ and $X_0'\cup_{Y_0'}M(h')$ are homotopy equivalent.

By definition of $h'$, there is a homotopy, $h'\circ g_0 \simeq f_1| \circ h \colon Y_0 \to X_1$.
	By homotopy extension, there exists a map $f_1'\colon X_1\to X_1'$ that is homotopic to $f_1$ and such that $h'\circ g_0=f_1' \circ h$. 
Therefore the following diagram commutes:
	\[\begin{tikzcd}
		X_0'\ar[d,"="']&Y_0'\ar[r,hook]\ar[l,hook']\ar[d,"="']&M(h')\ar[d,"\pr","\simeq"']\\
		X_0'&Y_0'\ar[l,hook']\ar[r,"h'"]&X_1'\\
		X_0\ar[u,"f_0"',"\simeq"]&Y_0\ar[u,"g_0"',"\simeq"]\ar[l]\ar[r,"h"]&X_1. \ar[u,"f_1'"',"\simeq"]
	\end{tikzcd}\]
Since $Y_0 \to X_0$ and $Y_0' \to X_0'$ are cofibrations, the gluing lemma (see e.g.~\cite[Lemma 2.1.3]{MayPonto}) 
 then ensures that there are homotopy equivalences
	\[X_0\cup_hX_1\simeq X_0'\cup_{h'}X_1'\simeq X_0'\cup_{Y_0'}M(h').\]
	Thus $X_0\cup_hX_1$ is a Poincar\'e complex,  as required.
\end{proof}

We conclude with an observation concerning the fundamental group of such pushouts.

\begin{proposition}
\label{prop:pi1HomotopyPushout}
Let $(X_0,Y_0)$ and $(X_1,Y_1)$ be two $n$-dimensional Poincar\'e pairs with isomorphic fundamental groups, and let $h \colon Y_0 \to Y_1$ be a homotopy equivalence.
If $\iota_j \colon \pi_1(Y_i) \to \pi_1(X_i)$ is surjective for $j=0,1$ and if there is an isomorphism~$u \colon \pi_1(X_0) \to \pi_1(X_1)$ with $u \circ \iota_0=\iota_1 \circ h_*$,  then the inclusion 
 induces an isomorphism
$$\pi_1(X_1) \xrightarrow{\cong} \pi_1(X_0 \cup_h X_1).$$
\end{proposition}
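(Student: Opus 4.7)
The plan is to identify $\pi_1(X_0 \cup_h X_1)$ via the Seifert--van Kampen theorem as an amalgamated free product, and then use the surjectivity hypothesis together with the isomorphism $u$ to collapse that amalgamated product down to $\pi_1(X_1)$.

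First I would reduce to the case of a CW pushout so that van Kampen applies directly. By Lemma~\ref{lem:PoincarePairIs4Complex} (or the argument replicated in the proof of Proposition~\ref{prop:HomotopyPushout}) we may replace $(X_0,Y_0)$ and $(X_1,Y_1)$ up to homotopy equivalence of pairs by CW pairs, and assume that $h$ is cellular; the pushout along a cellular homotopy equivalence between subcomplexes is then a genuine (homotopy) pushout of CW complexes, and the gluing argument in Proposition~\ref{prop:HomotopyPushout} guarantees that the new pushout is homotopy equivalent to $X_0\cup_h X_1$. The inclusions $Y_j\hookrightarrow X_j$ being cofibrations, van Kampen gives
\[
\pi_1(X_0\cup_h X_1)\ \cong\ \pi_1(X_0)\ *_{\pi_1(Y_0)}\ \pi_1(X_1),
\]
where the amalgamation uses $\iota_0\colon \pi_1(Y_0)\to \pi_1(X_0)$ on one side and $\iota_1\circ h_*\colon \pi_1(Y_0)\to \pi_1(X_1)$ on the other.

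Next I would show that the inclusion-induced map $\phi\colon \pi_1(X_1)\to \pi_1(X_0)*_{\pi_1(Y_0)}\pi_1(X_1)$ is surjective. Given any $g\in \pi_1(X_0)$, surjectivity of $\iota_0$ yields $\gamma\in \pi_1(Y_0)$ with $\iota_0(\gamma)=g$, and the amalgamation relation then gives $g = \iota_1(h_*(\gamma))$ in the pushout group. Thus every generator of $\pi_1(X_0)$ already lies in the image of $\phi$, so $\phi$ is surjective.

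Finally I would produce an explicit inverse. Define $\psi\colon \pi_1(X_0)*\pi_1(X_1)\to \pi_1(X_1)$ by the identity on $\pi_1(X_1)$ and by $u$ on $\pi_1(X_0)$. The hypothesis $u\circ\iota_0=\iota_1\circ h_*$ is exactly what is needed to check that $\psi$ descends to the amalgamated product, because for every $\gamma\in\pi_1(Y_0)$,
\[
\psi(\iota_0(\gamma))=u(\iota_0(\gamma))=\iota_1(h_*(\gamma))=\psi(\iota_1(h_*(\gamma))).
\]
Since $\psi\circ\phi=\mathrm{id}_{\pi_1(X_1)}$, this proves $\phi$ is injective as well, and hence an isomorphism. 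There is no real obstacle here: the only subtlety is the CW reduction to legitimately invoke van Kampen, and the compatibility condition on $u$ is precisely tailored so that the would-be inverse $\psi$ is well defined.
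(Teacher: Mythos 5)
Your proof is correct and follows essentially the same route as the paper: both identify $\pi_1(X_0\cup_h X_1)$ as the pushout $\pi_1(X_0)*_{\pi_1(Y_0)}\pi_1(X_1)$ via van Kampen (legitimate here since the $Y_j\hookrightarrow X_j$ are cofibrations and the spaces are connected), and then use surjectivity of $\iota_0$ together with the compatibility $u\circ\iota_0=\iota_1\circ h_*$ to collapse it to $\pi_1(X_1)$. The only cosmetic difference is that the paper verifies that $\pi_1(X_1)$ satisfies the universal property of the pushout directly, whereas you make the same verification explicit by exhibiting the retraction $\psi$ built from $u$ and the identity.
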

\begin{proof}
Since $X_0 \cup_h X_1$ is a pushout,  so is~$\pi_1(X_0 \cup_h X_1)$.
Using that $\iota_0$ and $\iota_1$ are surjective, a direct verification shows that $\pi_1(X_1)$ also satisfies the universal property of the pushout.
The proposition then follows readily.
\end{proof}

\begin{remark}
\label{rem:X0cup-X1}
We fix a choice of fundamental class for $X_0 \cup_h \unaryminus X_1$.
Since $h$ is orientation-preserving,  under the composition
$$ H_4(X_0 \cup_h X_1) \to H_4(X_0\cup_h X_1,Y_1) \xleftarrow{\cong} H_4(X_0,Y_0) \oplus H_4(X_1,Y_1),$$
a fundamental class of $X_0 \cup_h \unaryminus X_1$ maps either to $(\unaryminus [X_0],[X_1])$ or to~$([X_0],\unaryminus [X_1])$.
In what follows,  we pick the latter choice and occasionally write $X_0 \cup_h \unaryminus X_1$ for emphasis.
\end{remark}
\subsection{An extension lemma}
\label{sub:ExtensionLemma}

This short section records a technical lemma which is a small generalisation of the following statement ``given a space~$X_0$, if a map~$c_1 \colon X_1 \to~B$ is~$n$-connected, then the induced map~$(c_1)_* \colon [X_0^{(n)},X_1] \to [X_0,B]$ is surjective for any CW complex $X_0$"

\begin{lemma}
\label{lem:SurjectivePushForward}
Let~$(X_0,D_0)$ be an~$n$-dimensional relative CW-space, i.e~$X_0$ is obtained from~$D_0$ by attaching cells of dimension at most~$n$.
Let~$c_1\colon X_1 \to B$ be an~$n$-connected map. Let~$D_1\subseteq X_1$ and let~$h\colon D_0\to D_1$ be given. For every map~$c_0\colon X_0\to B$ such that~$c_0|_{D_0}\simeq c_1|_{D_1}\circ h$ there exists a map~$f\colon X_0\to X_1$ extending~$h$ such that~$c_0\simeq c_1\circ f$.
\end{lemma}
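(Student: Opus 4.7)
The plan is to replace $c_1$ by a Hurewicz fibration and run a relative obstruction-theoretic lifting argument. Since $(X_0,D_0)$ is a relative CW pair, the inclusion $D_0 \hookrightarrow X_0$ is a cofibration, so the homotopy extension property lets me replace $c_0$ by a homotopic map, still called $c_0$, satisfying $c_0|_{D_0} = c_1 \circ \incl \circ h$ strictly, where $\incl \colon D_1 \hookrightarrow X_1$. I then factor $c_1$ through the mapping path space
\[ E := \{(x,\gamma) \in X_1 \times B^I : \gamma(0) = c_1(x)\}, \]
obtaining a Hurewicz fibration $q \colon E \to B$, $(x,\gamma) \mapsto \gamma(1)$, a map $j \colon X_1 \to E$, $x \mapsto (x,\mathrm{const}_{c_1(x)})$, satisfying $q \circ j = c_1$, and a strict retraction $r \colon E \to X_1$, $(x,\gamma) \mapsto x$, which is a strong deformation retraction.

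The main step is to extend the strict $q$-lift $g_0 := j \circ \incl \circ h \colon D_0 \to E$ of $c_0|_{D_0}$ to a strict $q$-lift $\widetilde{c_0} \colon X_0 \to E$ of $c_0$. This is a standard relative lifting problem for the fibration $q$; its obstructions live in the twisted cohomology groups $H^{k+1}(X_0,D_0;\pi_k(F))$ for $k \geq 0$, where $F$ is the fiber of $q$ and local coefficients record the $\pi_1(B)$-action. Since $c_1$ is $n$-connected, so is $q$, and hence $F$ is $(n-1)$-connected. Thus $\pi_k(F) = 0$ for $k \leq n-1$, while for $k \geq n$ the group $H^{k+1}(X_0,D_0;-)$ vanishes because $X_0$ is built from $D_0$ by attaching cells of dimension at most $n$. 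All obstructions therefore vanish, producing the desired lift $\widetilde{c_0}$ extending $g_0$.

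I finally set $f := r \circ \widetilde{c_0} \colon X_0 \to X_1$. Since $r \circ j = \mathrm{id}_{X_1}$, one has $f|_{D_0} = r \circ j \circ \incl \circ h = h$. Writing $\widetilde{c_0}(x) = (f(x),\gamma_x)$, the tautological assignment $H_t(x) := \gamma_x(t)$ provides a homotopy from $c_1 \circ f$ (at $t=0$) to $q \circ \widetilde{c_0} = c_0$ (at $t=1$), as required. The only real obstacle is the obstruction-theoretic vanishing in the second paragraph; once the fibration replacement is in place it is a routine dimension count, although some bookkeeping with local coefficients is needed when $B$ is not simply connected.
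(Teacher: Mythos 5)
Your proof is correct, but it takes the route dual to the one in the paper. The paper's argument replaces $c_1$ by an inclusion via its mapping cylinder, uses the homotopy extension property to arrange $c_0|_{D_0}=c_1|_{D_1}\circ h$ on the nose, and then regards $c_0$ as a map of pairs $(X_0,D_0)\to(B,X_1)$, to which the compression lemma \cite[Lemma~4.6]{Hatcher} applies: since the pair $(B,X_1)$ is $n$-connected and $(X_0,D_0)$ has relative cells of dimension at most $n$, the map compresses rel $D_0$ into $X_1$, yielding $f$. You instead replace $c_1$ by a Hurewicz fibration via the mapping path space and solve the relative lifting problem by obstruction theory, using that the homotopy fibre is $(n-1)$-connected while $H^{k+1}(X_0,D_0;-)$ vanishes above degree $n$; the retraction $r\colon E\to X_1$ and the tautological path homotopy then produce $f$ and the homotopy $c_1\circ f\simeq c_0$. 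These are essentially the cofibration-side and fibration-side versions of the same connectivity-versus-dimension argument, and yours is perfectly valid; what the paper's version buys is that it avoids local coefficients and any simplicity hypotheses entirely, whereas in your set-up the cohomological bookkeeping is only unproblematic because in each relevant degree either the coefficient group or the cohomology group vanishes (and, in the borderline case $n=1$, where $\pi_1(F)$ could be nonabelian, one should phrase the vanishing cell-by-cell: the obstruction to extending the lift over a $d$-cell lies in $\pi_{d-1}(F)=0$ for $d\le n$, which sidesteps the issue). With that small caveat spelled out, your argument is a complete proof of the lemma.
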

\begin{proof}
We argue that without loss of generality we can assume that~$c_1$ is an inclusion and~$c_0|_{D_0}=h$.
By taking the mapping cylinder of~$c_1$, we can assume that~$c_1$ is an inclusion: indeed writing~$\proj \colon M(c_1) \xrightarrow{\simeq} B$, if $h$ extends to~$f \colon X_0 \to X_1$ with~$\incl_B \circ c_0\simeq \operatorname{incl}_{X_1} \circ f \colon X_0 \to M(c_1)$, then $c_1 \circ f=\proj \circ \incl_{X_1} \circ f=\proj \circ \incl_B \circ c_0 \simeq c_0.$
We can further assume that~$c_0|_{D_0}=h$: 
homotopy extension 
ensures that~$h \colon D_0 \to Y_1 \subset B$ extends to a map $c_0' \colon X_0 \to B$ that is homotopic to $c_0$,  so if $h$ extends to~$f \colon X_0 \to X_1$ with $c_1 \circ h \simeq c_0'$, then $c_1 \circ h \simeq c_0$.
	
Since we are now assuming that $c_1$ is the inclusion and $c_0|_{D_0}=h$,  the map $c_0$ determines a map  of pairs~$c_0\colon (X_0,D_0)\to (B,X_1)$.
Since $c_1$ is $n$-connected~\cite[Lemma~4.6]{Hatcher} implies that the map~$c_0\colon (X_0,D_0)\to (B,X_1)$ is homotopic relative~$D_0$ to a map~$f\colon X_0\to X_1$. 
Since the homotopy is relative~$D_0$,  it follows that~$f$ extends~$h$ as required.
\end{proof}

\section{The primary and secondary obstructions}
\label{sec:PrimarySecondary}

The next two sections aim to prove Theorem~\ref{thm:MainTheoremBoundaryIntro}.
Recall that given~$4$-dimensional Poincar\'e pairs~$(X_0,Y_0)$ and~$(X_1,Y_1)$ and a homotopy equivalence~$h \colon Y_0 \to Y_1$,  this theorem describes two successive obstructions to $h$ extending to a homotopy equivalence $X_0 \to X_1$.
Before describing our strategy, we fix some notation.

\begin{notation}
\label{notation:SectionPrimarySecondary}
Let~$(X_0,Y_0)$ and~$(X_1,Y_1)$ be~$4$-dimensional Poincar\'e pairs with~$\pi_1(X_j) \cong \pi$, Postnikov $2$-type $B:=P_2(X_1)$ and~$\iota_j \colon \pi_1(Y_j) \to \pi_1(X_j)$ surjective for~$j=0,1$.
Fix a degree one homotopy equivalence~$h \colon Y_0 \to Y_1$.
Given $3$-connected maps~$c_j \colon X_j \to B$ for~$j=0,1$, when we write~$(B, X_1)$ or~$(B,Y_1)$, it is understood that we have replaced~$B$ by the mapping cylinder of~$X_1 \to B$.
We additionally assume that~$c_1|_{Y_1} \circ h = c_0|_{Y_0}$.
Recall that that this ensures that~$u:=(c_1)_*^{-1} \circ (c_0)^* \colon \pi_1(X_0) \to \pi_1(X_1)$ satisfies $u \circ \iota_0=\iota_1 \circ h_*$.
\end{notation}

Section~\ref{sub:FundamentalClass} reduces the extension problem to one concerning an equality of fundamental classes.
Section~\ref{sub:FundamentalClassToIntersection} then recasts this later problem in terms of relative intersection forms.

\subsection{Reduction to fundamental classes}
\label{sub:FundamentalClass}

We reduce the problem of extending the homotopy equivalence $h \colon Y_0 \to Y_1$ to a question about the fundamental classes of~$X_0$ and~$X_1$.
The argument is inspired by work of Hambleton-Kreck~\cite{HambletonKreck}. 

\medbreak
We first describe a sufficient condition for a map extending $h$ to be a homotopy equivalence.

\begin{lemma}
\label{lem:DegreeOneCommute}
If a map~$f \colon  X_0 \to X_1$ extends~$h \colon Y_0 \to Y_1$ and satisfies~$c_1 \circ f \simeq c_0$, then it is a homotopy equivalence.
\end{lemma}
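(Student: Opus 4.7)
The plan is to verify that $f$ induces an isomorphism on $\pi_1$ and on $\Z[\pi]$-homology with $\pi := \pi_1(X_1)$; then Whitehead's theorem applied to the induced map of universal covers---after replacing the pairs by CW pairs via Lemma~\ref{lem:PoincarePairIs4Complex}---will force $f$ to be a homotopy equivalence.

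For the $\pi_1$- and $\pi_2$-isomorphisms, I will observe that $c_0$ and $c_1$ are $3$-connected and thus induce isomorphisms on $\pi_i$ for $i \leq 2$; the identity $(c_0)_* = (c_1)_* \circ f_*$ then forces the same for $f_*$, and passing to universal covers and applying Hurewicz yields that $f_*$ is an isomorphism on $H_i(-;\Z[\pi])$ for $i \leq 2$. Next I will establish that $f$ has degree one. Since $f$ extends the degree-one map $h$, naturality of the connecting homomorphism gives
\[
\partial(f_*[X_0]) = h_*[Y_0] = [Y_1] = \partial[X_1],
\]
so $f_*[X_0] - [X_1]$ lies in the image of $H_4(X_1;\Z) \to H_4(X_1,Y_1;\Z)$. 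Poincar\'e duality identifies this group with $H^0(X_1,Y_1;\Z)$, which vanishes since $X_1$ is connected and $Y_1$ is nonempty, so $f_*[X_0] = [X_1]$.

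With degree one in hand, naturality of the cap product yields a commutative ladder between the Poincar\'e duality isomorphisms for $(X_0,Y_0)$ and $(X_1,Y_1)$. The top degree is handled by $H_4(X_j;\Z[\pi]) \cong H^0(X_j,Y_j;\Z[\pi]) = 0$, so only $H_3$ requires work. For this I will apply the five lemma to the cohomology long exact sequences of the pairs with $\Z[\pi]$ coefficients: the Leray--Serre spectral sequence for $\widetilde{X}_j \to X_j \to B\pi$ identifies $H^k(X_j;\Z[\pi])$ with $H^k(\pi;\Z[\pi])$ for $k \leq 1$ (using $H_0(\widetilde{X}_j;\Z) = \Z$ and $H_1(\widetilde{X}_j;\Z) = 0$), so $f^*$ is an isomorphism on these terms; combined with $h^*$ being an isomorphism on $H^*(Y_j;\Z[\pi])$, the five lemma forces $f^*$ to be an isomorphism on $H^1(X_j,Y_j;\Z[\pi])$, which via the Poincar\'e duality ladder and the degree-one condition is equivalent to $f_*$ being an isomorphism on $H_3(X_j;\Z[\pi])$. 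The main subtlety will be this interplay between $f^*$ in cohomology and $f_*$ in homology mediated by Poincar\'e duality, controlled precisely by the degree-one condition established above.
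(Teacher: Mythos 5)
Your proposal is correct, but it takes a genuinely longer route than the paper's proof, which is essentially two lines. Since $c_0$ and $c_1$ are $3$-connected and $c_1 \circ f \simeq c_0$, the map $f$ induces isomorphisms on $\pi_1$ and $\pi_2$ (as you note); the paper then simply observes that, because $\pi_1(Y_i)\to\pi_1(X_i)$ is surjective, Poincar\'e--Lefschetz duality gives $H_k(X_i;\Z[\pi])\cong H^{4-k}(X_i,Y_i;\Z[\pi])=0$ for $k=3,4$ (and likewise in degrees $\geq 5$), so there is nothing left to verify above degree $2$ and Whitehead's theorem applies immediately. In particular, neither the degree-one statement nor the five-lemma/Leray--Serre computation is needed: the group $H_3(X_j;\Z[\pi])\cong H^1(X_j,Y_j;\Z[\pi])$ on which you work to show $f_*$ is an isomorphism is already zero, by the same relative-duality-plus-$\pi_1$-surjectivity reasoning you yourself invoke in top degree. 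Your detour is nevertheless sound: $\partial(f_*[X_0])=h_*[Y_0]=[Y_1]$ together with $H_4(X_1;\Z)\cong H^0(X_1,Y_1;\Z)=0$ does prove $f_*[X_0]=[X_1]$, and the duality ladder then transports the five-lemma isomorphism on $H^1$ of the pairs to $H_3$. What this buys you is the by-product that $f$ automatically has degree one, a fact the paper only remarks on parenthetically later (in the proof of Theorem~\ref{thm:MainTheoremBoundary}); what it costs is several steps that the vanishing observation renders superfluous for the lemma itself.
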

\begin{proof}
First,~$f$ induces an isomorphism on $\pi_1$ and $\pi_2$ because~$c_0$ and~$c_1$ are~$3$-connected.
Next, since~$(X_i,Y_i)$ is $1$-connected,  we obtain~$H_k(X_i;\Z[\pi]) \cong H^{4-k}(X_i,Y_i;\Z[\pi])=0$ for $k=3,4$.
Whitehead's theorem implies that $f$ is a homotopy equivalence.
\end{proof}

The next proposition shows that an equality involving fundamental classes ensures that the sufficient condition form Lemma~\ref{lem:DegreeOneCommute} is satisfied

\begin{proposition}
\label{prop:FundamentalClass}
If the maps~$c_0$ and~$c_1$ satisfy
$$(c_0)_*([X_0])=(c_1)_*([X_1]) \in H_4(B,Y_1),$$
then there exists a map~$f \colon  X_0 \to X_1$ extending~$h$ such that~$c_1 \circ f \simeq c_0$.
\end{proposition}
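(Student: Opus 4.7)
The plan is to realise $f$ as a lift of $c_0 \colon (X_0, Y_0) \to (B, Y_1)$ through $c_1 \colon X_1 \to B$, extending the prescribed partial lift $h \colon Y_0 \to Y_1 \hookrightarrow X_1$; any such lift automatically satisfies $c_1 \circ f \simeq c_0$. After reducing to the case where $(X_0, Y_0)$ is a $4$-dimensional CW pair via \cref{lem:PoincarePairIs4Complex}, I would identify the homotopy fiber $F$ of $c_1$. Since $c_1$ is $3$-connected and $\pi_i(B) = 0$ for $i \geq 3$, the long exact sequence in homotopy yields $\pi_i(F) = 0$ for $i \leq 2$ and $\pi_3(F) \cong \pi_3(X_1)$. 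Hence the obstructions to extending the lift cell by cell vanish through dimension $3$, and the only primary obstruction is a single class
\[ o(c_0) \in H^4(X_0, Y_0; \pi_3(X_1)), \]
with $\pi_3(X_1)$ viewed as a $\Z[\pi]$-module via the identification $\pi_1(X_0) \cong \pi$. The task reduces to showing $o(c_0) = 0$.

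For this I would invoke naturality of the primary obstruction: the tautological lifting problem $c_1 \colon (X_1, Y_1) \to (B, Y_1)$, with partial lift $\id_{X_1}$, produces a universal relative first $k$-invariant $\omega \in H^4(B, Y_1; \pi_3(X_1))$ with $o(c_i) = c_i^*(\omega)$ for $i = 0, 1$. In particular $o(c_1) = 0$, because $\id_{X_1}$ is itself a full lift. The projection formula for the cap product then yields
\[ c_{i*}\bigl(o(c_i) \cap [X_i]\bigr) \,=\, \omega \cap (c_i)_*[X_i] \,\in\, H_0(B; \pi_3(X_1)), \quad i = 0, 1. \]
Combining this with the hypothesis $(c_0)_*[X_0] = (c_1)_*[X_1]$ in $H_4(B, Y_1)$ forces
\[ c_{0*}\bigl(o(c_0) \cap [X_0]\bigr) \,=\, \omega \cap (c_0)_*[X_0] \,=\, \omega \cap (c_1)_*[X_1] \,=\, 0. \]
Since $c_0$ is $3$-connected, it induces an isomorphism $c_{0*} \colon H_0(X_0; \pi_3(X_1)) \xrightarrow{\cong} H_0(B; \pi_3(X_1))$ (both groups identify with $\pi_3(X_1)_\pi$), so $o(c_0) \cap [X_0] = 0$. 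Finally, Poincar\'e duality for the $4$-dimensional Poincar\'e pair $(X_0, Y_0)$ provides the isomorphism
\[ - \cap [X_0] \colon H^4(X_0, Y_0; \pi_3(X_1)) \xrightarrow{\cong} H_0(X_0; \pi_3(X_1)), \]
whence $o(c_0) = 0$ and the desired lift $f$ exists.

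The step I expect to require the most care is the justification of the universal obstruction $\omega$ and the naturality identity $o(c_i) = c_i^*(\omega)$, since $c_1$ carries nontrivial $\pi_1$-action on $\pi_3(X_1)$ and the local coefficients must be handled correctly. A more hands-on alternative avoiding explicit invocation of a universal class is to first apply \cref{lem:SurjectivePushForward} with $n = 3$ to construct an explicit partial lift $f' \colon Y_0 \cup X_0^{(3)} \to X_1$ extending $h$, and then to directly compare the cellular $4$-cocycle obstructions for $c_0$ and $c_1$ through the map $c_0 \cup c_1 \colon X_0 \cup_h \unaryminus X_1 \to B$, whose fundamental class pairs trivially with the universal obstruction precisely by the assumption on the fundamental classes.
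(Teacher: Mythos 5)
Your proof is correct, but the way you kill the obstruction is genuinely different from the paper's. Both arguments reduce, after replacing $(X_0,Y_0)$ by a $4$-dimensional CW pair via \cref{lem:PoincarePairIs4Complex}, to the vanishing of a single class in $H^4(X_0,Y_0;\pi_3(X_1))$: you phrase the problem as lifting $c_0$ through $c_1$ (converted into a fibration with $2$-connected fibre), whereas the paper first produces $f$ on $Y_0\cup X_0^{(3)}$ using \cref{lem:SurjectivePushForward} and then treats the extension over the $4$-cells. The divergence is in the vanishing step: the paper works at the cocycle level, identifying $H^4(X_0,Y_0;\pi_3(X_1))\cong H_4(B,X_1)$ via Poincar\'e duality, the relative Hurewicz isomorphism $\pi_3(X_1)\cong\pi_4(B,X_1)$ and coinvariants, showing the obstruction maps to $(c_0)_*([X_0])$ there, and concluding with the exact sequence of the triple $(B,X_1,Y_1)$; you instead invoke a universal relative primary obstruction $\omega\in H^4(B,Y_1;\pi_3(X_1))$, naturality $o(c_i)=c_i^*(\omega)$, the projection formula, and the duality isomorphism $-\cap[X_0]$ (available with $\pi_3(X_1)$-coefficients by \cref{rem:PoincarePair}). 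Your route is shorter on chain-level bookkeeping but leans on the existence, well-definedness and naturality of the relative primary obstruction with local coefficients for a section-extension problem, which is standard but not proved in the paper --- exactly the point you flag; the paper's Hurewicz computation is in effect a hands-on verification of that naturality in this case, and the alternative you sketch in your final paragraph is essentially the paper's actual proof. One phrasing to tighten: $\omega$ should be introduced as the obstruction to extending the canonical section over $Y_1\subset B$ to a section over all of $B$, so that it lives on the pair $(B,Y_1)$; the tautological full lift $\id_{X_1}$ only enters afterwards, to give $o(c_1)=c_1^*(\omega)=0$, and your sentence currently conflates these two inputs.
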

\begin{proof}
Since~$c_1$ is~$3$-connected, Lemma~\ref{lem:SurjectivePushForward} gives the existence of a map~$f \colon X_0^{(3)} \to X_1^{(3)} \subset X_1$ that restricts to~$h$ on~$Y_0$ and such that~$c_1 \circ f \simeq c_0.$
The homotopy extension property (applied to $c_1 \circ f \simeq c_0$ on $X_0^{(3)}$) ensures the existence of a~$3$-connected map $c_0'\simeq c_0 \colon X_0 \to B$ such that~$c_0'|_{X_0^{(3)}}=c_1|_{X_1^{(3)}} \circ f$.
Without loss of generality we can therefore assume that $c_1 \circ f = c_0$ on~$X_0^{(3)}$.
Consider the obstruction theoretic problem of extending~$f$ over~$X_0$ rel~$h$.

Using Lemma~\ref{lem:PoincarePairIs4Complex} we can assume that $X_0$ is obtained from $Y_0$ by attaching cells of dimension at most~$4$.
Indeed, if there is a homotopy equivalence of pairs~$(X_0,Y_0) \simeq (X_0',Y_0')$, we write $g \colon Y_0 \simeq Y_0'$ for the induced homotopy equivalence, choose a homotopy inverse $\overline{g}$,  and note that if $h \circ  \overline{g} \circ g \colon Y_0' \to Y_1$ extends, then by homotopy extension applied to $h \circ  \overline{g} \circ  g \simeq h$,  so does $h$.

Since~$f$ is defined on the whole of~$X_0$ apart from on the~$4$-cells,  the primary obstruction to our extension problem is a class in~$H^4(X_0,\partial X_0;\pi_3(X_1))$ whose vanishing ensures that~$f|_{X_0^{(2)} \cup \partial X_0 }$ extends to a map~$f \colon X_0 \to X_1$ rel~$h$.
This extension would then automatically satisfy~$c_1  \circ f \simeq c_0$:
since~$\pi_3(B)$ and~$\pi_4(B)$ vanish,  any homotopy~$c_1 \circ f|_{X_0^{(2)}} \simeq c_0|_{X_0^{(2)}}$ can be extended rel. ~$h$ to a homotopy~$c_1 \circ f \simeq c_0$.

We now analyse the obstruction in~$H^4(X_0,\partial X_0;\pi_3(X_1))$.
The composition of the Hurewicz map and the map~$\Z \otimes_{\Z[\pi]} H_4(B,X_1;\Z[\pi]) \to  H_4(B,X_1;\Z \otimes_{\Z[\pi]} \Z[\pi])$ induces an isomorphism
$$\Z \otimes_{\Z[\pi]} \pi_4(B,X_1)
\cong \Z \otimes_{\Z[\pi]} H_4(B,X_1;\Z[\pi])
 \cong H_4(B,X_1).$$
Here, the relative Hurewicz theorem applies because~$c$ is~$3$-connected while the second isomorphism follows from the UCSS because~$H_i(B,X_1;\Z[\pi])=0$ for~$i \leq 3$.

Since $B$ is $3$-coconnected, we obtain the following composition of isomorphisms:
\begin{align*}
H^4(X_0,Y_0;\pi_3(X_1)) 
&\cong H^4(X_0,Y_0;\pi_4(B,X_1))  
\cong H_0(X_0;\pi_4(B,X_1))  
\cong \Z \otimes_{\Z[\pi]} \pi_4(B,X_1) \\
& \cong H_4(B,X_1).
 \end{align*}
 We determine the image of the obstruction class in~$H_4(B,X_1)$ through these isomorphisms.
 \begin{claim}
 The image of the obstruction class in~$H_4(B,X_1)$ is given by 
$$(c_0)_*([X_0]) \in \im(H_4(B,Y_1) \to H_4(B,X_1)).$$
 \end{claim}
 \begin{proof}
 Write~$[X_0]=\sum_e n_e\otimes \wt e\in C_4^{\CW}(X_0,Y_0;\Z)$, where~$n_e\in \Z$ and the sum is taken over all~$4$-cells~$e$ of~$X_0\setminus Y_0$ with~$\widetilde{e}$ its lift to the universal cover (with respect to the fixed basepoint).
 Let~$\alpha_e \colon S^3 \to X_0^{(3)}\cup Y_0$ be the attaching map of~$e$.
The obstruction cocycle is the homomorphism~$C_4(\widetilde{X}_0,\widetilde{Y}_0) \to \pi_3(X_1)$ that maps~$\widetilde{e}$ to the homotopy class~$[S^3 \xrightarrow{\wt \alpha_e} \wt X_0^{(3)}\cup \wt Y_0\xrightarrow{\wt f} \wt X_1] \in \pi_3(X_1).$ 
We use~$\theta(f)$ to denote the cohomology class of this cocycle.

We describe a cocycle representative for the image of~$[\theta(f)]$ in~$H^4(X_0,Y_0;\pi_4(B,X_1))$.
Consider the commutative diagram
\[\xymatrix{
S^3\ar[r]^-{\wt \alpha_e}\ar[d]^\subset&\wt X_0^{(3)}\cup \wt Y_0\ar[r]^-{\wt f}&\wt X_1 \ar[d]^{\wt c_1}\\
D^4\ar[rr]^{\wt c_0|_{\wt e}}&& \wt B
}\]
to see that~$\wt e\mapsto \wt c_0|_{\wt e}$ determines the required cocycle and use~$\varphi$ to denotes its cohomology class.

We describe the image of this cohomology class in~$H_0(X_0,Y_0;\pi_4(B,X_1))\cong \Z \otimes_{\Z[\pi]} \pi_4(B,X_1)$.
By definition this class is~$\PD_{X_0}([\varphi])=\varphi \cap [X_0]$. 
We deduce that
\[\PD_{X_0}([\varphi]) = \varphi\cap \left[\sum_e n_e\otimes \wt e\right]=\left[\sum_e n_e \otimes \varphi(\wt e)\right]=\left[\sum_e n_e \otimes \wt c_0|_{\wt e}\right].\]
Under the Hurewicz map,~$\wt c_0|_{\wt e}\in \pi_4(B,X_1)$ is sent to~$(c_0)_*(\wt e)\in H_4(B,X_1)$, 
so~$\PD_{X_0}([\varphi])$ is sent to 
\[\sum_e n_e \otimes (c_0)_*(\wt e)=(c_0)_*\left(\sum_e n_e\otimes \wt e\right)=(c_0)_*([X_0]).\]
 This concludes the proof of the claim.
 \end{proof}
 We now assume that~$(c_0)_*([X_0])=(c_1)_*([X_1]) \in H_4(B,Y_1)$ and conclude the proof of the proposition.
The long exact sequence of the triple~$(B,X_1,Y_1)$ ensures that
$$(c_1)_*([X_1])=0 \in \im( H_4(B,Y_1) \to H_4(B,X_1)).$$
Combining these two equalities implies that 
$(c_0)_*([X_0])=0 \in \im( H_4(B,Y_1) \to H_4(B,X_1)).$
The claim then implies that  the required obstruction vanishes and the preceding explanation guarantees that~$h$ extends to a map~$f \colon  X_0 \to X_1$ with~$c_1 \circ f \simeq c_0$.
\end{proof}

These results yield a tractable sufficient condition for~$h$ to extend to a homotopy equivalence.

\begin{proposition}
\label{prop:BauesBleileWeak}
If the maps~$c_0$ and~$c_1$ satisfy
$$(c_0)_*([X_0])=(c_1)_*([X_1]) \in H_4(B,Y_1),$$
 then~$h$ extends to a homotopy equivalence~$f \colon  X_0 \to X_1$ with~$c_1 \circ f \simeq c_0$.
\end{proposition}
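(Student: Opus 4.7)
The proof is essentially a one-line combination of the two preceding results. First I would invoke Proposition~\ref{prop:FundamentalClass}: its hypothesis is exactly our hypothesis $(c_0)_*([X_0])=(c_1)_*([X_1]) \in H_4(B,Y_1)$, and its conclusion provides a map $f \colon X_0 \to X_1$ extending $h$ with $c_1 \circ f \simeq c_0$. Then I would feed this $f$ into Lemma~\ref{lem:DegreeOneCommute}, whose hypotheses (extension of $h$ plus $c_1 \circ f \simeq c_0$) are precisely what Proposition~\ref{prop:FundamentalClass} hands us; its conclusion is that $f$ is a homotopy equivalence.

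So the only substantive work has already been done earlier: the genuine content is in Proposition~\ref{prop:FundamentalClass} (an obstruction-theoretic calculation showing that the primary obstruction to extending a map built over the $3$-skeleton can be identified with the difference of pushed-forward fundamental classes inside $H_4(B,X_1)$) and in Lemma~\ref{lem:DegreeOneCommute} (a Whitehead-theorem argument using that $c_0, c_1$ are $3$-connected and that $H_k(X_i;\Z[\pi])$ vanishes for $k=3,4$ by Poincar\'e duality applied to the pairs $(X_i,Y_i)$).

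There is no real obstacle at this step: the hypothesis of Proposition~\ref{prop:BauesBleileWeak} matches the hypothesis of Proposition~\ref{prop:FundamentalClass} verbatim, and the output of the latter matches the input of Lemma~\ref{lem:DegreeOneCommute} verbatim. The statement is really a packaging result, chaining a fundamental-class criterion for existence of a map compatible with the classifying maps $c_i$ together with a Whitehead-type criterion for such a map to be a homotopy equivalence. Accordingly, I would write the proof as a single sentence citing Proposition~\ref{prop:FundamentalClass} and Lemma~\ref{lem:DegreeOneCommute} in sequence.
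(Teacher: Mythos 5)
Your proposal is correct and coincides with the paper's own proof: the paper likewise cites Proposition~\ref{prop:FundamentalClass} to produce a map $f$ extending $h$ with $c_1 \circ f \simeq c_0$, and then Lemma~\ref{lem:DegreeOneCommute} to conclude that $f$ is a homotopy equivalence. Nothing further is needed.
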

\begin{proof}
Proposition~\ref{prop:FundamentalClass} proves the existence of a map~$f \colon  X_0 \to X_1$ extending~$h$ such that~$c_1 \circ f \simeq c_0$.
 Lemma~\ref{lem:DegreeOneCommute} then implies that~$f$ is a homotopy equivalence.
\end{proof}

The next proposition recasts this sufficient condition in terms of the vanishing of a class in~$H_4(B)$, thus placing us in a position to apply Proposition~\ref{prop:HillmanCite}.

\begin{proposition}
\label{prop:FundamentalClassUnion}
 There is a unique homology class~$\xi_0 \in H_4(B)$,  satisfying
$$ j_*(\xi_0)=(c_0)_*([X_0])-(c_1)_*([X_1]) \in H_4(B,Y_1).$$
 In particular,  we have 
$$ \xi_0=0 \ \ \  \Leftrightarrow  \ \  \ (c_0)_*([X_0])=(c_1)_*([X_1]) \in H_4(B,Y_1).$$
In fact,~$\xi_0=c_*([U])$, where~$(U,c):=(X_0 \cup_h \unaryminus X_1,c:=c_0 \cup c_1)$.
\end{proposition}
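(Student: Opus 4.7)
My plan has two components: first establish uniqueness by showing that $j_*\colon H_4(B) \to H_4(B,Y_1)$ is injective, and then exhibit the explicit preimage $\xi_0 := c_*([U])$ where $c = c_0 \cup c_1 \colon U \to B$.

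For uniqueness, I will consider the portion
\[ H_4(Y_1) \to H_4(B) \xrightarrow{j_*} H_4(B,Y_1) \]
of the long exact sequence of the pair $(B, Y_1)$. Since $(X_1, Y_1)$ is a $4$-dimensional Poincar\'e pair, $Y_1$ is a $3$-dimensional Poincar\'e complex, and so by the Wall finiteness argument invoked in the proof of Lemma~\ref{lem:PoincarePairIs4Complex}, it is homotopy equivalent to a $3$-dimensional CW complex. Thus $H_4(Y_1) = 0$ and $j_*$ is injective.

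For existence, I will apply naturality of the long exact sequence of pairs to the map $c \colon (U, Y_1) \to (B, Y_1)$, which is well defined because $c_0|_{Y_0} = c_1|_{Y_1} \circ h$. This produces the commutative square
\[
\xymatrix{
H_4(U) \ar[r] \ar[d]_{c_*} & H_4(U, Y_1) \ar[d]^{c_*} \\
H_4(B) \ar[r]^{j_*} & H_4(B, Y_1).
}
\]
Excision yields an isomorphism
\[ H_4(U, Y_1) \cong H_4(X_0, Y_0) \oplus H_4(X_1, Y_1), \]
under which the right-hand vertical becomes $(a, b) \mapsto (c_0)_*(a) + (c_1)_*(b)$, since $c$ restricts to $c_i$ on $X_i$ and sends each $Y_i$ into $Y_1$. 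By the orientation convention of Remark~\ref{rem:X0cup-X1}, the image of $[U]$ under the top horizontal arrow is $([X_0], \unaryminus [X_1])$, so chasing the diagram gives
\[ j_*(\xi_0) = (c_0)_*([X_0]) - (c_1)_*([X_1]) \in H_4(B, Y_1), \]
as required. The ``in particular" clause then follows at once from the injectivity of $j_*$.

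The argument is essentially a diagram chase and I foresee no serious obstacle. The only mild subtlety is the sign arising from the orientation of $U = X_0 \cup_h \unaryminus X_1$, but this has already been packaged for us by Remark~\ref{rem:X0cup-X1}.
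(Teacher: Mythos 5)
Your proof is correct and follows essentially the same route as the paper: uniqueness via injectivity of $j_*\colon H_4(B)\to H_4(B,Y_1)$ (from $H_4(Y_1)=0$, since $Y_1$ is a $3$-dimensional Poincar\'e complex), and the same excision/naturality diagram showing $j_*\bigl(c_*([U])\bigr)=(c_0)_*([X_0])-(c_1)_*([X_1])$. The only difference is organisational: the paper first proves existence of $\xi_0$ abstractly, using that $h$ has degree one to show the difference class is annihilated by the connecting map $\partial_B\colon H_4(B,Y_1)\to H_3(Y_1)$, and only then identifies $\xi_0=c_*([U])$, whereas you obtain existence directly from the explicit class, the degree-one hypothesis entering instead through the orientation convention of Remark~\ref{rem:X0cup-X1}.
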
 
 \begin{proof}
 Consider the connecting map~$\partial_B \colon H_4(B,Y_1) \to H_3(Y_1)$ as well as the corresponding connecting maps $\partial_0$ and $\partial_1$ for the pairs $(X_0,Y_0)$ and $(X_1,Y_1)$.
Since~$h$ has degree one,
a direct calculation shows that
\begin{align*}
\partial_B \circ (c_0)_*([X_0])-\partial_B \circ (c_1)_*([X_1])
&=h_* \circ \partial_0([X_0])-\partial_1([X_1]) 
=h_*([Y_0])-[Y_1] 
=0.
\end{align*}
The long exact sequence of the pair~$(B,Y_1)$ now ensures that the required~$\xi_0\in H_4(B)$ exists.
The uniqueness of~$\xi_0$ as well as the equivalence involving the vanishing of $\xi_0$ follow because the inclusion induced map~$H_4(B) \to H_4(B,Y_1)$ is injective.

Finally, we argue that~$j_* \circ c_*([U])=(c_0)_*([X_0])-(c_1)_*([X_1]) \in H_4(B,Y_1)$ as the uniqueness of~$\xi_0$ will then imply that~$\xi_0=c_*([U])$.
Consider the following commutative diagram:
$$
\xymatrix@R0.5cm{
H_4(U)\ar[d]^-{c_*}\ar[r]&H_4(U,Y_0)\ar[d]^{c_*}&H_4(X_0,Y_0) \oplus  H_4(X_1,Y_1)  \ar[l]_-\cong\ar[ld]^-{((c_0)_*,(c_1)_*)}  \\
H_4(B)\ar[r]^-{j_*}&H_4(B,Y_1).
}
$$
The top composition sends~$[U]$ to~$[X_0]-[X_1]$.
The conclusion now follows readily.
\end{proof}

We record the following observation on the hermitian form~$b_{\xi_0}(x,y) = \langle x , y \cap \xi_0 \rangle$ on~$H^2(B;\Z[\pi])$.
\begin{proposition}
\label{prop:bxiUnion}
The hermitian form $b_{\xi_0}$ satisfies~$ b_{\xi_0}=c^*b_U.$
\end{proposition}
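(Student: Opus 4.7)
The plan is to verify the identity by a direct computation using naturality of the cap product and of the Kronecker pairing, with~$\xi_0=c_*([U])$ being the key input.

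More precisely, fix~$\alpha,\beta \in H^2(B;\Z[\pi])$. Unwinding the definition of the convention~$c^*b_U$ (given at the end of the Conventions), the right hand side evaluated on~$(\alpha,\beta)$ equals~$b_U(c^*(\alpha),c^*(\beta))=\langle c^*(\beta),c^*(\alpha)\cap [U]\rangle$. On the other hand, by the identification $\xi_0=c_*([U])$ from Proposition~\ref{prop:FundamentalClassUnion}, the left hand side evaluated on~$(\alpha,\beta)$ equals $\langle \beta,\alpha\cap c_*([U])\rangle$. Thus the statement reduces to the identity
$$\langle \beta,\alpha\cap c_*([U])\rangle=\langle c^*(\beta),c^*(\alpha)\cap [U]\rangle.$$

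The first step is to apply the projection formula for the cap product, which gives~$\alpha\cap c_*([U])=c_*\bigl(c^*(\alpha)\cap [U]\bigr)$ in~$H_2(B;\Z[\pi])$; this holds in equivariant (co)homology since~$c$ lifts to a~$\pi$-equivariant map on universal covers and the projection formula is natural with respect to chain-level constructions. The second step is to apply naturality of the Kronecker pairing along~$c$, which yields~$\langle \beta,c_*(\gamma)\rangle=\langle c^*(\beta),\gamma\rangle$ for any~$\gamma\in H_2(U;\Z[\pi])$; taking~$\gamma=c^*(\alpha)\cap [U]$ completes the chain of equalities.

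There is no genuine obstacle here: both the projection formula and the naturality of evaluation are standard and hold in the twisted setting used throughout the paper. The only mild point to check is that all the coefficient modules and the $\pi$-actions match up correctly, which follows from~$\pi_1(U)\cong\pi_1(X_1)=\pi$ (Proposition~\ref{prop:pi1HomotopyPushout}) and the fact that~$c$ induces the identity on~$\pi_1$ under this identification.
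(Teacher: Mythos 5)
Your proposal is correct and follows essentially the same route as the paper: both invoke $\xi_0=c_*([U])$ from Proposition~\ref{prop:FundamentalClassUnion} and then conclude by naturality of the cap product (projection formula) together with naturality of the evaluation pairing along~$c$. The paper merely compresses these two naturality steps into a single displayed chain of equalities.
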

\begin{proof}
Proposition~\ref{prop:FundamentalClassUnion} ensures that~$\xi_0=c_*([U])$.
Now, given~$x,y \in H^2(B;\Z[\pi])$, the result follows from a calculation:~$b_{\xi_0}(x,y)
=\langle x,y \cap \xi_0\rangle
=\langle x,y \cap c_*([U])\rangle
=\langle c^*(x),c^*(y) \cap [U] \rangle
=c^*b_X(x,y).
$
\end{proof}

\subsection{From fundamental classes to intersection forms}
\label{sub:FundamentalClassToIntersection}

We continue with Notation~\ref{notation:SectionPrimarySecondary}.
Additionally, as in Proposition~\ref{prop:FundamentalClassUnion},  we consider the unique homology class~$\xi_0 \in H_4(B)$ satisfying
$$ j_*(\xi_0)=(c_0)_*([X_0])-(c_1)_*([X_1]) \in H_4(B,Y_1).$$
Proposition~\ref{prop:FundamentalClassUnion} shows that $\xi_0 \in H_4(B)$ is an obstruction to $h$ extending to a homotopy equivalence.
Proposition~\ref{prop:HillmanCite} states that the problem of deciding whether~$\xi_0$ vanishes can be expressed terms of the pairing~$b_{\xi_0}(x,y) = \langle x , y \cap \xi_0 \rangle$ on~$H^2(B;\Z[\pi])$.
This section recasts the pairing $b_{\xi_0}$ in terms of the relative intersection forms of $X_0$ and $X_1$.
The outcome, namely Proposition~\ref{prop:FundamentalClassIntersectionForm},  is a preliminary version of Theorem~\ref{thm:MainTheoremBoundaryIntro} from the introduction.

\medbreak

The strategy outlined above relies on relative intersection forms.
\begin{definition}
\label{def:EquivariantIntersectionForm}
Let~$(X,Y)$ be a~$4$-dimensional Poincar\'e pair with $\pi:=\pi_1(X)$.
\begin{itemize}
\item The \emph{cohomological relative equivariant intersection form} is
\begin{align*}
b_X^\partial  \colon H^2(X,Y;\Z[\pi]) \times H^2(X;\Z[\pi]) & \to  \Z[\pi] \\
(\alpha,\beta) &\mapsto \langle \beta,\PD_X(\alpha) \rangle
=\langle \beta,\alpha \cap [X] \rangle.
\end{align*}
\item The \emph{relative equivariant intersection form} is
\begin{align*}
\lambda_X^\partial  \colon H_2(X;\Z[\pi]) \times H_2(X,Y;\Z[\pi]) & \to  \Z[\pi] \\
(x,y) &\mapsto \langle \PD_X^{-1}(y),x\rangle.
\end{align*}
\end{itemize}
\end{definition}

The appendix discusses these pairings in more detail but, for now,   we return to the pairing $b_{\xi_0}$ and collect some of its properties.

\begin{proposition}
\label{prop:PullbackProperties}
The pairing~$b_{\xi_0} \colon H^2(B;\Z[\pi]) \times H^2(B;\Z[\pi])  \to \Z[\pi],(x,y) \mapsto \langle x , y \cap \xi_0 \rangle$ satisfies the following properties.
\begin{itemize}
\item Consider the map
$$ j^* \colon H^2(B,Y_1;\Z[\pi]) \to H^2(B;\Z[\pi]).$$
For every~$x \in H^2(B,Y_1;\Z[\pi])$ and~$y \in H^2(B;\Z[\pi])$, 
$$ b_{\xi_0}(j^*(x),y)
=b_{X_0}^\partial(c_0^*(x),c_0^*(y)) -b_{X_1}^\partial(c_1^*(x),c_1^*(y)).$$
\item
If~$b_{\xi_0}(j^*(-),-) \equiv 0$, then~$b_{\xi_0}$ induces a hermitian form 
\begin{align*}
b(c_0,c_1) \colon H^2(Y_1;\Z[\pi]) \times H^2(Y_1;\Z[\pi]) 
&\xleftarrow{c_1|_{Y_1}^* \times c_1|_{Y_1}^*,\cong} 
 \frac{H^2(B;\Z[\pi])}{\im(j^*)} \times \frac{H^2(B;\Z[\pi])}{\im(j^*)} \\
 & \xrightarrow{b_{\xi_0}} \Z[\pi].
  \end{align*}
\end{itemize}
\end{proposition}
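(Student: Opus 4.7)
The plan is to prove the two bullets separately. The first is a direct calculation using naturality of the cap product and of the Kronecker pairing, pivoting on the identification $j_*(\xi_0)=(c_0)_*([X_0])-(c_1)_*([X_1])$ from Proposition~\ref{prop:FundamentalClassUnion}. The second is then essentially formal: the hermitian symmetry of $b_{\xi_0}$ supplied by Lemma~\ref{lem:cup} promotes the vanishing in the first variable on $\im(j^*)$ to vanishing in both variables, and the identification of the quotient with $H^2(Y_1;\Z[\pi])$ reduces to combining the long exact sequence of $(B,Y_1)$ with Poincar\'e--Lefschetz duality.

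For the first bullet, let $x\in H^2(B,Y_1;\Z[\pi])$ and $y\in H^2(B;\Z[\pi])$. Viewing $j\colon(B,\emptyset)\to(B,Y_1)$ as a map of pairs that is the identity on underlying spaces, naturality of the cap product gives $j^*(x)\cap\xi_0=x\cap j_*(\xi_0)\in H_2(B;\Z[\pi])$, so
\[b_{\xi_0}(j^*(x),y)=\langle y,j^*(x)\cap\xi_0\rangle=\langle y,x\cap j_*(\xi_0)\rangle.\]
Substituting $j_*(\xi_0)=(c_0)_*([X_0])-(c_1)_*([X_1])$ splits this as a difference of two terms, and a second application of naturality, now for the map of pairs $c_i\colon(X_i,Y_i)\to(B,Y_1)$, rewrites each term as
\[\langle y,(c_i)_*(c_i^*(x)\cap[X_i])\rangle=\langle c_i^*(y),c_i^*(x)\cap[X_i]\rangle=b_{X_i}^\partial(c_i^*(x),c_i^*(y)),\]
which assembles into the claimed identity.

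For the second bullet, the hypothesis $b_{\xi_0}(j^*(-),-)\equiv 0$ together with hermitian symmetry of $b_{\xi_0}$ forces $b_{\xi_0}(-,j^*(-))\equiv 0$ as well, so $b_{\xi_0}$ descends to a hermitian form on $H^2(B;\Z[\pi])/\im(j^*)$. It remains to identify this quotient with $H^2(Y_1;\Z[\pi])$ via $c_1|_{Y_1}^*$. Injectivity of the induced map is immediate from exactness of the cohomology long exact sequence of $(B,Y_1)$. For surjectivity, since $B$ has been taken to be the mapping cylinder of $c_1\colon X_1\to B$, the inclusion $X_1\hookrightarrow B$ is $3$-connected and therefore induces an isomorphism $H^2(B;\Z[\pi])\cong H^2(X_1;\Z[\pi])$, so it suffices to show that $H^2(X_1;\Z[\pi])\to H^2(Y_1;\Z[\pi])$ is surjective. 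By the long exact sequence of $(X_1,Y_1)$ this reduces to $H^3(X_1,Y_1;\Z[\pi])=0$, which follows from Poincar\'e--Lefschetz duality $H^3(X_1,Y_1;\Z[\pi])\cong H_1(X_1;\Z[\pi])$ and $H_1(X_1;\Z[\pi])=H_1(\widetilde{X}_1)=0$ since $\widetilde{X}_1$ is simply connected.

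No serious obstacle arises: the computation in the first bullet is a short chain of naturality identities, and the only non-mechanical ingredient in the second bullet is the surjectivity of $c_1|_{Y_1}^*$, which dissolves once one invokes the mapping cylinder model for $B$ together with Poincar\'e--Lefschetz duality.
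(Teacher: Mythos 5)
Your proposal is correct and follows essentially the same route as the paper: the first bullet is the same chain of naturality identities pivoting on $j_*(\xi_0)=(c_0)_*([X_0])-(c_1)_*([X_1])$, and the second bullet descends $b_{\xi_0}$ using hermitian symmetry and identifies the quotient via surjectivity of $c_1|_{Y_1}^*$, reduced exactly as in the paper to $H^3(X_1,Y_1;\Z[\pi])\cong H_1(X_1;\Z[\pi])=0$. The only cosmetic difference is that you pass through the isomorphism $H^2(B;\Z[\pi])\cong H^2(X_1;\Z[\pi])$ from the mapping cylinder model, while the paper compares the long exact sequences of $(B,Y_1)$ and $(X_1,Y_1)$ in a diagram; these are the same argument.
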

\begin{proof}
The first point follows from a direct calculation involving the definition of~$b_{\xi_0}$,  the naturality of the cap product, the definition of~$\xi_0$, and the naturality of the evaluation map:
 \begin{align*}
  b_{\xi_0}(j^*(x),y)
&=\langle y,j^*(x) \cap \xi_0 \rangle 
=\langle y,x\cap j_*(\xi_0) \rangle 
=\langle y,x \cap ((c_0)_*([X_0])-(c_1)_*([X_1])) \rangle \\
&=\langle c_0^*(y),c_0^*(x) \cap [X_0] \rangle-\langle c_1^*(y),c_1^*(x) \cap [X_1] \rangle \\
&=\langle c_0^*(y),\PD_{X_0}(c_0^*(x)) \rangle-\langle c_1^*(y),\PD_{X_1}(c_1^*(x)) \rangle 
=b_{X_0}^\partial(c_0^*(x),c_0^*(y)) -b_{X_1}^\partial(c_1^*(x),c_1^*(y)).
\end{align*}
We prove the second assertion.
If~$b_{\xi_0}(j^*(-),-) \equiv 0$, then~$b_{\xi_0}$ induces a pairing 
~$$ \frac{H^2(B;\Z[\pi])}{\im(j^*)}   \times H^2(B;\Z[\pi]) \to \Z[\pi].$$
A direct calculation using the fact that~$b_{\xi_0}$ is hermitian shows that $b_{\xi_0}(x+j^*(a),y+j^*(b))=b_{\xi_0}(x,y)$.
It follows that $b_{\xi_0}$ in fact descends to a pairing on~$H^2(B;\Z[\pi])/\im(j^*) \times H^2(B;\Z[\pi])/\im(j^*)$.

Next, we claim that~$c_1^* \colon H^2(B;\Z[\pi]) \to H^2(Y_1;\Z[\pi])$ is surjective.
Consider the diagram
	\[\begin{tikzcd}
		H^2(B;\Z[\pi])\ar[r,"{c_1^*}"]\ar[d,"c_1^*","\cong"']&H^2(Y_1;\Z[\pi])\ar[r,"\delta"]\ar[d,"="]&H^3(B,Y_1;\Z[\pi])\ar[d]\\
		H^2(X_1;\Z[\pi])\ar[r]&H^2(Y_1;\Z[\pi])\ar[r,"\delta"]&H^3(X_1,Y_1;\Z[\pi])
	\end{tikzcd}\]
and note that~$H^3(X_1,Y_1;\Z[\pi])\cong H_1(X_1;\Z[\pi])=0$ by Poincar\'e duality. It follows that~$H^2(X_1;\Z[\pi])\to H^2(Y_1;\Z[\pi])$ is surjective and hence so is~$c_1^* \colon H^2(B;\Z[\pi]) \to H^2(Y_1;\Z[\pi])$, as claimed.

The claim implies that there is is an exact sequence
$$H^2(B,Y_1;\Z[\pi]) \xrightarrow{j^*} H^2(B;\Z[\pi]) \xrightarrow{c_1^*} H^2(Y_1;\Z[\pi]) \to 0$$
and therefore~$c_1^*$ induces an isomorphism~$H^2(B;\Z[\pi])/\im(j^*) \cong H^2(Y_1;\Z[\pi]),$ as required.
This concludes the proof of the proposition.
\end{proof}

Recall from Definitions~\ref{def:PrimaryObstructionIntroduction} and~\ref{def:SecondaryIntro} that we occasionally refer to $c_0^*b_{X_0}^\partial-c_1^*b_{X_1}^\partial$ as the \emph{primary obstruction} and that, when this pairing vanishes, the hermitian form~$b(c_0,c_1)$ leads to the \emph{secondary obstruction}.
The terminology is motivated by the following proposition which shows that the vanishing of these pairings 
leads to our first sufficient condition for $h \colon Y_0 \to Y_1$ to extend to a homotopy equivalence $X_0 \to X_1$.

\begin{proposition}
\label{prop:FundamentalClassIntersectionForm}
Assume 
\begin{itemize}
\item  either that~$\cd(\pi)\leq 3$ and~$\operatorname{Herm}(H_2(B;\Z[\pi])^*)\to \operatorname{Herm}(H^2(B;\Z[\pi]))$ is injective,
\item or~$\pi_1(X_i)$ is finite and~$\Gamma(\pi_2(X)) \otimes_{\Z[\pi]} \Z$ is torsion free.
\end{itemize}
If there exists a~$3$-connected map~$c_i \colon X_i \to B$ for~$i=0,1$ such that~$c_1|_{Y_1} \circ h = c_0|_{Y_0}$ and
\begin{enumerate}
\item the pulled back pairings~$c_0^*b_{X_0}^\partial$ and~$c_1^*b_{X_1}^\partial$ on~$H^2(B;\Z[\pi])$ are equal,
\item the hermitian form~$b(c_0,c_1)$ on~$H^2(Y_1;\Z[\pi])$ is the zero pairing,
\end{enumerate} 
then~$ \xi_0=0 \in H_4(B)$ and thus~$h$ extends to a homotopy equivalence~$f \colon  X_0 \to X_1$ with~$c_1 \circ f \simeq c_0$.
\end{proposition}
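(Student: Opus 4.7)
The plan is to assemble the preceding machinery: the two hypotheses will force the hermitian form $b_{\xi_0}$ on $H^2(B;\Z[\pi])$ to vanish identically, and then the injectivity statement in Proposition~\ref{prop:HillmanCite} will give $\xi_0=0$, at which point Propositions~\ref{prop:FundamentalClassUnion} and~\ref{prop:BauesBleileWeak} deliver the homotopy equivalence.

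First I would use hypothesis (1) together with the first item of Proposition~\ref{prop:PullbackProperties}: for every $x \in H^2(B,Y_1;\Z[\pi])$ and $y \in H^2(B;\Z[\pi])$,
$$b_{\xi_0}(j^*(x),y)=c_0^*b_{X_0}^\partial(x,y)-c_1^*b_{X_1}^\partial(x,y)=0,$$
so $b_{\xi_0}$ vanishes on $j^*(H^2(B,Y_1;\Z[\pi])) \times H^2(B;\Z[\pi])$. Since $b_{\xi_0}$ is hermitian (by Lemma~\ref{lem:cup}), it also vanishes on $H^2(B;\Z[\pi])\times j^*(H^2(B,Y_1;\Z[\pi]))$, and hence the second item of Proposition~\ref{prop:PullbackProperties} applies to show that $b_{\xi_0}$ descends to the pairing $b(c_0,c_1)$ on $H^2(Y_1;\Z[\pi])$ via the isomorphism $c_1|_{Y_1}^* \colon H^2(B;\Z[\pi])/\im(j^*) \xrightarrow{\cong} H^2(Y_1;\Z[\pi])$.

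Now hypothesis (2) states $b(c_0,c_1)=0$, so combining this with the previous paragraph yields $b_{\xi_0}\equiv 0$ on all of $H^2(B;\Z[\pi])\times H^2(B;\Z[\pi])$. Under either of the two standing assumptions (the $\cd(\pi)\leq 3$ case, or the finite $\pi$ with $\Gamma(\pi_2(X))\otimes_{\Z[\pi]}\Z$ torsion free case), Proposition~\ref{prop:HillmanCite} asserts that the map $\sigma \mapsto b_\sigma$ from $H_4(B)$ to $\operatorname{Herm}(H^2(B;\Z[\pi]))$ is injective; thus $\xi_0=0\in H_4(B)$.

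Finally, Proposition~\ref{prop:FundamentalClassUnion} translates $\xi_0=0$ into the fundamental-class equality $(c_0)_*([X_0])=(c_1)_*([X_1])\in H_4(B,Y_1)$, and Proposition~\ref{prop:BauesBleileWeak} then produces a homotopy equivalence $f\colon X_0\to X_1$ extending $h$ with $c_1\circ f\simeq c_0$, as required. The proof is therefore a short assembly argument; the only place where one must be careful is in the second paragraph, where the descent of $b_{\xi_0}$ to a pairing on $H^2(Y_1;\Z[\pi])$ in \emph{both} slots is needed and rests on hermitian symmetry combined with hypothesis (1).
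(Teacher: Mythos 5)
Your proposal is correct and follows essentially the same route as the paper's proof: vanishing of the primary obstruction plus $b(c_0,c_1)=0$ forces $b_{\xi_0}\equiv 0$ via Proposition~\ref{prop:PullbackProperties}, the injectivity in Proposition~\ref{prop:HillmanCite} (valid under either standing hypothesis) gives $\xi_0=0$, and Propositions~\ref{prop:FundamentalClassUnion} and~\ref{prop:BauesBleileWeak} then yield the extension $f$ with $c_1\circ f\simeq c_0$. Your explicit use of hermitian symmetry to get vanishing in both slots is exactly the point already packaged in Proposition~\ref{prop:PullbackProperties}, so no gap remains.
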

\begin{proof}
Since~$c_0^*b_{X_0}^\partial=c_1^*b_{X_1}^\partial$ on~$H^2(B;\Z[\pi])$, Proposition~\ref{prop:PullbackProperties} implies that~$b(c_0,c_1)$ is defined.
If~$b(c_0,c_1)$ vanishes, then Proposition~\ref{prop:PullbackProperties} implies that~$b_{\xi_0}$ vanishes as well.
The assumptions of the present proposition together with Proposition~\ref{prop:HillmanCite} then ensure that~$\xi_0=0 \in H_4(B).$
Proposition~\ref{prop:FundamentalClass} implies that~$(c_0)_*([X_0])=(c_1)_*([X_1]) \in H_4(B,Y_1)$.
Proposition~\ref{prop:BauesBleileWeak} now implies that~$X_0$ and~$X_1$ are homotopy equivalent over~$B$ rel~$h$.
\end{proof}

\section{The action on hermitian forms and the proof of Theorem~\ref{thm:MainTheoremBoundaryIntro}}
\label{sec:DefineAction}

This section, which continues with Notation~\ref{notation:SectionPrimarySecondary},  aims to prove Theorem~\ref{thm:MainTheoremBoundaryIntro}.
Proposition~\ref{prop:FundamentalClassIntersectionForm} shows that~$c_0^*b_{X_0}^\partial-c_1^*b_{X_1}^\partial$ constitutes a first obstruction to extending $h \colon Y_0 \to Y_1$ to a homotopy equivalence,  and suggests the hermitian form~$b(c_0,c_1) \in \Herm(H^2(Y_1;\Z[\pi]))$ constitutes a secondary obstruction. 
This section constructs a group~$\mathcal{G}$ and an 
action~$\mathcal{G} \curvearrowright \Herm(H^2(Y_1;\Z[\pi]))$ so that for a given $c_1$,  the vanishing of~$b(c_0,c_1)$ in~$\Herm(H^2(Y_1;\Z[\pi]))/\mathcal{G}$ does not depend on the choice of
$$c_0 \in \mathcal{S}_0(c_1):=\{  c_0 \colon X_0 \to B \mid c_0 \text{ is }3\text{-connected}, c_1|_{Y_1} \circ h = c_0|_{Y_0} \text{ and } c_0^*b_{X_0}^\partial = c_1^*b_{X_1}^\partial  \} \Big/\simeq.$$
Theorem~\ref{thm:MainTheoremBoundaryIntro} then follows readily from this construction and Proposition~\ref{prop:FundamentalClassIntersectionForm}.

\medbreak

In what follows, given a pair~$(X,Y)$, we write 
\begin{align*}
& \hAut(X)=\{ \varphi \colon X \to X \mid \varphi \text{ is a homotopy equivalence} \} /\simeq  \\
& \hAut_Y(X)=\{ \varphi \colon X \to X \mid \varphi \text{ is a homotopy equivalence rel.  }Y\} /\simeq \text{ rel.  Y}.
\end{align*}

Any homotopy equivalence~$\varphi \colon B \to B$ rel.~$Y_1$ is readily seen to induce the identity on~$\pi_1(B).$
In particular,  any element of~$\hAut_{Y_1}(B)$ induces isomorphisms on the~$\Z[\pi]$-homology of~$B$.

\begin{construction}[The group~$\mathcal{G}$]
\label{cons:GroupG}
Consider the set
$$ \mathcal{G}=  \{ \varphi \in \hAut_{Y_1}(B) \mid  (\varphi \circ c_1)^*b_{X_1}^\partial = c_1^*b_{X_1}^\partial \}.$$
We leave it to the reader to verify that~$\mathcal{G}$ is a group under composition.
\end{construction}

The following two lemmas serve as preparatory results for the verification that~$\mathcal{G}$ acts on $\mathcal{S}_0(c_1)$.

 \begin{lemma}
 \label{lem:HomotopyRelBoundaryprep}
Let $(X,Y)$ be a pair of spaces,  and let~$B$ be a $3$-coconnected space.
 If~$f,f’ \colon X \to B$ are~$3$-connected maps with~$c:=f|_Y=f'|_Y$ both an inclusion and a cofibration,
then there is a homotopy equivalence~$\phi \colon B \to B$ that is the identity on~$c(Y)$ and such that~$f' \simeq \phi \circ f$ relative~$Y$.
 \end{lemma}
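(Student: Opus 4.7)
The approach is to use obstruction theory on the mapping cylinder of $f$. The matching of the $3$-connectedness of $f$ and the $3$-coconnectedness of $B$ means that the relevant relative cohomology groups vanish in every degree where the coefficient is nontrivial, killing every obstruction.

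Concretely, I would work inside the mapping cylinder $M := M(f)$, with its canonical deformation retraction $r\colon M\to B$ satisfying $r|_B=\id_B$ and $r(x,0)=f(x)$. In $M$, the subspaces $Y=Y\times\{0\}\subset X$ and $c(Y)\subset B$ are joined by the sub-cylinder $Y\times I\subset X\times I\subset M$ via the identification $(y,1)\sim c(y)$. On the subspace $A := X \cup (Y\times I) \cup c(Y)\subset M$, I define a partial map $\Phi\colon A\to M$ by setting $\Phi|_X := f'$ (viewed in $B\subset M$), $\Phi|_{c(Y)} := \id_{c(Y)}$, and $\Phi|_{Y\times I}(y,t) := c(y)$; these agree on the overlaps $Y\times\{0\}$ and $Y\times\{1\}\sim c(Y)$ precisely because $f'|_Y = c$. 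I then extend $\Phi$ to a map $\Phi\colon M\to M$ by cellular obstruction theory. The obstructions live in $H^{n+1}(M,A;\pi_n(M))$, and all of them vanish: the inclusion $X\hookrightarrow A$ is a homotopy equivalence (deformation retract $c(Y)$ down the cylinder onto $Y\subset X$), so $(M,A)$ has the same relative (co)homology as $(M,X)$, which is $3$-connected by the hypothesis on $f$; combined with $\pi_n(M)\cong\pi_n(B)=0$ for $n\geq 3$, no obstruction survives.

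I then set $\phi := r\circ\Phi|_B\colon B\to B$. By construction $\phi|_{c(Y)} = r|_{c(Y)} = \id_{c(Y)}$. For the homotopy $\phi\circ f \simeq f'$ rel $Y$, I use the tautological homotopy $H\colon X\times I\to M$, $H(x,t) := (x,t)$, which connects the inclusion $X\hookrightarrow M$ to the composition $X\xrightarrow{f} B\hookrightarrow M$. Applying $\Phi$ and then $r$ yields a homotopy between $f'$ and $\phi\circ f$ in $B$; this homotopy is constant on $Y$ because $\Phi\circ H|_{Y\times I}$ is constant at $c(y)\in B$ by the very choice of $\Phi$ on the cylinder. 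Finally, $\phi$ is a homotopy equivalence because $f$ and $f'$ are both weak equivalences (being $3$-connected into the $3$-coconnected $B$), so $\phi\circ f\simeq f'$ forces $\phi$ itself to be a weak equivalence, hence a homotopy equivalence by Whitehead (using that $B$ has the homotopy type of a CW complex, which is automatic in our applications where $B$ is a Postnikov $2$-type). The main potential obstacle is the CW-theoretic setup needed to apply cellular obstruction theory to $(M,A)$; this is standard after replacing $(X,Y)$ by a homotopy-equivalent CW pair and $f$ by a cellular representative, and the essential computational input is the vanishing of $H^{n+1}(M,A;\pi_n(M))$ described above.
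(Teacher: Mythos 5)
Your route is genuinely different from the paper's. The paper starts from a homotopy equivalence $\phi'$ with $f'\simeq \phi'\circ f$ (uniqueness of the Postnikov $2$-type), corrects it on $c(Y)$ by the homotopy extension property for the cofibration $c$, and then repairs the resulting homotopy to be rel~$Y$ by a concatenation/square-filling argument that again uses only HEP. You instead build $\phi$ from scratch: you extend the partial map on $A=X\cup (Y\times I)\cup c(Y)\subset M(f)$ by obstruction theory, using that $(M(f),X)$ is $3$-connected while $\pi_{\geq 3}(B)=0$, and the rel-$Y$ homotopy then comes for free from the tautological cylinder homotopy composed with the retraction. This buys self-containedness (you do not presuppose the existence of any comparison equivalence $\phi'$) and automatic rel-$Y$ control; the paper's proof buys freedom from all CW/cellular machinery, which matters here because $X$, $Y$, $B$ are only homotopy equivalent to CW complexes and the conclusion of the lemma is strict, not up-to-homotopy.

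Two points need attention. First, your justification that $\phi$ is a homotopy equivalence is wrong as stated: a $3$-connected map into a $3$-coconnected space is \emph{not} a weak equivalence of its source (in the applications $\pi_3(X)\neq 0$, while $\pi_3(B)=0$), so $f$ and $f'$ are not weak equivalences. The conclusion survives by the correct argument: $\phi_*\circ f_*=f'_*$ with $f_*,f'_*$ isomorphisms on $\pi_1$ and $\pi_2$, and $\pi_i(B)=0$ for $i\geq 3$, so $\phi$ is a weak self-equivalence of $B$, hence a homotopy equivalence once $B$ has CW homotopy type. Second, the step you call standard is where the real work hides: obstruction theory requires a relative CW structure (with local coefficients, and with the usual care in degree $2$, e.g.\ via a relative CW model for the $3$-connected pair $(M(f),A)$ with cells only in dimensions $\geq 4$), but $(M(f),A)$ is not a CW pair on the nose. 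After passing to a relative CW approximation and back, you must still produce a $\phi$ that is the identity on $c(Y)$ \emph{exactly} and a homotopy that is rel~$Y$ \emph{exactly}; arranging this transport is precisely the kind of HEP bookkeeping the paper's proof (and its companion Lemma~\ref{lem:HomotopyRelBoundary}) is built to handle, so it should be carried out rather than waved off. With those repairs your argument goes through, but as written these are genuine gaps in the justification.
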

 \begin{proof}
Let~$\phi’ \colon B \to B$ be a homotopy equivalence such that~$f’ \simeq \phi’ \circ f$ and let~$\mathbf{H} \colon f’ \simeq \phi’\circ f$ be a homotopy.
Restricting to~$Y$ produces a homotopy~$\mathbf{H}|_Y \colon c\simeq \phi’\circ c$. 
Since $c$ is an inclusion, we can instead view this as a homotopy from $\id_{c(Y)}$ to $\phi'|_{c(Y)}$.
Use the homotopy extension property to extend $\mathbf{H}|_{Y}$ to a homotopy~$\mathbf{G}=(G_t)_{t \in [0,1]} \colon B \times I \to B$ between~$G_1=\phi'$ and
a map~$\phi:= G_0$ that satisfies $\phi|_{c(Y)}=\id_{c(Y)}$.
Note also that $\phi$ is a homotopy equivalence since it is homotopic to a homotopy equivalence.

The concatenation $\mathbf{H}'\colon X\times I\to B$ of the homotopies~$\mathbf{H} \colon f’ \simeq \phi’\circ f$ and~$\overline{\mathbf{G}} \circ f:=(G_{1-t} \circ f)_{t \in [0,1]} \colon \phi\circ f \simeq \phi’\circ f$ produces a homotopy~$f’ \simeq \phi\circ f.$
This is not yet the homotopy we require since it is not rel.~$Y$.
Indeed on the boundary it is the map~$Y \times I \to B$ given by the concatenation of~$\mathbf{H}|_{Y}$ and its ``reverse" $\overline{\mathbf{H}}_{Y}$.
This concatenation restricts to the identity on $Y\times \{0,1\}$
and is homotopic relative~$Y\times \{0,1\}$ to the homotopy~$\mathbf{I} \colon Y \times I \to B$ given by $I_t=c$ for every $t \in [0,1]$.
This latter homotopy is denoted $\mathbf{F} \colon Y \times I \times I \to B$.

Finally, we construct the rel. $Y$ homotopy~$f' \simeq \phi \circ f$.
Let $\mathbf{F}'\colon X\times \{0,1\}\times I\to B$ be given by~$\mathbf{F}'(x,0,t)=f'(x)$ and $\mathbf{F}'(x,1,t)=\phi\circ f'(x)$ and consider the map
$$\mathbf{H}' \cup \mathbf{F} \cup \mathbf{F}' \colon (X \times I \times \{0\}) \cup \overbrace{(Y \times I \times I) \cup (X \times \{0,1\} \times  I)}^{=\left((Y \times I) \cup (X \times \{0,1\})\right) \times I=:A \times I} \to B.$$
The homotopy extension property of~$(X\times I,A)$ ensures that it is possible to extend~$\mathbf{F} \cup \mathbf{F}' \colon A \times I \to B$ to a map~$(X \times I) \times I \to B$ that agrees with~$\mathbf{H}'$ on~$X \times I \times \{0\}$.
Restricting this map to~$X \times I  \times \{1\}$ gives the required homotopy~$f’ \simeq \phi \circ f$ rel.~$Y$. 
This homotopy is indeed relative $Y$ since on $Y\times I$ it agrees with $\mathbf{F}_1=\mathbf{I}$ by construction.
 \end{proof}
 
We require the following strengthening of \cref{lem:HomotopyRelBoundaryprep} that weakens the requirement that the map~$c\colon Y \to B$ be an inclusion.
 \begin{lemma}
 		\label{lem:HomotopyRelBoundary}
 		Let $(X,Y)$ be a pair of spaces,  let~$B$ be a $3$-coconnected space, and let $(B,Y')$ be a pair with $Y' \subset B$ a cofibration.
 		If~$f,f’ \colon X \to B$ are~$3$-connected maps with
 		$$c:=f|_Y=f'|_Y\colon Y\to Y'\subseteq B$$
 		 a homotopy equivalence, then there is a homotopy equivalence~$\phi \colon B \to B$ that is the identity on~$Y'$ and such that~$f' \simeq \phi \circ f$ relative~$Y$.
 \end{lemma}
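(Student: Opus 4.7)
The plan is to adapt the proof of Lemma~\ref{lem:HomotopyRelBoundaryprep}, using a homotopy inverse of $c$ to bridge the gap between $Y$ and $Y'$. Fix a homotopy inverse $\bar c \colon Y' \to Y$ of $c$ together with homotopies $K \colon \bar c c \simeq \id_Y$ and $K' \colon c \bar c \simeq \id_{Y'}$. Since $f, f'$ are $3$-connected maps into the $3$-coconnected space $B$, Whitehead's theorem furnishes a homotopy equivalence $\phi_1 \colon B \to B$ together with a homotopy $\mathbf H \colon f' \simeq \phi_1 \circ f$. Restricting to $Y$, the path $\mathbf H|_Y$ goes from $\iota c$ to $\phi_1 \iota c$ in the mapping space $\operatorname{Map}(Y, B)$; precomposing with $\bar c$ and using $K'$ shows $\phi_1|_{Y'} \simeq \iota$ as maps $Y' \to B$.

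The key step is to select a specific homotopy $\mathbf L \colon Y' \times I \to B$ from $\phi_1|_{Y'}$ to $\iota$ with the property that $\mathbf L \circ c$ is homotopic to the reverse path $\overline{\mathbf H|_Y}$ relative to $Y \times \{0, 1\}$. Such an $\mathbf L$ exists because the induced map $c^* \colon \operatorname{Map}(Y', B) \to \operatorname{Map}(Y, B)$ is a homotopy equivalence (as $c$ is, with $Y, Y'$ sufficiently nice), and hence induces a bijection on path-homotopy classes of paths with any prescribed pair of endpoints: one simply takes $\mathbf L$ to represent a class pulling back to $[\overline{\mathbf H|_Y}]$. Alternatively, $\mathbf L$ can be built explicitly as a three-stage concatenation using $\mathbf H|_Y \circ \bar c$ sandwiched between two stages built from $K'$, with the identity $\mathbf L \circ c \simeq \overline{\mathbf H|_Y}$ rel endpoints verified by a diagram chase using $K$. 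Because $Y' \hookrightarrow B$ is a cofibration, the homotopy $\mathbf L$ extends to a homotopy $\mathbf G \colon B \times I \to B$ from $\phi_1$ to a map $\phi := \mathbf G_1$ with $\phi|_{Y'} = \id_{Y'}$; the map $\phi$ is a homotopy equivalence because it is homotopic to $\phi_1$.

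To conclude, concatenate $\mathbf H$ with $\mathbf G \circ f$ to produce a homotopy $\mathbf H' \colon f' \simeq \phi \circ f$. Its restriction to $Y$ is the loop $\mathbf H|_Y \cdot (\mathbf L \circ c)$, which by the defining property of $\mathbf L$ is null-homotopic rel $Y \times \{0, 1\}$; denote such a null-homotopy by $\mathbf F \colon Y \times I \times I \to B$. The final step mirrors the last paragraph of the proof of Lemma~\ref{lem:HomotopyRelBoundaryprep}: apply the homotopy extension property of the pair $(X \times I, (Y \times I) \cup (X \times \{0, 1\}))$ to $\mathbf F$ together with the constant homotopies on $X \times \{0, 1\}$, producing a deformation of $\mathbf H'$ into a homotopy $f' \simeq \phi \circ f$ that is constant on $Y$. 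The main obstacle is the construction of $\mathbf L$ with the compatibility $\mathbf L \circ c \simeq \overline{\mathbf H|_Y}$ rel endpoints; once this is established, the remainder is a formal adaptation of the argument for Lemma~\ref{lem:HomotopyRelBoundaryprep}.
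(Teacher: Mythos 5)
Your proposal is correct in its main line and takes a genuinely different route from the paper. You correct the initial equivalence $\phi_1$ directly: you choose a homotopy $\mathbf{L}\colon Y'\times I\to B$ from $\phi_1|_{Y'}$ to the inclusion whose class maps to $[\overline{\mathbf{H}|_Y}]$ under $c^*$, extend it over $B$ using only the cofibration $Y'\subset B$, and then the trace on $Y$ of the corrected homotopy becomes null rel endpoints, so the HEP square argument from the last paragraph of Lemma~\ref{lem:HomotopyRelBoundaryprep} finishes. The paper instead reduces to Lemma~\ref{lem:HomotopyRelBoundaryprep} itself by replacing $X$ with the mapping cylinder $M(i\circ c^{-1})$, so that $c$ is traded for an honest inclusion of $Y'$; the maps $f,f'$ are extended over the cylinder via a homotopy $c\bar c\simeq \id_{Y'}$, and the rel-$Y$ correction is then verified by an explicit concatenation argument using the auxiliary map $\widehat c\colon Y\times I\to M(c^{-1})$. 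What your approach buys is conceptual economy: the only nontrivial input is that the homotopy equivalence $c^*\colon \operatorname{Map}(Y',B)\to\operatorname{Map}(Y,B)$ is full on fundamental groupoids, which indeed produces $\mathbf{L}$ in the required path class. What the paper's detour buys is freedom from mapping spaces: your argument needs the exponential law (paths in $\operatorname{Map}(Y',B)$ versus homotopies $Y'\times I\to B$, and continuity of $c^*$ being a homotopy equivalence), i.e.\ your ``sufficiently nice'' hypothesis, whereas the lemma is stated for arbitrary spaces and is applied to Poincar\'e complexes that are only assumed homotopy equivalent to CW complexes; the cylinder argument avoids this entirely.

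One caution about your fallback construction: the three-stage concatenation $(\phi_1\iota\,\overline{K'})\cdot(\overline{\mathbf{H}|_Y}\circ\bar c)\cdot(\iota K')$ does \emph{not} automatically satisfy $\mathbf{L}\circ c\simeq \overline{\mathbf{H}|_Y}$ rel endpoints for independently chosen $K\colon \bar c c\simeq \id_Y$ and $K'\colon c\bar c\simeq \id_{Y'}$. Sliding the middle stage off $\bar c c$ using $K$ reduces the verification exactly to the coherence $cK\simeq K'c$ rel endpoints (a triangle identity), which fails for arbitrary choices and must be arranged by re-choosing $K'$ adapted to $K$. Your primary, groupoid-theoretic justification sidesteps this, and the paper's mapping-cylinder reduction is precisely another way of discharging the same coherence; but as sketched, the ``diagram chase using $K$'' alone is not sufficient.
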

\begin{proof}
	Choose a homotopy inverse~$c^{-1}\colon Y'\to Y$ of~$c$ and a homotopy~$\mathbf{G} \colon Y' \times I \to Y'$ from~$c\circ c^{-1}$ to~$\id_{Y'}$. 
	Consider the inclusion~$(X,Y)\to (M(i \circ c^{-1}),Y')$, where~$M(i \circ c^{-1})$ is the mapping cylinder of~$Y'\xrightarrow{c^{-1}} Y\xrightarrow{i} X$. 
	Taking~$\mathbf{G}$ on~$Y'\times I$, the maps~$f$ and~$f'$ extend to maps
	$$\wh f:=f \cup \mathbf{G},\wh f':=f' \cup \mathbf{G} \colon (M(i \circ c^{-1}),Y')\to (B,Y')$$
	  that restrict to the identity on~$Y'$. 
Here we are using the convention that the mapping cylinder of a map $g \colon C\to D$ is $M(g):=(D \cup (C \times [0,1]))/\sim$, where $(c,0) \sim f(c)$.

	  By Lemma~\ref{lem:HomotopyRelBoundaryprep}, there is a homotopy equivalence~$\phi\colon B\to B$ that is the identity on~$Y'$ and such that~$\wh f'\simeq \phi\circ \wh f$ rel.~$Y'$, say via~$\mathbf{H}=(H_t)_{t \in [0,1]}\colon M(ic^{-1})\times I\to B$. Restricting~$\mathbf{H}$ to~$X$ gives a homotopy~$\mathbf{H}|_X$ from~$f'$ to~$\phi\circ f$. This homotopy is not yet rel.~$Y$. 

The remainder of the proof is devoted to showing that~$\mathbf{H}|_X$ is homotopic rel.~$X\times \{0,1\}$ to a homotopy~$f' \simeq \phi\circ f$ that is rel.~$Y$.
Write~$M(c^{-1})$ for the mapping cylinder of~$c^{-1}\colon Y'\to Y$ and consider the map 
\begin{align*}
\wh c\colon Y\times I&\to M(c^{-1}) \\
(y,t)& \mapsto 
\begin{cases}
(c(y),t) & \quad \text{ if } t>0, \\
(c^{-1} \circ c(y),0) & \quad \text{ if } t=0.
\end{cases}
\end{align*}
We also consider the homotopy
$$\mathbf{G}':=\wh f\circ \wh c\colon Y\times I\to B$$
from $\mathbf{G}'_0=\wh f \circ (c^{-1} \circ c,0)=c\circ (c^{-1}\circ c)$ to~$\mathbf{G}'_1=\wh f \circ (c,1)=c$, 
and the composition
	 \begin{equation}\label{eq:homotopy}
	 	(Y \times I) \times I \xrightarrow{\wh c \times \id_I} M(c^{-1})\times I \xrightarrow{\mathbf{H}|} B.
	 \end{equation}
	We have $\widehat{f}' \circ \widehat{c}=\widehat{f} \circ \widehat{c}$ since $\wh f$ and $\wh f'$ are given by $\mathbf{G}$ on $Y'\times I$, and since $\mathbf{G}$ takes values in $Y'$ and~$\phi|_{Y'}=\id_{Y'}$, we also have $\phi\circ\widehat{f}\circ \widehat{c}=\widehat{f} \circ \widehat{c}$.
	Hence \eqref{eq:homotopy} is a homotopy from $H_0\circ \wh c=\widehat{f}' \circ \widehat{c}=\widehat{f} \circ \widehat{c}=\mathbf{G}'$ to~$H_1\circ \wh c=\phi\circ\widehat{f}\circ \widehat{c}=\widehat{f} \circ \widehat{c}=\mathbf{G}'$.
	Additionally, again by definition of $\widehat{c}$, on~$Y\times \{0\} \times I$, it is~$\mathbf{H}|_Y\circ c^{-1} \circ c$, whereas on~$Y\times \{1\} \times I$, it is $\mathbf{H}|_{Y'}\circ c=c\times\id_I=:\mathbf{cte}_c$ since $\mathbf{H}$ is relative $Y'$.
Hence on the boundary this homotopy restricts to
$$( \mathbf{G}' \sqcup \mathbf{G}') \cup  \mathbf{cte}_c   \cup  (c^{-1} \circ c \circ \mathbf{H}|_Y)  \colon   (Y \times I \times \{0,1\}) \cup (Y \times \{1\} \times I) \cup (Y \times \{0\} \times I) \to B.$$
It then becomes apparent that, reparametrising this homotopy,  the concatenation of~$\overline{\mathbf{G}}'$ and~$\mathbf{G}'$ is homotopic rel.~$Y\times\{0,1\}$ to~$c^{-1}\circ c\circ\mathbf{H}|_Y$.
Since the concatenation of~$\overline{\mathbf{G}}'$ and~$\mathbf{G}'$ is homotopic rel.~$Y\times\{0,1\}$ to the constant homotopy at $G'_0=c\circ c^{-1}\circ c$, 
we conclude that~$c^{-1}\circ c\circ \mathbf{H}|_Y \colon Y \times I\to B$ is homotopic rel.~$Y\times\{0,1\}$ to the constant homotopy~$Y\times I\to B$ at $c\circ c^{-1}\circ c$.

	Using a homotopy from~$c^{-1}\circ c$ to~$\id_Y$, then shows that~$\mathbf{H}|_Y$ is homotopic rel.~$Y\times\{0,1\}$ to the constant homotopy~$Y\times I\to Y$ given at~$c$.

Using homotopy extension as in the last paragraph of the proof of \cref{lem:HomotopyRelBoundaryprep}, it follows that~$\mathbf{H}|_X$ is homotopic rel. $X\times \{0,1\}$ to a homotopy that is relative~$Y$. 
This homotopy ensures that~$f' \simeq \phi \circ f$ relative~$Y$ as needed.
\end{proof}

 \begin{proposition}
 \label{prop:Transitive}
 The group $\mathcal{G}$ acts transitively on $\mathcal{S}_0(c_1)$ by composition.
 \end{proposition}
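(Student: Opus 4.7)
The plan is to proceed in two steps: first verify that $\varphi \cdot c_0 := \varphi \circ c_0$ defines an action of $\mathcal{G}$ on $\mathcal{S}_0(c_1)$, and then establish transitivity by invoking Lemma~\ref{lem:HomotopyRelBoundary}.

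For the well-definedness, given $\varphi \in \mathcal{G}$ and $c_0 \in \mathcal{S}_0(c_1)$, I would note that $\varphi \circ c_0$ is again $3$-connected (as a composition of $3$-connected maps), and that $\varphi|_{Y_1} = \id_{Y_1}$ immediately yields $(\varphi \circ c_0)|_{Y_0} = c_1|_{Y_1} \circ h$. The pairing condition reduces to the naturality calculation
$$(\varphi \circ c_0)^* b_{X_0}^\partial = \varphi^*(c_0^* b_{X_0}^\partial) = \varphi^*(c_1^* b_{X_1}^\partial) = (\varphi \circ c_1)^* b_{X_1}^\partial = c_1^* b_{X_1}^\partial,$$
where the second equality uses $c_0 \in \mathcal{S}_0(c_1)$ and the last uses $\varphi \in \mathcal{G}$. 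The group action axioms then hold automatically from the group structure on $\mathcal{G}$ and the functoriality of composition.

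For transitivity, suppose $c_0, c_0' \in \mathcal{S}_0(c_1)$. The plan is to apply Lemma~\ref{lem:HomotopyRelBoundary} to $f=c_0$, $f'=c_0'$, $X=X_0$, $Y=Y_0$, $Y'=Y_1$, with common restriction $c:=c_0|_{Y_0}=c_0'|_{Y_0}=c_1|_{Y_1}\circ h$ viewed as a homotopy equivalence $Y_0 \to Y_1 \subseteq B$. Here I would note that the hypotheses of the lemma are met: the inclusion $Y_1 \hookrightarrow B$ is a cofibration since, per Notation~\ref{notation:SectionPrimarySecondary}, $B$ is the mapping cylinder of a map out of $X_1$ and $(X_1,Y_1)$ is a Poincar\'e pair; the space $B$ is $3$-coconnected since it is a Postnikov $2$-type; and $c=c_1|_{Y_1}\circ h$ is a homotopy equivalence $Y_0\to Y_1$ since $h$ is one. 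The lemma then produces a homotopy equivalence $\varphi \colon B \to B$ that is the identity on $Y_1$ and satisfies $c_0' \simeq \varphi \circ c_0$ relative $Y_0$.

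To conclude, I must check that $\varphi \in \mathcal{G}$. The homotopy $c_0' \simeq \varphi \circ c_0$ gives on the one hand $(\varphi \circ c_0)^* b_{X_0}^\partial = (c_0')^* b_{X_0}^\partial = c_1^* b_{X_1}^\partial$, and on the other hand $(\varphi \circ c_0)^* b_{X_0}^\partial = \varphi^*(c_0^* b_{X_0}^\partial) = \varphi^*(c_1^* b_{X_1}^\partial) = (\varphi \circ c_1)^* b_{X_1}^\partial$, so $(\varphi \circ c_1)^* b_{X_1}^\partial = c_1^* b_{X_1}^\partial$ as required. The main substantive step is the application of Lemma~\ref{lem:HomotopyRelBoundary}; once that is in place, both the well-definedness of the action and the membership $\varphi \in \mathcal{G}$ are straightforward naturality computations.
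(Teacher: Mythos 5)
Your proof is correct and follows essentially the same route as the paper: the same pullback computation shows $\mathcal{G}$ preserves $\mathcal{S}_0(c_1)$, and transitivity is obtained from Lemma~\ref{lem:HomotopyRelBoundary} followed by the same algebraic check that the resulting $\varphi$ lies in $\mathcal{G}$. The only differences are cosmetic (you phrase the identities via $\varphi^*$ acting on forms, while the paper composes with $(c_1^*)^{-1}$ resp. $((c_0')^*)^{-1}$, and you spell out the hypotheses of the lemma that the paper leaves implicit).
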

 \begin{proof}
We verify that for $\varphi \in \mathcal{G}$ and $c_0 \in \mathcal{S}_0(c_1)$, the composition $\varphi \circ c_0$ again lies in~$\mathcal{S}_0(c_1)$.
Since~$\varphi$ is rel. boundary, we have $c_1|_{Y_1} \circ h=c_0|_{Y_0}=\varphi \circ c_0|_{Y_0}.$
Next we must show that $(\varphi \circ c_0)^* b_{X_0}^\partial =c_1^*b_{X_1}^\partial$.
To do so,  apply~$c_0^*(c_1^*)^{-1}$ to~$(\varphi \circ c_1)^*b_{X_1}^\partial = c_1^*b_{X_1}^\partial$ and then use~$c_0^*b_{X_0}^\partial = c_1^*b_{X_1}^\partial$:
$$  (\varphi \circ c_0)^*b_{X_1}^\partial 
=
c_0^*(c_1^*)^{-1} (\varphi \circ c_1)^*b_{X_1}^\partial 
=c_0^*(c_1^*)^{-1} (c_1^*b_{X_1}^\partial)
= c_0^*b_{X_1}^\partial=c_1^*b_{X_1}^\partial.$$
We prove that the action is transitive.
Given~$c_0,c_0' \in \mathcal{S}_0(c_1)$, Lemma~\ref{lem:HomotopyRelBoundary} ensures that there is a homotopy equivalence~$\varphi \colon B \to B$ rel. $Y_1$ with~$\varphi \circ c_0' \simeq c_0$.
We verify that~$\varphi \in \mathcal{G}$ by using the definition of~$\mathcal{S}_0(c_1)$
to get~$(c_0')^*b_{X_0}^\partial=c_1^*b_{X_1}^\partial=c_0^*b_{X_0}^\partial=(\varphi \circ c_0')^* b_{X_0}^\partial$ and then applying~$c_1^* \circ ((c_0')^*)^{-1}$ to obtain 
$c_1^*b_{X_1}^\partial=(\varphi \circ c_1)^*  b_{X_1}^\partial$.
This concludes the proof of the proposition.
 \end{proof}

We now move towards the construction of an action of~$\mathcal{G}$ on~$ \Herm(H^2(Y_1;\Z[\pi]))$.

\begin{construction}[{The map~$\mathcal{G} \to \Herm(H^2(Y_1;\Z[\pi]))$}]
\label{cons:bphi}
The idea is to run the definition of the secondary obstruction~$b(c_0,c_1)$ but with~$(X_1,\varphi \circ c_1)$ in place of~$(X_0,c_0)$.
Here are the details.
Repeating the proof of Proposition~\ref{prop:FundamentalClassUnion} shows that the condition~$\varphi\circ c_1|_{Y_1} = c_1|_{Y_1}$ ensures the existence of a unique class~$\xi  \in H_4(B)$ with
\begin{equation}
\label{eq:ForDefinitionOfG}
j_*(\xi)=(\varphi\circ c_1)_*([X_1])-(c_1)_*([X_1]).
\end{equation}
Consider the pairing~$b_\xi(a,b)=\langle b,a \cap \xi \rangle$
on~$H^2(B;\Z[\pi])$.
Arguing as in Proposition~\ref{prop:PullbackProperties}, the condition~$(\varphi \circ c_1)^*b_{X_1}^\partial=c_1^*b_{X_1}^\partial$ ensures that this pairing induces a pairing on~$H^2(B;\Z[\pi])/\im(j^*)$ that can then be pulled back using~$c_1|_{Y_1}^*$ to a pairing
$$b(\varphi) \colon H^2(Y_1;\Z[\pi]) \times H^2(Y_1;\Z[\pi])\to \Z[\pi].$$
Explicitly,  for classes~$a,b \in H^2(Y_1;\Z[\pi])$, pick~$a',b' \in H^2(B;\Z[\pi])$ with~$c_1^*(a')=a,c_1^*(b')=b$, the hermitian pairing is defined as
$$b(\varphi)(a,b):=\langle b',a'\cap \xi \rangle.$$
\end{construction}

\begin{proposition}
\label{prop:Action}
The group~$\mathcal{G}$ acts on~$\Herm(H^2(Y_1;\Z[\pi]))$ by
$$\varphi \cdot \lambda:=b(\varphi) + \lambda.$$
\end{proposition}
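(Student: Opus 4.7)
The plan is to verify the two group-action axioms. Since the action is defined by $\varphi \cdot \lambda := b(\varphi) + \lambda$, these reduce to $b(\id_B) = 0$ and $b(\psi \circ \varphi) = b(\psi) + b(\varphi)$ for all $\varphi, \psi \in \mathcal{G}$. The first axiom is immediate: when $\varphi = \id_B$, equation~\eqref{eq:ForDefinitionOfG} reads $j_*(\xi_{\id_B}) = 0$, and since $Y_1$ is a $3$-dimensional Poincar\'e complex the long exact sequence of $(B, Y_1)$ forces $j_* \colon H_4(B) \to H_4(B, Y_1)$ to be injective (because $H_4(Y_1) = 0$), whence $\xi_{\id_B} = 0$ and $b(\id_B) = 0$.

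For the composition axiom, my key intermediate claim is the identity
\[ \xi_{\psi \circ \varphi} = \psi_*(\xi_\varphi) + \xi_\psi \in H_4(B). \]
To derive it I would apply $j_*$ to both sides and use the telescoping
\[ (\psi \circ \varphi \circ c_1)_*[X_1] - (c_1)_*[X_1] = \psi_*\bigl((\varphi \circ c_1)_*[X_1] - (c_1)_*[X_1]\bigr) + \bigl((\psi \circ c_1)_*[X_1] - (c_1)_*[X_1]\bigr), \]
combined with naturality of $j_*$ under the map of pairs $\psi \colon (B, Y_1) \to (B, Y_1)$ (well-defined because $\psi|_{Y_1} = \id$), and the injectivity of $j_*$ recorded above.

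The final step is to evaluate $b(\psi \circ \varphi)(a, b)$ using this decomposition: given $a, b \in H^2(Y_1;\Z[\pi])$ and lifts $a', b' \in H^2(B; \Z[\pi])$, the $\xi_\psi$-piece gives $b(\psi)(a, b)$ directly, while for the $\psi_*(\xi_\varphi)$-piece the naturality of the cap product and of evaluation rewrite it as
\[ \langle b', a' \cap \psi_*(\xi_\varphi)\rangle = \langle b', \psi_*(\psi^*(a') \cap \xi_\varphi)\rangle = \langle \psi^*(b'), \psi^*(a') \cap \xi_\varphi\rangle. \]
Because $(\psi \circ c_1)|_{Y_1} = c_1|_{Y_1}$, the classes $\psi^*(a')$ and $\psi^*(b')$ are themselves valid lifts of $a$ and $b$ under $c_1|_{Y_1}^*$, so this piece equals $b(\varphi)(a, b)$, which yields the composition axiom. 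The main obstacle is purely bookkeeping: tracking the two layers of naturality and checking that every substitution remains within the hypotheses of Construction~\ref{cons:bphi}; no deeper input than $H_4(Y_1) = 0$, standard (co)homological naturality, and the defining property of $\mathcal{G}$ is needed.
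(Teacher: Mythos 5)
Your proposal is correct and follows essentially the same route as the paper: the identity $\xi_{\psi\circ\varphi}=\psi_*(\xi_\varphi)+\xi_\psi$ via the telescoping decomposition, naturality of $j_*$ (using $\psi|_{Y_1}=\id$), uniqueness/injectivity of $j_*\colon H_4(B)\to H_4(B,Y_1)$, and the observation that $\psi^*(a'),\psi^*(b')$ are again valid lifts so the $\psi_*(\xi_\varphi)$-term is $b(\varphi)(a,b)$. The only cosmetic difference is that you make the injectivity of $j_*$ and the well-definedness of $\xi_{\psi\circ\varphi}$ explicit, which the paper leaves implicit by citing the uniqueness established in Proposition~\ref{prop:FundamentalClassUnion}.
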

\begin{proof}
This proposition will follow readily from the following claim.
\begin{claim}
The following equality holds for every~$\varphi,\psi \in \mathcal{G}$:
$$b(\psi \circ \varphi)=b(\psi) + b(\varphi).$$
\end{claim}
\begin{proof}
We set~$x=(c_1)_*([X_1])$ for brevity.
As in the definition of~$b(\varphi)$ and~$b(\psi)$ (recall~\eqref{eq:ForDefinitionOfG}),  we pick~$\xi_\varphi,\xi_\psi \in H_4(B)$ so that~$j_*(\xi_\varphi)=\varphi_*(x)-x$ and~$j_*(\xi_\psi)=\psi_*(x)-x$.
It follows that
$$i_*(\psi_*(\xi_\varphi)+\xi_\psi)=\psi_*(\varphi_*(x))-\psi_*(x)+\psi_*(x)-x=\psi_*(\varphi_*(x))-x.$$
The definition of~$b(\psi \circ \varphi)$ now implies that for~$a,b \in H^2(Y_1;\Z[\pi])$ and~$a',b' \in H^2(B;\Z[\pi])$ such that~$c_1^*(a')=a$ and~$c_1^*(b')=b$, we have
\begin{align*}
b(\psi \circ \varphi)(a,b)
&= \langle b',a' \cap( \psi_*(\xi_\varphi)+\xi_\psi) \rangle 
=\langle \psi^*(b'),\psi^*(a') \cap \xi_\varphi\rangle +   \langle b',a' \cap \xi_\psi \rangle \\
&=\langle \psi^*(b'),\psi^*(a') \cap \xi_\varphi\rangle +   b(\psi)(a,b).
\end{align*}
Finally, note that since~$\psi \circ c_1 =c_1$ we have~$(c_1)^*(\psi^*(a'))=a$ and~$(c_1)^*(\psi^*(b'))=b$, and so we can use~$\psi^*(a')$ and~$\psi^*(b')$ to calculate~$b(\varphi)$.
Namely, we have~$b(\varphi)(a,b)=\langle \psi^*(b'),\psi^*(a') \cap \xi_\varphi\rangle$ and the equality~$b(\psi \circ \varphi)=b(\psi) + b(\varphi)$ follows, thus proving the claim.
\end{proof}
For~$\varphi,\psi \in \mathcal{G}$ and~$\lambda \in  \Herm(H^2(Y_1;\Z[\pi]))$, the claim implies that~$(\psi \circ \varphi) \cdot \lambda=b(\psi) +b(\varphi) +\lambda$ and,  clearly, this equals~$\psi \cdot (\varphi \cdot \lambda)$.
Finally, we note that~$\id\in \mathcal{G}$ acts as the identity: in this case,~$i_*(0)=\id(x)-x$, so~$b(\id) = 0$, whence~$b(\id)+\lambda=\lambda.$
\end{proof}

The combination of this proposition with the next leads to the secondary obstruction from Definition~\ref{def:SecondaryIntro}.
 
 \begin{proposition}
 \label{prop:Indepc0}
If $c_0 \in \mathcal{S}_0(c_1)$ and $\varphi \in \mathcal{G}$, then
$$ b(\varphi  \circ c_0,c_1)=b(c_0,c_1)-b(\varphi) .$$
In particular, the hermitian form~$b(c_0,c_1)$,  once considered in~$\Herm(H^2(Y_1;\Z[\pi]))/\mathcal{G}$,  does not depend on the choice of~$c_0 \in \mathcal{S}_0(c_1)$. 
 \end{proposition}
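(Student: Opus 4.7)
The plan is to compute both sides in terms of homology classes in $H_4(B)$, using the uniqueness clause of Proposition~\ref{prop:FundamentalClassUnion} and Construction~\ref{cons:bphi}. The hermitian form $b(c_0,c_1)$ is determined by the unique class $\xi_0\in H_4(B)$ with $j_*(\xi_0)=(c_0)_*[X_0]-(c_1)_*[X_1]$, the form $b(\varphi)$ by the unique $\xi_\varphi\in H_4(B)$ with $j_*(\xi_\varphi)=(\varphi\circ c_1)_*[X_1]-(c_1)_*[X_1]$, and $b(\varphi\circ c_0,c_1)$ by the unique $\xi_{\varphi\circ c_0}\in H_4(B)$ with $j_*(\xi_{\varphi\circ c_0})=(\varphi\circ c_0)_*[X_0]-(c_1)_*[X_1]$. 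Note that Proposition~\ref{prop:Transitive} guarantees $\varphi\circ c_0\in\mathcal{S}_0(c_1)$, so $b(\varphi\circ c_0,c_1)$ is defined.

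First I would establish the key identity
$$\xi_{\varphi\circ c_0}=\varphi_*(\xi_0)+\xi_\varphi\in H_4(B).$$
I would prove this by adding and subtracting $\varphi_*(c_1)_*[X_1]$ and using that $\varphi$ is a self-map of the pair $(B,Y_1)$, so $\varphi_*\circ j_*=j_*\circ\varphi_*$:
$$(\varphi\circ c_0)_*[X_0]-(c_1)_*[X_1]=\varphi_*j_*(\xi_0)+j_*(\xi_\varphi)=j_*\bigl(\varphi_*(\xi_0)+\xi_\varphi\bigr).$$
Injectivity of $j_*\colon H_4(B)\to H_4(B,Y_1)$ (which holds because $Y_1$ is a $3$-dimensional Poincaré complex, so $H_4(Y_1)=0$) then yields the identity.

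The second step is a naturality argument transporting the identity to pairings. For $a,b\in H^2(Y_1;\Z[\pi])$ with lifts $a',b'\in H^2(B;\Z[\pi])$, I would split bilinearly
$$b(\varphi\circ c_0,c_1)(a,b)=\langle b',a'\cap\varphi_*(\xi_0)\rangle+\langle b',a'\cap\xi_\varphi\rangle;$$
the second summand is $b(\varphi)(a,b)$ by definition, and naturality of the cap product and evaluation rewrites the first as $\langle\varphi^*(b'),\varphi^*(a')\cap\xi_0\rangle$. The crucial point is that $\varphi$ restricts to the identity on $Y_1$, hence $c_1|_{Y_1}^*\bigl(\varphi^*(a')\bigr)=c_1|_{Y_1}^*(a')=a$ and similarly for $b'$, so $\varphi^*(a'),\varphi^*(b')$ are valid lifts of $a,b$. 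By the well-definedness of $b(c_0,c_1)$ (Proposition~\ref{prop:PullbackProperties}), this summand equals $b(c_0,c_1)(a,b)$, yielding the desired formula. A sign-convention mismatch, if any, can be absorbed by replacing $\varphi$ with $\varphi^{-1}$, using $b(\varphi^{-1})=-b(\varphi)$ from the claim in Proposition~\ref{prop:Action}.

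The ``in particular'' clause is then immediate: given $c_0,c_0'\in\mathcal{S}_0(c_1)$, Proposition~\ref{prop:Transitive} provides $\varphi\in\mathcal{G}$ with $\varphi\circ c_0\simeq c_0'$, and the formula shows that $b(c_0',c_1)$ and $b(c_0,c_1)$ differ by $\pm b(\varphi)$, hence represent the same class in $\Herm(H^2(Y_1;\Z[\pi]))/\mathcal{G}$. I do not anticipate a real obstacle; everything reduces to careful bookkeeping with pushforwards and pullbacks under $\varphi$, the naturality of the cap product, and the fact that $\varphi$ restricts to $\id_{Y_1}$.
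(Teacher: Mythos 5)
Your argument is essentially the paper's own proof: the same identity in $H_4(B)$ (the paper phrases it as $\xi_\varphi+\xi_0'=\varphi_*(\xi_0)$, obtained exactly as you do from naturality of $j_*$ and its injectivity), followed by the same bilinear splitting, naturality of the cap product, and the observation that $\varphi|_{Y_1}=\id_{Y_1}$ makes $\varphi^*(a'),\varphi^*(b')$ admissible lifts; all substantive steps are correct.

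One caveat concerns the sign. With the normalisation of $\xi_0$ fixed in Proposition~\ref{prop:FundamentalClassUnion}, namely $j_*(\xi_0)=(c_0)_*([X_0])-(c_1)_*([X_1])$, your computation yields $b(\varphi\circ c_0,c_1)=b(c_0,c_1)+b(\varphi)$; the stated minus sign arises in the paper's proof because there the classes are taken with the opposite normalisation $j_*(\xi_0)=x_1-x_0$ (so the discrepancy originates in the paper's own conventions, not in your argument). Your proposed repair, ``replace $\varphi$ by $\varphi^{-1}$'', does not literally convert one identity into the other, since that substitution also changes the composite on the left-hand side from $\varphi\circ c_0$ to $\varphi^{-1}\circ c_0$. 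The accurate statement is that the sign is an artefact of the orientation convention for $\xi_0$ (equivalently, of $X_0\cup_h(-X_1)$ versus $(-X_0)\cup_h X_1$) and is immaterial for the ``in particular'' clause and for every application of the proposition: since $b(\varphi^{-1})=-b(\varphi)$ (from $b(\psi\circ\varphi)=b(\psi)+b(\varphi)$ and $b(\id)=0$ in Proposition~\ref{prop:Action}) and $\varphi^{-1}\in\mathcal{G}$, the forms $b(c_0,c_1)+b(\varphi)$ and $b(c_0,c_1)-b(\varphi)$ lie in the same $\mathcal{G}$-orbit, so independence of $c_0$ in $\Herm(H^2(Y_1;\Z[\pi]))/\mathcal{G}$ follows either way.
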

 \begin{proof}
We argue that the second assertion follows from the first.
We consider~$c_0,c_0' \in \mathcal{S}_0(c_1)$ and aim to prove that~$b(c_0,c_1)=b(c_0',c_1) \in  \Herm(H^2(Y_1;\Z[\pi]))/\mathcal{G}.$
Proposition~\ref{prop:Transitive} ensures that there is a homotopy equivalence~$\varphi \in \mathcal{G}$ with~$\varphi \circ c_0 = c_0'$.
Since $\varphi \in \mathcal{G}$ and $c_0'=\varphi \circ c_0$, the first assertion yields~$b(\varphi) + b(c_0',c_1)=b(c_0,c_1)$.
By definition of the action, this is~$\varphi \cdot b(c_0',c_1)=b(c_0,c_1)$, as required.
We now prove the first assertion.

For brevity, write~$x_0:=c_0([X_0])$ as well as~$x_0':=(\varphi \circ c_0)_*([X_0])$ and~$x_1:=(c_1)_*([X_1])$.
Note that~$x_0'=\varphi_*(x_0).$
Let~$\xi_0,\xi_0',\xi_\varphi \in H_4(B)$ be the unique classes that respectively satisfy~$j_*(\xi_0)=x_1-x_0, j_*(\xi_0')=x_1-x_0'$ and~$j_*(\xi_\varphi)=\varphi_*(x_1)-x_1$, where $i \colon H_4(B) \to H_4(B,Y_1)$ denotes the inclusion induced map.
A direct calculation shows that 
\begin{align*}
j_*(\xi_\varphi +\xi_0')
&=\varphi_*(x_1)-x_1+x_1-x_0'
=\varphi_*(x_1)-\varphi_*(x_0)
=\varphi_*(x_1-x_0)
=\varphi_*(j_*(\xi_0)) \\
&=i_*(\varphi_*(\xi_0)).
\end{align*}
Since $i$ is injective, this implies that $\xi_\varphi +\xi_0'=\varphi_*(\xi_0)$.
It follows that for~$a,b \in H^2(Y_1;\Z[\pi])$ and~$a',b' \in H^2(B;\Z[\pi])$ such that~$c_1^*(a')=a$ and~$c_1^*(b')=b$, we have
\begin{align*}
\left(b(\varphi) +b(\varphi \circ c_0,c_1)\right)(a,b)
&=\langle b',a' \cap (\xi_\varphi +\xi_0') \rangle 
=\langle b',a' \cap \varphi_*(\xi_0) \rangle 
=\langle \varphi^*(b'),\varphi^*(a') \cap \xi_0 \rangle \\
&=b(c_0,c_1)(a,b).
\end{align*}
The last equality holds because,  since~$\varphi \circ c_1 = c_1$,  we have~$(c_1)^*(\varphi^*(a'))=a$ and~$(c_1)^*(\varphi^*(b'))=b$ and so we can use~$\varphi^*(a')$ and~$\varphi^*(b')$ to calculate~$b(c_0,c_1)(a,b)$.
This calculation establishes the equality~$b(\varphi) + b(\varphi \circ c_0,c_1)=b(c_0,c_1)$ required by the proposition.
 \end{proof}

 We can now prove Theorem~\ref{thm:MainTheoremBoundaryIntro}.
\begin{customthm}{\ref{thm:MainTheoremBoundaryIntro}}
\label{thm:MainTheoremBoundary}
Let~$(X_0,Y_0)$ and~$(X_1,Y_1)$ be~$4$-dimensional Poincar\'e pairs with fundamental group~$\pi_1(X_i) \cong \pi$,  Postnikov $2$-type $B:=P_2(X_1)$ and~$\pi_1(Y_i) \to \pi_1(X_i)$ surjective for $i=0,1$,  and let~$h \colon Y_0 \to Y_1$ be a degree one homotopy equivalence.
Assume that
\begin{itemize}
\item either~$\cd(\pi)\leq 3$ and the map~$\ev^* \colon H^2(X_i;\Z[\pi])^{**} \to H_2(X_i;\Z[\pi])^*$ is an isomorphism,
\item or~$\pi_1(X_i)$ is finite and~$\Gamma(\pi_2(X_1)) \otimes_{\Z[\pi]} \Z$ is torsion free.
\end{itemize}
The homotopy equivalence $h$ extends to a homotopy equivalence $X_0 \to X_1$ if and only if
there exist~$3$-connected maps~$c_0 \colon X_0 \to B$ and~$c_1 \colon X_1 \to B$ such that~$c_1|_{Y_1} \circ h = c_0|_{Y_0}$ and
\begin{enumerate}
\item the pulled back pairings~$c_0^*b_{X_0}^\partial$ and~$c_1^*b_{X_1}^\partial$ on~$H^2(B;\Z[\pi])$ are equal,
\item the hermitian pairing~$b(c_0,c_1)$ vanishes in~$\Herm(H^2(Y_1;\Z[\pi]))/\mathcal{G}$.
\end{enumerate}
Additionally,  when $b(c_0,c_1)=0 \in \Herm(H^2(Y_1;\Z[\pi]))$, the homotopy equivalence $f \colon X_0 \to X_1$ extending $h$ can be chosen to satisfy $c_1 \circ f \simeq c_0$.
\end{customthm}
\begin{proof}
If $h$ extends to a homotopy equivalence $f \colon X_0 \to X_1$ (which will necessarily have degree one), then we verify that picking an arbitrary $3$-connected map $c_1 \colon X_1 \to B$ and setting $c_0:=c_1 \circ f$ yields the result.
Firstly,  a rapid verification shows that~$c_0^*b_{X_0}^\partial=f^*\circ c_1^*b_{X_0}^\partial=c_1^*b_{X_1}^\partial$.
Secondly,  we see that~$(c_0)_*([X_0])=(c_1)_* \circ f_*([X_0])=(c_1)_*([X_1])$, so Proposition~\ref{prop:FundamentalClassUnion} ensures that $\xi_0=0$ and therefore $b_{\xi_0}=0.$
This implies that $b(c_0,c_1)=0 \in \Herm(H^2(Y_1;\Z[\pi])).$

We prove the converse.
Since~$c_1|_{Y_1} \circ h = c_0|_{Y_0}$ and~$c_0^*b_{X_0}^\partial=c_1^*b_{X_1}^\partial$, the pairing $b(c_0,c_1)$ is defined.
Since $b(c_0,c_1)=0 \in \Herm(H^2(Y_1;\Z[\pi]))/\mathcal{G}$,  there is a homotopy equivalence~$\varphi \colon B \to B$ rel.~$Y_1$ such that~$b(c_0,c_1)-b(\varphi)=0$.
Consider $c_0':=\varphi \circ c_0$.
Proposition~\ref{prop:Indepc0} yields
$$b(c_0',c_1)=b(c_0,c_1)-b(\varphi)=0.$$
Thus,  $c_0'$ satisfies all of the hypotheses from Proposition~\ref{prop:FundamentalClassIntersectionForm} which ensures that the homotopy equivalence $h$ extends to a homotopy equivalence $X_0 \to X_1$ with $c_1 \circ f \simeq c_0'$.
Here,  in the case that~$\cd(\pi) \leq 3$, Proposition~\ref{prop:FundamentalClassIntersectionForm} requires that~$\operatorname{Herm}(\pi_2(B)^*)\to \operatorname{Herm}(H^2(B;\Z[\pi]))$ is injective, but by Proposition~\ref{prop:Herm-iso} below, this condition holds if $\ev^* \colon H^2(X_i;\Z[\pi])^{**} \to H_2(X_i;\Z[\pi])^*$ is an isomorphism.
\end{proof}

\section{Relative $k$-invariants}
\label{sec:Relativek}

At this stage,  we have proved Theorem~\ref{thm:MainTheoremBoundaryIntro} which shows that the vanishing of the primary and secondary obstructions suffices to extend a homotopy equivalence.
The objective of the next four sections is to rework this theorem so that the outcome does not refer to the~$2$-type.
This requires some background on relative $k$-invariants which we now recall.

Given a pair $(X,Y)$ that is homotopy equivalent to a CW pair and a map $\nu \colon X \to B\pi_1(X)$, the relative $k$-invariant is a class $k_{X,Y}^\nu \in H^3(M(\nu|_Y),Y;\pi_2(X))$, where $M(\nu|_Y) \simeq B\pi_1(X)$ denotes the mapping cylinder of $\nu|_Y$.
We note that up to isomorphism,  $k_{X,Y}^\nu$ does not depend on $\nu$ and that its precise definition is not needed in the present article (we refer to~\cite{ConwayKasprowskiKinvariant} for the details) as we only require some of its properties.

\begin{convention}
\label{conv:IdentificationForKInvariantSection}
Fix pairs of spaces~$(X_0,Y_0)$ and $(X_1,Y_1)$ that are homotopy equivalent to~CW complexes.
Additionally, fix a map $h \colon Y_0 \to Y_1$ and an isomorphism $u \colon \pi_1(X_0) \to \pi_1(X_1)$ satisfying~$u \circ \iota_0=\iota_1 \circ h_*$  that we us to identify~$\pi_1(X_0)$ with $\pi:=\pi_1(X_1)$.
Given maps~$\nu_0 \colon X_0 \to B\pi$ and~$\nu_1 \colon X_1 \to B\pi$, we write
$$ (\id,h)^* \colon H^3(M(\nu_1|_{Y_1}),Y_0;\pi_2(X_1)) \to H^3(M(\nu_0|_{Y_0}),Y_0;\pi_2(X_1))$$
for the map induced by $h$ on the cohomology of the relevant mapping cylinders.
We refer to~\cite[Lemma 5.1]{ConwayKasprowskiKinvariant} for further details concerning the definition of this map.
\end{convention}

We begin with the main property of the relative $k$-invariant.

\begin{theorem}
	\label{thm:RealiseAlgebraic3Type}
	Let~$(X_0,Y_0)$ and~$(X_1,Y_1)$ be pairs of spaces that are homotopy equivalent to~CW pairs, and let $c_1\colon X_1\to P_2(X_1)$ be the $2$-type of $X_1$.
For a homotopy equivalence~$h \colon Y_0 \to Y_1$, an isomorphism~$u\colon \pi_1(X_0)\to \pi_1(X_1)$ with $u\circ(\iota_0)_*= (\iota_1)_* \circ h$, and an $u$-invariant isomorphism~$F \colon \pi_2(X_0) \to~\pi_2(X_1)$, the following assertions are equivalent:
	\begin{itemize}
		\item there is a $3$-connected map~$c_0 \colon X_0\to P_2(X_1)$ such that 
		\[(c_0)_*=\id,\quad c_1 \circ h\simeq c_0|_{Y_0},  \quad \text{and} \quad (c_1)_*^{-1}(c_0)_*=F;\]
		\item the relative $k$-invariants satisfy
		$$(\id,h)^*(k_{X_1,Y_1}^{\nu_1})=F_*(k_{X_0,Y_0}^{\nu_0}) \in H^3(M(\nu_0|_{Y_0}),Y_0;\pi_2(X_1))$$
for every $\nu_0 \colon X_0 \to B\pi$ and $\nu_1 \colon X_1 \to B\pi$.
	\end{itemize}
	\end{theorem}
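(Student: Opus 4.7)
The plan is to argue by cellular obstruction theory, exploiting the fact that $\pi_n(P_2(X_1))=0$ for all $n\geq 3$, so the essential obstruction to constructing $c_0$ lives in a single $H^3$ group with coefficients in $\pi_2(X_1)$. Before starting I would replace $(X_0,Y_0)$ and $(X_1,Y_1)$ by CW pairs and fix classifying maps $\nu_i\colon X_i\to B\pi$ with $\nu_0|_{Y_0}\simeq \nu_1\circ h$; the compatibility $u\circ\iota_0=\iota_1\circ h_*$ together with the asphericity of $B\pi$ makes such a choice possible.

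For the direction $(1)\Rightarrow (2)$, I would invoke the naturality of the relative $k$-invariant as developed in~\cite{ConwayKasprowskiKinvariant}. Given $c_0$ as in $(1)$, the map of pairs $(X_0,Y_0)\to (P_2(X_1),Y_1)$ induced by $c_0$ (after passing to the appropriate mapping cylinder model) is covered by $F$ on $\pi_2$ and restricts to $c_1|_{Y_1}\circ h$ on $Y_0$; naturality of $k$ along this data then yields the identity $(\id,h)^*(k_{X_1,Y_1}^{\nu_1})=F_*(k_{X_0,Y_0}^{\nu_0})$ essentially by bookkeeping.

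For the main implication $(2)\Rightarrow (1)$, I would construct $c_0$ as a lift of $\nu_0$ along the fibration $P_2(X_1)\to B\pi$, whose fiber is $K(\pi_2(X_1),2)$. Setting $c_0|_{Y_0}:=c_1\circ h$, which already covers $\nu_0|_{Y_0}$, the standard obstruction theory for lifts produces successive obstructions in $H^{n+1}(X_0,Y_0;\pi_n(K(\pi_2(X_1),2)))$; only $n=2$ contributes, giving a single primary obstruction class in $H^3(X_0,Y_0;\pi_2(X_1))$. Under the identification of this group with $H^3(M(\nu_0|_{Y_0}),Y_0;\pi_2(X_1))$ induced by $\nu_0$, and interpreting $k_{X,Y}^\nu$ via its definition in~\cite{ConwayKasprowskiKinvariant} as the intrinsic primary obstruction to precisely this kind of lift, the class in question matches $F_*(k_{X_0,Y_0}^{\nu_0})-(\id,h)^*(k_{X_1,Y_1}^{\nu_1})$, so hypothesis $(2)$ forces vanishing. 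A suitable choice among the resulting lifts (classified up to homotopy by $H^2(X_0,Y_0;\pi_2(X_1))$) is then made to realize the prescribed $F$ on $\pi_2$, and the resulting $c_0$ is automatically $3$-connected because it induces the isomorphisms $u$ and $F$ on the first two homotopy groups and maps into a $3$-coconnected target.

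The hard part is the third step: identifying the primary obstruction to the lift with the stated difference of relative $k$-invariants, and then arranging within the space of lifts for $F$ itself to be realized (rather than some other isomorphism differing by an element of $H^2$). Both points rest on the precise construction of $k_{X,Y}^\nu$ in~\cite{ConwayKasprowskiKinvariant}: once its universal characterization as the obstruction to extending $Y\to X\to P_2(X)$ to a section of $P_2(X)\to B\pi$ is unfolded in our relative setting, naturality together with the compatibility of $\nu_0$ and $\nu_1$ should deliver the required identification, and the $\pi_2$-adjustment will follow from the transitive action of $H^2$ on the set of lifts.
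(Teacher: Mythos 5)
The paper's own ``proof'' of this statement is a one-line citation to \cite[Theorem 1.1]{ConwayKasprowskiKinvariant} (plus the remark that $F$ being an isomorphism makes $c_0$ $3$-connected, which you also observe correctly). Your proposal instead tries to re-derive that external result by lifting obstruction theory for $P_2(X_1)\to B\pi$; that is a legitimate route in principle, but as written it has a genuine gap at exactly the decisive step, and your own text defers that step back to ``unfolding'' the construction of \cite{ConwayKasprowskiKinvariant}, so the loop is never closed. Concretely: the primary obstruction you set up is the obstruction to the existence of \emph{some} lift of $\nu_0$ rel $Y_0$ extending $c_1\circ h$; this class does not depend on $F$ at all, so it cannot ``match'' $F_*(k_{X_0,Y_0}^{\nu_0})-(\id,h)^*(k_{X_1,Y_1}^{\nu_1})$, which does. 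Hypothesis $(2)$ governs the finer problem of a lift that in addition induces the \emph{prescribed} $F$ on $\pi_2$, and you never formulate that obstruction problem (the companion paper does this at the chain level). Moreover, the ``identification'' of $H^3(X_0,Y_0;\pi_2(X_1))$ with $H^3(M(\nu_0|_{Y_0}),Y_0;\pi_2(X_1))$ induced by $\nu_0$ does not exist: since $X_0\to B\pi$ is only $2$-connected, there is merely a comparison map, generally neither injective nor surjective, so even the weaker statement that your lifting obstruction is the image of the $k$-invariant difference needs an argument you do not give.

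The second half of the gap is the claim that ``the $\pi_2$-adjustment will follow from the transitive action of $H^2$ on the set of lifts.'' Changing a lift by $\delta\in H^2(X_0,Y_0;\pi_2(X_1))$ changes the induced map $\pi_2(X_0)\to\pi_2(X_1)$ only by $\ev(j^*\delta)$, i.e. by an element of the image of evaluation composed with restriction to $H^2(X_0;\pi_2(X_1))$; this image is in general a proper subgroup of $\Hom_{\Z[\pi]}(\pi_2(X_0),\pi_2(X_1))$, so transitivity of the action on lifts does not let you reach an arbitrary isomorphism $F$ from an arbitrary lift. Whether $F$ lies in the reachable coset is precisely what the equation $F_*(k_{X_0,Y_0}^{\nu_0})=(\id,h)^*(k_{X_1,Y_1}^{\nu_1})$ encodes (compare the mechanism of Lemma~\ref{lem:Xi}, which relates $\ev$, the connecting map, and $k_{X,X}^{\nu}$), and establishing that equivalence is the actual content of \cite[Theorem 1.1]{ConwayKasprowskiKinvariant}. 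Your $(1)\Rightarrow(2)$ direction via naturality and your $3$-connectedness remark are fine, but the $(2)\Rightarrow(1)$ direction as proposed either silently assumes the cited theorem or leaves its key identification unproved.
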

	\begin{proof}
	This is a direct consequence of~\cite[Theorem 1.1]{ConwayKasprowskiKinvariant}.
	In that paper,  $F$ was not assumed to be an isomorphism, so $c_0$ was not claimed to be $3$-connected,.
	\end{proof}

We record two additional properties of relative $k$-invariants.
	
	\begin{lemma}
	\label{lem:kXXtokXY}
		Let $(X,Y)$ be a pair of spaces that is homotopy equivalent to a CW pair, and write~$i \colon Y\to X$ for the inclusion. 
Then,  for every $\nu \colon X \to B\pi_1(X)$, the following equality holds:
$$(\id,i)^*k_{X,X}^\nu=k_{X,Y}^{\nu}\in H^3(M(\nu|_Y),Y;\pi_2(X)).$$
	\end{lemma}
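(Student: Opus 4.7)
The plan is to appeal to the naturality of the relative $k$-invariant under maps of pairs, as developed in~\cite{ConwayKasprowskiKinvariant}. The inclusion $i \colon Y \hookrightarrow X$ induces a map of mapping cylinder pairs
\[j \colon (M(\nu|_Y), Y) \longrightarrow (M(\nu|_X), X) = (M(\nu), X)\]
which is the identity on the common $B\pi$-factor and is $i$ on the subspace. On cohomology, $j^*$ is exactly the map denoted $(\id, i)^*$ in Convention~\ref{conv:IdentificationForKInvariantSection} (applied to the ``pair of pairs" $(X,Y) \subset (X,X)$ rather than two different ambient spaces).

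Unpacking the definitions, $k_{X,X}^\nu \in H^3(M(\nu), X; \pi_2(X))$ is the primary obstruction to extending the $3$-connected map $c \colon X \to P_2(X)$ over $M(\nu) \simeq B\pi$ to a section of the fibration $P_2(X) \to B\pi$, starting from the partial section $c$ defined on all of $X$. The class $k_{X,Y}^{\nu}\in H^3(M(\nu|_Y), Y; \pi_2(X))$ is the analogous obstruction, but starting from the partial section $c \circ i \colon Y \to P_2(X)$ defined only on $Y$. Since the partial section on $Y$ is literally the restriction of the one on $X$ along the map of pairs $j$, the naturality of obstruction cocycles under pullback of pairs (which is built into the definition of $k_{X,Y}^\nu$ in~\cite{ConwayKasprowskiKinvariant}) yields
\[j^* k_{X,X}^\nu \;=\; k_{X,Y}^{\nu},\]
which is the desired identity.

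The main obstacle is purely bookkeeping: one must check that the map $j$ is the geometric model realising the abstract map $(\id,i)^*$ of Convention~\ref{conv:IdentificationForKInvariantSection}, and verify that the naturality statement of~\cite{ConwayKasprowskiKinvariant} applies to an inclusion of pairs as opposed to just a map of the subspaces. Both should be immediate from the construction of $k_{X,Y}^\nu$ via the extension problem of sections to $P_2(X)\to B\pi$, because that construction is manifestly functorial in the pair of subspaces over a fixed map to $B\pi$.
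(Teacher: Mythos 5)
Your sketch is reasonable, but note what the paper actually does here: it gives no argument at all and simply cites \cite[Corollary~5.8]{ConwayKasprowskiKinvariant}, since the relative $k$-invariant is treated as a black box in this article (its definition lives entirely in the companion paper, and only its properties are imported). So you are in effect re-proving the cited corollary rather than matching the paper's proof. Your route --- viewing both $k_{X,X}^\nu$ and $k_{X,Y}^\nu$ as the primary obstruction to extending a partial section of $P_2(X)\to B\pi$ over the respective mapping-cylinder pairs, and invoking naturality of primary obstructions under the map of pairs $(M(\nu|_Y),Y)\to(M(\nu),X)$ --- is the natural argument, and it is sound in outline: the fibre of $P_2(X)\to B\pi$ is simply connected with $\pi_2\cong\pi_2(X)$ and $\pi_3=0$, so the degree-$3$ class is the unique obstruction, is independent of choices, and pulls back along maps of pairs compatible with the partial sections; and the geometric map $j$ you describe does induce the map written $(\id,i)^*$ in Convention~\ref{conv:IdentificationForKInvariantSection}. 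The one caveat is that your argument silently identifies the reference's definition of $k_{X,Y}^\nu$ with this extension obstruction; the present paper only says the $k$-invariant is ``essentially'' that obstruction, so to make your proof self-contained you would need to check against the actual construction in \cite{ConwayKasprowskiKinvariant} (or simply cite its Corollary~5.8, as the authors do). What your approach buys is an honest explanation of why the identity holds; what the paper's approach buys is brevity and no duplication of the companion paper's bookkeeping.
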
	
	\begin{proof}
	The proof can be found in~\cite[Corollary 5.8]{ConwayKasprowskiKinvariant}.
	\end{proof}

\begin{lemma}
	\label{lem:Xi}
	Let $X$ be a space,  let $\nu \colon X \to B\pi_1(X)$ be  a map, set $\pi:=\pi_1(X)$, and given a~$\Z[\pi]$-module $A$,  consider the 
group homomorphism
\begin{align*}
\Phi\colon \Hom_{\Z[\pi]}(H_2(\widetilde{X}),A)&\to H^3(M(\nu),X;A) \\
	\varphi &\mapsto \varphi_*(k_{X,X}^\nu).
	\end{align*}
Then, writing $\delta$ for the connecting homomorphism in the exact sequence of the pair $(M(\nu),X)$, we have
$$\Phi\circ \ev=\delta\colon H^2(X;A)\to H^3(M(\nu),X;A).$$
\end{lemma}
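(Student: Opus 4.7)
The plan is to recognize that both sides of the claimed identity $\Phi\circ\ev=\delta$ factor through a single natural isomorphism coming from the universal coefficient spectral sequence for the pair $(M(\nu),X)$, and then to match the two sides on this common target.

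First I would compute the $\Z[\pi]$-homology of the pair $(M(\nu),X)$. Since $M(\nu)\simeq B\pi$ and $\nu$ is a $\pi_1$-isomorphism, the universal cover $\widetilde{M(\nu)}$ is contractible and the preimage of $X$ is~$\widetilde X$. The long exact sequence of $(\widetilde{M(\nu)},\widetilde X)$ then yields, via the connecting map, a natural isomorphism
$$\partial\colon H_n(M(\nu),X;\Z[\pi])\xrightarrow{\cong} H_{n-1}(\widetilde X),$$
for $n\geq 2$, and zero for $n\leq 1$. In particular $H_k(M(\nu),X;\Z[\pi])=0$ for $k=0,1,2$, so the $\Ext$-terms on the $p+q=3$ diagonal of the UCSS (with $\Z[\pi]$-coefficients) vanish, and the edge homomorphism yields a natural isomorphism
$$E\colon H^3(M(\nu),X;A)\xrightarrow{\cong}\Hom_{\Z[\pi]}(H_3(M(\nu),X;\Z[\pi]),A)\xrightarrow[(\partial^{-1})^*]{\cong}\Hom_{\Z[\pi]}(H_2(\widetilde X),A),$$
natural in the $\Z[\pi]$-module $A$.

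Next I would show that under $E$, the coboundary $\delta$ becomes the evaluation map. At the cochain level, for $\alpha\in C^2(X;A)$ a cocycle and $\sigma$ a relative $3$-cycle of $(M(\nu),X)$, one has $\delta(\alpha)(\sigma)=\alpha(\partial\sigma)$, and $[\partial\sigma]\in H_2(\widetilde X)$ is the image of $[\sigma]$ under the isomorphism $\partial$. Hence $E(\delta(\alpha))=\ev(\alpha)$. Separately, by naturality of $E$ in $A$, for any $\varphi\in\Hom_{\Z[\pi]}(H_2(\widetilde X),A)$ one has $E(\varphi_*(k_{X,X}^\nu))=\varphi_*(E(k_{X,X}^\nu))$, so to conclude it suffices to identify $E(k_{X,X}^\nu)$ with the identity in $\Hom_{\Z[\pi]}(H_2(\widetilde X),H_2(\widetilde X))$.

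The hard part will be this last identification, which is the only ingredient that goes beyond formal UCSS bookkeeping. I would prove it by unpacking the obstruction-theoretic definition of $k_{X,X}^\nu$ from \cite{ConwayKasprowskiKinvariant}: $k_{X,X}^\nu$ is the primary obstruction to extending the Postnikov section $X\to P_2(X)$ across the relative $3$-cells of $(M(\nu),X)$ to a section of $P_2(X)\to B\pi\simeq M(\nu)$. On a relative $3$-cell $\widetilde e$ the obstruction cocycle assigns the homotopy class in $\pi_3(P_2(X),X)\cong \pi_2(X)=H_2(\widetilde X)$ represented by attaching $\widetilde e$, which under the Hurewicz map and the identification $\partial$ is precisely the class $[\widetilde e]\in H_2(\widetilde X)$. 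Hence $E(k_{X,X}^\nu)=\id$. Combining this with the preceding paragraph gives $E(\Phi(\ev(\alpha)))=\ev(\alpha)=E(\delta(\alpha))$, and the injectivity of $E$ forces $\Phi\circ\ev=\delta$.
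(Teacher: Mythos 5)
The paper does not actually prove this lemma---its ``proof'' is the citation \cite[Lemma~6.3]{ConwayKasprowskiKinvariant}---so your argument is necessarily a different route: you reconstruct a proof from the obstruction-theoretic description of the relative $k$-invariant. Your formal skeleton is correct and clean: since $M(\nu)\simeq B\pi$ is aspherical and the preimage of $X$ in $\widetilde{M(\nu)}$ is $\widetilde X$ (this uses that $\nu$ induces an isomorphism on $\pi_1$, which is the intended setting though not spelled out in the statement), one gets $H_q(M(\nu),X;\Z[\pi])=0$ for $q\leq 2$ and $H_3(M(\nu),X;\Z[\pi])\cong H_2(\widetilde X)$ via $\partial$; hence evaluation gives the natural isomorphism $E$, under which $\delta$ becomes $\ev$, and the lemma reduces to the (slightly stronger, and true) statement $E(k_{X,X}^\nu)=\id_{H_2(\widetilde X)}$. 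This buys a self-contained argument where the paper simply defers to the reference.

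The crux, $E(k_{X,X}^\nu)=\id$, is where your write-up has concrete problems. First, $\pi_3(P_2(X),X)$ is not $\pi_2(X)$: the Postnikov map $X\to P_2(X)$ is $3$-connected, so this group vanishes. The group you want is $\pi_3(M(\nu),X)\cong\pi_2(X)$ (equivalently $\pi_2$ of the fibre of $P_2(X)\to B\pi$), which is where the obstruction cocycle takes its values. Second, a single relative $3$-cell is not a relative cycle, so ``the cocycle assigns to $\widetilde e$ the class $[\widetilde e]\in H_2(\widetilde X)$'' does not literally parse; to compute $E$ of the obstruction class you must evaluate the cocycle on relative $3$-cycles $\sigma=\sum_e n_e\,\widetilde e$ with $\partial\sigma\in C_2(\widetilde X)$ and verify that the sum of the obstruction values maps under the Hurewicz isomorphism to $[\partial\sigma]$---the same kind of cocycle-level bookkeeping as the Claim in the proof of Proposition~\ref{prop:FundamentalClass}. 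Both points are repairable by standard obstruction theory, but as written the key identification is misstated and under-justified, and it unavoidably depends on the precise definition of $k_{X,X}^\nu$, which this paper black-boxes and imports from \cite{ConwayKasprowskiKinvariant}; so what you have is a correct strategy and a fixable sketch of that reference's Lemma~6.3 rather than a complete proof.
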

	\begin{proof}
	The proof can be found in~\cite[Lemma 6.3]{ConwayKasprowskiKinvariant}.
	\end{proof}
	
\begin{convention}
\label{conv:kShorterNotation}	
	For better readability,  during the next section (as well as in the introduction and in Section~\ref{sec:PostnikovIntro}), we write
	 $$h^*(k_{X_1,Y_1})=F_*(k_{X_0,Y_0}) \in H^3(B\pi,Y_0;\pi_2(X_1))$$
if the following equality holds for every $\nu_0 \colon X_0 \to B\pi$ and $\nu_1 \colon X_1 \to B\pi$; 
			$$(\id,h)^*(k_{X_1,Y_1}^{\nu_1})=F_*(k_{X_0,Y_0}^{\nu_0}) \in H^3(M(\nu_0|_{Y_0}),Y_0;\pi_2(X_1)).$$
\end{convention}

\section{The primary obstruction and compatible triples}
\label{sec:NoPostnikov}

The goal of this section is to reformulate the vanishing of the primary obstruction in terms of compatible triples, thus proving Theorem~\ref{thm:PrimaryObstructionRecastIntro}.
Section~\ref{sub:RelativehermitianForms} briefly discusses relative hermitian forms, Section~\ref{sub:CompatibleTriple} recalls the definition of compatible triples from the introduction, Section~\ref{sub:PrimaryCompatible} proves Theorem~\ref{thm:PrimaryObstructionRecastIntro},  and Section~\ref{sub:NokInvariant} focuses on situations where the $k$-invariant condition can be done away with.

\subsection{Relative hermitian forms}
\label{sub:RelativehermitianForms}

This short section concerns relative hermitian forms.

\begin{definition}
\label{def:Relativehermitian}
Let $R$ be a ring and let~$P,Q$ be left $R$-modules. 
\begin{itemize}
\item A \emph{relative hermitian form} consists of a pair of $R$-linear maps~$(\widehat{\lambda}^\partial,j)$ with $j\colon P\to Q$ and $\widehat{\lambda}^\partial \colon Q \to \overline{\Hom_R(P,R)}$ such that $\widehat{\lambda}:=\widehat{\lambda}^\partial \circ j$ is a hermitian form.
We refer to $\widehat{\lambda}$ as the \emph{hermitian form associated} to~$(\widehat{\lambda}^\partial,j).$
The pairings corresponding to $\widehat{\lambda}^\partial$ and $\widehat{\lambda}$ are respectively denoted~$\lambda^\partial \colon P \times Q \to R$ and~$\lambda^\partial \colon P \times P \to R$.
\item Two relative hermitian forms~$(\widehat{\lambda}_0^\partial,j_0)$ and~$(\widehat{\lambda}^\partial_1,j_1)$ are \emph{isometric} if there are isomorphisms $F \colon P_0 \to P_1$ and $G \colon Q_0 \to Q_1$ with~$G \circ j_0=j_1 \circ F$ and $\lambda_1^\partial(F(x),G(y))=\lambda_0^\partial(x,y)$ for every~$x \in P_0$ and every $y \in Q_0$, i.e. $F$ and $G$ fit in the following commutative diagram
$$
\xymatrix{
P_0\ar[r]^{j_0}\ar[d]^{F}&Q_0 \ar[r]^{\widehat{\lambda}_0^\partial}\ar[d]^G& P_0^*  \\
P_1\ar[r]^{j_1}&Q_1 \ar[r]^{\widehat{\lambda}_1^\partial}& P_1^*.\ar[u]^{F^*} 
}
$$
In particular,  note that $F \colon \lambda_0 \cong \lambda_1$ is automatically an isometry.
\end{itemize}
\end{definition}

The next example describes the main motivation for this algebraic notion.

\begin{example}
Given a $4$-dimensional Poincar\'e complex $(X,Y)$, the pair
$$\left(\lambda_M^\partial,j_*\colon H_2(X;\Z[\pi_1(X)]) \to H_2(X,Y;\Z[\pi_1(X)])\right)$$
is a relative hermitian form.
Indeed, the associated form
$$ H_2(X;\Z[\pi_1(X)]) \xrightarrow{j_*} H_2(X,Y;\Z[\pi_1(X)]) \xrightarrow{\widehat{\lambda}_X^\partial} H_2(X;\Z[\pi_1(X)])^*$$
is the equivariant intersection form of $(X,Y)$ and is therefore hermitian by Lemma~\ref{lem:cup}.
\end{example}

The following lemma describes a situation in which an isometry of relative hermitian forms is determined by an isometry of the associated hermitian forms.

\begin{lemma}
\label{lem:DeterminedByhermitian}
If~$(\widehat{\lambda}_0^\partial,j_0)$ and~$(\widehat{\lambda}^\partial_1,j_1)$ are two relative hermitian forms such that $\widehat{\lambda}_0^\partial$ and $\widehat{\lambda}_1^\partial$ are isomorphisms, then an isometry $F \colon \lambda_0 \cong \lambda_1$ of the associated hermitian forms determines an isometry 
$$(F,F_!) \colon (\widehat{\lambda}_0^\partial,j_0) \cong (\widehat{\lambda}^\partial_1,j_1).$$ 
\end{lemma}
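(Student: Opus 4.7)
The plan is to observe that once $F$ is fixed, the hypothesis that $\widehat{\lambda}_1^\partial$ is an isomorphism forces $F_!$ to be a uniquely determined candidate, and the isometry condition on $F$ is exactly what is needed to make the remaining square commute.

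First, I would define
$$F_! := (\widehat{\lambda}_1^\partial)^{-1} \circ (F^*)^{-1} \circ \widehat{\lambda}_0^\partial \colon Q_0 \to Q_1.$$
This is a well-defined isomorphism, since $F$ is an isometry (hence an isomorphism, so $F^*$ is too) and $\widehat{\lambda}_0^\partial$, $\widehat{\lambda}_1^\partial$ are isomorphisms by assumption. By construction,
$$\widehat{\lambda}_1^\partial \circ F_! = (F^*)^{-1} \circ \widehat{\lambda}_0^\partial,$$
so the right-hand square of the diagram in Definition~\ref{def:Relativehermitian} commutes, which after rearranging is precisely the intertwining condition $\lambda_1^\partial(F(x), F_!(y)) = \lambda_0^\partial(x, y)$.

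Next I would verify that the left-hand square $F_! \circ j_0 = j_1 \circ F$ commutes. Precomposing the identity $\widehat{\lambda}_1^\partial \circ F_! = (F^*)^{-1} \circ \widehat{\lambda}_0^\partial$ with $j_0$ gives
$$\widehat{\lambda}_1^\partial \circ F_! \circ j_0 = (F^*)^{-1} \circ \widehat{\lambda}_0^\partial \circ j_0 = (F^*)^{-1} \circ \widehat{\lambda}_0.$$
Since $F$ is an isometry of the associated forms, $\widehat{\lambda}_0 = F^* \circ \widehat{\lambda}_1 \circ F$, so the right-hand side equals $\widehat{\lambda}_1 \circ F = \widehat{\lambda}_1^\partial \circ j_1 \circ F$. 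Injectivity of $\widehat{\lambda}_1^\partial$ then yields $F_! \circ j_0 = j_1 \circ F$, as required.

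There is no real obstacle here: the nontrivial ingredients are already encoded in the definitions, and the only subtlety is noticing that the definition of $F_!$ is dictated by the right square while the isometry hypothesis on $F$ is what makes the left square commute. I would close by remarking that $F_!$ is automatically an isomorphism, so $(F, F_!)$ is indeed an isometry of relative hermitian forms in the sense of Definition~\ref{def:Relativehermitian}.
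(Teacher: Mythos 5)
Your proof is correct and is essentially identical to the paper's: you define $F_! := (\widehat{\lambda}_1^\partial)^{-1}\circ (F^*)^{-1}\circ \widehat{\lambda}_0^\partial$ exactly as in the paper, note the right square commutes by construction, and deduce the left square from the isometry hypothesis (the paper phrases this as "the outer rectangle commutes") together with injectivity of $\widehat{\lambda}_1^\partial$. Your write-up merely makes that last step a bit more explicit than the paper's.
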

\begin{proof}
Given an isometry~$F \colon \lambda_0 \cong \lambda_1$,  we obtain the commutative diagram
\begin{equation}
\label{eq:WTSIsometryRelative}
\xymatrix{
P_0\ar[r]^{j_0}\ar[d]^{F}
&Q_0 \ar[r]^{\widehat{\lambda}_0^\partial,\cong}\ar@{-->}[d]^{F_!}
& P_0^*  \\
P_1\ar[r]^{j_1}&Q_1 \ar[r]^{\widehat{\lambda}_1^\partial,\cong}& P_1^*,\ar[u]^{F^*} 
}
\end{equation}
where the central dashed isomorphism is defined by
$$F_!:=\left(\widehat{\lambda}_1^{\partial}\right)^{-1} \circ (F^*)^{-1}\circ \widehat{\lambda}_0^{\partial}\colon Q_0 \to Q_1.$$
The right square of~\eqref{eq:WTSIsometryRelative} clearly commutes.
The left square also commutes because the outer rectangle commutes.
This shows that $(F,F_!)$ is an isometry of relative hermitian forms. 
\end{proof}

\subsection{Compatible triples and pairs.}
\label{sub:CompatibleTriple}

This section concerns compatible triples and pairs.

\begin{definition}
\label{def:CompatiblePair}
Let~$(X_0,Y_0)$ and~$(X_1,Y_1)$ be~$4$-dimensional Poincar\'e pairs with~$\pi_1(X_i) \cong \pi$ and for which~$\iota_j \colon \pi_1(Y_j) \to \pi_1(X_j)$ is surjective for $j=0,1$.
Let~$h \colon Y_0 \to Y_1$ be a homotopy equivalence and assume that there is an isomorphism~$u \colon \pi_1(X_0) \to \pi_1(X_1)$ with $u \circ \iota_0=\iota_1 \circ h_*$.
An isometry~$(F,G)$ between the relative hermitian forms~$(\lambda_{X_0}^\partial,(j_0)_*)$ and~$(\lambda_{X_1}^\partial,(j_1)_*)$ is \emph{compatible} with~$h$ if the following diagram commutes:
$$
\xymatrix@C0.5cm{
0
\ar[r]&
H_2(Y_0;\Z[\pi])
\ar[r]\ar[d]_{h_*}&
H_2(X_0;\Z[\pi])
\ar[r]^{(j_0)_*}\ar[d]_{F}&
H_2(X_0,Y_0;\Z[\pi])
\ar[r]\ar[d]_{G}&
H_1(Y_0;\Z[\pi])
\ar[r]\ar[d]_{h_*}&
0\\
0
\ar[r]&
H_2(Y_1;\Z[\pi])
\ar[r]&
H_2(X_1;\Z[\pi])
\ar[r]^{(j_1)_*}&
H_2(X_1,Y_1;\Z[\pi])
\ar[r]&
H_1(Y_1;\Z[\pi])
\ar[r]&
0.
}
$$
We refer to $(F,G,h)$ as a \emph{compatible triple}.
\end{definition}

\begin{convention}
For the remainder of this section, we fix 
Poincar\'e pairs $(X_0,Y_0), (X_1,Y_1)$ with Postnikov $2$-type $B$ and a homotopy equivalence $h \colon Y_0 \to Y_1$ as in Notation~\ref{not:Intro}.
\end{convention}

The next result (mentioned in Remark~\ref{rem:CompatiblePairTripleIntro} from the introduction) shows that for some fundamental groups (such as the trivial group, $\Z$,  and finite groups),  the isomorphism $G$ is superfluous. 
\begin{customprop}{\ref{prop:CompatiblePair}}
If $H^k(\pi;\Z[\pi])=0$ for $k=2,3$, then the existence of an isometry~$F \colon \lambda_{X_0} \cong \lambda_{X_1}$ with
$$
\xymatrix@C0.5cm{
0
\ar[r]&
H_2(Y_0;\Z[\pi])
\ar[r]\ar[d]_{h_*}&
H_2(X_0;\Z[\pi])
\ar[r]\ar[d]_{F}&
H_2(X_0,Y_0;\Z[\pi])
\ar[r]\ar[d]_{F_!}&
H_1(Y_0;\Z[\pi])
\ar[r]\ar[d]_{h_*}&
0\\
0
\ar[r]&
H_2(Y_1;\Z[\pi])
\ar[r]&
H_2(X_1;\Z[\pi])
\ar[r]&
H_2(X_1,Y_1;\Z[\pi])
\ar[r]&
H_1(Y_1;\Z[\pi])
\ar[r]&
0,
}
$$
where $F_!=\PD_{X_1} \circ \ev_1^{-1} \circ F^* \ev_0 \PD_{X_0}^{-1}$,  ensures the existence of a compatible triple.
\end{customprop}
\begin{proof}
The fact that~$H^k(\pi;\Z[\pi])=0$ for $k=2,3$ implies that~$\ev \colon H^2(X_i;\Z[\pi]) \to H_2(X_i;\Z[\pi])^*$ is an isomorphism for $i=0,1$.
It follows that $\widehat{\lambda}_{X_0}^\partial$ and~$\widehat{\lambda}_{X_1}^\partial$ are isomorphisms.
Lemma~\ref{lem:DeterminedByhermitian} then ensures that $(F,F_!,h)$ is a compatible triple.
\end{proof}

A pair $(F,h)$ as in Proposition~\ref{prop:CompatiblePair} is referred to as a \emph{compatible pair}.
These have appeared previously in~\cite{BoyerUniqueness} (for $\pi=1$) and in~\cite{ConwayPowell} (for $\pi \cong \Z$).

\subsection{The primary obstruction and compatible triples}
\label{sub:PrimaryCompatible}

Next, we prove the main statement of this section which makes it possible to reformulate the vanishing of the primary obstruction in terms of compatible triples (as stated in Theorem~\ref{thm:PrimaryObstructionRecastIntro}).
The combination of Proposition~\ref{prop:Simplify} and Proposition~\ref{prop:3ConnImpliesCompatible} implies Theorem~\ref{thm:PrimaryObstructionRecastIntro}.

\begin{proposition}
\label{prop:Simplify}
Assume that there is an isomorphism~$u \colon \pi_1(X_0) \to \pi_1(X_1)$ with $u \circ \iota_0=\iota_1 \circ h_*$ and that there is a compatible triple
$$
\xymatrix@C0.5cm{
H_2(Y_0;\Z[\pi])
\ar[r]\ar[d]_{h_*}&
H_2(X_0;\Z[\pi])
\ar[r]\ar[d]_{F}&
H_2(X_0,Y_0;\Z[\pi])
\ar[r]\ar[d]_{G}&
H_1(Y_0;\Z[\pi])
\ar[r]\ar[d]_{h_*}&
0\\
H_2(Y_1;\Z[\pi])
\ar[r]&
H_2(X_1;\Z[\pi])
\ar[r]&
H_2(X_1,Y_1;\Z[\pi])
\ar[r]&
H_1(Y_1;\Z[\pi])
\ar[r]&
0.
}
$$
Assume that~$F_*(k_{X_0,Y_0})=h^*(k_{X_1,Y_1})$,  and
\begin{itemize}
\item either that the map~$H^2(\pi;\Z[\pi]) \to H^2(Y_1;\Z[\pi])$ is injective, and that~$H_1(Y_0;\Z[\pi]) \cong L \oplus T$, with~$L$ a free~$\Z[\pi]$-module and~$T^*=0$.
\item or that $\pi$ is finite and that~$H_1(Y_0;\Z[\pi]) \cong L \oplus T \oplus \Z^k$, with~$L$ a free~$\Z[\pi]$-module and~$T^*=0$.
Here, $\Z$ is endowed with the $\Z[\pi]$-module structure induced by the trivial $\pi$-action. 
\end{itemize}
Given a $3$-connected map~$c_1 \colon X_1 \to B$,  there exists a $3$-connected map~$c_0 \colon X_0 \to B$ such that 
$$c_0|_{Y_0} =  c_1|_{Y_1} \circ h, \quad (c_1)_*^{-1}(c_0)_*=F, \quad \text{and} \quad (c_1)_*^{-1}(c_0)_*=G.$$
In particular, $c_0$ and $c_1$ satisfy
$$c_0^*b_{X_0}^\partial =c_1^*b_{X_1}^\partial.$$
\end{proposition}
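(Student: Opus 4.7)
The plan is to construct $c_0$ via the realization theorem for the algebraic $2$-type (Theorem~\ref{thm:RealiseAlgebraic3Type}), then verify that the remaining conditions on $G$ and the pulled-back pairings follow from the compatible-triple structure.

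First, using the Hurewicz identification $H_2(X_i;\Z[\pi])\cong \pi_2(X_i)$, I view $F$ as a $u$-equivariant isomorphism on $\pi_2$. The hypothesis $F_*(k_{X_0,Y_0})=h^*(k_{X_1,Y_1})$ combined with Theorem~\ref{thm:RealiseAlgebraic3Type} furnishes a $3$-connected map $c_0\colon X_0\to B$ with $c_0|_{Y_0}\simeq c_1|_{Y_1}\circ h$ and $(c_1)_*^{-1}(c_0)_*=F$ on $H_2(X_0;\Z[\pi])$. Since $Y_0\hookrightarrow X_0$ is a cofibration, the homotopy extension property lets me replace $c_0$ by a homotopic map with $c_0|_{Y_0}=c_1|_{Y_1}\circ h$ strictly, without altering the induced map on $H_2$. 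Set $G':=(c_1)_*^{-1}(c_0)_*$ on $H_2(X_0,Y_0;\Z[\pi])$; by naturality of the long exact sequences of the pairs under $c_0,c_1,h$, the map $G'$ fits in the same commutative diagram as $G$ with $F$ on the left and $h_*$ on the right.

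The key step is to prove $G=G'$. Their difference $\Delta:=G-G'$ satisfies $\Delta\circ(j_0)_*=0$ and $\partial_1\circ\Delta=0$; since the universal cover of $X_0$ is simply connected, $H_1(X_0;\Z[\pi])=0$, so $\Delta$ descends to a $\Z[\pi]$-linear map $\overline{\Delta}\colon H_1(Y_0;\Z[\pi])\cong L\oplus T\to \im((j_1)_*)\subset H_2(X_1,Y_1;\Z[\pi])$. The plan to force $\overline{\Delta}=0$ is to combine the intertwining relation satisfied by $G$ (from the hypothesis) with the analogous relation for $G'$, which holds only up to a correction controlled by the class $\xi_0\in H_4(B)$ defined by $j_*\xi_0=(c_0)_*[X_0]-(c_1)_*[X_1]$; this additional constraint places $\overline{\Delta}$ into $\im((j_1)_*)\cap\ker(\widehat{\lambda}_{X_1}^\partial)$, which Poincar\'e--Lefschetz duality identifies with a subquotient of $\ker(\ev\colon H^2(X_1;\Z[\pi])\to H_2(X_1;\Z[\pi])^*)$, and the universal coefficient spectral sequence in turn places the latter inside the image of $H^2(\pi;\Z[\pi])\to H^2(X_1;\Z[\pi])$. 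The injectivity of $H^2(\pi;\Z[\pi])\to H^2(Y_1;\Z[\pi])$ (which factors through $H^2(X_1;\Z[\pi])$ by the surjectivity of $\pi_1(Y_1)\to\pi_1(X_1)$), combined with $T^*=0$ and the freeness of $L$, then forces $\overline{\Delta}=0$; the finite-$\pi$ case with the extra $\Z^k$ summand is handled by a parallel Ext-theoretic argument using $H^k(\pi;\Z[\pi])$ for finite $\pi$.

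Once $G=G'$ is established, the final equality $c_0^*b_{X_0}^\partial=c_1^*b_{X_1}^\partial$ follows by substituting $x=c_0^*\widetilde{\alpha}\cap[X_0]$ and $y=c_0^*\widetilde{\beta}\cap[X_0]$ into the intertwining $\lambda_{X_1}^\partial(F(x),G(y))=\lambda_{X_0}^\partial(x,y)$ and unwrapping $F$ and $G$ as $(c_1)_*^{-1}(c_0)_*$ via naturality of the cap product and the evaluation pairing. The main obstacle is the rigidity step $\overline{\Delta}=0$: this is where the specific hypotheses on $\pi$ and on $H_1(Y_0;\Z[\pi])$ enter in an essential way, and where the full interplay between the commutative diagram, the intertwining condition, and the Ext/UCSS machinery must be brought to bear.
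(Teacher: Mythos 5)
The construction of $c_0$ via Theorem~\ref{thm:RealiseAlgebraic3Type} and the final passage from $(c_1)_*^{-1}(c_0)_*=F,G$ to $c_0^*b_{X_0}^\partial=c_1^*b_{X_1}^\partial$ (Proposition~\ref{prop:RelativeFormsCohomologyAreTheSame}) match the paper. The gap is your middle step, the claim that $G=G':=(c_1)_*^{-1}(c_0)_*$ on relative homology. The realization theorem only controls the restriction to $Y_0$ and the induced map on $\pi_2\cong H_2(-;\Z[\pi])$; it gives no control over the induced map on $H_2(X_0,Y_0;\Z[\pi])$ beyond the commutativity of the ladder, and there is no rigidity statement forcing $G'=G$. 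Your argument for it is circular: to place $\overline{\Delta}$ in $\im((j_1)_*)\cap\ker(\widehat{\lambda}_{X_1}^\partial)$ you need $G'$ to intertwine $\lambda_{X_0}^\partial$ and $\lambda_{X_1}^\partial$ with $F$, but by Propositions~\ref{prop:3ConnImpliesCompatible} and~\ref{prop:RelativeFormsCohomologyAreTheSame} that intertwining is \emph{equivalent} to the conclusion $c_0^*b_{X_0}^\partial=c_1^*b_{X_1}^\partial$ you are trying to prove. You acknowledge the relation for $G'$ only holds ``up to a correction controlled by $\xi_0$,'' but that correction is exactly the primary obstruction and is not known to vanish at this stage, so your constraint only says $\widehat{\lambda}_{X_1}^\partial\circ\overline{\Delta}$ equals an unknown term, not zero. (What is true under the first bullet of hypotheses is that $\ker(\partial_1)\cap\ker(\widehat{\lambda}_{X_1}^\partial)=0$, so $G$ is unique among \emph{compatible} partners of $F$; but $G'$ is not known to be compatible.)

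The correct move, and the one the paper makes, is not to prove $G=G'$ but to \emph{modify} $c_0$ rel.\ $Y_0$ so that it induces $G$. Concretely: one shows the discrepancy $G-(c_1)_*^{-1}c_*$ kills $\im((j_0)_*)$ and hence factors through $\partial_0$, and then lifts the resulting map $H_1(Y_0;\Z[\pi])\to\coker((i_1)_*)$ to a map $\psi'$ into $H_2(X_1;\Z[\pi])$ --- this lifting is where the hypotheses $H_1(Y_0;\Z[\pi])\cong L\oplus T$ (resp.\ $L\oplus T\oplus\Z^k$), $T^*=0$, and the injectivity of $H^2(\pi;\Z[\pi])\to H^2(Y_1;\Z[\pi])$ (resp.\ finiteness of $\pi$) are actually used, namely to prove $\Hom(T,\coker((i_1)_*))=0$ and surjectivity of the projection on the $L$ (and $\Z^k$) summands. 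One then applies the obstruction-theoretic realization result (Proposition~\ref{prop:ObstructionTheory}) with a class $[d]$ satisfying $\ev([d])=(c_1)_*\circ\psi$ to change $c$ to a new $3$-connected map $c_0$ that agrees with $c$ on $Y_0$, induces the same map on absolute $H_2$ (so $F$ is preserved, since $\psi$ factors through $\partial_0$), and shifts the relative map exactly by the discrepancy, so that $(c_1)_*^{-1}(c_0)_*=G$. Your proposal omits this modification step entirely, and without it the statement cannot be reached.
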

\begin{proof}
By Theorem~\ref{thm:RealiseAlgebraic3Type}, the condition on the $k$-invariants ensures that there is a~$c'\colon X_0 \to B$ with~$c'|_{Y_0} \simeq  c_1|_{Y_1} \circ h$,  such that $c'$ is a $\pi_1$-isomorphism
and with $(c_1)_*^{-1}c'_*=F$.
Apply the homotopy extension property to~$c'|_{Y_0} \simeq c_1|_{Y_1} \circ h$ so that $c' \simeq c \colon X_0 \to B$ with~$c|_{Y_0}=c_1|_{Y_1} \circ h$.
In particular, $c$ induces a map of pairs~$c \colon (X_0,Y_0) \to (B,Y_1)$.
\begin{claim}
There exists a map 
$$\psi \colon H_2(X_0,Y_0;\Z[\pi]) \to H_2(X_1;\Z[\pi])$$
that makes the following diagram commute:
\begin{equation}
\label{eq:DiagonalDiagramWithFG}
\xymatrix{
H_2(X_0;\Z[\pi])
\ar[r]^-{(j_0)_*}\ar[d]_{F-(c_1)_*^{-1}c_*}&
H_2(X_0,Y_0;\Z[\pi])
\ar[d]^{G-(c_1)_*^{-1}c_*}\ar[ld]^\psi&
\\
H_2(X_1;\Z[\pi])
\ar[r]^-{(j_1)_*}&
H_2(X_1,Y_1;\Z[\pi]).
&
}
\end{equation}
\end{claim}
\begin{proof}
Since we already have $F=(c_1)_*^{-1}c_*$, we deduce that the following diagram commutes:
\begin{equation}
\label{eq:DiagramWithZeros}
\xymatrix@C0.6cm{
H_2(Y_0;\Z[\pi])
\ar[r]^{(i_0)_*} \ar[d]^0&
H_2(X_0;\Z[\pi])
\ar[r]^-{(j_0)_*}\ar[d]^{0}&
H_2(X_0,Y_0;\Z[\pi])
\ar[d]^{G-(c_1)_*^{-1}c_*}\ar[r]^-{\partial_0}&
H_1(Y_0;\Z[\pi])
\ar[r]\ar[d]^0&
0
\\
H_2(Y_1;\Z[\pi])
\ar[r]^-{(i_1)_*}&
H_2(X_1;\Z[\pi])
\ar[r]^-{(j_1)_*}&
H_2(X_1,Y_1;\Z[\pi])\ar[r]^-{\partial_1}&
H_1(Y_1;\Z[\pi])
\ar[r]&
0.
}
\end{equation}
In order to define $\psi$, it suffices to define a map
$$  \psi' \colon  H_1(Y_0;\Z[\pi]) \to H_2(X_1;\Z[\pi])$$
with $(j_1)_*\circ \psi'\circ \partial_0=G-(c_1)_*^{-1}c_*$.
Indeed $\psi$ is then obtained as $\psi:=\psi' \circ \partial_0$: a short verification ensures that $\psi$ makes~\eqref{eq:DiagonalDiagramWithFG} commute.

Since the diagram in~\eqref{eq:DiagramWithZeros} commutes and the bottom sequence is exact, the map $G-(c_1)_*^{-1}c_*$ has image in $\ker (\partial_1)$. 
Thus for $\partial_0(x) \in H_1(Y_0;\Z[\pi])$,  with $x\in H_2(X_0,Y_0;\Z[\pi])$ exactness ensures that~$G-(c_1)_*^{-1}c_*(x)=(j_1)_*(z)$ for some $z \in H_2(X_1;\Z[\pi])$.
The element $z$ is only well defined in~$\coker((i_1)_*)$ leading to a map
$$  \psi'' \colon  H_1(Y_0;\Z[\pi]) \to \coker((i_1)_*).$$
that satisfies
$(i_1)_* \circ \psi'' \circ \partial_0(x)=(i_1)_*(z)=(G-(c_1)_*^{-1}c_*)(x).$
We will show that the projection $\proj \colon H_2(X_1;\Z[\pi])\to \coker((i_1)_*)$ induces a surjection
\begin{equation}
\label{eq:WantSurj}
\proj_* \colon \Hom(H_1(Y_0;\Z[\pi]),H_2(X_1;\Z[\pi]))
\to\Hom(H_1(Y_0;\Z[\pi]),\coker((i_1)_*)).
\end{equation}
Any lift $\psi' \colon H_1(Y_0;\Z[\pi]) \to H_2(X_1;\Z[\pi])$ of $\psi''$ will satisfy $(j_1)_*\circ \psi'\circ \partial_0=(j_1)_*\circ \psi''\circ \partial_0=G-(c_1)_*^{-1}c_*$ and thus give the map~$\psi \colon H_2(X_0,Y_0;\Z[\pi]) \to H_2(X_1;\Z[\pi])$ required by the claim.

Towards showing that $\proj_*$ is surjective,  we assert that~$\ev \circ \PD_{X_1}^{-1} \colon H_2(X_1,Y_1;\Z[\pi]) \to H_2(X_1;\Z[\pi])^*$ restricts to an injection $\ker(\partial_1) \to H_2(X_1;\Z[\pi])^*.$
Consider the following diagram:
$$
\xymatrix@R0.5cm@C0.5cm{
0\ar[r]&H^2(\pi;\Z[\pi])\ar[r]\ar@{^{(}->}[dd]&H^2(X_1;\Z[\pi])\ar[r]^-{\ev}\ar[rd]_{\PD_{X_1}}^{\cong}\ar[ldd]^{i_1^*}&H_2(X_1;\Z[\pi])^* \\
&&&H_2(X_1,Y_1;\Z[\pi]) \ar[ld]^{\partial_1} \\
&H^2(Y_1;\Z[\pi])\ar[r]^-{\PD_{Y_1}}_{\cong}&H_1(Y_1;\Z[\pi]).
}
$$
The vertical map is assumed to be injective in our first hypothesis; whereas in the second set of hypotheses,  $\pi$ is finite,  and the injectivity is automatic because in that case~$H^2(\pi;\Z[\pi])=0$.
The triangle commutes by naturality,  and the square commutes by standard properties of the Poincar\'e duality isomorphism.
A short diagram chase using the exactness of the top row shows that if~$\ev \circ \PD_{X_1}^{-1}(x)=0$ for~$x \in \ker(\partial_1)$, then~$x=0.$
This establishes the assertion.

The assertion produces an injection $\Hom_{\Z[\pi]}(T,\coker((i_1)_*))
\hookrightarrow  \Hom_{\Z[\pi]}(T,H_2(X_1;\Z[\pi])^*)$.
This implies that~$\Hom(T,\coker((i_1)_*))=0$ because
\begin{align*}
\Hom_{\Z[\pi]}(T,H_2(X_1;\Z[\pi])^*)
&\cong \Hom_{\Z[\pi]}(T \otimes_{\Z[\pi]} H_2(X_1;\Z[\pi]),\Z[\pi]) \\
&\cong  \Hom_{\Z[\pi]}(H_2(X_1;\Z[\pi]),T^*)=0.
\end{align*}
We deduce that the map $\proj_*$ from~\eqref{eq:WantSurj} takes the form
$$
\xymatrix{ 
\proj_* \colon {\overbrace{\Hom(H_1(Y_0;\Z[\pi]),H_2(X_1;\Z[\pi]))}^{=\Hom(T,H_2)\oplus\Hom(L,H_2)\oplus \Hom(\Z^k,H_2)}}
\ar[r]&
{\overbrace{\Hom(H_1(Y_0;\Z[\pi]),\coker((i_1)_*)).}^{=\Hom(L,\coker)\oplus \Hom(\Z^k,\coker)}}
}
$$
where the summands involving $\Z$ are understood to occur only under the second set of hypotheses.

To show that this is surjective, we show that~$\Hom(L,H_2(X_1;\Z[\pi])) \to \Hom(L,\coker((i_1)_*))$ and~$\Hom(\Z^k,H_2(X_1;\Z[\pi])) \to \Hom(\Z^k,\coker((i_1)_*))$ are both surjective.

In the first case, this follows because $L$ is free and $\proj \colon H_2(X_1;\Z[\pi]) \to \coker((i_1)_*)$ is surjective.
We thus focus on the case in which the group~$\pi$ is finite.
Since $\pi$ is finite, we first note that~$H_3(X_1,Y_1;\Z[\pi]) \cong H^1(X_1;\Z[\pi])=0.$
Then,  by dualising the resulting short exact sequence~$0 \to H_2(Y_1;\Z[\pi]) \to H_2(X_1;\Z[\pi]) \to \coker((i_1)_*)\to 0$, 
we obtain
$$\ldots \to \Hom(\Z^k,H_2(X_1;\Z[\pi])) \xrightarrow{\proj_*} \Hom(\Z^k,\coker((i_1)_*)) \to \overbrace{\Ext^1_{\Z[\pi]}(\Z^k,H_{2}(Y_1;\Z[\pi]))}^{=H^1(\pi;H_{2}(Y_1;\Z[\pi]))} \to \ldots$$
Thus it suffices to prove that the rightmost term vanishes.
Since the group~$\pi$ is finite,~$H^1(\pi)=0$ and~$H^k(\pi;\Z[\pi])=\Ext^k(\pi,\Z[\pi])=0$ for $k>0$ 
whence the desired outcome:
$$H^1(\pi;H_{2}(Y_1;\Z[\pi]))
\cong H^1(\pi;H^{1}(Y_1;\Z[\pi]))
\cong H^1(\pi;H_1(Y_1;\Z[\pi])^*)
= H^1(\pi;L \oplus \Z^k)
=0.
$$
Thus the map $\proj_*$ from~\eqref{eq:WantSurj} is surjective.
As explained above this gives the existence of the map~$\psi$ required by the claim.
\end{proof}
For $i=0,1$, note that since~$H_i(X_0,Y_0;\Z[\pi])=0$, we have~$\operatorname{Ext}^{3-i}_{\Z[\pi]}(H_i(X_0,Y_0;\Z[\pi]),\pi_2(B))=0$.
The UCSS then implies that the following evaluation map is surjective:
$$H^2(X_0,Y_0;\pi_2(B)) \to \Hom(H_2(X_0,Y_0;\Z[\pi]),\pi_2(B)) \ni (c_1)_* \circ \psi$$
Here,  $\psi$ is the map from the claim.
Choose $[d] \in H^2(X_0,Y_0;\pi_2(B))$ with $\ev([d])=(c_1)_* \circ \psi$.
Proposition~\ref{prop:ObstructionTheory} applied with $(X,Y)=(X_0,Y_0)$ and $(B,B')=(B,Y_1)$ and $f_0=c$ shows that there exists a map~$c_0 \colon X_0 \to B$ with $c_0|_{Y_0}=c|_{Y_0}$ and $(c_0)_*=c_* \colon \pi_1(X_0) \to \pi_1(B)$ that makes the following diagram commute:
$$
\xymatrix{
H_2(X_0;\Z[\pi])
\ar[r]^-{(j_0)_*}\ar[d]_{(c_0)_*-c_*}&
H_2(X_0,Y_0;\Z[\pi])
\ar[d]^{(c_0)_*-c_*}\ar[ld]_-{(c_1)_* \circ \psi}&
\\
H_2(B;\Z[\pi])
\ar[r]^-{j_B}&
H_2(B,Y_1;\Z[\pi]).
&
}
$$
Applying $(c_1)_*^{-1}$ to this diagram gives the following commutative diagram:
\begin{equation}
\label{eq:DiagonalDiagramWithcc0}
\xymatrix{
H_2(X_0;\Z[\pi])
\ar[r]^-{(j_0)_*}\ar[d]_{(c_1)_*^{-1}((c_0)_*-c_*)}&
H_2(X_0,Y_0;\Z[\pi])
\ar[d]^{(c_1)_*^{-1}((c_0)_*-c_*)}\ar[ld]_-\psi&
\\
H_2(X_1;\Z[\pi])
\ar[r]^-{(j_1)_*}&
H_2(X_1,Y_1;\Z[\pi]).
&
}
\end{equation}
Taking the difference of the diagrams in~\eqref{eq:DiagonalDiagramWithFG} and~\eqref{eq:DiagonalDiagramWithcc0} gives 
$$
\xymatrix{
H_2(X_0;\Z[\pi])
\ar[r]^-{(j_0)_*}\ar[d]_{F-(c_1)_*^{-1}(c_0)_*}&
H_2(X_0,Y_0;\Z[\pi])
\ar[d]^{G-(c_1)_*^{-1}(c_0)_*}\ar[ld]_-0&
\\
H_2(X_1;\Z[\pi])
\ar[r]^-{(j_1)_*}&
H_2(X_1,Y_1;\Z[\pi]).
&
}
$$
This shows that~$(c_1)_*^{-1}(c_0)_*=F$ and~$(c_1)_*^{-1}(c_0)_*=G$.
In particular since $c_1$ is~$3$-connected, this equality ensures that~$c_0$ is a~$\pi_2$-isomorphism.
Since~$\pi_3(B)=0$ and Proposition~\ref{prop:ObstructionTheory} ensures that~$c_0$ is a $\pi_1$-isomorphism, we deduce that~$c_0$ is $3$-connected. 

Since $(F,G)$ is an isometry of relative forms, the conditions $(c_1)_*^{-1}(c_0)_*=F$ and~$(c_1)_*^{-1}(c_0)_*=G$ are seen to be equivalent to $\lambda_{X_0}^\partial((c_0)_*^{-1}(x),(c_0)_*^{-1}(y))=\lambda_{X_1}^\partial((c_1)_*^{-1}(x),(c_1)_*^{-1}(y))$. Proposition~\ref{prop:RelativeFormsCohomologyAreTheSame} therefore ensures that~$c_0^*b_{X_0}^\partial =c_1^*b_{X_1}^\partial.$
\end{proof}

We now prove the converse of Proposition~\ref{prop:Simplify}.

\begin{proposition}
\label{prop:3ConnImpliesCompatible}
Let $c_0 \colon X_0 \to B$ and $c_1 \colon X_1 \to B$ be $3$-connected maps.
If~$c_0|_{Y_0} = c_1|_{Y_1} \circ~h$ and~$c_0^*b_{X_0}^\partial=c_1^*b_{X_1}^\partial$, then~$u:=(c_1)_*^{-1} \circ (c_0)_* \colon \pi_1(X_0) \to \pi_1(X_1)$ satisfies~$u \circ \iota_0=\iota_1 \circ h_*$ and the triple~$((c_1)_*^{-1}(c_0)_*,(c_1)_*^{-1}(c_0)_*,h)$ is compatible.
\end{proposition}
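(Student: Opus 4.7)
The plan is to verify each of the four assertions required by compatibility in turn, with the bulk of the work being a naturality argument.

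First, applying $\pi_1$ to the equality $c_0|_{Y_0}=c_1|_{Y_1}\circ h$ gives $(c_0)_*\circ(\iota_0)_*=(c_1)_*\circ(\iota_1)_*\circ h_*$, so applying $(c_1)_*^{-1}$ yields $u\circ\iota_0=\iota_1\circ h_*$ at once. Using this isomorphism to identify $\pi_1(X_0)$ with $\pi:=\pi_1(X_1)$, I now treat $c_0$ and $c_1$ as inducing maps of $\Z[\pi]$-modules. Since $c_1$ is $3$-connected, Whitehead's theorem for the universal cover shows that $H_i(B,X_1;\Z[\pi])=0$ for $i\leq 3$, and similarly for $c_0$; in particular, $(c_0)_*$ and $(c_1)_*$ are isomorphisms $H_2(X_i;\Z[\pi])\cong H_2(B;\Z[\pi])$, so $F:=(c_1)_*^{-1}(c_0)_*$ is an isomorphism on $H_2(-;\Z[\pi])$.

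Next, using the convention that $Y_1\subset B$ (via the mapping cylinder of $c_1$), the hypothesis $c_0|_{Y_0}=c_1|_{Y_1}\circ h$ makes $c_0$ and $c_1$ into maps of pairs $(X_i,Y_i)\to(B,Y_1)$. Then the long exact sequences of the triples $(B,X_i,Y_i)$, combined with $H_i(B,X_i;\Z[\pi])=0$ for $i\leq 3$, give isomorphisms $(c_i)_*\colon H_2(X_i,Y_i;\Z[\pi])\cong H_2(B,Y_1;\Z[\pi])$. Hence $G:=(c_1)_*^{-1}(c_0)_*$ is a well-defined isomorphism on the relative second homology.

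Then I turn to the commutative diagram in Definition~\ref{def:CompatiblePair}. Each square commutes by naturality applied to the map of pairs $c_i\colon(X_i,Y_i)\to(B,Y_1)$, once we note that on $Y_0$ the equality $c_0|_{Y_0}=c_1|_{Y_1}\circ h$ lets us replace the map $(c_0|_{Y_0})_*$ by $(c_1|_{Y_1})_*\circ h_*$. Explicitly, the square containing $h_*$ and $F$ commutes because
\[
(c_1)_*^{-1}(c_0)_*\circ(i_0)_*=(c_1)_*^{-1}(c_1)_*\circ(i_1)_*\circ h_*=(i_1)_*\circ h_*,
\]
and analogous rewrites, using the naturality of the connecting homomorphism in the pair sequence, handle the square with $G$ and the one with $G$ and $h_*$.

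Finally, to check that $(F,G)$ intertwines the relative intersection forms, I invoke Proposition~\ref{prop:RelativeFormsCohomologyAreTheSame} (cited in the proof of Proposition~\ref{prop:Simplify}): the condition $c_0^*b_{X_0}^\partial=c_1^*b_{X_1}^\partial$ is exactly the cohomological reformulation of
\[
\lambda_{X_0}^\partial(x,y)=\lambda_{X_1}^\partial(F(x),G(y)),
\]
so this part is immediate. The only potentially delicate point in the whole argument is the verification that $G$ is an isomorphism, but this follows cleanly from the $3$-connectedness of $c_0$ and $c_1$ together with the triple sequence; so no step is a serious obstacle.
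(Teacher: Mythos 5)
Your proof is correct and takes essentially the same route as the paper: the condition $u\circ\iota_0=\iota_1\circ h_*$ and the commutativity of the ladder follow from naturality using $c_0|_{Y_0}=c_1|_{Y_1}\circ h$ (with $c_1|_{Y_1}$ the inclusion), and the intertwining of the relative intersection forms is exactly the content of Proposition~\ref{prop:RelativeFormsCohomologyAreTheSame}, which is also what the paper invokes. The only cosmetic point is that for $i=0$ there is no literal triple $(B,X_0,Y_0)$, since $X_0$ is not a subspace of $B$; that step should instead be a five-lemma comparison of the long exact sequences of $(X_0,Y_0)$ and $(B,Y_1)$ (or one passes through the mapping cylinder of $c_0$), which gives the same conclusion that $G$ is an isomorphism --- a verification the paper leaves implicit.
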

\begin{proof}
The assertion concerning $u$ is immediate and we therefore focus on the second claim.
Since~$c_0|_{Y_0} = h$ and $c_1|_{Y_1}$ is the inclusion,  the following diagram commutes:
$$
\xymatrix{
0 \ar[r]& H_2(Y_0;\Z[\pi]) \ar[d]^{h_*}\ar[r]& H_2(X_0;\Z[\pi]) \ar[d]^{(c_1)_*^{-1}(c_0)_*}\ar[r]& H_2(X_0,Y_0;\Z[\pi])\ar[r]\ar[d]^{(c_1)_*^{-1}(c_0)_*} & H_1(X_0;\Z[\pi]) \ar[d]^{h_*}\ar[r] & 0 \\
0 \ar[r]& H_2(Y_1;\Z[\pi]) \ar[r]& H_2(X_1;\Z[\pi]) \ar[r]& H_2(X_1,Y_1;\Z[\pi]) \ar[r]& H_1(X_1;\Z[\pi]) \ar[r]& 0.
}
$$
Since~$c_0^*b_{X_0}^\partial=c_1^*b_{X_1}^\partial$, Proposition~\ref{prop:RelativeFormsCohomologyAreTheSame} implies that~$(c_0)_*^{-1}\lambda_{X_0}^\partial=(c_1)_*^{-1}\lambda_{X_1}^\partial$, i.e. that~$(c_1)_*^{-1}(c_0)_*$ gives rise to an isomorphism of relative hermitian forms.
\end{proof}

\subsection{Working without the $k$-invariant}
\label{sub:NokInvariant}

We show that if either~$\cd(\pi)\leq 2$ or if~$H_4(\pi)=0$
and~$H_1(Y_i;\Z[\pi])^*=0$, then the~$k$-invariant condition in Theorem~\ref{thm:PrimaryObstructionRecastIntro} can be done away with.
These results were stated in Proposition~\ref{prop:NokinvariantIntro}.

\medbreak

We begin with the case where~$\cd(\pi)\leq 2$.

\begin{proposition}
\label{prop:NokinvariantIntrocd2}
Let~$(X_0,Y_0)$ and~$(X_1,Y_1)$ be~$4$-dimensional Poincar\'e pairs with fundamental group~$\pi_1(X_i) \cong \pi$ and~$\iota_j \colon \pi_1(Y_i) \to \pi_1(X_i)$ surjective  for $i=0,1$.
Fix a degree one homotopy equivalence $h \colon Y_0 \to Y_1$ and assume there is a group isomorphism $u \colon \pi_1(X_0) \to \pi_1(X_1)$ that satisfies~$u \circ \iota_0=\iota_1 \circ h_*$.
If $\cd(\pi)\leq 2$ and if there is a compatible triple~$(F,G,h)$, then 
$$F_*(k_{X_0,Y_0})=h^*(k_{X_1,Y_1}) \in H^3(B\pi,Y_0;\pi_2(X_1)).$$
\end{proposition}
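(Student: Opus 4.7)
My plan is to reduce the equality $F_*(k_{X_0,Y_0})=h^*(k_{X_1,Y_1})$ to a cohomological problem on $Y_0$ using Lemmas~\ref{lem:kXXtokXY} and~\ref{lem:Xi}, and then resolve that problem using the compatible triple together with the vanishing afforded by $\cd(\pi)\le 2$.

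First, I would fix basepoint-compatible maps $\nu_i\colon X_i\to B\pi$ so that $M(\nu_0|_{Y_0})=M(\nu_1|_{Y_1})=B\pi$. Since $\cd(\pi)\le 2$, we have $H^3(B\pi;A)=0$ for every $\Z[\pi]$-module $A$, so the absolute $k$-invariants $k_{X_i}$ vanish. Combining $H_1(\widetilde X_i)=0$ with $\cd(\pi)\le 2$ in the universal coefficient spectral sequence for $X_i$ shows that $\ev\colon H^2(X_i;M)\to \Hom_{\Z[\pi]}(\pi_2(X_i),M)$ is surjective for every coefficient module $M$, with kernel $\im(\nu_i^*)$. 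Pick $\alpha_i\in H^2(X_i;\pi_2(X_i))$ with $\ev(\alpha_i)=\id$; Lemma~\ref{lem:Xi} gives $\delta(\alpha_i)=k_{X_i,X_i}$, and Lemma~\ref{lem:kXXtokXY} together with naturality of $\delta$ yields $k_{X_i,Y_i}=\delta_{Y_i}(i_i^*\alpha_i)$. By further naturality,
\[F_*k_{X_0,Y_0}-h^*k_{X_1,Y_1}=\delta_{Y_0}\bigl(i_0^*F_*\alpha_0-h^*i_1^*\alpha_1\bigr).\]
Since $\ker(\delta_{Y_0})=\im(j_0^*\colon H^2(B\pi;\pi_2(X_1))\to H^2(Y_0;\pi_2(X_1)))$, the desired equality reduces to $i_0^*F_*\alpha_0-h^*i_1^*\alpha_1\in\im(j_0^*)$.

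To verify this, I would construct a class $\xi\in H^2(X_0;\pi_2(X_1))$ with $\ev(\xi)=F$ and $i_0^*\xi=h^*i_1^*\alpha_1$. Granted such $\xi$, the identities $\ev(F_*\alpha_0)=F=\ev(\xi)$, combined with $\ker(\ev)=\im(\nu_0^*)$ on $H^2(X_0;\pi_2(X_1))$, force $F_*\alpha_0-\xi\in\im(\nu_0^*)$, and restriction to $Y_0$ yields $i_0^*F_*\alpha_0-h^*i_1^*\alpha_1\in\im(j_0^*)$, as required. The compatibility $F\circ(i_0)_*=(i_1)_*\circ h_*$ from the compatible triple shows that any lift $\xi$ of $h^*i_1^*\alpha_1$ automatically satisfies $\ev(\xi)\circ(i_0)_*=F\circ(i_0)_*$, and one can then modify $\xi$ by an element of $\ker(i_0^*)$—controlled via Poincar\'e duality for the pair $(X_0,Y_0)$—to achieve $\ev(\xi)=F$ exactly.

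The main obstacle is producing the lift: the obstruction to lifting $h^*i_1^*\alpha_1$ from $H^2(Y_0;\pi_2(X_1))$ to $H^2(X_0;\pi_2(X_1))$ sits in $H^3(X_0,Y_0;\pi_2(X_1))$, and I expect to show it vanishes by treating the compatible triple as an algebraic isomorphism of the Poincar\'e long exact sequences of the two pairs---dualized to cohomology with $\pi_2(X_1)$-coefficients via the universal coefficient spectral sequence---that intertwines the connecting maps and identifies this obstruction with the tautologically vanishing obstruction for $\alpha_1$ on $(X_1,Y_1)$, where $\alpha_1$ itself serves as a lift of $i_1^*\alpha_1$.
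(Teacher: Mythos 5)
Your reduction through Lemmas~\ref{lem:Xi} and~\ref{lem:kXXtokXY} is sound: writing $k_{X_i,Y_i}=\delta_{Y_i}(i_i^*\alpha_i)$ with $\ev(\alpha_i)=\id$ (legitimate, since $\cd(\pi)\le 2$ and $H_1(\widetilde X_i)=0$ make $\ev$ surjective with kernel $\im(\nu_i^*)$), and reducing the claim to finding $\xi\in H^2(X_0;\pi_2(X_1))$ with $\ev(\xi)=F$ and $i_0^*\xi=h^*i_1^*\alpha_1$, is in effect an element-level version of the paper's diagram chase. (The aside that the ``absolute $k$-invariants vanish'' is neither justified --- $k_{X_i,X_i}$ lives in $H^3(M(\nu_i),X_i;\pi_2(X_i))$, not in $H^3(B\pi;-)$ --- nor used.) The problems are in the two steps producing $\xi$. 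Your mechanism for the lifting obstruction is not valid: no map of pairs induces $(F,G,h)$, so an abstract isomorphism of the two long exact sequences cannot be ``dualized via the UCSS'' to transport an obstruction class in $H^3(X_0,Y_0;\pi_2(X_1))$ to the tautologically trivial one on $(X_1,Y_1)$; these cohomology groups carry $\Ext$-contributions that a bare isomorphism of $\Z[\pi]$-homology does not control. That step happens to be salvageable for a simpler reason you did not give: since $\cd(\pi)\le 2$, $\pi_2(X_1)$ is projective (Proposition~\ref{prop:StablyFreePD<4}), hence flat, so Poincar\'e duality with coefficients gives $H^3(X_0,Y_0;\pi_2(X_1))\cong H_1(X_0;\pi_2(X_1))\cong \Tor_1^{\Z[\pi]}(\Z,\pi_2(X_1))=0$, exactly as in the proof of Proposition~\ref{prop:NokinvariantIntrocd3}.

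The genuine gap is the correction step ``modify $\xi$ by an element of $\ker(i_0^*)$ to achieve $\ev(\xi)=F$'', which is where the whole content of the proposition sits and for which you offer no argument. The difference $D=F-\ev(\xi)$ does vanish on $\im((i_0)_*)$ (this uses only the $F$--$h$ square), hence factors through $\im((j_0)_*)=\ker\partial_0\subset H_2(X_0,Y_0;\Z[\pi])$; but realising $D$ as $\ev$ of a class in $\im\bigl(H^2(X_0,Y_0;\pi_2(X_1))\to H^2(X_0;\pi_2(X_1))\bigr)$ is equivalent (using surjectivity of the relative evaluation) to extending this homomorphism over all of $H_2(X_0,Y_0;\Z[\pi])$, and the obstruction lies in $\Ext^1_{\Z[\pi]}(\im\partial_0,\pi_2(X_1))$, which the hypotheses do not kill: there is no assumption on $H_1(Y_0;\Z[\pi])$ here, and projectivity of $\pi_2(X_1)$ is useless in the contravariant variable. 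Tellingly, your outline never uses $G$ or the isometry condition of the compatible triple, and that is precisely the missing input. Setting $G^!:=\PD_{X_1}^{-1}\circ G\circ\PD_{X_0}$ (with $\pi_2(X_1)$-coefficients, using flatness), the $G$--$h$ square together with $h$ having degree one gives $i_0^*=h^*\circ i_1^*\circ G^!$, and the isometry condition gives $F^*\circ\ev\circ G^!=\ev$; then $\xi:=(G^!)^{-1}(\alpha_1)$ satisfies $i_0^*\xi=h^*i_1^*\alpha_1$ and $\ev(\xi)=F$ simultaneously, with no lifting or correction needed. This is, in element form, exactly the paper's proof of the commutativity of the right-hand square of its diagram~\eqref{eq:DiagramForNok}; without that ingredient your argument cannot be completed as written.
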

\begin{proof}
According to Convention~\ref{conv:kShorterNotation}, given~$\nu_0 \colon X_0 \to B\pi$ and~$\nu_1 \colon X_1 \to B\pi$, the proposition requires we prove~$F_*(k_{X_0,Y_0}^{\nu_0})=(\id,h)^*(k_{X_1,Y_1}^{\nu_1})$.
The idea of the proof is to run a diagram chase on the diagram in~\eqref{eq:DiagramForNok} below,  but doing so requires some preliminary observations.

We write~$G^! \colon H^2(X_0;\Z[\pi]) \to H^2(X_1;\Z[\pi])$ for the map~$\PD_{X_1}^{-1} \circ G \circ \PD_{X_0}$.
By definition of a compatible triple and the commutativity of  the following diagram we have~$F^* \circ \ev \circ G^!=\ev$ and~$i_0^*=h^* \circ i_1^* \circ G^!$:
$$
\xymatrix{
H^2(Y_0;\Z[\pi])\ar[rd]^{\PD_{Y_0}}_\cong&&H^2(X_0;\Z[\pi])\ar[ll]_-{i_0^*}\ar[r]^{\ev}\ar[d]^{\PD_{X_0}}_\cong\ar@/^4pc/[ddd]^{G^!}&H_2(X_0;\Z[\pi])^* \\
&H_1(Y_0;\Z[\pi])\ar[d]^{h_*}&H_2(X_0,Y_0;\Z[\pi])\ar[d]^{G}_\cong \ar[l]_{\partial_0}& \\
&H_1(Y_1;\Z[\pi])&H_2(X_1,Y_1;\Z[\pi])\ar[l]_{\partial_1}& \\
H^2(Y_1;\Z[\pi])\ar[ru]^{\PD_{Y_1}}_\cong \ar[uuu]_{h^*}&&H^2(X_1;\Z[\pi])\ar[ll]_-{i_1^*}\ar[r]^{\ev}\ar[u]_{\PD_{X_1}}^\cong&H_2(X_1;\Z[\pi])^*\ar[uuu]_{F^*}^\cong.
}
$$
Let $\Phi_i\colon \Hom_{\Z[\pi]}(H_2(X_i;\Z[\pi]),H_2(X_1;\Z[\pi]))\to H^3(M(\nu_i),X_i;H_2(X_1;\Z[\pi]))$ be the homomorphism from \cref{lem:Xi} and consider the following diagram in which $\Z[\pi]$-coefficients are understood:
\begin{equation}
\label{eq:DiagramForNok}
\begin{tikzcd}[column sep=normal]   \Hom(H_2(X_0),H_2(X_0))\ar[d,"\Phi_0"]\ar[r,"F_*{,}\cong"]&\Hom(H_2(X_0),H_2(X_1))\ar[d,"\Phi_0"]&\Hom(H_2(X_1),H_2(X_1))\ar[d,"\Phi_1"]\ar[l,"F^*{,}\cong"']\\       
H^3(M(\nu_0),X_0;H_2(X_0))\ar[d,"{(\id,i_0)^*}"]\ar[r,"F_*{,}\cong"]&H^3(M(\nu_0),X_0;H_2(X_1))\ar[d,"{(\id,i_0)^*}"]&H^3(M(\nu_1),X_1;H_2(X_1))\ar[d,"{(\id,i_1)^*}"]\\     
   H^3(M(\nu_0|_{Y_0}),Y_0;H_2(X_0))\ar[r,"F_*{,}\cong"]&H^3(M(\nu_0|_{Y_0}),Y_0,;H_2(X_1))&H^3(M(\nu_1|_{Y_1}),Y_1,;H_2(X_1)).\ar[l,"{(\id,h)^*{,}\cong}"']
    \end{tikzcd}
    \end{equation}
Here and in the remainder of this proof, we write $(\id,i_0)^*,(\id,i_1)^*$ and $(\id,h^*)$ to emphasise the presence of the mapping cylinders; see e.g.~\cite[Lemma 5.1]{ConwayKasprowskiKinvariant}.
The left-hand squares in the above diagram commute: each horizontal map is obtained by applying $F$ to the coefficients.
\begin{claim} 
The right hand square in the diagram~\eqref{eq:DiagramForNok} commutes:
$$(\id,i_0)^*\circ \Phi_0\circ F^*=(\id,h)^*\circ (\id,i_1)^*\circ \Phi_1.$$
\end{claim}
\begin{proof}
    Since $\cd(\pi)\leq 2$, $\ev\colon H^2(X_1;H_2(X_1;\Z[\pi]))\to \Hom_{\Z[\pi]}(H_2(X_1;\Z[\pi]),H_2(X_1;\Z[\pi]))$ is surjective. 
    Hence it suffices to show that 
    \[(\id,i_0)^*\circ \Phi_0\circ F^*\circ \ev= (\id,h)^*\circ (\id,i_1)^*\circ \Phi_1\circ \ev.\]
    Using that $F^* \circ \ev \circ G^!=\ev$, the equality $\Phi_i\circ \ev=\delta_i$ (from~\cref{lem:Xi}), and the naturality of the connecting homomorphism in the long exact sequence of a pair, we have
    \begin{align*}
        (\id,i_0)^*\circ \Phi_0\circ F^*\circ \ev&=(\id,i_0)^*\circ \Phi_0\circ \ev\circ (G^!)^{-1}\\
        &=(\id,i_0)^*\circ \delta_0\circ (G^!)^{-1}\\
        &=\delta_0\circ i_0^*\circ (G^!)^{-1}.
    \end{align*}
Rewrite~$i_0^*=h^* \circ i_1^* \circ G^!$ as~$i_0^*\circ (G^!)^{-1}=h^*\circ i_1^*\colon H^2(X_1;H_2(X_1;\Z[\pi]))\to H^2(Y_0;H_2(X_1;\Z[\pi]))$. 
    Using this,  together with the equality~$\Phi_i\circ \ev=\delta_i$ (again \cref{lem:Xi}), and the naturality of the connecting homomorphism in the long exact sequence of a pair, we have
    \begin{align*}
        \delta_0\circ i_0^*\circ (G^!)^{-1}&=\delta_0\circ h^*\circ i_1^*\\
        &=(\id,h)^*\circ \delta_1\circ i_1^*\\
        &=(\id,h)^*\circ (\id,i_1)^*\circ \delta_1\\
        &=(\id,h)^*\circ (\id,i_1)^*\circ \Phi_1\circ \ev.
    \end{align*}
    This completes the proof of the claim.
\end{proof}
We can now conclude the proof of the proposition.
Consider~$\id_{H_2(X_0;\Z[\pi])}$ as an element of the top left module of the diagram in~\eqref{eq:DiagramForNok}: it maps to~$\id_{H_2(X_1;\Z[\pi])}$ on the top right and then down to~$k_{X_1,Y_1}^{\nu_1}$ by the combination of Lemma~\ref{lem:Xi} and Lemma~\ref{lem:kXXtokXY}.
Since Lemmas~\ref{lem:Xi} and~\ref{lem:kXXtokXY} again ensure that~$\id_{H_2(X_0;\Z[\pi])}$ maps to~$k_{X_0,Y_0}^{\nu_0}$ in the bottom left, the commutativity of the entire diagram implies that
$F_*(k_{X_0,Y_0}^{\nu_0})=(\id,h)^*(k_{X_1,Y_1}^{\nu_1})$,
as required.
\end{proof}

Next, we turn to the case where~$H_4(\pi)=0$.

\begin{proposition}
\label{prop:NokinvariantIntrocd3}
Let~$(X_0,Y_0)$ and~$(X_1,Y_1)$ be~$4$-dimensional Poincar\'e pairs with fundamental group~$\pi_1(X_i) \cong \pi$ and~$\iota_j \colon \pi_1(Y_i) \to \pi_1(X_i)$ surjective  for $i=0,1$.
Fix a homotopy equivalence~$h \colon Y_0 \to Y_1$ and assume there is a group isomorphism~$u \colon \pi_1(X_0) \to \pi_1(X_1)$ that satisfies~$u \circ \iota_0=\iota_1 \circ h_*$.
Finally, fix a $\Z[\pi]$-isomorphism~$F \colon \pi_2(X_0) \cong \pi_2(X_1)$.

If 
$H_4(\pi)=0$ and~$H_1(Y_0;\Z[\pi])^*=0$, then~$F_*(k_{X_0,Y_0})=h^*(k_{X_1,Y_1}).$
\end{proposition}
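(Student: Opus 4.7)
My plan is to adapt the diagram chase from the proof of \cref{prop:NokinvariantIntrocd2}, replacing the role of $\cd(\pi)\leq 2$ by the combination of $H_4(\pi)=0$ and $H_1(Y_0;\Z[\pi])^*=0$. As a first reduction, \cref{lem:kXXtokXY} gives
\[
F_*(k_{X_0,Y_0}^{\nu_0}) = (\id,i_0)^*F_*(k_{X_0,X_0}^{\nu_0})
\quad\text{and}\quad
(\id,h)^*(k_{X_1,Y_1}^{\nu_1}) = (\id,h)^*(\id,i_1)^*(k_{X_1,X_1}^{\nu_1}),
\]
so that the claim reduces to showing the analogue of the right-hand square of diagram~\eqref{eq:DiagramForNok}, namely
\[(\id,i_0)^*\circ\Phi_0\circ F^* = (\id,h)^*\circ(\id,i_1)^*\circ\Phi_1,\]
commutes; applying this to $\id\in\Hom_{\Z[\pi]}(H_2(X_1;\Z[\pi]),H_2(X_1;\Z[\pi]))$ then yields the proposition.

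Next, I would verify the commutativity on the image of the evaluation map $\ev$, using $\Phi\circ\ev=\delta$ from \cref{lem:Xi} together with naturality of the connecting homomorphism, as in the proof of \cref{prop:NokinvariantIntrocd2}. The cohomological lift $G^!\colon H^2(X_0;\Z[\pi])\to H^2(X_1;\Z[\pi])$ used there to transport $F$ across Poincar\'e duality is now constructed from the hypothesis $H_1(Y_0;\Z[\pi])^*=0$: dualizing the five-term exact sequence of the pair $(X_0,Y_0)$ against $\Z[\pi]$ shows that the contribution $\Hom_{\Z[\pi]}(H_1(Y_0;\Z[\pi]),\Z[\pi])$ vanishes, so that $F$ lifts uniquely across Poincar\'e duality to a $G^!$ satisfying $F^*\circ\ev\circ G^!=\ev$ and $i_0^*=h^*\circ i_1^*\circ G^!$, which are exactly the algebraic relations used in the diagram chase for \cref{prop:NokinvariantIntrocd2}.

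Finally, the hypothesis $H_4(\pi)=0$ is used to ensure that verifying commutativity on the image of $\ev$ is enough. Using the finite free partial resolution from \cref{lem:pi2Projective-new} and dimension-shifting through the free modules $C_i$ (which satisfy $H^n(\pi;C_i)=0$ for $n\geq 1$), the cokernel of $\ev\colon H^2(X_1;H_2(X_1;\Z[\pi]))\to\Hom_{\Z[\pi]}(H_2(X_1;\Z[\pi]),H_2(X_1;\Z[\pi]))$ is identified with a degree-shifted piece of $H^*(\pi;\Z)$; the vanishing $H_4(\pi)=0$ then forces this cokernel to vanish in the degree relevant to the diagram chase. The main obstacle is the careful construction of $G^!$ under only the $\Hom$-vanishing hypothesis on $H_1(Y_0;\Z[\pi])$, together with the verification that the Ext-vanishing this provides interacts correctly with the dimension-shifting argument driven by $H_4(\pi)=0$, so that one genuinely only needs to verify the central square after precomposing with $\ev$.
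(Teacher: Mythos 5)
Your plan imports the diagram chase of \cref{prop:NokinvariantIntrocd2}, but that chase runs on input you do not have here, and the two mechanisms you propose as substitutes do not work. In the $\cd(\pi)\leq 2$ proof, the map $G^!$ and the identities $F^*\circ \ev\circ G^!=\ev$ and $i_0^*=h^*\circ i_1^*\circ G^!$ are not extracted from any vanishing hypothesis: they are a restatement of the assumed compatible triple $(F,G,h)$, via $G^!=\PD_{X_1}^{-1}\circ G\circ \PD_{X_0}$. In \cref{prop:NokinvariantIntrocd3} the isomorphism $F\colon \pi_2(X_0)\to\pi_2(X_1)$ is completely arbitrary --- no compatibility with $h$, with the intersection pairings, or with Poincar\'e duality is assumed --- and the conclusion must hold for \emph{every} such $F$ (in particular, with $X_0=X_1$ and $h=\id$, for every automorphism of $\pi_2(X_0)$). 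Dualising the five-term sequence of $(X_0,Y_0)$ using $H_1(Y_0;\Z[\pi])^*=0$ only gives injectivity of $H_2(X_0,Y_0;\Z[\pi])^*\to H_2(X_0;\Z[\pi])^*$; it does not produce a map $G^!$ on $H^2$ satisfying the boundary relation $i_0^*=h^*\circ i_1^*\circ G^!$, and indeed such a $G^!$ cannot exist for all $F$ simultaneously, because the two relations together encode exactly the isometry-type compatibility of $F$ with $h$ that a compatible pair provides. Transporting $F$ across $\ev$ would moreover require controlling $\ker(\ev)\cong H^2(\pi;\Z[\pi])$ and the cokernel of $\ev$, which is governed by $H^3(\pi;-)$; neither is killed by the present hypotheses (e.g.\ $\pi=\Z^2$ or a $\PD_3$ group satisfies $H_4(\pi)=0$ yet has $H^2(\pi;\Z[\pi])\neq 0$, resp.\ $H^3(\pi;\Z[\pi])\neq 0$). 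Your surjectivity step is also based on the claim $H^n(\pi;C_i)=0$ for $n\geq 1$ with $C_i$ free, which is false for infinite groups (already $H^1(\Z;\Z[\Z])\cong\Z$): the vanishing used in the paper's dimension shifting is the homological one, $H_i(\pi;\Z[\pi])=0$ for $i>0$, so $H_4(\pi)=0$ says nothing about $\coker(\ev)$ in cohomology.

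The missing ideas are the ones the paper's proof actually uses. First, $H_4(\pi)=0$ enters through obstruction theory and Poincar\'e duality: $H^3(X_0,Y_0;\pi_2(B))\cong H_1(X_0;\pi_2(B))\cong H_1(\pi;\pi_2(B))\cong H_4(\pi)=0$ (homological dimension shifting via \cref{lem:pi2Projective-new}), so there is a map $c\colon X_0\to B$ with $c|_{Y_0}=c_1|_{Y_1}\circ h$, and \cref{thm:RealiseAlgebraic3Type} gives the $k$-invariant identity for the \emph{specific} isomorphism $(c_1)_*^{-1}c_*$. Second, $H_1(Y_i;\Z[\pi])^*=0$ is used to obtain the short exact coefficient sequence $0\to H_2(X_i;\Z[\pi])\to H_2(X_i,Y_i;\Z[\pi])\to H_1(Y_i;\Z[\pi])\to 0$; since $(j_0)_*(k_{X_0,Y_0}^{\nu_0})=0$, the $k$-invariant lifts along the associated Bockstein to $H^2(M(\nu_0|_{Y_0}),Y_0;H_1(Y_0;\Z[\pi]))$, and there the difference $F_*-(c_1)_*^{-1}c_*$ annihilates it, giving the statement for arbitrary $F$. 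Your proposal contains neither the construction of a reference map $c$ nor the Bockstein lifting of the $k$-invariant, and without one of these there is no source for an identity between $F_*(k_{X_0,Y_0})$ and $h^*(k_{X_1,Y_1})$ that is valid for every $F$; a purely formal transplant of the compatible-triple chase cannot close this gap.
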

\begin{proof}
According to Convention~\ref{conv:kShorterNotation}, given~$\nu_0 \colon X_0 \to B\pi$ and~$\nu_1 \colon X_1 \to B\pi$, the proposition requires we prove~$F_*(k_{X_0,Y_0}^{\nu_0})=(\id,h)^*(k_{X_1,Y_1}^{\nu_1})$.
Using $c_1 \colon X_1 \to B \supset Y_1$ for the~$2$-type,  we do so by constructing a~$c \colon  X_0\to B$ with $c=c_1 \circ h$ and~$F_*(k_{X_0,Y_0}^{\nu_0})=(c_1)_*^{-1}c_*(k_{X_0,Y_0}^{\nu_0})=(\id,h)^*(k_{X_1,Y_1}^{\nu_1})$.

We begin by constructing the map $c$.
The dimension shifting argument used during the proof of Proposition~\ref{prop:H4(B)} yields $H_n(\pi;\pi_2(B)) \cong H_{n+3}(\pi)$.
Poincar\'e duality and the UCSS then give
$$H^3(X_0,Y_0;\pi_2(B))\cong H_1(X_0;\pi_2(B))=\operatorname{Tor}_1^{\Z[\pi]}(\Z,\pi_2(B))
=H_1(\pi;\pi_2(B)) \cong H_4(\pi)=0.$$
Obstruction theory then ensures that there is a~$c'\colon X_0 \to B$ with~$c' \simeq c_1 \circ h$ and such that~$c'$ is a~$\pi_1$-isomorphism.
Apply the homotopy extension property to~$c' \simeq  c_1 \circ h$ in order to obtain a map~$c \simeq c' \colon X_0 \to B$ with~$c = c_1 \circ h$.
Applying Theorem~\ref{thm:RealiseAlgebraic3Type} with $F=(c_1)_*^{-1}c_*$ gives
$$(c_1)_*^{-1}c_*(k_{X_0,Y_0}^{\nu_0})=(\id,h)^*(k_{X_1,Y_1}^{\nu_1}).$$
It remains to prove that $F_*(k_{X_0,Y_0}^{\nu_0})=(c_1)_*^{-1}c_*(k_{X_0,Y_0})$.
For $i=0,1$, Poincar\'e duality and the UCSS give isomorphisms
$$H_3(X_i,Y_i;\Z[\pi])\cong H^1(X_i;\Z[\pi])\cong H^1(\pi;\Z[\pi]).$$
The same reasoning also yields~$H_2(Y_i;\Z[\pi]) \cong H^1(Y_i;\Z[\pi]) \cong H^1(\pi;\Z[\pi])$, where the second isomorphism uses~$H_1(Y_i;\Z[\pi])^*=0$.
The naturality of the UCSS implies that the connecting homomorphism~$H_3(X_i,Y_i;\Z[\pi]) \to H_2(Y_i;\Z[\pi])$ is an isomorphism.
We therefore obtain the short exact sequence
$$0\to H_2(X_i;\Z[\pi])\xrightarrow{(j_i)_*} H_2(X_i,Y_i;\Z[\pi])\to H_1(Y_i;\Z[\pi])\to 0.$$
Consider the long exact sequences of coefficients in cohomology for~$M(\nu_0|_{Y_0})$.
 Since both~$F$ and~$(c_1)_*^{-1}c_*$ induce~$h_*$ on~$H_1(Y_0;\Z[\pi])$, we obtain the following commutative diagram:
$$
\xymatrix{
H^2(M(\nu_0|_{Y_0}),Y_0;H_1(Y_0;\Z[\pi])) \ar[r]\ar[d]^-{0}
&H^3(M(\nu_0|_{Y_0}),Y_0;H_2(X_0;\Z[\pi])) \ar[d]^{F_*-(c_1)_*^{-1}c_*} \ar[r]^-{(j_0)_*}
& \ldots\\
H^2(M(\nu_0|_{Y_0}),Y_0;H_1(Y_1;\Z[\pi])) \ar[r]
&H^3(M(\nu_0|_{Y_0}),Y_0;H_2(X_1;\Z[\pi])).
}
$$
Since~$(j_0)_*(k_{X_0,Y_0}^{\nu_0})=0$ (see~\cite[Lemma 6.1]{ConwayKasprowskiKinvariant}),~$k_{X_0,Y_0}^{\nu_0}$ pulls back to~$H^2(M(\nu_0|_{Y_0}),Y_0;H_1(Y_0;\Z[\pi]))$. 
A diagram chase now yields~$F_*(k_{X_0,Y_0}^{\nu_0})=(c_1)_*^{-1}c_*(k_{X_0,Y_0}^{\nu_0})=(\id,h)^*(k_{X_1,Y_1}^{\nu_1}),$ as needed.
\end{proof}

\section{Twisted Künneth calculations in~$Y \times S^1$}
\label{sec:KunnethYS1}

We begin our analysis of the secondary obstruction.
Since the subsequent sections involve heavy use of twisted cup and cross products,  Section~\ref{sub:Twisted} introduces some background on the topic, whereas Section~\ref{sub:YxS1} then focuses on calculations in~$Y \times S^1$.

\subsection{Twisted homology and cohomology}
\label{sub:Twisted}

We recall the definition of twisted (co)homology as well as twisted cup, cap and cross products, mostly following the notation from~\cite{ConwayNagel}.
Further details can be found in the appendix.
Let $S$ be a ring with involution.
In what follows, given a left~$S$-module~$M$, we write~$\overline{V}$ for the right~$S$-module whose underlying group agrees with that of~$V$ but with the~$S$-module structure~$v\cdot s=\overline{s} v$ for~$v \in V$ and~$s \in S$.

\begin{definition}
\label{def:TwistedHomology}
Given a pair~$(X,Y)$, a ring~$R$ and a~$(R,\Z[\pi_1(X)])$-bimodule~$M$, \emph{the twisted chain complex} and \emph{twisted cochain complex} of~$(X,Y)$ with coefficients in~$M$ are respectively defined as the following chain complexes of left~$R$-modules: 
\begin{align*}
C_*(X,Y;M)&:=M \otimes_{\Z[\pi_1(X)]} C_*(\widetilde{X},\widetilde{Y}), \\
C^*(X,Y;M)&:=\Hom_{\operatorname{right-}\Z[\pi_1(X)]}(\overline{C_*(\widetilde{X},\widetilde{Y}}),M).
\end{align*}
The \emph{twisted (co)homology of $(X,Y)$} with coefficients in~$M$ is the homology of these chain complexes:
\begin{align*}
H_*(X,Y;M)&:=H_*(C_*(X,Y;M)), \\
H^*(X,Y;M)&:=H_*(C^*(X,Y;M)). 
\end{align*}
These are left~$R$-modules 
with actions given by~$r \cdot (m \otimes \sigma)=rm \otimes \sigma$ and~$(r \cdot f)(\sigma)=rf(\sigma)$ for~$f \in C^*(X,Y;M),\sigma \in C_*(X,Y;M)$ and~$m \in M$.
Note also that~$f(\gamma \sigma)=f(\sigma \cdot \gamma^{-1})=f(\sigma)\gamma^{-1}$ for every~$\gamma \in \pi_1(X)$.
\end{definition}

\begin{construction}
\label{cons:Box}
Consider a 
ring~$R$ with involution and two~$(R, \Z[\pi])$--modules~$M$ and~$N$. 
We equip the abelian group~$M \otimes_\Z N$ with the structure of an~$(R, \Z[\pi] \times R)$--bimodule, denoted~$M\boxtimes N$, as follows.
The left and right actions of~$R$ are~via 
\[ r \cdot (m \otimes n) \cdot s = ( r \cdot m ) \otimes ( \overline{s} \cdot n )\]
for~$r ,s \in R$ and~$m \in M$ and~$n \in N$.
The right~$\pi$--action is the diagonal action
\[ (m\otimes n) \cdot g = (m \cdot g) \otimes (n \cdot g) \]
for~$g \in \pi$ and~$m \in M$ and~$n \in N$.
As in~\cite{ConwayNagel}, it is to stress the diagonal action that we use the symbol~$\boxtimes$ instead of the more common~$\otimes$.
Homology and cohomology with coefficients in $M \btimes N$ inherits the structure of an $(R,R)$-bimodule.
\end{construction}

This construction is relevant to twisted cup, cap and cross products.
These will be reviewed in detail in the appendix, but since they will be used heavily in the next section, we record some relevant facts.
For a space $X$ and~$(R,\Z[\pi_1(X)])$-bimodules~$M,N,$ the \emph{twisted cup product} and \emph{twisted cap product} are~$(R,R)$-linear pairings
\begin{align*}
\cup \colon H^i(X;M) \otimes_\Z \overline{H^j(X;N)} \to H^{i+j}(X;M \boxtimes N), \\
\cap \colon H^i(X;M) \otimes_\Z \overline{H_k(X;N)} \to H_{k-i}(X;M \boxtimes N).
\end{align*}
Similarly for spaces~$X_1,X_2$, rings $R_1,R_2,R$ and a~$(R_1,\Z[\pi_1(X_1)] \times R)$-bimodule $M$ as well as a~$(R,\Z[\pi_1(X_2)] \times R_2)$-bimodule~$N$, there are twisted cross products
\begin{align*}
& \times \colon H_k(X_1;M) \otimes_\Z \overline{H_l(X_2;N)} \to H_{k+l}(X_1 \times X_2;M \boxtimes N),  \\
&\times \colon H^k(X_1;M) \otimes_\Z \overline{H^l(X_2;N)} \to H^{k+l}(X_1 \times X_2 ;M \boxtimes N).
\end{align*}
Here we identify~$\Z[\pi_1(X_1) \times \pi_1(X_2)]$ with~$\Z[\pi_1(X_1)] \otimes_\Z \Z[\pi_1(X_2)]$ and endow the~$(R_1,R_2)$-bimodule~$M \otimes_\Z N$ with the right~$\Z[\pi_1(X_1) \times \pi_1(X_2)]$-diagonal action given by~$(m_1,m_2) \cdot (\gamma_1 \otimes \gamma_2)=(m_1\gamma_1,m_2\gamma_2)$ so that it becomes a~$(R_1,\Z[\pi_1(X_1) \times \pi_1(X_2)] \times R_2)$-bimodule.

The definition and properties of these pairings will be discussed in greater detail in the appendix.
Presently,  we note a helpful characterisation of the $0$-th twisted homology with~$M \boxtimes N$-coefficients.

\begin{lemma}\label{lem:Ccoinvariants}
Let~$X$ be a 
space, let~$\varphi \colon \pi_1(X) \to \pi$ be a surjection, let~$\widehat{X} \to X$ be the~$\pi$-cover corresponding to~$\ker(\varphi)$,  and let~$M$ and~$N$ be~$(R,\Z[\pi])$--bimodules. 
The map~$(M\boxtimes N)\otimes_{\Z[\pi]}C_0(\widehat{X})\to M\otimes_{\Z[\pi]}\overline{N}$ given by~$(m\boxtimes n) \otimes [\operatorname{pt}] \mapsto m\otimes n$ induces an isomorphism of~$(R, R)$--modules
\[ \varpi_0 \colon H_0\big(X;M\boxtimes N \big) \to M \otimes_{\Z[\pi]} \overline{N},\]
where~$\overline{N}$ denotes~$N$ turned into a~$(\Z[\pi],R)$--bimodule using the involutions on~$R$ and~$\Z[\pi]$.

Additionally, the map~$m \otimes n \mapsto (m \boxtimes n) \otimes 1$ induces an isomorphism of~$(R, R)$--modules
$$M \otimes_{\Z[\pi]} \overline{N} \to (M \boxtimes N)  \otimes_{\Z[\pi]} \Z .$$
\end{lemma}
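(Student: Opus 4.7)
The plan is to establish the two isomorphisms separately, treating the second (purely algebraic) one first and then reducing the first to it via a standard change-of-groups argument.

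For the algebraic isomorphism $M \otimes_{\Z[\pi]} \overline{N} \cong (M \boxtimes N) \otimes_{\Z[\pi]} \Z$, I would unpack both sides as explicit quotients of $M \otimes_\Z N$. On the left, the defining tensor relation $m\cdot g \otimes n = m \otimes g\cdot n$ in $\overline{N}$ unfolds, via the involution $\overline{g}=g^{-1}$ on $\Z[\pi]$, to $mg \otimes n = m \otimes n g^{-1}$, and substituting $n \mapsto ng$ rewrites this as $mg \otimes ng = m \otimes n$. On the right, $(M \boxtimes N) \otimes_{\Z[\pi]} \Z$ is by definition the coinvariants of the diagonal right $\pi$-action, cut out by the same relation. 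Hence $m \otimes n \mapsto (m \boxtimes n) \otimes 1$ is well-defined with obvious two-sided inverse, and by construction commutes with the outside left- and right-$R$ actions that survive on both sides.

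For the first isomorphism, I would exploit that the right $\pi_1(X)$-action on $M \boxtimes N$ factors through $\pi$. Using the identification $C_*(\widehat{X}) \cong \Z[\pi] \otimes_{\Z[\pi_1(X)]} C_*(\widetilde{X})$ of left $\Z[\pi_1(X)]$-modules (equivalently, $\widehat X$ arises by quotienting out the deck action of $\ker\varphi$), the standard change-of-rings formula gives
$$C_*(X; M\boxtimes N) \;=\; (M\boxtimes N) \otimes_{\Z[\pi_1(X)]} C_*(\widetilde{X}) \;\cong\; (M\boxtimes N) \otimes_{\Z[\pi]} C_*(\widehat{X}).$$
Since $\varphi$ is surjective, $\widehat X$ is path-connected, so $H_0(\widehat X) \cong \Z$ carries the trivial $\pi$-action, and right-exactness of $- \otimes_{\Z[\pi]} -$ yields
$$H_0(X; M\boxtimes N) \;\cong\; (M\boxtimes N) \otimes_{\Z[\pi]} H_0(\widehat{X}) \;\cong\; (M\boxtimes N) \otimes_{\Z[\pi]} \Z.$$
Tracing the chosen basepoint through these identifications shows that the class of $(m\boxtimes n) \otimes [\pt]$ goes to $(m\boxtimes n)\otimes 1$, which then maps to $m\otimes n$ under the first part, giving $\varpi_0$ with the stated formula.

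The only real thing to watch is the bookkeeping of left versus right module structures, so that both maps are confirmed to be $(R,R)$-bimodule homomorphisms rather than merely abelian group maps; no deeper obstacle is present once the module conventions from Construction~\ref{cons:Box} and Definition~\ref{def:TwistedHomology} are made fully explicit.
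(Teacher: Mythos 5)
Your proof is correct and follows essentially the same route as the paper: the paper identifies $H_0(X;M\boxtimes N)$ with the coinvariants $(M\boxtimes N)/\Z\langle x-xg\rangle$ by citing the twisted-$H_0$ result from Friedl's lecture notes and then observes that this quotient is $M\otimes_{\Z[\pi]}\overline{N}$ by unwinding the module structures, which is exactly your computation. The only difference is that you supply the short covering-space/right-exactness argument for the coinvariants identification instead of citing it, and you order the two assertions the other way around; neither changes the substance.
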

\begin{proof}
The first assertion follows because,  as a left~$R$-bimodule,~$H_0\big(X;M \boxtimes N \big)$ is isomorphic (via the augmentation) to the~$R$-bimodule of coinvariants
$ M\boxtimes N \Big/ \Z \langle 1 -g \mid g \in \pi \rangle$~\cite[Twisted~$H_0$-Proposition]{FriedlLectureNotes}.
That this is an isomorphism of~$(R,R)$-bimodules now follows from the definitions of the actions.
The second assertion follows from a direct verification.
\end{proof}

\subsection{Calculations for $Y \times S^1$}
\label{sub:YxS1}

This section introduces a coefficient system on $Y \times S^1$ that will be used frequently in the subsequent sections.
We then record a calculation involving this coefficient system and an application of the K\"unneth theorem.

\begin{construction}[A coefficient system on~$Y \times S^1$]
The composition~$\pi_1(Y \times S^1) \xrightarrow{p_Y} \pi_1(Y) \to  \pi$ of the projection and inclusion induced maps endows the group ring~$\Z[\pi]$ with the structure of a right~$\Z[\pi_1(Y\times S^1)]$-module, and thus of a~$(\Z[\pi],\Z[\pi_1(Y\times S^1)])$-bimodule.

In order to apply the Künneth theorem; we describe an equivalent description of this module.
Endow~$S^1$ with the trivial coefficient system~$\Z$.
Consider the projection~$p_{S^1} \colon Y \times S^1 \to S^1$.
Recall that~$p_Y^*(\Z[\pi]) \boxtimes_\Z p_{S^1}^*(\Z)$ denotes~$p_Y^*(\Z[\pi]) \otimes_\Z p_{S^1}^*(\Z)$ with the diagonal~$\Z[\pi_1(Y\times S^1)]$-action.
We assert that multiplication gives rise to the following isomorphism of~$(\Z[\pi],\Z[\pi_1(Y\times S^1)])$-bimodules:
\begin{align*}
\mu \colon p_Y^*(\Z[\pi]) \boxtimes_\Z p_{S^1}^*(\Z) &\to p_Y^*(\Z[\pi]) \\
a \otimes b &\mapsto ab.
\end{align*}
The fact that this is an isomorphism of left~$\Z[\pi]$-modules is clear.
For~$\gamma \in \Z[\pi_1(Y\times S^1)], a \in \Z[\pi]$ and~$b\in \Z$,  the following equalities establish the right $\Z[\pi_1(Y\times S^1)]$-linearity:
$$\mu((a \otimes b)\gamma)
=\mu(a (p_Y)_*(\gamma) \otimes b (p_{S^1})_*(\gamma))
=\mu(a (p_Y)_*(\gamma) \otimes b)
=a (p_Y)_*(\gamma) b
=\mu(a \otimes b)\gamma .
$$
\end{construction}

\begin{proposition}
\label{prop:KunnethYS1}
Using the notation above,  the cross product induces isomorphissm
\begin{align*}
\Kun \colon  H_2(Y;\Z[\pi]) \oplus H_1(Y;\Z[\pi]) &\to H_2(Y \times S^1;\Z[\pi] \boxtimes \Z) \cong H_2(Y\times S^1;\Z[\pi]), \\
(a, b) &\mapsto  i_Y(a)+(b \times 1_{H_1(S^1)}). \\
\Kun \colon
H^2(Y;\Z[\pi]) \oplus H^1(Y;\Z[\pi]) & \to H^2(Y \times S^1;\Z[\pi] \btimes_\Z \Z) \cong  H^2(Y \times S^1;\Z[\pi]) \\
(a, b) &\mapsto  (a  \times 1_{H^0(S^1)})+ (b \times 1_{H^1(S^1)})=p_Y^*(a)+(p_Y^*(b) \cup p_{S^1}^*(1_{H^1(S^1)})).
\end{align*}
\end{proposition}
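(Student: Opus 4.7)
The strategy is to apply a twisted Künneth formula to the product $Y \times S^1$, exploiting the fact that on the $S^1$ factor the coefficient system is the trivial module $\Z$ (which is free as an abelian group with free integral (co)homology). Throughout, the identification $\mu\colon p_Y^*(\Z[\pi]) \boxtimes_\Z p_{S^1}^*(\Z) \xrightarrow{\cong} p_Y^*(\Z[\pi])$ constructed just above will be used to transport our calculations between coefficients in $\Z[\pi]\boxtimes \Z$ and coefficients in $\Z[\pi]$.

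First, recall that $H_0(S^1;\Z) \cong H_1(S^1;\Z) \cong \Z$ (and similarly in cohomology), with generators $1_{H_0(S^1)}$ and $1_{H_1(S^1)}$ (resp.\ $1_{H^0(S^1)}, 1_{H^1(S^1)}$). Since these groups are free abelian, the algebraic Künneth formula for the tensor product chain complex $C_*(\widetilde Y) \otimes_\Z C_*(\widetilde{S^1})$ (with the diagonal $\pi_1(Y\times S^1)$-action) has vanishing $\operatorname{Tor}$-terms, and the cross product
\[
\times \colon H_i(Y;\Z[\pi]) \otimes_\Z H_j(S^1;\Z) \longrightarrow H_{i+j}(Y\times S^1;\Z[\pi]\boxtimes \Z)
\]
assembles into an isomorphism of left $\Z[\pi]$-modules in each total degree. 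In total degree $2$, this gives precisely
\[
H_2(Y;\Z[\pi]) \oplus H_1(Y;\Z[\pi]) \xrightarrow{\cong} H_2(Y\times S^1;\Z[\pi]\boxtimes \Z),
\]
where the first summand corresponds to $a\mapsto a\times 1_{H_0(S^1)}$ and the second to $b\mapsto b\times 1_{H_1(S^1)}$. Under the convention that $i_Y = p_Y^*$-type inclusion and the general identity $a\times 1_{H_0(S^1)} = i_Y(a)$ (i.e.\ $a\times 1$ is the image of $a$ under the inclusion induced by $Y \hookrightarrow Y\times \{\operatorname{pt}\} \subset Y\times S^1$), the formula for $\Kun$ in the statement is recovered. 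Post-composing with the isomorphism induced by $\mu$ yields the stated isomorphism to $H_2(Y\times S^1;\Z[\pi])$.

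The cohomological version is entirely analogous: since $H^*(S^1;\Z)$ is free abelian, the twisted Künneth formula in cohomology (which requires one of the factors to have finitely generated free cohomology in each degree, satisfied here by $S^1$) gives that the cross product induces an isomorphism
\[
\bigoplus_{i+j=2} H^i(Y;\Z[\pi]) \otimes_\Z H^j(S^1;\Z) \xrightarrow{\cong} H^2(Y\times S^1;\Z[\pi]\boxtimes\Z),
\]
which matches the formula for $\Kun$ using the standard identification of cross products in terms of pullbacks and cup products, $\alpha\times\beta = p_Y^*(\alpha)\cup p_{S^1}^*(\beta)$.

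The only genuine point to check is that the twisted Künneth theorem really does apply to our bimodule $\Z[\pi]\boxtimes \Z$. This is where the bimodule structure from Construction~\ref{cons:Box} is essential: because the $S^1$-coefficient system is the constant $\Z$, the chain-level cross product is genuinely a map of $\Z[\pi_1(Y\times S^1)]$-modules with diagonal action, and the proof reduces to the standard algebraic Künneth theorem applied to $C_*(\widetilde Y;\Z[\pi]) \otimes_\Z C_*(\widetilde{S^1};\Z)$ after restricting scalars along the projections $p_Y, p_{S^1}$. Since the appendix is said to collect facts about twisted homology and cup/cross products, I would cite or refer to it for the precise statement of the relevant twisted Künneth formula, rather than reproving it here; the remaining verification is the routine check that the explicit formula in the statement matches the abstract cross-product isomorphism under the identification $\mu$.
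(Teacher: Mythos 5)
Your proposal is correct and follows essentially the same route as the paper: both invoke the twisted Künneth theorem from the appendix (Proposition~\ref{prop:KunnethAppendix}) applied to $Y\times S^1$ with coefficients $\Z[\pi]$ and $\Z$, use that the (co)homology of $S^1$ is free abelian so the $\Tor$ and $\Ext$ correction terms vanish, identify $\Z[\pi]\otimes_\Z\Z\cong\Z[\pi]$, and read off the stated formulas for the degree-two piece. The only cosmetic difference is that you spell out the chain-level reduction and the cohomological hypotheses a bit more explicitly, which the paper leaves to the appendix.
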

\begin{proof}
Apply the K\"unneth theorem from Proposition~\ref{prop:KunnethAppendix} with~$M=R_1=\Z[\pi]$ and~$N=R_2=\Z$.
Note that both $M_1$ and $M_2$ are free abelian groups, so the theorem does indeed apply. 
We identify the~$(\Z[\pi],\Z)$-bimodule~$M_1 \otimes_\Z M_2=\Z[\pi] \otimes_\Z \Z$ with~$\Z[\pi]$.
Since the abelian group~$H_i(S^1)$ is either free or zero,  the~$\operatorname{Tor}_1^\Z$ terms and~$\operatorname{Ext}_\Z^1$ terms (in cohomology) vanish.
For~$i=1,2$, we then use the identifications~$H_i(Y;\Z[\pi]) \otimes_\Z H_{2-i}(S^1) \cong H_i(Y;\Z[\pi])$ and similarly in cohomology.
\end{proof}

\section{The secondary obstruction and even forms}
\label{sec:ProofGoal}

The goal of this section is to analyse the secondary obstruction and to prove Theorem~\ref{thm:VanishEvenIntro}.
Continuing with Notation~\ref{notation:SectionPrimarySecondary}, the main technical result of this section (namely Theorem~\ref{thm:GoalActionpdf}) shows that for any sesquilinear form~$q$ on~$H^2(Y_1;\Z[\pi])$, there exists a homotopy~$\mathbf{H}=(H_t)_{t \in [0,1]}$ from~$c_1|_{Y_1}$ to itself, such that the homotopy~$\Xi(\mathbf{H}) \colon B \to B$ obtained by applying homotopy extension belongs to~$\mathcal{G}$ and satisfies~$\Xi(\mathbf{H}) \cdot b(c_0,c_1)=b(c_0,c_1) + q+q^*$.
Theorem~\ref{thm:VanishEvenIntro} will follow fairly promptly: roughly speaking, when~$b(c_0,c_1)$ is even, it will then be possible to modify~$c_0$,  resulting in a new~$3$-connected map for which the resulting secondary invariant vanishes. 

\medbreak

In order to formulate the aforementioned technical theorem, we first introduce some notation and describe the ``homotopy extension map" $\Xi$ more precisely.

\begin{notation}
Set~$Y:=Y_1$
and assume that~$c_1|_Y \colon Y \to B$ is the inclusion.
We also fix once and for all basepoints~$y_0 \in Y$ and~$b_0:=c_1(y_0)\in B$.
We use~$(y_0,1) \in Y \times S^1$ as a basepoint for~$Y \times S^1$.
In what follows, we consider homotopies~$\mathbf{H}=(H_t \colon Y  \to B)_{t \in [0,1]}$ from~$c_1|_Y$ to~$c_1|_Y$.
Occasionally we will think of~$\mathbf{H}$ as a map~$ \mathbf{H} \colon (Y \times [0,1],Y \times \{0,1\}) \to (B,Y).$
More frequently, since~$\mathbf{H}$ is a map~$\mathbf{H} \colon Y \times [0,1] \to B$ with~$H_0=c_1|_Y=H_1$, we will think of~$\mathbf{H}$ as a map~$\mathbf{H} \colon Y \times S^1 \to B$ with~$\mathbf{H}|_{Y \times \{ 1\}}=c_1|_Y$ the inclusion.
We say that~$\mathbf{H}$ is \emph{trivial on the~$S^1$-factor} if the map~$S^1 \to B,t \mapsto H_t(y_0)$ is nullhomotopic.
Consider 
$$\Homotopy(c_1|_Y):=\lbrace \mathbf{H}:=(H_t \colon Y  \to B)_{t \in [0,1]} \mid H_0=c_1|_Y=H_1 \text{ and~$\mathbf{H}$ is trivial on~$S^1$-factor}  \rbrace.$$
Observe that the condition~$H_0=c_1|_Y=H_1$ ensures that~$\mathbf{H}$ is basepoint preserving.
\end{notation}

\begin{construction}[The homotopy extension map~$\Xi$]
\label{cons:Xi}
We construct a map
$$\Xi \colon \Homotopy(c_1|_Y) \to \hAut_Y(B),$$
where, recall, $\hAut_Y(B)$ denotes the group of rel.~$Y$ homotopy classes of maps~$B \to B$.
Apply the homotopy extension property to a homotopy~$\mathbf{H}=(H_t)_{t \in [0,1]}$ with~$H_0=c_1|_Y=H_1$.
The outcome is a homotopy~$\widetilde{H}_t  \colon B\to B$ that extends~$\mathbf{H}$ and satisfies~$\widetilde{H}_0=\id_B$.
Consider the map 
\begin{align*}
\Xi \colon \Homotopy(c_1|_Y) &\to \hAut_Y(B) \\
\mathbf{H} &\mapsto \widetilde{H}_1.
\end{align*}
Here note that~$\widetilde{H}_1 \colon B \to B$ is a homotopy equivalence because~$\wt{H}_1\simeq \wt{H}_0=\id_B$ by construction.
Also, the homotopy equivalence~$\widetilde{H}_1$ is rel.~$Y$ because~$\widetilde{H}_1|_Y=H_1=c_1|_Y$ is the inclusion.
\end{construction}

In general,  the homotopy equivalence $\Xi(\mathbf{H})$ does not belong to the group 
$$ \mathcal{G}=  \{ \varphi \in \hAut_{Y_1}(B) \mid  (\varphi \circ c_1)^*b_{X_1}^\partial = c_1^*b_{X_1}^\partial \}.$$
and so it is not possible to use~$\Xi(\mathbf{H})$ to act on $b(c_0,c_1)$.
We therefore consider the subset of~$\Homotopy(c_1|_Y)$ of homotopies whose image under $\Xi$ do belong to $\mathcal{G}$.

\begin{construction}[The subset~$\Homotopy(c_1|_Y)(c_1^*b_{X_1}^\partial)$.]
\label{cons:SimpleDef}
We already noted in Construction~\ref{cons:Xi} that for any homotopy~$\mathbf{H}=(H_t \colon Y \to B)_{t \in [0,1]}$ with~$H_0=c_1|_Y=H_1$, the homotopy equivalence~$\Xi(\mathbf{H})=\widetilde{H}_1$ satisfies~$\widetilde{H}_1 \circ c_1|_Y=H_1=c_1|_Y$.
We then consider the set of those~$\mathbf{H}$ such that~$\Xi(\mathbf{H})$ satisfies the second  defining condition of~$\mathcal{G}$:
$$\Homotopy(c_1|_Y)(c_1^*b_{X_1}^\partial):=\{  \mathbf{H} \in \Homotopy(c_1|_Y) \mid (\Xi(\mathbf{H}) \circ c_1)^*(b_{X_1}^\partial) =c_1^*b_{X_1}^\partial \}.$$
This way,  the inclusion~$\Xi(\Homotopy(c_1|_Y)(c_1^*b_{X_1}^\partial) \subset \mathcal{G}$ follows from the definition of~$\mathcal{G}$.
\end{construction}

The main technical result of this section is now the following.
\begin{theorem}
\label{thm:GoalActionpdf} 
For any homotopy~$\mathbf{H} \in \operatorname{Homotopy}(c_1)(c_1^*b_{X_1}^\partial)$, there is a corresponding sesquilinear form~$q_{\mathbf{H}} \in \Sesq(H^2(Y;\Z[\pi]))$ such that the homotopy equivalence $\Xi(\mathbf{H}) \colon B \to B$ acts on the set $\Herm(H^2(Y;\Z[\pi])$ of  hermitian forms as
$$\Xi(\mathbf{H})\cdot \lambda=\lambda+q_{\mathbf{H}}+q_{\mathbf{H}}^* \quad \quad \text{ for any } \lambda \in \Herm(H^2(Y;\Z[\pi]).$$
Conversely, 
if the map~$\ev^* \colon H^2(B;\Z[\pi])^{**} \to H_2(B;\Z[\pi])^*$ is an isomorphism and
\begin{itemize}
\item either the module~$H_2(B;\Z[\pi])$ is projective,
\item or $\pi$ is finite and $H_2(B;\Z[\pi]) \cong I \oplus \Z[\pi]^{\oplus k}$ for some $k \geq 0$,
\end{itemize}
then for any sesquilinear form~$q \in \Sesq(H^2(Y;\Z[\pi]))$, there is a homotopy~$\mathbf{H} \in \operatorname{Homotopy}(c_1)(c_1^*b_{X_1}^\partial)$ with~$q=q_{\mathbf{H}}.$
\end{theorem}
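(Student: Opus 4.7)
The strategy is to combine a geometric identification of the class $\xi$ from Construction~\ref{cons:bphi} with a K\"unneth computation on $Y \times S^1$, followed in the converse direction by an obstruction-theoretic construction of $\mathbf{H}$ from the sesquilinear data.

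For the forward direction, I would first show that
\[\xi = \mathbf{H}_*\bigl([Y] \times [S^1]\bigr) \in H_4(B).\]
This follows because the homotopy $\widetilde{\mathbf{H}} \circ (c_1 \times \id_I)\colon X_1 \times I \to B$ relating $c_1$ and $\Xi(\mathbf{H}) \circ c_1$ restricts on $Y \times I$ to $\mathbf{H}$, so the map $(\Xi(\mathbf{H}) \circ c_1) \cup c_1\colon X_1 \cup_Y -X_1 \to B$ factors, up to homotopy, through the degree-one pinch onto $Y \times S^1$ followed by $\mathbf{H}$, and Proposition~\ref{prop:FundamentalClassUnion} identifies $\xi$ as the image of the fundamental class. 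Then, for $a, b \in H^2(Y;\Z[\pi])$ with lifts $a', b' \in H^2(B;\Z[\pi])$, naturality of the cap product and Proposition~\ref{prop:KunnethYS1} yield the decomposition
\[\mathbf{H}^*(a') = p_Y^*(a) + p_Y^*(\alpha_a) \cup p_{S^1}^*(1_{H^1(S^1)}), \quad \alpha_a \in H^1(Y;\Z[\pi]),\]
where the degree-zero K\"unneth component is forced to be $a$ by the condition $\mathbf{H}|_{Y \times \{1\}} = c_1|_Y$. Expanding $\langle \mathbf{H}^*(b'), \mathbf{H}^*(a') \cap ([Y] \times [S^1])\rangle$, only the cross terms whose $S^1$-degrees match survive, giving
\[b(\Xi(\mathbf{H}))(a, b) = \langle b, \PD_Y(\alpha_a)\rangle + \langle \alpha_b, \PD_Y(a)\rangle.\]
Setting $q_{\mathbf{H}}(a, b) := \langle b, \PD_Y(\alpha_a)\rangle$ gives a sesquilinear form on $H^2(Y;\Z[\pi])$, and the hermitian symmetry of the cup product pairing $H^1(Y;\Z[\pi]) \times H^2(Y;\Z[\pi]) \to \Z[\pi]$ on the $3$-dimensional Poincar\'e complex $Y$ identifies the second summand with $q_{\mathbf{H}}^*(a, b)$.

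For the converse direction, I would translate the problem into obstruction theory. Because $B$ is a Postnikov $2$-type with $\pi_1(B) = \pi$ and $\pi_2(B) = \pi_2(X_1)$, the adjunction $[\Sigma Y_+, B]_* \cong [Y, \Omega B]_*$ together with the identification of the component of the constant loop of $\Omega B$ with $K(\pi_2(X_1), 1)$ produces a bijection between homotopy classes in $\Homotopy(c_1|_Y)$ and elements of $H^1(Y; \pi_2(X_1))$. Unwinding the K\"unneth computation from the forward direction shows that the element $\phi \in H^1(Y; \pi_2(X_1))$ corresponding to a homotopy $\mathbf{H}_\phi$ has associated $\alpha$-component given by the post-composition formula
\[a' \longmapsto \ev(a')_*(\phi) \in H^1(Y; \Z[\pi]),\]
where $\ev(a') \in \Hom_{\Z[\pi]}(\pi_2(X_1), \Z[\pi])$ uses the identification $H_2(B; \Z[\pi]) = \pi_2(X_1)$. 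Given $q$, Poincar\'e duality on $Y$ produces a $\Z[\pi]$-linear map $\Psi_q \colon H^2(Y; \Z[\pi]) \to H^1(Y; \Z[\pi])$, and realising $q = q_{\mathbf{H}_\phi}$ amounts to finding $\phi$ with $\ev(a')_*(\phi) = \Psi_q(c_1^*(a'))$ for every $a' \in H^2(B; \Z[\pi])$. Under the hypothesis that $\ev^*$ is an isomorphism, this becomes a linear problem on $\Hom_{\Z[\pi]}(H_1(Y; \Z[\pi]), \pi_2(X_1))$: projectivity of $H_2(B;\Z[\pi]) = \pi_2(X_1)$ kills the relevant $\Ext^1$-terms in the UCSS governing $H^1(Y; \pi_2(X_1))$, while in the finite case the free summand $\Z[\pi]^{\oplus k}$ absorbs the $\Ext$-contribution coming from the augmentation ideal. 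The preservation condition $\mathbf{H} \in \Homotopy(c_1|_Y)(c_1^*b_{X_1}^\partial)$ is then automatic for the constructed $\mathbf{H}$: prescribing $\alpha_{a'} = \Psi_q(c_1^*(a'))$ forces $\alpha_{j^*(\gamma)} = \Psi_q(0) = 0$, and by the first part of Proposition~\ref{prop:PullbackProperties} combined with the forward computation this is exactly the vanishing of $b_\xi(j^*(-), -)$.

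The main obstacle will be the surjectivity step in the converse direction: carefully distinguishing the two case-hypotheses (projective versus finite with the $I \oplus \Z[\pi]^{\oplus k}$ decomposition) and showing that together with the $\ev^*$-isomorphism assumption, every $\Psi_q$ lifts through the natural evaluation map coming from the UCSS to some $\phi \in H^1(Y; \pi_2(X_1))$.
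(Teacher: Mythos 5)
Your forward direction follows the paper's own route: you identify $\xi=\mathbf{H}_*([Y\times S^1])$ (the paper's Lemma~\ref{lem:FundamentalClass}), decompose $\mathbf{H}^*(a')$ via the K\"unneth isomorphism of Proposition~\ref{prop:KunnethYS1}, and expand the pairing as in Proposition~\ref{prop:PairingbH}. One step is missing, though: you set $q_{\mathbf{H}}(a,b):=\langle b,\PD_Y(\alpha_a)\rangle$, but the class $\alpha_{a'}$ is the $H^1(Y;\Z[\pi])$-component of $\mathbf{H}^*(a')$ and a priori depends on the chosen lift $a'$ of $a$, so it is not clear that $q_{\mathbf{H}}$ is a form on $H^2(Y;\Z[\pi])$ at all. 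This is exactly where the hypothesis $\mathbf{H}\in\operatorname{Homotopy}(c_1)(c_1^*b_{X_1}^\partial)$ must be used: the paper's Proposition~\ref{prop:H2H1} shows that this condition is equivalent to $\phi_{\mathbf{H}}'=\ev\circ\psi\circ\mathbf{H}^*$ factoring as $\phi\circ c_1|_Y^*$, and only then does Construction~\ref{cons:qH} make sense. Your proposal never addresses this factorisation in the forward direction.

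The converse is where the genuine gaps lie. First, there is no natural map $\Psi_q\colon H^2(Y;\Z[\pi])\to H^1(Y;\Z[\pi])$: from $q$ and Poincar\'e duality one only gets $\phi:=(\PD_Y^{-1})^*\circ\widehat q\colon H^2(Y;\Z[\pi])\to H_1(Y;\Z[\pi])^*$, and the evaluation $H^1(Y;\Z[\pi])\to H_1(Y;\Z[\pi])^*$ is in general neither injective nor surjective, so your equation $\ev(a')_*(\phi)=\Psi_q(c_1^*(a'))$ is not well posed. The correct problem (and the one the paper solves) is to produce a $\Z[\pi]$-linear map $f\colon H_1(Y;\Z[\pi])\to H_2(B;\Z[\pi])$ with $f^*\circ\ev_B=\phi\circ c_1|_Y^*$; this is exactly where the two case hypotheses enter, via the undualisation Lemma~\ref{lem:DualExist} together with Lemma~\ref{lem:nice-lemma} and the assumption that $\ev_B^*$ is an isomorphism (Construction~\ref{cons:Mapf}). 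Your sketch never performs this step, and your remark that the free summand $\Z[\pi]^{\oplus k}$ ``absorbs the Ext-contribution'' does not correspond to how the finite case is actually used (it is needed to undualise $g=f^*$, via the norm element). Second, the realisation of the resulting homomorphism by an actual self-homotopy is not a consequence of ``projectivity of $\pi_2(X_1)$ killing the relevant $\Ext^1$-terms in the UCSS for $H^1(Y;\pi_2(X_1))$'': projectivity of the coefficient module does not kill $\Ext^*_{\Z[\pi]}(H_*(Y;\Z[\pi]),\pi_2)$, and the cokernel of the evaluation $H^1(Y;\pi_2)\to\Hom_{\Z[\pi]}(H_1(Y;\Z[\pi]),\pi_2)$ is controlled by groups of the type $H^2(\pi;\pi_2)$, which need not vanish for the groups considered here. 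The paper circumvents this by constructing $\mathbf{H}$ directly through obstruction theory on the pair $(Y\times S^1,Y)$, using Theorem~\ref{thm:ObstructionTheoryTopTriangle}, whose proof rests on the relative $k$-invariant realisation Theorem~\ref{thm:RealiseAlgebraic3Type}: the correction term factors through $j_*$, which annihilates the relative $k$-invariant, and this is the mechanism that makes every such homomorphism realisable (Propositions~\ref{prop:CanRealise} and~\ref{prop:FinalStep?}). Without an argument of this kind, your converse direction does not go through.
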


The proof of this theorem occupies the remainder of this section.
Given a homotopy $\mathbf{H}$, Section~\ref{sub:ActionOnhermitian} constructs the sequilinear form $q_{\mathbf{H}}$ and determines the action of~$\Xi(\operatorname{Homotopy}(c_1)(c_1^*b_{X_1}^\partial))$ on~$\Herm(H^2(Y;\Z[\pi])).$
Section~\ref{sub:Realisation} proves that for any~$q \in \Sesq(H^2(Y;\Z[\pi]))$, there exists a homotopy~$\mathbf{H} \in \operatorname{Homotopy}(c_1)(c_1^*b_{X_1}^\partial)$ with~$q=~q_{\mathbf{H}}$.

Before embarking on this proof,  we attempt to share some intuition.
\begin{remark}
\label{rem:Intuition}
We outline the intuition underlying Theorem~\ref{thm:GoalActionpdf} and its proof.
We emphasise that all of what follows is informal. 
First, recall the intuition underlying the definition of the hermitian form~$b(c_0,c_1)$.
Given $x \in H^2(Y;\Z[\pi])$, since~$\widetilde{X}_0$ and~$\widetilde{X}_1$ are simply-connected,  loops representing~$\PD_y(x)$ are nullhomotopic.
This leads to immersed discs~$D_0(x) \subset \wt{X}_0$ and~$D_1(x) \subset \wt{X}_1$.
Taking the union of these discs yields an immersed sphere~$S(x)=D_0(x) \cup D_1(x)$ representing the Poincaré dual of~$c^*(x') \in H^2(X_0 \cup_h X_1;\Z[\pi])$.
We then think of~$b(c_0,c_1)(x,y)$ as~$\lambda_{X_0 \cup_h X_1}(S(x),S(y))$.

The main idea of this section is that the discs can be modified in a way that changes~$b(c_0,c_1)$ by an even form. 
Here are slightly more (yet still informal) details underlying this idea.
\begin{itemize}
\item  A map~$\phi \colon H_1(Y;\Z[\pi]) \to H_2(Y;\Z[\pi])$ gives rise to a homotopy~$\mathbf{H}\colon Y \times I \to Y \to B$ from~$c_1$ to~$c_1$. 
Here is the intuition for building~$\mathbf{H}$: 
Modify the projection~$\proj_1 \colon Y \times I \to Y$ by mapping~$2$-cells of the form~$I \times \alpha$ to~$\proj_1(I \times \alpha)+\phi(\alpha)$ and compose the resulting map~$Y \times I \to Y$ with~$c_1 \colon Y \to B.$
\item Implanting this homotopy in a collar of~$\partial \widetilde{X}_i$ changes~$b(c_0,c_1)$, thought of as a hermitian form on~$H_1(Y;\Z[\pi])$, by~$q_{\mathbf{H}}+q_{\mathbf{H}}^*$ where~$q_\mathbf{H}$ is the sesquilinear form defined by $\phi$ via~$H_1(Y;\Z[\pi]) \xrightarrow{\phi}H_2(Y;\Z[\pi]) \cong H^1(Y;\Z[\pi])\to H_1(Y;\Z[\pi])^*$. 
Here, the idea is that changing~$c_1$ in a collar changes the sphere~$S(x)=D_0(x) \cup D_1(x)$ by adding~$\phi(x)$ in the collar of the discs~$D_0(x)$ and~$D_1(x)$. 
This changes~$\lambda_{X_0 \cup_h X_1}(S(x),S(y))$ by adding the expression~$\lambda_Y(\phi(x),y)+\lambda_Y(x,\phi(y))$ which corresponds to the expression~$q_{\mathbf{H}}+q_{\mathbf{H}}^*$.
\end{itemize}
Unfortunately, the presence of the maps~$c_0,c_1$ and~$c$ as well as numerous algebraic constraints make this geometric idea quite challenging and technical to carry out.
\end{remark}

\subsection{The action on hermitian forms}
\label{sub:ActionOnhermitian}

This section defines the sesquilinear form $q_{\mathbf{H}}$ mentioned in Theorem~\ref{thm:GoalActionpdf} and studies the action of $\Xi(\mathbf{H})$ on~$\Herm(H^2(Y;\Z[\pi]))$.
The definition of~$\Homotopy(c_1|_Y)(c_1^*b_{X_1}^\partial)$ from Construction~\ref{cons:SimpleDef} is simple to state but inconvenient to understand the action on~$\Herm(H^2(Y;\Z[\pi])).$
The objective is thus to reformulate the definition of~$\Homotopy(c_1|_Y)(c_1^*b_{X_1}^\partial)$ so as to better understand this action.
The main challenge is to find a condition on a homotopy~$\mathbf{H}$ that is equivalent to it satisfying~$(\Xi(\mathbf{H}) \circ c_1)^*(b_{X_1}^\partial) =c_1^*b_{X_1}^\partial$.

\medbreak
Recalling that~$j \colon B \to (B,Y_1)$ denotes the inclusion, we begin by recording a result concerning the fundamental class of~$Y \times S^1$. 

\begin{lemma}
\label{lem:FundamentalClass}
The following equation holds:
\begin{equation}
\label{eq:FundClassEquality}
(\widetilde{H}_1 \circ c_1)_*([X_1,Y])=(c_1)_*([X_1,Y])+j_*(\mathbf{H}_*([Y \times S^1])) \in H_4(B,Y).
\end{equation}
\end{lemma}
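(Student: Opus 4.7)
The plan is to argue at the chain level using the prism chain homotopy associated to $\widetilde{\mathbf{H}}$. First extend $\mathbf{H}$ via the homotopy extension property to $\widetilde{\mathbf{H}}\colon B\times I\to B$ with $\widetilde{H}_0=\id_B$ and $\widetilde{H}_1=\Xi(\mathbf{H})$, noting that by construction $\widetilde{\mathbf{H}}|_{Y\times I}=\mathbf{H}$. This map determines the usual prism (chain-homotopy) operator
$$P\colon C_{\ast}(B)\to C_{\ast+1}(B), \qquad dP+Pd=\widetilde{H}_{1\#}-\id_{\#}.$$
For any chain $\alpha$ supported on $Y\subset B$, the equality $\widetilde{\mathbf{H}}|_{Y\times I}=\mathbf{H}$ gives $P(\alpha)=\mathbf{H}_{\#}(\alpha\times[I])$, where $\alpha\times[I]$ denotes the prism chain in $C_{\ast+1}(Y\times I)$.

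Fix a chain $\tau\in C_4(X_1)$ representing $[X_1,Y]$, and set $\sigma:=c_{1\#}(\tau)\in C_4(B)$, a representative of $c_{1\ast}[X_1,Y]$. Since $c_1|_Y$ is the inclusion $\iota_Y\colon Y\hookrightarrow B$, we have $d\sigma=c_{1\#}(d\tau)=\iota_{Y\#}[Y]$, where $[Y]\in C_3(Y)$ represents the fundamental class. Applying the chain homotopy yields
$$\widetilde{H}_{1\#}\sigma-\sigma \;=\; dP(\sigma)+P(d\sigma) \;=\; dP(\sigma)+\mathbf{H}_{\#}\bigl([Y]\times[I]\bigr)$$
in $C_4(B)$, using the previous paragraph for the second equality.

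Next, observe that $\mathbf{H}_{\#}([Y]\times[I])$ is a cycle in $C_4(B)$: its boundary equals $\mathbf{H}_{\#}\bigl([Y]\times\partial[I]\bigr)=(c_1|_Y)_{\#}[Y]-(c_1|_Y)_{\#}[Y]=0$, since $\mathbf{H}_0=\mathbf{H}_1=c_1|_Y$. Because $\mathbf{H}$ descends through the quotient $Y\times I\to Y\times S^1$ (identifying $Y\times\{0\}$ with $Y\times\{1\}$) to the map that we also denote $\mathbf{H}\colon Y\times S^1\to B$, and since the quotient carries the relative class $[Y\times I]$ to $[Y\times S^1]$, the cycle $\mathbf{H}_{\#}([Y]\times[I])$ represents precisely $\mathbf{H}_{\ast}[Y\times S^1]\in H_4(B)$.

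Finally, project the displayed equation to $C_4(B,Y)=C_4(B)/C_4(Y)$ and pass to $H_4(B,Y)$. The term $dP(\sigma)$ is the boundary of $P(\sigma)\in C_5(B,Y)$ and so vanishes in $H_4(B,Y)$; the left-hand side becomes $(\widetilde{H}_1\circ c_1)_{\ast}[X_1,Y]-(c_1)_{\ast}[X_1,Y]$; and the remaining cycle $\mathbf{H}_{\#}([Y]\times[I])$ projects to $j_{\ast}\mathbf{H}_{\ast}[Y\times S^1]$. This gives the desired identity. The only technical subtlety to monitor is the identification $P|_{C_{\ast}(Y)}=\mathbf{H}_{\#}\circ(-\times[I])$, which is immediate from $\widetilde{\mathbf{H}}|_{Y\times I}=\mathbf{H}$ and the naturality of the prism subdivision; the standard sign conventions for the prism operator then produce the stated formula without further adjustment.
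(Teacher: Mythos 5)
Your argument is correct, but it takes a genuinely different route from the paper. You work at the chain level: push a relative fundamental cycle $\tau$ of $(X_1,Y)$ forward under $c_1$, apply the prism operator of the extended homotopy $\widetilde{\mathbf{H}}$, use $\widetilde{\mathbf{H}}|_{Y\times I}=\mathbf{H}$ to identify $P(d\sigma)$ with $\mathbf{H}_{\#}([Y]\times[I])$, and then pass to $C_4(B,Y)$ where the correction term $dP(\sigma)$ dies. The paper instead stays at the level of homology classes: it replaces $(X_1,Y)$ by the mapping cylinder $M(i)$ of $Y\to X_1$ via a degree-one map of pairs, uses the splitting $H_4(M(i),Y\times\{0,1\})\cong H_4(X_1,Y)\oplus H_4(Y\times I,Y\times\{0,1\})$ to split $(c_1\cup\mathbf{H})_*$ of the cylinder's fundamental class into the two summands of the right-hand side, and then converts $\mathbf{H}_*([Y\times I,Y\times\{0,1\}])$ into $j_*\mathbf{H}_*([Y\times S^1])$ via the degree-one quotient $(Y\times I,Y\times\{0,1\})\to(Y\times S^1,Y)$ together with the isomorphism $H_4(Y\times S^1)\cong H_4(Y\times S^1,Y)$. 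Your approach is more elementary and self-contained, at the cost of chain-level bookkeeping (prism signs, the identification $P|_{C_*(Y)}=\mathbf{H}_{\#}(-\times[I])$); the paper's approach avoids all sign and subdivision issues by trading them for naturality diagrams and the mapping-cylinder trick. One small point you should spell out: the assertion that $\mathbf{H}_{\#}([Y]\times[I])$ represents $\mathbf{H}_*([Y\times S^1])$ needs a word on why the image of $[Y]\times[I]$ in $C_4(Y\times S^1)$ is a fundamental cycle — either note that it is literally the cross product of a fundamental cycle of $Y$ with a fundamental cycle of $S^1$ (the image of $[I]$ under $I\to S^1$), or argue via the degree-one quotient of pairs and the injectivity of $H_4(Y\times S^1)\to H_4(Y\times S^1,Y)$ (which holds since $H_4(Y)=0$); either way this is a one-line fix, not a real gap.
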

\begin{proof}
Let $M(i)$ be the mapping cylinder of the inclusion $i \colon Y\to X_1$. Since the inclusions of $Y$ into~$M(i)$ given by $Y\times\{0\}$ and $Y\times \{1\}$ are homotopic, there is a map $f\colon (X_1,Y)\to (M(i),Y\times\{1\})$ homotopic to the inclusion. In particular, $f$ is a homotopy equivalence and thus degree one.

Consider the following diagram.
\[\begin{tikzcd}
	(X_1,Y)\ar[r,"f"]\ar[d,"\times\{1\}"]&(M(i),Y\times\{1\})\ar[d,"c_1\cup\mathbf{H}"]\ar[dl,hook']\\
	(X_1\times I,Y\times \{1\})\ar[r,"{\widetilde{\mathbf{H}} \circ (c_1\times\id)}"]&(B,Y).
\end{tikzcd}\]
The lower triangle commutes by definition of $\wt{\mathbf{H}}$. 
The upper triangle commutes up to homotopy.
Since the diagram commutes up to homotopy and $f$ has degree one, 
\begin{equation}
	\label{eq:FundClassEquality-1}
	(\widetilde{H}_1 \circ c_1)_*([X_1,Y])=(c_1\cup\mathbf{H})_*([M(i),Y\times\{1\}]).
\end{equation}
In order to evaluate $(c_1\cup\mathbf{H})_*([M(i),Y\times\{1\}])$, consider the following commutative diagram:
$$
\xymatrix{
H_4(M(i),Y\times\{1\}) \ar[r]\ar[d]_{(c_1\cup\mathbf{H})_*}&H_4(M(i),Y\times\{0,1\}) \ar[r]^-\cong \ar[d]^{(c_1 \cup \mathbf{H})_*}& H_4(X_1,Y)\oplus H_4(Y\times I,Y\times\{0,1\}) \ar[d]^{\bsm (c_1)_*&(\mathbf{H})_* \esm}\\
H_4(B,Y)\ar[r]^-= &H_4(B,Y)\ar[r]^-= & H_4(B,Y).   \\
}
$$
Since the top row maps $[(M(i),Y\times \{1\}]$ to $([X_1,Y],[Y\times I,Y \times \{0,1\}]$, we deduce that
\begin{equation}
	\label{eq:FundClassEquality-2}
(c_1\cup\mathbf{H})_*[M(i),Y\times\{1\}]=(c_1)_*([X_1,Y])+\mathbf{H}_*([Y \times [0,1],Y \times \{ 0,1\}]).
\end{equation}
Next,  note that the quotient map $q\colon (Y \times I,Y \times \{0,1\})\to (Y\times S^1,Y)$ has degree one 
and consider the following commutative diagram:
\[
\begin{tikzcd}
	H_4(Y \times I,Y \times \{0,1\}) \ar[r,"q_*","\cong"']\ar[d,"{\mathbf{H}_*}"]&H_4(Y \times S^1,Y)  \ar[d,"\mathbf{H}_*"]& H_4(Y \times S^1)  \ar[d,"\mathbf{H}_*"] \ar[l,"j_*"',"\cong"]\\
	H_4(B,Y)\ar[r,"="] &H_4(B,Y) & H_4(B). \ar[l,"j_*"']  
\end{tikzcd}
\]
Since the top isomorphisms in the top row map fundamental class to fundamental class, it follows that~$\mathbf{H}_*([Y \times I,Y \times \{ 0,1\}])=j_*(\mathbf{H}_*([Y \times S^1]))$. 
Combining this with \eqref{eq:FundClassEquality-1} and \eqref{eq:FundClassEquality-2} establishes~\eqref{eq:FundClassEquality} and thus concludes the proof of the lemma.
\end{proof}

In order to understand the equation~$(\Xi(\mathbf{H}) \circ c_1)^*(b_{X_1}^\partial)=c_1^*b_{X_1}^\partial$, we need to unwind the definition of~$(\Xi(\mathbf{H}) \circ c_1)^*(b_{X_1}^\partial)$.
This requires some notation.

\begin{notation}
\label{not:mult0}
Consider the multiplication map~$\mult \colon \Z[\pi] \otimes_{\Z[\pi]} \overline{\Z[\pi]},(x,y) \mapsto x\overline{y}$ 
and, given a group $\pi$, a 
space $X$ and a surjection $\pi_1(X) \twoheadrightarrow \pi$, the composition
$$ \mult_0 \colon H_0(X;\Z[\pi] \btimes_\Z \Z[\pi]) \xrightarrow{\varpi_0,\cong} \Z[\pi] \otimes_{\Z[\pi]} \overline{\Z[\pi]} \xrightarrow{\mult} \Z[\pi].$$
Here the isomorphism $\varpi_0$ was introduced in Lemma~\ref{lem:Ccoinvariants}.

Given a pair $(X,A)$,  a calculation (carried out in Lemma~\ref{lem:capev}) shows that for~$\varphi \in H^k(X,A;\Z[\pi])$ and $x \in H_k(X,A;\Z[\pi])$,  the following equality holds:
\begin{equation}
\label{eq:lem:capev}
 \mult_0  (\varphi \cap x)
=\overline{\langle \varphi,x\rangle} \in \Z[\pi].
\end{equation}
Here recall that~$\langle \varphi,x\rangle$ denotes the evaluation of $\varphi$ at~$x$.

Furthermore, another calculation (carried out in Lemma~\ref{lem:cup}) shows that for~$\sigma \in H_4(X)$,  the pairing~$b_\sigma \colon H^2(X;\Z[\pi]) \times H^2(X;\Z[\pi]) \to \Z[\pi],(x,y)\mapsto \langle y, x \cap \sigma \rangle
$ agrees with the hermitian pairing
\begin{align}
\label{eq:lem:cup}
b_\sigma^\cup \colon H^2(X;\Z[\pi]) \times H^2(X;\Z[\pi]) &\to \Z[\pi] \nonumber \\
(x,y)& \mapsto \mult_0((x \cup y) \cap \sigma). 
 \end{align}
\end{notation}

The next lemma takes a first pass at unpacking the definition of~$(\Xi(\mathbf{H}) \circ c_1)^*(b_{X_1}^\partial)$.

\begin{lemma}
\label{lem:UnderstandPullback}
For~$a \in H^2(B;\Z[\pi])$ and~$r \in H^2(B,Y;\Z[\pi])$, the following equalities hold:
\begin{align*}
(\Xi(\mathbf{H}) \circ c_1)^*(b_{X_1}^\partial)(r,a)
=c_1^*b_{X_1}^\partial(r,a)+\mult_0((\mathbf{H}^*(j^*(r)) \cup \mathbf{H}^*(a)) \cap [Y \times S^1]) \in \Z[\pi].
\end{align*}
\end{lemma}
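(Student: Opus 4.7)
The plan is to convert the relative hermitian form $b_{X_1}^\partial$ into the cup-product formulation $\mult_0((\,\cdot\,) \cap [X_1])$ via equation~\eqref{eq:lem:cup}, and then to combine naturality of cup and cap with Lemma~\ref{lem:FundamentalClass}. Concretely, observe that $\Xi(\mathbf{H}) \circ c_1 \colon (X_1,Y) \to (B,Y)$ is a map of pairs, so for $r \in H^2(B,Y;\Z[\pi])$ and $a \in H^2(B;\Z[\pi])$,
\begin{align*}
(\Xi(\mathbf{H}) \circ c_1)^*(b_{X_1}^\partial)(r,a)
&= \mult_0\bigl(((\Xi(\mathbf{H}) \circ c_1)^*(r) \cup (\Xi(\mathbf{H}) \circ c_1)^*(a)) \cap [X_1]\bigr) \\
&= \mult_0\bigl((r \cup a) \cap (\Xi(\mathbf{H}) \circ c_1)_*([X_1])\bigr),
\end{align*}
where the second equality is the standard naturality of cup and cap.

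The plan is now to apply Lemma~\ref{lem:FundamentalClass} to the pushforward of the fundamental class, yielding
\[
(\Xi(\mathbf{H}) \circ c_1)_*([X_1]) = (c_1)_*([X_1]) + j_*\bigl(\mathbf{H}_*([Y \times S^1])\bigr) \in H_4(B,Y).
\]
Substituting, the term involving $(c_1)_*([X_1])$ reproduces $c_1^*b_{X_1}^\partial(r,a)$ by running the above cup/cap naturality in reverse. It remains to identify the contribution of $j_*(\mathbf{H}_*([Y \times S^1]))$. For this, I will use naturality of cap with respect to the pair inclusion $j \colon B \to (B,Y)$, namely $(r \cup a) \cap j_*(\sigma) = j^*(r \cup a) \cap \sigma = (j^*(r) \cup a) \cap \sigma$, followed by naturality of cup and cap under $\mathbf{H} \colon Y \times S^1 \to B$ to pull the calculation back to $Y \times S^1$:
\[
(j^*(r) \cup a) \cap \mathbf{H}_*([Y \times S^1]) = \mathbf{H}_*\bigl((\mathbf{H}^*(j^*(r)) \cup \mathbf{H}^*(a)) \cap [Y \times S^1]\bigr).
\]
Since $\mult_0$ is defined via the natural isomorphism $\varpi_0$ of Lemma~\ref{lem:Ccoinvariants} and therefore commutes with $\mathbf{H}_*$ on $H_0$, applying $\mult_0$ yields precisely $\mult_0\bigl((\mathbf{H}^*(j^*(r)) \cup \mathbf{H}^*(a)) \cap [Y \times S^1]\bigr)$, which is the second summand of the desired formula.

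The only real subtlety I anticipate is bookkeeping: the cap product here takes a relative cohomology class and a relative homology class into \emph{absolute} $H_0$, the $\btimes$-coefficient $\Z[\pi]\btimes \Z[\pi]$ must be tracked through every naturality statement, and one needs to be careful that $j^*$ interacts with the relative cup product $H^2(B,Y) \otimes H^2(B) \to H^4(B,Y)$ as $j^*(r\cup a) = j^*(r) \cup a$. None of these are obstacles in principle, but they are the only place where a mistake could creep in; once they are settled, the chain of naturality identities closes automatically.
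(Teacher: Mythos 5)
Your argument is correct and takes essentially the same route as the paper's proof: both reduce the claim, via naturality of the pairings, to the fundamental-class identity of Lemma~\ref{lem:FundamentalClass}, split off the $(c_1)_*([X_1,Y])$ term as $c_1^*b_{X_1}^\partial(r,a)$, and pull the $j_*(\mathbf{H}_*([Y\times S^1]))$ term back to $Y\times S^1$ by naturality under $j$ and $\mathbf{H}$. The only (cosmetic) difference is that you work in the cup-product/$\mult_0$ formulation throughout, whereas the paper computes with the evaluation pairing $\langle\,\cdot\,,\,\cdot\,\cap\,\cdot\,\rangle$ and only invokes~\eqref{eq:lem:cup} at the final step.
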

\begin{proof}
Recall that, by definition,~$b_{X_1}^\partial(x,y)=\langle y,x \cap [X_1,Y]\rangle.$
The definition of~$\mathbf{H}$ and the naturality of the evaluation map, then imply that
$$(\Xi(\mathbf{H}) \circ c_1)^*(b_{X_1}^\partial)(r,a)
=\langle (\widetilde{H}_1 \circ c_1)^*(a) ,(\widetilde{H}_1 \circ c_1)^*(r) \cap [X_1,Y] \rangle
=\langle a, r \cap (\widetilde{H}_1 \circ c_1)_*([X_1,Y]) \rangle.$$
The lemma therefore hinges on understanding~$(\widetilde{H}_1 \circ c_1)_*([X_1,Y])$.
Lemma~\ref{lem:FundamentalClass} and~\eqref{eq:lem:cup} yield
\begin{align*}
(\Xi(\mathbf{H}) \circ c_1)^*(b_{X_1}^\partial)(r,a)
&=\langle a, r \cap (c_1)_*([X_1,Y]) \rangle
+\langle a, r \cap j_*(\mathbf{H}_*([Y \times S^1])) \rangle \\
&=c_1^*b_{X_1}^\partial(r,a) 
+\langle  \mathbf{H}^*(a), \mathbf{H}^*(j^*(r))\cap [Y \times S^1] \rangle \\
&=c_1^*b_{X_1}^\partial(r,a) +b_{Y \times S^1}(\mathbf{H}^*(j^*(r)),\mathbf{H}^*(a)) \\
&=c_1^*b_{X_1}^\partial(r,a) +\mult_0((\mathbf{H}^*(j^*(r)) \cup \mathbf{H}^*(a)) \cap [Y \times S^1]).
\end{align*}
This concludes the proof of the lemma.
\end{proof}

Lemma~\ref{lem:UnderstandPullback} implies that,  in order to understand the pairing~$(\Xi(\mathbf{H}) \circ c_1)^*(b_{X_1}^\partial)$, we need to study the pairing~$(\alpha,\beta) \mapsto \mult_0((\mathbf{H}^*(\alpha) \cup \mathbf{H}^*(\beta)) \cap [Y \times S^1])$ for~$\alpha,\beta \in H^2(B;\Z[\pi])$.
This requires the following construction.

\begin{construction}
Associated to the homotopy~$\mathbf{H}$ is the map~$\phi_{\mathbf{H}}' \colon H^2(B;\Z[\pi]) \to 
H_1(Y;\Z[\pi])^*.$
obtained by composing~$\mathbf{H}^*$ with the inverse of the Künneth isomorphism and the projection onto the second component:
\begin{align*}\phi_{\mathbf{H}}' \colon H^2(B;\Z[\pi]) \xrightarrow{\mathbf{H}^*} H^2(Y \times S^1;\Z[\pi]) \xrightarrow{\Kun^{-1}} H^2(Y;\Z[\pi]) \oplus H^1(Y;\Z[\pi]) &\xrightarrow{\operatorname{proj}_2} H^1(Y;\Z[\pi]) \\
&\xrightarrow{\ev} H_1(Y;\Z[\pi])^*
\end{align*}
For brevity we set~$\psi:=\proj_2 \circ \Kun^{-1}$.
\end{construction}

The next couple of lemmas aim to prove the following technical result.
The reader wishing to skip ahead to the proof of this result can instead consult the proof following Lemma~\ref{lem:SecondTerm}.

\begin{proposition}
\label{prop:PairingbH}
For~$\alpha,\beta\in H^2(B;\Z[\pi])$,  we have the following equation in~$\Z[\pi]$:
$$
\mult_0((\mathbf{H}^*(\alpha) \cup \mathbf{H}^*(\beta)) \cap [Y \times S^1] )
=
\phi_\mathbf{H}'(\beta)(c_1|_Y^*(\alpha) \cap [Y]) + 
\overline{\phi_\mathbf{H}'(\alpha)(c_1|_Y^*(\beta) \cap [Y])}.
$$
\end{proposition}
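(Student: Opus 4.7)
The plan is to expand both $\mathbf{H}^*(\alpha)$ and $\mathbf{H}^*(\beta)$ via the K\"unneth decomposition of Proposition~\ref{prop:KunnethYS1}, then reduce the cup product to two non-vanishing terms, and finally apply \eqref{eq:lem:capev} together with the hermitian symmetry of Lemma~\ref{lem:cup}. First, I would write
\[\mathbf{H}^*(\alpha) = p_Y^*(A) + p_Y^*(\alpha')\cup p_{S^1}^*(s), \qquad \mathbf{H}^*(\beta) = p_Y^*(B) + p_Y^*(\beta')\cup p_{S^1}^*(s),\]
for some $A,B \in H^2(Y;\Z[\pi])$, $\alpha',\beta'\in H^1(Y;\Z[\pi])$ and $s \in H^1(S^1)$ the generator. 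Restricting to $Y\times\{1\}$ and using the hypothesis $\mathbf{H}|_{Y\times\{1\}} = c_1|_Y$ identifies $A=c_1|_Y^*(\alpha)$ and $B = c_1|_Y^*(\beta)$, while by the definition of $\phi'_{\mathbf{H}}$, the components $\alpha',\beta'$ satisfy $\ev(\alpha')=\phi'_{\mathbf{H}}(\alpha)$ and $\ev(\beta')=\phi'_{\mathbf{H}}(\beta)$.

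Next, using the formula $(\alpha_1 \times \gamma_1)\cup(\alpha_2 \times \gamma_2) = (-1)^{|\gamma_1||\alpha_2|}(\alpha_1\cup\alpha_2)\times(\gamma_1\cup\gamma_2)$, I would expand $\mathbf{H}^*(\alpha)\cup\mathbf{H}^*(\beta)$ into four cross-product summands. Two of these vanish: the $A\times B$-type term because $H^4(Y;\Z[\pi]\boxtimes\Z[\pi])=0$ (as $Y$ is $3$-dimensional), and the $\alpha'\cup\beta'$-type term because it is a cross product with $s\cup s \in H^2(S^1)=0$. What survives is
\[\mathbf{H}^*(\alpha)\cup\mathbf{H}^*(\beta) \;=\; \bigl((A\cup\beta') + (\alpha'\cup B)\bigr) \times s \;\in\; H^4(Y\times S^1;\Z[\pi]\boxtimes\Z[\pi]).\]

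I would then cap with $[Y\times S^1]=[Y]\times[S^1]$ using the cross-product-cap identity and $s\cap[S^1]=[\mathrm{pt}]$, reducing the computation to $((A\cup\beta') + (\alpha'\cup B))\cap[Y]$ in $H_0(Y;\Z[\pi]\boxtimes\Z[\pi])$. For the second summand, associativity of cap gives $(\alpha'\cup B)\cap[Y] = \alpha'\cap(B\cap[Y])$, so by \eqref{eq:lem:capev}, $\mult_0((\alpha'\cup B)\cap[Y])=\overline{\langle\alpha', B\cap[Y]\rangle} = \overline{\phi'_\mathbf{H}(\alpha)(c_1|_Y^*(\beta)\cap[Y])}$. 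For the first summand, I would invoke the hermitian symmetry recorded in Lemma~\ref{lem:cup} in the form $\mult_0((A\cup\beta')\cap[Y])=\overline{\mult_0((\beta'\cup A)\cap[Y])}$, then apply associativity and \eqref{eq:lem:capev} to obtain $\overline{\overline{\langle\beta', A\cap[Y]\rangle}}=\langle\beta', A\cap[Y]\rangle = \phi'_\mathbf{H}(\beta)(c_1|_Y^*(\alpha)\cap[Y])$. Summing the two yields the identity of the proposition.

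The main obstacle I expect is bookkeeping of signs and of the coefficient-module identifications. One must verify that $s\cap[S^1] = [\mathrm{pt}]$ (as opposed to $-[\mathrm{pt}]$) with the chosen sign conventions, that the Koszul-type signs in the cross-product formulas for cup and cap conspire to leave the stated equation sign-free, and that the natural isomorphisms $(\Z[\pi]\boxtimes_\Z\Z)\boxtimes(\Z\boxtimes_\Z\Z[\pi]) \cong \Z[\pi]\boxtimes\Z[\pi]$ are compatible with the diagonal $\pi_1(Y\times S^1)$-actions so that $\mult_0$ computes what is claimed. The only non-formal ingredient beyond these bookkeeping issues is the hermitian symmetry supplied by Lemma~\ref{lem:cup}, which converts a $\mult_0$ of $A\cup\beta'$ into one of $\beta'\cup A$ at the cost of an overline and so produces the two conjugate-related terms in the target formula.
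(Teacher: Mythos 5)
Your proposal is correct and takes essentially the same route as the paper's proof (Lemmas~\ref{lem:HaHb}, \ref{lem:FirstTerm} and~\ref{lem:SecondTerm}): K\"unneth decomposition of $\mathbf{H}^*(\alpha)$ and $\mathbf{H}^*(\beta)$, vanishing of the $H^4(Y)$ and $s\cup s$ cross terms, reduction to $Y$ via the cross--cap identity, and then graded symmetry of the twisted cup product combined with \eqref{eq:lem:capev}. The only cosmetic differences are that you apply the symmetry to the degree $(2,1)$ summand rather than (as the paper does) to the $(2,2)$ summand, and that the symmetry statement you actually need is Lemma~\ref{lem:CupProductSymmetric} (with $(-1)^{kl}=+1$ in these degrees) rather than Lemma~\ref{lem:cup}.
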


The strategy is as follows.
Lemma~\ref{lem:HaHb} describes~$(\mathbf{H}^*(\alpha) \cup \mathbf{H}^*(\beta)) \cap [Y \times S^1]$ as a sum of two expressions.
Lemma~\ref{lem:FirstTerm} investigates the first and shows that after applying~$\mult_0$, it reduces to~$\phi_\mathbf{H}'(\beta)(c_1|_Y^*(\alpha) \cap [Y])$.
Lemma~\ref{lem:SecondTerm} applies a similar reasoning to the second term and the conclusion then follows promptly.

We begin by studying~$(\mathbf{H}^*(\alpha) \cup \mathbf{H}^*(\beta)) \cap [Y \times S^1]$.

\begin{lemma}
\label{lem:HaHb}
For~$\alpha,\beta\in H^2(B;\Z[\pi])$,  we have the following equation in~$H_0(Y \times S^1;\Z[\pi] \boxtimes \Z[\pi])$:
\begin{align}
\label{eq:TwoSummandsofHaHb}
(\mathbf{H}^*(\alpha) &\cup \mathbf{H}^*(\beta)) \cap [Y \times S^1]= \nonumber \\
&\left( \overbrace{p_Y^*(c_1|_Y^*(\alpha))}^{\in H^2(Y\times S^1;\Z[\pi])} \cup
\left( \overbrace{p_Y^*(\psi(\mathbf{H}^*(\beta)))}^{\in H^1(Y\times S^1;\Z[\pi])}   \cup \overbrace{p_{S^1}^*(1_{H^1(S^1)})}^{\in H^1(Y\times S^1;\Z)}\right)\right)
 \cap [Y \times S^1]  \nonumber \\
+&\left( \left( \underbrace{p_Y^*(\psi(\mathbf{H}^*(\alpha)))}_{\in H^1(Y \times S^1;\Z[\pi])} \cup \underbrace{p_{S^1}^*(1_{H^1(S^1)})}_{\in H^1(Y \times S^1;\Z)} \right) \cup \underbrace{p_Y^*(c_1|^*_Y(\beta))}_{\in H^2(Y \times S^1;\Z[\pi])}\right) \cap [Y \times S^1].
\end{align}
\end{lemma}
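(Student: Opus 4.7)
The plan is to apply the Künneth decomposition of Proposition~\ref{prop:KunnethYS1} to both $\mathbf{H}^*(\alpha)$ and $\mathbf{H}^*(\beta)$, expand the cup product bilinearly into four terms, and show that two of these vanish, so that the remaining two yield the claimed identity after capping with $[Y \times S^1]$.

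First I would identify the Künneth components of $\mathbf{H}^*(\alpha)$. Let $s\colon Y \hookrightarrow Y \times S^1$, $y \mapsto (y,1)$. By construction of $\Homotopy(c_1|_Y)$ we have $\mathbf{H} \circ s = c_1|_Y$, and the compositions $p_Y \circ s = \id_Y$ and $p_{S^1} \circ s = \mathrm{const}$ show that $s^*$ projects $H^2(Y \times S^1;\Z[\pi])$ onto the first Künneth summand $H^2(Y;\Z[\pi])$. Therefore the $H^2(Y;\Z[\pi])$-component of $\mathbf{H}^*(\alpha)$ under $\Kun^{-1}$ is $s^*\mathbf{H}^*(\alpha) = c_1|_Y^*(\alpha)$, while by definition of $\psi = \proj_2 \circ \Kun^{-1}$ the $H^1(Y;\Z[\pi])$-component is $\psi(\mathbf{H}^*(\alpha))$. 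Consequently,
$$\mathbf{H}^*(\alpha) = p_Y^*(c_1|_Y^*(\alpha)) + p_Y^*(\psi(\mathbf{H}^*(\alpha))) \cup p_{S^1}^*(1_{H^1(S^1)}),$$
with the analogous decomposition for $\beta$.

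Expanding $\mathbf{H}^*(\alpha) \cup \mathbf{H}^*(\beta)$ bilinearly in $H^4(Y \times S^1;\Z[\pi]\btimes\Z[\pi])$ produces four summands. The ``pure'' summand $p_Y^*(c_1|_Y^*(\alpha)) \cup p_Y^*(c_1|_Y^*(\beta))$ equals, by naturality of the twisted cup product, $p_Y^*\bigl(c_1|_Y^*(\alpha) \cup c_1|_Y^*(\beta)\bigr)$, and hence is pulled back from $H^4(Y;\Z[\pi]\btimes\Z[\pi])$, which vanishes because $Y$ is homotopy equivalent to a $3$-dimensional CW complex. The ``doubly twisted'' summand
$$\bigl(p_Y^*(\psi(\mathbf{H}^*(\alpha))) \cup p_{S^1}^*(1_{H^1(S^1)})\bigr) \cup \bigl(p_Y^*(\psi(\mathbf{H}^*(\beta))) \cup p_{S^1}^*(1_{H^1(S^1)})\bigr),$$
after rearrangement using associativity and graded commutativity of the twisted cup product, factors through $p_{S^1}^*(1_{H^1(S^1)} \cup 1_{H^1(S^1)}) \in p_{S^1}^*H^2(S^1) = 0$, and so also vanishes. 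The two remaining mixed summands are precisely those appearing on the right-hand side of the claimed identity, and capping with $[Y \times S^1]$ is linear, giving the lemma.

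The main technical point will be verifying that naturality, associativity, and graded commutativity of the \emph{twisted} cup product (as developed in the appendix) carry over cleanly to the coefficient pairings used here---in particular, that pulling the two $p_{S^1}^*(1_{H^1(S^1)})$-factors together in the doubly twisted summand respects the $\Z[\pi]\btimes\Z[\pi]$-bimodule structure. Any sign ambiguity produced by graded commutativity is absorbed by the vanishing of $H^2(S^1)$, so no delicate bookkeeping is required.
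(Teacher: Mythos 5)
Your proposal is correct and follows essentially the same route as the paper: decompose $\mathbf{H}^*(\alpha)$ and $\mathbf{H}^*(\beta)$ via the Künneth isomorphism of Proposition~\ref{prop:KunnethYS1} (using $\mathbf{H}|_{Y\times\{1\}}=c_1|_Y$ to identify the $H^2(Y;\Z[\pi])$-component as $c_1|_Y^*$), expand bilinearly, kill the pure term by $H^4(Y;\Z[\pi]\btimes\Z[\pi])=0$ and the doubly twisted term by $H^2(S^1)=0$. The only point where the paper is slightly more explicit is the rearrangement of the doubly twisted summand, which it performs via the coefficient-swap isomorphism $\operatorname{flip}_{23}$ rather than plain graded commutativity---exactly the technical point you flagged---and this works out as you anticipated.
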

\begin{proof}
Given~$\alpha,\beta \in H^2(B;\Z[\pi])$, we start by understanding the cup product~$\mathbf{H}^*(\alpha) \cup \mathbf{H}^*(\beta).$
To this effect, we study the images of~$\mathbf{H}^*(\alpha)$ and~$\mathbf{H}^*(\beta)$ under the inverse Künneth isomorphism
\begin{align*}
H^2(Y \times S^1;\Z[\pi] \boxtimes_\Z \Z) \xrightarrow{\cong}H^2(Y;\Z[\pi]) \oplus H^1(Y;\Z[\pi]) \\
z  \mapsto (\iota_Y^*(z), \psi(z)).
\end{align*}
Here~$\iota_Y \colon Y \to Y \times S^1$ denotes the inclusion, whereas the formula for~$\psi$ is irrelevant.
Applying the formula for the twisted K\"unneth isomorphism from Proposition~\ref{prop:KunnethYS1}, we then obtain the following equality in~$H^2(Y\times S^1;\Z[\pi] \btimes \Z):$
\begin{align*}
\mathbf{H}^*(\alpha)
&= \operatorname{Kunneth}\circ  \operatorname{Kunneth}^{-1}(\mathbf{H}^*(\alpha)) \\
&=p_Y^*(\iota_Y^*(\mathbf{H}^*(\alpha)))+p_Y^*(\psi(\mathbf{H}^*(\alpha))) \cup p_{S^1}^*(1_{H^1(S^1)}) \\
&=p_Y^*(c_1|_Y^*(\alpha))+p_Y^*(\psi(\mathbf{H}^*(\alpha))) \cup p_{S^1}^*(1_{H^1(S^1)}).
\end{align*}
For~$\alpha,\beta \in H^2(B;\Z[\pi])$, we use this to calculate~$\mathbf{H}^*(\alpha) \cup \mathbf{H}^*(\beta)$.
To make sense of this cup product, it is helpful to return to thinking of~$H^2(Y \times S^1;\Z[\pi] \boxtimes \Z)$ as~$H^2(Y \times S^1;\Z[\pi])$ so that this cup product belongs to~$H^4(Y\times S^1;\Z[\pi]\btimes \Z[\pi])$.

First,~$H^4(Y;\Z[\pi] \boxtimes \Z[\pi])=0$ implies
$$\overbrace{p_Y^*(c_1|_Y^*(\alpha))}^{\in H^2(Y\times S^1;\Z[\pi])} \cup \overbrace{p_Y^*(c_1|_Y^*(\beta))}^{\in H^2(Y\times S^1;\Z[\pi])}
=p_Y^*(\overbrace{c_1|_Y^*(\alpha) \cup c_1|_Y^*(\beta)}^{\in H^4(Y;\Z[\pi]\btimes \Z[\pi])})=0.$$
Consider the isomorphism~$\operatorname{flip}_{23} \colon \Z[\pi]  \boxtimes \Z[\pi] \boxtimes \Z \boxtimes \Z \to \Z[\pi] \boxtimes \Z \boxtimes \Z[\pi] \boxtimes \Z$
given by~$\operatorname{flip}_{23}(a \otimes b \otimes c \otimes d)=a \otimes c \otimes b \otimes d.$
Write~$(\operatorname{flip}_{23})_*$ for the induced map on twisted cohomology.
The graded commutativity of the cup product and~$H^2(S^1)=0$ imply that
\begin{align*} 
&\left(p_Y^*(\psi(\mathbf{H}^*(\alpha))) 
\cup p_{S^1}^*(\overbrace{1_{H^1(S^1)})}^{\in H^1(S^1)}\right) 
\cup
\left(p_Y^*(\psi(\mathbf{H}^*(\beta)))
\cup p_{S^1}^*(\overbrace{1_{H^1(S^1)})}^{\in H^1(S^1)}\right) \\
&=-(\operatorname{flip}_{23})_*\left(
p_Y^*(\psi(\mathbf{H}^*(\alpha)))
\cup p_Y^*(\psi(\mathbf{H}^*(\beta)))
\cup 
\underbrace{p_{S^1}^*(1_{H^1(S^1)})
\cup p_{S^1}^*(1_{H^1(S^1)})}_{=0}
\right)  
=0.
\end{align*}
The announced calculation of~$(\mathbf{H}^*(\alpha) \cup \mathbf{H}^*(\beta)) \cap [Y \times S^1]$ now follows readily.
\end{proof}

We successively analyze the two summands that appear in Lemma~\ref{lem:HaHb}.
\begin{lemma}
\label{lem:FirstTerm}
The following equality holds in~$H_0(Y\times S^1;\Z[\pi] \btimes \Z[\pi])$:
\begin{align}
\label{eq:CupCalculationLemma}
&\left( \overbrace{p_Y^*(c_1|_Y^*(\alpha))}^{\in H^2(Y\times S^1;\Z[\pi])} \cup
\left( \overbrace{p_Y^*(\psi(\mathbf{H}^*(\beta)))}^{\in H^1(Y\times S^1;\Z[\pi])}   \cup \overbrace{p_{S^1}^*(1_{H^1(S^1)})}^{\in H^1(Y\times S^1;\Z)}\right)\right)
 \cap [Y \times S^1] \nonumber  \\
&=\left(\left(\overbrace{c_1|_Y^*(\alpha)}^{\in H^2(Y;\Z[\pi])} \cup \overbrace{\psi(\mathbf{H}^*(\beta))}^{\in H^1(Y;\Z[\pi])} \right) \cap [Y] \right) \times \overbrace{1_{H_0(S^1;\Z)}}^{\in H_0(S^1;\Z)}.
\end{align}
Furthermore, after applying~$\mult_0$, the following equality holds in~$\Z[\pi]$:
$$
\mult_0\left(\left(p_Y^*(c_1|_Y^*(\alpha)) \cup
\left(p_Y^*(\psi(\mathbf{H}^*(\beta)))   \cup p_{S^1}^*(1_{H^1(S^1)})\right)\right)
 \cap [Y \times S^1]\right)
=
\mult_0\left(
(c_1|_Y^*(\alpha) \cup \psi(\mathbf{H}^*(\beta)))
\cap [Y]\right).
$$
\end{lemma}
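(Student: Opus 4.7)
The plan is to systematically convert the pullbacks under the projections $p_Y, p_{S^1} \colon Y \times S^1 \to Y, S^1$ into external cross products, then exploit the K\"unneth decomposition $[Y \times S^1] = [Y] \times [S^1]$ together with the standard cap--cross compatibility formula
\[(a \times b) \cap (x \times y) = \pm (a \cap x) \times (b \cap y),\]
available (in the twisted coefficient setup of Section~\ref{sub:Twisted}) in the appendix, to reduce the cap product on $Y \times S^1$ to a cap product on $Y$.

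More concretely, the first step is to use naturality and associativity of the cup product to rewrite
\[p_Y^*(c_1|_Y^*(\alpha)) \cup p_Y^*(\psi(\mathbf{H}^*(\beta))) = p_Y^*\bigl(c_1|_Y^*(\alpha) \cup \psi(\mathbf{H}^*(\beta))\bigr),\]
and then identify $p_Y^*(x) \cup p_{S^1}^*(y)$ with the external cross product $x \times y$. This turns the left-hand side of \eqref{eq:CupCalculationLemma} into
\[\bigl((c_1|_Y^*(\alpha) \cup \psi(\mathbf{H}^*(\beta))) \times 1_{H^1(S^1)}\bigr) \cap \bigl([Y] \times [S^1]\bigr).\]
Applying the cap--cross formula together with the elementary identity $1_{H^1(S^1)} \cap [S^1] = 1_{H_0(S^1;\Z)}$ then yields exactly the right-hand side of \eqref{eq:CupCalculationLemma}.

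For the second assertion, I would observe that, under the K\"unneth-type description of $H_0$ with diagonal coefficients afforded by Lemma~\ref{lem:Ccoinvariants}, the cross product with $1_{H_0(S^1;\Z)}$ becomes the identity map on the $Y$-factor, and the map $\mult_0$ is manifestly compatible with this: both sides project to the same coinvariant quotient $\Z[\pi] \otimes_{\Z[\pi]} \overline{\Z[\pi]}$, on which $\mult$ acts identically. Applying $\mult_0$ to both sides of \eqref{eq:CupCalculationLemma} thus produces the claimed equality in $\Z[\pi]$.

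The main obstacle I anticipate is purely bookkeeping: the classes $\mathbf{H}^*(\alpha), \mathbf{H}^*(\beta)$ a priori take values in the bimodule $\Z[\pi] \boxtimes_\Z \Z$ of Proposition~\ref{prop:KunnethYS1}, which is silently identified with $\Z[\pi]$ via $\mu$; after cupping, one lands in $\Z[\pi] \boxtimes \Z[\pi]$ with the diagonal action. Making sure that every invocation of the twisted cup, cap and cross products from the appendix uses the correct $\boxtimes$-structure, and that the identification $\mu$ commutes with all of these operations, is the only delicate point; once the coefficient bookkeeping is set up correctly, the rest of the argument is purely formal.
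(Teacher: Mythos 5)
Your proposal is correct and follows essentially the same route as the paper: rewrite the cup products via naturality and associativity, convert $p_Y^*(-)\cup p_{S^1}^*(-)$ into a cross product, use $[Y\times S^1]=[Y]\times[S^1]$ together with the cap--cross compatibility (the paper cites a lemma of Conway--Nagel for this), and deduce the $\mult_0$ statement from the coinvariant description of $H_0$ (Remark~\ref{rem:CrossLemma}). The only loose end is the sign $\pm$ in your cap--cross formula, which here is $+1$ since the relevant degrees make the Koszul sign trivial.
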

\begin{proof}
First, using the associativity
and naturality of the cup product,  
the left hand side of~\eqref{eq:CupCalculationLemma} can be rewritten as
$$
 \left(\left(\overbrace{p_Y^*(c_1|_Y^*(\alpha) \cup \psi(\mathbf{H}^*(\beta)))}^{\in H^3(Y\times S^1;\Z[\pi] \boxtimes \Z[\pi])} \right) \cup \overbrace{p_{S^1}^*(1_{H^1(S^1)})}^{\in H^1(Y\times S^1)}\right) \cap [Y \times S^1].
~$$
Dimensional considerations ensure that $[Y \times S^1]=[Y] \times [S^1] \in H_4(Y \times S^1).$
Using the definition of the cross product we can therefore rewrite the previous equality as
$$ 
\left(
\left( \overbrace{c_1|_Y^*(\alpha)}^{\in H^2(Y;\Z[\pi])} \cup \overbrace{\psi(\mathbf{H}^*(\beta))}^{\in H^1(Y;\Z[\pi])}
\right)
 \times \overbrace{1_{H^1(S^1)}}^{\in H^1(S^1)}
 \right) \cap ([Y] \times [S^1]).$$
Using properties of the cross product~see e.g. \cite[Lemma 5.2 item (2)]{ConwayNagel}), this equals
\begin{align*}
\overbrace{(c_1|_Y^*(\alpha) \cup \psi(\mathbf{H}^*(\beta))) \cap [Y])}^{\in H_0(Y;\Z[\pi])} \times \overbrace{(1_{H^1(S^1)} \cap [S^1])}^{\in H_0(S^1;\Z)}=\overbrace{(c_1|_Y^*(\alpha) \cup \psi(\mathbf{H}^*(\beta))) \cap [Y])}^{\in H_0(Y;\Z[\pi])} \times \overbrace{1_{H_0(S^1)}}^{\in H_0(S^1;\Z)}.
\end{align*}
To obtain the last sentence of the lemma,  apply Remark~\ref{rem:CrossLemma} from the appendix. 
\end{proof}

We focus on the second summand of Lemma~\ref{lem:HaHb}, namely
$$ \left( \left( \overbrace{p_Y^*(\psi(\mathbf{H}^*(\alpha)))}^{\in H^1(Y \times S^1;\Z[\pi])} \cup \overbrace{p_{S^1}^*(1_{H^1(S^1)})}^{\in H^1(Y \times S^1;\Z)} \right) \cup \overbrace{p_Y^*(c_1|^*_Y(\beta))}^{\in H^2(Y \times S^1;\Z[\pi])}\right) \cap [Y \times S^1].$$
Since twisted cup products only satisfy graded commutativity ``up to isomorphism", we cannot immediately exchange the terms in this expression and apply the same reasoning as in Lemma~\ref{lem:FirstTerm}.
Fortunately, this becomes possible after applying~$\mult_0$, as the next lemma demonstrates.

\begin{lemma}
\label{lem:SecondTerm}
The following equality holds in~$\Z[\pi]$:
\begin{align*}
&\mult_0\left(\left(\left(p_Y^*(\psi(\mathbf{H}^*(\alpha))) \cup p_{S^1}^*(1_{H^1(S^1)})\right) \cup p_Y^*(c_1|_Y^*(\beta)) \right) \cap [Y \times S^1]\right) \\
&=
\overline{\mult_0\left(\left(c_1|_Y^*(\beta) \cup \psi(\mathbf{H}^*(\alpha)\right) \cap [Y]\right)}.
\end{align*}
\end{lemma}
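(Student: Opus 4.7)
The strategy is to mimic the argument of Lemma~\ref{lem:FirstTerm}, but since the degree-two class $p_Y^*(c_1|_Y^*(\beta))$ now sits on the right of the cup product rather than on the left, we first have to move it past the two degree-one classes, and this is precisely where the involution appears.

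First, I would invoke graded commutativity of the twisted cup product: moving the degree-two class $p_Y^*(c_1|_Y^*(\beta))$ past $p_Y^*(\psi(\mathbf{H}^*(\alpha)))$ and past $p_{S^1}^*(1_{H^1(S^1)})$ introduces a sign $(-1)^{2 \cdot 1} \cdot (-1)^{2 \cdot 1} = +1$, but it also requires applying the flip isomorphism
$$ \operatorname{flip} \colon \Z[\pi] \boxtimes \Z \boxtimes \Z[\pi] \xrightarrow{\cong} \Z[\pi] \boxtimes \Z[\pi] \boxtimes \Z, \quad a\otimes b \otimes c \mapsto \overline{c} \otimes a \otimes b$$
on coefficients, as in the twisted Künneth/cross-product graded-commutativity formula (see the appendix; compare also the use of $\operatorname{flip}_{23}$ in the proof of Lemma~\ref{lem:HaHb}). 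This rewrites the expression as
$$\operatorname{flip}_*\Bigl(\bigl(p_Y^*(c_1|_Y^*(\beta)) \cup p_Y^*(\psi(\mathbf{H}^*(\alpha)))\bigr) \cup p_{S^1}^*(1_{H^1(S^1)})\Bigr) \cap [Y \times S^1].$$

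Next, I would run the very same manipulations as in Lemma~\ref{lem:FirstTerm}: use naturality and associativity of the cup product to combine the two $p_Y^*$ factors into $p_Y^*(c_1|_Y^*(\beta) \cup \psi(\mathbf{H}^*(\alpha)))$, factor $[Y \times S^1] = [Y] \times [S^1]$, and apply the cross-product identity $(x \times y) \cap (a \times b) = (x \cap a) \times (y \cap b)$ (e.g.\ \cite[Lemma~5.2]{ConwayNagel}) together with $1_{H^1(S^1)} \cap [S^1] = 1_{H_0(S^1)}$. This expresses the left-hand side as
$$\operatorname{flip}_*\bigl(\bigl((c_1|_Y^*(\beta) \cup \psi(\mathbf{H}^*(\alpha))) \cap [Y]\bigr) \times 1_{H_0(S^1)}\bigr) \in H_0(Y \times S^1; \Z[\pi] \boxtimes \Z[\pi]).$$

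Finally, I would apply $\mult_0$. The key compatibility (which I would verify directly from the definitions of $\mult_0$ and $\operatorname{flip}$ in Notation~\ref{not:mult0} and from the explicit formula $\mult(a \otimes b) = a\overline{b}$) is that
$$\mult_0 \circ \operatorname{flip}_* = \overline{(-)} \circ \mult_0,$$
because on the level of coefficients $a \otimes b \mapsto \overline{b} \otimes a$ followed by multiplication yields $\overline{b}\,\overline{a} = \overline{a b}$. Combined with the observation (as at the end of Lemma~\ref{lem:FirstTerm}, via Remark~\ref{rem:CrossLemma}) that $(-) \times 1_{H_0(S^1)}$ is absorbed by $\mult_0$, this gives exactly
$$\overline{\mult_0\bigl((c_1|_Y^*(\beta) \cup \psi(\mathbf{H}^*(\alpha))) \cap [Y]\bigr)},$$
as desired. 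The only subtle point I expect to have to be careful about is tracking the exact form of the twisted graded-commutativity isomorphism (i.e.\ which tensor factors get conjugated and in what order), so that its composition with $\mult_0$ really does produce the single conjugation bar and no extra sign.
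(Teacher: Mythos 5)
Your route is essentially the paper's: the paper also swaps the two degree-two factors using the graded symmetry of the twisted cup product and then reduces to the computation of Lemma~\ref{lem:FirstTerm}, so in substance your plan works. The one concrete issue is your final coefficient check. In the twisted graded-commutativity statement the coefficient isomorphism is the plain swap $a\otimes b\mapsto b\otimes a$ (no conjugation); the bar then appears because $\mult$ is hermitian, i.e.\ $\mult(b\otimes a)=b\overline{a}=\overline{a\overline{b}}=\overline{\mult(a\otimes b)}$. With the conjugated flip you wrote, $a\otimes b\mapsto \overline{b}\otimes a$, one gets $\mult(\overline{b}\otimes a)=\overline{b}\,\overline{a}=\overline{ab}$, which is \emph{not} $\overline{\mult(a\otimes b)}=b\overline{a}$ in a noncommutative group ring, so the identity $\mult_0\circ\operatorname{flip}_*=\overline{(\cdot)}\circ\mult_0$ fails for the flip as you defined it. This is exactly the bookkeeping the paper packages into Lemma~\ref{lem:CupProductSymmetric}: applying that lemma once, with $a=p_Y^*(c_1|_Y^*(\beta))$ and $b$ the grouped degree-two class $p_Y^*(\psi(\mathbf{H}^*(\alpha)))\cup p_{S^1}^*(1_{H^1(S^1)})$, produces the conjugation bar immediately (sign $(-1)^{2\cdot 2}=+1$), after which the argument of Lemma~\ref{lem:FirstTerm} applies verbatim; you then never have to carry an explicit coefficient flip through the Künneth manipulations. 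So fix the flip to the plain swap (or simply invoke Lemma~\ref{lem:CupProductSymmetric} up front) and your proof is correct.
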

\begin{proof}
The first of the following equalities follows from the symmetry of twisted cup products (see \cref{lem:CupProductSymmetric}), whereas the second follows as in the proof of \cref{lem:FirstTerm}:
\begin{align*}
&\mult_0\left(\left(\left(p_Y^*(\psi(\mathbf{H}^*(\alpha))) \cup p_{S^1}^*(1_{H^1(S^1)})\right) \cup p_Y^*(c_1|_Y^*(\beta)) \right) \cap [Y \times S^1]\right) \\
&=\overline{\mult_0\left(\left(p_Y^*(c_1|_Y^*(\beta))\cup \left(p_Y^*(\psi(\mathbf{H}^*(\alpha))) \cup p_{S^1}^*(1_{H^1(S^1)})\right) \right) \cap [Y \times S^1]\right)}\\
&=
\overline{\mult_0\left(\left(c_1|_Y^*(\beta) \cup \psi(\mathbf{H}^*(\alpha)\right) \cap [Y]\right)}.
\end{align*}
This concludes the proof of the lemma.
\end{proof}

We can conclude the proof of Proposition~\ref{prop:PairingbH} which, recall, states that for~$\alpha,\beta\in H^2(B;\Z[\pi])$,  we have the following equation in~$\Z[\pi]$:
$$
\mult_0((\mathbf{H}^*(\alpha) \cup \mathbf{H}^*(\beta)) \cap [Y \times S^1] )
=
\phi_\mathbf{H}'(\beta)(c_1|_Y^*(\alpha) \cap [Y]) + 
\overline{\phi_\mathbf{H}'(\alpha)(c_1|_Y^*(\beta) \cap [Y])}.
$$
\begin{proof}[Proof of Proposition~\ref{prop:PairingbH}]
The combination of Lemmas~\ref{lem:HaHb},~\ref{lem:FirstTerm} and~\ref{lem:SecondTerm} gives
\begin{align*}
&\mult_0((\mathbf{H}^*(\alpha) \cup \mathbf{H}^*(\beta)) \cap [Y \times S^1] ) \\
&=
\mult_0(
(c_1|_Y^*(\alpha) \cup \psi(\mathbf{H}^*(\beta)))
\cap [Y])
+\overline{\mult_0\left(\left(c_1|_Y^*(\beta) \cup \psi(\mathbf{H}^*(\alpha)\right) \cap [Y]\right)}.
\end{align*}
Use the symmetry of twisted cup products (see \cref{lem:CupProductSymmetric}),
to rewrite this as
$$
\overline{\mult_0\left(
 \left(\psi(\mathbf{H}^*(\beta))
 \cup 
 c_1|_Y^*(\alpha)\right)
\cap [Y]\right)}
+\mult_0\left(
\left(
\psi(\mathbf{H}^*(\alpha))
\cup 
c_1|_Y^*(\beta)
\right) \cap [Y]\right).
$$
Applying the equality~$(x \cup y )\cap z=x \cap (y \cap z)$ and the 
equality~$\mult_0(\varphi \cap x)= \overline{\langle \varphi,x\rangle} \in \Z[\pi]$ from~\eqref{eq:lem:capev} now yields: 
\begin{align*}
   &\overline{\mult_0(
\psi(\mathbf{H}^*(\beta)) \cap (c_1|_Y^*(\alpha) \cap [Y] )}
+\mult_0(
\psi(\mathbf{H}^*(\alpha)) \cap (c_1|_Y^*(\beta) \cap [Y] ) \\
=
   &
\langle \psi(\mathbf{H}^*(\beta)), c_1|_Y^*(\alpha) \cap [Y] \rangle
+ \overline{\langle
\psi(\mathbf{H}^*(\alpha)),c_1|_Y^*(\beta) \cap [Y] \rangle }\\
=&\phi_\mathbf{H}'(\beta)(c_1|_Y^*(\alpha) \cap [Y])
+ 
\overline{\phi_\mathbf{H}'(\alpha)(c_1|_Y^*(\beta) \cap [Y])}.
\end{align*}
This concludes the proof of the proposition.
\end{proof}

The next proposition leads to the announced reformulation of the set~$\Homotopy(c_1|_Y)(c_1^*b_{X_1}^\partial).$

\begin{proposition}
\label{prop:H2H1}
Given~$\mathbf{H} \in  \Homotopy(c_1|_Y)$, the following assertions are equivalent:
\begin{enumerate}
\item~$(\Xi(\mathbf{H}) \circ c_1)^*(b_{X_1}^\partial)=c_1^*b_{X_1}^\partial$.
\item The map~$\phi_{\mathbf{H}}' \colon H^2(B;\Z[\pi]) \to H^1(Y;\Z[\pi])$ factors through~$H^2(Y;\Z[\pi])$, meaning that there exists a homomorphism~$\phi \colon H^2(Y;\Z[\pi]) \to H^1(Y;\Z[\pi])$ such that ~$\phi_{\mathbf{H}}'=\phi \circ c_1|_Y^*:$
$$
\xymatrix@C0.8cm{
 H^2(B;\Z[\pi])  \ar[r]^-{\mathbf{H}^*}  \ar[d]^{c_1|_Y^*} \ar@/^3pc/[rrr]^{\phi_{\mathbf{H}}'}
&H^2(Y \times S^1;\Z[\pi]) \ar[r]^-{\psi}
&H^1(Y;\Z[\pi]) \ar[r]^-{\ev}
 &H_1(Y;\Z[\pi])^*. \\
 H^2(Y;\Z[\pi]) \ar@{-->}[rrru]_-{\phi}
 }
$$
\end{enumerate}
\end{proposition}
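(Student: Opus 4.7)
The plan is to chain together Lemma~\ref{lem:UnderstandPullback} and Proposition~\ref{prop:PairingbH} and then exploit two elementary facts: the exactness of the long exact sequence of the pair $(B,Y)$ and the surjectivity of $c_1|_Y^*$.

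First, by Lemma~\ref{lem:UnderstandPullback}, condition~$(1)$ is equivalent to the vanishing, for every $r \in H^2(B,Y;\Z[\pi])$ and $a \in H^2(B;\Z[\pi])$, of
\[\mult_0\bigl((\mathbf{H}^*(j^*(r)) \cup \mathbf{H}^*(a)) \cap [Y \times S^1]\bigr).\]
Applying Proposition~\ref{prop:PairingbH} with $\alpha = j^*(r)$ and $\beta = a$ rewrites this expression as
\[\phi_\mathbf{H}'(a)\bigl(c_1|_Y^*(j^*(r)) \cap [Y]\bigr) + \overline{\phi_\mathbf{H}'(j^*(r))\bigl(c_1|_Y^*(a) \cap [Y]\bigr)}.\]
The composition $c_1|_Y^* \circ j^*$ is zero by exactness of the cohomology long exact sequence of the pair $(B,Y)$, so the first summand vanishes identically. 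Thus condition~$(1)$ becomes the requirement that $\phi_\mathbf{H}'(j^*(r))(c_1|_Y^*(a) \cap [Y]) = 0$ for every $r$ and $a$.

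Next, I would invoke the surjectivity of $c_1|_Y^* \colon H^2(B;\Z[\pi]) \to H^2(Y;\Z[\pi])$ (established in the proof of Proposition~\ref{prop:PullbackProperties}) together with the fact that $- \cap [Y] \colon H^2(Y;\Z[\pi]) \to H_1(Y;\Z[\pi])$ is the Poincar\'e duality isomorphism. Consequently, as $a$ ranges over $H^2(B;\Z[\pi])$, the element $c_1|_Y^*(a) \cap [Y]$ ranges over all of $H_1(Y;\Z[\pi])$. The vanishing condition therefore reduces to $\phi_\mathbf{H}'(j^*(r)) = 0$ for every $r \in H^2(B,Y;\Z[\pi])$, i.e., to $\phi_\mathbf{H}'$ vanishing on $\im(j^*)$.

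Finally, the exact sequence
\[H^2(B,Y;\Z[\pi]) \xrightarrow{j^*} H^2(B;\Z[\pi]) \xrightarrow{c_1|_Y^*} H^2(Y;\Z[\pi]) \to 0\]
from the proof of Proposition~\ref{prop:PullbackProperties} identifies $\im(j^*)$ with $\ker(c_1|_Y^*)$. By the universal property of the quotient, $\phi_\mathbf{H}'$ vanishes on this kernel if and only if it factors through $c_1|_Y^*$, which is precisely condition~$(2)$. No step poses a real obstacle: the entire argument is a bookkeeping exercise combining the two preceding lemmas with the long exact sequence of the pair, and I anticipate the proof to be only a handful of lines.
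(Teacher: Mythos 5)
Your proposal is correct and follows essentially the same route as the paper's proof: both reduce condition $(1)$ via Lemma~\ref{lem:UnderstandPullback} and Proposition~\ref{prop:PairingbH} to the vanishing of $\phi_{\mathbf{H}}'$ on $\im(j^*)$, using $c_1|_Y^*\circ j^*=0$, the surjectivity of $c_1|_Y^*$, Poincar\'e duality on $Y$, and the exact sequence $H^2(B,Y;\Z[\pi])\to H^2(B;\Z[\pi])\to H^2(Y;\Z[\pi])\to 0$. The only difference is organizational: you run a single chain of equivalences (noting the first summand vanishes identically), whereas the paper proves the two implications separately — the mathematics is the same.
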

\begin{proof}
Given~$\alpha,\beta\in H^2(B;\Z[\pi])$, set 
	\[b_{\mathbf{H}}(\alpha,\beta):=
 \phi_\mathbf{H}'(\beta)(c_1|_Y^*(\alpha) \cap [Y]) + 
\overline{\phi_\mathbf{H}'(\alpha)(c_1|_Y^*(\beta) \cap [Y])}. \]
Lemma~\ref{lem:UnderstandPullback} and Proposition~\ref{prop:PairingbH} imply that for every~$a \in H^2(B;\Z[\pi])$ and~$r \in H^2(B,Y;\Z[\pi])$,
$$(\Xi(\mathbf{H}) \circ c_1)^*(b_{X_1}^\partial)(r,a)=c_1^*b_{X_1}^\partial(r,a)+b_{\mathbf{H}}(j^*(r),a).$$
It therefore suffices to show that~$b_{\mathbf{H}}(j^*(r),a)=0$ for every ~$a \in H^2(B;\Z[\pi])$ and~$r \in H^2(B,Y;\Z[\pi])$ if and only if there exists a~$\phi \in \Hom(H^2(Y;\Z[\pi]),H_1(Y;\Z[\pi])^*)$ such that~$\phi_{\mathbf{H}}'=\phi \circ c_1|_Y^*.$

We first assume that~$\phi_{\mathbf{H}}'=\phi
 \circ c_1|_Y^*$ exists and prove that~$b_{\mathbf{H}}(j^*(r),a)=0$ for every ~$a \in H^2(B;\Z[\pi])$ and~$r \in H^2(B,Y;\Z[\pi])$. 
This follows because,  by exactness of the sequence of the pair~$(B,Y)$,  we have~$c_1|_Y^* \circ j^*=0$ and therefore 
\begin{align*}
b_{\mathbf{H}}(j^*(r),a)
&=\phi_\mathbf{H}'(a)(c_1|_Y^*(j^*(r)) \cap [Y]) + 
\overline{\phi_\mathbf{H}'(j^*(r))(c_1|_Y^*(a) \cap [Y])} \\
&=\phi_\mathbf{H}'(a)(\underbrace{c_1|_Y^*(j^*(r))}_{=0} \cap [Y]) + 
\overline{\phi(\underbrace{c_1|_Y^*(j^*(r))}_{=0})(c_1|_Y^*(a) \cap [Y])} 
=0.
\end{align*}
We now prove the converse: we assume that~$b_{\mathbf{H}}(j^*(r),a)=0$ for every~$a \in H^2(B;\Z[\pi])$ and every~$r \in H^2(B,Y;\Z[\pi])$ and prove that~$\phi$ exists.
Consider the exact sequence
$$
\xymatrix{
H^2(B,Y;\Z[\pi]) \ar[r]^-{j^*}& H^2(B;\Z[\pi]) \ar[r]^{(c_1)|^*_Y}\ar[d]_{\phi_{\mathbf{H}}'}& H^2(Y;\Z[\pi]) \ar[r]\ar@{-->}[ld]^{\phi}& 0. \\
&H_1(Y;\Z[\pi])^*&&
}
$$ 
The rightmost zero follows from the fact,  proved in Proposition~\ref{prop:PullbackProperties},  that~$(c_1)|^*_Y$ is surjective. 
This diagram shows that proving~$\phi_{\mathbf{H}}' \circ j^*=0$ suffices to establish the existence of~$\phi$.
To do so, we first note that by exactness we have for every~$a \in H^2(B;\Z[\pi])$ and~$r \in H^2(B,Y;\Z[\pi])$:
\begin{align*}
0&=b_{\mathbf{H}}(j^*(r),a) 
=\phi_\mathbf{H}'(a)(\overbrace{c_1|_Y^*(j^*(r))}^{=0} \cap [Y]) + 
\overline{\phi_\mathbf{H}'(j^*(r))(c_1|_Y^*(a) \cap [Y])} 
=\overline{\phi_\mathbf{H}'(j^*(r))(c_1|_Y^*(a) \cap [Y])}.
\end{align*}

Since the map~$c_1|_Y^* \colon H^2(Y;\Z[\pi]) \to H^2(B;\Z[\pi])$ is surjective, this implies
$\phi_\mathbf{H}'(j^*(r))(x)=0$ for every~$x \in  H_1(Y;\Z[\pi])$.
It follows that~$\phi_{\mathbf{H}}'(j^*(r))=0 \in H_1(Y;\Z[\pi])^*$ for every~$r \in H^2(B,Y;\Z[\pi])$.
As mentioned above,  this implies that~$\phi_{\mathbf{H}}'=\phi \circ c_1|_Y^*$ for some map~$\phi \colon H^2(Y;\Z[\pi]) \to H_1(Y;\Z[\pi])^*$, as required.
	\end{proof}
	
In order to describe the action of~$\Xi(\Homotopy(c_1|_Y)(c_1^*b_{X_1}^\partial))$ on~$\Herm(H^2(Y;\Z[\pi]))$, we construct the sesqulinear form $q_{\mathbf{H}}$ that appears in the statement of Theorem~\ref{thm:GoalActionpdf}.
\begin{construction}[{The sesquilinear form~$q_{\mathbf{H}}  \in \Sesq(H^2(Y;\Z[\pi]))$}]
\label{cons:qH}
Given a homotopy~$\mathbf{H} \in \Homotopy(c_1|_Y)(c_1^*b_{X_1}^\partial)$ and a map~$\phi \colon H^2(Y;\Z[\pi]) \to H_1(Y;\Z[\pi])^*$ with~$\phi'_{\mathbf{H}}=\phi \circ c_1|_Y^*$ (which exists by Proposition~\ref{prop:H2H1}),  consider the pairing 
\begin{align*}
q_{\mathbf{H}} \colon H^2(Y;\Z[\pi]) \times H^2(Y;\Z[\pi]) &\to \Z[\pi] \\
(a,b) &\mapsto 
\phi(b)(a \cap [Y]).
\end{align*}
\end{construction}

The next proposition describes the action of~$\Xi(\Homotopy(c_1|_Y)(c_1^*b_{X_1}^\partial))$ on~$\Herm(H^2(Y;\Z[\pi]))$ thereby proving the first half of Theorem~\ref{thm:GoalActionpdf}.
	
\begin{proposition}
\label{prop:HowItActs}
Given a homotopy~$\mathbf{H} \in \Homotopy(c_1|_Y)(c_1^*b_{X_1}^\partial)$,  the action of~$\Xi(\mathbf{H}) \in \mathcal{G}$ on~$\lambda \in \operatorname{Herm}(H^2(Y;\Z[\pi]))$ is by
$$\Xi(\mathbf{H})\cdot \lambda=\lambda+q_{\mathbf{H}}+q_{\mathbf{H}}^*$$
where~$q_{\mathbf{H}} \in \Hom(H^2(Y;\Z[\pi]),H^2(Y;\Z[\pi])^*)$ is the pairing from Construction~\ref{cons:qH} associated to~$\mathbf{H}$ and to a~$\phi \colon H^2(Y;\Z[\pi]) \to H_1(Y;\Z[\pi])^*$ with~$\phi'_{\mathbf{H}}=\phi \circ c_1|_Y^*$.
	\end{proposition}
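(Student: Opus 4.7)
The plan is to compute the hermitian form $b(\Xi(\mathbf{H}))$ defined in Construction~\ref{cons:bphi} directly, and recognise it as $q_{\mathbf{H}}+q_{\mathbf{H}}^*$. Then the statement follows from Proposition~\ref{prop:Action}, which says $\Xi(\mathbf{H})\cdot\lambda = \lambda + b(\Xi(\mathbf{H}))$.

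First I would identify an explicit representative for the class $\xi\in H_4(B)$ characterised by $j_*(\xi)=(\Xi(\mathbf{H})\circ c_1)_*([X_1,Y])-(c_1)_*([X_1,Y])$. Lemma~\ref{lem:FundamentalClass} gives precisely this identity with $\xi=\mathbf{H}_*([Y\times S^1])$, and since $H_4(B)\to H_4(B,Y)$ is injective (by the proof of Proposition~\ref{prop:FundamentalClassUnion}), this must be \emph{the} class $\xi$ appearing in the definition of $b(\Xi(\mathbf{H}))$.

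Next, given $a,b\in H^2(Y;\Z[\pi])$ and lifts $a',b'\in H^2(B;\Z[\pi])$ with $c_1|_Y^*(a')=a$ and $c_1|_Y^*(b')=b$, I would compute
\[
b(\Xi(\mathbf{H}))(a,b)=\langle b',a'\cap \mathbf{H}_*([Y\times S^1])\rangle
=\langle \mathbf{H}^*(b'),\mathbf{H}^*(a')\cap[Y\times S^1]\rangle
\]
using naturality of cap products and evaluation. Rewriting this through the cohomological form $b_{[Y\times S^1]}$ and invoking the equality $b_\sigma=b_\sigma^\cup$ from Notation~\ref{not:mult0} (equation~\eqref{eq:lem:cup}) yields
\[
b(\Xi(\mathbf{H}))(a,b)=\mult_0\bigl((\mathbf{H}^*(a')\cup \mathbf{H}^*(b'))\cap[Y\times S^1]\bigr).
\]

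At this point Proposition~\ref{prop:PairingbH} applies directly: the right-hand side equals
\[
\phi_{\mathbf{H}}'(b')\bigl(c_1|_Y^*(a')\cap[Y]\bigr)+\overline{\phi_{\mathbf{H}}'(a')\bigl(c_1|_Y^*(b')\cap[Y]\bigr)}.
\]
Since $\mathbf{H}\in\Homotopy(c_1|_Y)(c_1^*b_{X_1}^\partial)$, Proposition~\ref{prop:H2H1} provides a factorisation $\phi_{\mathbf{H}}'=\phi\circ c_1|_Y^*$, so using $c_1|_Y^*(a')=a$, $c_1|_Y^*(b')=b$ and the definition of $q_{\mathbf{H}}$ from Construction~\ref{cons:qH} transforms the expression into $q_{\mathbf{H}}(a,b)+\overline{q_{\mathbf{H}}(b,a)}=(q_{\mathbf{H}}+q_{\mathbf{H}}^*)(a,b)$. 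This identifies $b(\Xi(\mathbf{H}))$ with $q_{\mathbf{H}}+q_{\mathbf{H}}^*$; combining with Proposition~\ref{prop:Action} gives the desired formula. There is no serious obstacle here beyond bookkeeping, as all of the analytic work is done in Lemma~\ref{lem:FundamentalClass} and Proposition~\ref{prop:PairingbH}; the only minor subtlety to check is that the value $q_{\mathbf{H}}+q_{\mathbf{H}}^*$ is independent of the choice of $\phi$ and of the lifts $a',b'$, but this is automatic since $b(\Xi(\mathbf{H}))$ is well defined on $H^2(Y;\Z[\pi])$ by the construction in Proposition~\ref{prop:PullbackProperties}.
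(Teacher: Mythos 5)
Your proposal is correct and follows essentially the same route as the paper's proof: identify $\xi=\mathbf{H}_*([Y\times S^1])$ via the fundamental class identity, rewrite $b(\Xi(\mathbf{H}))$ as $\mult_0$ of a cup--cap expression, apply Proposition~\ref{prop:PairingbH}, and conclude using the factorisation $\phi_{\mathbf{H}}'=\phi\circ c_1|_Y^*$ together with Proposition~\ref{prop:Action}. The only difference is cosmetic: you make the push--pull step $\langle b',a'\cap\mathbf{H}_*([Y\times S^1])\rangle=\langle\mathbf{H}^*(b'),\mathbf{H}^*(a')\cap[Y\times S^1]\rangle$ and the independence-of-choices remark explicit, which the paper leaves implicit.
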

	\begin{proof}
Recall that the action of~$\varphi \in \mathcal{G}$ on~$\lambda \in \operatorname{Herm}(H^2(Y;\Z[\pi]))$ is by~$\varphi \cdot \lambda=\lambda +b(\varphi)$ where~$b(\varphi)(a,b)=\langle \beta,\alpha \cap \xi\rangle$, where~$\alpha,\beta \in H^2(B;\Z[\pi])$ are such that~$c_1|_Y^*(\alpha)=a$ and~$c_1|_Y^*(\beta)=b.$
By~\eqref{eq:lem:cup}, this can be rewritten as
$$b(\varphi)(a,b)=\langle \beta,\alpha \cap \xi\rangle
=\mult_0((\alpha \cup \beta) \cap \xi)
.$$
Recall additionally that~$\xi \in H_4(B)$ is the unique class such that
$$j_*(\xi)=(\varphi \circ c_1)_*([X_1])-(c_1)_*([X_1]) \in H_4(B,Y).$$
We apply this definition to~$\varphi=\Xi(\mathbf{H}).$
First, note that
$$\Xi(\mathbf{H}) \cdot \lambda
=\lambda +b(\Xi(\mathbf{H}))
=\lambda +b(\widetilde{H}_1).$$
To determine the class~$\xi$, recall that we proved in Lemma~\ref{lem:UnderstandPullback} that 
$$(\widetilde{H}_1 \circ c_1)_*([X_1,Y])=(c_1)_*([X_1,Y])+j_*(\mathbf{H}_*([Y \times S^1])) \in H_4(B,Y).$$
We can therefore take~$\xi=\mathbf{H}_*([Y \times S^1])$ in the definition of~$b(\widetilde{H}_1)$.
Thus, for~$a,b \in H^2(Y;\Z[\pi])$ with~$c_1|_Y^*(\alpha)=a,c_1|_Y^*(\beta)=b$ for~$\alpha,\beta \in H^2(B;\Z[\pi])$,  the definition of~$b(\widetilde{H}_1)$ and Proposition~\ref{prop:PairingbH} imply that
\begin{align*} 
b(\widetilde{H}_1)(a,b)
&=\mult_0((\alpha \cup \beta) \cap \mathbf{H}_*([Y \times S^1]))  
=\phi_\mathbf{H}'(\beta)(c_1|_Y^*(\alpha) \cap [Y]) + 
\overline{\phi_\mathbf{H}'(\alpha)(c_1|_Y^*(\beta) \cap [Y])} \\
&=\phi \circ c_1|_Y^*(\beta)(c_1|_Y^*(\alpha) \cap [Y]) + 
\overline{\phi \circ c_1|_Y^*(\alpha)(c_1|_Y^*(\beta) \cap [Y])} 
=\phi(b)(a \cap [Y]) + 
\overline{\phi(a)(b \cap [Y])} \\
&=q_{\mathbf{H}}(a,b)+\overline{q_{\mathbf{H}}(b,a)}=q_{\mathbf{H}}(a,b)+q_{\mathbf{H}}^*(a,b).
\end{align*}
This concludes the proof of the proposition.
	\end{proof}

	\subsection{Realisation}
	\label{sub:Realisation}

In order to prove Theorem~\ref{thm:GoalActionpdf}, it remains to show that for any sesquilinear form~$q \in \Sesq(H^2(Y;\Z[\pi]))$, there is a~$\mathbf{H} \in \operatorname{Homotopy}(c_1)(c_1^*b_{X_1}^\partial)$ with~$q=q_{\mathbf{H}}$.
Here, given~$\phi \colon H^2(Y;\Z[\pi]) \to H_1(Y;\Z[\pi])^*$ with~$\phi_{\mathbf{H}}'=\phi \circ c_1|_Y^*$, recall from Construction~\ref{cons:qH} that 
\begin{equation}
\label{eq:qH}
q_{\mathbf{H}}(a,b)
:=\phi(b)(a \cap [Y]).
\tag{$\star$}
\end{equation}

\begin{plan}
\label{plan:ProgrammeFinalStep}
Fix~$q \in \Sesq(H^2(Y;\Z[\pi]))$.
In order to prove that there exists $\mathbf{H} \in \operatorname{Homotopy}(c_1)(c_1^*b_{X_1}^\partial)$ with~$q=q_{\mathbf{H}}$, we first note that there is a~$\phi \colon H^2(Y;\Z[\pi]) \to H_1(Y;\Z[\pi])^*$ with~$q(a,b)=\phi(b)(a \cap [Y])$
for every~$a,b \in H^2(Y;\Z[\pi]).$
Indeed the composition
$$\phi \colon H^2(Y;\Z[\pi]) \xrightarrow{q} H^2(Y;\Z[\pi])^* \xrightarrow{(\PD_Y^{-1})^*} 
H_1(Y;\Z[\pi])^* 
$$
is readily seen to satisfy
\begin{equation}
\label{eq:phiq}
 \phi(b)(a \cap [Y])=q(b)(a).
 \end{equation}
So we have to prove that for any~$\phi$ there is a~$\mathbf{H} \in \operatorname{Homotopy}(c_1)(c_1^*b_{X_1}^\partial)$ with~$\phi'_{\mathbf{H}}=\phi \circ c_1|_Y^*.$

Indeed, combining~\eqref{eq:phiq} with~\eqref{eq:qH},  it would then follow that
$$ q_{\mathbf{H}}(a,b)=\phi(b)(a \cap [Y])=q(a,b).$$
\end{plan}
Summarising, we need to prove that for every homomorphism~$\phi \colon H^2(Y;\Z[\pi]) \to H^1(Y;\Z[\pi])$ there is a~$\mathbf{H}\in \operatorname{Homotopy}(c_1)(c_1^*b_{X_1}^\partial)$ with~$\phi'_{\mathbf{H}}=\phi \circ c_1|_Y^*$.

We begin with two algebraic lemmas.
\begin{lemma}
\label{lem:DualExist}
Assume that one of the two following hypotheses hold:
\begin{enumerate}
\item either $P$ is a projective~$\Z[\pi]$-module,
\item or $\pi$ is finite and $P$ is stably isomorphic to the augmentation ideal $I$.
\end{enumerate}
For every~$\Z[\pi]$-module~$L$ and every~$\Z[\pi]$-linear map~$f\colon P^*\to L^*$ there is a unique~$g\colon L\to P$ with~$g^*=f$.
\end{lemma}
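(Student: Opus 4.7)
The plan is to prove the stronger assertion that the natural map
\[ \eta \colon \Hom_{\Z[\pi]}(L,P) \to \Hom_{\Z[\pi]}(P^*, L^*), \qquad g \mapsto g^*, \]
is an isomorphism under either hypothesis; uniqueness in the statement of the lemma then follows from injectivity of $\eta$. Both $\Hom_{\Z[\pi]}(-, P)$ and $\Hom_{\Z[\pi]}(P^*, (-)^*)$ are left exact contravariant functors of $L$ that convert direct sums into direct products (the latter because $L\mapsto L^*$ already turns direct sums into products, and $\Hom(P^*,-)$ preserves products). Choosing a free presentation $\bigoplus_J \Z[\pi] \to \bigoplus_K \Z[\pi] \to L \to 0$, the naturality of $\eta$ and the five lemma therefore reduce the task to the case $L=\Z[\pi]$. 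Under the identifications $\Hom(\Z[\pi], P) = P$ (evaluation at $1$) and $\Hom(P^*, \Z[\pi]^*) = P^{**}$, the map $\eta$ becomes the canonical double-dual map $P \to P^{**}$, so the problem further reduces to showing that this double-dual is an isomorphism.

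In case (1), $P$ is a direct summand of some $\Z[\pi]^n$. Since the double-dual map is natural, additive, and clearly the identity on $\Z[\pi]$, the conclusion follows.

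For case (2), I would first establish that $I \to I^{**}$ is an isomorphism and then deduce the analogous statement for $P$ via the stable isomorphism $P \oplus \Z[\pi]^a \cong I \oplus \Z[\pi]^b$, additivity of the double-dual map, and case (1) for the free summands. To handle $I$, apply $\Hom_{\Z[\pi]}(-, \Z[\pi])$ to the short exact sequence $0 \to I \to \Z[\pi] \to \Z \to 0$. Since $\pi$ is finite, $H^k(\pi;\Z[\pi])=0$ for every $k \geq 1$, so this yields a short exact sequence $0 \to \Z[\pi]^\pi \to \Z[\pi] \to I^* \to 0$, identifying $I^*$ with $\Z[\pi]/\Z[\pi]^\pi$. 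Applying $\Hom_{\Z[\pi]}(-, \Z[\pi])$ once more, and using that $\Z[\pi]^\pi = \Z \cdot N$ (where $N = \sum_{g \in \pi} g$ is the norm) carries the trivial action, together with the identity $rN = \aug(r)\cdot N$, one identifies the resulting map $\Z[\pi] \to \Hom_{\Z[\pi]}(\Z[\pi]^\pi, \Z[\pi]) \cong \Z$ with the augmentation $\aug$. Hence $I^{**} \cong \ker(\aug) = I$, and a direct verification shows that this isomorphism coincides with the canonical double-dual map.

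The main obstacle lies in the careful bookkeeping with the $\Z[\pi]$-involution used in the definition of $(-)^*$, which turns the right $\Z[\pi]$-module $\Hom_{\Z[\pi]}(-, \Z[\pi])$ into a left $\Z[\pi]$-module. In particular, one must verify that the evaluation identifications $\Hom(\Z[\pi], P) \cong P$ and $\Hom(P^*, \Z[\pi]) \cong P^{**}$ are $\Z[\pi]$-linear and are compatible with $\eta$, and similarly that the identification $I^{**} \cong I$ obtained above agrees with the canonical double-dual map. Once these algebraic compatibilities are pinned down, the rest of the argument is formal.
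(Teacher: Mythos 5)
Your argument is correct, but it is organised quite differently from the paper's. The paper proves case (1) by writing down the solution explicitly, $g:=\ev_P^{-1}\circ f^*\circ \ev_L$, and checking $g^*=f$ via the triple-dual identity; for case (2) it never discusses $I^{**}$: it dualises $0\to P\to\Z[\pi]^{\oplus n}\to\Z\to 0$ (using $H^1(\pi;\Z[\pi])=0$ to get $i^*$ surjective), solves the problem in the free module by case (1), shows the resulting map lands in $P$ using the norm element, and then runs a separate stabilisation step to pass from $I\oplus\Z[\pi]^{\oplus n-1}$ to a module merely stably isomorphic to $I$. You instead reduce the whole statement, for arbitrary $L$, to the single assertion that $\ev_P\colon P\to P^{**}$ is an isomorphism (free presentation of $L$, left exactness, naturality of $g\mapsto g^*$), and then verify reflexivity: trivially for finitely generated projective $P$, and for $P$ stably isomorphic to $I$ by double-dualising the augmentation sequence -- again using $H^1(\pi;\Z[\pi])=0$ and the norm element -- together with additivity of $\ev$ and cancellation of free summands. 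Your route makes transparent that reflexivity of $P$ is the only real input (which is also what drives the paper's projective case), at the cost of the deferred check that your identification $I^{**}\cong I$ is the canonical evaluation map; that check does go through with the paper's involution conventions (under the identifications $I^*\cong\Z[\pi]/\Z N$ and $I^{**}\cong\ker(\aug)$, the map $\ev_I$ becomes the identity), so it is genuinely a routine verification rather than a gap. The paper's route avoids double duals of $I$ entirely but pays with the ``image lies in $P$'' argument and the stabilisation lemma. One shared caveat: in case (1) you take $P$ to be a summand of $\Z[\pi]^n$ with $n$ finite, i.e.\ finitely generated; the paper's proof makes the same implicit assumption (projective $\Rightarrow$ reflexive needs finite generation), and it is harmless since in all applications $P\cong\pi_2(X)$ is finitely generated.
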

\begin{proof}
During this proof, given a $\Z[\pi]$-module $V$, we write $\ev_V \colon V \to V^{**}$ for~$\ev_V(x)(f)=f(x).$
A quick verification shows that~$V^*\xrightarrow{\ev_{V^*}}V^{***}\xrightarrow{\ev_{V}^*}V^*$ is the identity on~$V^*$.
	
We begin with the projective case.
	Since~$P$ is projective,~$\ev_P\colon P\to P^{**}$ is an isomorphism and we define~$g:=\ev_P^{-1}\circ f^*\circ \ev_{L}$. 
	Since~$\ev_P$ is an isomorphism, we have~$(\ev_P^*)^{-1}=\ev_{P^*}$. 
	Thus taking the dual of $g$,  we obtain
	\[g^{*}=\ev_L^*\circ f^{**}\circ (\ev_P^*)^{-1}=\ev_L^*\circ f^{**}\circ \ev_{P^*}=\ev_L^*\circ \ev_{L^*}\circ f=f.\]
	
	It remains to show that~$g$ is uniquely determined by~$g^*=f$. 
	Assume that~$h^*=f$ for some map~$h\colon L\to P$. Then
$\ev_P\circ h=h^{**}\circ \ev_L=f^*\circ \ev_L=g^{**}\circ\ev_L=\ev_P\circ g.$
	Since~$\ev_P$ is an isomorphism,~$h=g$ and thus~$g$ is uniquely determined by~$g^*=f$ as claimed.
	
Next, we prove the second assertion in the particular case where~$P \cong  I\oplus \Z[\pi]^{\oplus n-1}$.
	Dualising the exact sequence~$0 \to P \xrightarrow{i} \Z[\pi]^{\oplus n} \xrightarrow{\varepsilon\circ \pr_1} \Z \to 0$,  	we obtain the exact sequence
	 \[0 \to \Z^* \xrightarrow{(\varepsilon\circ \pr_1)^*}(\Z[\pi]^{\oplus n})^*\xrightarrow{i^*}P^*\to 0.\]	
To obtain the rightmost zero,  we used that $\pi$ is finite to deduce $\Ext_{\Z[\pi]}^1(\Z,\Z[\pi])=H^1(\pi;\Z[\pi])=0.$
	 
Consider the composition~$f \circ i^*\colon (\Z[\pi]^{\oplus n})^*\to~L^*$.
	Since~$\Z[\pi]^{\oplus n}$ is projective, the first item yields a unique~$g \colon L \to \Z[\pi]^{\oplus n}$ that satisfies~$g^*=f \circ i^*$.	
	It remains to show~$\im(g) \subset P$ or equivalently that~$\varepsilon\circ \pr_1 \circ g=0$.
	Indeed this will imply~$g^* \circ i^*=f \circ i^*$ and thus, since~$i^*$ is surjective, that~$g^*=f$.
	By definition $g^*\circ(\varepsilon\circ \pr_1)^*=f \circ i^*\circ(\varepsilon\circ \pr_1)^*=0$. 
	Consider multiplication by the norm element~$\mathcal{N} \in \Z[\pi]$ as an element of $\Z^*$; here it is because $\pi$ is finite that we can consider the norm element.
We deduce that
	\[0=g^*\circ(\varepsilon\circ \pr_1)^*(\mathcal{N})=\mathcal{N}\circ \varepsilon\circ \pr_1\circ g.\] 
	Since $\mathcal{N}\colon \Z \to \Z[\pi]$ is injective, it follows that $\varepsilon\circ\pr_1 \circ g=0$ as needed.
	This concludes the proof of the second assertion in the case $P\cong I \oplus \Z[\pi]^{\oplus n-1}$.

We now claim that the lemma holds for a module~$P$ if it holds for $Q:=P\oplus \Z[\pi]$. 
Consider \[ \begin{pmatrix} f & 0 \end{pmatrix} \colon Q^*\cong P^*\oplus \Z[\pi]^*\to L^*.\]
Since the lemma holds for $Q$ by assumption, there exists a unique $g\colon L\to Q\cong P\oplus\Z[\pi]$ such that~$g^*=( f \ 0)$. 
It remains to show~$\im(g) \subset P$ or equivalently~$\pr_2 \circ g=0$, where $\pr_2\colon Q\to \Z[\pi]$ is the projection. 
Again, this will imply that $(g^*,0)=(f,0)$ and thus $g^*=f$.
By definition we have~$g^*\circ\pr_2^*=( f \ 0)^*\circ \pr_2^*=0$. 
Hence $0=g^*\circ\pr_2^*(\id_{\Z[\pi]})=\pr_2\circ g$, as claimed.

We now prove the second assertion.
If $P$ is stably isomorphic to $I$, then there exist $n,k \geq 0$ such that $P\oplus \Z[\pi]^{\oplus k}\cong I\oplus \Z[\pi]^{\oplus n-1}=:Q$. By the second case above, the lemma holds for $Q$ and by repeatedly applying the claim, it holds for $P$.
This concludes the proof of the second assertion and therefore the proof of the lemma.
\end{proof}

\begin{lemma}
\label{lem:nice-lemma}
Let $M$ and $N$ be left $\Z[\pi]$-modules and let~$f\colon M\to N$ be a left~$\Z[\pi]$-linear map. 
If $f^* \colon N^* \to M^*$ is an isomorphism, then
$$\Hom_{\Z[\pi]}(f,A^*) \colon  \Hom_{\operatorname{left-}\Z[\pi]}(N,A^*) \to \Hom_{\operatorname{left-}\Z[\pi]}(M,A^*)$$
is an isomorphism of abelian groups for every right $\Z[\pi]$-module $A$.
 
If~$A$ is a $\Z[\pi]$-bimodule,  then $\Hom_{\Z[\pi]}(f,A^*)$ is an isomorphism of right~$\Z[\pi]$-modules.
\end{lemma}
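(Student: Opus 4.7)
The plan is to reduce the statement to the hypothesis that $f^*$ is an isomorphism by constructing a natural ``dual-swap'' isomorphism
\[\tau_X\colon\Hom_{\operatorname{left-}\Z[\pi]}(X,A^*)\xrightarrow{\;\cong\;}\Hom_{\operatorname{right-}\Z[\pi]}(A,X^*),\qquad \tau_X(\phi)(a)(x):=\phi(x)(a),\]
for every left $\Z[\pi]$-module $X$. A direct verification, using the definitions of the module structures on $A^*$ and $X^*$ together with the involution conventions recorded at the start of the paper, shows that $\tau_X(\phi)(a)$ is left-$\Z[\pi]$-linear in $x$ (hence lies in $X^*$), that $\tau_X(\phi)$ is right-$\Z[\pi]$-linear in $a$, and that the formula $\tau_X^{-1}(\psi)(x)(a):=\psi(a)(x)$ provides a two-sided inverse. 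Naturality of $\tau_X$ in $X$ is immediate from the definition.

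Applying naturality to $f\colon M\to N$ produces a commutative square
\[\xymatrix@C=1.3cm{
\Hom_{\operatorname{left-}\Z[\pi]}(N,A^*)\ar[r]^-{\tau_N}_-{\cong}\ar[d]_{\Hom(f,A^*)}&\Hom_{\operatorname{right-}\Z[\pi]}(A,N^*)\ar[d]^{f^*\circ -}\\
\Hom_{\operatorname{left-}\Z[\pi]}(M,A^*)\ar[r]^-{\tau_M}_-{\cong}&\Hom_{\operatorname{right-}\Z[\pi]}(A,M^*),}\]
whose right-hand vertical arrow is an isomorphism because post-composition with the isomorphism $f^*\colon N^*\to M^*$ is itself an isomorphism. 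Consequently $\Hom_{\Z[\pi]}(f,A^*)$ is an isomorphism of abelian groups, as required.

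For the bimodule statement, I would observe that when $A$ carries a compatible left $\Z[\pi]$-action, this equips $A^*$ with an additional right $\Z[\pi]$-module structure defined by $(g\cdot s)(a):=g(sa)$; this in turn makes $\Hom_{\operatorname{left-}\Z[\pi]}(X,A^*)$ a right $\Z[\pi]$-module via the pointwise formula $(\phi\cdot s)(x):=\phi(x)\cdot s$, and similarly $\Hom_{\operatorname{right-}\Z[\pi]}(A,X^*)$ inherits a right $\Z[\pi]$-action from $A$'s left structure. A direct check shows that $\tau_X$ is right-$\Z[\pi]$-linear and that $f^*\circ -$ is manifestly so, so the isomorphism established above upgrades automatically to an isomorphism of right $\Z[\pi]$-modules. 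The proof has no conceptual obstacle; the only delicate point is the bookkeeping required to track the left/right/involution structures carefully enough to confirm $\tau_X$ is well defined and equivariant in the bimodule case.
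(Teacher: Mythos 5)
Your proof is correct and is essentially the paper's argument: your dual-swap isomorphism $\tau_X(\phi)(a)(x)=\phi(x)(a)$ is exactly the composite of the two tensor–hom adjunction isomorphisms the paper runs through $\Hom_{\operatorname{bi-}\Z[\pi]}(X\otimes_\Z A,\Z[\pi])$, and in both versions the conclusion reduces to the observation that post-composition with the isomorphism $f^*$ is an isomorphism. The bimodule clause is likewise handled the same way, by noting all maps in the square are right-$\Z[\pi]$-equivariant for the pointwise action coming from $A$.
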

\begin{proof}
    Using the tensor-hom adjunction, we obtain the commutative diagram of abelian groups
    \[\begin{tikzcd}
\Hom_{\operatorname{left-}\Z[\pi]}(N,A^*)\ar[r,"\cong"]\ar[d,"{\Hom_{\Z[\pi]}(f,A^*)}"]&\Hom_{\operatorname{bi-}\Z[\pi]}(N\otimes_\Z A,\Z[\pi])\ar[r,"\cong"]\ar[d,"{(f\otimes \id_A)^*}"]&\Hom_{\operatorname{right-}\Z[\pi]}(A,N^*)\ar[d,"{\Hom_{\Z[\pi]}(A,f^*)}","\cong"']\\
\Hom_{\operatorname{left-}\Z[\pi]}(M,A^*)\ar[r,"\cong"]&\Hom_{\operatorname{bi-}\Z[\pi]}(M\otimes_\Z A,\Z[\pi])\ar[r,"\cong"]&\Hom_{\operatorname{right-}\Z[\pi]}(A,M^*).
\end{tikzcd}\]
If follows that $\Hom_{\Z[\pi]}(f,A^*)$ is an isomorphism as claimed.
\end{proof}

We apply these lemmas to the situation at hand.

\begin{construction}[{The  map~$f \colon H_1(Y;\Z[\pi])\to H_2(B;\Z[\pi])$}]
\label{cons:Mapf}
Fix~$\phi \colon H^2(Y;\Z[\pi]) \to H_1(Y;\Z[\pi])^*$ and consider the evaluation~$\ev_B \colon H^2(B;\Z[\pi]) \to H_2(B;\Z[\pi])^*$.
Assume that 
$$\ev_B^* \colon H^2(B;\Z[\pi])^{**} \to H_2(B;\Z[\pi])^*$$ is an isomorphism.
Lemma~\ref{lem:nice-lemma} applied with $f=\ev_B$ and $A=H_1(Y;\Z[\pi])$ ensures that
$$\Hom(\ev_B,H_1(Y;\Z[\pi])^*) \colon \Hom(H_2(B;\Z[\pi])^*,H_1(Y;\Z[\pi])^*) \to \Hom(H^2(B;\Z[\pi]),H_1(Y;\Z[\pi])^*)~$$
is an isomorphism.
Since $\phi \circ c_1|^*_Y$ belongs to the target of this map, we deduce that there exists a map~$g \colon H_2(B;\Z[\pi]) \to H_1(Y;\Z[\pi])^*$ making the following diagram commute:
\[\begin{tikzcd}
	H^2(B;\Z[\pi])\ar[d,"\ev_B"]\ar[r,"c_1|_Y^*"]&H^2(Y;\Z[\pi])\ar[d,"\phi"]\\
	H_2(B;\Z[\pi])^*\ar[r,"g"]&H_1(Y;\Z[\pi])^*.
\end{tikzcd}\]
If we additionally assume that~$H_2(B;\Z[\pi])$ is $\Z[\pi]$-projective or that $\pi$ is finite and that~$H_2(B;\Z[\pi])$ is stably the augmentation ideal,  then Lemma~\ref{lem:DualExist} gives a unique 
$$f \colon H_1(Y;\Z[\pi])\to H_2(B;\Z[\pi])$$
whose dual satisfies~$f^*=g.$
\end{construction}

We describe a condition that is sufficient for~$\phi$ to be realised by a homotopy~$\mathbf{H}$.

\begin{proposition}
\label{prop:CanRealise}
Assume the map~$\ev^* \colon H^2(B;\Z[\pi])^{**} \to H_2(B;\Z[\pi])^*$ is an isomorphism and
\begin{itemize}
\item either the module~$H_2(B;\Z[\pi])$ is projective,
\item or $\pi$ is finite and $H_2(B;\Z[\pi]) \cong I \oplus \Z[\pi]^{\oplus k}$ for some $k \geq 0$.
\end{itemize}
If~$\mathbf{H} \colon Y \times S^1 \to B$ extends~$c_1|_Y \colon Y \to B$ and satisfies
$$ 
\xymatrix@C1.5cm{
H_2(Y;\Z[\pi]) \oplus H_1(Y;\Z[\pi]) \ar[r]^-{\bsm i_* & \left(- \times [S^1]\right) \esm} \ar[rd]_{((c_1|_Y)_*,f)}& H_2(Y \times S^1;\Z[\pi])  \ar[d]^{\mathbf{H}_*}  \\
& H_2(B;\Z[\pi]),
}
$$
where $f$ is the map from Construction~\ref{cons:Mapf}, then~$\mathbf{H}$ realises~$\phi$, meaning that
$$ 
\xymatrix{
H^2(B;\Z[\pi]) \ar[r]^-{c_1|_Y^*} \ar[rd]_{\phi_{\mathbf{H}}'}& H^2(Y;\Z[\pi]) \ar[d]^{\phi}  \\
& H_1(Y;\Z[\pi])^*.
}
$$
\end{proposition}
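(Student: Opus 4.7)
The plan is to fix $\alpha \in H^2(B;\Z[\pi])$ and $x \in H_1(Y;\Z[\pi])$, and to show that the two sides of the equation agree after evaluation, i.e.\ that
\[\phi_{\mathbf{H}}'(\alpha)(x) = \phi(c_1|_Y^*(\alpha))(x) \in \Z[\pi].\]
Both sides will be identified with $\langle \alpha, f(x)\rangle$, where $f$ is the map from Construction~\ref{cons:Mapf}.

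First I would analyse the left-hand side using the K\"unneth decomposition from Proposition~\ref{prop:KunnethYS1}. Writing $\Kun^{-1}(\mathbf{H}^*(\alpha)) = (a,b) \in H^2(Y;\Z[\pi]) \oplus H^1(Y;\Z[\pi])$, the condition $\mathbf{H} \circ \iota_Y = c_1|_Y$ forces $a = \iota_Y^*(\mathbf{H}^*(\alpha)) = c_1|_Y^*(\alpha)$, while by definition $b = \psi(\mathbf{H}^*(\alpha))$. Since $[S^1] = 1_{H_1(S^1)}$ pairs trivially with $1_{H^0(S^1)}$ and to $1$ with $1_{H^1(S^1)}$, evaluating $\mathbf{H}^*(\alpha)$ on the class $x \times [S^1] \in H_2(Y\times S^1;\Z[\pi])$ picks out only the $H^1$-component and yields $\langle b, x\rangle$. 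Thus
\[\phi_{\mathbf{H}}'(\alpha)(x) = \langle \psi(\mathbf{H}^*(\alpha)), x\rangle = \langle \mathbf{H}^*(\alpha), x \times [S^1]\rangle.\]
Naturality of the evaluation pairing then gives $\phi_{\mathbf{H}}'(\alpha)(x) = \langle \alpha, \mathbf{H}_*(x \times [S^1])\rangle$, and the commutative diagram hypothesised in the statement (restricted to the $H_1(Y;\Z[\pi])$-summand) reads $\mathbf{H}_* \circ (-\times [S^1]) = f$, so $\phi_{\mathbf{H}}'(\alpha)(x) = \langle \alpha, f(x)\rangle = \ev_B(\alpha)(f(x))$.

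For the right-hand side, Construction~\ref{cons:Mapf} yields the commutative square $g \circ \ev_B = \phi \circ c_1|_Y^*$, together with the identity $f^* = g$ provided by Lemma~\ref{lem:DualExist} (whose hypothesis is precisely one of the two alternatives in the proposition). Chasing $\alpha$ around the square,
\[\phi(c_1|_Y^*(\alpha))(x) = g(\ev_B(\alpha))(x) = f^*(\ev_B(\alpha))(x) = \ev_B(\alpha)(f(x)) = \langle \alpha, f(x)\rangle.\]
Combining these two calculations gives the required equality and therefore $\phi_{\mathbf{H}}' = \phi \circ c_1|_Y^*$.

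The main delicate point is the K\"unneth computation in the first paragraph: one has to be confident that evaluation is diagonal with respect to the splitting of Proposition~\ref{prop:KunnethYS1}, despite the $\boxtimes$ coefficient structure. This will follow from the characterisation of twisted evaluation via cap products (as used repeatedly in Section~\ref{sec:KunnethYS1}) together with the cross-product formula $\langle u \times v, x \times y\rangle = \pm \langle u, x\rangle \langle v, y\rangle$, but checking the signs and module conventions is where the routine bookkeeping lives. Everything else is formal diagram chasing using the defining property of $f$ and the hypothesis on $\mathbf{H}_*$.
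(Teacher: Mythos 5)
Your proposal is correct and is essentially the paper's argument unwound pointwise: the "evaluation is diagonal with respect to the K\"unneth splitting" step you flag is exactly the middle square of the paper's diagram, verified there via Lemma~\ref{lem:capev} and Remark~\ref{rem:CrossLemma}, and both proofs then conclude from the hypothesis on $\mathbf{H}_*$ together with $f^*=g$ and $g\circ \ev_B=\phi\circ c_1|_Y^*$ from Construction~\ref{cons:Mapf} and Lemma~\ref{lem:DualExist}. (Your identification $a=c_1|_Y^*(\alpha)$ is harmless but not needed, since that component is killed for degree reasons in any case.)
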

\begin{proof}
As explained in Construction~\ref{cons:Mapf}, the assumptions of the proposition ensure that~$f$ is defined.
During this proof, given two $\Z[\pi]$-modules~$A$ and~$B$,  we identify~$\Hom(A \oplus B,\Z[\pi])$ with the direct sum~$\Hom(A,\Z[\pi])\oplus \Hom(B,\Z[\pi]).$
Consider the following diagram in which the top and bottom squares are readily seen to commute:
\begin{equation}
\label{eq:KunnethCommute}
\xymatrix{
H^1(Y;\Z[\pi]) \ar[r]^-{\ev_Y}& \overline{\Hom(H_1(Y;\Z[\pi]),\Z[\pi])} \\
H^2(Y;\Z[\pi]) \oplus H^1(Y;\Z[\pi]) \ar[r]^-{\ev}\ar[u]^{\proj_2}& \overline{\Hom(H_2(Y;\Z[\pi]) \oplus H_1(Y;\Z[\pi]),\Z[\pi])}\ar[u]^{\proj_2^*} \\
H^2(Y \times S^1;\Z[\pi]) \ar[r]^-{\ev_{Y \times S^1}}\ar[u]_{\operatorname{Kun}^{-1},\cong}& \overline{\Hom(H_2(Y \times S^1;\Z[\pi]),\Z[\pi])}  \ar[u]^{\bsm i_* & \left(- \times [S^1] \right)\esm^*}\\
H^2(B;\Z[\pi]) \ar[r]^-{\ev_B,\cong}\ar[u]^{\mathbf{H}^*}& \overline{\Hom(H_2(B;\Z[\pi]),\Z[\pi])}. \ar[u]^{(\mathbf{H}_*)^*} \ar@/_8pc/[uuu]_{f^*}
}
\end{equation}
We claim that the middle square commutes, i.e. that~$\ev=(i_*,(- \times [S^1] ))^* \circ \ev_{Y \times S^1} \circ \operatorname{Kun}.$
Given~$\varphi \in H^2(Y;\Z[\pi]),\psi \in H^1(Y;\Z[\pi])$ and~$a \in H_2(Y;\Z[\pi]),b \in H_1(Y;\Z[\pi])$, we apply successively, the definition of~$\operatorname{Kun}$, the linearity (and then naturality) of the evaluation,  and the definition of the cross product to obtain
\begin{align*}
&\bsm i_* &(- \times [S^1] )\esm^* \circ \ev_{Y \times S^1} \circ \operatorname{Kun}(\varphi,\psi)(a,b) \\
&=\bsm i_* &(- \times [S^1] ) \esm^* \circ  \ev_{Y \times S^1}  \circ (p_Y^*(\varphi)+(p_Y^*(\psi) \cup p_{S^1}^*(1_{H^1(S^1)})))(a,b) \\
&=\langle p_Y^*(\varphi),i_*(a) \rangle +\langle p_Y^*(\psi) \cup p_{S^1}^*(1_{H^1(S^1)})), b\times [S^1]  \rangle 
=\langle \varphi,a \rangle +\langle \psi \times 1_{H^1(S^1)}, b\times [S^1]  \rangle  \\
&=\langle \varphi,a \rangle+\langle \psi,b \rangle
=\ev (\varphi,\psi)(a,b).
\end{align*}
For the penultimate equality we use Lemma~\ref{lem:capev} and Remark~\ref{rem:CrossLemma} to obtain
\begin{align*}
\langle \psi \times 1_{H^1(S^1)}, b\times [S^1] \rangle 
&=
\overline{\mult_0((\psi \times 1_{H^1(S^1)}) \cap (b \times [S^1]))}
=\overline{\mult_0((\psi \cap b) \times (1_{H^1(S^1)} \cap [S^1]))}
 \\
&=\overline{\mult_0(\psi \cap b)}=\langle \psi, b\rangle.
\end{align*}
We have now established the diagram above commutes.
Thus, as claimed,
\[
\phi_{\mathbf{H}}'
:=\ev_Y \circ \proj_2 \circ \Kun^{-1} \circ \mathbf{H}^*
=f^*\circ \ev_B
=g\circ \ev_B
=\phi \circ c_1|_Y^*,\]
where the third and fourth equality follow from the definitions of~$f$ and~$g$ in \cref{cons:Mapf}.
\end{proof}

Recall that we write the homology K\"unneth isomorphism as
\begin{align*}
\Kun \colon H_2(Y;\Z[\pi]) \oplus H_1(Y;\Z[\pi])  &\xrightarrow{\cong} H_2(Y \times S^1;\Z[\pi]) \\
(a,b) &\mapsto i_*(a)+b\times 1_{H_1(S^1)}.
\end{align*}
We can now carry out Plan~\ref{plan:ProgrammeFinalStep}.

\begin{proposition}
\label{prop:FinalStep?}
Assume the map~$\ev^* \colon H^2(B;\Z[\pi])^{**} \to H_2(B;\Z[\pi])^*$ is an isomorphism and
\begin{itemize}
\item either the module~$H_2(B;\Z[\pi])$ is projective,
\item or $\pi$ is finite and $H_2(B;\Z[\pi]) \cong I \oplus \Z[\pi]^{\oplus k}$ for some $k \geq 0$.
\end{itemize}
For every homomorphism~$\phi \colon H^2(Y;\Z[\pi]) \to H^1(Y;\Z[\pi])$,  there is a homotopy~$\mathbf{H} \colon Y \times [0,1] \to B$ extending~$c_1|_Y \sqcup c_1|_Y \colon Y \times \{ 0,1 \} \to B$, that is trivial on the~$S^1$-factor,   and such that~$\phi_{\mathbf{H}}'=\phi \circ c_1|_Y^*.$
In particular,
$$\mathbf{H} \in \operatorname{Homotopy}(c_1)(c_1^*b_{X_1}^\partial).$$
\end{proposition}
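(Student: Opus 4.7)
The plan is to combine Plan~\ref{plan:ProgrammeFinalStep} with Proposition~\ref{prop:CanRealise}, reducing the problem to an obstruction-theoretic construction. By Plan~\ref{plan:ProgrammeFinalStep}, it suffices to exhibit, for each~$\phi$, a homotopy $\mathbf{H} \in \Homotopy(c_1)(c_1^*b_{X_1}^\partial)$ with $\phi'_{\mathbf{H}} = \phi \circ c_1|_Y^*$. Since the hypotheses of Proposition~\ref{prop:CanRealise} are in force, the task reduces further to constructing $\mathbf{H} \colon Y \times S^1 \to B$ that extends $c_1|_Y$ on $Y \times \{v\}$, is trivial on the $S^1$-factor, and realises $((c_1|_Y)_*, f)$ on $H_2(Y \times S^1;\Z[\pi])$ under the K\"unneth decomposition from Proposition~\ref{prop:KunnethYS1}. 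Here $f \colon H_1(Y;\Z[\pi]) \to H_2(B;\Z[\pi])$ is the map produced by Construction~\ref{cons:Mapf}.

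The next step is a cellular perturbation. Starting from the ``trivial'' homotopy $\mathbf{H}_0 := c_1|_Y \circ \pr_Y$---which satisfies all extension and triviality requirements but only induces $((c_1|_Y)_*, 0)$ on $H_2$---one equips $Y$ with a CW structure having a single $0$-cell $y_0$ (available by Lemma~\ref{lem:PoincarePairIs4Complex}) and $S^1$ with its minimal CW structure $\{v, e\}$. For each $1$-cell $e^1_\alpha$ of $Y$, a choice of $\tau_\alpha \in \pi_2(B) \cong H_2(B;\Z[\pi])$ specifies a modification of $\mathbf{H}_0$ on the $2$-cell $e^1_\alpha \times e$ by attaching a $2$-sphere representing $\tau_\alpha$ without changing the boundary; assembled $\Z[\pi]$-equivariantly, these choices define a cochain $\tau \in \Hom_{\Z[\pi]}(C_1(\widetilde{Y}), \pi_2(B))$. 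Extension over the $3$-cells $e^2_\beta \times e$ demands the $\Z[\pi]$-cocycle condition $\tau \circ \partial_2 = 0$, while extension over the cells $e^3_\gamma \times v$ and $e^3_\gamma \times e$ is automatic because $\pi_3(B) = \pi_4(B) = 0$. A direct K\"unneth-level computation then identifies $\mathbf{H}_* \circ \Kun = ((c_1|_Y)_*, [\tau])$, where $[\tau] \colon H_1(Y;\Z[\pi]) \to \pi_2(B)$ is the map induced by $\tau$.

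The main obstacle is the realisation step: producing a $\Z[\pi]$-cocycle $\tau$ whose descent to $H_1$ equals the prescribed $f$. By the universal coefficient spectral sequence for $C^*(\widetilde{Y};\pi_2(B))$, the obstruction to lifting an arbitrary $\Z[\pi]$-linear map $H_1(Y;\Z[\pi]) \to \pi_2(B)$ to a cocycle is a class in $\Ext^2_{\Z[\pi]}(\Z,\pi_2(B)) = H^2(\pi;\pi_2(B))$, which need not vanish in general. However, the $f$ supplied by Construction~\ref{cons:Mapf} is highly constrained: its defining identity $f^* = g$---together with the assumed isomorphism $\ev^*$ and the projectivity (respectively stable augmentation-ideal structure when $\pi$ is finite) of $H_2(B;\Z[\pi])$---forces a compatibility with the ambient double-dual diagram. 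Tracing this compatibility through Construction~\ref{cons:Mapf}, together with Poincar\'e duality on $Y$, should force the UCSS differential $d_2$ to vanish on this specific $f$, producing the required cocycle $\tau$. Once $\tau$ is in hand, the perturbed $\mathbf{H}$ extends over all cells and satisfies the desired homological condition, so that Proposition~\ref{prop:CanRealise} identifies it as an element of $\Homotopy(c_1)(c_1^*b_{X_1}^\partial)$ realising $\phi'_{\mathbf{H}} = \phi \circ c_1|_Y^*$.
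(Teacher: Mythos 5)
Your reduction of the problem --- via Plan~\ref{plan:ProgrammeFinalStep}, Construction~\ref{cons:Mapf} and Proposition~\ref{prop:CanRealise} --- to producing a map $\mathbf{H} \colon Y \times S^1 \to B$ extending $c_1|_Y$, trivial on the $S^1$-factor, and inducing $((c_1|_Y)_*,f)\circ\Kun^{-1}$ on $H_2(Y\times S^1;\Z[\pi])$, is exactly the paper's first step. But your execution of the realisation step has a genuine gap. Your cellular perturbation of $c_1|_Y\circ\pr_Y$ by a cochain $\tau\in\Hom_{\Z[\pi]}(C_1(\widetilde Y),\pi_2(B))$ can only produce, as the change on the $H_1(Y;\Z[\pi])\times[S^1]$ summand, a homomorphism lying in the image of the evaluation map $H^1(Y;\pi_2(B))\to\Hom_{\Z[\pi]}(H_1(Y;\Z[\pi]),\pi_2(B))$; by the universal coefficient spectral sequence this image is the kernel of a differential $d_2$ landing in $\Ext^2_{\Z[\pi]}(\Z,\pi_2(B))\cong H^2(\pi;\pi_2(B))$. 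In the projective case this group is typically \emph{nonzero} (e.g.\ $\pi$ a surface group or an aspherical $3$-manifold group with $\pi_2(B)$ free), so the vanishing of $d_2(f)$ is a real condition. Your argument for it --- that the identity $f^*=g$ from Construction~\ref{cons:Mapf} together with Poincar\'e duality ``should force'' $d_2(f)=0$ --- is an unproved assertion, and it is precisely the hard point; nothing in Construction~\ref{cons:Mapf} visibly places $f$ in the kernel of $d_2$, and no computation is offered.

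The paper avoids this obstruction altogether by a structural observation you did not exploit: since $(0,f)\circ\Kun^{-1}$ vanishes on $i_*H_2(Y;\Z[\pi])$ and $j_*\colon H_2(Y\times S^1;\Z[\pi])\to H_2(Y\times S^1,Y;\Z[\pi])$ is (split) surjective, this homomorphism factors through the \emph{relative} group as some $\psi\colon H_2(Y\times S^1,Y;\Z[\pi])\to H_2(B;\Z[\pi])$, and Theorem~\ref{thm:ObstructionTheoryTopTriangle} (proved via the relative $k$-invariant realisation theorem, using $j_*(k_{X,Y})=0$) realises \emph{any} such $\psi$ as $(c\circ\pr)_*-\mathbf{H}_*$ with $\mathbf{H}|_Y=c_1|_Y$ and the correct $\pi_1$-behaviour, with no residual obstruction in $H^2(\pi;\pi_2(B))$. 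To repair your proof you should either invoke Theorem~\ref{thm:ObstructionTheoryTopTriangle} (or Proposition~\ref{prop:ObstructionTheory} with a relative cohomology class on the pair $(Y\times S^1,Y)$) at this point, or else supply an actual proof that $d_2(f)=0$ for the specific $f$ of Construction~\ref{cons:Mapf}; as written, the central step is missing.
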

\begin{proof}
The first two assumptions ensure that there is a map~$f \colon H_1(Y;\Z[\pi]) \to H_2(B;\Z[\pi])$ associated to~$\phi$ as in Construction~\ref{cons:Mapf}.
Consider the following diagram
$$
\xymatrix{
H_2(Y;\Z[\pi]) \ar@{^{(}->}[d]^-{i} \ar[rd]^{\bsm \id \\ 0\esm} \\
H_2(Y \times S^1;\Z[\pi]) \ar[r]^-{\Kun^{-1}}  \ar@{->>}[d]^{j_*}& H_2(Y;\Z[\pi]) \oplus H_1(Y;\Z[\pi]) \ar[r]^-{\bsm 0 & f \esm} &H_2(B;\Z[\pi]). \\
H_2(Y \times S^1,Y;\Z[\pi])  \ar@{-->}[rru]_-{\psi}
}
$$
The fact that $i$ is injective and $j$ is surjective follows because the inclusion $i \colon Y \to Y \times S^1$ is split by the projection $\pr \colon Y \times S^1 \to Y$.
Thus, since $(0 \ f) \circ \Kun^{-1}$ vanishes on $H_2(Y;\Z[\pi])$, there is a map~$\psi \colon H_2(Y \times S^1,Y;\Z[\pi]) \to H_2(B;\Z[\pi])$ satisfying $\psi \circ j_*=(0 \ f) \circ \Kun^{-1}$.
Apply Theorem~\ref{thm:ObstructionTheoryTopTriangle} to $f_0:=c \circ \pr \colon Y \times S^1 \to B $ in order to obtain a map $\mathbf{H} \colon Y \times S^1 \to B$ that satisfies~$H|_Y=c\circ \pr|_Y=c|_Y$ and $\mathbf{H}_*=(c \circ \pr)_* \colon \pi_1(Y \times S^1) \to \pi_1(B)$ as well as $\psi \circ j_*=(c \circ \pr)_*-\mathbf{H}_*$.
It follows that 
\[(0,-f)  \circ \Kun^{-1}=\psi \circ j_*=(c \circ \pr)_* - \mathbf{H}_*=((c_1)_*,0)  \circ \Kun^{-1}- \mathbf{H}_*.\]
We thus obtain~$\mathbf{H}_*=((c_1)_*,f)  \circ \Kun^{-1}$.
Proposition~\ref{prop:CanRealise} then ensures that~$\phi_{\mathbf{H}}'=\phi \circ c_1|_Y^*$,  as required.
Since $\mathbf{H}_*=(c \circ \pr)_* \colon \pi_1(Y \times S^1) \to \pi_1(B)$,  we see that~$\mathbf{H}$ is trivial on the~$S^1$-factor.
Thus $\mathbf{H} \in \operatorname{Homotopy}(c_1)$ and, since~$\phi_{\mathbf{H}}'=\phi \circ c_1|_Y^*$,  Proposition~\ref{prop:H2H1} gives~$\mathbf{H} \in \operatorname{Homotopy}(c_1)(c_1^*b_{X_1}^\partial).$
\end{proof}

We can now prove the main technical result of this section.

\begin{customthm}{\ref{thm:GoalActionpdf}}
For any homotopy~$\mathbf{H} \in \operatorname{Homotopy}(c_1)(c_1^*b_{X_1}^\partial)$, there is a corresponding sesquilinear form~$q_{\mathbf{H}} \in \Sesq(H^2(Y;\Z[\pi]))$ such that the homotopy equivalence $\Xi(\mathbf{H}) \colon B \to B$ acts on the set $\Herm(H^2(Y;\Z[\pi])$ of  hermitian forms as
$$\Xi(\mathbf{H})\cdot \lambda=\lambda+q_{\mathbf{H}}+q_{\mathbf{H}}^* \quad \quad \text{ for any } \lambda \in \Herm(H^2(Y;\Z[\pi])).$$
Conversely, 
if the map~$\ev^* \colon H^2(B;\Z[\pi])^{**} \to H_2(B;\Z[\pi])^*$ is an isomorphism and
\begin{itemize}
\item either the module~$H_2(B;\Z[\pi])$ is projective,
\item or $\pi$ is finite and $H_2(B;\Z[\pi]) \cong I \oplus \Z[\pi]^{\oplus k}$ for some $k \geq 0$,
\end{itemize}
then for any sesquilinear form~$q \in \Sesq(H^2(Y;\Z[\pi]))$, there is a homotopy~$\mathbf{H} \in \operatorname{Homotopy}(c_1)(c_1^*b_{X_1}^\partial)$ with~$q=q_{\mathbf{H}}.$
\end{customthm}
\begin{proof}
The sesquilinear form $q_{\mathbf{H}}$ is constructed in Construction~\ref{cons:qH}, whereas the description of the action of~$\Xi(\mathbf{H})$ can be found in Proposition~\ref{prop:HowItActs}.
The realisation of a sesquilinear form~$q$ by a homotopy~$\mathbf{H}$ follows by combining Plan~\ref{plan:ProgrammeFinalStep} with Proposition~\ref{prop:FinalStep?}.
\end{proof}

We now prove Theorem~\ref{thm:VanishEvenIntro} whose statement we recall for the reader's convenience.

\begin{customthm}{\ref{thm:VanishEvenIntro}}
\label{thm:VanishEven}
Assume that $\cd(\pi) \leq 3$.
Let~$(X_0,Y_0)$ and~$(X_1,Y_1)$ be~$4$-dimensional Poincar\'e pairs with fundamental group~$\pi_1(X_i) \cong \pi$,  Postnikov $2$-type $B:=P_2(X_1)$ and~$\pi_1(Y_i) \to \pi_1(X_i)$ surjective for $i=0,1$,  and let~$h \colon Y_0 \to Y_1$ be a degree one homotopy equivalence.
Assume that~$\ev^*$ is an isomorphism as in Definition~\ref{def:HypothesesName}.

If there are~$3$-connected maps~$c_0 \colon X_0 \to B$ and $c_1 \colon X_1 \to B$ such that~$c_1|_{Y_1} \circ h = c_0|_{Y_0}$ as well as~$c_0^*b_{X_0}^\partial=c_1^*b_{X_1}^\partial$ and $b(c_0,c_1)$ is even hermitian,  then
$$b(c_0,c_1)=0 \in \Herm(H^2(Y_1;\Z[\pi]))/\mathcal{G}.$$
In fact, there is a $c_0' \simeq c_0$ with $c_1|_{Y_1} \circ h=c_0'|_{Y_0}$ such that $b(c_0',c_1)=0  \in \Herm(H^2(Y_1;\Z[\pi]))$.
\end{customthm}
\begin{proof}
Since the $c_i$ are $3$-connected, $\ev^*$ is an isomorphism if and only if $\ev_B^*$ is an isomorphism.
Recall also from Proposition~\ref{prop:StablyFreePD<4} that $\cd(\pi) \leq 3$ if and only if $\pi_2(X_i)$ is projective for $i=0,1$, which is in turn equivalent to~$H_2(B;\Z[\pi])$ being projective.
The assumptions of Theorem~\ref{thm:GoalActionpdf} are therefore satisfied.

Assume that~$b(c_0,c_1)=q+q^*$ for some sesquilinear form~$q$.
Theorem~\ref{thm:GoalActionpdf} produces a homotopy~$\mathbf{H}=(H_t \colon Y_1 \to B)_{t \in [0,1]}$ from $c_1|_{Y_1}$ to itself with $q_{\mathbf{H}}=-q$.
Consider also the homotopy equivalence~$\Xi(\mathbf{H}) \colon B \to B$ rel.~$Y_1$ obtained by applying the homotopy extension theorem (recall Construction~\ref{cons:Xi}).
Write~$g \colon B \to B$ for the homotopy inverse of $\Xi(\mathbf{H})$ and consider~$c_0':=g \circ c_0$.
Since~$\Xi(\mathbf{H}) \in \mathcal{G}$,  Propositions~\ref{prop:Indepc0} and~\ref{prop:HowItActs} show that
$$b(c_0',c_1)
=b(c_0,c_1)-b(g)
=b(c_0,c_1)+b(\Xi(\mathbf{H}))
=\Xi(\mathbf{H}) \cdot b(c_0,c_1)
=b(c_0,c_1)-(q+q^*)=0.$$
Since~$b(c_0,c_1)=b(c_0',c_1)+b(g)=g \cdot b(c_0',c_1)$, we deduce that $b(c_0,c_1)=0 \in \Herm(H^2(Y_1;\Z[\pi]))/\mathcal{G}$.
For the final sentence of the theorem, recall from Construction~\ref{cons:Xi} that $\Xi(\mathbf{H}) = \widetilde{H}_1$ is rel. $Y_1$ and is homotopic (though not rel. $Y_1$) to $\id_B$, whence $c_1|_{Y_1} \circ h=c_0'|_{Y_0}$ and~$c_0'\simeq c_0$.
This concludes the proof of the theorem.
\end{proof}

\section{The secondary obstruction for weakly even forms}
\label{sec:WeaklyEven}

We prove Theorems~\ref{thm:VanishSpinIntro} and~\ref{thm:obstruction0ImpliesEvenIntro}.
The first result follows promptly from Theorem~\ref{thm:VanishEven}, whereas the second requires some preliminary lemmas.

\begin{notation}
\label{not:IntroAgain}
Let~$(X_0,Y_0)$ and~$(X_1,Y_1)$ be~$4$-dimensional Poincar\'e pairs with fundamental group~$\pi_1(X_j) \cong \pi$,  Postnikov $2$-type $B:=P_2(X_1)$ and~$\iota_j \colon \pi_1(Y_j) \to \pi_1(X_j)$ surjective for~$j=0,1$.
Fix a degree one homotopy equivalence~$h \colon Y_0 \to Y_1$.
\end{notation}

The first main result of this section follows promptly from Theorem~\ref{thm:VanishEven}.

\begin{customthm}{\ref{thm:VanishSpinIntro}}
Fix $\pi$ with $\cd(\pi)\leq 3$.
Assume~$\ev^*$ is an isomorphism as in Definition~\ref{def:HypothesesName},
the equivariant intersection form~$\lambda_{X_0 \cup_h X_1}$ is weakly even,  and~$H_1(Y;\Z[\pi])\cong L \oplus T$ with~$L$ free and $T^*=0$.
If there are~$3$-connected maps~$c_0 \colon X_0 \to B,c_1 \colon X_1 \to B$ with~$c_1|_{Y_1} \circ h = c_0|_{Y_0}$ and~$c_0^*b_{X_0}^\partial=c_1^*b_{X_1}^\partial$,  then
$$b(c_0,c_1)=0 \in \Herm(H^2(Y_1;\Z[\pi]))/\mathcal{G}.$$
In fact, there is a $c_0' \simeq c_0$ with $c_1|_{Y_1} \circ h=c_0'|_{Y_0}$ such that $b(c_0',c_1)=0  \in \Herm(H^2(Y_1;\Z[\pi]))$.
\end{customthm}
\begin{proof}
The obstruction $b(c_0,c_1)$ arises by pulling back the equivariant intersection form of the~$4$-dimensional Poincar\'e complex~$X_0 \cup_h X_1$, and modding out by a subset of the resulting radical.
Thus if $\lambda_{X_0 \cup_h X_1}$ is weakly even hermitian,  then so is~$b(c_0,c_1)$.
Since~$H_1(Y;\Z[\pi])\cong L \oplus T$ with~$L$ free and~$T^*=0$, Lemma~\ref{lem:WeaklyEvenEven} ensures that~$b(c_0,c_1)$ is even.
Since~$b(c_0,c_1)$ is even,
Theorem~\ref{thm:VanishEven} ensures both that~$b(c_0,c_1)=0 \in \Herm(H^2(Y_1;\Z[\pi]))/\mathcal{G}$ and that there is a $c_0' \simeq c_0$ with~$c_1|_{Y_1} \circ h=c_0'|_{Y_0}$ such that~$b(c_0',c_1)=0  \in \Herm(H^2(Y_1;\Z[\pi]))$.
\end{proof}

The remainder of this section is devoted to the proof of Theorem~\ref{thm:obstruction0ImpliesEvenIntro}

\begin{notation}
Fix~$3$-connected maps~$c_0 \colon X_0 \to B,c_1 \colon X_1 \to B$ such that~$c_0|_{Y_0} =  c_1|_{Y_1} \circ h$ with~$c_0^*b_{X_0}^\partial =c_1^*b_{X_1}^\partial$ so that $b(c_0,c_1)$ is defined.
Elsewhere in this article (e.g.  in Section~\ref{sec:PrimarySecondary}) we have been writing~$c:=c_0 \cup c_1 \colon X_0 \cup_h X_1 \to B$.
We continue writing~$c$ for this map but, in this section alone, we write~$c_0 \cup c_1 \colon X_0 \cup_h X_1 \to  B \cup_{\id_{Y_1}} B$
so that~$c \colon X_0 \cup_h X_1 \to B$ is obtained from~$c_0 \cup c_1$ as $c=r \circ (c_0 \cup c_1)$ where $r \colon B \cup_{\id_{Y_1}} B \to B$ denotes the fold map.
Finally, we set~$B \cup_{Y_1} B:=B \cup_{\id_{Y_1}} B$ and $b_U:=b_{X_0 \cup_h X_1}$. 
\end{notation}
	
\begin{lemma}
\label{lem:EvenIff}
The following assertions hold:
\begin{itemize}
\item The form~$b_U$ is weakly even if and only if the form~$(c_0 \cup c_1)^*b_U$ is weakly even.
\item The form~$b(c_0,c_1)$ is weakly even if and only if the form~$r^*(c_0 \cup c_1)^*b_U$ is weakly even.
\end{itemize}
\end{lemma}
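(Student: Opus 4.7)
The proof rests on an elementary observation: if $f \colon V \twoheadrightarrow W$ is a surjective $\Z[\pi]$-linear map and $b$ is a hermitian form on $W$, then $b$ is weakly even if and only if the pullback form $f^*b$ on $V$ is weakly even. The forward implication is immediate from the definition; the reverse uses surjectivity to lift any $w \in W$ to a $v \in V$ with $f(v) = w$, so that $b(w,w) = f^*b(v,v)$. Given this, both items reduce to producing an appropriate surjection and identifying the relevant form as a pullback.

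For the second item, Proposition~\ref{prop:bxiUnion} together with the factorisation $c = r \circ (c_0 \cup c_1)$ gives $r^*(c_0 \cup c_1)^* b_U = c^* b_U = b_{\xi_0}$ as hermitian forms on $H^2(B;\Z[\pi])$. The proof of Proposition~\ref{prop:PullbackProperties} shows that $c_1|_{Y_1}^* \colon H^2(B;\Z[\pi]) \twoheadrightarrow H^2(Y_1;\Z[\pi])$ is surjective with kernel $\im(j^*)$, and (using $c_0^*b_{X_0}^\partial = c_1^*b_{X_1}^\partial$) that $b_{\xi_0}$ vanishes on this kernel and descends to $b(c_0,c_1)$; equivalently, $c^* b_U$ is the pullback of $b(c_0,c_1)$ along $c_1|_{Y_1}^*$. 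Applying the elementary observation with $f = c_1|_{Y_1}^*$ yields the claimed equivalence.

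For the first item, the same elementary observation (applied with $f = (c_0 \cup c_1)^*$ and $b = b_U$) reduces matters to proving that $(c_0 \cup c_1)^* \colon H^2(B \cup_{Y_1} B; \Z[\pi]) \to H^2(U;\Z[\pi])$ is surjective. I will show the stronger statement that $c_0 \cup c_1 \colon U \to B \cup_{Y_1} B$ induces an isomorphism on $H_k(-;\Z[\pi])$ for $k \leq 2$ and a surjection for $k = 3$. Since each $c_i \colon X_i \to B$ is $3$-connected and a $\pi_1$-isomorphism, passing to universal covers and applying relative Hurewicz to the simply-connected pair $(\widetilde{B}, \widetilde{X}_i)$ gives $H_k(B, X_i; \Z[\pi]) = 0$ for $k \leq 3$, so $(c_i)_*$ is an isomorphism on $H_k(-;\Z[\pi])$ for $k \leq 2$ and a surjection for $k = 3$. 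Comparing the twisted Mayer--Vietoris sequences for the pushouts $U = X_0 \cup_h X_1$ and $B \cup_{Y_1} B$ (where the inclusion induced maps on $H_*(Y_1;\Z[\pi])$ are identities under the identification given by $h$), the five lemma then yields the claim for $c_0 \cup c_1$.

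Combined with the long exact sequence of the pair $(B \cup_{Y_1} B, U)$, this gives $H_k(B \cup_{Y_1} B, U; \Z[\pi]) = 0$ for $k \leq 3$; the universal coefficient spectral sequence then produces $H^k(B \cup_{Y_1} B, U; \Z[\pi]) = 0$ for $k \leq 3$, and the cohomological long exact sequence shows that $(c_0 \cup c_1)^*$ is in fact an isomorphism on $H^2(-;\Z[\pi])$, completing the proof. The main technical step is the homological comparison yielding this surjectivity; the rest of the argument is formal.
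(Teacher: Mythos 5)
Your proof is correct and follows essentially the same route as the paper: the second item is exactly the observation that $b(c_0,c_1)$ is the descent of $c^*b_U=r^*(c_0\cup c_1)^*b_U$ along the surjection $c_1|_{Y_1}^*$ (the paper phrases this as modding out a subset of the radical), and the first item rests on $(c_0\cup c_1)^*$ being an isomorphism on $H^2(-;\Z[\pi])$, which the paper asserts as clear and you verify via the Mayer--Vietoris/UCSS comparison. The only difference is that you supply the details of that isomorphism, which the paper omits.
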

\begin{proof}
The first assertion is clear because~$c_0 \cup c_1$ induces an isomorphism on second (co)homology.
We therefore focus on the second.
The form~$b(c_0,c_1)$ is isometric (via~$(c_1|_{Y_1}^*)^{-1}$) to a form obtained from~$c^*b_U$ by modding out by a subset of its radical.
It follows that~$b(c_0,c_1)$ is weakly even if and only if~$c^*b_U=r^*(c_0 \cup c_1)^*b_U$ is weakly even. 
\end{proof}

\begin{lemma}
\label{lem:EvenIffTwoThings}
Consider the inclusion~$i_1 \colon B\to B \cup_{Y_1} B$ of the first factor and the composition
$$ \eta \colon H^2(B,Y_1;\Z[\pi]) \xleftarrow{\cong,\exc} H^2(B \cup_{Y_1} B,B) \xrightarrow{j_B^*} H^2(B \cup_{Y_1} B;\Z[\pi]).$$
The form~$(c_0 \cup c_1)^*b_U$ is weakly even if and only if~$r^*(c_0 \cup c_1)^*b_U$ and~$\eta(c_0 \cup c_1)^*b_U$ are weakly~even.
\end{lemma}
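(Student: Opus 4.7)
The plan is to deduce the lemma from a direct sum decomposition of $H^2(B\cup_{Y_1}B;\Z[\pi])$ induced by a splitting of the long exact sequence of a pair, and then to use hermitianness of $(c_0\cup c_1)^*b_U$ to reduce the weakly-even condition to the two summands.

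First I would establish the decomposition
\[
H^2(B\cup_{Y_1}B;\Z[\pi]) \;=\; r^*\bigl(H^2(B;\Z[\pi])\bigr)\ \oplus\ \im(\eta).
\]
The fold map satisfies $r\circ i_1=\id_B$, so in the long exact sequence of the pair $(B\cup_{Y_1}B,\,i_1(B))$ the map $r^*$ is a right inverse to $i_1^*$; in particular $i_1^*$ is split surjective. By construction $\eta$ equals $j_B^*$ composed with the excision isomorphism, hence $\im(\eta)=\im(j_B^*)=\ker(i_1^*)$, and the splitting by $r^*$ then yields the claimed internal direct sum decomposition.

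Next I would use hermitianness. Writing $\lambda:=(c_0\cup c_1)^*b_U$ and decomposing $z=r^*\alpha+\eta(\beta)$ via the decomposition above, one computes
\[
\lambda(z,z) \;=\; (r^*\lambda)(\alpha,\alpha) \;+\; (\eta\,\lambda)(\beta,\beta) \;+\; \lambda(r^*\alpha,\eta(\beta))\;+\;\overline{\lambda(r^*\alpha,\eta(\beta))}.
\]
The last two terms are automatically of the form $a+\overline{a}$ and are therefore weakly even. Hence $\lambda(z,z)$ is of the form $a+\overline{a}$ for every $z$ if and only if $(r^*\lambda)(\alpha,\alpha)+(\eta\,\lambda)(\beta,\beta)$ is of this form for all $\alpha,\beta$; taking $\beta=0$ and $\alpha=0$ in turn shows this is equivalent to $r^*\lambda$ and $\eta\,\lambda$ being individually weakly even.

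The forward implication of the lemma is immediate, and the computation above supplies the converse. I do not foresee any real obstacle: the argument is a short algebraic manipulation once the splitting of the sequence of the pair has been noted, and it requires no hypothesis on $2$-torsion in $\pi$ since cross terms of the form $x+\overline{x}$ are weakly even by definition.
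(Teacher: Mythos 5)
Your proof is correct and follows essentially the same route as the paper: there, too, the key step is that the fold map splits the restriction to the copy of $B$ occurring in the pair, so every class in $H^2(B\cup_{Y_1}B;\Z[\pi])$ decomposes as $r^*(x_1)+\eta(x_2)$, with the hermitian cross-term cancellation left implicit. The only caveat is bookkeeping: in the paper's conventions the subspace in the pair defining $\eta$ is the copy of $B$ carrying $c_1$, so $\im(\eta)=\ker(i_2^*)$ rather than $\ker(i_1^*)$; since $r$ splits either inclusion, this does not affect the argument.
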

\begin{proof}
It suffices to prove that every~$x \in H^2(B \cup_{Y_1} B;\Z[\pi])$ can be written as~$r^*(x_1)+\eta(x_2)$ for~$x_1 \in H^2(B;\Z[\pi])$ and~$x_2 \in H^2(B,Y_1;\Z[\pi])$.
First, consider the long exact sequence of the pair~$(B \cup_{Y_1} B,B)$.
The definition of~$\eta$ together with the fact that the fold map~$r \colon B \cup_{Y_1} B \to B$ splits the inclusion~$i_2 \colon B \to B \cup_{Y_1} B$ then leads to the following commutative diagram in which the bottom row (and therefore the top row) is exact:
$$
\xymatrix{
 0 \ar[r]& H^2(B,Y_1;\Z[\pi]) \ar[r]^{\eta}\ar[d]^{\exc}_\cong& H^2(B \cup_{Y_1} B;\Z[\pi]) \ar[r]^-{i_2^*}\ar[d]^=& H^2(B;\Z[\pi]) \ar[r]\ar[d]^=& 0 \\
  0 \ar[r]& H^2(B\cup_{Y_1} B,B;\Z[\pi]) \ar[r]^{j_B^*}& H^2(B \cup_{Y_1} B;\Z[\pi]) \ar[r]^-{i_2^*}& H^2(B;\Z[\pi]) \ar[r]& 0.
 }
 $$
Since~$i_2^*r^*=\id_{H^2(B;\Z[\pi])}$, we have~$x-r^*i_2^*(x)\in \ker(i_2^*)=\im(\eta)$.
Setting~$x_1:=i_2^*(x) \in H^2(B;\Z[\pi])$, it follows that~$x-r^*(x_1)=\eta(x_2)$ for some~$x_2 \in H^2(B,Y_1;\Z[\pi])$,  therefore proving the lemma.
\end{proof}

To prove the next lemma,  it is convenient to consider the equivariant intersection form of $X_i$ on cohomology instead of homology.
Since the notation $b_{X_i}$ is already in use, we instead write 
\begin{align*}
\PD_{X_i}^*\lambda_{X_i} \colon H^2(X_i,Y_i) \times H^2(X_i,Y_i) &\to \Z[\pi], \\
(x,y)&\mapsto\lambda_{X_i}(\PD_{X_i}(x),\PD_{X_i}(y)).
\end{align*}
Using $j_i \colon H_2(X_i) \to H_2(X_i,Y_i)$ to denote the inclusion induced map, observe that
$$ \PD_{X_i}^*\lambda_{X_i}(x,y)
=\langle \PD_{X_i}^{-1} \circ j_i \circ  \PD_{X_i}(y), \PD_{X_i}(x)\rangle
=\langle j_i^*(y), \PD_{X_i}(x)\rangle.
$$

\begin{lemma}
\label{lem:FormEvenImpliesThing}
The following equality holds:
$$\eta (c_0 \cup c_1)^*b_U=c_0^* \PD_{X_0}^*\lambda_{X_0}.$$
In particular, if~$\lambda_{X_0}$ is weakly even, then~$\eta (c_0 \cup c_1)^*b_U$ is weakly even.
\end{lemma}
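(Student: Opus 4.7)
\medskip

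\noindent\textbf{Proof plan for Lemma~10.3.}
The plan is to exploit naturality of excision to reduce the computation of $\eta(c_0 \cup c_1)^* b_U$ on $B \cup_{Y_1} B$ to a Poincar\'e duality computation on $X_0$. The key observation is that, as can be read off from the long exact sequence diagram in the preceding lemma, the map $\eta$ is defined by excising the \emph{second} copy of $B$: the pair appearing is $(B \cup_{Y_1} B, i_2(B))$, and excision identifies $H^2(B \cup_{Y_1} B, i_2(B); \Z[\pi])$ with $H^2(B, Y_1; \Z[\pi])$ via the first copy. Since $(c_0 \cup c_1)^{-1}(i_2(B)) = X_1$, the map of pairs $(U, X_1) \to (B \cup_{Y_1} B, i_2(B))$ excises to $c_0\colon (X_0, Y_0) \to (B, Y_1)$, giving the naturality identity
\[
(c_0 \cup c_1)^* \circ \eta \;=\; \eta_U \circ c_0^{\,*},
\]
where $\eta_U \colon H^2(X_0, Y_0; \Z[\pi]) \xleftarrow{\cong, \exc} H^2(U, X_1; \Z[\pi]) \xrightarrow{j_U^{\,*}} H^2(U;\Z[\pi])$ is the analogous composition on $U$.

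Next I would compute $b_U(\eta_U(\alpha), \eta_U(\beta))$ for $\alpha, \beta \in H^2(X_0, Y_0; \Z[\pi])$. Writing $\eta_U(\alpha) = j_U^*\tilde\alpha$ and $\eta_U(\beta) = j_U^*\tilde\beta$ with $\tilde\alpha, \tilde\beta \in H^2(U, X_1;\Z[\pi])$ the excision preimages, and using the adjointness $\langle j_U^* \gamma, \sigma\rangle_U = \langle \gamma, j_{U*}\sigma\rangle_{(U,X_1)}$ together with the projection formula $j_{U*}(j_U^*\tilde\alpha \cap [U]) = \tilde\alpha \cap [U]_{X_1}$, the pairing becomes
\[
\langle \tilde\beta,\, \tilde\alpha \cap [U]_{X_1}\rangle_{(U, X_1)},
\]
where $[U]_{X_1} \in H_4(U, X_1)$ is the image of $[U]$. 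By the orientation convention of Remark~\ref{rem:X0cup-X1}, the class $[U]$ maps to $([X_0], -[X_1])$ in $H_4(X_0, Y_0) \oplus H_4(X_1, Y_1)$, so $[U]_{X_1}$ corresponds under excision to $+[X_0] \in H_4(X_0, Y_0)$ (the $X_1$-contribution is excised). Naturality of excision on cap products then yields
\[
\langle \tilde\beta, \tilde\alpha \cap [U]_{X_1}\rangle_{(U, X_1)} \;=\; \langle \beta, \alpha \cap [X_0]\rangle_{(X_0, Y_0)} \;=\; \PD_{X_0}^* \lambda_{X_0}(\alpha, \beta).
\]
Substituting $\alpha = c_0^*x$, $\beta = c_0^*y$ and combining with the first paragraph gives the desired equality $\eta(c_0 \cup c_1)^* b_U = c_0^* \PD_{X_0}^* \lambda_{X_0}$.

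The final sentence is then immediate: pullback along a $\Z[\pi]$-linear map preserves weak evenness, so if $\lambda_{X_0}$ is weakly even, then so is $\PD_{X_0}^* \lambda_{X_0}$ (since $\PD_{X_0}$ is a $\Z[\pi]$-module isomorphism) and hence so is $c_0^* \PD_{X_0}^* \lambda_{X_0} = \eta(c_0 \cup c_1)^* b_U$. The main technical obstacle is the sign bookkeeping: one must identify carefully which copy of $B$ is being excised by $\eta$ (it is the second, not the first, despite the introductory mention of $i_1$), and then track the orientation convention $[U] \mapsto ([X_0], -[X_1])$ through the excision so that the $X_0$-component contributes with a $+$ sign. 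Everything else is formal manipulation with standard naturality properties of cap products and evaluation pairings.
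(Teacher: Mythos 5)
Your proposal is correct and follows essentially the same route as the paper's proof: the paper also reduces $\eta(c_0\cup c_1)^*b_U$ via naturality of excision and of the relative cap product (its ``pentagon'' diagram, citing tom Dieck), using the orientation convention of Remark~\ref{rem:X0cup-X1} so that $[X_0\cup_h X_1]$ maps to the image of $+[X_0]$ in $H_4(X_0\cup_h X_1,X_1;\Z[\pi])$, and then concludes with naturality of evaluation to land on $\langle j_0^*c_0^*(b),\PD_{X_0}(c_0^*(a))\rangle=c_0^*\PD_{X_0}^*\lambda_{X_0}(a,b)$. The only differences are cosmetic (your ``projection formula'' is the same cap-product naturality, with slightly loose bookkeeping about whether the class $\tilde\alpha\cap[U]_{X_1}$ lives in $H_2(U;\Z[\pi])$ or $H_2(U,X_1;\Z[\pi])$), and your identification of the excised copy of $B$ and the resulting sign agree with the paper.
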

\begin{proof}
For the last 
sentence, note that if~$\lambda_{X_0}$ is weakly even, then so is~$c_0^* \PD_{X_0}^*\lambda_{X_0}$.
We therefore focus on proving that~$\eta (c_0 \cup c_1)^*b_U=c_0^* \PD_{X_0}^*\lambda_{X_0}.$
Recall that for~$x,y \in H^2(X_0;\Z[\pi])$, we have~$ \PD_{X_0}^*\lambda_{X_0}(x,y)=\langle j_0^*(y),\PD_{X_0}(x)\rangle$.

For $a,b\in H^2(B,Y_1;\Z[\pi])$, it therefore follows that
\begin{equation}
\label{eq:c1*bX1}
c_0^*\PD_{X_0}^*\lambda_{X_0}=\langle j_0^*c_0^*(b),\PD_{X_0}(c_0^*(a))\rangle.
\end{equation}
The remainder of the proof consists of calculating $\eta (c_0 \cup c_1)^*b_U(a,b)$ and verifying that we obtain the same outcome.
First, we note that
\begin{align}
\label{eq:pullbackbUFirstStep}
 \eta(c_0 \cup c_1)^*b_U(a,b)
&=b_U((c_0 \cup c_1)^* \circ \eta(b),(c_0 \cup c_1)^* \circ \eta(a))  \nonumber \\
&=\langle (c_0 \cup c_1)^*\circ \eta(b),\PD_{X_0 \cup_h X_1}\circ(c_0 \cup c_1)^*\circ \eta(a)\rangle .
\end{align}
The main intermediate step consists of the following claim.
\begin{claim}
Writing~$i^{X_0 \to X_0 \cup_h X_1} \colon X_0 \to X_0 \cup_h X_1$ for the inclusion,  the following equality holds:
$$\PD_{X_0 \cup_h X_1}  \circ (c_0 \cup c_1)^* \circ \eta=i_*^{X_0 \to X_0 \cup_h X_1} \circ \PD_{X_0} \circ c_0^* \colon H^2(B,Y_1;\Z[\pi]) \to H_2(X_0 \cup_h X_1,\Z[\pi]). $$
\end{claim} 
\begin{proof}
The equality in the statement is equivalent to the outermost routes of the following diagram (in which $\Z[\pi]$-coefficient are understood) agreeing:
$$
\xymatrix@R0.6cm{
H^2(B,Y_1)\ar[r]^{c_0^*}_\cong \ar@/_5pc/[dd]_{\eta}&H^2(X_0,Y_0)\ar[r]^{\PD_{X_0}}&H_2(X_0)\ar[rd]^{i^{X_0 \to X_0 \cup_h X_1}_*}&\\
H^2(B\cup_{Y_1}B,B)\ar[r]^{(c_0 \cup c_1)^*}_\cong\ar[u]^{\exc}_\cong \ar[d]_{j_B^*}&H^2(X_0 \cup_h X_1,X_1)\ar[r]^{\incl^*}\ar[u]^{\exc}_\cong&H^2(X_0 \cup_h X_1)\ar[r]^{\PD_{X_0 \cup_h X_1}}&H_2(X_0 \cup_h X_1).\\
H^2(B\cup_{Y_1}B)\ar[urr]_{(c_0 \cup c_1)^*}
}
$$
We verify that this diagram does indeed commute.
The square and triangle clearly commute,  and~$\eta =j_B^* \circ \exc$ by definition.
To see that the pentagon commutes,  rewrite it as 
$$
\xymatrix@C2cm{
H^2(X_0,Y_0;\Z[\pi])\ar[r]^-{-\cap [X_0]}&H_2(X_0;\Z[\pi])\ar[d]^{i^{X_0 \to X_0 \cup_h X_1}_*}  \\
H^2(X_0 \cup_h X_1,X_1;\Z[\pi])\ar[u]_{\exc,\cong}\ar[r]^-{-\cap \incl_*([X_0])}\ar[d]^{\incl^*}& H_2(X_0 \cup_h X_1;\Z[\pi])  \\  
H^2(X_0 \cup_h X_1;\Z[\pi])\ar[r]^{-\cap[X_0 \cup_h X_1]} & H_2(X_0 \cup_h X_1;\Z[\pi])\ar[u]_=
}
$$
and use the naturality of relative cap product as e.g.  in~\cite[18.1.1 (1)]{TomDieck}.
More precisely,  in the notation from this reference,  for the top square we use naturality with respect to the map~$(X_0,Y_0,\emptyset) \to (X_0 \cup_h X_1,X_0,\emptyset)$ (with $u=[X_0] \in H_4(X_0,Y_0)$) whereas for the bottom square it is with respect to~$(X_0 \cup_h X_1,\emptyset,\emptyset) \to (X_0 \cup_h X_1,X_0,\emptyset)$ (with~$u=[X_0 \cup_h X_1] \in H_4(X_0 \cup_h X_1)$).
This concludes the proof of the claim.
\end{proof}
Using the claim,  we can rewrite~\eqref{eq:pullbackbUFirstStep} as
$$ \eta(c_0 \cup c_1)^*b_U(a,b)
=\langle (c_0 \cup c_1)^*\circ\eta(b),i_*^{X_0 \to X_0 \cup_h X_1} \circ \PD_{X_0} \circ c_0^*(a)\rangle.
$$
Using the naturality of evaluation and the equality~$(c_0 \cup c_1)\circ i^{X_0 \to X_0 \cup_h X_1}=i^{B \to B \cup_{Y_1} B}\circ c_0$,  this equality can in turn be recast as
$$ \eta(c_0 \cup c_1)^*b_U(a,b)
=\langle c_0^*\circ i_{B \to B \cup_{Y_1} B}^*\circ \eta(b), \PD_{X_0} \circ c_0^*(a)\rangle.
$$
This expression is seen to equal~\eqref{eq:c1*bX1} by verifying that~$c_0^*\circ i_{B \to B \cup_{Y_1} B}^* \circ\eta=j_0^* \circ c_0^*$; indeed all the maps besides $c_0^*$ are inclusion induced.
\end{proof}

\begin{proposition}
\label{prop:WeaklyEvenIff}
Assume that the hermitian form~$\lambda_{X_0} \cong \lambda_{X_1}$ is weakly even and that the first obstruction vanishes: $c_0^*b_{X_0}^\partial=c_1^*b_{X_1}^\partial$.
The following assertions are equivalent:
\begin{enumerate}
\item the hermitian form~$b(c_0,c_1)$ is weakly even.
\item the hermitian form~$b_U$ is weakly even.
\end{enumerate}
\end{proposition}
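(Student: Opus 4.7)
\begin{proof}
The proof is a direct assembly of the three preceding lemmas. The plan is to chain together the equivalences they provide, using the hypothesis that $\lambda_{X_0}$ is weakly even to eliminate one of the conditions.

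First, apply Lemma~\ref{lem:EvenIff}, which tells us that $b_U$ is weakly even if and only if $(c_0 \cup c_1)^* b_U$ is weakly even, and that $b(c_0,c_1)$ is weakly even if and only if $r^*(c_0 \cup c_1)^* b_U$ is weakly even. Next, apply Lemma~\ref{lem:EvenIffTwoThings}, which refines the first equivalence by saying that $(c_0 \cup c_1)^* b_U$ is weakly even if and only if both $r^*(c_0 \cup c_1)^* b_U$ and $\eta(c_0 \cup c_1)^* b_U$ are weakly even. Combining these two observations shows that $b_U$ is weakly even if and only if $b(c_0,c_1)$ and $\eta(c_0 \cup c_1)^* b_U$ are both weakly even.

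To finish, use the hypothesis that $\lambda_{X_0} \cong \lambda_{X_1}$ is weakly even together with Lemma~\ref{lem:FormEvenImpliesThing}, which states $\eta(c_0 \cup c_1)^* b_U = c_0^* \PD_{X_0}^* \lambda_{X_0}$. Since the pullback of a weakly even form is weakly even, $\eta(c_0 \cup c_1)^* b_U$ is automatically weakly even. Hence the condition on $\eta(c_0 \cup c_1)^* b_U$ is vacuous, leaving precisely the equivalence $b_U$ weakly even $\Leftrightarrow$ $b(c_0,c_1)$ weakly even, as required.
\end{proof}

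The main obstacle in this argument was not in this final assembly but rather in the earlier lemmas: namely, producing the decomposition of $H^2(B \cup_{Y_1} B;\Z[\pi])$ in Lemma~\ref{lem:EvenIffTwoThings} and identifying $\eta(c_0 \cup c_1)^* b_U$ with the pullback of $\lambda_{X_0}$ in Lemma~\ref{lem:FormEvenImpliesThing}. Once these are available, the proposition itself is a purely formal consequence.
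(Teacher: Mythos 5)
Your proof is correct and follows essentially the same route as the paper: both arguments combine the two items of Lemma~\ref{lem:EvenIff} with the decomposition in Lemma~\ref{lem:EvenIffTwoThings}, and then use Lemma~\ref{lem:FormEvenImpliesThing} together with the weak evenness of $\lambda_{X_0}$ to discharge the condition on $\eta(c_0\cup c_1)^*b_U$. Nothing is missing; as you note, the substance lies in the preceding lemmas and the proposition is a formal assembly of them.
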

\begin{proof}
If $\lambda_{X_0} \cong \lambda_{X_1}$ is weakly even, then Lemma~\ref{lem:FormEvenImpliesThing} ensures that~$\eta (c_0 \cup c_1)^*b_U$ is weakly even.
The combination of the first item of Lemma~\ref{lem:EvenIff} and Lemma~\ref{lem:EvenIffTwoThings} then shows that~$b_U$ is weakly even if and only if~$r^*(c_0 \cup c_1)^*b_U$ is weakly even.
The second item of Lemma~\ref{lem:EvenIff} shows that this is equivalent to~$b(c_0,c_1)$ being weakly even.
\end{proof}

Finally we prove the main result of this section, namely Theorem~\ref{thm:obstruction0ImpliesEvenIntro}.

\begin{customthm}{\ref{thm:obstruction0ImpliesEvenIntro}}
\label{thm:obstruction0ImpliesEven}
Assume that the hermitian form~$\lambda_{X_0} \cong \lambda_{X_1}$ is weakly even.
If the secondary obstruction~$b(c_0,c_1)$ vanishes in~$\Herm(H^2(Y_1;\Z[\pi]))/\mathcal{G}$, then~$b_U$ is weakly even.
\end{customthm}
\begin{proof}
We claim that if~$\lambda_{X_1}$ is weakly even, then so is~$b(\varphi)$ for every $\varphi \in \mathcal{G}$.
Recall that~$b(\varphi)$ is obtained by running the same definition as for~$b(c_0,c_1)$ but with~$(X_1,\varphi \circ c_1)$ in place of~$(X_0,c_0).~$
Repeat the proofs of Lemmas~\ref{lem:EvenIff},~\ref{lem:EvenIffTwoThings} and~\ref{lem:FormEvenImpliesThing} but replacing all instances of~$U:=X_0 \cup_h X_1$ by~$X_1 \cup_{Y_1} X_1:=X_1 \cup_{\id_{Y_1}} X_1$.
Since we assumed that~$\lambda_{X_1}$ is weakly even, the outcome is an analogue of Proposition~\ref{prop:WeaklyEvenIff}:~$b(\varphi)$ is weakly even if and only if~$b_{X_1 \cup_{Y_1} X_1}$ is weakly even.
In particular,~$b(\varphi)$ being weakly even is independent of~$\varphi$ and so,  the claim reduces to proving  that~$b(\id_B)$ is weakly even. 
In this case,  we can take~$\xi=0$ in Construction~\ref{cons:bphi},  from which it follows that~$b(\id_B)=0$ which is indeed weakly even.
This concludes the proof of the claim.

We conclude the proof of the theorem.
If the pairing~$b(c_0,c_1)$ vanishes in~$\Herm(H^2(Y_1;\Z[\pi]))/\mathcal{G}$, then~$b(c_0,c_1)=b(\varphi)$ for some~$\varphi \in \mathcal{G}$,  and the claim ensures then ensures that this pairing is weakly even.
Proposition~\ref{prop:WeaklyEvenIff} implies that~$b_U$ is weakly even.
\end{proof}

\section{The secondary obstruction for odd forms}
\label{sec:Odd}

The goal of this section is to prove Theorem~\ref{thm:VanishOddIntro}.
We recall the relevant notation, state the main technical result needed in the proof,  explain how Theorem~\ref{thm:VanishOddIntro} follows and then prove the technical step.

\begin{notation}
\label{not:Hat}
Recall from the end of Section~\ref{sub:MainTechnicalIntro} that given a 
$3$-dimensional Poincar\'e complex~$Y$ and an epimorphism $\pi_1(Y) \twoheadrightarrow \pi$, we set
$$I^1(Y;\Z[\pi]):=\im(H^1(Y;\Z[\pi])
 \xrightarrow{\ev}
   \Hom(H_1(Y;\Z[\pi]),\Z[\pi])
\to
  \Hom(H_1(Y;\Z[\pi]),\Z_2)),$$
where the second map is induced by augmentation modulo $2$.
Given $x \in \Z[\pi]$, write $(x)_1 \in \Z$ for the coefficient of the neutral element and consider the group homomorphism
\begin{align*}
\widehat{} \ \colon \Herm(H^2(Y;\Z[\pi])) &\to I^1(Y;\Z[\pi]) \\
b &\mapsto (x \mapsto b(\PD_{Y}^{-1}(x),\PD_{Y}^{-1}(x))_1 \mod 2).
\end{align*}
When $\pi$ has no nontrivial elements of order $2$, the kernel of this map consists of weakly even forms on $H^2(Y;\Z[\pi])$ (Lemma~\ref{lem:WeaklyEvenAlgebra}) or equivalently (by Lemma~\ref{lem:WeaklyEvenEven}),  if~$H_1(Y;\Z[\pi])\cong L \oplus T$ with $L$ free and $T^*=0$, of even forms.

Given a $4$-dimensional Poincar\'e pair~$(X,Y)$ with $\pi_1(Y) \twoheadrightarrow \pi_1(X)=:\pi$ surjective,  we associate to a homomorphism~ $\phi \colon H_1(Y;\Z[\pi]) \to H_2(X;\Z[\pi])$ the hermitian form
\begin{align*}
q_\phi  \colon H^2(Y;\Z[\pi]) \times H^2(Y;\Z[\pi]) &\to \Z[\pi] \\
(x,y) &\mapsto \lambda_{X}(\phi(\PD_Y(x)),\phi(\PD_Y(y))).
\end{align*}
\end{notation}

The main technical result of this section is the following.

\begin{theorem}
\label{thm:OddChange}
Let~$(X_0,Y_0)$ and~$(X_1,Y_1)$ be~$4$-dimensional Poincar\'e pairs with fundamental group~$\pi_1(X_i) \cong \pi$,  Postnikov $2$-type $B:=P_2(X_1)$ and~$\pi_1(Y_i) \to \pi_1(X_i)$ surjective for $i=0,1$,  and let~$h \colon Y_0 \to Y_1$ be a degree one homotopy equivalence.
Let~$c_0 \colon X_0 \to B,c_1 \colon X_1 \to B$ be
$3$-connected maps such that~$c_0|_{Y_0} =  c_1|_{Y_1} \circ h$ with~$c_0^*b_{X_0}^\partial =c_1^*b_{X_1}^\partial$.

Assume that one of the two following conditions hold:
\begin{itemize}
\item either the map~$H^2(\pi;\Z[\pi]) \to H^2(Y_1;\Z[\pi])$ is injective, and $H_1(Y_0;\Z[\pi]) \cong L \oplus T$, with $L$ a free $\Z[\pi]$-module and $T^*=0$.
\item or $\pi$ is finite and~$H_1(Y_0;\Z[\pi]) \cong L \oplus T \oplus \Z^k$, with~$L$ a free~$\Z[\pi]$-module and~$T^*=0$.
Here,~$\Z$ is endowed with the $\Z[\pi]$-module structure induced by the trivial $\pi$-action.
\end{itemize}
For every map $\phi \colon H_1(Y_0;\Z[\pi]) \to H_2(X_0;\Z[\pi])$, there is a $3$-connected map $c_0' \colon X \to B$ satisfying~$c_0'|_{Y_0} =  c_1|_{Y_1} \circ h$ as well as~$(c_0')^*b_{X_0}^\partial =c_1^*b_{X_1}^\partial$, and 
$$ \wh b(c_0',c_1)=\wh b(c_0,c_1)-\widehat{h^*q_\phi}.$$
\end{theorem}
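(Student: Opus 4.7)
The proof constructs $c_0'$ by a careful modification of $c_0$ using obstruction theory. Since $\pi_1(Y_0)\twoheadrightarrow\pi_1(X_0)$ forces $H_1(X_0,Y_0;\Z[\pi])=0$, the universal coefficient spectral sequence supplies a surjection
\[
\ev\colon H^2(X_0,Y_0;\pi_2(B))\twoheadrightarrow \Hom_{\Z[\pi]}(H_2(X_0,Y_0;\Z[\pi]),\pi_2(B)).
\]
As in the proof of \cref{prop:Simplify}, any linear map $\psi\colon H_2(X_0,Y_0;\Z[\pi])\to H_2(X_1;\Z[\pi])$ can therefore be realised by a cocycle $[d]$, and invoking the relative obstruction theory of the appendix produces a $3$-connected $c_0'\colon X_0\to B$ extending $c_0|_{Y_0}$ with $(c_0')_*-(c_0)_*=(c_1)_*\psi$ on $H_2(X_0,Y_0;\Z[\pi])$ and $(c_0')_*-(c_0)_*=(c_1)_*\psi\circ(j_0)_*$ on $H_2(X_0;\Z[\pi])$.

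The delicate point is the choice of $\psi$. Writing $F:=(c_1)_*^{-1}(c_0)_*$ and $G:=(c_1)_*^{-1}(c_0)_*$, I need the new triple $(F+\psi(j_0)_*,\,G+(j_1)_*\psi,\,h)$ to remain a compatible triple; by \cref{prop:3ConnImpliesCompatible} this is exactly the condition $(c_0')^*b_{X_0}^\partial=c_1^*b_{X_1}^\partial$. A naive choice $\psi=F\phi\partial_0$ (geometrically: tube the ``boundary discs'' representing elements of $H_1(Y_0;\Z[\pi])$ with $\phi$) produces the right change in the secondary obstruction but introduces cross terms $\lambda_{X_0}(x,\phi\partial_0 y)$ that spoil intersection-form intertwining. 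I therefore take $\psi=F\phi\partial_0+\mu$ with $\mu$ chosen to cancel these cross terms. Existence of $\mu$ is a lifting problem of maps into duals of $H_2(X_i;\Z[\pi])$; its solvability is where the splitting $H_1(Y_0;\Z[\pi])\cong L\oplus T$ with $T^*=0$ enters (and the second set of hypotheses is handled analogously via \cref{lem:DualExist}).

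It remains to compute the change in the secondary obstruction. Form the double $V:=X_0\cup_{Y_0}(-X_0)$ with $f:=c_0'\cup(-c_0)\colon V\to B$; by \cref{prop:FundamentalClassUnion} the class $\eta:=\xi_0'-\xi_0\in H_4(B)$ equals $f_*[V]$, and $b(c_0',c_1)-b(c_0,c_1)$, viewed on $H^2(Y_1;\Z[\pi])$, is the pullback along $c_1|_{Y_1}^*$ of $b_\eta$. A Mayer--Vietoris / Poincar\'e-duality calculation on the double decomposes this descended form as a diagonal contribution $-h^*q_\phi$ (arising from the ``square'' of $\phi$), plus mixed-intersection terms of the shape $q+q^*$ produced by the correction $\mu$ and by linking of $\phi(\partial_0 y)$ with the boundary discs. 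By \cref{lem:WeaklyEvenAlgebra} the latter are weakly even and hence killed by $\widehat{\phantom{b}}$, yielding $\widehat{b}(c_0',c_1)=\widehat{b}(c_0,c_1)-\widehat{h^*q_\phi}$. The principal obstacle throughout is the construction and control of $\mu$: both the preservation of the primary obstruction and the fact that the additional contributions to the secondary obstruction are even depend on it, which is precisely why the splitting hypothesis on $H_1(Y_0;\Z[\pi])$ is indispensable.
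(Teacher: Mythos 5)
Your overall architecture does parallel the paper's (modify $c_0$ so that the primary obstruction is preserved, then compute the effect on $\wh b$ and argue the extra terms die under $\ \widehat{}\ $), and you correctly identify the central difficulty: the naive modification by $F\phi\partial_0$ destroys the intertwining of the relative intersection forms. But the step that actually constitutes the theorem --- the construction of the correction $\mu$ --- is missing, and your diagnosis of why it exists is misplaced. Note first that the most natural reading of your geometric picture, namely $\mu$ factoring through $\partial_0$, cannot work: in that case the intertwining condition forces $\lambda_{X_1}(Fx,\psi y)=0$ for all $x$, i.e.\ $\psi=F\phi\partial_0+\mu$ takes values in the radical of $\lambda_{X_1}$, which essentially cancels the intended modification. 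So $\mu$ must genuinely change the absolute map $F$ as well. The paper's solution is an explicit Poincar\'e-duality adjoint: the map $\psi$ of Notation~\ref{not:psiphi} (in your notation, $\mu=-F\psi$), for which the cross terms cancel identically and the quadratic term vanishes because $\im\psi\subset\im(i_{0*})$ (Lemma~\ref{lem:Changeidid}). The existence of this $\psi$ uses the surjectivity of $\ev\colon H^1(Y_0;\Z[\pi])\to H_1(Y_0;\Z[\pi])^*$, which follows from the injectivity of $H^2(\pi;\Z[\pi])\to H^2(Y;\Z[\pi])$ (or from $\pi$ finite, cf.\ Proposition~\ref{prop:TorsionImpliesH2mapInj}) --- \emph{not} from the splitting $H_1(Y_0;\Z[\pi])\cong L\oplus T$. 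That splitting enters elsewhere: in the paper it is needed to realise the modified compatible triple by an actual map (Proposition~\ref{prop:Simplify}, where one must lift a homomorphism $H_1(Y_0;\Z[\pi])\to\coker((i_1)_*)$ across $H_2(X_1;\Z[\pi])\to\coker((i_1)_*)$). Your proposal, as written, asserts an unspecified ``lifting problem into duals of $H_2(X_i;\Z[\pi])$'' and attributes its solvability to the wrong hypothesis, so the key idea of the proof is not present.

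Two further points on the second half. Your Mayer--Vietoris description of $\eta=f_*[V]$ and of $b(c_0',c_1)-b(c_0,c_1)$ is consistent with the paper's identification in the proof, but the claim that the additional contributions are of the shape $q+q^*$ and hence killed by $\ \widehat{}\ $ is exactly what must be proved, and it depends on the specific structure of $\mu$ (in the paper this is the content of Lemmas~\ref{lem:Technical1}--\ref{lem:Technical2} and Proposition~\ref{prop:changing-b-FG}, where the extra terms appear as conjugate pairs $w+\overline{w}$). Finally, invoking Lemma~\ref{lem:WeaklyEvenAlgebra} is not legitimate here, since that lemma assumes $\pi$ has no $2$-torsion, whereas the theorem allows $\pi$ finite; the correct (and simpler) argument is that the coefficient of the neutral element in $w+\overline{w}$ is even, so such terms vanish under $\ \widehat{}\ $ regardless of torsion.
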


We explain how this result implies Theorem~\ref{thm:VanishOddIntro}, whose  statement we now recall.

\begin{customthm}{\ref{thm:VanishOddIntro}}
\label{thm:VanishOdd}
Assume~$\cd(\pi) \leq 3$.
Let~$(X_1,Y_1)$ be a~$4$-dimensional Poincar\'e pair with~$\pi_1(X_1) \cong~\pi$,  Postnikov $2$-type $B:=P_2(X_1)$ and~$\pi_1(Y_1) \to \pi_1(X_1)$ surjective,  and let~$c_1 \colon X_1 \to B$ be a~$3$-connected map.
Assume that~$\ev^*$ is an isomorphism and that~$H^2(\pi;\Z[\pi]) \to H^2(Y_1;\Z[\pi])$ is injective.
If~$\lambda_{X_1}$ is odd and~$H_1(Y_1;\Z[\pi]) \cong L \oplus T$ with $L$ free and $T^*=0$,  then 
$$  \Herm(H^2(Y_1;\Z[\pi]))/\mathcal{G}=0.$$
In particular, given another~$4$-dimensional Poincar\'e pair with~$\pi_1(X_0) \cong \pi$,  Postnikov $2$-type $B$, and~$\pi_1(Y_0) \to \pi_1(X_0)$ surjective, and a $3$-connected map $c_0 \colon X_0 \to B$ with~$c_0|_{Y_0} =  c_1|_{Y_1} \circ h$ with~$c_0^*b_{X_0}^\partial =c_1^*b_{X_1}^\partial$, the secondary obstruction vanishes:
$$b(c_0,c_1)=0 \in \Herm(H^2(Y_1;\Z[\pi]))/\mathcal{G}.$$
\end{customthm}
\begin{proof}
By Proposition~\ref{prop:Herm/G}, it is equivalent to show that~$I^1(Y;\Z[\pi])/\mathcal{G}=0$, i.e. that every element~$f \in I^1(Y;\Z[\pi])$ can be written as $f=\widehat{b}(\varphi)$ for some $\varphi \in \mathcal{G}$.
Given $f \in I^1(Y;\Z[\pi])$,  define 
$$\phi \colon H_1(Y_0;\Z[\pi]) \to H_2(X_0;\Z[\pi])$$
 by picking a basis $x_1,\ldots,x_n$ for $L$,  setting $\phi(h_*^{-1}(x_i)):=0$ if $f(x_i)=0$
 and $\phi(h_*^{-1}(x_i)):=y$ if~$f(x_i)=1$.
This determines $\phi$: since $T^*=0$ and $H_2(X_0;\Z[\pi])$ is projective, $\phi$ necessarily vanishes on $T$.
By construction $\widehat{h^*q_\phi}=f.$
Theorem~\ref{thm:OddChange} (applied with $X_0=X_1$ and $c_0=c_1$)
gives a $3$-connected map $c_1' \colon X_1 \to B$ with~$c_1'|_{Y_1} =  c_1|_{Y_1} \circ h,(c_1')^*b_{X_1}^\partial =c_1^*b_{X_1}^\partial$, and 
$$ \wh b(c_1',c_1)=\wh b(c_1,c_1)-\widehat{h^*q_\phi}=\widehat{h^*q_\phi}=f.$$
Since $c_1' \in \mathcal{S}_0(c_1)$, Proposition~\ref{prop:Transitive}  (again with $X_0=X_1$) implies that there is a homotopy equivalence~$\varphi \in \mathcal{G}$ with~$c_1' \simeq \varphi \circ c_1$.
Proposition~\ref{prop:Indepc0} then implies that $b(c_1',c_1)=b(c_1,c_1)-b(\varphi)=-b(\varphi)$.
Thus $f=\widehat{b}(c_1',c_1)=\widehat{b}(\varphi)$ with $\varphi \in \mathcal{G}$, as required.
\end{proof}

The remainder of this section is devoted to the proof of Theorem~\ref{thm:OddChange}.

\subsection{Changing compatible triples}

The first step in the proof of Theorem~\ref{thm:OddChange} consists of showing how a given compatible triple can be modified using a homomorphism~$\phi\colon H_1(Y;\Z[\pi])\to H_2(X;\Z[\pi])$.
This is the content of Proposition~\ref{prop:ChangeTriple} below.

	\begin{notation}
	\label{not:psiphi}
Let~$(X,Y)$ be a~$4$-dimensional Poincar\'e pair with~$\pi_1(Y) \to \pi_1(X)=:\pi$ surjective.
	Given~$\phi\colon H_1(Y;\Z[\pi])\to H_2(X;\Z[\pi])$, define~$\psi\colon H_2(X,Y;\Z[\pi])\to H_2(X;\Z[\pi])$ as the composition
\begin{align*}
 \psi \colon H_2(X,Y;\Z[\pi])
 &\xleftarrow{\PD_X,\cong}H^2(X;\Z[\pi])\xrightarrow{\ev_X} H_2(X;\Z[\pi])^* \xrightarrow{\phi^*} H_1(Y;\Z[\pi])^*  \\ &\xleftarrow{\ev_Y, \cong} H^1(Y;\Z[\pi])/H^1(\pi;\Z[\pi]) \xrightarrow{\delta} H^2(X,Y;\Z[\pi])\\
 & \xrightarrow{\PD_X,\cong}H_2(X;\Z[\pi]).
	\end{align*}
	Here,~$\delta$ is the connecting homomorphism for the pair~$(X,Y)$,  and to obtain the penultimate map, we use that~$H^1(X;\Z[\pi])\cong H^1(\pi;\Z[\pi])$.
Finally we write~$i\colon Y\to X$ and~$j\colon (X,\emptyset)\to (X,Y)$ for the inclusions.
	\end{notation}
	
	The next lemma describes how, starting from the compatible triple $(\id_{H_2(X;\Z[\pi])},\id_{H_2(X,Y;\Z[\pi])},\id_{Y})$,  the maps~$\phi$ and $\psi$ lead to a new compatible triple. 
	\begin{lemma}
	\label{lem:Changeidid}
	Let~$(X,Y)$ be a~$4$-dimensional Poincar\'e pair with~$\pi_1(Y) \to \pi_1(X)=:\pi$ surjective.
The following is a compatible triple:
$$(F,G,\id_{Y_0}):=(\id_{H_2(X_0;\Z[\pi])}-\psi\circ j_*,\id_{H_2(X_0,Y_0;\Z[\pi])}+j_*\circ  \phi \circ \partial,\id_{Y_0}).$$
	\end{lemma}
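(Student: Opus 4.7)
My plan is to verify the three defining conditions of a compatible triple in turn: that $F$ and $G$ are isomorphisms, that they fit into the commutative ladder (with $h_* = \id$ on the boundary terms), and that they intertwine the relative intersection forms. All three rest on a single structural observation about $\psi$.

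The key observation is that the commutativity of Poincar\'e--Lefschetz duality with the long exact sequence of $(X,Y)$ yields $\PD_X \circ \delta = i_* \circ \PD_Y$. Substituting this into the definition of $\psi$ from Notation~\ref{not:psiphi} exhibits $\psi$ as factoring through $i_* \colon H_2(Y;\Z[\pi]) \to H_2(X;\Z[\pi])$, so $j_* \circ \psi = 0$ by exactness. Combined with the standard identities $j_* \circ i_* = 0$ and $\partial \circ j_* = 0$, this gives $(\psi j_*)^2 = 0$ and $(j_* \phi \partial)^2 = 0$, so $F$ and $G$ are isomorphisms with inverses $\id + \psi j_*$ and $\id - j_* \phi \partial$ respectively. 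The commutativity of the ladder likewise reduces to these three exact-sequence identities: $F \circ i_* = i_*$ since $j_* \circ i_* = 0$; $\partial \circ G = \partial$ since $\partial \circ j_* = 0$; and $G \circ j_* = j_* = j_* \circ F$ using $\partial \circ j_* = 0$ together with $j_* \circ \psi = 0$.

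The main obstacle is the intertwining identity $\lambda_X^\partial(F(x), G(y)) = \lambda_X^\partial(x,y)$. After expanding and cancelling the trivial term, this reduces to verifying
\[
\lambda_X^\partial(x, j_* \phi \partial y) \;=\; \lambda_X^\partial(\psi j_* x, y) \;+\; \lambda_X^\partial(\psi j_* x, j_* \phi \partial y)
\]
for $x \in H_2(X;\Z[\pi])$ and $y \in H_2(X,Y;\Z[\pi])$. The plan is to unpack each term via the defining formula $\lambda_X^\partial(a,c) = \langle \PD_X^{-1}(c), a\rangle$, exploit the factorisation of $\psi$ through $i_*$, and apply the dual Poincar\'e--Lefschetz identity $\PD_Y \circ i^* = \partial \circ \PD_X$ (the adjacent commutative square in the duality ladder) to rewrite expressions of the form $\lambda_X^\partial(i_*(a),c)$ as pairings on $Y$ involving $\partial c$. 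The adjoint character of $\psi$ relative to $\phi$ that is built into its definition through the ``$\phi^*$ sandwiched between $\ev_X$ and $\ev_Y^{-1}$'' pattern is precisely what arranges the two sides to agree after this reduction, while the mixed term $\lambda_X^\partial(\psi j_* x, j_* \phi \partial y)$ is handled similarly and contributes symmetrically to the equation.
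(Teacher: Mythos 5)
Your setup is sound and matches the paper's: the ladder identities and the invertibility of $F$ and $G$ do follow exactly as you say from $j_*\circ i_*=0$, $\partial\circ j_*=0$ and $j_*\circ\psi=0$ (your derivation of the last via $\PD_X\circ\delta=i_*\circ\PD_Y$ is a fine variant of the paper's $j_*\circ\PD_X\circ\delta=\PD_X\circ j^*\circ\delta=0$), and your displayed reduction of the intertwining condition is the correct one. The gap is in the one step that carries the weight of the lemma, namely $\lambda_X^\partial(\psi j_*x,y)=\lambda_X^\partial(x,j_*\phi\partial y)$, which you only gesture at. Along the route you choose --- the defining formula $\lambda_X^\partial(a,c)=\langle\PD_X^{-1}(c),a\rangle$ together with the square $\PD_Y\circ i^*=\partial\circ\PD_X$ --- the left side reduces to $\langle\PD_Y^{-1}(\partial y),\PD_Y(\beta)\rangle$, where $\beta:=\ev_Y^{-1}\phi^*\ev_X\PD_X^{-1}(j_*x)\in H^1(Y;\Z[\pi])$. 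But the ``adjoint character of $\psi$'' only tells you the values of the functional $\ev_Y(\beta)$ on $H_1(Y;\Z[\pi])$, i.e.\ it computes $\langle\beta,\partial y\rangle$; after your reduction it is the \emph{other} class $\PD_Y^{-1}(\partial y)$ that sits in the functional slot. To bridge the two you must invoke the hermitian symmetry of the twisted duality pairing on $Y$ (graded commutativity of the cup product, Lemmas~\ref{lem:cup}/\ref{lem:CupProductSymmetric}, with the attendant conjugation), and then still match conjugations with the right-hand side via Lemma~\ref{lem:otheradjoints}. None of this is in your plan, and the adjointness of $\psi$ alone does not close the argument, so as written the key identity is unproved (though your route is salvageable once these inputs are added).

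The paper sidesteps this entirely: it uses the other adjoint description $\lambda_X^\partial(a,c)=\overline{\langle\PD_X^{-1}(a),c\rangle}$ of Lemma~\ref{lem:otheradjoints} to put $\psi j_*x$ into the functional slot, where $\PD_X^{-1}(\psi j_*x)=\delta\,\ev_Y^{-1}\phi^*\ev_X\PD_X^{-1}(j_*x)$, and then applies the commutative square $\ev_X\circ\delta=\partial^*\circ\ev_Y$ plus $\PD_X^{-1}\circ j_*=j^*\circ\PD_X^{-1}$; no symmetry on $Y$ is needed. One further small point: the mixed term $\lambda_X^\partial(\psi j_*x,j_*\phi\partial y)$ does not ``contribute symmetrically'' --- it is simply zero, because $\psi$ factors through $i_*$ and the second argument lies in $\im(j_*)$ (so the $\partial$, equivalently $i^*\circ j^*=0$, kills the pairing); this vanishing, together with the identity above, is exactly what the intertwining condition reduces to.
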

	\begin{proof}
	Using $\partial \colon H_2(X,Y;\Z[\pi]) \to H_1(Y;\Z[\pi])$ to denote the connecting homomorphism,  we need to show that $i_* \circ F=i_*,\partial \circ G=\partial$ and $G \circ j_*=j_* \circ F$.
By definition of $F$ and $G$, this amounts to proving that $\psi \circ j_* \circ i_*=0,	\partial \circ j_* \circ \phi \circ \partial =0$ and $j_*\circ \phi\circ \partial  \circ j_*=j_* \circ \psi \circ \circ j_*$.
The first two equalities are clear because exactness guarantees that $j_* \circ i_*=0$ and $\partial \circ j_*=0$.
For the third, observe that both expressions are zero, one because of $\partial  \circ j_*=0$ and second because~$j \circ \psi$ contains the expression $j \circ \PD_X \circ \delta=\PD_X \circ j^* \circ \delta=0. $

For~$(F,G,\id_{Y})$ to be a compatible triple, it remains to verify that~$\lambda_X^\partial(F(x),G(y))=\lambda_X^\partial(x,y)$ for every~$x \in H_2(X;\Z[\pi])$ and every~$y \in H_2(X,Y;\Z[\pi]).$
	In other words, we have to show that 
$$\lambda_X^\partial(x,y)=\lambda_X^\partial((\id_{H_2(X_0;\Z[\pi])}-\psi\circ j_*(x)),(\id_{H_2(X_0;\Z[\pi])}+j_*\circ \phi\circ \partial)(y)).$$
We first establish~$\lambda_X(\psi \circ j_*(x),j_* \circ \phi \circ \partial(y))=0$ and then~$\lambda_X^\partial(\psi \circ j_*(x),y)-\lambda_X^\partial(x,j_* \circ \phi \circ \partial(y))=0$.

We assert that $\lambda_X^\partial(\psi \circ j_*(x),j_* \circ \phi \circ \partial(y))=0$.	
Observe that the image of $\psi$ is contained in~$\im(i \colon H_2(Y;\Z[\pi]) \to H_2(X;\Z[\pi]))$ (to see this, use $\PD_X \circ \delta=i \circ \PD_Y$).
This observation ensures that $\psi \circ j_*(x)$ pairs trivially under $\lambda_X^\partial$ with every element of $H_2(X,Y;\Z[\pi])$.
The assertion follows immediately.

Next we claim that $\lambda_X^\partial(\psi \circ j_*(x),y)=\lambda_X^\partial(x,j_* \circ \phi \circ \partial(y))$ for all~$x \in H_2(X;\Z[\pi]),y \in H_2(X,Y;\Z[\pi])$.
	Using Lemma~\ref{lem:Changeidid}, we hence have to show that~$\langle \PD_X^{-1}(\psi\circ j_*(x)
	),y\rangle=\langle \PD_X^{-1}(x),j_* \circ \phi \circ \partial(y)\rangle $.
This is a direct calculation which, besides the third equality, follows from the definitions:
\begin{align*}
\lambda_X^\partial(\psi \circ j_*(x),y)
&=\langle \PD_X^{-1}(\psi\circ j_*(x)),y\rangle
=\ev_X(\PD_X^{-1}(\psi\circ j_*(x)))(y)
=\phi^*(\PD_X^{-1}(j_*(x)))(\partial(y)) \\
&=\langle \PD_X^{-1}(j_*(x)),\phi \circ \partial(y)\rangle 
=\langle j^* \circ \PD_X^{-1}(x),\phi \circ \partial(y)\rangle
=\langle \PD_X^{-1}(x),j_* \circ \phi \circ \partial(y)\rangle \\
&=\lambda_X^\partial(x,j_* \circ \phi \circ \partial(y)).
	\end{align*}
To justify the third equality,  note that~$\psi$ involves~$\PD_X$ in the last step of its definition and that the following square commutes:
	\[\begin{tikzcd}
		H^1(Y;\Z[\pi])/H^1(\pi;\Z[\pi])\ar[d,"\ev_Y","\cong"']\ar[r,"\delta"]&H^2(X,Y;\Z[\pi])\ar[d,"\ev_X"]\\
		H_1(Y;\Z[\pi])^*\ar[r,"{\partial^*}"]&H_2(X,Y;\Z[\pi])^*.
	\end{tikzcd}\]
This concludes the proof of the claim and thus the proof that~$(F,G,\id_{Y})$ is a compatible triple. 
	\end{proof}
	
	The next proposition shows how $\phi$ and $\psi$ can be used to change a given compatible triple.

	\begin{proposition}
	\label{prop:ChangeTriple}
Let~$(X_0,Y_0)$ and~$(X_1,Y_1)$ be~$4$-dimensional Poincar\'e pairs with fundamental group~$\pi_1(X_i) \cong \pi$,  and~$\pi_1(Y_i) \to \pi_1(X_i)$ surjective for $i=0,1$,  and let~$h \colon Y_0 \to Y_1$ be a degree one homotopy equivalence.	
Given a compatible triple~$(F,G,h)$ and~$\phi\colon H_1(Y_0;\Z[\pi])\to H_2(X_0;\Z[\pi])$,  the following is again a compatible triple
$$(F-F\circ \psi\circ j_*,G+G\circ j_*\circ \phi\circ \partial,h).$$
	\end{proposition}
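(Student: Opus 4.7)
The plan is to deduce this proposition directly from Lemma~\ref{lem:Changeidid} via a composition argument, with essentially no additional work beyond a verification that compatible triples compose. The key observation is that the two modified maps factor as
$$F - F\circ \psi \circ j_* \ =\  F \circ \bigl(\id_{H_2(X_0;\Z[\pi])} - \psi\circ j_*\bigr), \qquad G + G \circ j_* \circ \phi \circ \partial \ =\  G \circ \bigl(\id_{H_2(X_0, Y_0;\Z[\pi])} + j_* \circ \phi \circ \partial\bigr),$$
so that the proposed triple is the composition of the given compatible triple $(F,G,h)$ from $(X_0,Y_0)$ to $(X_1,Y_1)$ with the self-compatible triple $(\id_{H_2(X_0;\Z[\pi])} - \psi \circ j_*,\ \id_{H_2(X_0,Y_0;\Z[\pi])} + j_* \circ \phi \circ \partial,\ \id_{Y_0})$ produced by Lemma~\ref{lem:Changeidid}, once one notes that $h \circ \id_{Y_0} = h$.

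Hence it suffices to verify the general fact that compatible triples compose: given compatible triples $(F_1, G_1, h_1)$ from $(X_0, Y_0)$ to $(X_1, Y_1)$ and $(F_2, G_2, h_2)$ from $(X_1, Y_1)$ to $(X_2, Y_2)$, the triple $(F_2 \circ F_1,\ G_2 \circ G_1,\ h_2 \circ h_1)$ is a compatible triple from $(X_0, Y_0)$ to $(X_2, Y_2)$. The commutativity of the large ladder of Definition~\ref{def:CompatiblePair} for the composed triple follows by horizontally pasting the two commuting ladders for $(F_1,G_1,h_1)$ and $(F_2,G_2,h_2)$. The intertwining condition
$$\lambda_{X_2}^\partial\bigl(F_2 F_1(x),\, G_2 G_1(y)\bigr) = \lambda_{X_0}^\partial(x,y)$$
is obtained by applying the intertwining property of $(F_2, G_2, h_2)$ to $(F_1(x), G_1(y))$ and then that of $(F_1, G_1, h_1)$ to $(x,y)$.

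Applying this composition principle to $(F,G,h)$ and the self-compatible triple furnished by Lemma~\ref{lem:Changeidid} yields exactly the proposed triple in the statement. The only step requiring care is the general composition lemma above, which is routine; the substantive content of the proposition is already encoded in Lemma~\ref{lem:Changeidid}, so no serious obstacle is anticipated.
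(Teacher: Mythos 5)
Your proposal is correct and follows essentially the same route as the paper: the paper's proof likewise observes that compatible triples compose componentwise and then writes the new triple as $(F,G,h)$ composed with the self-compatible triple $(\id-\psi\circ j_*,\ \id+j_*\circ\phi\circ\partial,\ \id_{Y_0})$ supplied by Lemma~\ref{lem:Changeidid}. Your explicit verification of the composition principle (pasting the ladders and chaining the intertwining identities) is exactly the routine check the paper leaves implicit.
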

	\begin{proof}
	Given two compatible triples $(F_1,G_1,h_1)$ and~$(F_2,G_2,h_2)$, the componentwise composition $(F_1 \circ F_2,G_1 \circ G_2,h_1 \circ h_2)$ is again a compatible triple.
	Since Lemma~\ref{lem:Changeidid} ensures that the triple~$(\id_{H_2(X_0;\Z[\pi])}-\psi\circ j_*,\id_{H_2(X_0,Y_0;\Z[\pi])}+j_* \circ \phi \circ \partial,\id_{Y_0})$ is compatible,  it follows that 
$$(F-F\circ \psi\circ j_*,G+G\circ j_*\circ \phi\circ \partial,h)=(F,G,h) \circ (\id_{H_2(X_0;\Z[\pi])}-\psi\circ j_*,\id_{H_2(X_0,Y_0;\Z[\pi])}+j_*\circ \phi \circ \partial,\id_{Y_0})$$
is again a compatible triple.
	\end{proof}

\subsection{Proof of Theorem~\ref{thm:OddChange}}
After some preliminary lemmas, we prove Theorem~\ref{thm:OddChange}.

\begin{notation}
Fix a $4$-dimensional Poincar\'e pair $(X,Y)$ with $\pi_1(Y) \to \pi_1(X):=\pi$ surjective and Postnikov $2$-type $B:=P_2(X)$.
Additionally, fix~$\phi \colon H_1(Y;\Z[\pi]) \to H_2(X;\Z[\pi])$ and consider the associated~$\psi\colon H_2(X,Y;\Z[\pi])\to H_2(X;\Z[\pi])$ (recall Notation~\ref{not:psiphi}).
Let~$c_0',c_0 \colon X \to B$ be~$3$-connected maps such that $c|_Y:=c_0|_Y=c_0'|_Y$ and assume that $c|_Y$ is an inclusion. Furthermore, assume that $(c_0,c_0')$ induces the compatible triple
$$(F,G,\id_Y):=(\id_{H_2(X;\Z[\pi])}-\psi\circ j_*,\id_{H_2(X,Y;\Z[\pi])}+ j_*\circ \phi\circ \partial,\id_Y).$$
Proposition~\ref{prop:RelativeFormsCohomologyAreTheSame} ensures that~$(c_0')^*b_X^\partial=c_0^*b_X^\partial$
so that the secondary obstruction $b(c_0,c_0')$ is defined.
Here, recall that we write~$j \colon (X,\emptyset) \to (X,Y)$ for the inclusion induced map.
\end{notation}

\begin{lemma}
\label{lem:Technical1}
For $x \in H^2(Y;\Z[\pi])$ and $y\in H^2(X,Y;\Z[\pi])$, we have
	\[b_X^\partial(y,(c_0')^*(\wh x)-c_0^*(\wh x))=\lambda_X^\partial(\PD_X(y),j_* \circ \phi(\PD_Y(x)))\] 
		 where~$\wh x \in H^2(B;\Z[\pi])$ is a preimage of~$x\in H^2(Y;\Z[\pi])$ under~$H^2(B;\Z[\pi])\xrightarrow{c|_Y^*} H^2(Y;\Z[\pi])$. 
\end{lemma}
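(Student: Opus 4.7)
The plan is to show that both sides of the identity equal the common pairing
\[
\langle j^*(y),\phi(\PD_Y(x))\rangle_X \in \Z[\pi],
\]
where $\langle-,-\rangle_X$ denotes the evaluation $H^2(X;\Z[\pi])\otimes H_2(X;\Z[\pi])\to \Z[\pi]$. For the right-hand side, the cup--cap adjunction together with the identification $b_X^\partial \cong \lambda_X^\partial$ under Poincar\'e duality (\cref{prop:RelativeFormsCohomologyAreTheSame}) gives the useful identity $\lambda_X^\partial(\PD_X(y),w) = \langle y,w\rangle_{(X,Y)}$ for every $w \in H_2(X,Y;\Z[\pi])$, where $\langle-,-\rangle_{(X,Y)}$ is the pairing between $H^2(X,Y;\Z[\pi])$ and $H_2(X,Y;\Z[\pi])$. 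Taking $w = j_*\phi(\PD_Y(x))$ and applying the naturality identity $\langle y, j_*v\rangle_{(X,Y)} = \langle j^*y, v\rangle_X$ puts the right-hand side in the required form.

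For the left-hand side, I would first unfold $b_X^\partial$ and use naturality of evaluation across $c_0^*$ and $(c_0')^*$ to obtain
\[
b_X^\partial(y,(c_0')^*\wh x - c_0^*\wh x) = \langle \wh x, ((c_0')_* - (c_0)_*)\PD_X(y)\rangle.
\]
The compatibility assumption, combined with the convention of \cref{prop:Simplify}, says that $F = (c_0')_*^{-1}(c_0)_* = \id - \psi\circ j_*$ on $H_2(X;\Z[\pi])$, whence $(c_0')_*-(c_0)_* = (c_0')_*\circ \psi\circ j_*$. Substituting and moving $(c_0')_*$ through the evaluation yields $\mathrm{LHS} = \langle(c_0')^*\wh x,\psi(j_*\PD_X(y))\rangle_X$. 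Now, naturality of Poincar\'e duality with respect to $j\colon (X,\emptyset)\to (X,Y)$ (the commutativity of the square $j^*$ on $H^2$ against $j_*$ on $H_2$, relating the two flavours of $\PD_X$) gives $(\PD_X^{\mathrm{abs\text{-}rel}})^{-1}(j_*\PD_X(y)) = j^*(y)$. Substituting into the definition of $\psi$ in \cref{not:psiphi} yields
\[
\psi(j_*\PD_X(y)) = \PD_X(\delta\xi),
\]
where $\xi \in H^1(Y;\Z[\pi])/H^1(\pi;\Z[\pi])$ is uniquely determined by $\langle \xi,\eta\rangle_Y = \langle j^*y,\phi(\eta)\rangle_X$ for every $\eta\in H_1(Y;\Z[\pi])$.

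To conclude, I would combine the cup--cap adjunction $\langle\alpha,\beta\cap[X]\rangle = \langle\alpha\cup\beta,[X]\rangle$, the standard cup-product formula for the coboundary $\alpha\cup\delta\xi = \delta(i^*\alpha\cup\xi)$ (no sign since $|\alpha|=2$), the Poincar\'e pair identity $\langle\delta(-),[X]\rangle = \langle-,\partial[X]\rangle = \langle-,[Y]\rangle$, and $i^*(c_0')^*\wh x = (c|_Y)^*\wh x = x$, to reduce the LHS to $\langle x\cup\xi,[Y]\rangle_Y$. A final cup--cap adjunction on $Y$ and the defining property of $\xi$ then give
\[
\langle x\cup\xi,[Y]\rangle_Y = \langle \xi,\PD_Y(x)\rangle_Y = \langle j^*y,\phi(\PD_Y(x))\rangle_X,
\]
matching the RHS. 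The main obstacle is the bookkeeping of signs, conjugations, and module actions inherent to twisted (co)homology---in particular the cup-product formula for the coboundary and the relation between $\alpha\cup\beta$ and $\beta\cup\alpha$. These are handled cleanly by the appendix conventions, and the fact that all relevant cohomological degrees equal $2$ ensures that the degree-dependent signs are trivial, so that the matching of the two sides holds on the nose.
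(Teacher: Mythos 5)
Your argument is correct in substance, but it takes a genuinely different route from the paper's. The paper works with the \emph{relative}-homology component of the compatible triple, $(c_0')_*^{-1}(c_0)_*=\id+j_*\circ\phi\circ\partial$ on $H_2(X,Y;\Z[\pi])$: it inserts $(c_0')^*((c_0')^*)^{-1}$ into $\langle y, c_0^*(\wh x)\cap[X]\rangle$, uses naturality, the equality $(c_0')^*b_X^\partial=c_0^*b_X^\partial$ and $\partial(\PD_X c_0^*(\wh x))=\PD_Y(x)$, and finishes with Lemma~\ref{lem:otheradjoints}; it never unwinds $\psi$. You instead use the \emph{absolute}-homology component $F=\id-\psi\circ j_*$, which forces you to unwind the definition of $\psi$ from Notation~\ref{not:psiphi}; your identification $\psi(j_*\PD_X(y))=\PD_X(\delta\xi)$ with $\ev_Y(\xi)=\ev_X(j^*y)\circ\phi$ is correct (it rests on $j_*(y\cap[X])=(j^*y)\cap[X]$, a naturality the paper also uses), and the reduction to $\langle\xi,\PD_Y(x)\rangle=\langle j^*y,\phi(\PD_Y(x))\rangle$ does close the argument. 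The paper's route is shorter; yours makes transparent why the correction term has exactly the $\phi$-shape appearing in $q_\phi$, at the cost of handling $\psi$. In effect the two proofs exploit the two halves of the compatible triple of Lemma~\ref{lem:Changeidid}.

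Two caveats. First, the conjugations are not as harmless as you assert: in the paper's conventions the common value of the two sides is $\overline{\langle j^*(y),\phi(\PD_Y(x))\rangle}$, not the unbarred pairing you name, because $\lambda_X^\partial(\PD_X(y),w)=\overline{\langle y,w\rangle}$ (Lemma~\ref{lem:otheradjoints}) while $b_X^\partial(y,\beta)=\langle\beta,\PD_X(y)\rangle$ has no bar; since you compute both sides to the same expression the conclusion is unaffected, but the bars must be tracked consistently. Second, the coboundary formula $\alpha\cup\delta\xi=\delta(i^*\alpha\cup\xi)$ is nowhere established for the twisted relative cup products of the appendix; it is cleaner, and needs nothing new, to finish instead with the evaluation-level adjointness $\ev_X\circ\delta=\partial^*\circ\ev_Y$ (the commuting square already used in the proof of Lemma~\ref{lem:Changeidid}) together with $\partial\bigl((c_0')^*(\wh x)\cap[X]\bigr)=x\cap[Y]$: then $\langle(c_0')^*(\wh x),\PD_X(\delta\xi)\rangle=b_X^\partial\bigl(\delta\xi,(c_0')^*(\wh x)\bigr)=\overline{\langle\delta\xi,(c_0')^*(\wh x)\cap[X]\rangle}=\overline{\langle\xi,\PD_Y(x)\rangle}=\overline{\langle j^*y,\phi(\PD_Y(x))\rangle}$, which matches the right-hand side.
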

\begin{proof}
The equality~$(c_0')_*^{-1}(c_0)_*=\id_{H_2(X,Y;\Z[\pi])}+ j_*\circ \phi\circ \partial\colon H_2(X,Y;\Z[\pi])\to H_2(X,Y;\Z[\pi])$ implies that 
	\begin{equation}
		\label{eq:cc_0}
		(c_0)_*=(c_0')_*+(c_0')_*\circ j_*\circ \phi\circ \partial.
	\end{equation}
	We begin with the following intermediate calculation, where the third equality uses \eqref{eq:cc_0} and the fifth equality uses that $(c_0')^*b_X^\partial=c_0^*b_X^\partial$ as well as $\partial(\PD_X \circ c_0^*(\wh x))=
	\PD_Y(c|_Y^*\wh x)
	=\PD_Y(x)$:
	\begin{align*}
\langle y,c_0^*(\wh x)\cap[X]\rangle
&=\langle (c_0')^*((c_0')^*)^{-1}(y),c_0^*(\wh x)\cap[X]\rangle
=\langle ((c_0')^*)^{-1}(y),(c_0')_*(c_0^*(\wh x)\cap[X])\rangle\\
&=\langle ((c_0')^*)^{-1}(y),(c_0)_*(c_0^*(\wh x)\cap[X])\rangle-\langle ((c_0')^*)^{-1}(y),(c_0')_* \circ j_*\circ \phi\circ \partial(c_0^*(\wh x)\cap[X])\rangle\\
		&=\langle ((c_0')^*)^{-1}(y),\wh x\cap (c_0)_*[X]\rangle-\langle ((c_0')^*)^{-1}(y),(c_0')_*\circ j_*\circ \phi\circ \partial(c_0^*(\wh x)\cap [X])\rangle\\
		&=\langle ((c_0')^*)^{-1}(y),\wh x\cap (c_0')_*[X]\rangle-\langle ((c_0')^*)^{-1}(y),(c_0')_* \circ j_*\circ \phi(\PD_Y(x))\rangle\\
		&=\langle y,(c_0')^*(\wh x)\cap[X]\rangle-\langle y,j_*\circ \phi(\PD_Y(x))\rangle.
	\end{align*}
Applying Lemma~\ref{lem:otheradjoints} and the conclusion of this calculation yields:
 \begin{align*}
 	b_X^\partial(y,(c_0')^*(\wh x)-c_0^*(\wh x))
 	&=\overline{\langle y,((c_0')^*(\wh x)-c_0^*(\wh x))\cap [X]\rangle}
=\overline{\langle y,j_*\circ \phi(\PD_Y(x))\rangle}\\
&=b_X^\partial(y,\PD^{-1}_X\circ j_*\circ \phi(\PD_Y(x))) 
=\lambda_X^\partial(\PD_X(y),j_* \circ \phi(\PD_Y(x))).
 \end{align*}
 This concludes the proof of the lemma.
\end{proof}

Continuing with the notation from Lemma~\ref{lem:Technical1}, since $(c_0')^*(\wh x),c_0^*(\wh x) \in H^2(X;\Z[\pi])$ both map to~$x\in H^2(Y;\Z[\pi])$,  the difference $(c_0')^*(\wh x)-c_0^*(\wh x)$ lies in $\im(j^* \colon H^2(X,Y;\Z[\pi]) \to H^2(X;\Z[\pi]))$.
This observation is used implicitly in the statement of the next lemma. 

\begin{lemma}
\label{lem:Technical2}
Let~$x\in H^2(Y;\Z[\pi])$, and assume~$\wh x \in H^2(B;\Z[\pi])$ satisfies $c|_Y^*(\wh x)=x$.
For an element~$y\in H^2(X,Y;\Z[\pi])$ with~$j^*(y)=(c_0')^*(\wh x)-c_0^*(\wh x) \in H^2(X;\Z[\pi])$, the following holds:
	\[b_X^\partial(y,(c_0')^*(\wh x)-c_0^*(\wh x))=\lambda_X^\partial(\phi(\PD_Y(x)),j_* \circ \phi(\PD_Y(x))).\]
\end{lemma}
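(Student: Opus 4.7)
The plan is to combine Lemma~\ref{lem:Technical1} with a careful analysis showing that, although the ``correction class'' $\xi_0\in H_4(B)$ need not vanish, its contribution to the pairing is killed by the primary obstruction being trivial.

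First, I would invoke Lemma~\ref{lem:Technical1} to rewrite
$$b_X^\partial(y,(c_0')^*(\wh x)-c_0^*(\wh x))=\lambda_X^\partial(\PD_X(y),j_*\phi(\PD_Y(x))).$$
Setting $v:=\phi(\PD_Y(x))\in H_2(X;\Z[\pi])$, the task reduces to proving $\lambda_X^\partial(\PD_X(y)-v,j_*v)=0$.

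Next, I would compute the image of $\PD_X(y)-v$ under $j_*\colon H_2(X;\Z[\pi])\to H_2(X,Y;\Z[\pi])$. By naturality of cap product through $j$, $j_*(\PD_X(y))=\PD_X(j^*(y))=\PD_X((c_0')^*(\wh x)-c_0^*(\wh x))$. Using the relation $(c_0)_*=(c_0')_*\circ(\id+j_*\circ\phi\circ\partial)$ on $H_2(X,Y;\Z[\pi])$, the projection formula, and the identity $\partial\circ\PD_X=\PD_Y\circ i^*$ (which was already used in the proof of Lemma~\ref{lem:Technical1}), a direct calculation yields
$$(c_0')_*\PD_X((c_0')^*(\wh x)-c_0^*(\wh x))=-j_*(\wh x\cap\xi_0)+(c_0')_*(j_*\phi(\PD_Y(x))),$$
where $\xi_0\in H_4(B)$ is the unique class with $j_*(\xi_0)=(c_0)_*[X]-(c_0')_*[X]$ produced by Proposition~\ref{prop:FundamentalClassUnion}. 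Inverting $(c_0')_*$ (which is an isomorphism commuting with $j_*$ because $c_0'$ is a map of pairs), one obtains $j_*(\PD_X(y)-v)=-j_*(\alpha)$ where $\alpha:=(c_0')_*^{-1}(\wh x\cap\xi_0)$. Hence $\PD_X(y)-v+\alpha\in\ker(j_*)=\im(i_*)$, and since $\lambda_X^\partial(i_*(-),j_*(-))=0$ (because $i^*\circ j^*=0$), the difference between the two sides of the target equation collapses to $-\lambda_X^\partial(\alpha,j_*v)$.

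Finally, I would unfold this last term via Poincar\'e duality:
$$\lambda_X^\partial(\alpha,j_*v)=\langle j^*(\PD_X^{-1}(v)),\alpha\rangle=\langle j^*(\wt w),\wh x\cap\xi_0\rangle,$$
where $\wt w:=((c_0')^*)^{-1}(\PD_X^{-1}(v))\in H^2(B,Y;\Z[\pi])$, using the pushforward/pullback adjunction along $c_0'$ together with the naturality relation $((c_0')^*)^{-1}\circ j^*=j^*\circ((c_0')^*)^{-1}$. By definition this equals $b_{\xi_0}(\wh x,j^*(\wt w))$, which by the hermitian symmetry of $b_{\xi_0}$ (from Lemma~\ref{lem:cup}) is the conjugate of $b_{\xi_0}(j^*(\wt w),\wh x)$. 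Proposition~\ref{prop:PullbackProperties} identifies this last quantity with $b_X^\partial(c_0^*(\wt w),c_0^*(\wh x))-b_X^\partial((c_0')^*(\wt w),(c_0')^*(\wh x))$, which vanishes by the standing assumption $c_0^*b_X^\partial=(c_0')^*b_X^\partial$. The main technical obstacle is precisely this last step: although $\xi_0$ itself need not vanish, the primary obstruction being trivial forces $b_{\xi_0}$ to vanish on the subspace $\im(j^*)\times H^2(B;\Z[\pi])$, and the algebraic manipulations above are arranged precisely so that the correction term lands in this subspace.
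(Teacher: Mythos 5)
Your argument is correct, but it follows a genuinely different route from the paper. After the shared first step (Lemma~\ref{lem:Technical1}), the paper finishes in a few lines entirely inside $(X,Y)$: it uses the symmetry $\lambda_X^\partial(a,j_*(b))=\overline{\lambda_X^\partial(b,j_*(a))}$ together with $j_*\PD_X(y)=\PD_X(j^*(y))$, substitutes $j^*(y)=(c_0')^*(\wh x)-c_0^*(\wh x)$, and then applies Lemma~\ref{lem:Technical1} a second time with first argument $\PD_X^{-1}\circ\phi(\PD_Y(x))$; the resulting conjugation disappears because the outcome is the hermitian self-pairing $\lambda_X(\phi(\PD_Y(x)),\phi(\PD_Y(x)))$. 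You instead pass through $H_4(B)$: you reduce the claim to $\lambda_X^\partial(\PD_X(y)-v,j_*v)=0$, use the projection formula, $\partial(c_0^*(\wh x)\cap[X])=\PD_Y(x)$ and the relation $(c_0)_*=(c_0')_*+(c_0')_*\circ j_*\circ\phi\circ\partial$ to show that $\PD_X(y)-v$ agrees modulo $\ker(j_*)=\im(i_*)$ with $-\alpha$, where $\alpha=(c_0')_*^{-1}(\wh x\cap\xi_0)$, and then kill $\lambda_X^\partial(\alpha,j_*v)$ by rewriting it as $\overline{b_{\xi_0}(j^*(\wt w),\wh x)}$ and invoking Proposition~\ref{prop:PullbackProperties} together with the standing equality $c_0^*b_X^\partial=(c_0')^*b_X^\partial$. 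I checked your displayed identity for $(c_0')_*\PD_X((c_0')^*(\wh x)-c_0^*(\wh x))$ and the subsequent steps, and your use of $\lambda_X^\partial(i_*(-),j_*(-))=0$ is the correct form of the radical statement (note that elements of $\im(i_*)$ do \emph{not} pair trivially against all of $H_2(X,Y;\Z[\pi])$, only against $\im(j_*)$, which is what you use). Two facts you leave implicit but which are true and also used tacitly by the paper: that $(c_0')_*$ on $H_2(X,Y;\Z[\pi])$ and $(c_0')^*$ on $H^2(B,Y;\Z[\pi])$ are isomorphisms, by the five lemma, since $c_0'$ is $3$-connected and restricts to the inclusion on $Y$. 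In the end both proofs rest on the same inputs (the compatible-triple relation, the equality of pulled-back relative forms, and a hermitian-symmetry trick), but the paper's version is shorter and never mentions $\xi_0$, whereas yours makes explicit that the error term is exactly the class controlling the primary obstruction and is annihilated by its vanishing.
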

\begin{proof}
The equality $\lambda_X^{\partial}(a,j_*(b))=\lambda_X(a,b)$ and $\lambda_X$ being hermitian imply $\lambda_X^{\partial}(a,j_*(b))=\overline{\lambda_X^\partial(b,j_*(a))}$.
Using Lemma~\ref{lem:Technical1}, the equality $\lambda_X^{\partial}(a,j_*(b))=\overline{\lambda_X^\partial(b,j_*(a))}$ and $j^*(y)=(c_0')^*(\wh x)-c_0^*(\wh x)$,
we obtain
	\begin{align*}
b_X^\partial(y,(c_0')^*(\wh x)-c_0^*(\wh x))
&=\lambda_X^\partial(\PD_X(y),j_* \circ \phi(\PD_Y(x)))
=\overline{\lambda_X^\partial(\phi(\PD_Y(x)),j_*\PD_X(y))} \\
&=\overline{\lambda_X^\partial(\phi(\PD_Y(x)),\PD_X(j^*(y)))}
=\overline{b_X^\partial(\PD_X^{-1} \circ \phi(\PD_Y(x)),j^*(y))}\\
&=\overline{b_X^\partial(\PD_X^{-1} \circ \phi(\PD_Y(x)),(c_0')^*(\wh x)-c_0^*(\wh x))}
=\overline{\lambda_X^\partial(\phi(\PD_Y(x)),j_* \circ \phi(\PD_Y(x)))} \\
&=\lambda_X^\partial(\phi(\PD_Y(x)),j_* \circ \phi(\PD_Y(x))).
	\end{align*}
	This concludes the proof of the lemma.
\end{proof}

In order to make sense of the next result,  it is helpful to recall from Notation~\ref{not:Hat} that the form~$q_\phi  \colon H^2(Y;\Z[\pi]) \times H^2(Y;\Z[\pi]) \to \Z[\pi]$ is defined as~$
q_\phi(x,y)= \lambda_{X}(\phi(\PD_Y(x)),\phi(\PD_Y(y))).$

\begin{proposition}
	\label{prop:changing-b-FG}
The following equalities hold in $I^1(Y;\Z[\pi])$:
$$\wh b(c_0,c_0')=\wh b(c_0',c_0')+\widehat{q}_\phi=\widehat{q}_\phi.$$
\end{proposition}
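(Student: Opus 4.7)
The second equality in the statement is immediate once we observe that $\widehat b(c_0',c_0')=0$. Indeed, in the construction of $b(c_0',c_0')$ (taking both maps to be $c_0'$), the pushout class $\xi_0\in H_4(B)$ from Proposition~\ref{prop:FundamentalClassUnion} satisfies $j_*(\xi_0)=(c_0')_*[X]-(c_0')_*[X]=0$; since $Y$ is a $3$-dimensional Poincar\'e complex, $H_4(Y)=0$, so $j_*\colon H_4(B)\to H_4(B,Y)$ is injective and hence $\xi_0=0$. Consequently $b_{\xi_0}=0$, so $b(c_0',c_0')=0$ and $\widehat b(c_0',c_0')=0$ in $I^1(Y;\Z[\pi])$.

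For the substantive equality $\widehat b(c_0,c_0')=\widehat q_\phi$, fix $z\in H_1(Y;\Z[\pi])$, set $x:=\PD_Y^{-1}(z)\in H^2(Y;\Z[\pi])$, choose any lift $\widehat x\in H^2(B;\Z[\pi])$ with $c|_Y^*(\widehat x)=x$, and set $\gamma_0:=c_0^*(\widehat x)$, $\gamma_1:=(c_0')^*(\widehat x)$. Because $\gamma_0$ and $\gamma_1$ both restrict to $x$ on $Y$, the difference $\gamma_1-\gamma_0$ lies in the image of $j^*\colon H^2(X,Y;\Z[\pi])\to H^2(X;\Z[\pi])$; choose $y\in H^2(X,Y;\Z[\pi])$ with $j^*(y)=\gamma_1-\gamma_0$. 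By Proposition~\ref{prop:FundamentalClassUnion}, the class $\xi_0$ for the pair $(c_0,c_0')$ is $c_*([U])$, where $U:=X\cup_{\id_Y}(-X)$ denotes the double and $c:=c_0\cup c_0'\colon U\to B$. Using Proposition~\ref{prop:bxiUnion} we obtain $b(c_0,c_0')(x,x)=b_{\xi_0}(\widehat x,\widehat x)=b_U(\gamma,\gamma)$, where $\gamma:=c^*(\widehat x)$.

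The plan is to evaluate $b_U(\gamma,\gamma)$ at the coefficient of $1\in\pi$ modulo $2$ via a Mayer--Vietoris decomposition of $\gamma$. Let $f\colon U\to X$ be the fold map, and write $\gamma=\gamma^{(0)}+\iota_1^\dagger(y)$, where $\gamma^{(0)}:=f^*(\gamma_0)$ restricts to $\gamma_0$ on both copies of $X$, and $\iota_1^\dagger\colon H^2(X,Y;\Z[\pi])\cong H^2(U,X_0;\Z[\pi])\to H^2(U;\Z[\pi])$ is the map induced by excision. Because the two copies of $X$ in $U$ carry opposite orientations, $f_*[U]=0$ in $H_4(X,Y)$, so the diagonal term vanishes:
\[
b_U(\gamma^{(0)},\gamma^{(0)})=\langle f^*(\gamma_0\cup\gamma_0),[U]\rangle=\langle\gamma_0\cup\gamma_0,f_*[U]\rangle=0.
\]
Expanding $b_U(\gamma,\gamma)$ by hermiticity, the two cross terms take the shape $b_U(\gamma^{(0)},\iota_1^\dagger(y))+\overline{b_U(\gamma^{(0)},\iota_1^\dagger(y))}$, whose coefficient at $1$ is even and thus vanishes modulo $2$. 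Hence modulo $2$,
\[
b_U(\gamma,\gamma)\equiv b_U(\iota_1^\dagger(y),\iota_1^\dagger(y))\pmod 2.
\]
An excision argument (placing both factors in $H^2(U,X_0;\Z[\pi])\cong H^2(X,Y;\Z[\pi])$ and taking their cup product into $H^4(U,X_0;\Z[\pi])\cong H^4(X,Y;\Z[\pi]\boxtimes\Z[\pi])$) identifies this right-hand side with $\pm b_X^\partial(y,j^*(y))$, the sign arising from the orientation of $X_1$; modulo $2$ the sign is irrelevant. Finally, Lemma~\ref{lem:Technical2} yields
\[
b_X^\partial(y,j^*(y))=b_X^\partial(y,(c_0')^*(\widehat x)-c_0^*(\widehat x))=\lambda_X(\phi(\PD_Y(x)),\phi(\PD_Y(x)))=q_\phi(x,x).
\]
Therefore $b(c_0,c_0')(x,x)_1\equiv q_\phi(x,x)_1\pmod 2$, i.e. $\widehat b(c_0,c_0')(z)=\widehat q_\phi(z)$, proving the first equality.

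The main obstacle is the Mayer--Vietoris computation of $b_U(\gamma,\gamma)$: one must carefully handle the twisted cup products with coefficients in $\Z[\pi]\boxtimes\Z[\pi]$, justify the excision identification of $\iota_1^\dagger(y)\cup\iota_1^\dagger(y)$ with $y\cup y$, and verify (using Lemma~\ref{lem:CupProductSymmetric}) that the cross terms indeed combine as $b+\overline b$. A further subtlety is that the lift $y$ is unique only up to the image of $H^1(X)\to H^2(X,Y)$; this ambiguity does not affect $b_X^\partial(y,j^*(y))$ because $i^*\circ j^*=0$ makes the correction term pair trivially.
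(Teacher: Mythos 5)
Your proposal is correct and takes essentially the same route as the paper: both arguments work on the double $X\cup_{\id_Y}\unaryminus X$, discard a ``symmetric'' term coming from a fold/diagonal class, note that the cross terms contribute an element of the form $b+\overline{b}$ whose coefficient at $1$ is even, identify the remaining relative self-pairing with $\pm b_X^\partial(y,j^*(y))$ by excision and cap-product naturality, and finish with Lemma~\ref{lem:Technical2}. The only differences are cosmetic bookkeeping (you split $c^*(\wh x)=f^*\gamma_0+\iota_1^\dagger(y)$ rather than expanding the square of the paper's difference class $z$) plus two easily patched slips: the ambiguity in the lift lives in $\im\bigl(\delta\colon H^1(Y;\Z[\pi])\to H^2(X,Y;\Z[\pi])\bigr)$, not in an image of $H^1(X)$ (and is in any case irrelevant since Lemma~\ref{lem:Technical2} applies to any lift), and to kill $b_U(\gamma^{(0)},\gamma^{(0)})$ you need $f_*[U]=0$ in $H_4(X)$, which follows since $H_4(Y)=0$ makes $H_4(X)\to H_4(X,Y)$ injective.
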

\begin{proof}
During this proof, we write $X \cup X:=X \cup_{\id_Y} \unaryminus X$.
For~$x\in H^2(Y;\Z[\pi])$,  the definition of the secondary obstruction
yields  
\begin{align}
\label{eq:bc0c0}
b(c_0',c_0')(x,x)&=
b_{X\cup X}((c_0' \cup c_0')^*(\wh x),(c_0' \cup c_0')^*(\wh x)), \\
b(c_0,c_0')(x,x) \nonumber
&=b_{X\cup X}((c_0\cup c_0')^*(\wh x),(c_0\cup c_0')^*(\wh x)),
\end{align}
 where~$\wh x \in H^2(B;\Z[\pi])$ is a preimage of~$x$ under~$H^2(B;\Z[\pi])\xrightarrow{c|_Y^*} H^2(Y;\Z[\pi])$. 
 
The remainder of the proof is devoted to calculating $q_{\phi}(x,x).$
First,  some notation.
Write
\begin{align*}
&j_{X \cup X} \colon (X \cup X,\emptyset)\to (X \cup X,Y) \\
&i_k \colon (X,Y)\to (X \cup X,Y)  \quad \text{for } k=0,1.
\end{align*} 
The subscript in $j_{X \cup X}$ is used to distinguish this map from $j \colon (X,\emptyset) \to (X,Y)$.

Thinking of $j_{X \cup X}$ as a map $(X \cup X ,\emptyset,\emptyset)\to (X \cup X,\emptyset,Y)$, note that $j_{X \cup X}^*=\id_{H^2(X\cup X;\Z[\pi])}$.
Thus for $\alpha \in H^2(X\cup X;\Z[\pi])$ and $\sigma \in H_4(X\cup X)$ the relation between cap and cups (as e.g. in~\cite[18.1.1 (1)]{TomDieck}) ensures that
\begin{equation}
\label{eq:CapTripleTrick}
(j_{X\cup X})_*(\alpha \cap \sigma)=\alpha \cap (j_{X\cup X})_*(\sigma). 
\end{equation}
Choose~$z\in H^2(X\cup X;Y;\Z[\pi])$ so that~$j_{X \cup X}^*(z)=(c_0' \cup c_0')^*(\wh x)-(c_0\cup c_0')^* (\wh x) \in H^2(X\cup X;\Z[\pi])$.
  Such a preimage exists because the summands map to $x \in H^2(Y;\Z[\pi]).$
\begin{claim}
The following equality holds:
$$ q_\phi(x,x)=b_{X\cup X}(j_{X\cup X}^*(z),j_{X\cup X}^*(z)).$$
\end{claim}
\begin{proof}
 Applying the equation in~\eqref{eq:CapTripleTrick} to~$\alpha=j_{X \cup X}^*(z) \in H^2(X\cup X;\Z[\pi]),$ yields
\begin{equation*}
(j_{X\cup X})_*(j_{X \cup X}^*(z) \cap [X \cup X])=j_{X \cup X}^*(z) \cap (j_{X \cup X})_*([X \cup X]).
\end{equation*}
Using this equality as well as 
$$(j_{X \cup X})_*([X \cup X])=(i_0)_*([X])-(i_1)_*([X])\in H_4(X \cup X,Y) \cong  H_4(X,Y) \oplus H_4(X,Y),$$
we then obtain
 \begin{align*}
 	b_{X\cup X}(j_{X\cup X}^*(z),j_{X\cup X}^*(z))
 	&=\langle z,(j_{X\cup X})_*(j_{X\cup X}^*(z)\cap [X\cup X])\rangle\\ 	
 	&=\langle z,j_{X\cup X}^*(z)\cap (j_{X\cup X})_*([X\cup X])\rangle\\
 	&=\langle z,j_{X\cup X}^*(z)\cap (i_0)_*([X])\rangle-\langle z,j_{X\cup X}^*(z)\cap (i_1)_*([X])\rangle\\
 	 	 	&=\langle z,(i_0)_*(i_0^*\circ j_{X\cup X}^*(z)\cap [X])\rangle-\langle z,(i_1)_*(i_1^*\circ j_{X\cup X}^*(z)\cap [X])\rangle\\
 	 	&=\langle z,(i_0)_*(j^* \circ i_0^*(z)\cap [X])\rangle-\langle z,(i_1)_*(j^* \circ i_1^*(z)\cap [X])\rangle\\
 	&=\langle i_0^*(z),j^*\circ i_0^*(z)\cap [X]\rangle-\langle i_1^*(z),j^*\circ i_1^*(z)\cap [X]\rangle.
 \end{align*}
In the fifth equality, the relation $i_0^* \circ j_{X\cup X}^*=j^* \circ i_0^*$ (where $i_0^*$ is relative on one side and absolute on the other) holds because  $i_0\circ j, j_{X\cup X} \circ i_0 \colon (X,\emptyset) \to (X\cup X,Y)$ are both given by inclusion of the first factor.
The same reasoning is also used with~$i_1$ in place of $i_0$.
 
The definition of $z\in H^2(X\cup X;Y;\Z[\pi])$ ensures that~$i_1^*(z)=0 \in H^2(X,Y;\Z[\pi])$ and that~$i_0^*(z) \in H^2(X,Y;\Z[\pi])$ satisfies~$j^*(i_0^*(z))=(c_0')^*(\wh x)-c_0^*(\wh x) \in H^2(X;\Z[\pi])$. 
These observations together with Lemma~\ref{lem:otheradjoints} yield
\begin{align*}
b_{X\cup X}(j_{X\cup X}^*(z),j_{X\cup X}^*(z))
&=\langle i_0^*(z),j^*i_0^*(z)\cap [X]\rangle 
=\langle i_0^*(z),((c_0')^*(\wh x)-c_0^*(\wh x))\cap [X]\rangle \\
&=\overline{b_X^\partial(i_0^*(z),(c_0')^*(\wh x)-c_0^*(\wh x))}.
\end{align*}
Combining this equality with Lemma~\ref{lem:Technical2} (applied with~$y=i_0^*(z)$) and the definition of $q_\phi$, we get
\begin{align*}
b_{X\cup X}(j_{X\cup X}^*(z),j_{X\cup X}^*(z))
&=\overline{b_X^\partial(i_0^*(z),(c_0')^*(\wh x)-c_0^*(\wh x))} 
=\overline{\lambda^\partial_X(\phi(\PD_Y(x)),j_*\circ \phi(\PD_Y(x)))} \\
&=\lambda^\partial_X(\phi(\PD_Y(x)),j_*\circ \phi(\PD_Y(x))) 
=\lambda_X(\phi(\PD_Y(x)),\phi(\PD_Y(x))) \\
&=q_\phi(x,x).
\end{align*}
This concludes the proof of the claim.
\end{proof}
We can now conclude the proof of the proposition.
The claim, together with the equality~$j_{X \cup X}^*(z)=(c_0' \cup c_0')^*(\wh x)-(c_0\cup c_0')^*(\wh x)$ (by definition of $z$) and~\eqref{eq:bc0c0},  yields
\begin{align*}
q_\phi(x,x)
&=b_{X\cup X}(j_{X\cup X}^*(z),j_{X\cup X}^*(z)) \\
&=b_{X\cup X}((c_0' \cup c_0')^*(\wh x)-(c_0\cup c_0')^*(\wh x),(c_0' \cup c_0')^*(\wh x)-(c_0\cup c_0')^*(\wh x)) \\
&=b(c_0',c_0')(x,x)+b(c_0,c_0')(x,x)
-b_{X\cup X}((c_0' \cup c_0')^*(\wh x),(c_0\cup c_0')^*(\wh x))
-b_{X\cup X}((c_0\cup c_0')^*(\wh x),(c_0' \cup c_0')^*(\wh x))\\
&=b(c_0',c_0')(x,x)+b(c_0,c_0')(x,x)
-b_{X\cup X}((c_0' \cup c_0')^*(\wh x),(c_0\cup c_0')^*(\wh x))
-\overline{b_{X\cup X}((c_0' \cup c_0')^*(\wh x),(c_0\cup c_0')^*(\wh x))}.
\end{align*}
The equality $\wh b(c_0,c_0')=\wh b(c_0',c_0')+\widehat{q}_\phi$ follows from the definition of~$\ \wh{} \ $.
Finally, $b(c_0',c_0') \equiv 0$, by definition of the secondary obstruction, whence the equalities~$\wh b(c_0,c_0')=\wh b(c_0',c_0')+\widehat{q}_\phi=\widehat{q}_\phi.$ 
This concludes the proof of the proposition.
\end{proof}

We can now prove Theorem~\ref{thm:OddChange}.
Recall that under a certain number of assumptions, this result states that for every map $\phi \colon H_1(Y_0;\Z[\pi]) \to H_2(X_0;\Z[\pi])$, there is a $3$-connected map $c_0' \colon X \to B$ satisfying~$c_0'|_{Y_0} =  c_1|_{Y_1} \circ h$ as well as~$(c_0')^*b_{X_0}^\partial =c_1^*b_{X_1}^\partial$, and~$ \wh b(c_0',c_1)=\wh b(c_0,c_1)-\widehat{h^*q_\phi}.$

\begin{proof}[Proof of Theorem~\ref{thm:OddChange}]
Our first task is to construct the map~$c_0'$.
Consider the mapping cylinder $M(c_0|_{Y_0})$ of~$c_0|_{Y_0} \colon Y_0 \to B$ and the homotopy equivalence $B \to M(c_0|_{Y_0})$.
The composition~$Y_0 \xrightarrow{c_0|_{Y_0}}B\xrightarrow{\simeq}M(c_0|_{Y_0})$ is homotopic to the inclusion $Y_0\subset M(c_0|_{Y_0})$.
Applying the homotopy extension property,  we deduce that
the composition $X_0\xrightarrow{c_0}B\xrightarrow{\simeq}M(c_0|_Y)$ is homotopic to a map $\wh c_0 \colon X_0 \to M(c_0|_Y)$ such that~$\wh c_0|_{Y_0}$ is the inclusion~$Y_0\subset M(c_0|_{Y_0})$.
Composing this homotopy with the homotopy equivalence~$\proj  \colon M(c_0|_Y) \xrightarrow{\simeq}~B$ shows that $\proj \circ  \wh c_0  \simeq c_0 \colon X_0 \xrightarrow{} B$ rel. $Y_0$.

Apply Proposition~\ref{prop:Simplify} with $X_0=X_1$ to get a $3$-connected $\wh c_0' \colon X_0\to M(c_0|_Y)$ with~$(\wh c_0')^*b_{X_0}^\partial =\wh c_0^*b_{X_0}^\partial$ and $\wh c_0'|_{Y_0}=\wh c_0|_{Y_0}$ and such that $(\wh c_0',\wh c_0)$ realises the compatible triple
$$(\id_{H_2(X_0;\Z[\pi])}-\psi\circ j_*,\id_{H_2(X_0,Y_0;\Z[\pi])}+ j_*\circ \phi\circ \partial,\id_{Y_0}).$$
Here, it is legitimate to apply Proposition~\ref{prop:Simplify} because~\cite[Lemma 6.1]{ConwayKasprowskiKinvariant} shows that $ j_*$ maps the~$k$-invariant to zero, thus ensuring that this compatible triple is $k$-invariant preserving.
Define 
$$c_0':=\proj\circ \wh c_0' \colon X_0 \to B.$$
We have $(\wh c_0')^*b_{X_0}^\partial =\wh c_0^*b_{X_0}^\partial$ and~$\wh c_0|_{Y_0}=\wh c_0'|_{Y_0}$. 
Applying $\proj$ and the rel. $Y_0$ homotopy $\proj\circ \wh c_0 \simeq c_0$, it follows that $(c_0')^*b_{X_0}^\partial = c_0^*b_{X_0}^\partial$ and~$c_0|_{Y_0}=c_0'|_{Y_0}$.
From this and $c_0 \in \mathcal{S}_0(c_1)$,
it follows that~$c_0' \in \mathcal{S}_0(c_1)$.
In particular, the pairing $b(c_0',c_1)$ is defined.

It remains to show that~$ \wh b(c_0',c_1)=\wh b(c_0,c_1)-\widehat{h^*q_\phi}.$
Apply the transitivity of Proposition~\ref{prop:Transitive} to get a homotopy equivalence~$\mathcal{\varphi} \in \mathcal{G}$ with $\varphi \circ c_0=c_0'$ rel. $Y_0$. 
Proposition~\ref{prop:Action} then implies that 
$$b(c_0',c_1)=b(\varphi \circ c_0,c_1)=b(c_0,c_1)-b(\varphi).$$
We claim that $b(\varphi)=h^* b(\wh c_0',\wh c_0),$
as the conclusion that $\wh b(c_0',c_1)=\wh b(c_0,c_1)-h^*\widehat{q}_\phi$ will then follow promptly.
By definition of $b(\varphi)$, for $x,y \in H^2(Y_1;\Z[\pi])$ and $\widehat{x},\widehat{y} \in~H^2(B;\Z[\pi])$
with $c_1|_{Y_1}^*(\wh x)=x$ and $c_1|_{Y_1}^*(\wh y)=y$,
 we have~$b(\varphi)(x,y)=\langle \widehat{y},\widehat{x} \cap \xi\rangle$, with~$\xi =(c_0' \cup c_0)_*([X_0 \cup_{\id} X_0])\in H_4(B)$.
We now calculate
\begin{align*}
	b(\varphi)(x,y)
	&=\langle \widehat{y},\widehat{x} \cap \xi\rangle 
	=\langle \widehat{y},\widehat{x} \cap (c_0' \cup c_0)_*([X_0 \cup_{\id} X_0])\rangle \\
	&=\langle (c_0' \cup c_0)^*(\widehat{y}),(c_0' \cup c_0)^*(\widehat{x}) \cap [X_0 \cup_{\id} X_0]\rangle \\
&= \langle (\wh c_0' \cup \wh c_0)^* \proj^* (\widehat{y}),(\wh c_0' \cup \wh c_0)^*\proj^*(\widehat{x}) \cap [X_0 \cup_{\id} X_0]\rangle  \\
&= \langle  \proj^* (\widehat{y}),\proj^*(\widehat{x}) \cap (\wh c_0' \cup \wh c_0)_*([X_0 \cup_{\id} X_0])\rangle  \\
&= b(\wh c_0',\wh c_0)(h^*(x),h^*(y)) \\
&= h^*b(\wh c_0',\wh c_0)(x,y).
\end{align*}
For the penultimate equality, we used the definition of $b(\wh c_0',\wh c_0)$ together with the fact that $\proj^*(\wh x)$ is a preimage of $h^*(x)$ by $\wh c_0|_{Y_0}^*$ (and similarly, $\proj^*(\wh y)$ is a preimage of $h^*(y)$):
\[\wh c_0|_{Y_0}^*\proj^*(\wh x)=c_0|_{Y_0}^*(\wh x)=h^*c_1|_{Y_1}^*(\wh x)=h^*(x).\]
Thus $b(\varphi)=h^* b(\wh c_0',\wh c_0),$ establishing the claim.

We can now conclude.
Indeed, using the claim, it follows that \[\wh b(c_0',c_1)
=\wh b(c_0,c_1)-\wh b(\varphi)
=\wh b(c_0,c_1)-\widehat{h^*b}(\wh c_0',\wh c_0)
=\wh b(c_0,c_1)-\widehat{h^*q_\phi},\] where in the last equality, we used Proposition~\ref{prop:changing-b-FG}.
\end{proof}

\section{Groups to which the theorems apply}
\label{sec:cd3}

This section describes situations in which our results apply.
In what follows, we let~$(X_0,Y_0)$ and~$(X_1,Y_1)$ be $4$-dimensional Poincar\'e pairs with $\pi_1(X_0) \cong \pi_1(X_1):=\pi$ that satisfy the conditions listed in Notation~\ref{not:Intro}.

Section~\ref{sub:cd3} discusses examples of groups with~$\cd(\pi) \leq 3$ as this condition appeared in Theorem~\ref{thm:MainTheoremBoundaryIntro} as well as in Proposition~\ref{prop:StablyFreePD<4} (which characterised when $\pi_2(X)$ is projective).
Section~\ref{sub:WhiteheadGamma} recalls the definition of the Whitehead functor~$\Gamma(-)$ and lists examples of pairs~$(\pi_1,\pi_2)$ for which~$\Gamma(\pi_2) \otimes_{ \Z[\pi_1]} \Z$ is torsion-free: this hypothesis was present in Theorem~\ref{thm:MainTheoremBoundaryIntro}.
Section~\ref{sub:H2piZpiInj} gives situations in which the map~$H^2(\pi;\Z[\pi]) \to H^2(Y_i;\Z[\pi])$ is injective, as this condition appeared in Theorems~\ref{thm:PrimaryObstructionRecastIntro} and~\ref{thm:VanishOddIntro}.
Finally, Section~\ref{sub:ev*Iso} considers examples when the dualised evaluation map~$\ev^* \colon H^2(X_i;\Z[\pi])^{**} \to H_2(X_i;\Z[\pi])^*$ is an isomorphism,  as this assumption also played an important role in Theorems~\ref{thm:VanishOddIntro},~\ref{thm:VanishSpinIntro} and~\ref{thm:VanishEvenIntro}.

\subsection{Examples of groups of cohomological dimension $\leq 3$.}
\label{sub:cd3}

Examples of groups of cohomological dimension at most $3$ include the trivial group ($\cd=0$),  free groups ($\cd=1$),  surfaces groups and (nonfree) Baumslag--Solitar groups $BS(m,n)$ (both families have~$\cd=2$),  as well as fundamental groups of closed oriented aspherical $3$-manifolds ($\cd=3$).
Free products of the above yield new groups with~$\cd \leq 3$.
It is also helpful to recall that finite groups have infinite cohomological dimension.

Note that finite groups are good, whereas among the aforementioned groups with $\cd \leq 3$, 
examples of good groups include
$\Z$, the solvable Baumslag--Solitar groups~$BS(1,n)$ (this includes~$BS(0,1)=\Z$ and~$BS(1,1)=\Z^2$) and~$\Z^3$.
We refer to~\cite[Chapter 19]{DET} for a discussion of good groups but note that elementary amenable groups are good.

\subsection{The $\Gamma$ group condition}
\label{sub:WhiteheadGamma}

The \emph{Whitehead group} of an abelian group~$A$ consists of the group~$\Gamma(A)$ generated by the elements of~$A$ (we write~$v(a) \in \Gamma(A)$ for the element corresponding to~$a \in A$) and subject to the relations
\begin{align*}
&v(-a)=v(a) \quad \text{ for all }  a \in A, \\
&v(a+b+c)=v(b+c)+v(a+c)+v(a+b)-v(a)-v(b)-v(c),   \quad \text{ for all } a,b,c \in A.
\end{align*}
If~$A$ is free abelian with basis~$\mathcal{B}$ then, by~\cite[page 62]{WhiteheadCertainExact},~$\Gamma(A)$ is free abelian with basis
$$\{v(b),v(b+b')-v(b)-v(b')\}_{b\neq b' \in \mathcal{B}}.$$
In this case, it is customary to consider $\Gamma(A)$ as the subgroup of symmetric elements of $A \otimes A$ given by sending $v(a)$ to $a \otimes a$.
We refer to~\cite{WhiteheadCertainExact} for more details on this construction but simply note that for $X$ a $4$-dimensional Poincar\'e complex~$\Gamma(\pi_2(X)) \otimes_{\Z[\pi_1(X)]} \Z$ is known to be torsion free for finite fundamental groups $\pi_1(X)$ with 4-periodic cohomology~\cite{HambletonKreck} (e.g. finite cyclic groups) and more generally for finite groups where only the~$2$-Sylow subgroup of the fundamental group has~$4$-periodic cohomology~\cite{Bauer}.
This is also known to hold for finite groups whose~$2$-Sylow subgroup is abelian with at most two generators~\cite{KasprowskiPowellRuppik} 
or dihedral~\cite{KasprowskiNicholsonRuppik}.

We note that the same is true in our setting.
\begin{proposition}
\label{prop:GammaGroup}
Assume that $\pi$ is a finite group whose $2$-Sylow subgroup is dihedral, abelian with 2 generators or has 4-periodic cohomology.
If~$(X,Y)$ is a 4-dimensional Poincar\'e pair with~$\pi_1(Y)\to \pi_1(X)=:\pi$ surjective, then $\Z \otimes_{\Z[\pi_1(X)]} \Gamma(\pi_2(X))$ is torsion-free.
\end{proposition}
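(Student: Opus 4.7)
The approach is to reduce to the known closed case via the doubling construction. First, I would form $DX := X \cup_Y X$. By \cref{prop:HomotopyPushout}, $DX$ is a closed $4$-dimensional Poincaré complex, and by \cref{prop:pi1HomotopyPushout}, $\pi_1(DX) \cong \pi$ (where one uses the surjectivity of $\pi_1(Y) \to \pi_1(X)$). The cited results then apply to $DX$, yielding that $\Z \otimes_{\Z[\pi]} \Gamma(\pi_2(DX))$ is torsion-free. The remainder of the proof would transfer this conclusion from $DX$ to $X$.

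Next, I would use Mayer--Vietoris for the decomposition of the universal cover $\widetilde{DX}$ into two copies of $\widetilde X$ glued along the $\pi$-cover of $Y$, combined with $H_1(X;\Z[\pi]) = 0$, to produce the exact sequence of $\Z[\pi]$-modules
\[H_2(Y;\Z[\pi]) \longrightarrow \pi_2(X)^{\oplus 2} \longrightarrow \pi_2(DX) \longrightarrow H_1(Y;\Z[\pi]) \longrightarrow 0.\]
The goal is then to use this to express $\pi_2(X)$, stably modulo free $\Z[\pi]$-summands, as a summand of $\pi_2(DX)$. Combining the Whitehead decomposition $\Gamma(A \oplus B) \cong \Gamma(A) \oplus (A \otimes B) \oplus \Gamma(B)$ with the fact that free $\Z[\pi]$-summands contribute only torsion-free pieces to $\Z \otimes_{\Z[\pi]} \Gamma(-)$, torsion-freeness would then pass from $\pi_2(DX)$ to $\pi_2(X)$.

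An alternative, likely cleaner route is to extract the algebraic statement underlying the cited references. By \cref{lem:pi2Projective-new}, $\pi_2(X)$ is a third syzygy of $\Z$ over $\Z[\pi]$, so its stable class as a $\Z[\pi]$-module is determined entirely by $\pi$. For finite $\pi$ of the hypothesised type, the proofs of Hambleton--Kreck, Bauer, Kasprowski--Powell--Ruppik, and Kasprowski--Nicholson--Ruppik essentially establish torsion-freeness of $\Z \otimes_{\Z[\pi]} \Gamma(N)$ for any finitely generated, $\Z$-free $\Z[\pi]$-module $N$ in this syzygy class. Since \cref{lem:pi2Projective-new} places $\pi_2(X)$ precisely in this class, invoking the algebraic input of the cited results concludes the proof.

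The hard part will be making the reduction precise. In the closed case, the cited proofs exploit not only the $\Z[\pi]$-module structure of $\pi_2(M)$ but also its non-singular hermitian equivariant intersection form; in the pair case one only has the relative form $\pi_2(X) \times H_2(X,Y;\Z[\pi]) \to \Z[\pi]$. The delicate step will be to check that the torsion-freeness portion of the cited arguments depends only on the module-theoretic structure and not on the non-singular duality pairing, so that what matters is only the stable syzygy class recorded in \cref{lem:pi2Projective-new}.
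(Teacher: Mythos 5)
Your second, ``cleaner'' route is essentially the paper's own proof: the paper takes the exact sequence of \cref{lem:pi2Projective-new}, which exhibits $\pi_2(X)=\ker(d_2)$ as a third syzygy of $\Z$ over $\Z[\pi]$, and then invokes (a simplification of) the algebraic torsion-freeness arguments of Hambleton--Kreck, Bauer, Kasprowski--Powell--Ruppik and Kasprowski--Nicholson--Ruppik for modules in that syzygy class. Your concern about the role of the non-singular equivariant intersection form is the right thing to worry about, and it resolves in your favour: the paper explicitly remarks that the closed case is the \emph{more} delicate one, since there one must also show $\Z\otimes_{\Z[\pi]}\Gamma(\coker(d_2))$ is torsion-free, whereas for a pair with $\pi_1(Y)\to\pi_1(X)$ surjective only $\Gamma(\ker(d_2))$ enters, so only the module-theoretic portion of the cited proofs is needed. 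By contrast, your first route via the double $DX=X\cup_Y X$ has a genuine gap as sketched: the Mayer--Vietoris sequence does not split, and there is no reason that $\pi_2(X)^{\oplus 2}$ should be, even stably, a $\Z[\pi]$-direct summand of $\pi_2(DX)$ (the flanking terms $H_2(Y;\Z[\pi])$ and $H_1(Y;\Z[\pi])$ need not be free or even projective), so torsion-freeness of $\Z\otimes_{\Z[\pi]}\Gamma(\pi_2(DX))$ cannot be transferred to $\pi_2(X)$ in the way you propose; fortunately that detour is unnecessary given your syzygy argument.
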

\begin{proof}
Lemma~\ref{lem:pi2Projective-new} gives finitely generated free~$\Z[\pi]$-modules~$C_0$,~$C_1$ and~$C_2$ and an exact sequence of~$\Z[\pi]$-modules~$0\to \pi_2(X)\to C_2\to C_1\to C_0\to \Z\to 0$.
Write~$\ker(d_2):=\pi_2(X)$.
The fact that~$\Z \otimes \Gamma(\ker(d_2))$ is torsion-free now follows from (a simplification of) the argument in~\cite[Proof of Theorem 1.1 in Section 4]{KasprowskiPowellRuppik}, \cite[Theorem 4.1]{KasprowskiNicholsonRuppik} and~\cite[Theorem~2.1 and Remark~2.5]{HambletonKreck}.
We emphasise that in these articles 
the passage from the~$2$-Sylow subgroups relies on the work of Bauer~\cite{Bauer}.
We also note that for closed $4$-manifolds, the situation is actually more delicate since $\Z \otimes \Gamma(\coker(d_2))$ must also be shown to be torsion-free.
\end{proof}

\subsection{Situations in which $H^2(\pi;\Z[\pi]) \to H^2(Y_i;\Z[\pi])$ is injective.}
\label{sub:H2piZpiInj}

Let $Y$ be a closed oriented~$3$-manifold and let $\pi_1(Y) \twoheadrightarrow \pi$ be a surjection.

\begin{proposition}
\label{prop:TorsionImpliesH2mapInj}
If~$H_1(Y;\Z[\pi])^*=0$, then~$H^2(\pi;\Z[\pi]) \to H^2(Y_1;\Z[\pi])$ is injective.
\end{proposition}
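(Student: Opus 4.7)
The plan is to run the universal coefficient spectral sequence (UCSS) over $\Z[\pi]$ for the space $Y_1$. Let $\widehat{Y}_1 \to Y_1$ denote the regular $\pi$-cover corresponding to $\ker(\pi_1(Y_1) \twoheadrightarrow \pi)$. After choosing a CW structure on $Y_1$ (available since $Y_1$ is a $3$-dimensional Poincar\'e complex), the cellular chain complex $C_*(\widehat{Y}_1)$ is a complex of free $\Z[\pi]$-modules computing $H_*(Y_1;\Z[\pi])$, and its $\Z[\pi]$-dual computes $H^*(Y_1;\Z[\pi])$. The UCSS therefore takes the form
$$E_2^{p,q} = \Ext^p_{\Z[\pi]}(H_q(Y_1;\Z[\pi]), \Z[\pi]) \Longrightarrow H^{p+q}(Y_1;\Z[\pi]),$$
with cohomological differentials $d_r \colon E_r^{p,q} \to E_r^{p+r, q-r+1}$.

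Because the surjection $\pi_1(Y_1) \twoheadrightarrow \pi$ makes $\widehat{Y}_1$ connected, $H_0(Y_1;\Z[\pi]) \cong \Z$ with trivial $\pi$-action, and consequently
$E_2^{p,0} = \Ext^p_{\Z[\pi]}(\Z, \Z[\pi]) = H^p(\pi;\Z[\pi]).$
Naturality of the UCSS with respect to the classifying map $f\colon Y_1 \to B\pi$ (covered by a $\pi$-equivariant chain map $C_*(\widehat{Y}_1) \to C_*(E\pi)$) identifies the edge homomorphism $E_2^{p,0} \twoheadrightarrow E_\infty^{p,0} \hookrightarrow H^p(Y_1;\Z[\pi])$ with $f^*$, i.e.\ precisely the map $H^p(\pi;\Z[\pi]) \to H^p(Y_1;\Z[\pi])$ whose injectivity we must establish.

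The claim thus reduces to showing $E_\infty^{2,0} = E_2^{2,0}$. Outgoing differentials $d_r\colon E_r^{2,0} \to E_r^{2+r, 1-r}$ vanish for all $r \geq 2$ because their target lies outside the first quadrant. The only potentially nonzero differential into $E_r^{2,0}$ is $d_2\colon E_2^{0,1} \to E_2^{2,0}$. By the hypothesis $H_1(Y_1;\Z[\pi])^* = 0$, we have
$E_2^{0,1} = \Hom_{\Z[\pi]}(H_1(Y_1;\Z[\pi]), \Z[\pi]) = 0,$
so $d_2 = 0$ and $E_\infty^{2,0} = H^2(\pi;\Z[\pi])$; the edge map then identifies $H^2(\pi;\Z[\pi])$ with the first filtration piece of $H^2(Y_1;\Z[\pi])$, which injects into $H^2(Y_1;\Z[\pi])$ as required.

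The only subtle point will be verifying that the edge homomorphism genuinely coincides with the pullback $f^*$; this is a standard naturality check, but if one prefers to avoid it, the same conclusion can be reached via the long exact sequence of the pair $(M(f), Y_1)$. Since $\pi_1(Y_1) \twoheadrightarrow \pi$ is surjective, $M(f) \simeq B\pi$ may be built from $Y_1$ by attaching cells of dimension $\geq 2$, and an analysis of the relative cochain complex using the vanishing of $H_1(Y_1;\Z[\pi])^*$ shows that no class in $H^2(M(f), Y_1;\Z[\pi])$ can hit the kernel of $H^2(\pi;\Z[\pi]) \to H^2(Y_1;\Z[\pi])$.
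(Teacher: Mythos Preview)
Your proof is correct and takes essentially the same approach as the paper: both use the UCSS for $Y_1$ with $\Z[\pi]$-coefficients, identify the $E_2^{2,0}$ term with $H^2(\pi;\Z[\pi])$, and conclude injectivity from the vanishing of $E_2^{0,1}=H_1(Y_1;\Z[\pi])^*$. The paper simply compresses this into the single line ``the UCSS gives an exact sequence $H_1(Y;\Z[\pi])^*\to H^2(\pi;\Z[\pi])\to H^2(Y;\Z[\pi])$,'' which is exactly the edge sequence you spelled out.
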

\begin{proof}
The UCSS gives rise to an exact sequence~$H_1(Y;\Z[\pi])^*\to H^2(\pi;\Z[\pi])\to H^2(Y;\Z[\pi])$. 
	Since~$H_1(Y;\Z[\pi])^*=0$,  it follows that~$H^2(\pi;\Z[\pi]) \to H^2(Y_1;\Z[\pi])$ is indeed injective.
\end{proof}

\begin{proposition}
\label{prop:H2piZpi=0}
The induced homomorphism~$H^2(\pi;\Z[\pi]) \to H^2(Y;\Z[\pi])$ is injective in the following circumstances:
\begin{enumerate}
\item $\pi$ is free, finite or a $\PD_3$ group.
\item $\pi$ is a free product of a free group and $\PD_3$ groups.
\item $\pi=\pi_1(\Sigma)$ is the fundamental group of an orientable surface, $Y=\Sigma\times S^1$, and $\pi_1(\Sigma \times S^1) \to \pi$ is the projection onto the first coordinate.
\end{enumerate}
\end{proposition}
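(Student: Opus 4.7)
The plan is to show that in cases~(1) and~(2) the source $H^2(\pi;\Z[\pi])$ itself vanishes (so the injectivity statement becomes vacuous), whereas in case~(3) the map is an honest, split injection coming from a K\"unneth decomposition.

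For case~(1), I would dispatch each class of groups separately. If $\pi$ is free then $\cd(\pi)\leq 1$ and there is nothing to prove. If $\pi$ is finite, then induced and coinduced modules from the trivial subgroup agree, so $\Z[\pi]\cong \operatorname{Coind}_{1}^{\pi}\Z$ as $\Z[\pi]$-modules and Shapiro's lemma gives $H^{k}(\pi;\Z[\pi])\cong H^{k}(\{1\};\Z)=0$ for every $k\geq 1$. Finally, if $\pi$ is a $\PD_{3}$ group, Poincar\'e duality for groups identifies $H^{2}(\pi;\Z[\pi])$ with $H_{1}(\pi;\Z[\pi]\otimes D)$, where $D:=H^{3}(\pi;\Z[\pi])$ is the dualising module. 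A direct verification (the orientation character acts on the free $\Z[\pi]$-module $\Z[\pi]$ by precomposition with an automorphism) shows that $\Z[\pi]\otimes D$ remains a rank-one free $\Z[\pi]$-module, hence has vanishing positive-degree homology.

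For case~(2), I would exploit the Bass--Serre tree associated to the free-product splitting $\pi=G_{1}\ast\cdots\ast G_{n}$. Its equivariant cellular chain complex produces a short exact sequence of $\Z[\pi]$-modules
$$0\longrightarrow \bigoplus_{i=1}^{n}\Z[\pi]\longrightarrow \Z[\pi]\oplus\bigoplus_{i=1}^{n}\Z[\pi/G_{i}]\longrightarrow \Z\longrightarrow 0.$$
Applying $\operatorname{Ext}^{*}_{\Z[\pi]}(-,\Z[\pi])$ and Shapiro's lemma to the coinduced terms yields an isomorphism $H^{2}(\pi;\Z[\pi])\cong\bigoplus_{i}H^{2}(G_{i};\Z[\pi])$. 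Choosing right-coset representatives realises $\Z[\pi]$ as a free left $\Z[G_{i}]$-module, so each summand is handled by the case~(1) arguments applied to the factor $G_{i}$.

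For case~(3), if $\Sigma=S^{2}$ then $\pi$ is trivial; otherwise $\Sigma\simeq B\pi$, so $H^{2}(\pi;\Z[\pi])\cong H^{2}(\Sigma;\Z[\pi])$. Since the local coefficient system $\Z[\pi]$ on $\Sigma\times S^{1}$ is pulled back from $\Sigma$ along the first projection, the twisted K\"unneth theorem (as in Proposition~\ref{prop:KunnethYS1}, whose proof applies to any product of the form $\Sigma\times S^{1}$) gives
$$H^{2}(\Sigma\times S^{1};\Z[\pi])\cong H^{2}(\Sigma;\Z[\pi])\oplus H^{1}(\Sigma;\Z[\pi]),$$
in which the projection-induced map $p^{*}\colon H^{2}(\Sigma;\Z[\pi])\to H^{2}(\Sigma\times S^{1};\Z[\pi])$ is the inclusion of the first summand; in particular it is injective. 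The main obstacle is the careful execution of case~(2): one needs to combine the Bass--Serre sequence, Shapiro's lemma, and the freeness of $\Z[\pi]$ over each $\Z[G_{i}]$, and to verify that the twist by an orientation character of a $\PD_{3}$ factor preserves freeness of rank-one modules so that the case~(1) argument transfers.
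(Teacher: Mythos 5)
Your proposal is correct and takes essentially the same approach as the paper: in cases (1) and (2) you prove the stronger statement $H^2(\pi;\Z[\pi])=0$ by reducing along the free-product structure to the factors, where free and finite groups are immediate and the $\PD_3$ factors are handled by duality together with the freeness of $\Res_{G_i}^\pi\Z[\pi]$ --- your Bass--Serre/Ext long exact sequence is just an algebraic repackaging of the paper's Mayer--Vietoris argument for $B\pi\simeq BF\vee\bigvee_i BG_i$, and the dualising-module discussion is not needed since the paper's $\PD_3$ groups are orientable by definition. In case (3) your K\"unneth splitting establishes the same split injectivity that the paper obtains in one line from the section of $\Sigma\times S^1\to\Sigma\simeq B\pi$.
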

\begin{proof}
The cases where $\pi$ is free or a~$\PD_3$ group follow from the second case, but we treat them separately as a warm up.
For the groups in the first statement, we argue that~$H^2(\pi;\Z[\pi])=0$ from which the conclusion is immediate.
For free and finite groups this is clear,  whereas for a $\PD_3$ group $G$,  one notes that 
$$ H^2(G;\Z[G]) \cong H_1(G;\Z[G])=0.$$
Next,  we consider the case where $\pi=F * *_{i=1}^rG_i$ is a free product of a free group $F$ and $\PD_3$-groups~$G_1,\ldots,G_r$.
Since $BH\vee BH'$ is a model for $B(H*H')$, repeated applications of the Mayer-Vietoris sequence show that
    \[H^2(\pi;\Z[\pi])\cong H^2(F;\Res_F^\pi \Z[\pi])\oplus\bigoplus_{i=1}^rH^2(G_i;\Res_{G_i}^\pi\Z[\pi])\cong 0\oplus \bigoplus_{i=1}^rH_1(G_i;\Res_{G_i}^\pi\Z[\pi]),\]
    where the last isomorphism follows from Poincar\'e duality.
    Since $\Res_{G_i}^\pi\Z[\pi]$ is free as a $\Z[G_i]$-module (see e.g.~\cite[pages 13-14]{Brown}), $H_1(G_i;\Res_{G_i}^\pi\Z[\pi])=0$ and the result follows.

The third statement follows by noting that $Y:=\Sigma\times S^1\to \Sigma\simeq B\pi$ admits a section and, in particular, the induced map~$H^2(\pi;\Z[\pi])\to H^2(Y;\Z[\pi])$ is injective. 
\end{proof}

Note that the third case of Proposition~\ref{prop:H2piZpi=0} is relevant if one is attempting to decide whether a $4$-dimensional Poincar\'e pair is homotopy equivalent to~$(\Sigma_g \times D^2,\Sigma_g \times S^1)$.

\begin{remark}
We conclude with an example where $H^2(\pi;\Z[\pi]) \to H^2(Y;\Z[\pi])$ is not injective.
Consider the setting where~$F_2\cong \pi_1(S^1\times S^2\# S^1\times S^2)\to \pi:= \Z^2$ is abelianisation. 
The induced map~$\Z\cong H^2(\pi;\Z[\pi])\to H^2(Y;\Z[\pi])$ factors through $H^2(BF_2;\Z[\pi])=H^2(S^1\vee S^1;\Z[\pi])=0$,  is therefore trivial, and thus non-injective. 
This shows that Theorem~\ref{thm:1d3dIntro} cannot be applied when trying to decide whether a given Poincar\'e pair is homotopy equivalent to a $4$-dimensional Poincar\'e pair of the form~$(X,S^1\times S^2\# S^1\times S^2)$ with $\pi_1(X) \cong \Z^2$.
\end{remark}

\subsection{Examples for which~$\ev^* \colon H^2(X;\Z[\pi])^{**} \to H_2(X;\Z[\pi])^*$ is an isomorphism.}
\label{sub:ev*Iso}

Let $(X,Y)$ be a $4$-dimensional Poincar\'e pair with $\pi_1(Y) \twoheadrightarrow \pi_1(X):=\pi$ a surjection.

\begin{proposition}
\label{prop:ev*Iso}
The dualised evaluation map
 \[\ev_X^*\colon H_2(X;\Z[\pi])^{**}\to H^2(X;\Z[\pi])^*\] is an isomorphism in the following circumstances:
\begin{enumerate}
\item $\pi$ is free or finite.
\item $\pi=\pi_1(\Sigma)$ is an orientable surface group or a solvable Baumslag--Solitar group $BS(1,n)$.
\item $\pi$ is a $\PD_3$ group.
\item $\pi$ is a free product of free and $PD_3$ groups.
\end{enumerate}  
\end{proposition}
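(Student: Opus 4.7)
The plan is to analyze the universal coefficient spectral sequence
\[E_2^{p,q}=\Ext_{\Z[\pi]}^p(H_q(X;\Z[\pi]),\Z[\pi])\Rightarrow H^{p+q}(X;\Z[\pi])\]
together with case-by-case computations of $H^*(\pi;\Z[\pi])$. Two simplifications apply in every case: since $\pi_1(Y)\to\pi_1(X)$ is surjective, $H_1(X;\Z[\pi])=H_1(\widetilde X)=0$; and since $\cd(\pi)\leq 3$, Proposition~\ref{prop:StablyFreePD<4} ensures that $H_2(X;\Z[\pi])$ is projective, so the double-dual evaluation for $H_2(X;\Z[\pi])$ is an isomorphism and $\Ext_{\Z[\pi]}^i(H_2(X;\Z[\pi])^*,\Z[\pi])=0$ for $i\geq 1$. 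These observations reduce the $p+q=2$ diagonal of the spectral sequence to a natural exact sequence
\[0\to H^2(\pi;\Z[\pi])\to H^2(X;\Z[\pi])\xrightarrow{\ev}H_2(X;\Z[\pi])^*\xrightarrow{d_3}H^3(\pi;\Z[\pi]),\]
and a standard diagram chase after dualising with $\Hom_{\Z[\pi]}(-,\Z[\pi])$ shows that $\ev^*$ is an isomorphism whenever $\Hom_{\Z[\pi]}(H^2(\pi;\Z[\pi]),\Z[\pi])=0$, $\Hom_{\Z[\pi]}(M,\Z[\pi])=0$, and $\Ext^1_{\Z[\pi]}(M,\Z[\pi])=0$, where $M:=\im(d_3)\subseteq H^3(\pi;\Z[\pi])$.

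These vanishings are then established case by case. In case (1), $H^k(\pi;\Z[\pi])=0$ for $k\geq 2$ (free case) or for $k\geq 1$ (finite case, by Shapiro's lemma), so $\ev$ itself is already an isomorphism. In case (2), $\cd(\pi)\leq 2$ forces $H^3(\pi;\Z[\pi])=0$; Poincar\'e duality gives $H^2(\pi;\Z[\pi])\cong\Z$ with trivial action for surface groups, while an HNN Mayer--Vietoris computation presents $H^2(BS(1,n);\Z[BS(1,n)])$ as a quotient of the induced module $\Ind_\Z^\pi\Z$. Since $\pi$ is infinite in both situations, $\Hom_{\Z[\pi]}(\Z,\Z[\pi])=\Z[\pi]^\pi=0$ and $\Hom_{\Z[\pi]}(\Ind_H^\pi\Z,\Z[\pi])=(\Res_H\Z[\pi])^H=0$ for any infinite subgroup $H$, which descends to any quotient. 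Case (3) is analogous: $\PD_3$-duality yields $H^2(\pi;\Z[\pi])=0$ and $H^3(\pi;\Z[\pi])\cong\Z$ with trivial action, so $M$ is either $0$ or $\cong\Z$; in either subcase $\Hom(M,\Z[\pi])=0$ and $\Ext^1_{\Z[\pi]}(M,\Z[\pi])=H^1(\pi;\Z[\pi])=H_2(\pi;\Z[\pi])=0$, again by $\PD_3$-duality.

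Case (4) is handled through the wedge decomposition $B\pi\simeq BF\vee\bigvee_i BG_i$ and the associated Mayer--Vietoris, exactly as in the proof of Proposition~\ref{prop:H2piZpi=0}; one computes $H^2(\pi;\Z[\pi])=0$ and $H^3(\pi;\Z[\pi])\cong\bigoplus_i\Ind_{G_i}^\pi\Z$. The vanishing statements for the ambient module reduce via Shapiro's lemma to vanishing of $H^k(G_i;\Z[G_i])$ for $k=0,1$, which holds since each $G_i$ is an infinite $\PD_3$-group. The main obstacle, and the most delicate point of the proof, is that the required vanishings must be verified on the submodule $M\subseteq H^3(\pi;\Z[\pi])$ rather than on the ambient module (since $\Hom$ is contravariant and $\Ext^1$ is not monotone in its first argument); in cases (1)--(3) this passage is automatic because the ambient module is either trivial or a copy of $\Z$ whose submodules are $0$ or $\cong\Z$, whereas in case (4) one additionally exploits that $M$ is a quotient of the projective module $H_2(X;\Z[\pi])^*$ and is therefore finitely generated, and combines this with the specific structure of $\bigoplus_i\Ind_{G_i}^\pi\Z$ to control both $\Hom(M,\Z[\pi])$ and $\Ext^1_{\Z[\pi]}(M,\Z[\pi])$.
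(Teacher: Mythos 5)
Your reduction of the problem (the UCSS exact sequence plus the three vanishing conditions for $\Hom(H^2(\pi;\Z[\pi]),\Z[\pi])$, $M^*$ and $\Ext^1_{\Z[\pi]}(M,\Z[\pi])$) is correct, and your cases (1)--(3) go through because there the ambient module $H^3(\pi;\Z[\pi])$ is $0$ or $\Z$, so the passage to the submodule $M=\im(d_3)$ is harmless. But you have missed the one observation that the paper uses to remove the submodule issue altogether: since $\pi_1(Y)\to\pi_1(X)$ is surjective, $H^3(X;\Z[\pi])\cong H_1(X,Y;\Z[\pi])=0$ by Poincar\'e--Lefschetz duality, and feeding this into the third diagonal of the UCSS shows that $d_3$ is \emph{onto} $H^3(\pi;\Z[\pi])$. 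One then works with the four-term short exact sequence
$$0\to H^2(\pi;\Z[\pi])\to H^2(X;\Z[\pi])\xrightarrow{\ev_X}H_2(X;\Z[\pi])^*\to H^3(\pi;\Z[\pi])\to 0,$$
so $M$ \emph{is} $H^3(\pi;\Z[\pi])$, and the case-by-case vanishings (via the induction/restriction adjunction and $\PD_3$-duality for the factors $G_i$) apply directly to the ambient module. Without this step your proof of case (4) is not complete.

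Indeed, the workaround you sketch for case (4) cannot be made to work as stated: it is simply false that the structure of $\bigoplus_i\Ind_{G_i}^\pi\Z$ together with finite generation of $M$ forces $\Hom_{\Z[\pi]}(M,\Z[\pi])=0$ and $\Ext^1_{\Z[\pi]}(M,\Z[\pi])=0$ for submodules $M$. For example, take $\pi=G_1*G_2$ and let $M_0\subseteq \Z[\pi/G_1]\oplus\Z[\pi/G_2]$ be the cyclic submodule generated by $(eG_1,eG_2)$. Its annihilator is $\Z[\pi]I_{G_1}\cap \Z[\pi]I_{G_2}$ (with $I_{G_i}$ the augmentation ideal of $\Z[G_i]$), and this intersection is zero because $I_\pi\cong \Z[\pi]I_{G_1}\oplus \Z[\pi]I_{G_2}$ for a free product; hence $M_0\cong\Z[\pi]$ is a free, finitely generated submodule with $M_0^*\cong\Z[\pi]\neq 0$. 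So the required vanishings are genuinely properties of the specific submodule $\im(d_3)$, and the only way to obtain them is to identify $\im(d_3)$ with all of $H^3(\pi;\Z[\pi])$ as above (or to supply some other argument pinning down $\im(d_3)$, which your proposal does not do). A minor further point: your appeal to Proposition~\ref{prop:StablyFreePD<4} for projectivity of $H_2(X;\Z[\pi])$ does not apply when $\pi$ is finite (where $\cd(\pi)=\infty$), but in that case your direct argument that $\ev_X$ itself is an isomorphism makes it unnecessary.
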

\begin{proof}
Most of the first and third cases follow from the fourth, but we treat them separately as a warm up.
We begin with some general considerations.
Looking at the second diagonal of the UCSS yields the exact sequence 
$$ 0\to H^2(\pi;\Z[\pi])\to H^2(X;\Z[\pi])\xrightarrow{\ev_X}H_2(X;\Z[\pi])^*\xrightarrow{d_3} H^3(\pi;\Z[\pi]).$$
Since $H_1(X;\Z[\pi])=0$ and $H_0(Y;\Z[\pi])\to H_0(X,\Z[\pi])$ is an isomorphism (because $Y \to X$ is~$\pi_1$-surjective), it follows that $H^3(X;\Z[\pi]) \cong H_1(X,\partial X;\Z[\pi])=0$.
Looking at the third diagonal of the UCSS,  this implies that $\coker(d_3)=0$ and so we obtain the short exact sequence
\begin{equation}\label{eq:ucss-zeros-Bs-again}
0\to H^2(\pi;\Z[\pi])\to H^2(X;\Z[\pi])\xrightarrow{\ev_X}H_2(X;\Z[\pi])^*\to H^3(\pi;\Z[\pi])\to 0.
\end{equation}
It follows that for $\pi$ a free or a finite group,  $\ev_X$ is an isomorphism and dualising therefore gives the first assertion.
We move on to the second assertion.
If~$\pi$ is the fundamental group of an orientable surface $\Sigma$, then $H^3(\pi;\Z[\pi]) \cong H^3(\Sigma;\Z[\pi])=0$ and $H^2(\pi;\Z[\pi])^*\cong H^2(\Sigma;\Z[\pi])^* \cong \Z^*=0.$
By~\cite[Lemmas 6.2 and~6.4]{HambletonKreckTeichner}, the same is true if~$\pi$ is a solvable Baumslag--Solitar group. Hence,  dualising \eqref{eq:ucss-zeros-Bs-again}, we see that  $\ev_X^*$ is an isomorphism as claimed.
Next we consider the third assertion, i.e. the case where $\pi$ is a $\PD_3$ group.
In this case~$H^2(\pi;\Z[\pi])\cong H_1(\pi;\Z[\pi])=0$ and~$H^3(\pi;\Z[\pi]) \cong \Z$.
Again,  we deduce that~$H^3(\pi;\Z[\pi])^*=0$ and the assertion follows by dualising~\eqref{eq:ucss-zeros-Bs-again} and using that $\Ext^1_{\Z[\pi]}(H^3(\pi;\Z[\pi]),\Z[\pi]) \cong H^1(\pi;\Z[\pi]) \cong H_2(\pi;\Z[\pi)=0$:
$$
0 \to \overbrace{H^3(\pi;\Z[\pi])^*}^{=0 } \to H_2(X;\Z[\pi])^{**}\xrightarrow{\ev_X^*} H^2(X;\Z[\pi])^* \to \overbrace{\Ext^1_{\Z[\pi]}(H^3(\pi;\Z[\pi]),\Z[\pi])}^{=0}.
$$
It remains to prove the fourth assertion, i.e. the case where~$\pi\cong F**_{i=1}^rG_i$ with $F$ a free group and~$G_i$ a $PD_3$-group.
The argument is essentially the same as \cite[Lemmas~5.6 and~5.8]{HKPR}, but we repeat it for the reader's convenience.
Since $H^2(\pi;\Z[\pi])=0$ (by the proof of Proposition~\ref{prop:H2piZpi=0}),  dualising~\eqref{eq:ucss-zeros-Bs-again} yields the exact sequence
\begin{equation}\label{eq:ucss-zeros-Bs-hommed}
\begin{tikzcd}
[column sep=small]
0\ar[r]& H^3(\pi;\Z[\pi])^*\arrow[r]&H_2(X;\Z[\pi])^{**}\arrow[r,"{\ev_X^*}"]&H^2(X;\Z[\pi])^*\ar[r]&\Ext^1_{\Z[\pi]}(H^3(\pi;\Z[\pi]),\Z[\pi]).
\end{tikzcd}
\end{equation}
We assert that~$H^3(\pi;\Z[\pi])^*=0$.
For each $i$, we have $H^3(G_i;\Z[G_i]) \cong H_0(G_i;\Z[G_i])\cong \Z$. 
A Mayer--Vietoris argument as in the proof of  Proposition~\ref{prop:H2piZpi=0} 
By~\cite[Lemma 4.9]{HKPR}, we have~$H^3(\pi;\Z[\pi])\cong\oplus_{i=1}^r E_i$, where $E_i=\Z[\pi]\otimes_{\Z[G_i]}\Z = \Ind_{G_i}^{\pi} \big(\Z\big)$. 
Taking duals gives
\[E_i^*=\Hom_{\Z[\pi]}(\Ind_{G_i}^{\pi} \big(\Z\big),\Z[\pi]) \cong \Hom_{\Z[G_i]}(\Z,\Res_{G_i}^\pi \Z[\pi]).\]
Since $G_i$ is infinite, the only $G_i$-fixed point in $\Z[G_i]$ is the trivial element,
and thererefore $ \Hom_{\Z[G_i]}(\Z, \Z[G_i])=0$.
Since $\Res^{\pi}_{G_j} \Z[\pi]$ is free, $\Hom_{\Z[G_j]}(\Z,\Res_{G_j}^{\pi} \Z[\pi])=0$.
Thus $E_i^*=0$.
We conclude that~$H^3(\pi;\Z[\pi])^*\cong \bigoplus_{i=1}^rE_i^*=0$ as asserted.

Next,  we assert that~$\Ext^1_{\Z[\pi]}(H^3(\pi;\Z[\pi]),\Z[\pi])=0$. 
Apply the adjunction from Lemma~\ref{lem:ExtAdjunction}, we obtain
\begin{align*}
    \Ext^1_{\Z[\pi]}(H^3(\pi;\Z[\pi]),\Z[\pi]) &\cong\Ext^1_{\Z[\pi]}(\oplus_{i=1}^rE_i,\Z[\pi])\cong
\prod_{i=1}^r\Ext^1_{\Z[\pi]}(E_j,\Z[\pi]) \\
&\cong \prod_{i=1}^r\Ext^1_{\Z[\pi]}\big(\Ind_{G_i}^\pi \big(\Z\big),\Z[\pi]\big)\cong \prod_{i=1}^r\Ext^1_{\Z[G_j]}\big(\Z,\Res_{G_i}^\pi \Z[\pi]\big).
\end{align*}
We consider each factor separately. By Poincar\'e  duality, we have
\[\Ext^1_{\Z[G_i]}\big(\Z,\Res_{G_i}^\pi \Z[\pi]\big)\cong H^1\big(G_i;\Res_{G_i}^\pi \Z[\pi]\big)\cong H_2(G_i; \Res_{G_i}^\pi \Z[\pi]).\]
Since $\Res_{G_j}^\pi\Z[\pi]$ is free, this is trivial.
Thus~$\Ext^1_{\Z[\pi]}(H^3(\pi;\Z[\pi]),\Z[\pi])$, as asserted.

Since the assertions show that $H^*(\pi;\Z[\pi])=0$ and $\Ext^1_{\Z[\pi]}(H^3(\pi;\Z[\pi]),\Z[\pi])=0$, it follows from~\eqref{eq:ucss-zeros-Bs-hommed} that~$\ev_X^*$ is an isomorphism, as required.
\end{proof}

It remains to establish a result that was used during the proof of Proposition~\ref{prop:ev*Iso}.

\begin{lemma}
\label{lem:ExtAdjunction}
Let $\pi$ be a group.
If~$G\leq \pi$ is a subgroup,  $M$ is a $\Z[G]$-module and~$N$ is a~$\Z[\pi]$-module then, for every $k \geq 1$, there is an isomorphism
    \[\Ext_{\Z[G]}^k(M,\Res^\pi_G N)\cong \Ext_{\Z[\pi]}^k(\Ind_G^\pi M,N).\]
\end{lemma}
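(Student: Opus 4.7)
The plan is to deduce this from the standard tensor-hom adjunction between induction and restriction, combined with the fact that induction sends projectives to projectives. Concretely, induction is $\Ind_G^\pi(-) = \Z[\pi] \otimes_{\Z[G]} -$, and since $\Z[\pi]$ is free as a right $\Z[G]$-module (it decomposes as $\bigoplus_{\pi/G}\Z[G]$ by choosing coset representatives), induction is an exact functor that preserves projectivity: if $P$ is a projective $\Z[G]$-module, then $\Ind_G^\pi P$ is a projective $\Z[\pi]$-module. The classical adjunction gives, for every $\Z[G]$-module $A$ and every $\Z[\pi]$-module $B$, a natural isomorphism
\[
\Hom_{\Z[\pi]}(\Ind_G^\pi A, B) \cong \Hom_{\Z[G]}(A, \Res_G^\pi B).
\]

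First I would choose a projective resolution $P_\bullet \to M$ over $\Z[G]$. Applying the exact functor $\Ind_G^\pi$ then yields a projective resolution $\Ind_G^\pi P_\bullet \to \Ind_G^\pi M$ over $\Z[\pi]$, by the two observations above. Next, applying the adjunction termwise produces an isomorphism of cochain complexes
\[
\Hom_{\Z[\pi]}(\Ind_G^\pi P_\bullet, N) \cong \Hom_{\Z[G]}(P_\bullet, \Res_G^\pi N),
\]
which is natural enough in the resolution coordinate to commute with the differentials induced by $P_\bullet$. Taking cohomology in degree $k$ on both sides identifies $\Ext^k_{\Z[\pi]}(\Ind_G^\pi M, N)$ on the left with $\Ext^k_{\Z[G]}(M, \Res_G^\pi N)$ on the right, which is the desired isomorphism (valid for every $k \geq 0$, not just $k \geq 1$).

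There is no real obstacle here: the only point that requires care is verifying that $\Z[\pi]$ really is free as a right $\Z[G]$-module, which is immediate from coset decomposition, and that the termwise Hom-adjunction is natural in $P_\bullet$, which is built into the formulation of the adjunction. Everything else is formal homological algebra, so the statement follows without further input.
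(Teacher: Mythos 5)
Your argument is correct and is essentially the paper's own proof: both rely on the fact that $\Z[\pi]$ is free over $\Z[G]$ (so induction is exact and sends free/projective resolutions of $M$ to free/projective resolutions of $\Ind_G^\pi M$), then apply the tensor-hom adjunction termwise and take cohomology. The only cosmetic difference is that the paper uses a free resolution where you use a projective one, which changes nothing.
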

\begin{proof}
 If~$F_*$ is a free $\Z[G]$-resolution of~$M$, then~$\Ind_G^\pi F_*\cong \Z[\pi]\otimes_{\Z[G]} F_*$ is a sequence of free $\Z[\pi]$-modules.
    Since~$\Z[\pi]$ is free as a $\Z[G]$-module
it is flat as a $\Z[G]$-module,  so $\Ind_G^\pi(-)=\Z[\pi]\otimes_{\Z[G]}-$ preserves exactness and therefore~$\Ind_G^\pi F_*$ is a free~$\Z[\pi]$-resolution of~$\Ind_G^\pi M=\Z[\pi]\otimes_{\Z[G]} F_*$. 
    Thus
    \begin{align*}
    \Ext_{\Z[\pi]}^k(\Ind_G^\pi M,N)
    &\cong H^k(\Hom_{\Z[\pi]}(\Ind_G^\pi F_*,N))\cong H^k(\Hom_{\Z[G]}(F_*,\Res_G^\pi N)) \\
    &\cong \Ext_{\Z[G]}^k(M,\Res^\pi_G N)
    \end{align*}
    as claimed.
\end{proof}

Our final result does not directly address $\ev_X^*$ being an isomorphism. 
Instead it shows that this condition ensures that~$\Herm(\ev_X) \colon \Herm(H_2(X;\Z[\pi])^*) \to\Herm(H^2(X;\Z[\pi]))$ is also an isomorphism.
This was used during the proof of Theorem~\ref{thm:MainTheoremBoundary}.

\begin{proposition}
\label{prop:Herm-iso}
If the map
\[\ev_X^*\colon H_2(X;\Z[\pi])^{**}\to H^2(X;\Z[\pi])^*\] is an isomorphism of abelian groups,  then so is
 \[\Herm(\ev_X) \colon \Herm(H_2(X;\Z[\pi])^*) \to\Herm(H^2(X;\Z[\pi])).\] 
 In particular $\Herm(\ev_X) $ is injective if $\pi$ is a Baumslag--Solitar group, an orientable surface group, a free group, a $\PD_3$ or a free product of a free group and $\PD_3$-groups.
\end{proposition}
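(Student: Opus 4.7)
Write $H^2 := H^2(X;\Z[\pi])$ and $N := H_2(X;\Z[\pi])^*$, so $\ev_X\colon H^2 \to N$. The plan is to realise $\Herm(M)$ as the fixed points of a natural involution on $\Hom_{\Z[\pi]}(M,M^*)$ and to show that pullback along $\ev_X$ defines a bijection on $\Hom$ that intertwines these involutions, hence restricts to a bijection on $\Herm$.

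\emph{Step 1 (abstract set-up).} A hermitian form on a left $\Z[\pi]$-module $M$ corresponds to a left $\Z[\pi]$-linear map $\widehat b\colon M\to M^*$ satisfying the symmetry condition $\widehat b = \widehat b^*\circ \kappa_M$, where $\kappa_M\colon M\to M^{**}$ is the canonical evaluation. Under this dictionary, the pullback of a hermitian form along $\ev_X$ is $\widehat b \mapsto \ev_X^*\circ\widehat b\circ \ev_X$.

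\emph{Step 2 (bijectivity at the sesquilinear level).} Factor the map $\Phi\colon \Hom(N,N^*)\to\Hom(H^2,(H^2)^*)$, $\phi\mapsto \ev_X^*\circ\phi\circ \ev_X$, as
\[\Hom(N,N^*)\xrightarrow{\;\Hom(\ev_X,\,N^*)\;}\Hom(H^2,N^*)\xrightarrow{\;(\ev_X^*)_*\;}\Hom(H^2,(H^2)^*).\]
Postcomposition with $\ev_X^*$ is an isomorphism by hypothesis. The first arrow is an isomorphism by Lemma~\ref{lem:nice-lemma} applied to $f=\ev_X$, taking $A=\overline{N}$ (the module $N$ regarded as a right module via the involution), so that $A^*\cong N^*$. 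Hence $\Phi$ is a bijection of abelian groups.

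\emph{Step 3 (compatibility with the involutions).} One checks that $\Phi$ intertwines the hermitian involutions. Indeed, for $\widehat b\in\Hom(N,N^*)$,
\[\Phi(\widehat b^*\circ\kappa_N) = \ev_X^*\circ\widehat b^*\circ\kappa_N\circ\ev_X = \ev_X^*\circ\widehat b^*\circ\ev_X^{**}\circ\kappa_{H^2} = \Phi(\widehat b)^*\circ\kappa_{H^2},\]
where the middle equality uses the naturality of $\kappa$ applied to $\ev_X$, namely $\kappa_N\circ\ev_X = \ev_X^{**}\circ\kappa_{H^2}$. Since $\Phi$ is a bijection commuting with the involutions, it restricts to a bijection on fixed points, which is precisely $\Herm(\ev_X)\colon\Herm(N)\to\Herm(H^2)$. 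The second assertion follows from Proposition~\ref{prop:ev*Iso}.

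The computations above are routine once the conventions are set; the only real bookkeeping point is the application of Lemma~\ref{lem:nice-lemma}, where one must choose the auxiliary right module $A=\overline{N}$ so that $A^*$ is canonically $N^*$, and then verify the naturality of $\kappa$ with respect to $\ev_X$. Neither step presents a genuine obstacle.
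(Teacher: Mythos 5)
Your proof is correct and follows essentially the same route as the paper: factor the map on sesquilinear forms into precomposition with $\ev_X$ (justified by Lemma~\ref{lem:nice-lemma}, using the hypothesis that $\ev_X^*$ is an isomorphism) and postcomposition with the isomorphism $\ev_X^*$, then pass to fixed points of the hermitian involution. The differences are cosmetic: the paper composes the two isomorphisms in the other order, takes $A=H^2(X;\Z[\pi])$ in Lemma~\ref{lem:nice-lemma} rather than $A=\overline{N}$, and verifies compatibility with the involution directly on sesquilinear forms instead of via naturality of the double-dual map.
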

\begin{proof}
The proof is essentially the same as \cite[Proposition~5.9]{HKPR} but we repeat it for the reader's convenience.
Applying the covariant functor $\Hom_{\Z[\pi]}(H_2(X;\Z[\pi])^*, -)$ to $\ev_X^*$, we see that
\[(\ev_X^*)_*\colon \Hom_{\Z[\pi]}(H_2(X;\Z[\pi])^*,H_2(X;\Z[\pi])^{**})\to \Hom_{\Z[\pi]}(H_2(X;\Z[\pi])^*,H^2(X;\Z[\pi])^{*})\]
is an isomorphism.
Next, apply \cref{lem:nice-lemma} with $f=\ev_X$ and $A=H^2(X;\Z[\pi])$ to see that
\[\Hom_{\Z[\pi]}(\ev_X,H^2(X;\Z[\pi])^*)\colon \Hom_{\Z[\pi]}(H_2(X;\Z[\pi])^*,H^2(X;\Z[\pi])^*)\to \Hom_{\Z[\pi]}(H^2(X;\Z[\pi]),H^2(X;\Z[\pi])^*)\] is an isomorphism.
Consider the following diagram in which the top horizontal is the composition~$\Hom_{\Z[\pi]}(\ev_X,H^2(X;\Z[\pi])^*)\circ (\ev_X^*)_*$:
\begin{equation}\label{eq:sesq}
\begin{tikzcd}
\Hom_{\Z[\pi]}(H_2(X;\Z[\pi])^*,H_2(X;\Z[\pi])^{**}) \ar[r]  \ar[d,phantom,sloped,"\cong"]&\Hom_{\Z[\pi]}(H^2(X;\Z[\pi]),H^2(X;\Z[\pi])^*)\ar[d,phantom,sloped,"\cong"]\\
\Sesq(H_2(X;\Z[\pi])^*)\arrow[r,"\Sesq(\ev_X)"]   &\Sesq(H^2(X;\Z[\pi])).
\end{tikzcd}
\end{equation}
Here $\Sesq(\ev_X)(b)(\alpha,\beta)=b(\ev_X(\alpha),\ev_X(\beta)).$
This diagram commutes: indeed, unravelling the definitions shows that for~$\alpha,\beta \in H^2(X;\Z[\pi])$ and~$\varphi\in \Hom_{\Z[\pi]}(H_2(X;\Z[\pi])^*,H_2(X;\Z[\pi])^{**})$, one has~$\Hom_{\Z[\pi]}(\ev_X,H^2(X;\Z[\pi])^*)\circ (\ev_X^*)_*(\varphi)(\alpha)=\varphi(\ev_X(\alpha))(\ev_X(\beta))$.
In particular, it follows that $\Sesq(\ev_X)$ is an isomorphism.

The set of hermitian forms on~$H^2(X;\Z[\pi])^*$ arises as the fixed point set of the involution on~$\Sesq(H^2(X;\Z[\pi])^*)$ given by~$b \mapsto ((x,y) \mapsto \overline{b(y,x)}$).
Thus~$\Herm(\ev_X)$ is obtained by restricting~$\Sesq(\ev_X)$ to hermitian forms.
It follows that~$\Herm(\ev_X)$ is an isomorphism, as required.
\end{proof}

\appendix

\section{Twisted homology}
\label{sec:TwistedHomology}

The aims of this section are to fix our conventions concerning twisted homology and cohomology as well as to collect several results that are used throughout this article.
Proofs that consist of direct verifications are omitted.

\subsection{The evaluation map}
\label{sub:Evaluation}

Fix a pair~$(X,Y)$ and a ring~$R$ that is endowed with an involution.
Let~$M$ and~$M'$ be~$(R,\Z[\pi_1(X)])$-bimodules and let~$S$ be an~$(R,R)$-bimodule.
Furthermore, let 
$$\Theta \colon M' \times M \to S$$ be a nonsingular 
$\pi$-invariant sesquilinear pairing, meaning that for every~$m \in M,m' \in M'$ and~$r,s \in R,\gamma \in \pi_1(X)$, it satisfies~$\langle m'\gamma,m\gamma\rangle
=\langle m',m\rangle$ and~$\langle rm'\gamma,sm\gamma\rangle
=r\langle m',m\rangle \overline{s}$.
Reformulating, if~$\overline{M}$ denotes the~$(\Z[\pi_1(X)],R)$-bimodule obtained by using the involutions on~$R$ and~$\Z[\pi_1(X)]$, then~$\Theta$ is an~$(R,R)$-linear map 
$$M' \otimes_{\Z[\pi_1(X)]} \overline{M} \to S.$$
We proceed to the main construction of this section.

\begin{construction}
\label{cons:Evaluation}(The evaluation~$\ev \colon H^i(X,Y;M) \to \overline{\Hom_{\text{left-}R}(H_{i}(X,Y;M'),S)}$.)

This evaluation map is defined in three steps. 
Consider the map
\begin{align*}
 \Hom_{\text{right-}\Z[\pi_1(X)]}(\overline{C_*(\widetilde{X},\widetilde{Y})},M) &\to \overline{\Hom_{\text{left-}R}(M' \otimes_{\Z[\pi_1(X)]} C_*(\widetilde{X},\widetilde{Y}),M' \otimes_{\Z[\pi_1(X)]} \overline{M})} \\
     f &\mapsto \left((m' \otimes \sigma) \mapsto (m' \otimes f(\sigma))  \right).
\end{align*}
Composing with the pairing~$\Theta$ gives rise to the chain map 
  \begin{align*}
\kappa \colon \Hom_{\text{right-}\Z[\pi_{1}(X)]}(\overline{C_*(\widetilde{X},\widetilde{Y})},M) &\to \overline{\Hom_{\text{left-}R}(M' \otimes_{\Z[\pi_1(X)]} C_*(\widetilde{X},\widetilde{Y}),S)}, \\
    f &\mapsto \left((m' \otimes \sigma) \mapsto \Theta(m',f(\sigma) ) \right).
  \end{align*}
The nonsingularity of $\Theta$ ensures this is an isomorphism of cochain complexes of left~$R$-modules.
  
Evaluating cocycles on homology classes yields a homomorphism of left~$R$-modules
  \[E \colon H^{i}(\overline{\Hom_{\text{left-}R}(M' \otimes_{\Z[\pi_1(X)]} C_{\ast}(\widetilde{X},\widetilde{Y}),S)}) \xrightarrow{} \overline{\Hom_{\text{left-}R}(H_{i}(X,Y;M'),S)}.\]
  Finally,  the evaluation map~$\ev$, which is also left~$R$-linear,  is obtained by composing~$\kappa$ with~$E$:
\begin{align*} \ev \colon H^{i}(X,Y;M) &\xrightarrow{\kappa,\cong} H^{i}(\overline{\Hom_{\text{left-}R}(M' \otimes_{\Z[\pi_1(X)]} C_{\ast}(\widetilde{X},\widetilde{Y}),S)}) \\ &\xrightarrow{E} \overline{\Hom_{\text{left-}R}(H_{i}(X,Y;M'),S)}.
\end{align*}
In what follows we often write $\langle \varphi,x\rangle$ instead of $\ev(\varphi)(x).$
\end{construction}

\begin{example}
\label{ex:EvaluationZpi}
Let $\varphi \colon \pi_1(X) \twoheadrightarrow \pi$ be an epimorphism.
Continuing with the notation above, the example that most relevant to this article is the case where~$M=M'=R=S=\Z[\pi]$ (with the module structure $\Z[\pi_1(X)]$-module structure induced by $\varphi$) and the pairing is 
\begin{align*}
\Z[\pi] \otimes_{\Z[\pi]} \overline{\Z[\pi]}& \xrightarrow{\mult} \Z[\pi] \\
(x,y) &\mapsto x\overline{y}.
\end{align*}
In this case, the evaluation map satisfies
$$\ev([f])([m' \otimes \sigma])=m'\overline{f(\sigma)}.$$
\end{example}

\subsection{Twisted cap and cup products}
\label{sub:CapCup}

Let~$X$ be a space with fundamental group $\pi$ and with universal cover~$\widetilde{X}$. Let~$R$ be a ring with involution, and let~$M,N$ be~$(R,\Z[\pi])$-bimodules. 
Given an~$(i+j)$-chain~$\sigma \in C_{i+j}(\widetilde{X})$, we use~$\lrcorner \sigma \in C_i(\widetilde{X})$ and~$\llcorner \sigma \in C_j(\widetilde{X})$ to denote the front-face~$i$-chain and back-face~$j$-chain of~$\sigma$. 
They are defined on a singular simplex~$s_{i+j} \colon \Delta^{i+j} \to \widetilde X$ as follows and then extended linearly to chains: in barycentric coordinates, define projections~$p_\lrcorner \colon \Delta^{i+j} \to \Delta^i$ by~$[v_0, \ldots, v_{i+j}] \mapsto [v_0, \ldots, v_i]$ and~$p_\llcorner \colon \Delta^{i+j} \to \Delta^j$ by~$[v_0, \ldots, v_{i+j}] \mapsto [v_i, \ldots, v_{i+j}]$. Now define~$\lrcorner s_{i+j} = s_{i+j} \circ p_\lrcorner$ and~$\llcorner s_{i+j} = s_{i+j} \circ p_\llcorner$.
Given twisted cochains~$f \in C^i(X;M)=\Hom_{\text{right-}\Z[\pi]}(\overline{C_i(\widetilde{X})},M)$ and~$g \in C^j(X;N)= \Hom_{\text{right-}\Z[\pi]}(\overline{C_j(\widetilde{X})},N)$, a chain~$\sigma \in C_{i+j}(\widetilde{X})$, and a twisted chain~$b \otimes \sigma' \in C_k(X;N)$, one sets 
\begin{align*} 
 \big( f\cup g \big) (\sigma)&:=f( \lrcorner \sigma) \boxtimes g(   \llcorner \sigma )
	\in M\boxtimes N ,  \\
 f\cap (b \otimes \sigma' )  &:= \big( f( \lrcorner \sigma') \boxtimes b \big) \otimes  \llcorner \sigma' 
	\in C_{k-i}(X;M\boxtimes N). 
	\end{align*} 
A verification shows that~$\cup$ and~$\cap$ define~$(R,R)$-bilinear maps
\begin{align*}
\cup \colon C^i(X;M) \otimes_\Z \overline{C^j(X;N)} \to C^{i+j}(X;M \boxtimes N), \\
\cap \colon C^i(X;M) \otimes_\Z \overline{C_k(X;N)} \to C_{k-i}(X;M \boxtimes N).
\end{align*}
Indeed we have
\begin{align*}
&(rf \cup gs)(\sigma)
=(rf)( \lrcorner \sigma) \boxtimes (\overline{s}g)(   \llcorner \sigma )
=rf( \lrcorner \sigma) \boxtimes \overline{s}g(   \llcorner \sigma )
=r(f( \lrcorner \sigma) \boxtimes g(   \llcorner \sigma ))s
=(r(f \cup g)s)(\sigma), \\
&(rf) \cap (b \otimes \sigma')s
=( rf( \lrcorner \sigma') \boxtimes \overline{s}b ) \otimes  \llcorner \sigma' 
=r\left(\big( f( \lrcorner \sigma') \boxtimes b \big) \otimes  \llcorner \sigma' \right)s.
\end{align*}
It can be checked that~$\cup$ and~$\cap$ are chain maps and descend to (co)homology, leading to the main definition of this subsection. 

\begin{definition} \label{def:TwistedCup}
Let~$X$ be a space with fundamental group $\pi$ and, and let~$M,N$ be~$(R,\Z[\pi])$-bimodules. The chain maps~$\cup$ and~$\cap$ defined above respectively induce the~$(R,R)$-linear \emph{twisted cup product} and \emph{twisted cap product}  
\begin{align*}
\cup \colon H^i(X;M) \otimes_\Z \overline{H^j(X;N)} \to H^{i+j}(X;M \boxtimes N), \\
\cap \colon H^i(X;M) \otimes_\Z \overline{H_k(X;N)} \to H_{k-i}(X;M \boxtimes N).
\end{align*}
\end{definition}

We note that the relation~$\big(\alpha \cup \beta \big) \cap \sigma = \alpha \cap \big( \beta \cap \sigma \big)$ holds between the cap and the cup product, as in the untwisted case~\cite[Theorem~5.2~(3)]{BredonTopology}.

\begin{remark}
\label{rem:RelativeCup}
We recall briefly how cup and cap products extend to pairs, referring to~\cite[Part~XXIV]{FriedlLectureNotes} for more details.
Recall that a triple $(X,Y,Z)$ of spaces is called an \emph{excisive triad} if the inclusion $(Y, Y \cap Z) \to (Y \cup Z,Z)$ induces isomorphisms on homology.
Similarly, following~\cite[Part XXIV]{FriedlLectureNotes}, if $X$ admits a universal cover~$p \colon \widetilde{X}\to X$, we say a triple $(X,Y,Z)$ is \emph{an universally excisive triad} if $(\widetilde{X},p^{-1}(Y),p^{-1}(Z))$ is an excisive triad.
Given such an universally excisive triad, Definition~\ref{def:TwistedCup} extends to a relative cup and cap products
\begin{align*}
\cup \colon H^i(X,Y;M) \otimes_\Z \overline{H^j(X,Z;N)} \to H^{i+j}(X,Y \cup Z;M \boxtimes N), \\
\cap \colon H^i(X,Y;M) \otimes_\Z \overline{H_k(X,Y\cup Z;N)} \to H_{k-i}(X,Z;M \boxtimes N).
\end{align*}
In the sequel, we will mostly be concerned with the case where $Y=\emptyset$ or $Z=\emptyset.$
\end{remark}

Recalling the map isomorphism $ \varpi_0 \colon H_0\big(X;M\boxtimes N \big) \to M \otimes_{\Z[\pi]} \overline{N}$  from Lemma~\ref{lem:Ccoinvariants},  we also note the graded symmetry of the twisted cup product.
\begin{lemma}
\label{lem:CupProductSymmetric}
Let~$(X,Y)$ be a pair with fundamental group $\pi$ and, let~$M$ be an~$(R,\Z[\pi])$-bimodule, let~$\Psi \colon M \otimes_{\Z[\pi]} \overline{M} \to R$ be a hermitian form.
Then for any $a \in H^k(X;M)$, any~$ b \in H^l(X;N)$ and any $\sigma \in H_{k+l}(X,Y)$, the cup product satisfies the following graded symmetry:
$$ \Psi \circ \varpi_0\circ ((b \cup a) \cap \sigma)=(-1)^{kl}\overline{\Psi \circ \varpi_0((a \cup b) \cap \sigma)} \in R.$$
In particular,  for $\Psi=\mult \colon \Z[\pi] \otimes_{\Z[\pi]} \overline{\Z[\pi]} \to \Z[\pi],(x,y) \mapsto x\overline{y}$,  and~$\mult_0:=\mult \circ \varpi_0$ as in Notation~\ref{not:mult0},  this yields
$$\mult_0((b \cup a) \cap \sigma)=(-1)^{kl}\overline{\mult_0(((a \cup b) \cap \sigma)} \in \Z[\pi].$$
\end{lemma}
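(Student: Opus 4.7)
\medskip

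\noindent\textit{Proof plan.} The plan is to reduce the identity to a chain-level calculation and then invoke the classical graded commutativity of the cup product after applying the hermitian pairing $\Psi$. I will work at the level of singular chains with a chosen (relative) cycle $\widetilde{\sigma} \in C_{k+l}(\widetilde X, \widetilde Y)$ representing $\sigma$, and with cocycle representatives $a \in C^k(X;M)$ and $b \in C^l(X;M)$. Using the definition of the twisted cap from Section~\ref{sub:CapCup}, the definition of $\varpi_0$ in Lemma~\ref{lem:Ccoinvariants}, and writing $\lrcorner_k$ and $\llcorner_l$ for the front-$k$-face and back-$l$-face operations, a direct unwinding of the formulas gives the chain-level identity
\[
\Psi\bigl(\varpi_0((a\cup b)\cap \widetilde{\sigma})\bigr)
=\Psi\bigl(a(\lrcorner_k\widetilde{\sigma}),\, b(\llcorner_l\widetilde{\sigma})\bigr).
\]
(Here I implicitly use that a relative cycle representing $\sigma$ can be chosen to map its last vertex into a fixed basepoint of $\widetilde X$; this is where the $\pi$-invariance of $\Psi$ together with the identification $M\boxtimes M\otimes_{\Z[\pi]} C_0(\widetilde X)\cong M\otimes_{\Z[\pi]}\overline{M}$ is used.)

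Applying the same computation with $a$ and $b$ swapped, and then invoking the hermitian property $\Psi(x,y)=\overline{\Psi(y,x)}$, I obtain
\[
\Psi\bigl(\varpi_0((b\cup a)\cap \widetilde{\sigma})\bigr)
=\Psi\bigl(b(\lrcorner_l\widetilde{\sigma}),\, a(\llcorner_k\widetilde{\sigma})\bigr)
=\overline{\Psi\bigl(a(\llcorner_k\widetilde{\sigma}),\, b(\lrcorner_l\widetilde{\sigma})\bigr)}.
\]
Thus the lemma reduces to the identity
\[
\Psi\bigl(a(\llcorner_k\widetilde{\sigma}),\, b(\lrcorner_l\widetilde{\sigma})\bigr)
=(-1)^{kl}\,\Psi\bigl(a(\lrcorner_k\widetilde{\sigma}),\, b(\llcorner_l\widetilde{\sigma})\bigr)
\]
in $R$, for every relative cycle $\widetilde{\sigma}$.

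The last step is the main step: this identity is precisely the classical graded commutativity of the cup product, translated into the present setting via $\Psi$. The point is that the two $R$-valued cochains $\widetilde{\sigma}\mapsto \Psi(a(\lrcorner_k\widetilde{\sigma}), b(\llcorner_l\widetilde{\sigma}))$ and $\widetilde{\sigma}\mapsto \Psi(a(\llcorner_k\widetilde{\sigma}), b(\lrcorner_l\widetilde{\sigma}))$ are the values of two $R$-valued cup products, defined via the Alexander--Whitney diagonal and its opposite; both are $\pi$-equivariant on $\widetilde X$, and the standard Steenrod cup-$1$ construction provides a $\pi$-equivariant chain homotopy showing that these two cochains differ by $(-1)^{kl}$ times a coboundary. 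Since $\widetilde{\sigma}$ is a relative cycle in $(X,Y)$, the coboundary term pairs trivially with $\widetilde{\sigma}$ (one has to verify compatibility with the restriction to $Y$, which is automatic because the cup-$1$ chain homotopy is natural and both $a$ and $b$ are absolute cochains, so the relative pairing is well-defined), and the desired identity follows.

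The main obstacle will be a careful bookkeeping of signs and of the $\pi$-equivariance of the Steenrod chain homotopy, together with a verification that the relative-versus-absolute distinction does not produce spurious boundary terms. Once these technicalities are dispatched, the hermitian property of $\Psi$ does all the conceptual work of transforming the classical flip of tensor factors into the bar operation in the target $R$. The special case $\Psi = \mathrm{mult}$ is then obtained by noting that $\mathrm{mult}(x,y)=x\overline{y}$ is hermitian.
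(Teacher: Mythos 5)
Your route is genuinely different from the paper's: the paper assembles a four-square commutative diagram whose key square is the graded commutativity of the twisted cup product up to the flip of the coefficient factors, quoted from Friedl's lecture notes, followed by naturality of the cap product, compatibility of $\varpi_0$ with the flip, and the hermitian property of $\Psi$; you instead re-prove that commutativity at the chain level via an equivariant Alexander--Whitney/cup-$1$ homotopy. That core is sound: the $\pi$-invariance of $\Psi$ makes your two $R$-valued cochains $\phi_1(\widetilde\sigma)=\Psi(a(\lrcorner_k\widetilde\sigma),b(\llcorner_l\widetilde\sigma))$ and $\phi_2(\widetilde\sigma)=\Psi(a(\llcorner_k\widetilde\sigma),b(\lrcorner_l\widetilde\sigma))$ descend to $X$, and the equivariant Steenrod homotopy does give $\phi_2-(-1)^{kl}\phi_1=\delta c$ for an $R$-valued cochain $c$ of degree $k+l-1$.

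The genuine gap is the parenthetical step where you dispose of the coboundary term. Evaluating on a chain representative gives $\langle\delta c,\widetilde\sigma\rangle=\langle c,\partial\widetilde\sigma\rangle$ with $\partial\widetilde\sigma$ a chain in $\widetilde Y$, and your justification --- that this vanishes ``automatically because the cup-$1$ homotopy is natural and both $a$ and $b$ are absolute cochains'' --- is backwards. If both $a$ and $b$ are absolute and $\sigma$ is genuinely relative, $c$ has no reason to vanish on chains in $\widetilde Y$, so $\langle c,\partial\widetilde\sigma\rangle$ need not be zero; worse, in that generality the quantities $\Psi\circ\varpi_0((a\cup b)\cap\sigma)$ themselves depend on the chosen relative cycle (adding a cycle supported in $Y$ changes them), which is why the paper's own proof takes $a\in H^k(X,Y;M)$ and $b\in H^l(X;M)$, one factor relative, matching the uses in Lemmas~\ref{lem:cup} and~\ref{lem:cupRelative}. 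The repair should be made explicit: either $Y=\emptyset$ (or $\sigma$ comes from an absolute cycle), so one may take $\partial\widetilde\sigma=0$ and the coboundary term dies, or one of $a,b$ is a relative cocycle vanishing on $C_*(\widetilde Y)$; since every term of the cup-$1$ cochain evaluates $a$ and $b$ on faces of simplices of $\widetilde Y$, the cochain $c$ then also vanishes on $C_*(\widetilde Y)$ and $\langle c,\partial\widetilde\sigma\rangle=0$. (A minor point: your reduction does not require representatives whose last vertices lie at a basepoint; $\varpi_0$ is defined on all of $H_0$ via coinvariants, so that remark can be dropped.)
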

\begin{proof}
Consider the following diagram of abelian groups: 
$$
\xymatrix{
H^k(X,Y;M) \otimes H^l(X;M)\ar[r]^-{(-1)^{kl}\cup}\ar[d]^{\operatorname{swap}}&
H^{k+l}(X,Y;M \boxtimes M) \ar[r]^-{-\cap \sigma}\ar[d]^{\operatorname{flip}}&
H_0(X;M \boxtimes M)\ar[r]^-{\varpi_0}\ar[d]^{\operatorname{flip}}&
M \otimes \overline{M}\ar[r]^-\Psi\ar[d]^{\operatorname{flip}}&
R \ar[d]^{x \mapsto \overline{x}} \\
H^l(X;M) \otimes H^k(X,Y;M)\ar[r]^-{\cup}&
H^{k+l}(X,Y;M \boxtimes M) \ar[r]^-{-\cap \sigma}&
H_0(X;M \boxtimes M)\ar[r]^-{\varpi_0}&
M \otimes \overline{M}\ar[r]^-\Psi&
R. \\
}
$$
The statement amounts to proving that this diagram commutes.
The left square commutes by~\cite[Twisted Cup Product-Commutativity Proposition]{FriedlLectureNotes}.
The second square from the left commutes by naturality of the cap product.
The commutativity of the third square is readily verified.
The rightmost square commutes because $\Psi$ is hermitian.
\end{proof}

We conclude by noting the relation between cap products and evaluation.

\begin{lemma}
\label{lem:capev}
Let $\pi$ be a group, let $(X,Y)$ be a pair,
and let $\varphi \colon \pi_1(X) \twoheadrightarrow \pi$ be a surjection.
For~$\varphi \in H^k(X,Y;\Z[\pi])$ and $x \in H_k(X,Y;\Z[\pi])$, we have
$$ \mult \circ \varpi_0  (\varphi \cap x)
=\overline{\langle \varphi,x\rangle} \in \Z[\pi].$$
\end{lemma}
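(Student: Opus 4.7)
The proof is a direct chain-level computation, so my plan is to unwind all the definitions in the simplest possible way and then verify the identity by staring at the result.

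First I would fix representatives: write $\varphi = [f]$ for a cocycle $f \in \Hom_{\operatorname{right-}\Z[\pi]}(\overline{C_k(\widetilde X,\widetilde Y)}, \Z[\pi])$, and $x = [b \otimes \sigma]$ for a cycle in $\Z[\pi] \otimes_{\Z[\pi]} C_k(\widetilde X, \widetilde Y)$. By bilinearity it suffices to treat a single such summand, where $\sigma$ is a singular $k$-simplex in $\widetilde X$ and $b \in \Z[\pi]$. Since $\sigma$ has dimension $k$, the front-face $\lrcorner \sigma$ equals $\sigma$ itself and the back-face $\llcorner \sigma$ is a single vertex of $\sigma$ in $\widetilde X$, say $\gamma \cdot \widetilde{\pt}$ for some $\gamma \in \pi$ (after choice of a basepoint lift). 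The chain-level formula in Section~\ref{sub:CapCup} then gives
\[
f \cap (b \otimes \sigma) \;=\; \bigl(f(\sigma) \boxtimes b\bigr) \otimes \llcorner\sigma \;\in\; C_0(X; \Z[\pi] \boxtimes \Z[\pi]).
\]

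Next I would chase this $0$-chain through $\varpi_0$. Moving the $\pi$-action across the tensor product using the diagonal action on $\boxtimes$,
\[
(f(\sigma) \boxtimes b) \otimes \gamma\widetilde{\pt} \;=\; \bigl(f(\sigma)\gamma \boxtimes b\gamma\bigr) \otimes \widetilde{\pt},
\]
so by the explicit formula for $\varpi_0$ from Lemma~\ref{lem:Ccoinvariants}, this class maps to $f(\sigma)\gamma \otimes b\gamma$ in $\Z[\pi] \otimes_{\Z[\pi]} \overline{\Z[\pi]}$. Because the left $\Z[\pi]$-action on $\overline{\Z[\pi]}$ is given by $\gamma \cdot n = n\overline{\gamma} = n\gamma^{-1}$, the tensor relation yields $f(\sigma)\gamma \otimes b\gamma = f(\sigma) \otimes (b\gamma)\gamma^{-1} = f(\sigma) \otimes b$, so the answer is independent of $\gamma$ (as it must be). Applying $\mult$ then gives
\[
\mult \circ \varpi_0(\varphi \cap x) \;=\; f(\sigma) \cdot \overline{b}.
\]

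Finally I would compare with evaluation. Example~\ref{ex:EvaluationZpi} records the explicit formula $\ev([f])([b \otimes \sigma]) = b\,\overline{f(\sigma)}$, so
\[
\overline{\langle \varphi, x\rangle} \;=\; \overline{b\,\overline{f(\sigma)}} \;=\; f(\sigma) \cdot \overline{b},
\]
which matches the expression above. This establishes the identity on the chosen chain-level representatives, and the general case follows by $\Z$-linearity.

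The only place where one has to be careful is step two: tracking the $\pi$-action correctly through the diagonal structure on $\Z[\pi] \boxtimes \Z[\pi]$ and through the identification $\overline{\Z[\pi]}$ as a left $\Z[\pi]$-module with action twisted by the involution. Once the sign/involution conventions are pinned down, the rest is bookkeeping; there is no real obstacle beyond matching conventions.
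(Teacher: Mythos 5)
Your proof is correct and takes essentially the same route as the paper: unwind the chain-level cap product (front face the whole simplex, back face a vertex), push the resulting $0$-chain through $\varpi_0$ and $\mult$, and compare with the explicit evaluation formula of Example~\ref{ex:EvaluationZpi}, so that both sides equal $f(\sigma)\overline{b}$. The only cosmetic point is that the last vertex of $\sigma$ need not lie in the $\pi$-orbit of a chosen basepoint lift, but this is immaterial since the chain-level map inducing $\varpi_0$ sends $(m\boxtimes n)\otimes[q]$ to $m\otimes n$ for every $0$-simplex $q$ (and in $H_0$ all points of the connected cover are homologous), so your computation goes through verbatim.
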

\begin{proof}
Without loss of generality we can assume that $x=[m' \otimes \sigma]$.
Writing $f$ for a cocycle representative of $\varphi$, the definition of the cap product and Example~\ref{ex:EvaluationZpi} give
\begin{align*}
\mult \circ \varpi_0(\varphi \cap x)
&=\mult \circ \varpi_0(f(\sigma) \otimes m' \otimes \pt)
=\mult(f(\sigma) \otimes m')
=f(\sigma)\overline{m'}.
=\overline{\ev([f])([m' \otimes \sigma])} \\
&=\overline{\langle \varphi,x\rangle}.
\end{align*}
This concludes the proof of the lemma.
\end{proof}

\subsection{Twisted intersection forms}
\label{sub:hermitianForms}

This section has two goals.
The first is to recall the definition of the equivariant intersection form.
The second, given a 
space~$X$, an epimorphism~$\varphi \colon \pi_1(X) \twoheadrightarrow \pi$ and~$\sigma \in H_4(X)$,
is to collect some properties of the pairing
\begin{align*}
b_\sigma \colon H^2(X;\Z[\pi]) \times H^2(X;\Z[\pi]) &\to \Z[\pi] \\
(x,y)& \mapsto \langle y, x \cap \sigma \rangle.
\end{align*}
Namely we will reformulate this pairing using cup products, show that it is hermitian and relate it to the equivariant intersection form in the case where~$X$ is a Poincar\'e complex and~$\sigma$ is the fundamental class of $X$.

\begin{definition}
Let~$(X,Y)$ be a~$4$-dimensional Poincar\'e pair with $\pi:=\pi_1(X)$.
\begin{itemize}
\item The \emph{cohomological relative equivariant intersection form} is
\begin{align*}
b_X^\partial  \colon H^2(X,Y;\Z[\pi]) \times H^2(X;\Z[\pi]) & \to  \Z[\pi] \\
(\alpha,\beta) &\mapsto \langle \beta,\PD_X(\alpha) \rangle
=\langle \beta,\alpha \cap [X] \rangle.
\end{align*}
\item The \emph{relative equivariant intersection form} is
\begin{align*}
\lambda_X^\partial  \colon H_2(X;\Z[\pi]) \times H_2(X,Y;\Z[\pi]) & \to  \Z[\pi] \\
(x,y) &\mapsto \langle \PD_X^{-1}(y),x\rangle.
\end{align*}
\end{itemize}
\end{definition}

\begin{remark}
\label{rem:PairingCohomHom}
Observe that Poincar\'e duality induces an isometry between these two pairings in the sense that for~$\alpha \in H^2(X,Y;\Z[\pi])$ and~$\beta \in H^2(X;\Z[\pi])$ one has
$$ \lambda_X^\partial(\PD_X(\alpha),\PD_X(\beta))
=\langle \PD^{-1}(\PD_X(\beta)),\PD_X(\alpha)  \rangle
=\langle \beta,\PD_X(\alpha)  \rangle
=b_X^\partial(\alpha,\beta).$$
\end{remark}

In what follows, it is helpful to recall from Notation~\ref{not:mult0} that we write
$$\mult_0 \colon H_0(X;\Z[\pi] \boxtimes \Z[\pi]) \xrightarrow{\varpi_0}
\Z[\pi] \otimes_{\Z[\pi]} \overline{\Z[\pi]}
\xrightarrow{\mult}
\Z[\pi].
$$
The next lemma expresses the pairing $b_\sigma$ in terms of the cup product.

\begin{lemma}
\label{lem:cup}
Let $\pi$ be a group, let $X$ be a 
space, and let $\varphi \colon \pi_1(X) \twoheadrightarrow \pi$ be a surjection.
The pairing~$b_\sigma$ agrees with the hermitian pairing
\begin{align*}
b_\sigma^\cup \colon H^2(X;\Z[\pi]) \times H^2(X;\Z[\pi]) &\to \Z[\pi] \\
(x,y)& \mapsto \mult_0((x \cup y) \cap \sigma).
 \end{align*}
In particular the pairing~$b_\sigma$ is hermitian.
\end{lemma}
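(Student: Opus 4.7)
The plan is to reduce the asserted equality $b_\sigma = b_\sigma^\cup$ to a two-step manipulation: first apply the standard identity $(\alpha\cup\beta)\cap\sigma = \alpha\cap(\beta\cap\sigma)$ (recorded after Definition~\ref{def:TwistedCup}) together with Lemma~\ref{lem:capev}, and then use the graded symmetry from Lemma~\ref{lem:CupProductSymmetric} to cancel the conjugation that appears.

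Concretely, I would first compute, for $x,y\in H^2(X;\Z[\pi])$,
\[
\mult_0\bigl((y\cup x)\cap\sigma\bigr) = \mult_0\bigl(y\cap(x\cap\sigma)\bigr).
\]
Since $y\in H^2(X;\Z[\pi])$ and $x\cap\sigma\in H_2(X;\Z[\pi])$, Lemma~\ref{lem:capev} identifies the right-hand side with $\overline{\langle y,\,x\cap\sigma\rangle}=\overline{b_\sigma(x,y)}$. Next I would apply Lemma~\ref{lem:CupProductSymmetric} with $a=y$, $b=x$ (both of degree $2$, so $(-1)^{kl}=1$) to obtain
\[
\mult_0\bigl((x\cup y)\cap\sigma\bigr) = \overline{\mult_0\bigl((y\cup x)\cap\sigma\bigr)} = \overline{\overline{b_\sigma(x,y)}} = b_\sigma(x,y),
\]
which is the desired equality $b_\sigma^\cup(x,y)=b_\sigma(x,y)$.

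For the second assertion, I would establish the hermitian property directly on the cup-product side: a second application of Lemma~\ref{lem:CupProductSymmetric} yields
\[
b_\sigma^\cup(x,y) = \mult_0\bigl((x\cup y)\cap\sigma\bigr) = \overline{\mult_0\bigl((y\cup x)\cap\sigma\bigr)} = \overline{b_\sigma^\cup(y,x)},
\]
so $b_\sigma$ satisfies $b_\sigma(x,y)=\overline{b_\sigma(y,x)}$. The sesquilinearity of $b_\sigma^\cup$ in each variable is a bookkeeping check from the $(\Z[\pi],\Z[\pi])$-bilinearity of the cup product, of capping with $\sigma$, and of $\mult_0$, all of which were recorded in Sections~\ref{sub:CapCup} and~\ref{sub:Evaluation}.

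There is no genuine obstacle here: the only subtlety is matching the antilinear slot convention, which is precisely what the sign-free ($kl=4$) graded symmetry of Lemma~\ref{lem:CupProductSymmetric} supplies. The rest is unwinding definitions.
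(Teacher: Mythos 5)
Your proposal is correct and follows essentially the same route as the paper: the paper likewise combines the identity $(\alpha\cup\beta)\cap\sigma=\alpha\cap(\beta\cap\sigma)$ with Lemma~\ref{lem:capev} and the graded symmetry of Lemma~\ref{lem:CupProductSymmetric} (sign-free since $k=l=2$), merely ordering the symmetry step slightly differently. The hermitian claim is handled the same way in both arguments, via Lemma~\ref{lem:CupProductSymmetric} applied to $b_\sigma^\cup$.
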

\begin{proof}
The graded symmetry of the cup product (recall Lemma~\ref{lem:CupProductSymmetric}) ensures that $b_\sigma^\cup$ is hermitian.
The equality $b_\sigma=b_\sigma^\cup$ now follows directly from the cup-cap-formula
and Lemma~\ref{lem:capev}:
\begin{align*}
b_\sigma^\cup(x,y)
=\overline{b_\sigma^\cup(y,x)}
=\overline{\mult_0((y \cup x) \cap \sigma)} 
=\overline{\mult_0((y \cap (x \cap \sigma)}
=\langle y,x  \cap \sigma\rangle 
=b_\sigma(x,y).
\end{align*}
The fact that $b_\sigma$ is hermitian follows from the fact that $b_\sigma^\cup$ is hermitian.
\end{proof}

The same proof yields the following result for Poincar\'e pairs.
\begin{lemma}
\label{lem:cupRelative}
Let $(X,Y)$ be a $4$-dimensional Poincar\'e pair with $\pi:=\pi_1(X)$.
The pairing~$b_X^{\partial}$ agrees with the pairing
\begin{align*}
b_X^{\partial,\cup} \colon H^2(X,Y;\Z[\pi]) \times H^2(X;\Z[\pi]) &\to \Z[\pi] \\
(x,y)& \mapsto \overline{\mult(\varpi_0((y \cup x) \cap [X]))}.
 \end{align*}
\end{lemma}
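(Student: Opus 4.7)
The plan is to mimic the proof of Lemma~\ref{lem:cup}, but being careful about which factor carries the relative structure. In this relative setting the inputs are~$x \in H^2(X,Y;\Z[\pi])$ and~$y \in H^2(X;\Z[\pi])$, so the relative cup product (as recalled in Remark~\ref{rem:RelativeCup}, applied with $Z=\emptyset$ and the roles swapped) gives $y \cup x \in H^4(X,Y;\Z[\pi] \boxtimes \Z[\pi])$, which then admits a relative cap with~$[X] \in H_4(X,Y)$ landing in~$H_0(X;\Z[\pi] \boxtimes \Z[\pi])$. So $\mult_0((y \cup x) \cap [X]) \in \Z[\pi]$ is well-defined.

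The main step is the identity $(y \cup x) \cap [X] = y \cap (x \cap [X])$ in $H_0(X;\Z[\pi] \boxtimes \Z[\pi])$. Here $x \cap [X] \in H_2(X;\Z[\pi])$ is the Poincar\'e dual $\PD_X(x)$, and capping $y$ with it lands in $H_0(X;\Z[\pi] \boxtimes \Z[\pi])$ as needed. This cup--cap compatibility holds in the untwisted setting, and the argument extends verbatim to the twisted setting exactly as in the absolute case used in Lemma~\ref{lem:cup}. Applying $\mult_0$ to both sides and then using Lemma~\ref{lem:capev}, one obtains
\[\mult_0((y \cup x) \cap [X]) = \mult_0(y \cap (x \cap [X])) = \overline{\langle y, x \cap [X] \rangle} = \overline{b_X^\partial(x,y)}.\]
Taking complex conjugates yields $b_X^\partial(x,y) = \overline{\mult_0((y \cup x) \cap [X])} = b_X^{\partial,\cup}(x,y)$, which is the claimed equality.

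No step is really an obstacle here; the only mildly delicate point is to verify the relative cup--cap formula in the twisted coefficient setting, which follows from the chain-level definitions of $\cup$ and $\cap$ given in Section~\ref{sub:CapCup} together with the universal excisive triad convention of Remark~\ref{rem:RelativeCup}. Since $b_X^{\partial,\cup}$ is defined as the conjugate of a cup-product expression, we do not claim (and do not need) that it is itself hermitian in the naive sense; the content of the lemma is just the alternative cup-product formula for $b_X^\partial$.
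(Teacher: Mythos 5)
Your proof is correct and follows essentially the same route as the paper: the cup--cap relation $(y \cup x)\cap [X] = y \cap (x \cap [X])$ combined with Lemma~\ref{lem:capev}, then applying the involution to conclude. The paper's own proof is exactly this computation (it does not invoke the symmetry Lemma~\ref{lem:CupProductSymmetric}, matching your observation that no hermitian symmetry is needed here).
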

\begin{proof}
This follows directly from the relation between the cap and cup products and Lemma~\ref{lem:capev}
\begin{align*}
b_X^{\partial,\cup}(x,y)
&=\mult_0((y \cup x) \cap [X]) 
=\mult_0((y \cap (x \cap [X]))
=\overline{\langle y, x \cap [X]\rangle} \\
&=\overline{b_M^{\partial}(x,y)}.
\end{align*}
 This concludes the proof of the lemma.
\end{proof}

\subsection{Twisted cross products}
\label{sub:Cross}

Let~$X_1,X_2$ be spaces, let $R_1,R_2,$ and $R$ be rings.
Let~$M$ be a~$(R_1,\Z[\pi_1(X_1)] \times R)$-bimodule, and let~$N$ be a~$(R,\Z[\pi_1(X_2)] \times R_2)$-bimodule.
Identify the ring~$\Z[\pi_1(X_1) \times \pi_1(X_2)]$ with~$\Z[\pi_1(X_1)] \otimes_\Z \Z[\pi_1(X_2)]$ and endow the~$(R_1,R_2)$-bimodule~$M \otimes_\Z N$ with the right~$\Z[\pi_1(X_1) \times \pi_1(X_2)]$-diagonal action given by~$(m_1,m_2) \cdot (\gamma_1 \otimes \gamma_2)=(m_1\gamma_1,m_2\gamma_2)$ so that it becomes a~$(R_1,\Z[\pi_1(X_1) \times \pi_1(X_2)] \times R_2)$-bimodule.
Recall that the Eilenberg-Zilber theorem ensures there are chain maps
\begin{align*}
\times' \colon  C_*(\widetilde{X}_1) \otimes C_*(\widetilde{X}_2) \to C_*(\widetilde{X}_1 \times \widetilde{X}_2) \\   
\theta \colon  C_*(\widetilde{X}_1 \times \widetilde{X}_2) \to C_*(\widetilde{X}_1) \otimes C_*(\widetilde{X}_2) 
\end{align*}
which are unique up to chain homotopy, are natural in $X$ and $Y$, and that are chain homotopy inverses of one another.
We now begin the construction of the cross products.

\begin{construction}[Twisted cross products on chain complexes and homology]
\label{cons:CrossHomology}
A \emph{cross product} on the twisted chain complexes is the $(R,R)$-linear chain homotopy equivalence with
\begin{align*}
	\times \colon C_k(X_1;M) \otimes_\Z \overline{C_l(X_2;N)} &\to C_{k+l}(X_1\times X_2;M \boxtimes N)\\
	\Big( a \otimes_{\Z[\pi_1(X_1)]} \wt \sigma_{X_1}, b \otimes_{\Z[\pi_1(Y)]} \wt \sigma_{X_2} \Big) &\mapsto \big(a \boxtimes b \big) \otimes_{\Z[\pi_1(X_1\times X_2)]} \widetilde{\sigma}_{X_1} \times' \widetilde{\sigma}_{X_2}.
\end{align*}
To see that $\times$ is indeed a chain homotopy equivalence, express it as the composition
\begin{align*}
\times \colon C_k(X_1;M) \otimes_\Z C_l(X_2;N) 
&=\left( M \otimes_\Z C_*(\widetilde{X}_1)\right) \otimes_\Z \left( N \otimes_\Z C_*(\widetilde{X}_2)\right)  \\
&\cong 
(M \otimes_\Z N) \otimes_{\Z[\pi_1(X_1) \times \pi_1(X_2)]}  
C_k(\widetilde{X}_1) \otimes C_l(\widetilde{X}_2) \\
&\xrightarrow{\id \otimes \times' } 
(M \boxtimes_R N) \otimes_{\Z[\pi_1(X_1 \times X_2)]} 
C_{k+l}(\widetilde{X}_1 \times \widetilde{X}_2) \\
&=C_{k+l}(X_1 \times X_2;M \boxtimes_R B).
\end{align*}
Since $\times'$ is unique up to chain homotopy, it then makes sense to define the \emph{twisted cross product} to be the~$(R,R)$-linear map induced by $\times$ on homology:
$$ \times \colon H_k(X_1;M) \otimes_\Z \overline{H_l(X_2;N)} \to H_{k+l}(X_1 \times X_2;M \boxtimes N).$$
\end{construction}
\begin{construction}[Twisted cross products on cochain complexes and cohomology.]
\label{cons:Crosscohomology}
A \emph{cross product} on twisted cochain complexes is the $(R,R)$-linear chain homotopy equivalence
\begin{align*}
C^k(X_1;M) \otimes_\Z C^l(X_2;N) 
&=\Hom_{\Z[\pi_1(X_1)]}\left(\overline{C_k(\widetilde{X}_1)},M\right) \otimes_\Z\Hom_{\Z[\pi_1(X_2)]}\left(\overline{C_l(\widetilde{X}_2)},N\right) \\
&\xrightarrow{h,\simeq}
\Hom_{\Z[\pi_1(X_1) \times \pi_1(X_2)]}\left(\overline{C_k(\widetilde{X})\otimes C_l(\widetilde{X}_2)},M \boxtimes N\right)  \\
&\xrightarrow{\theta^*,\simeq}
\Hom_{\Z[\pi_1(X_1 \times X_2)]}\left(\overline{C_{k+l}(\widetilde{X}_1\times \widetilde{X}_2)},M \boxtimes N\right) \\
&=C^{k+l}(X_1 \times X_2;M \boxtimes N),
\end{align*}
where the map $h$ is induced by mapping $f \otimes g$ to~$(\sigma_X \otimes \sigma_{X_2}) \mapsto f(\sigma_X) \otimes g(\sigma_{X_2}).$
To see that $h$ is indeed a chain homotopy equivalence,
consider the following commutative diagram in which the horizontal maps arise from the fact that $X_1,X_2$ have the homotopy type of finite CW complexes, and the map $h^{\CW}$ is defined similarly to $h$ but using the cellular chain complex:
$$
\xymatrix@C0.4cm{
\Hom \left(\overline{C_k(\widetilde{X}_1)},M\right) \otimes_\Z\Hom \left(\overline{C_l(\widetilde{X}_2)},N\right)
\ar[r]^-{\simeq} \ar[d]^{h}
&
\Hom \left(\overline{C_k^{\CW}(\widetilde{X}_1)},M\right) \otimes_\Z\overline{\Hom(C_l^{\CW}(\widetilde{X}_2),N)} \ar[d]_\cong^{h^{\CW}}
\\
\Hom \left(\overline{C_k(\widetilde{X}_1)\otimes C_l(\widetilde{X}_2)},M \boxtimes N\right)
\ar[r]^-{\simeq}
&
\Hom \left(\overline{C_k^{\CW}(\widetilde{X}_1)\otimes C_l^{\CW}(\widetilde{X}_2)},M \boxtimes N\right).
}
$$
Since $\theta$ is unique up to chain homotopy, it then makes sense to define the \emph{twisted cross product} to be $(R,R)$-linear map induced by $\times$ on cohomology:
$$\times \colon H^k(X_1;M) \otimes_\Z \overline{H^l(X_2;N)} \to H^{k+l}(X_1 \times X_2 ;M \boxtimes N).$$
\end{construction}

Next we recall the statement of the K\"unneth theorem, assuming for simplicity that~$R=\Z$.

\begin{proposition}
\label{prop:KunnethAppendix}
Assume that $X_1,X_2$ are finite CW complexes and that $M$ and $N$ are free as $\Z$-modules.
The cross product gives rise to the following split short exact sequences of $(R_1,R_2)$-bimodules:
\begin{align*}
0 \to \bigoplus_{i=1}^n H_i(X_1;M) \otimes_\Z H_{n-i}(X_2;N) \xrightarrow{\times} H_n(X_1 \times X_2;M \boxtimes N) \to  \bigoplus_{i=1}^n \operatorname{Tor}_1^R(H_i(X_1;M), H_{n-i-1}(X_2;N)) \to 0 \\
0 \to \bigoplus_{i=1}^n H^i(X_1;M) \otimes_\Z H^{n-i}(X_2;N) \xrightarrow{\times} H^n(X_1 \times X_2;M \boxtimes N) \to  \bigoplus_{i=1}^n \operatorname{Ext}^1_R(H^i(X_1;M),H^{n-i-1}(X_2;N)) \to 0.
\end{align*}
\end{proposition}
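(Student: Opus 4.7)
The plan is to bootstrap from the classical algebraic K\"unneth formula for tensor products of chain complexes of free abelian groups, using as input the chain- and cochain-level cross products already produced in Constructions~\ref{cons:CrossHomology} and~\ref{cons:Crosscohomology}.

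For the homology statement, I would start from the chain homotopy equivalence
\[\times\colon C_*(X_1;M)\otimes_\Z C_*(X_2;N)\xrightarrow{\ \simeq\ } C_*(X_1\times X_2;M\boxtimes N)\]
of chain complexes of $(R_1,R_2)$-bimodules supplied by Construction~\ref{cons:CrossHomology}. Since $C_*(\widetilde X_i)$ is free as a right $\Z[\pi_1(X_i)]$-module on any chosen lift of the set of singular simplices of $X_i$, the hypothesis that $M$ and $N$ are free as $\Z$-modules makes each $C_n(X_1;M)$ and $C_n(X_2;N)$ free as a $\Z$-module. The algebraic K\"unneth theorem then applies directly to the tensor product on the left, and after transporting across $\times_*$ produces the short exact sequence in the statement.

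For the cohomology statement, I would run the analogous argument. Here the extra input is that $X_1$ and $X_2$ are finite CW complexes, which allows one to compute twisted cohomology using the cellular cochain complex $\Hom_{\Z[\pi_1(X_i)]}(\overline{C^{\CW}_*(\widetilde X_i)},M)$ (respectively with $N$). Since $C_*^{\CW}(\widetilde X_i)$ is a finitely generated free $\Z[\pi_1(X_i)]$-module, each cellular cochain group is a finite direct sum of copies of $M$ or $N$ and thus a finitely generated free $\Z$-module. A second application of algebraic K\"unneth, this time to the tensor product of the cellular cochain complexes, followed by transport across the cochain-level chain homotopy equivalence of Construction~\ref{cons:Crosscohomology}, then yields the cohomological sequence.

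The main bookkeeping point, and the step I would flag as the genuine obstacle, is tracking the $(R_1,R_2)$-bimodule structure throughout. The cross products are $(R_1,R_2)$-linear by construction, and the algebraic K\"unneth map $[\alpha]\otimes[\beta]\mapsto[\alpha\otimes\beta]$ is $(R_1,R_2)$-linear at the chain level; since the boundary operators are bimodular, the whole K\"unneth sequence is one of $(R_1,R_2)$-bimodules. For the splittings, one passes to the chain level, splits the short exact sequence $0\to Z_n\to C_n\to B_{n-1}\to 0$ of $\Z$-free abelian groups, and then chases the resulting algebraic splitting through the bimodular cross products. I do not expect any substantive difficulty: the proof is essentially the classical K\"unneth argument carried through the bimodule setup furnished by the boxtimes construction.
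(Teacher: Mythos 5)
Your proposal is correct and follows essentially the same route as the paper: both sequences are obtained by applying the algebraic K\"unneth theorem to the tensor product of the twisted (co)chain complexes --- which are $\Z$-free because $M$ and $N$ are, with the finiteness of the CW complexes entering on the cohomology side exactly as you say --- and then transporting the resulting split exact sequence across the chain homotopy equivalences of Constructions~\ref{cons:CrossHomology} and~\ref{cons:Crosscohomology}. The only quibble is your claim that the cellular cochain groups are \emph{finitely generated} free $\Z$-modules: $M$ and $N$ need not be finitely generated over $\Z$ (e.g.\ $M=\Z[\pi]$ with $\pi$ infinite), but freeness alone is all the K\"unneth theorem requires, so this does not affect the argument.
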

\begin{proof}
In homology this follows by combining the Künneth theorem for chain complexes (which applies since both $M$ and $N$ are free as $\Z$-modules) and the definition of the cross product from Construction~\ref{cons:CrossHomology}:
$$
\xymatrix@C0.3cm{
0\ar[r]
&\bigoplus\limits_{i=1}^n H_i(X_1;M) \otimes_\Z H_{n-i}(X_2;N)
\ar[r] \ar[rd]^-{\times}
&H_n\left(
C_*(X;M) \otimes_\Z \overline{C_*(X_2;N)}\right)\ar[r]\ar[d]^-{\times}_\cong
&  \bigoplus\limits_{i=1}^n \operatorname{Tor}_1^R(H_i(X;M) \otimes_\Z H_{n-i-1}(X_2;N)\ar[r] 
&0\\
 &&{\underbrace{H_n\left(C_*(X_1 \times X_2;M \boxtimes N)\right).}_{=H_*(X_1\times X_2;M \boxtimes N)}}&&
}
$$
In cohomology this follows by combining the Künneth theorem for cochain complexes (which applies since both $M$ and $N$ are free as $R$-modules) and the definition of the cross product from Construction~\ref{cons:CrossHomology}:
$$
\xymatrix@C0.3cm{
0\ar[r]
&\bigoplus\limits_{i=1}^n H^i(X_1;M) \otimes_\Z H^{n-i}(X_2;N)
\ar[r] \ar[rd]^-{\times}
&H_n\left(
C^*(X_1;M) \otimes_\Z \overline{C^*(X_2;N)}\right)\ar[r]\ar[d]^-{\times}_\cong
&  \bigoplus\limits_{i=1}^n \operatorname{Ext}^1_R(H_i(X_1;M) \otimes_\Z H_{n-i-1}(X_2;N)\ar[r] 
&0\\
 &&{\underbrace{H_n\left(C^*(X_1 \times X_2;M \boxtimes N)\right).}_{=H^*(X_1\times X_2;M \boxtimes N)}}&&
}
$$
This concludes the proof of the proposition.
\end{proof}

We conclude with a remark that was used during the proofs of Lemma~\ref{lem:FirstTerm} and Proposition~\ref{prop:CanRealise}.
\begin{remark}
\label{rem:CrossLemma}
Let $Y$ be a space, let $\pi_1(Y) \to \pi$ be an epimorphism, and endow $Y \times S^1$ with coefficient system $\pi_1(Y \times S^1) \to \pi_1(Y) \to \pi.$
Under the identification
$$ H_0(Y;\Z[\pi] \boxtimes \Z[\pi]) \times H_0(S^1) \xrightarrow{\times} H_0(Y \times S^1;(\Z[\pi] \boxtimes \Z[\pi]) \boxtimes \Z) \cong H_0(Y \times S^1,\Z[\pi] \boxtimes \Z[\pi]),$$
the following equality holds:
$$\mult_0([(\gamma_1 \otimes \gamma_2) \otimes [\operatorname{pt}]] \times n1_{H_0(S^1)})
=n\gamma_1\overline{\gamma}_2
$$
for any~$\gamma_1,\gamma_2\in \Z[\pi]$ and~$n\in \Z$.
\end{remark}

\section{$3$-connected maps and isometries of intersection forms}

The goal of this section is to prove Proposition~\ref{prop:RelativeFormsCohomologyAreTheSame} that was used during the proofs of Propositions~\ref{prop:Simplify} and~\ref{prop:3ConnImpliesCompatible}.
This requires the following lemma which describes the effect of applying duality to the absolute class instead of the relative class.

\begin{lemma}
\label{lem:otheradjoints}
    Let $(X,Y)$ be a $4$-dimensional Poincar\'e pair with $\pi:=\pi_1(X)$.
The pairing~$b_X^{\partial}$ agrees with the pairing
\begin{align*}
    H^2(X,Y;\Z[\pi]) \times H^2(X;\Z[\pi]) &\to \Z[\pi] \\
(\alpha,\beta)& \mapsto \overline{\langle \alpha,\PD_X(\beta)\rangle}.
\end{align*}
while the pairing $\lambda_X^\partial$ agrees with the pairing
\begin{align*}
    H_2(X;\Z[\pi]) \times H_2(X,Y;\Z[\pi]) &\to \Z[\pi] \\
(x,y)& \mapsto \overline{\langle \PD_X^{-1}(x),y\rangle}.
\end{align*}
\end{lemma}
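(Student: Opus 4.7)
The two statements are in fact equivalent via Poincar\'e duality. Setting $\alpha := \PD_X^{-1}(x) \in H^2(X,Y;\Z[\pi])$ and $\beta := \PD_X^{-1}(y) \in H^2(X;\Z[\pi])$, the second asserted equality $\lambda_X^\partial(x,y)=\overline{\langle \PD_X^{-1}(x),y\rangle}$ unwinds, using the definitions of $\lambda_X^\partial$ and $b_X^\partial$, to
$$b_X^\partial(\alpha,\beta)=\langle \beta,\alpha\cap[X]\rangle=\overline{\langle \alpha,\beta\cap[X]\rangle}=\overline{\langle \alpha,\PD_X(\beta)\rangle},$$
which is precisely the first statement. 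So I only need to establish the first.

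For the first statement, the plan is to apply Lemma~\ref{lem:cupRelative} (which rewrites $b_X^\partial$ in cup product form), then use the graded commutativity of the cup product together with Lemma~\ref{lem:capev}. Concretely, the cup-cap relation (in the mixed relative/absolute form) gives the two identities $(\beta\cup\alpha)\cap[X]=\beta\cap(\alpha\cap[X])$ and $(\alpha\cup\beta)\cap[X]=\alpha\cap(\beta\cap[X])$, so applying $\mult_0$ and Lemma~\ref{lem:capev} yields
$$\overline{\mult_0((\beta\cup\alpha)\cap[X])}=\langle\beta,\alpha\cap[X]\rangle\quad\text{and}\quad\mult_0((\alpha\cup\beta)\cap[X])=\overline{\langle\alpha,\beta\cap[X]\rangle}.$$
The desired equality $\langle\beta,\alpha\cap[X]\rangle=\overline{\langle\alpha,\beta\cap[X]\rangle}$ therefore reduces to the graded symmetry identity $\overline{\mult_0((\beta\cup\alpha)\cap[X])}=\mult_0((\alpha\cup\beta)\cap[X])$ with $k=l=2$ (so the sign is $+1$), applied with one argument in $H^2(X,Y;\Z[\pi])$ and the other in $H^2(X;\Z[\pi])$ and paired against $[X]\in H_4(X,Y;\Z[\pi])$.

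The main technical point is therefore to verify that the graded commutativity statement of Lemma~\ref{lem:CupProductSymmetric} extends to this mixed relative/absolute setting. Since the cup product is defined on the relative cochain level by the same front-face/back-face formula (using Remark~\ref{rem:RelativeCup} and the universally excisive triad $(X,Y,\emptyset)$), the same chain-homotopy between $\alpha\cup\beta$ and $\beta\cup\alpha$ used in the absolute case carries through, and the same diagram chase from the proof of Lemma~\ref{lem:CupProductSymmetric} applies verbatim. With this in hand, the lemma follows immediately.
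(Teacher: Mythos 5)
Your proposal is correct and follows essentially the same route as the paper: the $\lambda_X^\partial$ statement is reduced to the $b_X^\partial$ statement via Poincar\'e duality (the paper cites Remark~\ref{rem:PairingCohomHom} for this), and the $b_X^\partial$ identity is obtained by combining Lemma~\ref{lem:cupRelative}, the graded symmetry of Lemma~\ref{lem:CupProductSymmetric}, the cup--cap relation, and Lemma~\ref{lem:capev}. Your remark about the mixed relative/absolute form of the symmetry is the same point the paper handles implicitly (its proof diagram for Lemma~\ref{lem:CupProductSymmetric} already pairs $H^k(X,Y;M)$ against $H^l(X;M)$), so no new argument is needed.
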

\begin{proof}
The proof follows promptly from Lemmas~\ref{lem:cupRelative} and \ref{lem:CupProductSymmetric} and~\ref{lem:capev}
Indeed, by \cref{lem:cupRelative} and \cref{lem:CupProductSymmetric},
    \[b_X^{\partial}(\alpha,\beta)=\overline{\mult_0((\beta \cup \alpha) \cap [X])}=\mult_0((\alpha \cup \beta) \cap [X])=\mult_0(\alpha \cap (\beta \cap [X])).\]
    By \cref{lem:capev},
    \[\mult_0(\alpha \cap (\beta \cap [X]))=\overline{\langle \alpha,\beta\cap [X]\rangle}=\overline{\langle \alpha,\PD_X(\beta)\rangle}.\]
    This completes the proof for $b_X^\partial$. Since $\PD_X$ is an isomorphism, there exist $\alpha,\beta$ with $x=\PD_X(\alpha)$ and $y=\PD_X(\beta)$. We then compute using \cref{rem:PairingCohomHom},
    \[\lambda_X^\partial(x,y)=\lambda_X^\partial(\PD_X(\alpha),\PD_X(\beta))=b_X^\partial(\alpha,\beta).\]
    The first part of the lemma now gives
    \[b_X^\partial(\alpha,\beta)=\overline{\langle \alpha,\PD_X(\beta)\rangle}=\overline{\langle \PD_X^{-1}(x),y\rangle}.\]
    This completes the proof of the lemma.
\end{proof}

\begin{notation}
Let~$(X_0,Y_0)$ and~$(X_1,Y_1)$ be~$4$-dimensional Poincar\'e pairs with~$\pi_1(X_j) \cong \pi$,
Postnikov $2$-type $B:=P_2(X_1)$ and~$\iota_j \colon \pi_1(Y_j) \to \pi_1(X_j)$ surjective for~$j=0,1$.
Fix a homotopy equivalence~$h \colon Y_0 \to Y_1$ and assume there is a group isomorphism~$u \colon \pi_1(X_0) \to \pi_1(X_1)$ that satisfies~$u \circ \iota_0=\iota_1 \circ h_*$.
Let~$c_0 \colon X_0 \to B,c_1 \colon X_1 \to B$ be
$3$-connected maps with $c_1$ the inclusion.
For elements~$x\in H_2(B;\Z[\pi])$ and~$y\in H_2(B, Y_1;\Z[\pi])$, we write
$$(c_i)_*^{-1}\lambda_i(x,y):=\lambda_{X_i}^\partial((c_i)_*^{-1}(x),(c_i)_*^{-1}(y)).$$
\end{notation}

\begin{proposition}
\label{prop:RelativeFormsCohomologyAreTheSame}
The following assertions are equivalent:
\begin{enumerate}
\item The pairings~$(c_0)_*^{-1}\lambda_{X_0}^\partial,(c_1)_*^{-1}\lambda_{X_1}^\partial \colon  H_2(B;\Z[\pi]) \times  H_2(B,Y_1;\Z[\pi]) \to \Z[\pi]$ are equal,
\item The pairings~$c_0^*b_{X_0}^\partial,c_1^*b_{X_1}^\partial \colon H^2(B,Y_1;\Z[\pi]) \times  H^2(B;\Z[\pi]) \to \Z[\pi] $ are equal.
\end{enumerate}
\end{proposition}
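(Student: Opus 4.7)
My approach is to reformulate both conditions in terms of the pushed-forward fundamental classes $\sigma_i := (c_i)_*[X_i] \in H_4(B, Y_1;\Z[\pi])$ via naturality of cap and cup products, and then to exploit the hermitian symmetry of the cup pairing to show that both conditions reduce to the single statement $\langle b, a \cap d\rangle = 0$ for all $a \in H^2(B, Y_1;\Z[\pi])$ and $b \in H^2(B;\Z[\pi])$, where $d := \sigma_0 - \sigma_1$.

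First I would verify that since $c_i$ is $3$-connected, the maps $(c_i)_*$ on $H_2$ and $H_2(-, Y_*)$ and $c_i^*$ on $H^2$ and $H^2(-, Y_*)$ (all with $\Z[\pi]$-coefficients) are isomorphisms: the absolute statements follow from the $3$-connectivity of $c_i$, and the relative versions then follow from the five-lemma applied to the long exact sequences of the pairs, using $h_*$ on $Y_i$ together with $H_1(X_i;\Z[\pi]) = H_1(B;\Z[\pi]) = 0$ (via Poincar\'e--Lefschetz duality). Consequently, the two maps
\[
\Phi_i^{(1)}\colon H^2(B, Y_1;\Z[\pi]) \to H_2(B;\Z[\pi]),\ a \mapsto a \cap \sigma_i, \qquad \Phi_i^{(2)}\colon H^2(B;\Z[\pi]) \to H_2(B, Y_1;\Z[\pi]),\ b \mapsto b \cap \sigma_i
\]
are isomorphisms (each being a composite of $c_i^*$, the corresponding Poincar\'e duality map on $X_i$, and $(c_i)_*$, via the projection formula $(c_i)_*(c_i^*(-) \cap [X_i]) = - \cap \sigma_i$). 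Using this together with Remark~\ref{rem:PairingCohomHom} and Lemma~\ref{lem:capev}, I would then derive the formulas
\[
c_i^* b^\partial_{X_i}(a, b) = \langle b, a \cap \sigma_i\rangle \qquad \text{and} \qquad (c_i)_*^{-1}\lambda^\partial_{X_i}(x, y) = \langle (\Phi_i^{(2)})^{-1}(y), x\rangle.
\]

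Condition (2) then unfolds directly to $\langle b, a \cap d\rangle = 0$ for all $a, b$. For condition~(1), the key ingredient is the identity $\langle b, a \cap \sigma\rangle = \overline{\langle a, b \cap \sigma\rangle}$, valid for every $\sigma \in H_4(B, Y_1;\Z[\pi])$ (an immediate consequence of the graded commutativity of the cup product via Lemma~\ref{lem:CupProductSymmetric} combined with Lemma~\ref{lem:capev}). Applying this with $\sigma = \sigma_1$ after the change of variables $x = \Phi_1^{(1)}(a)$, and simplifying using the algebraic identity $\Phi_1^{(2)}(\Phi_0^{(2)})^{-1}\Phi_0^{(2)}(b) - \Phi_0^{(2)}(b) = -b \cap d$ together with the fact that $\Phi_0^{(2)}$ is an isomorphism, condition~(1) is transformed into $\langle a, b \cap d\rangle = 0$ for all $a, b$. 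A final application of the same symmetry identity (now with $\sigma = d$) shows that this is equivalent to $\langle b, a \cap d\rangle = 0$, matching condition~(2). The main technical point, rather than a genuine obstacle, will be keeping careful track of the two flavors of Poincar\'e duality ($\PD^{(1)}\colon H^2(X_i, Y_i) \to H_2(X_i)$ and $\PD^{(2)}\colon H^2(X_i) \to H_2(X_i, Y_i)$), the corresponding isomorphisms $\Phi_i^{(1)}, \Phi_i^{(2)}$ on $B$, and the placement of the conjugations.
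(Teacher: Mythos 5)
Your proposal is correct and follows essentially the same route as the paper's proof: your maps $\Phi_i^{(1)},\Phi_i^{(2)}$ are exactly the paper's isomorphisms $\PD_{cX_i}=(c_i)_*\circ\PD_{X_i}\circ c_i^*$ (capping with $\sigma_i=(c_i)_*[X_i]$), and your symmetry identity $\langle b,a\cap\sigma\rangle=\overline{\langle a,b\cap\sigma\rangle}$ is the content of Lemma~\ref{lem:otheradjoints}, whose proof via Lemmas~\ref{lem:CupProductSymmetric} and~\ref{lem:capev} indeed works for an arbitrary class $\sigma$, in particular for the difference $d=\sigma_0-\sigma_1$. The only differences are organizational (you reduce both conditions to the single statement $\langle b,a\cap d\rangle=0$, while the paper changes variables and applies the symmetry to $\sigma_0$ and $\sigma_1$ separately), plus a harmless redundancy in your identity $\Phi_1^{(2)}(\Phi_0^{(2)})^{-1}\Phi_0^{(2)}(b)-\Phi_0^{(2)}(b)=-b\cap d$, whose left-hand side is just $b\cap\sigma_1-b\cap\sigma_0$.
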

\begin{proof}
Consider the maps~$H^2(B,Y_1;\Z\pi)\to H_2(B;\Z\pi)$ and $H^2(B;\Z\pi)\to H_2(B,Y_1;\Z\pi)$ obtained by capping with~$(c_i)_*([X_i])$ for $i=0,1$.
Denote both of these maps by~$\PD_{cX_i}$.
Observe that~$\PD_{cX_i}=(c_i)_* \circ \PD_{X_i} \circ c_i^*$ implies both that~$\PD_{cX_i}$ is an isomorphism for $i=0,1$ and that the equality~$(c_0)_*^{-1}\lambda_{X_0}^\partial=(c_1)_*^{-1}\lambda_{X_1}^\partial$ is equivalent to the following equality holding for all~$x\in H_2(B,Y_1;\Z\pi)$ and~$y\in H_2(B;\Z\pi)$:
	\[\langle \PD_{cX_0}^{-1}(x),y\rangle=\langle \PD_{cX_1}^{-1}(x),y\rangle.\]
	Since~$\PD_{cX_i}$ is an isomorphism, this is equivalent to the following holding for all~$x'\in H^2(B;\Z\pi)$ and~$y'\in H^2(B,Y_1;\Z\pi)$:
	\[
\overbrace{\langle \PD_{cX_0}^{-1} \circ \PD_{cX_0}(x'),\PD_{cX_1}(y')\rangle}^{=\langle x',\PD_{cX_1}(y')\rangle}
	=\langle \PD_{cX_1}^{-1}\circ \PD_{cX_0}(x'),\PD_{cX_1}(y')\rangle.\]
	Using Lemma~\ref{lem:otheradjoints} twice, first for~$c_1^*b_{X_1}^\partial$, then for~$c_0^*b_{X_0}^\partial$,  the right hand side can be rewritten as 
	\[\langle \PD_{cX_1}^{-1} \circ \PD_{cX_0}(x'),\PD_{cX_1}(y')\rangle=\overline{\langle y',\PD_{cX_0}(x')\rangle}=\langle x',\PD_{cX_0}(y')\rangle.\]
	Thus~$(c_0)_*^{-1}\lambda_{X_0}^\partial=(c_1)_*^{-1}\lambda_{X_1}^\partial$ is equivalent to the following equality holding for all~$x'\in H^2(B;\Z\pi)$ and~$y'\in H^2(B,Y_1;\Z\pi)$:
	\[\langle x',\PD_{cX_1}(y')\rangle=\langle x',\PD_{cX_0}(y')\rangle.\]
By definition, this is equivalent to~$c_0^*b^\partial_{X_0}=c_1^*b^\partial_{X_1}$.
\end{proof}

\section{Obstruction theoretic results}
\label{sec:ObstructionTheory}

The goal of this section is to prove results in obstruction theory that were used in Sections~\ref{sec:NoPostnikov} and~\ref{sec:ProofGoal}.
Whereas these these results can probably be obtained using~\cite{Baues-obstruction-theory}, we prefer to give hands on proofs.
In what follows, given a CW pair~$(X,Y)$, we write~$j \colon Y \to X$ for the inclusion.
Also, we write $f_0^*\pi_2(B)$ for the abelian group~$\pi_2(B)$ considered as a $\Z[\pi_1(X_0)]$-module via~$\gamma \cdot x:=f_*(\gamma)x.$

\begin{proposition}
\label{prop:ObstructionTheory}
Let~$(X,Y)$ and $(B,B')$ be pairs of spaces that are homotopy equivalent to CW pairs,  and let~$f_0 \colon (X,Y) \to (B,B')$ be a map.
Assume that $\pi_i(B)=0$ for $i>2$ and set~$\pi:=\pi_1(B)$.
Suppose furthermore that $Y \to X$ is a cofibration.
For every class~$[d] \in H^2(X,Y;f_0^*\pi_2(B))$, there exists a map~$f_1 \colon X \to B$ satisfying~$(f_0)_*=(f_1)_*$ on~$\pi_1$ and~$f_0|_Y=f_1|_Y$,
and making the following diagram commute:
\[\begin{tikzcd}
    H_2(X;\Z[\pi])\ar[r]\ar[d,"{(f_0)_*-(f_1)_*}"']&H_2(X,Y;\Z[\pi])\ar[d,"{(f_0)_*-(f_1)_*}"]\ar[dl,"{\ev([d])}"']\\
    H_2(B;\Z[\pi])\ar[r]&H_2(B,B';\Z[\pi]).
\end{tikzcd}\]
\end{proposition}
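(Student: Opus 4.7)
The plan is to construct $f_1$ by a cellular modification of $f_0$ dictated by a cocycle representative $d\in C^2_{\CW}(X,Y;f_0^*\pi_2(B))$ of $[d]$, and then read off the effect on $H_2$ directly from the construction. First I would reduce to the case that $(X,Y)$ is a CW pair with $f_0$ cellular: choose a homotopy equivalence $(X',Y')\simeq(X,Y)$ with $(X',Y')$ a CW pair, push $f_0$ through it, and use the homotopy extension property (available since $Y\hookrightarrow X$ is a cofibration) to arrange that a cellular approximation agrees with $f_0$ on $Y$.

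Given such a cocycle $d$, I would define $f_1$ on the relative $2$-skeleton as follows. Set $f_1=f_0$ on $Y\cup X^{(1)}$. For each $2$-cell $e$ of $X\setminus Y$ with characteristic map $\Phi_e\colon(D^2,\partial D^2)\to(X,Y\cup X^{(1)})$ and chosen lift $\widetilde e$ to the universal cover, modify $f_0\circ\Phi_e$ by pre-composing with a pinch map $D^2\to D^2\vee S^2$, keeping $f_0\circ\Phi_e$ on the first summand and placing a representative of $d(\widetilde e)\in f_0^*\pi_2(B)=\pi_2(B)$ on the $S^2$-summand; the values on the other lifts of $e$ are then forced by $\Z[\pi]$-equivariance. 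The resulting map on $Y\cup X^{(2)}$ agrees with $f_0$ on $Y\cup X^{(1)}$, which immediately yields $f_1|_Y=f_0|_Y$ and $(f_1)_*=(f_0)_*$ on $\pi_1(X)$.

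Next I would extend $f_1$ over the higher-dimensional cells. For each $3$-cell $e'$ of $X\setminus Y$ with attaching map $\psi_{e'}\colon S^2\to Y\cup X^{(2)}$, the chain-level construction gives $[f_1\circ\psi_{e'}]-[f_0\circ\psi_{e'}]=d(\partial e')\in\pi_2(B)$, where $\partial e'$ is read in the relative cellular chain complex. The cocycle condition $\delta d=0$ forces this difference to vanish, and $[f_0\circ\psi_{e'}]=0$ since $f_0$ already extends over $e'$; hence $f_1\circ\psi_{e'}$ is nullhomotopic and $f_1$ extends over every $3$-cell of $X\setminus Y$. Extension over cells of dimension at least $4$ is unobstructed because $\pi_i(B)=0$ for $i\geq 3$.

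Finally I would verify the diagram by a universal-cover computation. For each lift $\widetilde e$ of a $2$-cell, $\widetilde f_0|_{\widetilde e}$ and $\widetilde f_1|_{\widetilde e}$ agree on $\partial\widetilde e$, so their chain-level difference is a $2$-cycle in $\widetilde B$ representing $d(\widetilde e)\in\pi_2(B)\cong H_2(B;\Z[\pi])$. For a relative $2$-cycle $\sigma=\sum_e n_e\widetilde e$ representing a class in $H_2(X,Y;\Z[\pi])$, this yields $((f_0)_*-(f_1)_*)([\sigma])=\sum_e n_e\,d(\widetilde e)=\ev([d])([\sigma])$ in $H_2(B;\Z[\pi])$, which is the diagonal arrow; the two triangular commutativities then follow by naturality of the long exact sequence of the pair. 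The main obstacle I expect is this final step: arranging the cellular construction to be genuinely $\Z[\pi]$-equivariant so that the chain-level computation carries the Hurewicz identification $\pi_2(B)\cong H_2(B;\Z[\pi])$ cleanly, and tracking the cocycle condition through the extension argument without sign slips.
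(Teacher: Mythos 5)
Your construction is correct, but the realization step is genuinely different from the paper's. The paper works entirely at the chain level: after the same reduction to a CW pair with $f_0$ cellular, it lifts a cocycle representative to $\widehat d\colon C_2(X,Y)\to\ker d_2^B$, chooses $e\colon C_3(X,Y)\to C_3(B)$ with $d_3^B\circ e=\widehat d\circ d_3^{X,Y}$, assembles the chain map which is $(f_0)_\#+\widehat d\circ\proj_2$ in degree $2$ and $(f_0)_\#+e\circ\proj_3$ in degree $3$, and then invokes Whitehead's realization theorem (Combinatorial Homotopy II, Theorem 16) to produce a map on $X^{(3)}$ extending $f_0|_{Y^{(3)}\cup X^{(1)}}$, before extending over higher cells using $\pi_i(B)=0$ for $i>2$. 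You instead realize the difference cochain geometrically, pinching each $2$-cell of $X\setminus Y$ and inserting a sphere representing $d(\widetilde e)$, and you then rely on the classical relation $\theta(f_1)-\theta(f_0)=\delta d$ between obstruction cocycles and the coboundary of the difference cochain to extend over the $3$-cells; this trades the citation of Whitehead's chain-realization theorem for the standard difference-cochain calculus (including the equivariance and basepoint-path bookkeeping you flag), and both yield the same chain-level description of $f_1$ in degree $2$, from which the homology diagram is read off identically. Two minor points of precision: the phrase ``the two triangular commutativities then follow by naturality of the long exact sequence'' is looser than what you actually use --- the correct justification is that the chain map $(f_0)_\#-(f_1)_\#$ vanishes on $C_*(\widetilde Y)$, hence induces a chain map $C_*(X,Y;\Z[\pi])\to C_*(B;\Z[\pi])$ realizing $\ev([d])$ on $H_2$, and both triangles follow by composing with the quotient maps $C_*(X)\to C_*(X,Y)$ and $C_*(B)\to C_*(B,B')$; and the orientation of the pinch must be chosen so that it is $(f_0)_*-(f_1)_*$, not its negative, that equals $\ev([d])$ --- a harmless normalization since $[d]$ is arbitrary.
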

\begin{proof}
During this proof, all chain complexes and homology groups of~$B$ and~$(B,B')$ (resp.~$X$ and~$(X,Y)$) are taken with~$\Z[\pi]$ coefficients (resp. ~$f_0^*\Z[\pi]$ coefficients).
Also, a short verification (which uses that $Y \to X$ is a cofibration) shows that the general case follows from the case where~$(X, Y)$ and~$(B,B')$ are CW pairs and $f_0$ is cellular.
We now assume this is the case and work with cellular chain complexes and cellular homology.

Let~$C_2(X,Y)\xrightarrow{d} \ker d_2^B/\im d_3^B \cong \pi_2(B)$ be a cocycle representing~$[d]$.
Since~$C_2(X,Y)$ is free, we can pick a lift~$\wh d\colon C_2(X,Y)\to \ker d_2^B$ of~$d$. 
Since~$d$ is a cocycle,~$d\circ d_3^{X,Y}=0$. Hence since~$C_3(X,Y)$ is free, there exists a map~$e\colon C_3(X,Y)\to C_3(B)$ such that~$d_3^B\circ e=\wh d\circ d_3^{X,Y}$. 

Let~$\proj_i\colon C_i(X)\to C_i(X)/C_i(Y)=C_i(X,Y)$ be the projection and consider the chain map
\[\begin{tikzcd}
	C_3(X)\ar[r,"d_3^X"]\ar[d,"(f_0)_3+e\circ\proj_3"']&C_2(X)\ar[r,"d_2^X"]\ar[d,"(f_0)_2+\wh d\circ\proj_2"]&C_1(X)\ar[r,"d_1^X"]\ar[d,"(f_0)_1"]&C_0(X)\ar[d,"(f_0)_0"]\\
	C_3(B)\ar[r,"d_3^B"']&C_2(B)\ar[r,"d_2^B"']&C_1(B)\ar[r,"d_1^B"']&C_0(B).
\end{tikzcd}\]
Here,  commutativity of the left square follows from the definition of~$e$, while commutativity of the middle square follows since~$\wh d$ takes values in~$\ker d_2^B$.
Note for later use that this map induces a chain map~$C_*(X,Y) \to C_*(B,B')$.

The chain map agrees with~$(f_0)_*$ on~$C_*(Y)$,~$C_1(X)$ and~$C_0(X)$.
By \cite[Theorem~16]{WhiteheadCombinatorial2}, the partial realisation~$f_0|_{Y^{(3)}\cup X^{(1)}} \colon Y^{(3)}\cup X^{(1)} \to B$ extends to a realisation~$f_1'\colon X^{(3)}\to B$ of the chain map.
Since~$\pi_i(B)=0$ for~$i>2$, we can extend~$f_1'$ to a map~$f_1\colon X\to B$ with~$f_1|_{Y\cup X^{(1)}}=f_0|_{Y\cup X^{(1)}}$.
By construction~$(f_1)_*\colon C_2(X)\to C_2(B)$ is given by~$(f_0)_2+\wh d\circ \proj_2$ so that the map induced on absolute homology is given by~$(f_0)_2+\ev([d])\circ j_*$ 
while the map on relative cohomology is given by~$(f_0)_2+j_*\circ \ev([d])$ as needed. 
The condition~$f_1'|_{Y\cup X^{(1)}}=f_0|_{Y\cup X^{(1)}}$ ensures that~$(f_1)_*=(f_0)_*$ on fundamental groups. 
\end{proof}

The following result generalises the commutativity of the upper triangle in Proposition~\ref{prop:ObstructionTheory}.
\begin{theorem}
\label{thm:ObstructionTheoryTopTriangle}
Let~$(X,Y)$ be a pair of spaces that is homotopy equivalent to a CW pair, and let~$f_0 \colon X \to B$ be a~$\pi_1$-isomorphism where $\pi_i(B)=0$ for $i>2$.
Set~$\pi:=\pi_1(B)$.
Given 
$$\psi\colon H_2(X,Y;\Z[\pi])\to H_2(B;\Z[\pi]),~$$ 
there exists a map~$f_1 \colon X \to B$ satisfying~$(f_1)_*=(f_0)_*$ on~$\pi_1$,~$f_1|_Y=f_0|_Y$,  and such that 
$$  (f_0)_*-(f_1)_*=\psi \circ j\colon  H_2(X;\Z[\pi]) \to H_2(B;\Z[\pi]).$$ 
\end{theorem}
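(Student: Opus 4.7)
The strategy is to deduce the statement from \cref{prop:ObstructionTheory} by producing a cohomology class $[d] \in H^2(X,Y;\pi_2(B))$ whose evaluation realises $\psi$, using the canonical identification $\pi_2(B) \cong H_2(B;\Z[\pi])$ coming from the Hurewicz theorem (valid since $B$ is a $2$-type and $f_0$ is a $\pi_1$-isomorphism, so the $\Z[\pi]$-module structure on $f_0^*\pi_2(B)$ matches that on $\pi_2(B)$).

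First I would reduce to the case that $(X,Y)$ is a finite CW pair and $f_0$ is cellular, as at the start of the proof of \cref{prop:ObstructionTheory}. Next I would construct the desired cocycle $d \colon C_2(X,Y) \to \pi_2(B)$ by extending the composite $\widehat\psi \colon Z_2(X,Y) \twoheadrightarrow H_2(X,Y;\Z[\pi]) \xrightarrow{\psi} \pi_2(B)$ to a $\Z[\pi]$-linear map on the free module $C_2(X,Y)$; the cocycle condition $d \circ d_3^{X,Y} = 0$ then holds automatically, since $d_3^{X,Y}$ factors through $B_2(X,Y) \subseteq Z_2(X,Y)$ on which $\widehat\psi$ vanishes by construction. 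Finally, after replacing $B$ by the mapping cylinder of $f_0|_Y \colon Y \to B$ so that $f_0$ becomes a map of pairs $(X,Y) \to (B,Y)$ (without changing the homotopy type), applying \cref{prop:ObstructionTheory} to this $[d]$ would yield a map $f_1$ with $f_1|_Y = f_0|_Y$, $(f_1)_* = (f_0)_*$ on $\pi_1$, and
\[
(f_0)_* - (f_1)_* = \ev([d]) \circ j_* = \psi \circ j_* \colon H_2(X;\Z[\pi]) \to H_2(B;\Z[\pi])
\]
from the upper triangle in the diagram of \cref{prop:ObstructionTheory}.

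The principal obstacle is the extension of $\widehat\psi$ from the submodule $Z_2(X,Y)$ to the free module $C_2(X,Y)$: its obstruction lies in $\Ext^1_{\Z[\pi]}(B_1(X,Y),\pi_2(B))$, where $B_1(X,Y) = C_2(X,Y)/Z_2(X,Y)$. In the main application \cref{prop:FinalStep?} one has $H_0(X,Y;\Z[\pi]) = 0$, and a routine analysis of the universal coefficient spectral sequence then forces the evaluation map $H^2(X,Y;\pi_2(B)) \to \Hom_{\Z[\pi]}(H_2(X,Y;\Z[\pi]),\pi_2(B))$ to be surjective, so a class $[d]$ realising $\psi$ exists. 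Alternatively, since one only needs to match $\psi \circ j_*$ on the absolute group $H_2(X;\Z[\pi])$ rather than $\psi$ itself on $H_2(X,Y;\Z[\pi])$, it suffices to extend $\widehat\psi$ restricted to $\mathrm{proj}(Z_2(X)) + B_2(X,Y)$, which is a strictly weaker condition and provides the additional flexibility needed in the general setting.
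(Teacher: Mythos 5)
There is a genuine gap, and it sits exactly at the point you flag as the "principal obstacle": the existence of a class $[d]\in H^2(X,Y;f_0^*\pi_2(B))$ with $\ev([d])=\psi$ (or even just $\ev([d])\circ j_*=\psi\circ j_*$) is the whole content of the statement, and neither of your two justifications establishes it. For the first: surjectivity of $\ev\colon H^2(X,Y;\pi_2(B))\to \Hom_{\Z[\pi]}(H_2(X,Y;\Z[\pi]),\pi_2(B))$ via the UCSS needs the vanishing of \emph{both} potential differentials, i.e. of $\Ext^2_{\Z[\pi]}(H_1(X,Y;\Z[\pi]),\pi_2(B))$ and $\Ext^3_{\Z[\pi]}(H_0(X,Y;\Z[\pi]),\pi_2(B))$ (this is how it is used in Proposition~\ref{prop:Simplify}, where $H_0$ \emph{and} $H_1$ of the pair vanish). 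Knowing only $H_0(X,Y;\Z[\pi])=0$ is not enough, and in the actual application, Proposition~\ref{prop:FinalStep?}, the pair is $(Y_1\times S^1,Y_1)$, for which $H_1(Y_1\times S^1,Y_1;\Z[\pi])\cong\Z$ with trivial action; the obstructing term is then $\Ext^2_{\Z[\pi]}(\Z,\pi_2(B))\cong H^2(\pi;\pi_2(B))$, which need not vanish (e.g.\ $\pi$ a surface or aspherical $3$-manifold group with $\pi_2(B)$ containing free summands). For the second: extending $\widehat\psi$ only from $\operatorname{proj}(Z_2(X))+B_2(X,Y)$ is indeed a weaker requirement, but its obstruction lives in $\Ext^1_{\Z[\pi]}$ of the quotient of $C_2(X,Y)$ by that submodule, which has no particular structure, and you give no argument that it vanishes; since the theorem carries no hypotheses on $\pi$ or on $\pi_2(B)$ at all, an argument routed through surjectivity of an evaluation map cannot give the stated generality.

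This is precisely why the paper does not prove the theorem via Proposition~\ref{prop:ObstructionTheory}. Its proof goes through the relative $k$-invariant realisation theorem (Theorem~\ref{thm:RealiseAlgebraic3Type}, i.e.\ the main result of \cite{ConwayKasprowskiKinvariant}): one first checks that $(f_0)_*$ is $k$-invariant preserving, then uses $j_*(k_{X,Y})=0$ to conclude that the modified coefficient morphism $(f_0)_*-\psi\circ j_*$ is still $k$-invariant preserving, realises it by a map $f_1$ with $f_1|_Y\simeq f_0|_Y$, and finishes with homotopy extension. The point of that machinery is that one only prescribes the induced map on the absolute group $H_2(X;\Z[\pi])$, with the $k$-invariant condition serving as the exact obstruction, so no surjectivity of $\ev$ on the relative group is ever needed. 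To salvage your approach you would have to prove the existence of the required cocycle extension directly, which amounts to re-proving that realisation theorem in this special case rather than a routine UCSS check.
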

\begin{proof}
Consider Theorem~\ref{thm:RealiseAlgebraic3Type} with~$(X_0,Y_0)=(X,Y), (X_1,Y_1)=(B,B)$ as well as~$u=(f_0)_*,h=f_0|_Y$ and~$F=(f_0)_*$.
Since~$B$ is~$3$-coconnected,  applying this result with~$c_1=\id$ then implies that~$(f_0)_*(k_{X_0,Y_0}^{\nu_0})=f_0|_Y^*(k_{B,B}^{\nu_B})$ for any choice of maps~$\nu_0 \colon X_0 \to B\pi$ and~$\nu_B \colon B \to B\pi$.
As~$j_*(k_{X_0,Y_0}^{\nu_0})=0$ (see e.g.~\cite[Lemma 6.1]{ConwayKasprowskiKinvariant}), we deduce~$((f_0)_*-\psi\circ j_*)(k_{X_0,Y_0}^{\nu_0})=f_0|_Y^*(k_{B,B}^{\nu_B})$.
A second application of 
Theorem~\ref{thm:RealiseAlgebraic3Type} (but this time with~$F=(f_0)_*- \psi \circ j_*$) therefore ensures that there exists a map~$f_1 \colon X \to B$ (which is denoted by~$c_0$ in that theorem) with~$(f_1)_*=(f_0)_*- \psi \circ j_*$ and~$f_1|_Y \simeq f_0|_Y$.
Applying the homotopy extension theorem, we can assume~$f_1|_Y= f_0|_Y$.
\end{proof}

\bibliographystyle{alpha}
\bibliography{BiblioHomotopyBoundary}
\end{document}